\numberwithin{equation}{section}
\newcommand{\rd}{{\rm d}}
\newcommand{\bZ}{{\mathbb Z}}
\newcommand{\beq}{\begin{equation}}
\newcommand{\bEq}{\end{equation}}
\newcommand{\bu}{{\bf{u}}}
\newcommand{\bv}{{\bf{v}}}
\newcommand{\bw}{{\bf{w}}}
\newcommand{\ord}{{\rm{ord}}}
\newcommand{\al}{\alpha}
\newcommand{\be}{\begin{equation}}
\newcommand{\ee}{\end{equation}}
\newcommand{\e}{{\varepsilon}}
\newcommand{\PP}{{\mathbb P}}
\newcommand{\fa}{{\mathfrak a}}
\newcommand{\fb}{{\mathfrak b}}
\renewcommand{\Dot}{\mathbf {Dot}}
\renewcommand{\cal}{\mathcal}
\newcommand{\wh}{\widehat}
\newcommand{\wt}{\widetilde}
\newcommand{\ii}{\mathrm{i}} 
\newcommand{\dd}{\mathrm{d}}
\renewcommand{\epsilon}{\varepsilon}
\renewcommand{\leq}{\leqslant}
\renewcommand{\geq}{\geqslant}
\renewcommand{\le}{\leq}
\renewcommand{\ge}{\geq}
\renewcommand{\P}{\mathbb{P}}
\newcommand{\E}{\mathbb{E}}
\newcommand{\R}{\mathbb{R}}
\newcommand{\C}{\mathbb{C}}
\newcommand{\N}{\mathbb{N}}
\newcommand{\Z}{\mathbb{Z}}
\newcommand{\dashed}{\text{diffusive}}
\newcommand{\Dashed}{\text{Diffusive}}
\newcommand{\self}{\text{self-energy}}
\newcommand{\selfs}{\text{self-energies}}
\newcommand{\Selfs}{\text{Self-energies}}
\newcommand{\abs}{{\rm abs}}
\DeclareMathOperator{\tr}{Tr}
\DeclareMathOperator{\re}{Re}
\DeclareMathOperator{\im}{Im}
\DeclareMathOperator{\OO}{O}
\DeclareMathOperator{\oo}{o}
\theoremstyle{plain} 
\newtheorem{theorem}{Theorem}[section]
\newtheorem*{theorem*}{Theorem}
\newtheorem{lemma}[theorem]{Lemma}
\newtheorem{assumption}[theorem]{Assumption}
\newtheorem*{lemma*}{Lemma}
\newtheorem{corollary}[theorem]{Corollary}
\newtheorem*{corollary*}{Corollary}
\newtheorem*{proposition*}{Proposition}
\newtheorem{claim}[theorem]{Claim}
\newtheorem*{claim*}{Claim}
\newtheorem{definition}[theorem]{Definition}
\newtheorem*{definition*}{Definition}
\theoremstyle{remark}
\newtheorem{example}[theorem]{Example}
\newtheorem*{example*}{Example}
\newtheorem{remark}[theorem]{Remark}
\newtheorem*{remark*}{Remark}
\newtheorem*{remarks*}{Remarks}
\newtheorem{strategy}[theorem]{Strategy}
\newcommand{\cor}{\color{red}}
\newcommand{\Sdelta}{{\Sigma}_T}
\newcommand{\Sdeltak}{{\Sigma}_{T,k}}
\newcommand{\wtSdelta}{\Sigma}
\newcommand{\wtSdeltan}{\Sigma^{(n)}}
\newcommand{\Sele}{{\mathcal E}}
\newcommand{\Selek}{{\mathcal E}}
\newcommand{\Seleinf}{{\mathcal E}^{\infty}}
\newcommand{\Sint}{\mathfrak S}
\newcommand{\Incomp}{{\text{$T$-equation}}}
\newcommand{\incomp}{{\text{$T$-equation}}}
\newcommand{\PT}{\mathcal R_T}
\newcommand{\wtPT}{\widetilde{\mathcal R}_T}
\newcommand{\PTn}{\mathcal R_T^{(n)}}
\newcommand{\PTk}{{\mathcal R_{T,k}}}
\newcommand{\PIT}{\mathcal R_{IT}}
\newcommand{\PITn}{{\mathcal R_{IT}^{(n)}}}
\newcommand{\PITk}{{\mathcal R_{IT,k}}}
\newcommand{\PGn}{\mathcal R^{(n)}}
\newcommand{\AGn}{\mathcal A^{(>n)}_{ho}}
\newcommand{\QGn}{\mathcal Q^{(n)}}
\newcommand{\QT}{\mathcal Q_T}
\newcommand{\QTn}{\mathcal Q_T^{(n)}}
\newcommand{\QTk}{{\mathcal Q_{T,k}}}
\newcommand{\QITn}{{\mathcal Q_{IT}^{(n)}}}
\newcommand{\QITk}{{\mathcal Q_{IT,k}}}
\newcommand{\AT}{\mathcal A_T}
\newcommand{\ATn}{\mathcal A_T^{(>n)}}
\newcommand{\AIT}{\mathcal A_{IT}}
\newcommand{\AITn}{{\mathcal A_{IT}^{(>n)}}}
\newcommand{\Err}{{\mathcal Err}}
\def\bZ{{\mathbb Z}}
\def\@empty{}
\def\author#1{\par
    {\centering{\authorfont#1}\par\vspace*{0.05in}}
}
\def\titlefont{\fontsize{13}{15}\bfseries\boldmath\selectfont\centering{}}
\def\authorfont{\fontsize{13}{15}}
\def\abstractfont{\fontsize{8}{10}}
\let\affiliationfont\rhfont
\def\address#1{\par
    {\centering{\affiliationfont#1\par}}\par\vspace*{11pt}
}
\def\body{
\setcounter{footnote}{0}
\def\thefootnote{\alph{footnote}}
\def\@makefnmark{{$^{\rm \@thefnmark}$}}
}
\def\title#1{
    \thispagestyle{plain}
    \vspace*{-14pt}
    \vskip 79pt
    {\centering{\titlefont #1\par}}%
    \vskip 1em
}
\renewenvironment{abstract}{\par%
    \vspace*{6pt}\noindent 
    \abstractfont
    \noindent\leftskip18pt\rightskip18pt
}{%
  \par}
\renewcommand{\section}{\@startsection
{section}
{1}
{0mm}
{-2\baselineskip}
{2\baselineskip}
{\normalfont\large\scshape\centering}} 
\renewcommand{\subsection}{\@startsection
{subsection}
{2}
{0mm}
{-\baselineskip}
{0.5\baselineskip}
{\normalfont\bf\itshape}} 
\newcommand{\tnorm}[1]{{\left\vert\kern-0.25ex\left\vert\kern-0.25ex\left\vert #1 
    \right\vert\kern-0.25ex\right\vert\kern-0.25ex\right\vert}}
\newcommand{\hty}[1]{{\cor (ht: #1) }}
\begin{document}

\title{Delocalization and quantum diffusion of random band matrices in high dimensions I: Self-energy renormalization}
{\let\thefootnote\relax\footnotetext{\noindent 
The work of F.Y. is partially supported by the Wharton Dean’s Fund for Postdoctoral Research. 
The work of H.-T. Y. is partially supported by the NSF grant DMS-1855509 and a Simons Investigator award. 
The work of J.Y. is partially supported by the NSF grant DMS-1802861. 
}}
\vspace{1cm}
\noindent \begin{minipage}[b]{0.32\textwidth}
\author{Fan Yang }
\address{University of Pennsylvania\\
   fyang75@wharton.upenn.edu}
 \end{minipage}
\begin{minipage}[b]{0.32\textwidth}
 \author{Horng-Tzer Yau}
\address{Harvard University\\
  htyau@math.harvard.edu}
 \end{minipage}
\begin{minipage}[b]{0.32\textwidth}
 \author{Jun Yin}
\address{University of California, Los Angeles\\
    jyin@math.ucla.edu}
 \end{minipage}

\begin{abstract}
We consider Hermitian random band matrices $H=(h_{xy})$ on the $d$-dimensional lattice $(\Z/L\Z)^d$. The entries  $h_{xy}$ are  independent (up to Hermitian conditions) centered complex Gaussian random variables with variances $s_{xy}=\mathbb E|h_{xy}|^2$. The variance matrix $S=(s_{xy})$ has a banded structure so that  $s_{xy}$ is negligible if $|x-y|$ exceeds the band width $W$. 
In dimensions $d\ge 8$, we prove that, as long as $W\ge L^\e$ for a small constant $\e>0$, with high probability  most  bulk eigenvectors of $H$ are delocalized in the sense that their localization lengths are comparable to $L$. Denote by $G(z)=(H-z)^{-1}$ the Green's function of $H$. For $\im z\gg W^2/L^2$, we also prove a widely used criterion in physics for quantum diffusion of this model, namely, the leading term in the Fourier transform of $\E|G_{xy}(z)|^2$ with respect to $x-y$ is of the form $(\im z + a(p))^{-1}$ for some $a(p)$  quadratic in $p$, where $p$ is the Fourier variable. 
Our method is based on an  expansion of  $T_{xy}=|m|^2 \sum_{\alpha}s_{x\alpha}|G_{\alpha y}|^2$ and it   requires  a
self-energy renormalization up to error $W^{-K}$ for any large constant  $K$ independent of $W$ and $L$. 
We expect that this method can be extended to non-Gaussian band matrices. 
\end{abstract}

\tableofcontents

\newpage

\section{Introduction}

\subsection{Random band matrices}

Wigner envisioned \cite{Wigner} that spectral properties of quantum systems of high complexity can be modeled by Gaussian random matrices such as GOE (Gaussian orthogonal ensemble) or GUE (Gaussian unitary ensemble). Although many non-rigorous arguments and numerical simulations support his thesis, rigorous works have been mostly restricted to mean-field models such as Wigner matrices or the adjacency matrices of random graphs of various types. For non-mean-field models, the understanding of their spectral properties is much more limited. One important non-mean-field model is the random Schr{\"o}dinger operator or more specifically, the Anderson model \cite{Anderson}. More precisely, the $d$-dimensional Anderson model is defined  by a Hamiltonian $H=-\Delta + \lambda V$, where $\Delta$ is the graph Laplacian on $\Z^d$, $V$ is a random potential with i.i.d. entries, and $\lambda$ is a small coupling strength. This model is highly non-mean-field because the off-diagonal elements consist of only 2$d$ entries of constant value in each row or column, while all the randomness is in the diagonal elements. In the strong disorder regime, i.e., when $\lambda $ is large, the eigenvectors of the Anderson model are expected to be localized and the local eigenvalue statistics converge to a Poisson process;  in the weak disorder regime, the eigenvectors are expected to be delocalized and the local eigenvalue statistics coincide with those of a GOE or GUE. 
The localization was first proved rigorously by Fr{\"o}hlich and Spencer \cite{FroSpen_1983} using a multi-scale analysis;   an alternative  proof was given years  later by Aizenman and Molchanov \cite{Aizenman1993} using a fractional moment method. Many spectacular results have been proved regarding the localization of the Anderson model (see, e.g., \cite{FroSpen_1985,Bourgain2005,Carmona1987,DingSmart2020,Damanik2002,Germinet2013,LiZhang2019}). The existence of the delocalized regime for the Anderson model has only been proved for the Bethe lattice \cite{Bethe_PRL,Bethe_JEMS}, but not for any finite-dimensional integer lattice $\Z^d$.

A model that is more tractable than the Anderson model but still preserves its key non-mean-field property is the following random band matrix ensemble. Let $\Z_L^d:=\{1,2, \cdots, L\}^d$ be a lattice of linear size $L$, and $N\equiv L^d$ be the total number of lattice sites. A $d$-dimensional random band matrix ensemble consists of $N\times N$ random Hermitian matrices  \smash{$H=(h_{xy})_{x,y\in \Z_L^d}$}, whose entries  $h_{xy}$ are  centered random variables  that are independent up to the Hermitian condition $h_{xy}=\overline h_{yx}$. 
%
%
In this paper, we require that $s_{xy}$ be negligible when $|x-y|\gg W$ for some length scale $1\ll W \ll L$ and satisfy the normalization condition  
\be\label{fxy}
\sum_{x}s_{xy}=\sum_{y}s_{xy}=1.
\ee
It is well-known that under the condition \eqref{fxy}, the global eigenvalue distribution of $H$ converges weakly to the Wigner's semicircle law supported in $[-2,2]$. 
As $W$ varies, the random band matrices naturally interpolate between the random Schr\"odinger operator  \cite{Anderson}
and the mean-field Wigner  ensemble \cite{Wigner}. 

A key physical quantity for both the Anderson model and random band matrices is the \emph{localization length} $\ell$, which, roughly speaking, is the length scale of the region in which most weight of an eigenvector resides. There are different ways to define the localization length depending on how the eigenvector decays outside the localized region (e.g. polynomial decay, exponential decay, etc.). 
For the Anderson model in infinite volume, an eigenvector is localized if its localization length is finite, and delocalized otherwise. 
For random band matrices, one can define an eigenvector to be \emph{delocalized} if its localization length $\ell$ is comparable with the linear size $L$ of the system, and \emph{localized} otherwise. It should be remarked that localization and delocalization in general depend on the energy levels. 
In this paper, we will restrict ourselves to the bulk eigenvectors, that is, eigenvectors with   eigenvalues in $(-2+ \kappa, 2-\kappa)$ for some small constant $\kappa>0$ independent of $L$.

We assume for the moment that the majority of the bulk eigenvectors have similar localization lengths so that we can refer to the localization length of a random band matrix. The localization length $\ell\equiv \ell(d,W)$ is expected to increase with $W$ and an (almost) sharp localization-delocalization transition occurs at some critical band width $W_c\equiv W_c(d,L)$ when the localization length  $\ell$ becomes comparable to  the system size $L$, i.e., 
\begin{itemize}
	\item for  $W \gg W_c$, the bulk eigenvectors are delocalized, i.e. $\ell$ is of order similar to  $L$; 
	\item for  $W \ll W_c$, the bulk eigenvectors are localized, i.e. $\ell$ is much smaller than $L$.
\end{itemize}

Heuristically, the random band matrices and the Anderson model are expected to have  same qualitative properties with $\lambda \sim W^{-1}$. 
In dimension $d=1$, the localization length of the Anderson model 
is known to be of order $\ell\sim \lambda^{-2}$. 
By simulations \cite{ConJ-Ref1, ConJ-Ref2, ConJ-Ref4, ConJ-Ref6} and non-rigorous supersymmetric arguments \cite{fy}, the localization length of one-dimensional random band matrices is conjectured to be of order $\ell \sim W^2$,  leading  to the critical band width $W_c\sim \sqrt L$. The localization length of the two-dimensional Anderson model is conjectured to be exponentially large in $\lambda^{-2}$ \cite{PRL_Anderson} (although this conjecture is not universally accepted). Correspondingly, it is conjectured that the localization length of 
two-dimensional random band matrices also grows exponentially fast in $W^{2}$, leading to the critical band width $W_c\sim \sqrt{\log L}$. In dimensions $d\ge 3$, it is conjectured that there is a threshold energy in the Anderson model,  the mobility edge, that separates the delocalized and localized states. For random band matrices with $d\ge 3$, 
the bulk eigenvectors are conjectured to be delocalized. 
More precisely, the localization length of the bulk eigenvectors is expected to be of macroscopic scale $\ell\sim L$ independently of the band width $W$, and the critical band width $W_c$ is a large number independent of $L$. 
The previous summary on the localization-delocalization conjecture is mainly focused on the random band matrices, and we refer the reader to \cite{Spencer1,Spencer2,Spencer3, PB_review} for more details. There are extensive works concerning this problem for random Schr\"odinger operators in the past several  decades;  
they are beyond the scope of this paper and we refer the reader to \cite{Kirsch2007,CarLa1990,Spencer_Anderson} for extensive reviews. 

There have been  many partial results concerning these conjectures for  random band matrices in dimension $d=1$ \cite{BaoErd2015,Semicircle, ErdKno2013,ErdKno2011,delocal,HeMa2018,BouErdYauYin2017,PartI,PartII,PartIII,Sch2009,PelSchShaSod,Sod2010,SchMT,Sch1,Sch2,Sch3,1Dchara,Sch2014}. Key results include that $\ell < W^7$ if the entries of $H$ are Gaussian \cite{PelSchShaSod}, and $\ell > W^{4/3}$ for general random band matrices without Gaussian assumption \cite{PartI, PartII, PartIII}. For a class of complex Hermitian Gaussian random band matrices with  certain special variance profiles, supersymmetry techniques can be used \cite{Efe1997,DisPinSpe2002,BaoErd2015,Sch2014,SchMT,Sch1,Sch2,Sch3,1Dchara}, and a transition in the two-point correlation function for the bulk eigenvalues at $W_c\sim L^{1/2}$ was proved in \cite{1Dchara}. It is still not clear if the supersymmetry method can be adapted to prove  localization or delocalization of the random band matrices treated in \cite{1Dchara}.

The understanding of the delocalization of random band matrices in dimensions $d\ge 2$, however, is much more limited.   
Based on studying the unitary operator $e^{\ii t H}$, it was shown \cite{ErdKno2013,ErdKno2011} that the localization length for 
$d$-dimensional random band matrices satisfies $\ell > W^{1+d/6}$. The delocalization used in these papers is defined in a weak sense which we will explain  later on. 
With a Green's function method, it was proved  \cite{PartIII} that $\ell > W^{1+d/2}$, improving the earlier results obtained in \cite{delocal, HeMa2018}.

In this paper, we prove that with high probability in dimensions $d\ge 8$, 
the bulk eigenvectors of random band matrices are (weakly) delocalized in the sense defined in \cite{ErdKno2013,ErdKno2011} provided that $W\ge L^\e$ for a small constant $\e>0$. Recall that the delocalization conjecture asserts that the random band matrices are delocalized in dimensions $d \ge 3$ as long as $W\ge C$ for a large enough constant $C>0$. Our result gives a positive answer to  this conjecture for $d \ge 8$  in the weak delocalization sense under the slightly  stronger assumption $W\ge L^\e$ (vs. $W\ge C$). The definition of the delocalization used in this paper, following \cite{ErdKno2013,ErdKno2011}, is still far from the strong delocalization used for Wigner matrices \cite{ESY_local,ESY1}. Major works remain to be done to prove the strong delocalization even under the conditions $d \ge 8$ and $W\ge L^\e$. We will discuss some of these problems after stating the main results.

\subsection{Delocalization and local law}\label{subsec_mainresult}
 
 In this subsection, we define our model and state the first two main results, Theorem \ref{comp_delocal} and Theorem \ref{main thm0}, of this paper. 
 We will consider  $d$-dimensional random band matrices indexed by a cube  of linear size \(L\) in \(\mathbb{Z}^{d} \), i.e., 
 \be\label{ZLd}
 \Z_L^d:=\left( \Z\cap ( -L/2 , L/2]\right) ^d. 
 \ee
 We will view $\Z_L^d$ as a torus and denote  by  $[x-y]_L$ the representative of $x-y$ in $\Z_L^d$, i.e.,  
 \be\label{representativeL}[x-y]_L:= \left[(x-y)+L\Z^d\right]\cap \Z_L^d.\ee
 Clearly, $\|x-y\|_L:=\| [x-y]_L \|$ is  the {periodic} distance on $\Z_L^d$ for any norm $\|\cdot\|$ on $\Z^d$.  For definiteness, we use $\ell^\infty$-norm in this paper, i.e. $\|x-y\|_L:=\|[x-y]_L\|_\infty$. 
 In this paper, we consider the following class of $d$-dimensional random band matrices.

\begin{assumption}[Random band matrix $H\equiv H_{d,f,W,L}$] \label{assmH}
	Fix any $d\in \N$. For $L\gg W\gg 1$ and $N:=L^d$, we assume that $ H\equiv H_{d,f,W,L}$ is an $N\times N$ complex Hermitian random matrix whose entries $(\re h_{xy}, \im   h_{xy}: x,y \in \Z_L^d)$ are independent Gaussian random variables (up to the Hermitian condition $h_{xy}=\overline h_{yx}$) such that  
	\be\label{bandcw0}
	\mathbb E h_{xy} = 0, \quad \E (\re h_{xy})^2 =  \E (\im h_{xy})^2 = s_{xy}/2, \quad x , y \in \bZ_L^d,
	\ee
	where the variances $s_{xy}$ satisfy that
	\be\label{sxyf}s_{xy}= f_{W,L}\left( [x-y]_L \right)\ee
	for some positive symmetric function $f_{W,L}$ satisfying Assumption \ref{var profile} below. Then we say that $H$ is a $d$-dimensional  random band matrix with the linear size $L$,  band width $W$ and variance profile $f_{W,L}$. Denote the variance matrix by $S : = (s_{xy})_{x,y\in \Z_L^d}$, which is a doubly stochastic symmetric $N \times N$ matrix. 
	%
\end{assumption}   

\begin{assumption}[Variance profile]\label{var profile}
	We assume that $f_{W,L}:\Z_L^d\to \mathbb R_+$ is a positive symmetric function on $\Z_L^d$ that can be expressed by the Fourier transform 
	\be\label{choicef}
	f_{W,L}(x):= \frac{1}{(2\pi)^d Z_{W,L}}\int \psi(Wp)e^{\ii p\cdot x} \dd p.  \ee
	Here $\Z_{W,L}$ is the  normalization constant so that $\sum_{x\in \Z_L^d} f_{W,L}(x)=1$, and $\psi\in C^\infty(\R^d)$ is a symmetric smooth function independent of $W$ and $L$ and satisfies the following properties:
	\begin{itemize}
		\item[(i)] $\psi(0)=1$ and $\|\psi\|_\infty \le 1$;  
		\item[(ii)] $\psi(p)\le \max\{1 - c_\psi |p|^2 , 1-c_\psi  \}$ for a constant $c_\psi>0$;
		\item[(iii)]  $\psi$ is in the Schwartz space, i.e.,
		\be\label{schwarzpsi} \lim_{|p|\to \infty}(1+|p|)^{k}|\psi^{(l)}(p)| =0, \quad \text{for any }k,l\in \N.\ee
	\end{itemize}
\end{assumption}

Clearly, $f_{W,L}$ is of order $\OO(W^{-d})$
and decays faster than any polynomial, that is, for any fixed $k\in \N$, there exists a constant $C_k>0$ so that
\be\label{subpoly}
|f_{W,L}(x)|\le C_k W^{-d}\left( {\|x\|_L}/{W}\right)^{-k}.
\ee
Hence the variance profile $S$ defined in \eqref{sxyf}  has a banded structure, namely,  for any constants $\tau,D>0$,
\be\label{app compact f}
\mathbf 1_{|x-y|\ge W^{1+\tau}}|s_{xy}|\le W^{-D}.
\ee
Combining \eqref{schwarzpsi} and \eqref{subpoly} with the Poisson summation formula, we obtain that  
\be\label{bandcw1} 
Z_{W,L} =   \psi(0) + \OO(W^{-D})=1+ \OO(W^{-D}),
\ee
for any large constant $D>0$ as long as $L\ge W^{1+\e}$ for a constant $\e>0$. 
Note that Assumption \ref{var profile} does not cover non-smooth profile functions. For example, it does not  include the indicator function $f_{W,L}(x)=W^{-d}\mathbf 1_{x\in (-W/2,W/2]^d}$. While we believe that Assumption \ref{var profile} is not essential, we will not get into this technical issue in this paper.

Denote the eigenvalues and normalized eigenvectors of $H$ by $\{\lambda_\al\}$ and $\{\mathbf u_\alpha\}$. According to \cite{Spencer2}, an eigenvector $\bu_\al$ is localized with a localization length $\ell$ if for some $x_0\in \Z_L^d$ its entries satisfy that 
\be\label{localu}  
|  u_\alpha(x) | \le Ce^{-c\|x-x_0\|_L/\ell} ,
\ee
for some constants $c,C>0$. Inspired by this definition, 
for any fixed constants $K>1$ and $0<\gamma \le 1 $, we define a random subset of indices as
$$ {\mathcal B}_{\gamma, K,\ell} :=\left\{\alpha: \lambda_\alpha \in (-2+\kappa, 2-\kappa) \ \text{ so that  } \min_{x_0\in \Z_L^d} \sum_x |  u_\alpha(x)|^2 \exp\left[\left(\frac{\|x-x_0\|_L}{\ell}\right)^\gamma\right] \le K \right\}, $$
which contains all indices associated with bulk eigenvectors that have localization lengths bounded by $\OO(\ell)$. Here we have relaxed the exponential function in \eqref{localu} to a more general family of sub-exponential functions. Then we have the following theorem for random band matrices in dimensions $d\ge 8$. 



\begin{theorem}[Weak delocalization of bulk eigenvectors in high dimesnions]\label{comp_delocal}
	Fix $d\ge 8$, small constants $c_0, c_1, \gamma, \kappa>0$ and a large constant $K > 1$.  Suppose that $W\le  \ell \le L^{1 - c_0}$,  $L^{c_1} \le W \le L$ and $H$ is a $d$-dimensional random band matrix satisfying Assumptions \ref{assmH} and \ref{var profile}. 
	Then we have that for any constants $\tau,D>0$,
	\be\label{uinf_locallength}  \mathbb P \left[ \frac{|\mathcal B_{\gamma,K,\ell}|}{N} \le W^\tau \left(\frac{\ell^2}{L^2} + W^{-d/2}\right)\right] \ge 1-L^{-D},
	\ee
	provided that $L$ is sufficiently  large  depending on these constants. Moreover, 
	for any eigenvalue $\lambda_\al$ of $H$ satisfying $\lambda_\al\in (-2+\kappa,2-\kappa)$, its eigenvector $\bu_\al$ satisfies that
	\be\label{uinf} 
	\mathbb P \left(\|\bu_\al\|_\infty \le W^{1+\tau}/L\right)\ge  1-L^{-D} ,
	\ee
	for any constants $ \tau, D>0$ and sufficiently large $L$.
\end{theorem}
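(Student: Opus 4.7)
Both bounds reduce, via the elementary identity
\[
|u_\alpha(x)|^2 \;\le\; \eta\cdot \im G_{xx}(\lambda_\alpha + \ii\eta),
\]
obtained by keeping only the $\beta=\alpha$ term in the spectral decomposition $\im G_{xx}(E+\ii\eta)=\sum_\beta \eta |u_\beta(x)|^2/[(\lambda_\beta-E)^2+\eta^2]$, to pointwise or averaged control of $\im G_{xx}$ at an appropriate spectral scale. Such control will be a consequence of the $T$-equation and self-energy renormalization developed elsewhere in this paper, which yields a local law $\im G_{xx}(E+\ii\eta)=\im m(E)+\oo(1)$ with high probability in the diffusive regime $\eta\gg W^2/L^2$ for $E$ in the bulk.

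For \eqref{uinf}, I would take $\eta=W^{2+2\tau}/L^2$, which lies safely above the diffusive threshold. The pointwise local law gives $\im G_{xx}(\lambda_\alpha+\ii\eta)\le C$ with probability at least $1-L^{-D}$, so the identity yields $|u_\alpha(x)|^2\le C W^{2+2\tau}/L^2$; a union bound over $x\in\Z_L^d$ (absorbing the polynomial factor $N=L^d$ into $\tau$) delivers $\|\bu_\alpha\|_\infty\le W^{1+\tau}/L$ with probability at least $1-L^{-D}$.

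For \eqref{uinf_locallength}, I would sum the same identity over $\alpha\in\mathcal{B}_{\gamma,K,\ell}$, exploiting the sub-exponential localization built into the definition of $\mathcal{B}_{\gamma,K,\ell}$. For each such $\alpha$ the defining weight forces $\sum_{x\in B_\alpha}|u_\alpha(x)|^2\ge 1/2$ for the ball $B_\alpha:=\{x:\|x-x_0(\alpha)\|_L\le C\ell\}$ with $C=C(K,\gamma)$. Hence
\[
\frac12 \,|\mathcal{B}_{\gamma,K,\ell}| \;\le\; \sum_{\alpha\in\mathcal{B}_{\gamma,K,\ell}}\sum_{x\in B_\alpha}|u_\alpha(x)|^2 \;\le\; \eta \sum_{\alpha\in\mathcal{B}_{\gamma,K,\ell}}\sum_{x\in B_\alpha}\im G_{xx}(\lambda_\alpha+\ii\eta).
\]
I would then group the indices $\alpha$ by which box of a fixed side-$\ell$ partition of $\Z_L^d$ contains $x_0(\alpha)$ and bound each group via an averaged $\im G$ estimate on the corresponding box, discretised in energy on scale $\eta$. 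At the choice $\eta\sim W^{2+\tau}/L^2$ two distinct error mechanisms contribute: the part of the diffusion kernel with Fourier mass $|p|\lesssim 1/\ell$ (yielding the $\ell^2/L^2$ contribution once $\eta$ is matched against $a(p)$ for $p\sim 1/\ell$), and the subleading fluctuation term from the self-energy renormalization (which, because the renormalization is pushed to error $W^{-K}$, contributes the $W^{-d/2}$ part after a Gaussian-type count of independent $W$-sized blocks).

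The main obstacle is precisely this last step. Pointwise estimates of $\im G_{xx}$ are sharp only to constant order and cannot separate the localized from the diffusive contribution when $\ell/L$ is close to $1$. The required two-sided averaged control of $\im G$ on a box of side $\ell$ relies on the fine self-energy renormalization up to error $W^{-K}$ developed in the body of the paper, and this is also where the dimensional hypothesis $d\ge 8$ enters (through the summability of the diffusive kernel that governs the multi-loop renormalization bookkeeping).
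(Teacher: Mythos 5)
Your argument for \eqref{uinf} is exactly the paper's: you use $|u_\alpha(x)|^2 \le \eta\,\im G_{xx}(\lambda_\alpha+\ii\eta)$ at a scale $\eta\sim W^{2}/L^{2-\e}$ together with the pointwise local law from Theorem \ref{main thm0}; a union bound then gives the stated high-probability estimate. That part is correct.

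Your plan for \eqref{uinf_locallength}, however, has a concrete gap. Bounding boxes of side $\ell$ and energy windows of width $\eta$ using only the pointwise information $\im G_{xx}(E+\ii\eta)=\OO(1)$ can never produce a nontrivial bound: each box--window pair carries at most $\OO(\eta\ell^d)$ localized eigenvectors, and summing over the $\sim L^d/\ell^d$ boxes and $\sim 1/\eta$ windows returns the trivial $|\mathcal B_{\gamma,K,\ell}|\lesssim L^d$. The cancellations you need do not live in the diagonal of $G$; they come from the \emph{off-diagonal} decay $|G_{xy}|^2\prec B_{xy}=W^{-2}\langle x-y\rangle^{-(d-2)}$ established in Theorem \ref{main thm0}, which at scale $\eta=W^2/L^{2-\e}$ yields the averaged estimate $\max_x\eta\sum_{y:|y-x|\le\ell}|G_{xy}(z)|^2\prec \ell^2/L^{2-\e}$, as well as the pointwise error $\max_{x,y}|G_{xy}-m\delta_{xy}|\prec W^{-d/2}$ (this, not any fluctuation count over $W$-blocks, is the origin of the $W^{-d/2}$ term). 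The paper then does not work with energy-sliced box counts at all. It introduces the quantity $\sum_x|u_\alpha(x)|\,\|P_{x,\ell}\bu_\alpha\|$ with $P_{x,\ell}=\mathbf 1(|\cdot-x|\ge\ell)$, defines the intermediate set $\mathcal A_{\delta,\ell}$ of bulk eigenvectors on which this is $\le\delta$, proves $|\mathcal A_{\delta,\ell}|/N\le C\sqrt\delta+\OO_\prec(\ell^2/L^{2-\e}+W^{-d/2})$ by following \cite{delocal} Proposition 7.1 (which exploits precisely the two local-law estimates above), and then converts between $\mathcal B_{\gamma,K,\ell}$ and $\mathcal A_{\delta,\wt\ell}$ at an intermediate scale $\wt\ell=\ell L^{c_1}$ via a Cauchy--Schwarz/optimization step from \cite{ErdKno2011}; the exponent $M/(M+4)$ appears from this optimization, and letting $M\to\infty$ recovers $\ell^2/L^2$. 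Your invocation of the quantum diffusion kernel (Theorem \ref{main thm2}) and of a Fourier cutoff at $|p|\sim 1/\ell$ is not what the paper uses for this estimate, and as written does not close the counting gap identified above.
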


The estimate \eqref{uinf_locallength} asserts that, for random band matrices with band width essentially of order one ($L^{c_1}$ 
for any small constant $c_1> 0$), the majority of bulk eigenvectors have localization lengths essentially of the size of the system (in the sense that they are larger than $L^{1 - c_0}$ for any small constant $c_0 > 0$). The bound \eqref{uinf} implies that $\|\bu_\al\|_4^4\le W^{2+2\tau}/L^2$ with high probability, which converges to 0 as $L\to \infty$, another commonly used weak notion of delocalization in physics (see, e.g., \cite{Spencer2}).  We also remark that all the results in this paper hold only for large enough $W$ and $L$, and, for simplicity, we do not repeat it again in all our statements.

To prove Theorem \ref{comp_delocal}, we study the resolvent (or Green's function) of $H$ defined by 
\be\nonumber
G(z)=(H-z)^{-1},\quad z\in \C_+:=\{x\in \C: \im z>0\}.
\ee
In \cite{Semicircle, ErdYauYin2012Univ}, it has been shown that for any small constant $\e>0$,
\be  \label{intro_semi}
\max_{x,y}|G_{xy}(z)-m(z)\delta_{xy}| \le \frac{W^\e}{\sqrt{W^d \eta}} ,\quad z= E+ \ii \eta,
\ee
with high probability for all $E\in (-2+\kappa,2-\kappa)$ and $\eta\ge W^{-d + \e}$, where $m(z)$ is the Stieltjes transform of Wigner's semicircle law,
\be\label{msc}
m(z):=\frac{-z+\sqrt{z^2-4}}{2} = \frac{1}{2\pi}\int_{-2}^2 \frac{\sqrt{4-\xi^2}}{\xi-z}\dd\xi,\quad z\in \C_+.
\ee
The bound \eqref{intro_semi} implies a lower bound on the localization length of order $W$, which is far shorter than $L^{1 - c_0}$ 
stated in Theorem \ref{comp_delocal} when $W = L^{c_1}$. For our purpose, we need to decrease $\eta$ from $\eta\ge W^{-d + \e}$
to a much smaller scale $\eta \ge W^{2}/L^{2-\e}$ and improve the error bound in \eqref{intro_semi} significantly. 
%
While the diagonal resolvent entry $G_{xx}$ is expected to be given by the semicircle law for a large range of $\eta$, we will show that the off-diagonal entries can be approximated by a \emph{diffusive kernel} $\Theta$ defined by  
\be\label{Thetapm}
\Theta(z):=\frac{|m(z)|^2 S}{1-|m(z)|^2 S} .
\ee
It is well-known that 
for $z=E+\ii \eta$ with $E\in (-2+\kappa,2-\kappa)$ and $\eta\ge W^{2}/L^{2-\e}$ for a constant $\e>0$,
\be\label{thetaxy}
\Theta_{xy}(z) \le   \frac{W^\tau \mathbf 1_{|x-y|\le   \eta^{-1/2}W^{1+\tau}}}{W^2\left(\|x-y\|_{ L}+W\right)^{d-2}}   + \frac{1}{ \left(\|x-y\|_{ L}+W\right)^{D}}  \le W^{\tau} B_{xy},
\ee
for any constants $\tau, D>0$, where we have abbreviated that 
\be\label{defnBxy}B_{xy}:=W^{-2}\left(\|x-y\|_{ L}+W\right)^{-d+2}.\ee 
The reader can refer to \cite{delocal,PartIII} for a proof of \eqref{thetaxy}. 
The following theorem provides an essentially sharp local law on the resolvent entries under the assumptions of Theorem \ref{comp_delocal}.

\begin{theorem}[Local law]\label{main thm0}
Under the assumptions of Theorem \ref{comp_delocal}, for any small constants $\e,\tau>0$ and large constant $D>0$, we have the following estimate on $G(z)$ for $z=E+\ii \eta$ and all $x,y \in \Z_{L}^d$: 
\be\label{locallaw}
\P\bigg(\sup_{E\in (-2+\kappa, 2- \kappa)}\sup_{W^{2}/L^{2-\e}\le \eta\le 1} |G_{xy} (z) -m(z)\delta_{xy}|^2 \le  W^\tau B_{xy}\bigg) \ge 1- L^{-D} . 
\ee
\end{theorem}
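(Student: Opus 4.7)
The plan is to prove \eqref{locallaw} by combining a continuity/bootstrap argument in the spectral parameter $\eta$ with a carefully renormalized expansion of the ``$T$-variable'' $T_{xy}(z) := |m(z)|^2 \sum_\alpha s_{x\alpha}|G_{\alpha y}(z)|^2$. The initial input is the entrywise local law \eqref{intro_semi}, which already yields the claim at $\eta = 1$, and indeed down to $\eta \ge W^{-d+\e}$ in a weaker shape. The standard Lipschitz continuity of $G(z)$ in $\eta$ permits an induction on a dyadic sequence $\eta_k \to W^2/L^{2-\e}$: assuming the desired shape bound at $\eta_{k-1}$, one derives the same bound at $\eta_k$ up to a controlled amplification, and the iteration closes if the amplification factor can be reduced below a small power of $W$.

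The key identity driving the induction is an expansion of $T_{xy}$ obtained by applying Gaussian integration by parts to one entry $h_{x\alpha}$ inside $|G_{\alpha y}|^2$. Schematically, this produces an equation of the form
\begin{equation*}
T \;=\; |m|^2 S \;+\; (|m|^2 S)\, T \;+\; \mathcal{E}(G)\, T \;+\; \mathcal{F},
\end{equation*}
where $\mathcal{E}(G)$ collects deterministic higher-order corrections and $\mathcal{F}$ collects mean-zero fluctuation terms, which are tamed by martingale/concentration inequalities combined with the Ward identity $\sum_y |G_{xy}|^2 = \eta^{-1}\im G_{xx}$. Formally inverting yields $T = \Theta + (1 - |m|^2 S - \mathcal{E})^{-1}\mathcal{F}$, and combining the diffusive bound \eqref{thetaxy} with estimates on $\mathcal{F}$ gives $|T_{xy}| \lesssim W^\tau B_{xy}$. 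Once $T$ is controlled, an analogous resolvent expansion for $G_{xy}$ that uses $T$ as input converts the diffusive bound on $T$ into the pointwise bound \eqref{locallaw}.

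The main obstacle, and the true core of the paper as reflected in its title, is the \emph{self-energy renormalization}. A single-step expansion produces a deterministic self-energy $\mathcal{E}$ whose action, after inversion by $(1-|m|^2 S)^{-1}$, contributes errors of order $\eta^{-1} W^{-d}$, which are not summable all the way down to $\eta = W^2/L^{2-\e}$. The remedy is to iterate the expansion many times, collecting at each order additional deterministic pieces into a renormalized self-energy $\Sigma = \Sigma_{W,K}$, and replacing the bare propagator $\Theta$ with $\widetilde\Theta := |m|^2 S/(1 - |m|^2 S - \Sigma)$. By a diagrammatic bookkeeping of the iterated expansion, one builds $\Sigma$ up to a residual error of order $W^{-K}$ for any prescribed large constant $K$, and verifies that $\widetilde\Theta$ still obeys the diffusive bound \eqref{thetaxy}. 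The hypothesis $d \ge 8$ enters exactly here: the nested self-energy subdiagrams must be summed against powers of $B_{xy}$, and the requisite $\ell^p$-summability of $B_{xy}$ (after accounting for the Ward-identity factors of $\eta^{-1}$) holds only in sufficiently high dimension. Closing the bootstrap by a Gronwall iteration in the exponent $\tau$, and upgrading expectation bounds to high-probability bounds via Gaussian hypercontractivity applied to $\mathcal{F}$, then produces \eqref{locallaw} with probability at least $1 - L^{-D}$ for any $D > 0$.
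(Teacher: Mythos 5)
Your proposal captures the correct skeleton: an initial estimate at $\eta=1$, a continuity/bootstrap argument in decreasing $\eta$, a $T$-equation obtained by Gaussian integration by parts, and a self-energy renormalization of the diffusive propagator carried to arbitrary accuracy $W^{-K}$. These are all present in the paper's five-step scheme (Figure~\ref{Fig pfchart1}). But the proposal has a genuine gap exactly where the paper's real difficulty lies, and it is worth spelling out because the gap would make the sketched argument collapse.

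You write that ``one builds $\Sigma$ up to a residual error of order $W^{-K}$\ldots and verifies that $\widetilde\Theta$ still obeys the diffusive bound \eqref{thetaxy}'' via ``diagrammatic bookkeeping.'' This is not a verification one can do by bookkeeping alone. A generic self-energy $\Sigma_l$ of scaling order $l$ has row sums $\sum_x (\Sigma_l)_{0x}$ of size $W^{-(l-2)d/2}$, which is \emph{not} small compared to $\eta$ when $\eta\sim W^2/L^2 \ll W^{-(l-2)d/2}$. Because $\sum_\alpha \Theta_{x\alpha} \sim \eta^{-1}$, the single product $\sum_\alpha \Theta_{x\alpha}(\Sigma_l)_{\alpha y}$ would then be of order $\eta^{-1} W^{-(l-2)d/2}$, which blows up as $\eta \downarrow W^2/L^2$ and the geometric series defining $\widetilde\Theta = (1-\Theta\Sigma)^{-1}\Theta$ would be wildly out of control. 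The renormalization only works because each $\Sigma_l$ satisfies a \emph{sum zero property} $\sum_x(\Sigma_l^\infty)_{0x}=0$ (and its finite-volume surrogate \eqref{3rd_property0}, which carries an explicit $\eta$ factor), allowing a double summation-by-parts that turns $\Theta\Sigma_l$ into a summable kernel decaying like $\langle x-y\rangle^{-d}$ (Lemma~\ref{lem redundantagain}). This cancellation is not visible from the diagram structure; it is a global identity among the graph values.

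Moreover, the sum zero property is not proved ``diagrammatically'' at all. The paper proves it indirectly: (i) the row sum of $\Sigma_n$ is shown, in an infinite-space Fourier representation, to equal $W^{-(n-2)d/2}\mathfrak{S}_n(m(E),\psi)$ for a constant $\mathfrak{S}_n$ that is \emph{independent of $W$} (Lemma~\ref{lem FT0}); (ii) one then specializes to a \emph{cleverly chosen} system size $L_n$ at the edge of the window where the induction already gives the local law, and extracts from the incomplete $T$-equation an a priori upper bound $|\sum_x(\Sigma_n)_{0x}|\lesssim W^{-(n-2)d/2 - c}$ (Lemma~\ref{lem maincancel}); (iii) comparing (i) and (ii) forces $\mathfrak{S}_n=0$. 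Without this contradiction argument — or a genuinely combinatorial cancellation identity, which the paper does not supply and which is unclear how to obtain — you cannot justify that $\widetilde\Theta$ satisfies \eqref{thetaxy}. This is the sense in which the sum zero property, not the bookkeeping, is the ``true core'' of the renormalization, and your sketch does not provide or even identify it.

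Two secondary points. First, the constraint $d\ge 8$ does not come from ``$\ell^p$-summability of $B_{xy}$ against Ward factors'' in the self-energy sums; it comes from the continuity estimate (Lemma~\ref{lem: ini bound}), specifically the weak averaged bound $\frac{1}{K^d}\sum_{|y-x_0|\le K}|G_{xy}|^2\prec W^{2\epsilon_0}W^{-4}K^{-(d-4)}$ feeding into Claim~\ref{cor: ini bound}, which requires $d/2 - 2 - 2 \ge 0$ to make $\sum_\alpha B_{x\alpha}\mathcal{A}_{y\alpha}$ converge. Second, the fluctuation terms are not controlled by ``Gaussian hypercontractivity'' but by $Q$-expansions of $\sum_x Q_x(\cdot)$ graphs combined with high-moment estimates that preserve the doubly connected structure — a structured fluctuation-averaging mechanism, not a generic concentration input.
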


The proof of Theorem \ref{main thm0} is based on an expansion method and can be readily adapted to non-Gaussian random band matrices after some technical modifications (cf. Remark \ref{rem general distr} below for more details).  We choose to present it  for Gaussian cases to avoid technical complexities associated with non-Gaussian distributions (which will increase the number of terms in  expansions). The proof for the real symmetric Gaussian case is very similar to the complex case except that, as usual, the number of terms will double in every expansion step.  (This is due to the fact that $\E h_{xy}^2 = 0$ in the complex case but not in the real case.) The condition $d\ge 8$ can also be improved; it remains to be seen whether this method can  reach the physical dimension $d=3$. 
We will deal with  these  improvements in forthcoming papers.

A very strong notion of delocalization is to require that
$$\mathbb P \left(\|\bu_\al\|_\infty \le L^{-d/2 +\tau}\right)\ge  1-L^{-D} 
$$
for any constants $\tau, D>0$. This was first proved for Wigner matrices in \cite{ESY_local,ESY1,ErdYauYin2012Univ,ErdYauYin2012Rig} and later extended to many other classes of mean-field type random matrices (see e.g. \cite{isotropic,PartI,EKYY_ER1,HY_Regulard,BHY2019,HKM2019,BKH2017}). This estimate  was proved in \cite{ErdYauYin2012Univ} as a consequence of the following bound on the diagonal resolvent entries, i.e.,  for some constant $C>0$, 
$$
\max_{x\in \Z_L^d}   |G_{xx} (z) | \le  C  \quad \text{for all $\eta\gg L^{-d}$}.
$$
We believe  that the resolvents of random band matrices satisfy the following stronger estimate with high probability:
\be
|G_{xy} (z) -m(z)\delta_{xy}|^2 \le W^\tau B_{xy} + W^\tau (L^d\eta)^{-1} \quad \text{for all $\eta\gg L^{-d}$}.
\ee
The restriction $\eta\gg W^2/L^2$ in this paper is not intrinsic and can be substantially improved to, say, $\eta\gg L^{-d/4}$. However, it seems to be a difficult problem to reach the optimal threshold $\eta\gg L^{-d}$.

\subsection{Quantum diffusion} \label{sec_diffu}


A key quantity in the analysis of random band matrices is the $T$-matrix introduced in \cite{delocal}: 
\be\label{defGT}
T_{xy}(z):=|m|^2\sum_\al s_{x\al}|G_{\al y}(z)|^2,\quad x,y \in \Z_L^d.
\ee
Note that $T_{xy}$ is very similar to $|G_{xy}|^2$, and it is known that the $T$-matrix controls the asymptotic behaviors of the resolvent  (see, e.g., Lemma \ref{lem G<T} below). Moreover, the $T$-variables are slightly easier to use in our proof, because the diagonal $T$-variables $T_{xx}$ can be dealt with in the same way as the off-diagonal $T$-variables $T_{xy}$ with $x\ne y$, while this is not the case for the $|G_{xy}|^2$ variables.
In the following theorem, we show that  $\E T_{xy}$ is governed by a diffusion profile. 

\begin{theorem}[Quantum diffusion of the $T$-matrix]\label{main thm2}
	Suppose the assumptions of Theorem \ref{comp_delocal} hold. 
	Fix any small constant $\e>0$ and large constant $M\in \N$. Then  for all $x,y \in \Z_{L}^d$ and $z=E+\ii \eta$ with $E\in (-2+\kappa,2-\kappa)$ and $W^2/L^{2-\e}\le \eta\le 1$, we have that 
	\be\label{E_locallaw}
	\begin{split}
		\E T_{xy} &=  \left[  \Theta^{(M)} \left(|m|^2 + \cal G^{(M)} \right)\right]_{xy}  + \OO( W^{-Md/2} ) . 
	\end{split}
	\ee
	Here $\Theta^{(M)}$ is the $M$-th order \emph{renormalized diffusive matrix}
	\be\label{theta_renormal}\Theta^{(M)}:= \frac{1}{1-|m|^2  S\left(1+\wtSdelta^{(M)}\right)}|m|^2S,\ee
	and it  satisfies the bound 
	\be\label{Self_theta}\left|\Theta^{(M)}_{xy}\right| \le L^{\tau}B_{xy}, \ee 
	for any constant $\tau>0$. Furthermore, the \emph{self-energy correction} $\wtSdelta^{(M)}$ is given by $\wtSdelta^{(M)}(z):=\sum_{l=4}^M \Sele_l(z)$ where $\{\Sele_l\}_{l=4}^M$ is a sequence of deterministic matrices satisfying the following properties:  
    \be\label{two_properties0}
	\Selek_{l} (x, x+a) =  \Selek_{l} (0,a), \quad   \Selek_{l} (0, a) = \Selek_{l}(0,-a), \quad \forall \ x,a\in \Z_L^d,
	\ee 
	and for any constant $\tau>0$,
	\be\label{4th_property0}
	\left|  (\Selek_{l})_{0x}(z) \right| \le L^\tau {W^{-(l-4)d/2}}B_{0x}^2  , \quad \forall \ x\in \Z_L^d,\ \eta\in [W^2/L^{2-\e}, 1],
	\ee
	\be\label{3rd_property0}
	\Big|\sum_{x\in \Z_L^d} (\Selek_{l})_{0x}(z)\Big|   \le L^\tau \eta W^{-(l-2)d/2 } , \quad \forall  \ \eta\in [W^2/L^{2-\e},1].
	\ee
 Here in \eqref{two_properties0} and throughout the rest of this paper, we use $\cal A_{xy}$ and $ \cal A(x,y)$ interchangeably for any matrix $\cal A$.	
	The	 $M$-th order \emph{local correction} $ \cal G^{(M)}$ satisfies that
	\be\label{bound_calG}
	\left|\cal G_{xy}^{(M)}\right|\le L^\tau B_{xy}^{3/2} , 
	\ee
	for any constant $\tau>0$. 
\end{theorem}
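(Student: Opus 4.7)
The plan is to prove \eqref{E_locallaw} by a graphical expansion of $\E T_{xy}$ based on Gaussian integration by parts, combined with a self-energy renormalization scheme organized by induction on $M$. The starting point is a self-consistent ``$T$-equation'' obtained as follows: writing $T_{xy} = |m|^2 \sum_\alpha s_{x\alpha} G_{\alpha y}\overline{G_{\alpha y}}$ and applying Stein's lemma to differentiate the factor $G_{\alpha y}$ with respect to the Gaussian entries of $H$ (using $\partial_{h_{ab}} G_{cd} = -G_{ca}G_{bd}$), together with the resolvent identity $HG = I + zG$, one derives a schematic identity $T = |m|^2 S + |m|^2 S\cdot T + \cal R$, where $\cal R$ collects remainder terms of higher polynomial degree in $G - m$. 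Inverting $1 - |m|^2 S$ yields the first-order expansion $\E T = \Theta \cdot |m|^2 + \Theta \cdot \E \cal R$, with $\E \cal R$ controlled via the local law of Theorem \ref{main thm0}.

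The second step is to iteratively apply integration by parts to the remainder. At each iteration, a new ``two-leg irreducible'' deterministic piece of $\E \cal R$ emerges that, after factoring out $S$ on both sides, takes the structural form of a self-energy insertion sandwiched between two diffusive propagators. This piece cannot be absorbed into any smaller error, so it must be resummed into the propagator through the Dyson-type identity implicit in \eqref{theta_renormal}, producing the renormalized $\Theta^{(M)}$ and simultaneously identifying the coefficients $\Sele_l$. After $M-4$ iterations, the residual contributions split into short-range ``local'' graphs collected in $\cal G^{(M)}$ (whose sum over internal vertices yields the $B^{3/2}$ decay \eqref{bound_calG}) and a negligible remainder bounded by $W^{-Md/2}$, using the gain of $W^{-d/2}$ per extra internal vertex obtained from bounds on $\sum_x B_{0x}$ in dimension $d\ge 8$.

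The third step is to verify the three properties of each $\Sele_l$. Translation and reflection symmetry \eqref{two_properties0} follow from the corresponding symmetries of $S$ and $m$ and the combinatorial form of the deterministic graphs defining $\Sele_l$. The pointwise bound \eqref{4th_property0} is a power-counting statement: each additional internal vertex beyond the fourth costs a factor $W^{-d/2}$ by summing propagators against propagators via \eqref{thetaxy}, while the $B_{0x}^2$ prefactor reflects the two disjoint paths that must connect the external legs in any two-leg irreducible graph.

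The main obstacle is the sum rule \eqref{3rd_property0}. Naive power counting only gives $|\sum_x \Sele_l(0,x)| \lesssim L^\tau W^{-(l-4)d/2}\sum_x B_{0x}^2 \lesssim L^\tau W^{-(l-2)d/2}$, missing the essential factor $\eta$ that is needed to keep the Dyson resummation well-defined near the spectral axis. Recovering this factor requires a Ward-identity-type cancellation exploiting $\sum_a(G - G^*)_{ab} = 2\ii\eta\sum_a|G_{ab}|^2$: at the zero Fourier mode, the deterministic contribution $\sum_x \Sele_l(0,x)$ must be shown to vanish modulo an explicit factor of $\eta$. Establishing this cancellation graph by graph while preserving the inductive bookkeeping of the expansion—and in particular ensuring that every non-self-energy piece generated along the way fits either into $\cal G^{(M)}$ or into the $W^{-Md/2}$ error—constitutes the core technical difficulty of the theorem.
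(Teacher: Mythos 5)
Your skeleton is on the right track: you correctly identify that one should derive a self-consistent $T$-equation of the form $T = (\text{leading}) + \Theta\,\wtSdelta\, T + (\text{remainders})$, solve it by inverting $1-\Theta\wtSdelta^{(M)}$ to produce the Dyson-resummed propagator $\Theta^{(M)}$, and then take expectation so the fluctuation ($Q$-graph) pieces drop out. This is indeed how the paper obtains \eqref{E_locallaw} from its $M$-th order $\incomp$, and your power-counting rationale for \eqref{4th_property0} (each extra internal vertex costs $W^{-d/2}$, two disjoint paths between external legs give $B^2$) matches the paper's doubly connected estimates. However, two of your identifications are imprecise: the leading ``driving'' term is $m\,\Theta_{\fa\fb}\,\overline G_{\fb\fb}$ (a $\Theta$ edge with a $\overline G$ weight), not $|m|^2 S$; and what is inverted is $1-\Theta\wtSdelta^{(M)}$, so that $\Theta^{(M)}=(1-\Theta\wtSdelta^{(M)})^{-1}\Theta$, equivalent to your \eqref{theta_renormal} after some algebra.

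The genuine gap is in your treatment of the sum rule \eqref{3rd_property0}. You correctly observe that naive power counting only produces $W^{-(l-2)d/2}$ without the essential factor $\eta$, and you correctly single this out as the crux. But your proposed remedy --- a ``Ward-identity-type cancellation'' to be ``established graph by graph'' on the deterministic self-energy --- is not a proof strategy and is in fact precisely what the paper avoids. The deterministic matrix $\Sele_l$ contains on the order of $l^{Cl}$ graphs with no tractable closed form, and the authors explicitly note that there is no known direct graph-by-graph verification of the cancellation. Instead the paper's argument for \eqref{3rd_property0} is a fundamentally different, indirect one: (a) introduce the infinite-space limit $\Seleinf_l$ and show that $\sum_x(\Sele_l)_{0x}(z)$ equals $\sum_x(\Seleinf_l)_{0x}(E)$ up to an $\eta$-controlled error (Lemma~\ref{lem V-R wt}); (b) show via a rescaling of Fourier variables that the infinite-space sum equals $W^{-(l-2)d/2}\,\Sint_l$ for a $W$-independent constant $\Sint_l$ (Lemma~\ref{lem FT0}); and (c) plug the $n$-th order $\incomp$, at a cleverly chosen critical system size $L_n$ with $W^{(n-3)d/2+c_0}\le L_n^2/W^2\le W^{(n-2)d/2-c_0}$, into the Ward identity $\sum_\fb\E T_{\fa\fb}=L^d|m|^2\im(\E G_{00})/\eta$ and the exact relation $|m|^2/(1-|m|^2)=\im m/\eta$ to show that $|\sum_\fa(\Sele_n)_{0\fa}|\le W^{-(n-2)d/2-c}$ (Lemma~\ref{lem maincancel}). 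Comparing (b) and (c) forces $\Sint_n = 0$. The Ward identity is used once, globally, to close a scalar equation, not graph by graph; and the decisive input is the $W$-scaling invariance of $\Sint_n$, which your proposal never introduces. Without this device your approach has no way to upgrade the naive $W^{-(l-2)d/2}$ bound to the $\eta W^{-(l-2)d/2}$ bound required for the Dyson resummation to satisfy \eqref{Self_theta}.
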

We believe that the bound \eqref{bound_calG} can be improved to $|\cal G_{xy}^{(M)}|\le L^\tau B_{xy}^{2}$ with some extra work, but we do not pursue this improvement in this paper for simplicity. From Theorem \ref{main thm2}, we can readily obtain the following quantum diffusion of the resolvent entries. 
\begin{corollary}[Quantum diffusion]\label{main_cor}
	Under the assumptions of Theorem \ref{main thm2}, we have that 
	\be\label{EGxy} \E |G_{xy}|^2 =  \bigg[ \frac{1}{1-\left(1+\wtSdelta^{(M)}\right)|m|^2  S} \left( |m|^2 + \cal G^{(M)}\right) \bigg]_{xy} 
	+ \OO( W^{-Md/2} ) ,
	\ee
	for all $x,y \in \Z_{L}^d$ and $z=E+\ii \eta$ with $E\in (-2+\kappa,2-\kappa)$ and $W^2/L^{2-\e}\le \eta\le 1$.
\end{corollary}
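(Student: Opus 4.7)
My plan is to deduce the corollary from Theorem \ref{main thm2} by combining the definition of $T$ with an inversion argument based on translation invariance. The matrix form of \eqref{defGT} reads $T = |m|^2 S P$, where $P_{xy} := |G_{xy}|^2$ and we view both sides as $N\times N$ matrices indexed by $\Z_L^d$; taking expectations gives $\E T = |m|^2 S \cdot \E|G|^2$. Meanwhile, the operator identity $A(1-BA)^{-1} = (1-AB)^{-1}A$ applied with $A = |m|^2 S$ and $B = 1 + \wtSdelta^{(M)}$ yields the factorization $\Theta^{(M)} = |m|^2 S \cdot \cal K^{(M)}$, where $\cal K^{(M)} := (1 - (1+\wtSdelta^{(M)}) |m|^2 S)^{-1}$. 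Substituting into \eqref{E_locallaw} gives
\begin{equation*}
|m|^2 S \cdot \bigl( \E|G|^2 - \cal K^{(M)}(|m|^2 + \cal G^{(M)}) \bigr) = O(W^{-Md/2}),
\end{equation*}
so the task reduces to ``dividing out'' the factor of $|m|^2 S$ on the left.

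For this I would use the translation invariance of the model on the torus $\Z_L^d$: because $s_{xy}$ depends only on $[x-y]_L$, the law of $H$ is translation invariant and so $\E|G_{xy}|^2$ depends only on $[x-y]_L$. The same is true of $[\cal K^{(M)}(|m|^2 + \cal G^{(M)})]_{xy}$, by the translation invariance of $S$, of $\wtSdelta^{(M)}$ (see \eqref{two_properties0}), and of $\cal G^{(M)}$ (inherited from its construction in Theorem \ref{main thm2}). The difference $D_{xy}$ in the displayed equation therefore has the form $g([x-y]_L)$ for a single function $g$ on $\Z_L^d$, and passing to the discrete Fourier transform on the torus converts the matrix equation into the scalar relation
\begin{equation*}
|m|^2 \, \wh{s}(p) \, \wh{g}(p) = O(W^{-Md/2}), \qquad p \in \wh{\Z_L^d},
\end{equation*}
with $\wh{s}(p) = \psi(Wp)/Z_{W,L}$ by \eqref{choicef}--\eqref{bandcw1}.

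For Fourier modes $p$ where $\psi(Wp)$ is bounded below, this immediately yields $\wh{g}(p) = O(W^{-Md/2})$. The main obstacle is controlling $\wh{g}$ at the high-frequency modes where $\psi(Wp)$ is small or vanishes, since there one cannot simply divide by $\wh{s}(p)$. I would handle these by bringing in apriori bounds: the local law (Theorem \ref{main thm0}) controls $\E|G_{xy}|^2$ by $|m|^2 \delta_{xy} + W^\tau B_{xy}$, and \eqref{Self_theta}, \eqref{bound_calG} control $[\cal K^{(M)}(|m|^2 + \cal G^{(M)})]_{xy}$ by $L^\tau B_{xy}$, so the tail of $g$ in real space is controlled directly; combined with the rapid decay of $\psi$ (and hence the sparseness of the ``bad'' modes), any polynomial-in-$L$ losses in this estimate are absorbed by enlarging $M$ in Theorem \ref{main thm2}, which is arbitrary. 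A perhaps cleaner alternative would be to extract the corollary directly inside the proof of Theorem \ref{main thm2}: since $T_{xy}$ is by definition a weighted sum of $|G_{\alpha y}|^2$ against $|m|^2 s_{x\alpha}$, the expansion of $\E T_{xy}$ should naturally factor through an expansion of $\E|G_{\alpha y}|^2$, and stopping one step short of the final summation over $\alpha$ should produce exactly the corollary's formula without any inversion issue.
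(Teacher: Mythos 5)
Your reduction to the matrix identity $|m|^2 S\bigl(\E|G|^2 - \cal K^{(M)}(|m|^2 + \cal G^{(M)})\bigr) = \OO(W^{-Md/2})$, via $T=|m|^2 S\,|G|^2$ and the commutation identity $A(1-BA)^{-1}=(1-AB)^{-1}A$, is correct, but the inversion step has a genuine gap. In Fourier you must divide by $\wh s(p)\approx \psi(Wp)$, which is merely Schwartz: for $|p|\gtrsim W^{-1}$ it can be smaller than any power of $W$ or $L$, and can vanish identically if $S$ is singular, a case the corollary explicitly covers. Enlarging $M$ absorbs polynomial-in-$(W,L)$ losses, not a division by $\psi(Wp)$. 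You propose to control the bad modes via a priori bounds on $D_{0x}:=\E|G_{0x}|^2-[\cal K^{(M)}(|m|^2+\cal G^{(M)})]_{0x}$, but the pointwise bound available from \eqref{locallaw}, \eqref{Self_theta} and \eqref{bound_calG} is only $|D_{0x}|=\OO(L^\tau B_{0x})$, so $\sum_x|D_{0x}|=\OO(L^{2+\tau}/W^2)$ and $\|\wh D\|_\infty$ is only polynomially bounded in $L$, many orders larger than $W^{-Md/2}$. The bad modes comprise all but an $(W/L)^d$ fraction of the $L^d$ lattice modes, so the inverse-Fourier sum over them swamps any $\OO(W^{-Md/2})$ pointwise conclusion for $D$. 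The Fourier route cannot be closed with the information the statement of Theorem \ref{main thm2} provides.

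Your closing remark---extracting the corollary by stopping the $T$-expansion one step short of the final summation against $|m|^2 s_{x\alpha}$---is the right fix, and it is essentially the paper's proof. Inside the proof of Theorem \ref{main thm2} one has the structured identity \eqref{E_local2}, $\E T_{\fa\fb} = |m|^2\Theta^{(M)}_{\fa\fb} + (\Theta^{(M)}\cal G^{(M)})_{\fa\fb} + \sum_x\Theta^{(M)}_{\fa x}\sum_\omega\E(\cal G^{err}_\omega)_{x\fb}$, in which \emph{every} term, including the error, carries a common left factor $|m|^2 S$ because $T=|m|^2 S\,|G|^2$ and $\Theta^{(M)}=|m|^2S\,\cal K^{(M)}$. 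Cancelling this factor (multiplying by $|m|^{-2}S^{-1}$ when $S$ is invertible) therefore incurs no operator-norm penalty; it returns exactly the corollary's formula with error $[\cal K^{(M)}\sum_\omega\E\cal G^{err}_\omega]_{\fa\fb}$, which is bounded by $\OO(W^{-Md/2})$ exactly as in the argument below \eqref{Ahoforms_add}, since $\cal K^{(M)}=1+(1+\wtSdelta^{(M)})\Theta^{(M)}$ has the same decay as $\delta_{\fa x}+\Theta^{(M)}_{\fa x}$. Singular $S$ is handled by replacing $S$ with $(S+\e_N I)/(1+\e_N)$ for some $\e_N< L^{-100Md}$ and comparing the two resolvents by perturbation. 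You should develop this second route, which bypasses the amplification problem entirely, rather than the Fourier one.
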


Taking $M=0$, we get the (0-th order) diffusive matrix $\Theta^{(0)} \equiv \Theta$ in \eqref{Thetapm}.
We can expand $\Theta$ into a geometric series 
\be\label{RW-rep}
\Theta_{xy}= \sum_{k=1}^\infty |m|^{2k}(S^{k})_{xy}.
\ee   
By \eqref{fxy}, $S$ is the transition matrix of a random walk on $\Z_L^d$ with step size $\OO(W)$. With direct calculations, we can check that $|m|^2 =  1- \eta/r(E) +\OO(\eta^2)$ where $r(E):=\sqrt{4-E^2}/2$ is proportional to the semicircle density. Hence \eqref{RW-rep} shows that $\Theta_{xy}$ is a superposition of random walks up to the time $\eta^{-1}$, which is the main reason why we call $\Theta$ the diffusive matrix. Due to the form of $s_{xy}$ in \eqref{sxyf}, $\Theta_{xy}$ is translationally invariant on $\Z_L^d$. Moreover, the Fourier transform of $\Theta_{xy}$ with respect to $x-y$ 
is given by  
\be\nonumber
\wh\Theta(p):=  \sum_x \Theta_{0x} e^{\ii p \cdot x} = \frac{|m|^2 \wh S_{W,L}(p)}{1- |m|^2 \wh S_{W,L}(p)},\quad \text{with} \quad p\in  \mathbb T_L^d:=\left(\frac{2\pi }{L}\Z_L\right)^d, \ \ \wh S_{W,L}(p):= \sum_x s_{0x} e^{\ii p \cdot x}.
\ee
Note that by \eqref{choicef}, $\wh S_{W,L}(p)$ is equal to $\psi(Wp)$ up to a small error when $L$ is large. In the regime $|p|\ll W^{-1}$ and $\eta\ll 1$, this equation gives the following diffusion approximation: 
\be\label{diffu_Theta}
\wh\Theta(p) = \frac{|m|^2 \wh S_{W,L}(p)}{(1- |m|^2) + |m|^2[1-\wh S_{W,L}(p)]}= \frac{r(E) \left[1+\OO(\eta+W^2 |p|^2)\right]}{\eta+ W^2 p\cdot \cal D(E) p + \OO(\eta^2 + W^3|p|^3)},
\ee
with an effective diffusion coefficient (matrix) $ \cal D(E)$ defined by
$$ \cal D_{ij} :=\frac{r(E) }{2}\sum_{i,j}\frac{x_i x_j}{W^2}s_{0x},\quad 1\le i, j \le d. 
$$

The matrix $\Theta^{(M)}$ can be viewed as a diffusion propagator with an $M$-th order \emph{self-energy renormalization} to the diffusion constant.  
For any $4\le l \le M$, the property \eqref{two_properties0} shows that $(\Sele_l)_{xy}$ is translationally invariant, and $(\Sele_l)_{0x}$ is symmetric in $x$. Thus its Fourier transform in $x$, 
\smash{$\wh\Sele_l(p)$}, is a symmetric function in $p$. 
Using the properties \eqref{4th_property0} and \eqref{3rd_property0}, it is easy to check that for $|p|\ll W^{-1}$ and any constant $\tau>0$, 
\be\label{FT_of_self}
\wh\Sele_l(p)=W^{-(l-2)d/2}   W^2 p\cdot \cal D_l(z) p + \OO\left[W^{-(l-2)d/2+\tau} \left(\eta + W^3|p|^3\right)\right], 
\ee
where $\cal D_l(z)$ is defined by 
$$ (\cal D_l)_{ij} (z):=W^{(l-2)d/2}\cdot \frac{1}{2}\sum_{i,j}\frac{x_i x_j}{W^2}(\Sele_l)_{0x}(z),\quad 1\le i, j \le d. $$
Note that 
the main error in \eqref{FT_of_self} comes from the $l=4$ case. Using \eqref{diffu_Theta} and \eqref{FT_of_self}, we can write the Fourier transform of $\Theta^{(M)}$ as 
\be\label{TM}
\wh \Theta^{(M)}(p) = \frac{r(E)\left[1+\OO(\eta+W^2 |p|^2)\right]}{\eta+ W^2 p\cdot \cal D_{eff}^{(M)}(z) p+ \OO\left( \eta^2+ W^{-d+\tau} \eta + W^3|p|^3\right)},
\ee
for $|p|\ll W^{-1}$, where the renormalized effective diffusion coefficient is defined as
\be  \cal D_{eff}^{(M)}(z):= \cal D(E) + r(E) \sum_{l=4}^M W^{-(l-2)d/2}\cal D_l(z).\ee
Therefore, $\Theta^{(M)}$ is a diffusion propagator with $\Sele_l$ being the $l$-th order $\self$. 

The matrix $\cal G^{(M)}$ in \eqref{bound_calG} represents the collective effects of local recollisions. Notice that each row of $\cal G^{(M)}$ has a summable decay and its $\ell^1$ norm is small in the sense that $\sum_y |\cal G_{x y}^{(M)}| =\OO(W^{-d/2+\tau})$. In particular, this shows that $|\wh {\cal G}^{(M)}(p)|=\OO(W^{-d/2+\tau})$. 
Thus the Fourier transform of \eqref{E_locallaw} is given by 
\be\label{T law}
\begin{split}
 \sum_x \E T_{0x} (z) e^{\ii p \cdot x}  = 
\wh \Theta^{(M)}(p)\left[|m|^2+\OO(W^{-d/2+\tau})\right] + \OO(W^{-M}) , 
\end{split} 
\ee
as long as $M$ is sufficiently large. 
By \eqref{EGxy}, the Fourier transform of $\E |G_{xy}|^2$ has a similar behavior for $|p|\ll W^{-1}$. 
To summarize, we have obtained the following corollary from Theorem \ref{main thm2} and Corollary \ref{main_cor}.
\begin{corollary}\label{main_cor2}
Under the assumptions of Theorem \ref{main thm2}, let $M$ be a large constant satisfying $M\ge 4\log_W L$. For $p\in \mathbb T_L^d$ with $|p|\ll W^{-1}$ and $W^2/L^{2-\e}\le \eta \ll 1$, \eqref{T law} and the following estimate hold for any small constant $\tau>0$:
\be\label{T law2}
\begin{split}
 \sum_x \E |G_{0x}(z)|^2 e^{\ii p \cdot x}  = \frac{r(E)\left[|m|^2+\OO(W^{-d/2+\tau})\right]}{\eta+ W^2 p\cdot \cal D_{eff}^{(M)}(z) p+ \OO\left( \eta^2 + W^{-d+\tau} \eta + W^3|p|^3 \right)} + \OO(W^{-M}) . 
\end{split} 
\ee
\end{corollary}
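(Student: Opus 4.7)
The plan is to obtain both \eqref{T law} and \eqref{T law2} by Fourier-transforming the identities \eqref{E_locallaw} and \eqref{EGxy}, exploiting the translation invariance on $\Z_L^d$ that diagonalizes every kernel appearing on the right-hand sides in the Fourier basis $\{e^{\ii p\cdot x}\}_{p\in\mathbb T_L^d}$. The asymptotic expansion \eqref{TM} for $\wh\Theta^{(M)}(p)$ has already been derived from \eqref{diffu_Theta} and \eqref{FT_of_self}, so the task reduces to controlling the Fourier contribution of $\cal G^{(M)}$ and converting the pointwise remainder $\OO(W^{-Md/2})$ into a summed remainder of size $\OO(W^{-M})$.

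First I would apply $\sum_x(\cdot)\,e^{\ii p\cdot x}$ to \eqref{E_locallaw}. Using the translation invariance inherited by $\cal G^{(M)}$ from the construction, the matrix product $\Theta^{(M)}(|m|^2+\cal G^{(M)})$ becomes $(|m|^2+\wh{\cal G}^{(M)}(p))\,\wh\Theta^{(M)}(p)$. The key estimate is the $\ell^1$ bound
\[
|\wh{\cal G}^{(M)}(p)|\le \sum_{y\in\Z_L^d}|\cal G^{(M)}_{0y}|\le L^\tau \sum_{y}B_{0y}^{3/2},
\]
which follows from \eqref{bound_calG}; since $B_{0y}^{3/2}$ decays like $(\|y\|_L+W)^{-3(d-2)/2}$, the sum converges because $3(d-2)/2>d$ whenever $d>6$, and a direct computation gives $\sum_y B_{0y}^{3/2}=\OO(W^{-d/2})$. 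This produces the factor $|m|^2+\OO(W^{-d/2+\tau})$ in \eqref{T law}. The pointwise remainder $\OO(W^{-Md/2})$, summed against a phase over $|\Z_L^d|=L^d$ points, is bounded by $\OO(L^d W^{-Md/2})\le \OO(W^{-M})$ provided $M\ge \tfrac{2d}{d-2}\log_W L$, and this is guaranteed by the assumption $M\ge 4\log_W L$ together with $d\ge 8$.

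For \eqref{T law2}, the same Fourier transform applied to \eqref{EGxy} yields
\[
\sum_x \E|G_{0x}(z)|^2 e^{\ii p\cdot x}=\frac{|m|^2+\wh{\cal G}^{(M)}(p)}{1-(1+\wh{\wtSdelta}^{(M)}(p))|m|^2\wh S(p)}+\OO(W^{-M}).
\]
From the definition \eqref{theta_renormal} we have the identity $[1-(1+\wh{\wtSdelta}^{(M)}(p))|m|^2\wh S(p)]^{-1}=\wh\Theta^{(M)}(p)/(|m|^2\wh S(p))$, and in the regime $|p|\ll W^{-1}$, $\eta\ll 1$ one has $|m|^2\wh S(p)=1+\OO(\eta+W^2|p|^2)$, so this prefactor only introduces perturbations that are absorbed into the error terms of the numerator. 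Substituting the diffusion expansion \eqref{TM} for $\wh\Theta^{(M)}(p)$ then produces exactly the right-hand side of \eqref{T law2}, with the renormalized effective diffusion coefficient $\cal D_{eff}^{(M)}(z)$.

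The proof is essentially a mechanical Fourier computation once the expansion \eqref{TM} and the bounds in Theorem \ref{main thm2} are granted. The only real subtlety is the bookkeeping of the various small parameters $\eta$, $W^2|p|^2$, $W^3|p|^3$, $W^{-d/2+\tau}$, and $W^{-d+\tau}\eta$ so that they end up in the correct places in the numerator and denominator without any of them being swallowed along the way. The hypothesis $d\ge 8$ enters in two places: the convergence of $\sum_y B_{0y}^{3/2}$ (requiring $d>6$), and the conversion of the pointwise $\OO(W^{-Md/2})$ error into the summed $\OO(W^{-M})$ remainder. If one had the expected improved bound $|\cal G^{(M)}_{xy}|\le L^\tau B_{xy}^2$ mentioned after Theorem \ref{main thm2}, the first restriction could be relaxed, but this is not pursued here.
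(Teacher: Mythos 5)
Your approach---Fourier-transforming \eqref{E_locallaw} and \eqref{EGxy}, diagonalizing the translation-invariant kernels, and controlling $\wh{\cal G}^{(M)}(p)$ by the row $\ell^1$ bound $\sum_y B_{0y}^{3/2}=\OO(W^{-d/2})$ together with the crude $L^d W^{-Md/2}\le W^{-M}$ accounting for the remainder---is exactly the route the paper takes to deduce \eqref{T law} and \eqref{T law2} from \eqref{TM}. The one small imprecision is that the prefactor $|m|^2\wh S(p)=1+\OO(\eta+W^2|p|^2)$ is absorbed into the \emph{denominator's} error term, not the numerator's: multiplying the denominator $\eta+W^2p\cdot\cal D_{eff}^{(M)}p+\OO(\eta^2+W^{-d+\tau}\eta+W^3|p|^3)$ by $1+\OO(\eta+W^2|p|^2)$ produces extra terms of size $\OO(\eta^2+\eta W^2|p|^2+W^4|p|^4)\subset\OO(\eta^2+W^3|p|^3)$, which is already covered, whereas pushing it into the numerator would incorrectly introduce an $\OO(\eta+W^2|p|^2)$ term alongside $\OO(W^{-d/2+\tau})$.
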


It is commonly believed in physics literature (see, e.g., \cite{Spencer1,Spencer2}) that
\be\label{dif}
 \sum_x \E|G_{0x}(z)|^2 e^{\ii p \cdot x}  \sim \frac 1 { \eta + a(p)} 
\ee
with $a(p)$ being a quadratic form of $p$ for small $|p|$ is a signature of quantum diffusion. Hence Corollary \ref{main_cor2} shows that the resolvent is diffusive for  $\eta \ge W^2/L^{2-\e} $.
The quantum diffusion for the Anderson's model was proved in \cite{ESY2008} for time scale 
$t \sim \lambda^{-2 - c }$ for some small constant $c > 0$. If we take the correspondence $t \sim \eta^{-1}$ and $\lambda \sim W^{-d/2}$, the result in \cite{ESY2008} amounts to establishing the quantum diffusion for  $\eta \sim W^{-d - c}$ in the current language. The quantum diffusion in \cite{ESY2008} was established for the unitary evolution $e^{\ii t H}$ instead of  \eqref{dif} in terms of the resolvent. {While the two formulations of the quantum diffusion are generally believed to be roughly equivalent, lots of works are still required to prove the quantum diffusion for the unitary evolution $e^{\ii t H}$ of random band matrices. However, we believe that there are no intrinsic difficulties for such results.}

The \emph{Thouless time} \cite{Edwards_1972,Thouless_1977, Spencer2} for random band matrices is defined to be 
the time for a particle to reach the boundary of the system, which is roughly $t_{Th}=L^2/W^2$ if we assume that the particle evolves as a diffusion. 
It is generally believed, at least heuristically, that the localization/delocalization and quantum diffusion properties of a disordered system can be determined by the behavior of the resolvent up to the Thouless time. Since $\eta$ and the time $t$ are dual variables, 
the assumption $\eta \gg W^/L^2$ in Corollary \ref{main_cor2} exactly corresponds to that the evolution time is less than the Thouless time. In other words, Corollary \ref{main_cor2}  establishes the quantum diffusion in resolvent sense up to the Thouless time. 

\subsection{$T$-expansion}\label{sec mainref}

The main tool to prove Theorems \ref{comp_delocal}, \ref{main thm0} and \ref{main thm2} is an expansion of the $T$-matrix 
up to arbitrarily high order. In \cite{delocal}, the $T$-matrix was shown to satisfy a $T$-equation to the leading order, which gives a $T$-expansion up to second order in $W^{-d/2}$ (i.e., up to order $W^{-d}$) as follows. 
From \eqref{defGT}, it is trivial to derive the following equation 
\be\label{T-equation}
T _{xy}=\Theta_{xy} ( |G_{y y}|^2- T_{yy})+\sum_{\al\ne y}\Theta_{x\al} (|G_{\al y}|^2- T_{\al y}) .
\ee
Since we have  
\[
T_{y y} =  |m|^2 s_{y y}|G_{y y}(z)|^2+ |m|^2 \sum_{\al \ne y} s_{y \al}|G_{\al y}(z)|^2 \le CW^{-d}|G_{y y}(z)|^2 + \sup_{\al\ne y} |G_{\al y}(z)|^2, 
\]
we expect that $T_{y y}=\OO(W^{-d})$ with high probability and thus is an error term. We now show that $|G_{\al y}|^2- T_{\al y}$ also gives a higher order term. Using the equation $(z+ m) m = -1$ for $m(z)$, we get that
\be\label{GmH}
G = - \frac 1 { z+m} + \frac 1 { z+m} (H+m) G \quad \Rightarrow \quad G-   m = - m  (H+m) G .
\ee
Here the expansion in terms of $H+m$, instead of $H$, can be viewed as a naive renormalized expansion.  
 Define $\E_x$ as the partial expectation with respect to the $x$-th row and column of $H$, i.e., $\E_x(\cdot) := \E(\cdot|H^{(x)}),$	where $H^{(x)}$ denotes the $(N-1)\times(N-1)$ minor of $H$ obtained by removing the $x$-th row and column. 
For simplicity, in this paper we will use the notations 
$$P_x :=\E_x , \quad Q_x := 1-\E_x.$$
Using \eqref{GmH}, we get that for $x\ne y$,
\begin{align*}
	&  |G_{xy}|^2 =P_x \left( G_{xy}  \overline G_{xy}  \right) + Q_x  |G_{xy}|^2  
	=  - P_x \Big[ \Big(m^2 G_{xy} + m \sum_\al h_{x\al }G_{\al y}\Big)  \overline G_{xy}\Big] + Q_x |G_{xy}|^2   .
\end{align*}
Using Gaussian integration by parts with respect to $h_{x\al }$, we obtain that
\begin{align}
	|G_{xy}|^2 
	&= Q_x    |G_{xy}|^2 - m^2 P_x | G_{xy} |^2  - m P_x \Big[  \sum_\al s_{x\al } \partial_{ h_{ \al x }} \big ( 
	G_{\al y}  \overline G_{xy}  \big )\Big] \nonumber\\
	& =  Q_x   |G_{xy}|^2+ m P_x   \Big[\sum_\al s_{x\al }\big ( G_{\al \al}- m \big ) |G_{xy}|^2 \Big] +  m P_x\Big(  \overline G_{xx}   \sum_\al s_{x\al }|G_{\al y}|^2  \Big)  \nonumber \\
	& =   |m|^2   \sum_\al s_{x\al } |G_{\al y}|^2 + \Omega_{xy} = T_{xy} + \Omega_{xy},\label{G-T}
\end{align}
where $\Omega_{xy}$ consists of diagonal error terms (i.e., terms depending on $ G_{\al \al}- m$ and $\overline G_{xx}-\overline m$) and fluctuations (i.e., terms of the form  $Q_x[\cdot]$). Inserting \eqref{G-T} into \eqref{T-equation}, we obtain that 
\be\label{T-equation2}
T _{xy} = \left[|m|^2+ \OO( W^{-d/2})\right] \Theta_{xy}  +   \sum_{\al \ne y}\Theta_{x \al}  \Omega_{\al y}  ,
\ee
if we have a diagonal estimate $G_{yy}=m+\OO(W^{-d/2})$ with high probability.



We expect the second term in \eqref{T-equation2} to be an error term. But we have that 
\be\label{sumTheta}\sum_{y}\Theta_{xy}(z)=\frac{|m(z)|^2 }{1-|m(z)|^2 }\sim \eta^{-1},\ee
which makes the error $\sum_{\al \ne y}\Theta_{x\al}\Omega_{\al y}$ bigger than the order of $\max_{\al, y}|\Omega_{\al y}|$ by a huge factor $\eta^{-1}$ if we bound the sum naively. Thus this error is very difficult to bound when $\eta \ll 1$ (in particular, when $\eta=W^2/L^{2-\e}$). The estimate of $\sum_{\al \ne y}\Theta_{x\al}\Omega_{\al y}$ can be improved by a fluctuation averaging lemma, which was first discovered in \cite{EYY bernoulli} and later extended to random band matrices in \cite{EKY_Average}.  This leads to, roughly speaking, the following bound in \cite{delocal}: for any small constant $\tau>0$,
\be
\big|\sum_{\al \ne y}\Theta_{x \al}  \Omega_{\al y}\big| \le \eta^{-1}  W^{-3d/2+\tau} \quad \text{with high probability for}\quad \eta\gg W^{-d}.
\ee
 It was noticed later \cite{PartIII} that one can take advantage of the decay of $\Theta_{x\al}$ and $\Omega_{\al y}$ with respect to $\al$ to improve the error estimate. In order to achieve the regime $L \ge W^C$ for an arbitrarily large constant $C>0$, the previous methods will require that $|\Omega_{\alpha y}| \ll \eta$, which is almost impossible to establish and very  likely to be incorrect.  

While the $T$-equation has drawbacks, it is already a big step towards the understanding of the $T$-matrix. Recall that $\Theta$ is a random walk expansion up to the time $\eta^{-1}$. Hence to prove that $T_{xy}\sim \Theta_{xy}$ for $\eta \sim W^{-d}$, it amounts to expanding the resolvent $(H-z)^{-1}$ at least $W^{d}$ times. This will generate a huge combinatorial factor $(W^d \times W^d ) !$ in calculating $\E |G_{xy}|^2$ using Gaussian contractions. This combinatorial factor makes it infeasible to use the naive expansion method even taking into account various renormalization simplifications in the calculations. The $T$-equation method bypasses the problem of analyzing the $(W^d \times W^d)!$ many error terms at the expense of showing that the error is bounded up to the accuracy $|\Omega_{\alpha y}| \ll \eta$. Returning to the current case with $W = L^\e$ and $\eta \sim W^2/L^2$, the naive expansion will generate $(L^2/W^2\times L^2/W^2) !$ many terms and it is again hopeless to analyze them. Thus we have to study the $T$-equation more deeply and seek for a crucial replacement of the bound $|\Omega_{\alpha y}| \ll  \eta$.

One key observation of this work is that main contributions to the term  $\sum_{\al \ne y}\Theta_{x \al}\Omega_{\al y}$ come from  self-energy related terms such as  $ (\Theta \wtSdelta^{(M)})^k \Theta$ in the Taylor expansion of $\Theta^{(M)}$. Suppose for now we replace the property \eqref{3rd_property0} by a stronger sum zero property
\be\label{pseduo_sum_zero} \sum_{x} (\Sele_l)_{xy} =0 .\ee
Together with the fact that $(\Sele_l)_{xy}$ is symmetric in $x$ and $y$, we can sum by parts twice in the expression 
$\sum_\al \Theta_{x\al}\, (\Sele_l)_{\al y}$  to get 
\be\label{label_Theta}
\left|(\Theta \Sele_l)_{x y}\right| \le  W^{-(l-4)d/2+\tau}  \sum_{\alpha} \left|\partial_{\alpha }^2 \Theta_{x \alpha }\right|B_{\al y}^2 \le \frac{W^{-(l-2)d/2+2\tau}}{\left(\|x-y\|_{ L}+W\right)^{d}},
\ee 
where we also used the bound \eqref{4th_property0} for $\Sele_l$ 
and $ | \partial_\al^2 \Theta_{x \al} | \lesssim \left(\|x-\al\|_{ L}+W\right)^{-d+\tau}$ for any constant $\tau>0$. 
(Strictly speaking, $\partial_{\alpha }^2 \Theta_{x \alpha }$ should be replaced by the second order difference of  $ \Theta_{x \alpha }$ in $\alpha$.)  Using this estimate, it is easy to get that for any small constant $\tau>0$,  
\be \Big|\big[(\Theta \wtSdelta^{(M)})^k \Theta\big]_{xy}\Big| \le W^{\tau} B_{xy}. \ee 
Although the row sums of $\Sele_l$ are not exactly equal to zero by \eqref{3rd_property0}, the $\eta$ factor in the error term will be small enough to cancel the factor from $\sum_\al \Theta_{x \al}\sim \eta^{-1}$. To summarize, the $\selfs$ in the $T$-expansion need to either satisfy a sum zero property or contain effectively an $\eta$ factor. 
If we take $\eta\to 0$ as $L\to\infty$, then an exact sum zero property will hold for the \emph{infinite space limit} of $\Sele_l$ (see equation \eqref{weaz} for a more precise statement). 
Hence we will call \eqref{3rd_property0} a \emph{sum zero property}. 


Our main task is thus to design an expansion method to derive a $T$-equation with the leading term $\Theta^{(n)}$ and an error of order   $\OO(W^{-(n+1)d/2})$ for any fixed $n\in \N$. But there will also be many other types of terms. Roughly speaking, we will derive an expression of the form 
\be\label{T-exp-intro}
T = \Theta^{(n)} + (\text{recollision term}) + (\text{higher order term}) + (\text{fluctuation term}) + (\text{error term}),
\ee
where the recollision term consists of expressions with coincidences in summation indices, the higher order term consists of expressions that are of order smaller than $W^{- nd/2}$, 
the fluctuation term consists of expressions that can be written into the form $\sum_x Q_x(\cdot)$ (which can be analyzed 
via the fluctuation averaging mechanism), and the error term can be neglected for all of our proofs. In the expansion process, we will need to give a precise construction of $\Theta^{(n)}$. Furthermore, the recollision, higher order and fluctuation terms will also need to be tracked relatively explicitly and some key structures (which we call the \emph{doubly connected structures}) 
need to be maintained in order to derive the final estimates on these terms. The expansion \eqref{T-exp-intro} is constructed inductively in $n$.  Roughly speaking, with the $T$-expansion \eqref{T-exp-intro} for a given $n$, 
we insert itself into a suitable subset of expressions in \eqref{T-exp-intro} to derive the $(n+1)$-th order $T$-expansion. The main technical difficulties are to verify the sum zero properties for the $\selfs$ $\Sele_l$ order by order, and to maintain the doubly connected structures for all the other expressions so that we can estimate them. 
We want to point out that the typical sizes of $\Theta_{xy}$ and $G_{xy}$ are of order $B_{xy}$ and \smash{$B_{xy}^{1/2}$}, respectively. Moreover, the row sums of $B_{xy}$ are bounded by $L^2/W^2$, while the row sums of $B_{xy}^{3/2}$ are bounded by $\OO(W^{-d/2}$). The 
doubly connected structures defined in Definition \ref{def 2net} below ensure that in each sum, we have at least a product of a $\Theta$ factor and a $G$ factor, so that the sum can be bounded independently of $L$.

 
The proof of the main results in this paper and \cite{PartII_high} can be roughly divided into the following three parts:
(i) construction of the $T$-expansion, (ii)  proof of the sum zero properties for the $\selfs$,  
(iii) proof of Theorems \ref{comp_delocal}, \ref{main thm0} and \ref{main thm2} using the $T$-expansion. 
In this paper, we will complete  (ii) and (iii), while  (i) and some estimates used in (iii) will be proved in the second paper of this series \cite{PartII_high}. 
We stress that our strategy to construct the $T$-expansion is not  a straightforward extension of the one used  in \cite{EKY_Average, PartIII}. In terms of the terminology to be  introduced in this paper, the expansions in \cite{EKY_Average, PartIII} are  \emph{local expansions}. The  construction of the full $T$-expansion will require the much more sophisticated \emph{global expansions}, which will be explained in  
 Section \ref{subsec global} and Section \ref{sec strategy}.

 
 \medskip

The rest of this paper is organized as follows. 
In Section \ref{sec outline}, we introduce the graphical tools and use them to define the core concepts of this paper---the $T$-expansion and $\selfs$. In Section \ref{sec_basiclocal}, we introduce the basic graph operations that are used to construct the $T$-expansion. In Section \ref{sec lower_order}, we give some examples of how to use the basic graph operations to obtain some lower order $T$-expansions. In Section \ref{sec pfmain}, we give the proof of Theorem \ref{main thm}, a slightly weaker version of Theorem \ref{main thm0},  based on some lemmas that will be proved in Sections \ref{sec double}--\ref{sec infspace} and the second paper of this series \cite{PartII_high}. We will also discuss the restriction $d \ge 8$ after the continuity estimate, Lemma \ref{lem: ini bound}. 
In Section \ref{sec double}, we introduce the doubly connected structures of the graphs. In Section \ref{sec infspace}, we study the infinite space limits of $\selfs$. Finally, the proofs of Theorem \ref{comp_delocal}, Theorem \ref{main thm0}, Theorem \ref{main thm2} and Corollary \ref{main_cor} will be presented in Section \ref{sec pf0}. In Section \ref{sec strategy}, we discuss some key new ideas in \cite{PartII_high} that are used to prove the relevant lemmas in Section \ref{sec pfmain}.   

\medskip
\noindent{\bf Acknowledgements.} J.Y. would like to thank Xinyi Cui and Xuyang Tian for helpful discussions.

\section{$T$-expansion and $\selfs$}\label{sec outline}



The major part of this paper is devoted to proving Theorem \ref{main thm0}. We will mainly focus on proving the following slightly weaker form of Theorem \ref{main thm0}, which assumes a stronger compactly supported condition on $\psi$. The reason is that in the setting of Theorem \ref{main thm}, the sum zero property \eqref{weaz} can be stated in a cleaner form. In Section \ref{sec pf0}, we will show how to adapt the proof for Theorem \ref{main thm} to the proof of Theorem \ref{main thm0}. 

\begin{theorem}\label{main thm}
Under the assumptions of Theorem \ref{main thm0}, we assume in addition that $\psi$ in Definition \ref{var profile} is a compactly supported smooth function. Then for any small constants $\e,\tau>0$ and large constant $D>0$, the estimate \eqref{locallaw} holds. 
\end{theorem}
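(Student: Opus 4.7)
My plan is to prove the local law for $G_{xy}-m\delta_{xy}$ by first establishing a pointwise bound on the $T$-matrix $T_{xy}$ from \eqref{defGT}, and then transferring this to $G$ using the $G$--$T$ comparison (Lemma \ref{lem G<T}). The principal tool is the high-order $T$-expansion sketched in \eqref{T-exp-intro}: for a fixed large integer $n$ (chosen depending on $d$, $\e$ and $\tau$),
\begin{equation*}
T_{xy} = |m|^{2}\Theta^{(n)}_{xy} + (\text{recollision}) + (\text{higher order}) + (\text{fluctuation}) + (\text{negligible}),
\end{equation*}
where $\Theta^{(n)}$ is the renormalized diffusive propagator built from self-energies $\Sele_4,\dots,\Sele_n$. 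Granting the bound \eqref{Self_theta}, the leading term already has the desired size $L^{\tau}B_{xy}$, which is exactly the target estimate on $T_{xy}$.

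The overall scheme is a bootstrap in $\eta$. At the largest scale $\eta\sim 1$ the averaged semicircle estimate \eqref{intro_semi} immediately gives the required bound. I would then decrease $\eta$ along a fine grid down to the target $W^{2}/L^{2-\e}$; Lipschitz continuity of $G(z)$ in $\eta$ (with polynomial-in-$L$ Lipschitz constant), combined with a union bound over the grid, reduces the statement uniform in $(E,\eta)$ to a single inductive step. The inductive step takes an a priori bound $|G_{xy}-m\delta_{xy}|^{2}\le W^{\tau/2}B_{xy}$ at scale $\eta$, plugs it into the $T$-expansion, and outputs $|G_{xy}-m\delta_{xy}|^{2}\le W^{\tau}B_{xy}$ at a slightly smaller scale $\eta'$. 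The bulk restriction $E\in(-2+\kappa,2-\kappa)$ keeps $|m(z)|$ bounded away from $1$, which is essential for the convergence of the geometric series \eqref{RW-rep}.

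The heart of each inductive step is proving $|\Theta^{(n)}_{xy}|\le L^{\tau}B_{xy}$. Expanding $\Theta^{(n)}$ as a geometric series in $\wtSdelta^{(n)}$, every insertion $\Theta\Sele_l$ must preserve the $B_{xy}$-type decay in spite of $\sum_y \Theta_{xy}\sim \eta^{-1}$ being enormous. This is where the sum-zero property \eqref{3rd_property0} of $\Sele_l$ enters: the translation invariance and symmetry \eqref{two_properties0} allow a double summation by parts in $\al$, replacing $\sum_\al \Theta_{x\al}(\Sele_l)_{\al y}$ by $\sum_\al \partial^{2}_\al \Theta_{x\al}$ times a weighted second moment of $\Sele_l$, and the decay $|\partial^{2}_\al\Theta_{x\al}|\lesssim (\|x-\al\|_L+W)^{-d+\tau}$ together with the pointwise bound \eqref{4th_property0} yields the estimate \eqref{label_Theta}. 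The finite-volume deviation from an exact sum-zero contributes only $\OO(\eta)$, which compensates exactly the $\eta^{-1}$ factor. The recollision and higher-order graphs carry additional powers of $W^{-d/2}$ from index coincidences and are controlled using the doubly connected structure (Definition \ref{def 2net}) together with moment bounds of the form $\sum_y B_{xy}^{3/2}=\OO(W^{-d/2})$; the fluctuation terms of the form $\sum_x Q_x(\cdot)$ are handled by a high-moment fluctuation-averaging estimate extending the arguments of \cite{EYY bernoulli, EKY_Average, PartIII}.

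I expect two principal obstacles. The first, and most delicate, is verifying the sum-zero property \eqref{3rd_property0} for each self-energy $\Sele_l$ order by order. My approach is to compare $\Sele_l$ to its infinite-volume limit (Section \ref{sec infspace}), where the exact sum-zero \eqref{pseduo_sum_zero} should hold by translation invariance of the free propagator; the finite-volume deviation should then supply precisely the $\eta$ factor appearing in \eqref{3rd_property0}. The second obstacle is organizational: every graph produced by the inductive expansion must retain a doubly connected structure, otherwise the summations become $L$-divergent. Finally, the dimensional restriction $d\ge 8$ enters through power counting---closing the bootstrap requires the $W^{-d/2}$ gain from the recollision and $\cal G^{(M)}$-type terms to dominate the $\eta^{-1}$ factors produced by summing $\Theta$, and this only succeeds once $d$ is sufficiently large for the exponents on $B_{xy}$-weighted sums to have the correct sign.
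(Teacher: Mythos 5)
Your overall architecture---bootstrap in $\eta$ from $\eta\sim 1$ down to $W^2/L^{2-\e}$, the $T$-expansion with renormalized propagator $\Theta^{(n)}$, the $G$--$T$ comparison Lemma \ref{lem G<T}, summation by parts exploiting \eqref{3rd_property0}, the doubly connected structure, and the power counting that forces $d\ge 8$---matches the paper's five-step strategy of Sections \ref{sec_main_struct}--\ref{sec expT}. But you have a genuine gap at precisely the step you yourself flag as most delicate: the sum-zero property for the self-energies $\Sele_l$.

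You assert that in the infinite-volume limit the exact sum-zero identity \eqref{weaz} ``should hold by translation invariance of the free propagator.'' This is not so. Translation invariance and the symmetry \eqref{two_properties0V} make $\Seleinf_l$ a symmetric convolution kernel on $\Z^d$, but they say nothing about whether its Fourier transform vanishes at $p=0$, which is exactly what \eqref{weaz} asserts. In fact, Lemma \ref{lem FT0} only reduces the row sum to $\sum_x(\Seleinf_n)_{0x}=W^{-(n-2)d/2}\Sint_n(m(E),\psi)$ for a $W$-independent constant $\Sint_n$; nothing structural makes $\Sint_n$ vanish. The paper closes this by a self-consistency argument (Lemma \ref{lem maincancel}): it takes the $n$-th order $\incomp$ with $\fb_1=\fb_2$, sums both sides over $\fa,\fb$, applies Ward's identity \eqref{eq_Ward} to $\sum_{\fa,\fb}\E T_{\fa\fb}$, evaluates at a carefully chosen $(L_n,\eta_n)$ satisfying \eqref{Lcondition}, and invokes the local law at that system size---available because it only requires the $(n-1)$-th order $T$-expansion---to show that all large contributions cancel exactly, leaving $|\sum_\fa(\Sele_n)_{0\fa}|\le W^{-(n-2)d/2}\cdot W^{-c}$. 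Comparing this a priori smallness with the $W$-independence of $\Sint_n$ forces $\Sint_n=0$. Note this makes the five-step proof a genuine double induction in $(n,\eta)$: the sum-zero property at order $n$ cannot be obtained without first proving the local law at the specific scale $L_n$, which in turn needs the order-$(n-1)$ expansion. Your proposal, by treating the infinite-volume sum-zero as a priori given, would never close this loop, and the $\eta^{-1}$ factor in $\sum_\al\Theta_{x\al}$ would remain uncontrolled at the first order where $\Sele_l\ne 0$.
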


In this section, we introduce the following three key tools for the  proof of Theorem \ref{main thm}: $\selfs$ in Definition \ref{collection elements}, the $T$-expansion in Definition \ref{defn genuni}, and the $\incomp$ 
in Definition \ref{defn incompgenuni}. With these tools, we will give an outline of the proof of Theorem \ref{main thm} in Section \ref{sec pfmain}. 
Before stating the $T$-expansion, we introduce two deterministic matrices 
\be\label{Thetapm2}
S^+(z):=\frac{m^2(z) S}{1-m^2(z) S}, \quad S^-(z):=\overline S^+(z),  
\ee
which satisfy the following estimate \eqref{S+xy}. For simplicity, throughout the rest of this paper, we abbreviate
\be\label{Japanesebracket} |x-y|\equiv \|x-y\|_L,\quad \langle x-y \rangle \equiv \|x-y\|_L + W.\ee

\begin{lemma} \label{lem deter}
Suppose Assumptions \ref{assmH} and \ref{var profile} hold, and $z=E+\ii \eta$ with $E\in (-2+\kappa,2-\kappa)$ for a constant $\kappa>0$. 
Then for any constants $\tau, D>0$, we have that
\be\label{S+xy}|S^\pm_{xy}(z)| \lesssim    W^{-d}\mathbf 1_{|x-y|\le W^{1+\tau}} +  \langle x-y\rangle^{-D} . \ee 
\end{lemma}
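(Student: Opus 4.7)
\medskip

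\noindent\textbf{Proof plan for Lemma \ref{lem deter}.}

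The plan is to diagonalize $S$ by Fourier transform on the torus $\Z_L^d$ and then to analyze the resulting multiplier. Since $s_{xy}=f_{W,L}([x-y]_L)$, the matrix $S$ is a convolution operator on $\Z_L^d$, so we have
\[
S^+_{xy} \;=\; \frac{1}{L^d}\sum_{p\in \mathbb T_L^d} \frac{m^2(z)\,\wh S(p)}{1-m^2(z)\,\wh S(p)}\, e^{\ii p\cdot (x-y)},
\qquad \wh S(p):=\sum_{x\in \Z_L^d} s_{0x}\,e^{-\ii p\cdot x}.
\]
By \eqref{choicef} and Poisson summation, together with \eqref{bandcw1}, the spectral symbol of $S$ is $\wh S(p)=\psi(Wp)+\OO(W^{-D})$ for any $D>0$; in particular $\wh S(p)$ is real and depends on $p$ essentially through the rescaled variable $Wp$, which will be the source of the desired $W$-dependent decay.

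The core step is a uniform lower bound
\begin{equation}\label{pf_key_lb}
|1-m^2(z)\wh S(p)| \;\geq\; c(\kappa)\;>\;0, \qquad \forall\, p\in\mathbb T_L^d,\ E\in(-2+\kappa,2-\kappa),\ \eta\geq 0.
\end{equation}
Its proof splits into two regimes. First, a direct computation from $m^2+zm+1=0$ gives $|1-m^2(E)|=\sqrt{4-E^2}\geq \sqrt{3\kappa}$ in the bulk, and by continuity the same bound (up to a factor $1/2$) persists for small $\eta>0$. Next, if $\wh S(p)$ is bounded away from $1$ in absolute value, then $|m^2 \wh S(p)|\leq (1-c_\psi/2)\cdot 1<1$ and \eqref{pf_key_lb} is immediate; if instead $\wh S(p)$ is close to $+1$, then by Assumption \ref{var profile}(ii) we must have $|Wp|$ small, and writing $\wh S(p)=1-\delta$ we get $1-m^2\wh S(p)=(1-m^2)+m^2\delta$, whose modulus exceeds $\sqrt{3\kappa}-\delta$. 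The remaining sub-case, $\wh S(p)$ close to $-1$, is the main obstacle: one needs to rule out the possibility that $\psi$ attains values near $-1$ on $\mathbb T_L^d$; I expect this to follow from the combination of Assumption \ref{var profile}(ii) (which forces $\psi$ to be strictly less than $1$ away from the origin and to fall off to $0$ by the Schwartz property) together with the implicit positivity of $f_{W,L}$, or equivalently from a slight strengthening of the hypotheses on $\psi$ that is standard in this setting.

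Granted \eqref{pf_key_lb}, set $\Phi_z(q):=m^2(z)\psi(q)\big/(1-m^2(z)\psi(q))$, a Schwartz function of $q\in\R^d$ by the chain rule and the uniform lower bound. Then $\wh S^+(p)=\Phi_z(Wp)+\OO(W^{-D})$, and the inverse Fourier transform on $\Z_L^d$ (approximated by the one on $\Z^d$ via Poisson summation, with error $\OO(\langle x-y\rangle^{-D})$) yields
\[
S^+_{xy} \;=\; W^{-d}\,\check\Phi_z\!\left(\tfrac{x-y}{W}\right) \;+\; \OO_D\!\big(\langle x-y\rangle^{-D}\big).
\]
For $|x-y|\leq W^{1+\tau}$ the trivial bound $\|\check\Phi_z\|_\infty=\OO(1)$ gives $|S^+_{xy}|\lesssim W^{-d}$. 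For $|x-y|>W^{1+\tau}$, integration by parts in $p$ gains a factor of $W/|x-y|\leq W^{-\tau}$ per step, which after $k$ steps beats any prescribed polynomial decay in $\langle x-y\rangle$. Combining the two cases produces the stated bound for $S^+$. Finally, $S^-=\overline{S^+}$ inherits the same bound, completing the argument.
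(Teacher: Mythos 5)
Your proposal is mathematically sound and takes a genuinely different route from the paper. The paper's own proof is a one-line citation to equation (4.21) of \cite{PartII} (and, as used in Appendix B here via \eqref{contractionS+}--\eqref{expS+}, the underlying argument there is spatial: a contraction bound $\|((m^2 S + c_0)/(1+c_0))^2\|_{\ell^\infty\to\ell^\infty}\le 1-c_1$ feeding a Taylor/random-walk expansion of $(1-m^2S)^{-1}$). You instead diagonalize $S$ by Fourier series and prove a uniform lower bound on the symbol $|1-m^2(z)\wh S(p)|$, which is cleaner and more self-contained, while ultimately resting on the same spectral nondegeneracy. Both approaches must confront the same danger point: $E$ near $0$ (so $m^2$ near $-1$) together with $\wh S(p)$ near $-1$.

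Where you hedge --- ``I expect this to follow \ldots or equivalently from a slight strengthening of the hypotheses'' --- the argument in fact closes with no added hypotheses, and it is worth stating why. The assumption that $f_{W,L}>0$ forces the inverse Fourier transform $\check\psi$ to be a nonnegative Schwartz probability density, so $\psi$ is positive-definite. Then $\psi(q)=\int\cos(q\cdot y)\,\check\psi(y)\,\dd y\ge -1$, and equality would require $\cos(q\cdot y)=-1$ on a full-$\check\psi$-measure set, which is impossible since that level set is a union of hyperplanes of Lebesgue measure zero. Hence $\psi(q)>-1$ for every $q$; combined with $\psi(q)\to 0$ as $|q|\to\infty$ (Schwartz) and continuity on compacts, one gets $\inf_q\psi(q)\ge -1+c_\psi'$ for a fixed constant $c_\psi'>0$ depending only on $\psi$. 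This supplies the missing sub-case and makes \eqref{pf_key_lb} hold uniformly in $p$, $E\in(-2+\kappa,2-\kappa)$, and $\eta\ge 0$. With this filled in, the rest of your argument (Schwartz decay of $\check\Phi_z$ for $|x-y|>W^{1+\tau}$, the trivial bound for $|x-y|\le W^{1+\tau}$, Poisson summation error $\OO(\langle x-y\rangle^{-D})$, and $S^-=\ol{S^+}$) is correct.
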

\begin{proof}
The estimate \eqref{S+xy} is a folklore result. A formal proof for the $d=1$ case is given in equation (4.21) of \cite{PartII}. This proof can be extended directly to the general $d$ case. 
\end{proof}

In this paper, we adopt the following convention of stochastic domination \cite{EKY_Average}. 

\begin{definition}[Stochastic domination and high probability event]\label{stoch_domination}
	(i) Let
	\[\xi=\left(\xi^{(W)}(u):W\in\mathbb N, u\in U^{(W)}\right),\hskip 10pt \zeta=\left(\zeta^{(W)}(u):W\in\mathbb N, u\in U^{(W)}\right),\]
	be two families of non-negative random variables, where $U^{(W)}$ is a possibly $W$-dependent parameter set. We say $\xi$ is stochastically dominated by $\zeta$, uniformly in $u$, if for any fixed (small) $\tau>0$ and (large) $D>0$, 
	\[\mathbb P\bigg[\bigcup_{u\in U^{(W)}}\left\{\xi^{(W)}(u)>W^\tau\zeta^{(W)}(u)\right\}\bigg]\le W^{-D}\]
	for large enough $W\ge W_0(\tau, D)$, and we will use the notation $\xi\prec\zeta$. 
	If for some complex family $\xi$ we have $|\xi|\prec\zeta$, then we will also write $\xi \prec \zeta$ or $\xi=\OO_\prec(\zeta)$. 
	
	\vspace{5pt}
	\noindent (ii) As a convention, for two deterministic non-negative quantities $\xi$ and $\zeta$, we will write $\xi\prec\zeta$ if and only if $\xi\le W^\tau \zeta$ for any constant $\tau>0$. 
	
	
	\vspace{5pt}
	\noindent (iii) We say that an event $\Xi$ holds with high probability (w.h.p.) if for any constant $D>0$, $\mathbb P(\Xi)\ge 1- W^{-D}$ for large enough $W$. More generally, we say that an event $\Omega$ holds $w.h.p.$ in $\Xi$ if for any constant $D>0$,
	$\P( \Xi\setminus \Omega)\le W^{-D}$ for large enough $W$.
\end{definition}

\subsection{Second order $T$-expansion} \label{sec Thetaexp} 
 We generalize  the $T$-variable in \eqref{defGT} to the following $T$-variables with three subscripts: 
 \be\label{general_T}
T_{x,yy'}:=|m|^2\sum_\al s_{x\al}G_{\al y}\overline G_{\al y'},\quad \text{and}\quad T_{yy',x}:=|m|^2\sum_\al G_{ y\al }\overline G_{y' \al}s_{\al x}. 
\ee
By definition,  the $T$-variable in \eqref{defGT} can be written as $T_{xy}\equiv T_{x,yy}$.
Our $T$-expansion will be formulated in terms of these generalized $T$-variables. In this subsection, we define the second order $T$-expansion of $T_{x,yy'}$ using the following $\Theta$ expansion, which is derived from Gaussian integration by parts. The expansion of $T_{yy',x}$ can be obtained by considering the transposition of $T_{x,yy}$.  

  \begin{lemma}[$\Theta$-expansion]   \label{expandtheta}
In the setting of Theorem \ref{comp_delocal}, consider the expression $|m|^2\sum_\al s_{x\al}G_{\al y}  \overline G_{\al y' }  f(G)$, where  
$f$ is a differentiable function of $G$. 
Then we have the identity 
\begin{align}
  |m|^2 \sum_{\al}s_{x\al}G_{\al y}  \overline G_{\al y'}  f(G) = m  \Theta_{xy}   \overline G_{yy'} f(G)  +  m \sum_{\al,\beta} \Theta_{x\al} s_{\al\beta}   (G_{\beta \beta}-m) G_{\al y} \overline G_{\al y'} f(G)\nonumber \\
 +m \sum_{\al,\beta} \Theta_{x\al}  s_{\al\beta} ( \overline G_{\al\al} -\overline m)G_{\beta y}\overline G_{\beta y'}  f(G)- m\sum_{\al,\beta}  \Theta_{x\al} s_{\al\beta}  G_{\beta y} \overline G_{\al y'} \partial_{ h_{\beta \al}}f(G) +\cal Q_\Theta ,  \label{Otheta}
 \end{align} 
 where
 \begin{align*}
	\cal Q_\Theta&:=  \sum_{\al}\Theta_{x\al} Q_\al \left[G_{\al y} \overline G_{\al y'} f(G)\right]- m  \Theta_{xy}  Q_y\left[  \overline G_{yy'} f(G)\right] - \sum_{\al,\beta} m \Theta_{x\al} s_{\al\beta}  Q_\al\left[   (G_{\beta \beta}-m) G_{\al y} \overline G_{\al y'} f(G)\right]  \\
	& - \sum_{\al,\beta} m\Theta_{x\al}  s_{\al\beta}  Q_\al\left[    \overline G_{\al\al} G_{\beta y}\overline G_{\beta y'}  f(G)\right] + \sum_{\al,\beta} m \Theta_{x\al} s_{\al\beta} Q_\al\left[  G_{\beta y} \overline G_{\al y'} \partial_{ h_{\beta\al}}f(G)\right].
\end{align*}
\end{lemma}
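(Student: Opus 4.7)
Denote the LHS of \eqref{Otheta} by $A_x$ and view $A$ as a vector in the $x$-index. The plan is to extract a linear equation of the form $A = |m|^2 SA + R$ for a suitable remainder $R$, then invert it as $A = (I+\Theta)R$ using $(I-|m|^2 S)(I+\Theta) = I$, a consequence of $\Theta = |m|^2 S(I-|m|^2 S)^{-1}$ which in particular gives $(I+\Theta)|m|^2 S = \Theta$.

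First I would decompose $A_x$ via $1 = P_\al + Q_\al$ applied to $G_{\al y}\overline G_{\al y'}f(G)$ inside the sum; the $Q_\al$-piece is set aside and will become the first summand of $\cal Q_\Theta$ after inversion. Inside the $P_\al$-piece, expand the first $G_{\al y}$ using the resolvent identity $G_{\al y} = m\delta_{\al y} - m^2 G_{\al y} - m\sum_\beta h_{\al\beta}G_{\beta y}$, which is a rewriting of $(1+m^2)G_{\al y} = m\delta_{\al y} - m\sum_\beta h_{\al\beta}G_{\beta y}$ that follows from $(H-z)G = I$ and $1+m^2 = -zm$. Handle the $h_{\al\beta}$-term by the complex Gaussian integration-by-parts identity $P_\al[h_{\al\beta}F] = s_{\al\beta}P_\al[\partial_{h_{\beta\al}}F]$ applied to $F = G_{\beta y}\overline G_{\al y'}f(G)$; the Wirtinger rules $\partial_{h_{\beta\al}}G_{ij} = -G_{i\beta}G_{\al j}$ and $\partial_{h_{\beta\al}}\overline G_{ij} = -\overline G_{i\al}\overline G_{\beta j}$ produce three contributions, proportional to $G_{\beta\beta}G_{\al y}\overline G_{\al y'}f$, $\overline G_{\al\al}G_{\beta y}\overline G_{\beta y'}f$, and $G_{\beta y}\overline G_{\al y'}\partial_{h_{\beta\al}}f$.

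Next I would split $G_{\beta\beta}=m+(G_{\beta\beta}-m)$ and $\overline G_{\al\al}=\overline m+(\overline G_{\al\al}-\overline m)$ and use $\sum_\beta s_{\al\beta}=1$. The $G_{\beta\beta}\to m$ piece creates $m^2 P_\al[G_{\al y}\overline G_{\al y'}f]$, which cancels exactly the $-m^2 P_\al[G_{\al y}\overline G_{\al y'}f]$ coming from the resolvent substitution. The $\overline G_{\al\al}\to\overline m$ piece, after being summed against $|m|^2 s_{x\al}$, yields the combination $|m|^2\sum_\al s_{x\al}P_\al[A_\al]=(|m|^2 SA)_x - |m|^2\sum_\al s_{x\al}Q_\al[A_\al]$, which is precisely the mechanism by which the operator $|m|^2 S$ enters. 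Collecting everything gives $A_x = (|m|^2 SA)_x + R_x$, where $R_x$ is built from six pieces, each carrying prefactor $|m|^2 s_{x\al}$ or $|m|^2 m s_{x\al}s_{\al\beta}$: the initial $Q_\al$-residue, the source $|m|^2 m s_{xy}P_y[\overline G_{yy'}f]$ from the $m\delta_{\al y}$ term, the $(G_{\beta\beta}-m)$-piece, the residue $-|m|^2\sum_\al s_{x\al}Q_\al[A_\al]$, the still un-split $\overline G_{\al\al}$-piece, and the $\partial_{h_{\beta\al}}f$-piece.

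Finally, I would apply $(I+\Theta)$ to both sides. Each prefactor $|m|^2 s_{x\al}$ becomes $\Theta_{x\al}$ and the isolated $|m|^2 m s_{xy}$ becomes $m\Theta_{xy}$. Inside each surviving $P_\al$-residue, write $P_\al = 1-Q_\al$: the $1$-parts yield the four deterministic main terms of \eqref{Otheta}, and the $Q_\al$-parts form $\cal Q_\Theta$. The one subtle step is the interaction between the residue $-|m|^2\sum_\al s_{x\al}Q_\al[A_\al]$ and the $Q_\al$-subpart of the $\overline G_{\al\al}$-term after inversion: the former becomes $-|m|^2\sum_{\al,\beta}\Theta_{x\al}s_{\al\beta}Q_\al[G_{\beta y}\overline G_{\beta y'}f]$, and it cancels the $\overline G_{\al\al}\to\overline m$ subpiece of the latter, leaving the clean $-m\sum_{\al,\beta}\Theta_{x\al}s_{\al\beta}Q_\al[\overline G_{\al\al}G_{\beta y}\overline G_{\beta y'}f]$ as the fourth summand of $\cal Q_\Theta$. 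This cancellation is the main obstacle: the $\overline G_{\al\al}\to\overline m$ split must be performed on the deterministic piece in order to expose the $(|m|^2 SA)$ structure that makes the inversion possible, while the $\overline G_{\al\al}$ inside the $Q_\al$-part must be kept intact so that the residue of the first split can match and remove the unwanted fragment of the second; once this accounting is set up, the remainder of the proof is a routine application of $\sum_\beta s_{\al\beta}=1$, $P_\al+Q_\al=1$, and $(I+\Theta)|m|^2 S = \Theta$.
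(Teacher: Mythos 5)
Your proof is correct, and it is essentially the same calculation as the paper's, but reorganized in a way worth noting. The paper performs the inversion at the very first step by substituting $|m|^2 s_{x\al} = \Theta_{x\al} - |m|^2(\Theta S)_{x\al}$ into $|m|^2\sum_\al s_{x\al}P_\al[\,\cdot\,]$, then expands; the extra $|m|^2(\Theta S)_{x\al}$ piece is absorbed at the end by relabeling $\sum_\al |m|^2(\Theta S)_{x\al}P_\al[G_{\al y}\overline G_{\al y'}f]$ against $\sum_{\al,\beta}|m|^2\Theta_{x\al}s_{\al\beta}P_\al[G_{\beta y}\overline G_{\beta y'}f]$, whose deterministic parts cancel and whose $Q$-parts survive. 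You instead first close a self-consistent equation $A = |m|^2 SA + R$ and then apply $(I+\Theta)=(I-|m|^2 S)^{-1}$ at the very end, identifying the factor $|m|^2\sum_\beta s_{\al\beta}G_{\beta y}\overline G_{\beta y'}f$ produced by the $\overline G_{\al\al}\to\overline m$ split as $A_\al$ itself. The two routes execute the same algebra in opposite order; yours makes the Schwinger--Dyson structure and the origin of $\Theta$ more transparent, at the price of having to track the residue $-|m|^2\sum_\al s_{x\al}Q_\al[A_\al]$ explicitly and match it against the $\overline m$-subpiece of the unsplit $\overline G_{\al\al}$-term, whereas the paper's version packages that same cancellation into the single relabeling identity. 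You have correctly identified this matching as the subtle step and handled it correctly: after inversion the residue becomes $-|m|^2\sum_{\al,\beta}\Theta_{x\al}s_{\al\beta}Q_\al[G_{\beta y}\overline G_{\beta y'}f]$, which cancels the $+|m|^2\sum_{\al,\beta}\Theta_{x\al}s_{\al\beta}Q_\al[G_{\beta y}\overline G_{\beta y'}f]$ arising from splitting $\overline G_{\al\al}=\overline m + (\overline G_{\al\al}-\overline m)$ inside $-m\sum_{\al,\beta}\Theta_{x\al}s_{\al\beta}Q_\al[(\overline G_{\al\al}-\overline m)G_{\beta y}\overline G_{\beta y'}f]$, leaving exactly the fourth summand of $\cal Q_\Theta$.
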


\begin{proof}
With \eqref{GmH} and the identity $|m|^2 S= \Theta - |m|^2  \Theta S$, we can write that
\begin{align}
 & |m|^2 \sum_{\al}s_{x\al} P_\al\left[G_{\al y}  \overline G_{\al y'}  f(G)\right]  =   \sum_\al \left[ \Theta_{x\al} - |m|^2 (\Theta S)_{x\al}\right] P_\al\left[G_{\al y} \overline G_{\al y'} f(G)\right] \label{GGfH}\\
& = - \sum_\al  |m|^2 (\Theta S)_{x\al} P_\al\left[G_{\al y} \overline G_{\al y'} f(G)\right] +\sum_\al  \Theta_{x\al} P_\al\left[ (m\delta_{\al y} -m^2G_{\al y} - m (HG)_{\al y}) \overline G_{\al y'} f(G)\right].\nonumber
\end{align}
For the $ HG $ term, using Gaussian integration by parts we obtain that
\begin{align*}
   P_\al\bigg[ -m \sum_\beta h_{\al \beta}G_{\beta y} \overline G_{\al y'} f(G)\bigg] &= P_\al\bigg[ m \sum_\beta  s_{\al \beta}G_{\beta \beta} G_{\al y} \overline G_{\al y'} f(G) + m \sum_\beta s_{\al\beta} G_{\beta y} \overline G_{\al\al } \overline G_{\beta y'}  f(G)\bigg] \\
  &+   P_\al\bigg[ - m \sum_\beta s_{\al\beta}G_{\beta y} \overline G_{\al y'} \partial_{ h_{ \beta \al}}f(G)\bigg].
\end{align*}
Using this equation, we get that
\begin{align*}
& \sum_\al  \Theta_{x\al} P_\al\left[ (m\delta_{\al y} -m^2G_{\al y} - m (HG)_{\al y}) \overline G_{\al y'} f(G)\right]\\
& = m  \Theta_{xy}  P_y\bigg[  \overline G_{yy'} f(G)\bigg] + \sum_\al \Theta_{x\al}  P_\al\bigg[ m \sum_\beta  s_{\al\beta} (G_{\beta \beta}-m) G_{\al y} \overline G_{\al y'} f(G)\bigg] \\
&+ \sum_\al \Theta_{x\al}  P_\al\bigg[ m ( \overline G_{\al\al} -\overline m)\sum_\beta s_{\al\beta} G_{\beta y}\overline G_{\beta y'}  f(G)\bigg] +  \sum_\al \Theta_{x\al}  P_\al\bigg[ |m|^2\sum_\beta s_{\al\beta} G_{\beta y}\overline G_{\beta y'}  f(G)\bigg] \\
&- \sum_\al \Theta_{x\al}  P_\al\bigg[  m \sum_\beta s_{\al\beta}G_{\beta y} \overline G_{\al y'} \partial_{ h_{\beta\al}}f(G)\bigg].
\end{align*}
Plugging it into \eqref{GGfH}, writing $P_\al = 1-Q_\al$, and using the identity
\begin{align*}  
&\sum_{\al,\beta} |m|^2 \Theta_{x\al} s_{\al\beta}   P_\al\left[ G_{\beta y}\overline G_{\beta y'}  f(G)\right]  -  \sum_\al  |m|^2 (\Theta S)_{x\al} P_\al\left[G_{\al y} \overline G_{\al y'} f(G)\right] \\
=& \sum_\al  |m|^2 (\Theta S)_{x\al} Q_\al\left[G_{\al y} \overline G_{\al y'} f(G)\right] - \sum_{\al,\beta} |m|^2 \Theta_{x\al} s_{\al\beta}   Q_\al\left[ G_{\beta y}\overline G_{\beta y'}  f(G)\right]  ,
\end{align*} 
we can obtain \eqref{Otheta} after some simple calculations.
\end{proof}

Using the $\Theta$-expansion \eqref{Otheta}, we obtain the following second order $T$-expansion. 
  
\begin{lemma}\label{2nd3rd T}
Under the assumptions of Theorem \ref{comp_delocal}, we have that for any $\fa, \fb_1,\fb_2 \in \Z_L^d$, 
\begin{align}\label{seconduniversal}
& T_{\fa,\fb_1\fb_2}= 
m \Theta_{\fa\fb_1} \overline G_{\fb_1\fb_2}    + (\AT^{(>2)})_{\fa,\fb_1\fb_2} + (\QT^{(2)})_{\fa,\fb_1\fb_2}  ,
 \end{align}
 where 
 \begin{align}
  (\AT^{(>2)})_{\fa,\fb_1\fb_2}& :  =  m\sum_{x,y} \Theta_{\fa x} s_{xy} (G_{yy}-m) G_{x \fb_1}\overline G_{x \fb_2}  + m \sum_{x,y} \Theta_{\fa x} s_{xy}  ( \overline G_{xx} - \overline m) G_{y \fb_1}\overline G_{y \fb_2}, \label{Aho>2}  \\
    (\QT^{(2)})_{\fa,\fb_1\fb_2}& := \sum_x Q_x\left(\Theta_{\fa x} G_{x \fb_1}   \overline G_{x \fb_2} \right) - m Q_{\fb_1} \left( \Theta_{\fa\fb_1} \overline G_{\fb_1 \fb_2}\right)  \nonumber\\
    & -  m\sum_{x,y} Q_x\left[\Theta_{\fa x} s_{xy} (G_{yy}-m) G_{x \fb_1} \overline G_{x \fb_2} \right] - m \sum_{x,y} Q_x\left[ \Theta_{\fa x} s_{xy}  \overline G_{xx} G_{y\fb_1}\overline G_{y\fb_2} \right] .\label{QT>2}
 \end{align}
\end{lemma}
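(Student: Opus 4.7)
Lemma~\ref{2nd3rd T} is a direct specialization of the $\Theta$-expansion of Lemma~\ref{expandtheta}. My plan is to substitute $(x,y,y') = (\fa,\fb_1,\fb_2)$ and $f(G) \equiv 1$ into the identity \eqref{Otheta}, and then identify each of the resulting groups of terms with the three pieces on the right-hand side of \eqref{seconduniversal}.

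With this choice, the left-hand side of \eqref{Otheta} reduces to $|m|^2 \sum_\alpha s_{\fa\alpha}\, G_{\alpha\fb_1}\overline{G}_{\alpha\fb_2}$, which is precisely $T_{\fa,\fb_1\fb_2}$ by the definition in \eqref{general_T}. On the right-hand side, every occurrence of $\partial_{h_{\beta\alpha}} f(G)$ vanishes because $f$ is constant; this kills the fourth summand of \eqref{Otheta} as well as the last summand of $\cal Q_\Theta$ in Lemma~\ref{expandtheta}.

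It remains to group the surviving terms. The leading contribution $m\,\Theta_{xy}\overline{G}_{yy'} f(G)$ becomes $m\,\Theta_{\fa\fb_1}\overline{G}_{\fb_1\fb_2}$, matching the first term of \eqref{seconduniversal}. The two diagonal-correction terms involving $(G_{\beta\beta}-m)$ and $(\overline G_{\alpha\alpha}-\overline m)$, after relabeling $\alpha\to x$ and $\beta\to y$, are precisely the two summands in the definition \eqref{Aho>2} of $\AT^{(>2)}$. Finally, the four surviving $Q_\alpha[\cdots]$ terms in $\cal Q_\Theta$, after the same relabeling and pulling the deterministic factors $\Theta_{\fa x}$ and $s_{xy}$ inside $Q_x[\cdot]$ via linearity of $Q_x$, are precisely the four summands in the definition \eqref{QT>2} of $\QT^{(2)}$.

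No real obstacle arises; the proof is just the bookkeeping check that the decomposition provided by Lemma~\ref{expandtheta} reorganizes into the three claimed groups. The only point requiring minor care is to use the linearity of $Q_\alpha$, which commutes with multiplication by $H^{(\alpha)}$-measurable quantities (in particular, with all deterministic factors), to move $\Theta_{\fa x}$ and $s_{xy}$ inside the brackets so that the expression matches \eqref{QT>2} verbatim.
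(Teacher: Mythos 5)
Your proof is correct and is exactly the paper's argument: Lemma 2.3 is obtained by setting $f(G)\equiv 1$ in the $\Theta$-expansion of Lemma~\ref{expandtheta}, renaming $(x,y,y',\al,\beta)\to(\fa,\fb_1,\fb_2,x,y)$, and regrouping the surviving terms into the three pieces $m\Theta_{\fa\fb_1}\overline G_{\fb_1\fb_2}$, $\AT^{(>2)}$, and $\QT^{(2)}$. Your remark that deterministic prefactors such as $\Theta_{\fa x}$ and $s_{xy}$ can be moved in and out of $Q_x[\cdot]$ is the right (and only) bookkeeping subtlety needed to match \eqref{QT>2} literally.
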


 \begin{proof}
 Taking $f(G)\equiv 1$ in Lemma \ref{expandtheta} and replacing $x,y,y'$ by $\fa, \fb_1,\fb_2$, we immediately conclude \eqref{seconduniversal}.
\end{proof}

We  will see in Section \ref{sec lower_order} that the third and fourth order $T$-expansions are already rather lengthy. For even higher order $T$-expansions, the number of terms will grow exponentially (actually there are about $n^{Cn}$ many terms in the $n$-th order $T$-expansion). These terms have complicated structures and we will use graphical notations to represent them.

\subsection{Graphical notations} \label{unisec}

Our goal is to expand the generalized $T$-variable $T_{\fa,\fb_1 \fb_2}$ for $\fa, \fb_1,\fb_2 \in \Z_L^d$. 
We represent these three indices by special vertices
\be\label{rep_abc}\fa\equiv \otimes, \quad \fb_1\equiv \oplus, \quad \fb_2\equiv \ominus,\ee
in the graphs. In other words, we use $\fa,\fb_1,\fb_2$ in expressions, and draw them as $\otimes,\oplus,\ominus$ in the graphs. 
Now we first introduce the atomic graphs, and the concept of subgraphs. 

\begin{definition}[Atomic graphs] \label{def_graph1} 
Given a standard oriented graph with vertices and edges, we assign the following structures and call the resulting graph an atomic graph.  

\begin{itemize}
%
%

\item {\bf Atoms:} We will call the vertices atoms (vs. molecules in Definition \ref{def_poly} below). Each graph has some external atoms and internal atoms. The external atoms represent external indices whose values are fixed, while internal atoms represent summation indices that will be summed over. In particular, each graph in the expansions of $T_{\fa,\fb_1\fb_2}$ has the following 
external atoms: one $\otimes$ atom representing  the $\fa$ index, one $\oplus$ atom representing  the $\fb_1$ index, and one $\ominus$ atom representing the $\fb_2$ index (where some of them can be the same atom). 
By fixing the value of an internal atom, it will become an external atom; by summing over an external atom, it will become an internal atom. 
 
\item {\bf Regular weights:}  A regular weight on the atom $x $ represents a $G_{xx}$ or $\overline G_{xx}$ factor. 
 Each regular weight has a charge, where ``$+$" charge indicates that the weight is a $G$ factor, represented by a blue solid $\Delta$, and ``$-$" charge indicates that the weight is a $\overline G$ factor, represented by a red solid $\Delta$. 
%
%

 \item \noindent{\bf Light weights}:  
Corresponding to the regular weights defined above, we define the light weights representing $G_{xx}-m$ and $\overline G_{xx}-\overline m$. 
They are drawn as blue or red hollow $\Delta$ in graphs depending on their charges. 

%

\item {\bf Edges:} The edges are divided into the following types. 

\begin{enumerate}

\item{\bf  Solid edges:} A solid edge represents a $G$ factor. More precisely, 
\begin{itemize}
\item each oriented edge from atom $\alpha$ to atom $\beta$ with $+$ charge represents a $G_{\al\beta}$ factor; 
\item each oriented edge from atom $\alpha$ to atom $\beta$ with $-$ charge represents a $\overline G_{\al\beta}$ factor.
\end{itemize}
The plus $G$ edges will be drawn as blue solid edges, while minus $G$ edges will be drawn as red solid edges. In this paper, whenever we say ``$G$ edges", we mean both the plus and minus $G$ edges.

\item {\bf Waved edges:} We have neutral black, positive blue and negative red waved edges:  
\begin{itemize}
\item a neutral waved edge between atoms $x$ and $y$ represents an $s_{xy}$ factor; 

\item a blue waved edge of positive charge between atoms $x$ and $y$ represents a $S^+_{xy}$ factor;

\item a red waved edge of negative charge between atoms $x$ and $y$ represents a $S^-_{xy}$ factor.
\end{itemize}



\item {\bf $\Dashed$ edges:} 
A $\dashed$ edge connecting atoms $x$ and $y$ represents a ${\Theta}_{xy}$ factor;
 we draw it as a double-line edge between atoms $x$ and $y$. 

\item {\bf Dotted edges:} A dotted line connecting atoms $\al$ and $\beta$  represents the factor $\mathbf 1_{\al=\beta}\equiv \delta_{\al\beta}$; a dotted line with a cross ($\times$) represents the factor $\mathbf 1_{\al\ne \beta} \equiv  1-\delta_{\al\beta} $. There is at most one dotted or $\times$-dotted edge between each pair of atoms. 
By definition, a $\times$-dotted edge between the two ending atoms of a $G$ edge  indicates that this $G$ edge is off-diagonal. We also allow for dotted edges between external atoms.   
\end{enumerate}
The orientations of non-solid edges do not matter. The edges between internal atoms are called \emph{internal edges};  the edges with at least one end at an external atom are called \emph{external edges}.

\item{\bf $P$ and $Q$ labels:} Some solid edges and weights may have a label $P_x$ or $Q_x$, where $x$ is an atom in the graph. Moreover, each edge or weight can have at most one $P$ or $Q$ label. 

\item{\bf Coefficients:} There is a coefficient (which is a polynomial of $m$, $m^{-1}$, $(1-m^2)^{-1}$ and their complex conjugates) associated with each graph.  


\end{itemize}

\end{definition}

\begin{definition}[Sugraphs]\label{def_sub}
A graph $\cal G_1$ is said to be a subgraph of $\cal G_2$, denoted by $\cal G_1\subset \cal G_2$, if every graphical component of $\cal G_1$ is also in $\cal G_2$. Moreover, $\cal G_1 $ is a proper subgraph of $\cal G_2$ if  $\cal G_1\subsetneq \cal G_2$. 
Given a subset $\cal S$ of atoms in a graph $\cal G$, the subgraph induced on $\cal S$ 
refers to the subgraph of $\cal G$ with atoms in $\cal S$ as vertices, the edges between these atoms, and the weights on these atoms.  
\end{definition}

  \begin{example}
   As an example, we draw the graphs for $\AT^{(>2)}$ in \eqref{Aho>2}: 
\be  \label{Aho2} 
\parbox[c]{0.3\linewidth}{\includegraphics[width=4cm]{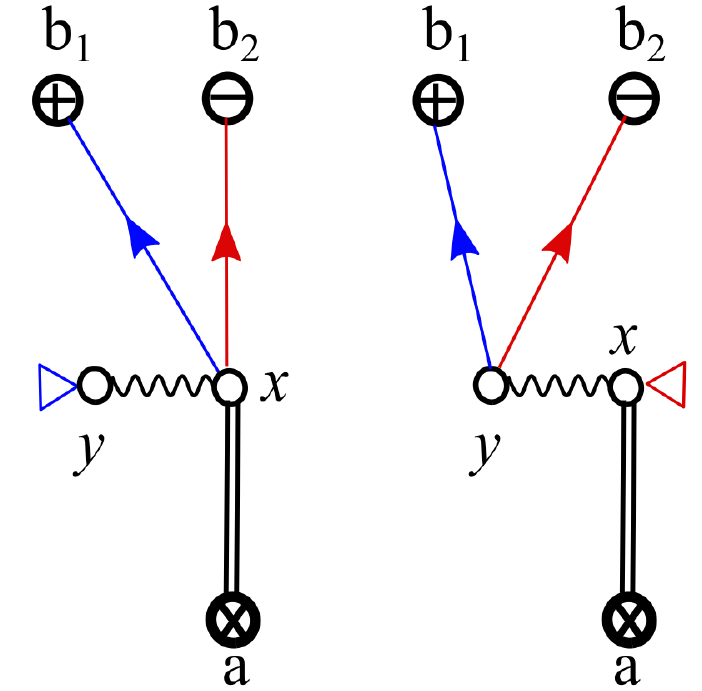}} 
\ee
For conciseness, we do not draw the coefficients of these graphs.
\end{example}

To each graph, we assign a value as follows.
  
 \begin{definition}[Values of graphs]\label{ValG} 
For an atomic graph $\mathcal G$, we define its value, denoted by $\llbracket \cal G\rrbracket$, as an expression obtained as follows. We first take the product of 
all the edges, all the weights and the coefficient of $\cal G$. 
Then for the edges and weights with the same $P_x$ or $Q_x$ label, we group them together and apply $P_x$ or $Q_x$ to them. Finally, we sum over all the internal indices represented by the internal atoms. The values of the external indices are fixed by their given values. 
 For a linear combination of graphs  $\sum_i c_i \cal G_i$, where $\{c_i\}$ is a sequence of coefficients and $\{\cal G_i\}$ is a sequence of graphs, we define  its value by 
 $$ \Big\llbracket\sum_i c_i  \cal G_i\Big\rrbracket=\sum_i c_i \left\llbracket  \cal G_i\right\rrbracket.$$
For simplicity, we will abuse the notation by identifying  a  graph (which is a geometric object) with its value (which is an analytic expression). 
 \end{definition}
\begin{example}  As an example of Definition \ref{ValG}, we write down the value for the following graph:
  \be\nonumber
 \parbox[c]{0.25\linewidth}{\includegraphics[width=\linewidth]{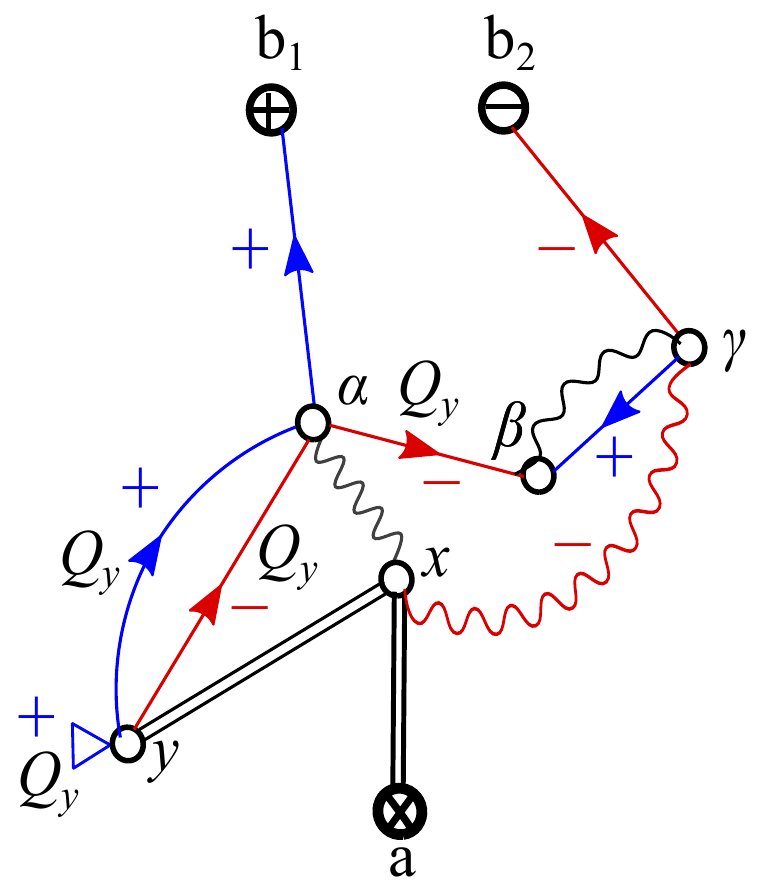}} \quad 
 = \sum_{x,y,\al,\beta,\gamma} \Theta_{\fa x}s_{x\al}S^-_{x\gamma}s_{\beta\gamma}G_{\al\fb_1} G_{\gamma\beta} \overline G_{\gamma\fb_2} \Theta_{xy}Q_y\left[\overline G_{\al\beta}  | G_{y\al }|^2 (G_{yy}-m)\right].
 \ee
\end{example}

Next, we introduce the concept of \emph{regular graphs}, which include (almost) all the graphs appearing in this paper, and a stronger concept of \emph{normal regular graphs}.

 \begin{definition}[Normal regular graphs]  \label{defnlvl0} 
 We say an atomic graph $\cal G$ is {\bf regular} if it satisfies the following properties:
\begin{itemize}
\item[(i)] it is a connected graph that contains at most $\OO(1)$ many atoms and edges;
\item[(ii)] all the internal atoms are connected together through paths of waved and $\dashed$ edges;
\item[(iii)] there are no dotted edges between internal atoms.
\end{itemize}
 Moreover, we say a regular graph is {\bf normal} if it satisfies the following additional property:
\begin{itemize}
\item[(iv)] any pair of atoms $\al$ and $\beta$ in the graph are connected by a $\times$-dotted edge \emph{if and only if} they are connected by a $G$ edge.
\end{itemize}

 \end{definition}

By this definition,  every $G$ edge in a normal regular graph is off-diagonal, while all the diagonal $G$ factors will be represented by weights. There are two reasons for introducing the property (iv): {(1)} in Definition \ref{def scaling}, we need to distinguish between the diagonal and off-diagonal $G$ entries; {(2)} the weight expansion (cf. Definition \ref{Ow-def} below) of diagonal $G$ entries and the edge expansions (cf. Definitions \ref{multi-def}, \ref{GG-def}, \ref{GGbar-def} below) of off-diagonal $G$ entries are very different in nature. 

   \begin{definition} [Scaling order] \label{def scaling}
   Given a normal regular  graph $\cal G$, we define its scaling order as 
\begin{align}
\text{ord}(\cal G):&= \#\{\text{off-diagonal }  G  \text{ edges}\} + \#\{\text{light weights}\} + 2\#\{ \text{waved edges}\}  + 2 \#\{\text{$\dashed$ edges}\} \nonumber\\
&  - 2\left[ \#\{\text{internal atoms}\}- \#\{\text{dotted edges}\} \right]. \label{eq_deforderrandom2}
\end{align}
Here each dotted edge in a normal regular graph means that an internal atom is equal to an external atom, so we lose one free summation index. 
The concept of scaling order can  be also defined for subgraphs. 
  \end{definition} 

The motivation behind this definition is as follows. Consider the Wigner ensemble with $W=L$. 
By \eqref{subpoly}, \eqref{thetaxy} and \eqref{S+xy}, each waved edge is of order $\OO(W^{-d})$ and each $\dashed$ edge is of order $\OO_\prec(W^{-d})$. 
Moreover, if we know that $|G_{xy}-m\delta_{xy}|\prec W^{-d/2}$, then each off-diagonal $G$ edge or light weight is bounded by $\OO_\prec(W^{-d/2})$.  Finally, each summation leads to a factor $W^d$. Hence it is easy to obtain the bound 
$$\llbracket \cal G\rrbracket\prec W^{-\text{ord}(\cal G)\cdot d/2}.$$ 
Later in Lemma \ref{no dot}, 
we will show that this bound holds even if $L\gg W$ as long as the graph satisfies the doubly connected property to be introduced in Section \ref{sec doublenet}. 

In the following proof whenever we say the order of a graph, we are referring to its scaling order. We emphasize that in general the scaling order does not imply the ``order of the graph value" directly. 


\subsection{$\Selfs$}\label{sec self}

The $T$-expansion is defined using a collection of special sums of deterministic graphs, which satisfy some important properties given by Definition \ref{collection elements} below. Following the notations in Feynman diagrams, we call them ``$\selfs$". 
As we explained before, 
the sum zero property \eqref{3rd_property0} of the $\selfs$ is one of the key reasons why we can define the $T$-expansion up to any order. In previous works \cite{delocal,PartIII}, the $T$-expansion can only be performed to third order without using the concept of $\selfs$.

\begin{definition}[$\Selfs$]\label{collection elements}
Under the assumptions of Theorem \ref{main thm}, for a fixed $l\in \N$, let $\Sele_l(z)\equiv  \Selek_{l}^L(z)$ be a deterministic matrix depending on $m(z)$, $S$, $S^\pm(z)$ and $\Theta(z)$ only, and satisfying the following properties. (In this paper, we will often omit the dependence on $L$ in $\Selek_{l}^L$.)


 \begin{itemize}
\item[(i)] For any $x,y \in \Z_L^d$, $  (\Selek_{l})_{xy}$ is a sum of at most $C_l$ many deterministic graphs of scaling order $l$ and with external atoms $x$ and $y$. Here $C_l$ is a large constant depending on $l$. Some graphs, say $\cal G$, in $\Sele_l$ can be diagonal matrices satisfying $  \cal G_{xy}= \cal G_{xx} \delta_{xy}$, i.e. there is a dotted edge between the atoms $x$ and $y$. 

\item[(ii)] $ \Selek_{l} (z)$ satisfies the properties \eqref{two_properties0}--\eqref{3rd_property0}.

\item[(iii)]  For any $x,y \in \Z^d$ and $z_L:= E+\ii\eta_L$ with $\eta_L \in [W^{2}/L^{2-\tau} , L^{-\tau}]$ for a small constant $\tau>0$, we denote the infinite space limit (with $W$ being fixed) of $ (\Selek^L_{l})_{xy}(z_L)$ by $ (\Seleinf_{l})_{xy}(E):=\lim_{L\to \infty} (\Selek^L_{l})_{xy}(z_L)$,  which is independent of $L$ and with  $\eta_\infty = 0$.  
\end{itemize} 
We call $\Selek_l$ the $l$-th order $\self$ ($\Selek_l$ will be unique from our construction) and graphically 
we will use a square, $\square$, between atoms $x$ and $y$ with a label $l$ to represent $(\Sele_l)_{xy}$. 
\end{definition}


We will show that the infinite space limits of the $\selfs$ satisfy the following properties: 
%
\be\label{two_properties0V}
\Seleinf_{l}(x, x+a) = \Seleinf_{l}(0,a), \quad  \Seleinf_{l}(0, a) =\Seleinf_{l} (0,-a), \quad \forall \ x,a\in \Z^d,
\ee 
\be\label{4th_property0V}
\left| (\Seleinf_{l})_{0x}(E) \right| \le W^{-ld/2} \frac{W^{2d-4} }{\langle x\rangle^{2d-4-\tau}}, \quad \forall \ x\in \Z^d, 
\ee
\be\label{3rd_property0 diff}
\left| (\Selek^L_{l})_{0x}(z)-(\Seleinf_{l})_{0x}(E) \right| \le W^{-ld/2} \frac{\eta W^{2d-6} }{\langle x\rangle^{2d-6-\tau}}, \quad \forall \  x\in \Z_L^d \subset \Z^d,\ \eta\in [W^{2}/L^{2-\tau}, L^{-\tau}],
\ee
\begin{equation}\label{weaz}
\sum_{x\in \Z^d} (\Seleinf_{l})_{0x}(E)=0.
\end{equation} 
In \eqref{3rd_property0 diff}, with slight abuse of notation, we identify the torus $\Z_L^d$ in \eqref{ZLd} as a subset of $\Z^d$. The properties \eqref{two_properties0V} and \eqref{4th_property0V} take the same forms as the properties \eqref{two_properties0} and \eqref{4th_property0}. The property \eqref{weaz} is an exact \emph{sum zero property} and is thus stronger than \eqref{3rd_property0}. 
These properties will be proved in Lemma \ref{cancellation property}.

By Definition \ref{def scaling}, the scaling order of a deterministic graph can only be even. Moreover, every nontrivial $\self$ $\Selek_{l}$ used in this paper has scaling order $\ge 4$. Hence we always have  
\be\label{trivial conv}
\Sele_1=\Sele_2=\Sele_3=0, \quad \text{and}\quad \Sele_{2l+1}:=0,\quad l\in \N.
\ee

By property \eqref{two_properties0V}, $\Seleinf_l$ is translationally invariant and symmetric (so are all the deterministic graphs in this paper by Lemma \ref{lem Rsymm}). 
The properties \eqref{4th_property0} and \eqref{4th_property0V} show that the rows of $\Selek_{l}$ or $\Seleinf_{l}$ are absolutely summable, i.e.,  
 for any constant $\tau>0$,
 \be\label{sum S2l}\sum_{x}\left| (\Selek_{l})_{0x}  \right| \le W^{-(l-2)d/2 + \tau},\quad \sum_{x}\left| (\Selek_{l}^\infty)_{0x} \right| \le W^{-(l-2)d/2 + \tau} .
 \ee
The bound \eqref{3rd_property0} is stronger than the first estimate in \eqref{sum S2l} by an extra $\eta$ factor, which, as discussed in Section \ref{sec mainref}, is crucial for our proof.  

The property \eqref{3rd_property0 diff} controls the difference between $(\Selek_{l})_{0x}(z)$ and $(\Seleinf_{l})_{0x}(E)$. 
The property \eqref{3rd_property0} actually 
can be derived from \eqref{4th_property0V}, \eqref{3rd_property0 diff}, and the sum zero property \eqref{weaz} for $\Seleinf_l$.  More precisely, 
\begin{align*}
 \Big|\sum_{x\in \Z_L^d} (\Selek_{l})_{0x}(z)\Big| &=  \Big|\sum_{x\in \Z_L^d} (\Selek_{l})_{0x}(z) - \sum_{x\in \Z^d} (\Seleinf_{l})_{0x}(E)\Big|  \le \sum_{|x|\le L/2} \left| (\Selek_{l})_{0x}(z) -(\Seleinf_{l})_{0x}(E) \right| + \sum_{|x|>L/2}\left|  (\Selek_{l})_{0x}(z) \right| \\
& \le \sum_{|x|\le L/2} W^{-ld/2} \frac{\eta W^{2d-6} }{\langle x\rangle^{2d-6-\tau}} + \sum_{|x|>L/2}W^{-ld/2} \frac{W^{2d-4} }{\langle x\rangle^{2d-4-\tau}} \\
&\lesssim L^\tau \left(\eta+\frac{W^2}{L^2}\right)W^{-(l-2)d/2} \le 2L^\tau \eta W^{-(l-2)d/2},
\end{align*}
where in the first step we used  \eqref{weaz}, in the third step we used \eqref{4th_property0V} and \eqref{3rd_property0 diff}, and in the last step we used $\eta\gg W^2/L^2$. 
 
The $l$-th order $\self$ $\Sele_l$ in this paper is constructed through a specific expansion procedure of the $T$-variables. In general, if a different expansion procedure is used, a different $l$-th order $\self$ may be obtained. Although we expect the $\selfs$ constructed in different procedures to be the same up to negligible errors, this property is not needed in this paper and we will not pursue it. 

\subsection{Definition of the $T$-expansion}\label{sec Texp2}
 
Given $n\in \N$, we will define the $n$-th order $T$-expansion in Definition \ref{defn genuni}, which is an extension of the second order $T$-expansion in \eqref{seconduniversal}. To this end, we first introduce the following two types of graphs. 

\begin{definition}[Recollision graphs and $Q$-graphs]\label{Def_recoll}

(i) We say a graph is a $\oplus$/$\ominus$-recollision graph, if there is at least one dotted edge connecting $\oplus$ or $\ominus$ to an internal atom. In other words, a recollision graph represents an expression where we set at least one summation index to be equal to $\fb_1$ or $\fb_2$.

\medskip
\noindent(ii) We say a graph is a $Q$-graph if all  $G$ edges and $G$ weights  in the graph have the same $Q$ label with a specific atom $x$, i.e., all $Q$ operators are given by the same $Q_x$. 


\end{definition}

We now  define a general $n$-th order $T$-expansion for any fixed $n\in \N$. Besides the properties in Definition \ref{defn genuni}, the graphs in the definition satisfy several additional properties to be stated in Definition \ref{def genuni}. 

\begin{definition} [$n$-th order $T$-expansion]\label{defn genuni}
Fix any $n\in \N$ and let $D>n$ be an arbitrary large constant. For $\fa, \fb_1,\fb_2 \in \Z_L^d$, an $n$-th order $T$-expansion of $T_{\fa,\fb_1\fb_2}$ with $D$-th order error is an expression of the following form: 
\be\label{mlevelTgdef}
\begin{split}
T_{\fa,\fb_1 \fb_2}&= m  \Theta_{\fa \fb_1}\overline G_{\fb_1\fb_2} +m   (\Theta \Sdelta^{(n)}\Theta)_{\fa\fb_1} \overline G_{\fb_1\fb_2} \\
&+ (\PTn)_{\fa,\fb_1 \fb_2} +  (\ATn)_{\fa,\fb_1\fb_2}  + (\QTn)_{\fa,\fb_1\fb_2}  +  (\Err_{n,D})_{\fa,\fb_1\fb_2}.
\end{split}
\ee
The graphs on the right side depend only on $n$, $D$, $m(z)$, $S$, $S^{\pm}(z)$, $\Theta(z)$ and $G(z)$, but do not depend on $W$, $L$ and $d$ explicitly. Moreover, they satisfy the following properties with $C_n$ and $C_D$ denoting large constants depending on $n$ and $D$, respectively.
\begin{enumerate}
\item  The graphs on the right side are normal regular graphs (recall Definition \ref{defnlvl0}) with external atoms $\otimes\equiv \fa$, $\oplus\equiv \fb_1$ and $\ominus\equiv \fb_2$, and with at most $C_D$ many atoms.

\item $\Sdelta^{(n)}$ is a sum of at most $C_n$ many deterministic normal regular graphs. 
We decompose it according to the scaling order as 
\be\label{decompose Sdelta0} \Sdelta^{(n)} = \sum_{k\le n} \Sdeltak.\ee
Moreover, we have a sequence of $\selfs$ $\Sele_{k}$ satisfying Definition \ref{collection elements} and properties \eqref{two_properties0V}--\eqref{weaz} for $4\le k \le n$ such that $\Sdeltak$ can be written into the following form 
\be\label{chain S2k}
\Sdeltak=\Sele_{k} + \sum_{ l =2}^k \sum\limits_{ \mathbf k=(k_1,\cdots, k_l) \in \Omega_{k}^{(l)} } \Sele_{k_1}\Theta  \Sele_{k_2}\Theta \cdots \Theta  \Sele_{k_l} . 
\ee
Here all the deterministic graphs with $l=1$ are included into $\Sele_k$ so that the summation starts with $l=2$. Moreover,  $\Omega_{k}^{(l)} \subset \N^l$ is the subset of vectors $\mathbf k$ satisfying that 
\be\label{omegakl}
4\le k_i \le k-1,\quad \text{ and }\quad  \sum_{i=1}^l  k_i  -2(l-1)=k. 
\ee
The second condition in \eqref{omegakl} guarantees that the subgraph $(\Sele_{k_1}\Theta  \Sele_{k_2}\Theta \cdots \Theta  \Sele_{k_l})_{xy}$ has scaling order $k$.

\item $(\PTn)_{\fa,\fb_1\fb_2}$ is a sum of at most $C_n$ many $\oplus/\ominus$-recollision graphs of scaling order $\le n$ and without any $P/Q$ labels.  Moreover, it can be decomposed as 
 \be\label{decomposeP}(\PTn)_{\fa,\fb_1\fb_2} =\sum_{ k =3}^{ n}(\PTk)_{\fa,\fb_1\fb_2},\ee
 where each $(\PTk)_{\fa,\fb_1\fb_2}$ is a sum of the $\oplus/\ominus$-recollision graphs of scaling order $k$ in $(\PTn)_{\fa,\fb_1\fb_2}$.

\item  $(\ATn)_{\fa,\fb_1\fb_2}$ is a sum of at most $C_D$ many graphs of scaling order $> n$ and without any  $P/Q$ labels.

\item $(\QTn)_{\fa,\fb_1\fb_2} $ is a sum of at most $C_D$ many $Q$-graphs.  Moreover, it can be decomposed as 
\be\label{decomposeQ} (\QTn)_{\fa,\fb_1\fb_2}= \sum_{k=2}^{n} (\QTk)_{\fa,\fb_1\fb_2} + (\cal Q^{(>n)}_T)_{\fa,\fb_1\fb_2},\ee
where  $(\QTk)_{\fa,\fb_1\fb_2}$ is a sum of the scaling order $k$ $Q$-graphs in $ (\QTn)_{\fa,\fb_1\fb_2}$ and $(\cal Q^{(>n)}_T)_{\fa,\fb_1\fb_2}$ is a sum of all the scaling order $>n$ $Q$-graphs in $ (\QTn)_{\fa,\fb_1\fb_2}$.

\item $\Sdeltak$, $\PTk$ and $\QTk$ are independent  of $n$.

\item $(\Err_{n,D})_{\fa,\fb_1\fb_2}$ is a sum of at most $C_D$ many graphs, each of which has scaling order $> D$ and may contain some $P/Q$ labels in it. 

\item In each graph of $(\PTn)_{\fa,\fb_1\fb_2}$, $(\ATn)_{\fa,\fb_1\fb_2}$, $(\QTn)_{\fa,\fb_1\fb_2} $ and $(\Err_{n,D})_{\fa,\fb_1\fb_2}$,  there is a unique  $\dashed$ edge connected to $\otimes$. Furthermore,  there is at least an edge, which is either  plus solid $G$ or $\dashed$ or dotted,  connected to  $\oplus$, 
 and  there is at least an edge, which is either  minus solid $G$ or $\dashed$ or dotted,  connected to $\ominus$.
 

%
 
\end{enumerate}
The graphs on the right-hand side of \eqref{mlevelTgdef} satisfy some additional properties, which will be given in Definition \ref{def genuni} below. 
\end{definition}



In accordance with \eqref{trivial conv}, we have that
$$\Sigma_{T,1}=\Sigma_{T,2}=\Sigma_{T,3}=\Sigma_{T,2l+1}=0,\quad l\in \N.$$
%
With \eqref{label_Theta}, we can bound $\left(\Theta \Sdeltak \Theta\right)_{\fa\fb_1}$ by 
\be\label{intro_redagain}\left(\Theta \Sdeltak \Theta\right)_{\fa\fb_1} \prec W^{-(k-2)d/2}B_{\fa\fb_1}. \ee
This bound shows that when $\fb_1=\fb_2=\fb$, the second term on the right-hand side of \eqref{mlevelTgdef} can be bounded by  \smash{$m\overline G_{\fb\fb}(\Theta \Sdelta^{(n)} \Theta)_{\fa\fb}\prec B_{\fa\fb}$}, which is necessary for \eqref{locallaw} to hold.
The rigorous proof of \eqref{intro_redagain} will be given in Lemma \ref{lem redundantagain}. 
When $\fb_1=\fb_2= \fb$, by \eqref{Aho>2} the two graphs in $(\cal R_{T,3})_{\fa,\fb\fb}$ are 
$$ m\sum_{x,y}\delta_{x \fb} \Theta_{\fa x} s_{xy} (G_{yy}-m) |G_{x \fb}|^2  + m \sum_{x,y}\delta_{y \fb} \Theta_{\fa x} s_{xy}  ( \overline G_{xx} - \overline m) |G_{y \fb}|^2,$$
which can be easily bounded by $\Theta_{\fa\fb}\prec B_{\fa\fb}$. 
In general, there are many more complicated graphs in \smash{$(\PTn)_{\fa,\fb_1 \fb_2}$}, but they all satisfy good enough bounds for our prupose. 
If $D>0$ is sufficiently large, the term $(\Err_{n,D})_{\fa,\fb_1\fb_2}$ will be negligible for all proofs. If a graph in $(\Err_{n,D})_{\fa,\fb_1\fb_2}$ does not contain any  $P/Q$ label, then it can  be also included into {$(\ATn)_{\fa,\fb_1\fb_2}$}.

In Section \ref{sec_basiclocal}, we will describe the basic graph operations that are used to obtain the $T$-expansion, and more details will  be given in \cite{PartII_high}. Assuming the $n$-th order $T$-expansion, we can prove Theorem \ref{main thm}. 
 
\begin{theorem}\label{thm ptree}
Fix any $n\in \N$. Suppose the assumptions of Theorem \ref{main thm} hold, and we have an $n$-th order $T$-expansion given in Definition \ref{defn genuni} (together with the additional properties in Definition \ref{def genuni}). 
Assume that $L$ satisfies 
\be\label{Lcondition1}  {L^2}/{W^2}  \le W^{(n-1)d/2-c_0} \ee
for some constant $c_0>0$. Then for any constant $\e>0$, the local law 
\be\label{locallaw1}
|G_{xy} (z) -m(z)\delta_{xy}|^2 \prec B_{xy}
\ee
holds uniformly in all $z= E+\ii\eta$ with $E\in (-2+\kappa,2-\kappa)$ and $\eta \in [W^{2}/L^{2-\e},1]$.
\end{theorem}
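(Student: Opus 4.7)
The plan is a continuity (bootstrap) argument in the spectral parameter $\eta = \im z$. We start at $\eta \sim 1$, where the mean-field local law \eqref{intro_semi} already gives $|G_{xy}-m\delta_{xy}|^2 \prec W^{-d}$, and descend in multiplicative steps of size $1+W^{-\tau}$ down to $\eta = W^2/L^{2-\e}$. At each scale we aim to establish the single estimate $T_{\fa\fb} \prec B_{\fa\fb}$, from which the off-diagonal local law \eqref{locallaw1} follows via Lemma \ref{lem G<T} (the standard $T \Leftrightarrow |G|^2$ correspondence), while diagonal entries $G_{xx}$ are controlled by the usual Schur complement/Ward identity argument. The induction hypothesis at each step is that the local law holds at the slightly larger scale $\eta_0 = \eta(1+W^{-\tau})$, so that in every graphical expansion each off-diagonal $G$-edge is $\prec B^{1/2}$ and each light weight is $\prec W^{-d/2}$.

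Specialize the $T$-expansion \eqref{mlevelTgdef} to $\fb_1=\fb_2=\fb$ and estimate term by term. The leading term $m\Theta_{\fa\fb}\overline{G}_{\fb\fb}$ is $\prec B_{\fa\fb}$ since $\Theta_{\fa\fb}\prec B_{\fa\fb}$ by \eqref{thetaxy} and $G_{\fb\fb}=m+\OO_\prec(W^{-d/2})$. The renormalized term $m(\Theta\Sdelta^{(n)}\Theta)_{\fa\fb}\overline{G}_{\fb\fb}$ is handled by Lemma \ref{lem redundantagain}: summing by parts twice in each factor $\Theta\,\Sele_l$ uses the sum zero property \eqref{3rd_property0} (equivalently \eqref{weaz} in the infinite-volume limit) to convert the otherwise dangerous row sum $\sum_{\al}\Theta_{\fa\al}\sim \eta^{-1}$ into second discrete differences of $\Theta$; the surviving $\eta$-factor in \eqref{3rd_property0} absorbs the $\eta^{-1}$, yielding \eqref{intro_redagain} and hence a contribution $\prec B_{\fa\fb}$. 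The recollision term $\PTn$ gains an additional factor from the forced dotted edge pinning a summation index to $\fb$; combined with the doubly connected structure of Section \ref{sec double}, it is $\prec B_{\fa\fb}$ graph by graph. Finally $\Err_{n,D}$ is negligible upon choosing $D$ arbitrarily large.

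The two genuinely substantial terms are $\ATn$ and $\QTn$. For $\ATn$, every graph has scaling order $>n$, so the power-counting bound (Lemma \ref{no dot}) afforded by the doubly connected structure gives a pointwise estimate of the form $\prec W^{-(n-1)d/2+\tau}B_{\fa\fb}$ up to at most one residual free summation of $B$; this summation costs $\sum_y B_{\fa y}\lesssim L^2/W^2$, and the hypothesis \eqref{Lcondition1} then precisely yields $\ATn \prec W^{-c_0/2}B_{\fa\fb}$. The $Q$-graph term $\QTn$ is controlled by a high-moment (fluctuation-averaging) computation: raise $|\QTn|^{2p}$ to a large power, perform the Gaussian Wick pairings generated by the $Q_x$ labels, and use the centering together with the doubly connected structure of the glued graphs to gain an extra $W^{-d/2}$ per pairing. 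Markov's inequality then gives $\QTn \prec B_{\fa\fb}$. Summing the four bounds closes the bootstrap at scale $\eta$, and iterating descends from $\eta=1$ to $W^2/L^{2-\e}$ in $\OO(\log L)$ steps.

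The main obstacle I anticipate is the high-moment estimate for $\QTn$: graphs of arbitrary scaling order $>n$ carry $Q$-labels on both edges and weights, and after Wick pairing one must verify that the glued graphs retain the doubly connected structure and hence admit the desired power-counting bound; this requires delicate bookkeeping of how the doubly connected property is preserved under pairing. A secondary but essential difficulty is obtaining a strong enough initial input and continuity in $\eta$ to launch the bootstrap at a scale where the $T$-expansion is valid; this is the role of the continuity estimate (Lemma \ref{lem: ini bound}) referenced after the theorem, and it is also the place where the restriction $d\ge 8$ enters the analysis.
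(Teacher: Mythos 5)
Your high-level picture is right (initial estimate at $\eta=1$, a multiplicative descent in $\eta$, the $T$-expansion plugged in with $\fb_1=\fb_2$, doubly connected power counting for $\ATn$ with the loss $\sum_y B_{\fa y}\lesssim L^2/W^2$ killed by \eqref{Lcondition1}, and a high-moment argument for $\QTn$), but there is a genuine circularity in the way you set up the induction that the paper avoids by a substantially different mechanism.

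You say the induction hypothesis is ``the local law holds at the slightly larger scale $\eta_0=\eta(1+W^{-\tau})$, so that in every graphical expansion each off-diagonal $G$-edge is $\prec B^{1/2}$.'' But the $T$-expansion you then estimate term by term is an identity for $T(z)$ with $z=E+\ii\eta$, and every $G$-edge in it is a $G(z)$-entry at the \emph{current} scale $\eta$ --- precisely the quantity \eqref{locallaw1} is trying to control. A local law at $\eta_0$ does not automatically give $|G_{xy}(z)-m\delta_{xy}|^2\prec B_{xy}$ at $\eta<\eta_0$: the naive resolvent perturbation $G(z)-G(\wt z)=\ii(\wt\eta-\eta)G(z)G(\wt z)$ gives, via Ward's identity, a correction of order $(\wt\eta-\eta)\eta^{-1/2}\wt\eta^{-1/2}\sim W^{-\tau}$, which is far larger than $B_{xy}^{1/2}\sim W^{-d/2}$ for $|x-y|\sim W$. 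So the step $\eta_0\to\eta$ is not free even with the small step size $1+W^{-\tau}$, and without it your term-by-term estimates assume what they are trying to prove.

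The paper's resolution is exactly the continuity estimate you mention at the end, but it plays a very different role than ``launching'' the bootstrap. Lemma \ref{lem: ini bound} is invoked at \emph{every} step of the descent and deliberately gives only \emph{weakened} bounds: the averaged bound \eqref{initial Txy2} with an explicit loss factor $(\wt\eta/\eta)^2$, and the maximum bound \eqref{Gmax} with a loss $W^{\e_0}$. (This is where Lemma \ref{lem normA} and the method of moments enter, and where $d\ge 8$ comes from via the inequality $ad/2-b-2\ge 0$ with $(a,b)=(1,2)$.) Neither of these weak bounds is the local law. The bridge from the weak bounds back to the sharp bound $T_{xy}\prec B_{xy}$ is a separate self-improving step, Lemma \ref{lemma ptree}, with the iteration parameter $\wt\Phi$: one shows that $T_{xy}\prec B_{xy}+\wt\Phi^2$ implies $T_{xy}\prec B_{xy}+W^{-c_1}\wt\Phi^2$ by inserting the $T$-expansion into one copy of $T_{\fa\fb}$ in $\E T_{\fa\fb}^p$ and applying H\"older plus Young's inequality, and one iterates $\OO(D/c_1)$ times. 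Your proposal has no analogue of the $\wt\Phi$-parameter, and your treatment of $\QTn$ by direct Wick pairing skips the $Q$-expansion and $T$-reinsertion structure that makes the self-improvement close. In short: the bootstrap in the paper is a three-stage loop (local law at $\eta_k$ $\Rightarrow$ weak averaged/max bound for $\eta\in[\eta_{k+1},\eta_k]$ $\Rightarrow$ sharp $T$-bound at $\eta_{k+1}$ via the self-improving high-moment estimate), and your proposal collapses the first two stages in a way that does not close.
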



If we have obtained the $n$-th order $T$-expansion for $n= n_{W,L}:= \left\lceil\frac{4}{d}\left(\log_W L  - 1 + \frac{c_0}{2}\right)\right\rceil+1$, then we can conclude Theorem \ref{main thm} by using Theorem \ref{thm ptree}. The proof of Theorem \ref{thm ptree} will be given Section \ref{sec pfmain}.


\subsection{Definition of the $\Incomp$}

In this subsection, we define the concept of \emph{$\incomp$}. 

\begin{definition} [$n$-th order $\incomp$]\label{defn incompgenuni}
Fix any $n\in \N$ and let $D>0$ be an arbitrary large constant. For $\fa, \fb_1,\fb_2 \in \Z_L^d$, an $n$-th order $\incomp$ of $T_{\fa,\fb_1\fb_2}$ with $D$-th order error is an expression of the following form: 
	\be \label{mlevelT incomplete}
	\begin{split}
		T_{\fa,\fb_1 \fb_2} &= m  \Theta_{\fa \fb_1}\overline G_{\fb_1\fb_2}   +\sum_x (\Theta \wtSdeltan)_{\fa x} T_{x,\fb_1\fb_2} \\
		&+ (\PITn)_{\fa,\fb_1 \fb_2} +  (\AITn)_{\fa,\fb_1 \fb_2} + (\QITn)_{\fa,\fb_1 \fb_2} + (\Err'_{n,D})_{\fa,\fb_1\fb_2}  ,
	\end{split}
	\ee
	where the graphs on the right-hand side depend only on $n$, $D$, $m(z)$, $S$, $S^{\pm}(z)$, $\Theta(z)$ and $G(z)$, but do not depend on $W$, $L$ and $d$ explicitly. 
	Moreover, they satisfy the following properties.
	\begin{enumerate}
		\item  $(\PITn)_{\fa,\fb_1 \fb_2}$, $(\AITn)_{\fa,\fb_1 \fb_2}$, $(\QITn)_{\fa,\fb_1 \fb_2}$ and $(\Err'_{n,D})_{\fa,\fb_1\fb_2}$ respectively satisfy the same properties as $(\PTn)_{\fa,\fb_1 \fb_2}$, $(\ATn)_{\fa,\fb_1 \fb_2}$, $(\QTn)_{\fa,\fb_1 \fb_2}$ and $(\Err_{n,D})_{\fa,\fb_1\fb_2}$ in Definition \ref{defn genuni}. Furthermore,  $(\PITn)_{\fa,\fb_1 \fb_2}$ and $(\QITn)_{\fa,\fb_1 \fb_2}$ can be decomposed as
		\be\label{decomposePIT}(\PITn)_{\fa,\fb_1\fb_2} =\sum_{ k =3}^{ n}(\PITk)_{\fa,\fb_1\fb_2},\ee
		and
		\be\label{decomposeQIT} (\QITn)_{\fa,\fb_1\fb_2}= \sum_{k=2}^{n} (\QITk)_{\fa,\fb_1\fb_2} + (\cal Q^{(>n)}_{IT})_{\fa,\fb_1\fb_2},\ee
		where  $(\PITk)_{\fa,\fb_1\fb_2}$ is a sum of the scaling order $k$ $\oplus/\ominus$-recollision graphs in $(\PITn)_{\fa,\fb_1\fb_2}$,  $(\QITk)_{\fa,\fb_1\fb_2}$ is a sum of the scaling order $k$ $Q$-graphs in $(\QITn)_{\fa,\fb_1 \fb_2}$, and $(\cal Q^{(>n)}_{IT})_{\fa,\fb_1\fb_2}$ is a sum of the scaling order $>n$ $Q$-graphs in  $(\cal Q^{(>n)}_{IT})_{\fa,\fb_1\fb_2}$.  Moreover, $\PITk$ and $\QITk$ are independent of $n$.
		
		\item $\wtSdeltan$ 
		can be decomposed according to the scaling order as 
		\be\label{decompose Sdelta} 
		\wtSdeltan = \Sele_n + \sum_{l=4}^{n-1} \Sele_{l},
		\ee
		where $\Sele_{l}$, $1\le l \le n-1$, is a sequence of $\selfs$ satisfying Definition \ref{collection elements} and properties \eqref{two_properties0V}--\eqref{weaz}. For any $x,y \in \Z_L^d$, $(\Sele_{n})_{xy}$ is a sum of at most $C_n$ many deterministic graphs of scaling order $n$ and with external atoms $x$ and $y$. 
		
		\item Each graph of $(\PITn)_{\fa,\fb_1\fb_2}$, $(\AITn)_{\fa,\fb_1\fb_2}$, $(\QITn)_{\fa,\fb_1\fb_2} $ and $(\Err_{n,D}')_{\fa,\fb_1\fb_2}$ can be written into 
		$$ \sum_{x}\Theta_{\fa x} {\cal G}_{x,\fb_1\fb_2},$$ 
		where $ {\cal G}_{x,\fb_1\fb_2}$ is a normal regular graph with external atoms $x$, $\fb_1$ and $\fb_2$. Moreover, $ {\cal G}_{x,\fb_1\fb_2}$ has at least an edge, which is either plus solid $G$ or $\dashed$ or dotted,  connected to  $\oplus$, and at least an edge, which is either minus solid $G$ or $\dashed$ or dotted, connected to $\ominus$.
		
	
	\end{enumerate}
	The graphs on the right side of \eqref{mlevelT incomplete} satisfy some additional properties, which will be given in Definition \ref{def incompgenuni} below.
\end{definition} 
The form of \eqref{mlevelT incomplete} is different from \eqref{mlevelTgdef} only in the second term on the right-hand side. We can regard \eqref{mlevelT incomplete} as a linear equation of the $T$-variable $T_{x,\fb_1\fb_2}$. 
In fact, taking $\fb_1=\fb_2$ and $n=M$ in \eqref{mlevelT incomplete}, if we move the second term on the right-hand side to the left-hand side, multiply both sides by $(1-\Theta\wtSdeltan)^{-1}$ and take expectation, then we will get \eqref{E_locallaw}. More details of the proof 
will be given in Section \ref{sec pf0}.

The $\Sele_l$'s in \eqref{decompose Sdelta} are the same $\selfs$ as in Definition \ref{defn genuni}. 
We remark that the sequence of $\incomp$s is constructed inductively. In particular, before constructing the $n$-th order $\incomp$, we have obtained the $k$-th order $\incomp$ and proved the properties \eqref{two_properties0}--\eqref{3rd_property0} and \eqref{two_properties0V}--\eqref{weaz} for $\Sele_k$ for all $4\le k\le n-1$. On the other hand, $ \Sele_n$ is a new sum of deterministic graphs obtained in the $n$-th order $\incomp$, whose properties \eqref{two_properties0}--\eqref{3rd_property0} and \eqref{two_properties0V}--\eqref{weaz} are yet to be shown.  


\section{Basic graph operations}\label{sec_basiclocal}

A graph operation $\cal O[\cal G]$ on a graph $\cal G$ is a linear combination of new graphs such that the graph value of  $\cal G$ 
is unchanged, i.e. $\llbracket \cal O[\cal G]\rrbracket=\llbracket \cal G\rrbracket$. 
All graph operations are linear, that is, 
\be\label{linear graphO} \cal O\Big[\sum_i c_i  \cal G_i\Big] =\sum_i c_i \cal O\left[ \cal G_i\right].
\ee

\subsection{Dotted edge operations} 

Recall that a dotted edge between atoms $\al$ and $\beta$ represents a $\delta_{\al\beta}$ factor. We will identify  internal atoms  connected by dotted edges, but we will {\it not} identify an external and an internal atom due to their different roles in graphs. Dotted edges between internal atoms may appear in intermediate steps, so we define the following merging operation. 
\begin{definition}[Merging operation]\label{def merge}
Given a graph $\cal G$ that contains dotted edges between different atoms, we define an operator $\cal O_{merge}$ in the following way: $\cal O_{merge}[\cal G]$ is a graph obtained by merging every pair of internal atoms, say $\al$ and $\beta$, that are connected by a path of dotted edges into a single internal atom, say $\gamma$. Moreover, the weights and edges attached to $\al$ and $\beta$ 
are now attached to the atom $\gamma$ 
 in $\cal O_{merge}[\cal G]$. In particular, the $G$ edges between $\al$ and $\beta$ become weights on $\gamma$, and the waved and $\dashed$ edges between $\al$ and $\beta$ become self-loops on $\gamma$. 
\end{definition}
It is easy to see that the graph operator $\cal O_{merge}$ is an identity in the sense of graph values:
 $ \llbracket \cal O_{merge}[\cal G]\rrbracket=\llbracket \cal G\rrbracket$.
Given any regular graph, we can rewrite it as a linear combination of normal regular graphs 
using the following \emph{dotted edge partition} operation. 


\begin{definition}[Dotted edge partition] \label{dot-def}
 Given a regular graph $\cal G$, for any pair of atoms $\al$ and $\beta$, if there is at least one $G$ edge but no $\times$-dotted edge between them, then we write 
 $$1=\mathbf 1_{\al=\beta} + \mathbf 1_{\al \ne \beta} ;$$
  if there is a $\times$-dotted line $ \mathbf 1_{\al\ne \beta}$ but no $G$ edge between them, then we write 
 $$\mathbf 1_{\al\ne \beta} =1 - \mathbf 1_{\al = \beta}.$$
 Expanding the product of all these sums on the right-hand sides, we can expand $\cal G$ as
 \be\label{odot}\cal O_{dot}[\cal G] := \sum \cal O_{merge}[{\Dot} \cdot \cal G],\ee
 where each ${\Dot}$ is a product of dotted and $\times$-dotted edges together with a $+$ or $-$ sign. 
  In ${\Dot} \cdot \cal G$, if there is a $\times$-dotted edge between $\al$ and $\beta$, then the $G$ edges between them are off-diagonal; otherwise, the $G$ edges between them become weights after the merging operation. If $\Dot$ is ``inconsistent" (i.e., two atoms are connected by a  $\times$-dotted edge and a path of dotted edges), then we trivially have $\llbracket \Dot \cdot \cal G\rrbracket=0$. Thus we will drop all inconsistent graphs.  Finally, if the graph $\cal G$ is already normal, then $\cal O_{dot}$ acting on $\cal G$ is a null operation and we let $\cal O_{dot}[\cal G]:=\cal G$.

  \end{definition}
\begin{lemma}\label{lem Odot}
Given any regular graph $\cal G$, $\cal O_{dot}[\cal G]$ is a sum of normal regular graphs and $\llbracket \cal O_{dot}[\cal G]\rrbracket  =\llbracket \cal G\rrbracket.$
\end{lemma}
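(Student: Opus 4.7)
The plan is to verify directly from the construction that $\cal O_{dot}$ preserves graph values and produces only normal regular summands. If $\cal G$ is already normal, then by the last sentence of Definition \ref{dot-def} the operator acts as the identity and both claims are immediate, so I may assume $\cal G$ is not normal, meaning there is at least one pair $(\al,\beta)$ of atoms that violates property (iv) of Definition \ref{defnlvl0}, i.e., the pair carries $G$ edges without a $\times$-dotted edge, or carries a $\times$-dotted edge without $G$ edges.

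For the value-preservation claim I would argue in three steps. First, each insertion of $1 = \mathbf 1_{\al=\beta} + \mathbf 1_{\al\ne\beta}$ or $\mathbf 1_{\al\ne\beta} = 1 - \mathbf 1_{\al=\beta}$ in Definition \ref{dot-def} is an algebraic identity, so distributing the product to form the sum $\sum \Dot \cdot \cal G$ leaves the total expression unchanged. Second, discarding inconsistent $\Dot$'s is legitimate because in such terms two atoms are simultaneously forced to be equal and unequal, so the product of the corresponding indicators is identically zero and the term contributes $0$ to the value. Third, $\cal O_{merge}$ is value-preserving, since each $\delta$-factor produced by a dotted edge simply collapses an internal summation to a single index while preserving all the weights, $G$-factors, waved and $\dashed$ edges attached to the merged atoms. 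Composing these three steps gives $\llbracket \cal O_{dot}[\cal G]\rrbracket = \llbracket \cal G\rrbracket$.

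For the structural claim I would check properties (i)-(iv) of Definition \ref{defnlvl0} for an arbitrary surviving summand of $\cal O_{dot}[\cal G]$. Properties (i)-(iii) are straightforward bookkeeping: merging only decreases the number of atoms and edges and cannot disconnect a connected graph; paths of waved and $\dashed$ edges among internal atoms survive (all incident edges are re-attached to the merged atom); and by design every internal-internal dotted edge introduced in the partition is contracted by $\cal O_{merge}$, so no dotted edges remain between distinct internal atoms. The essential point is property (iv). By construction, for each pair originally violating (iv) the partition inserts a dichotomy: on the $\mathbf 1_{\al=\beta}$ branch the two atoms are identified (and any $G$ edges between them become weights on the merged atom, so there is no surviving $G$ edge needing a $\times$-dotted partner), while on the $\mathbf 1_{\al \ne \beta}$ branch we either install a $\times$-dotted edge alongside the existing $G$ edges or simply remove a stray $\times$-dotted edge that had no $G$ edge to accompany. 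Running through all violating pairs yields that in each surviving summand, any pair of distinct atoms carries a $\times$-dotted edge if and only if it carries a $G$ edge, which is property (iv).

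The only genuine subtlety, and what I expect to be the main place to be careful, is the handling of dotted edges between an internal atom and an external atom, because Definition \ref{def merge} only merges internal-internal pairs. In that case the internal summation is pinned to the value of the external atom, and any $G$ edge that previously ran between the two becomes a diagonal $G$-factor at the external atom, hence a weight in the sense of Definition \ref{def_graph1}; after this reinterpretation the $\times$-dotted/$G$-edge correspondence demanded by property (iv) still holds, since (iv) concerns the (necessarily off-diagonal) $G$ edges of the resulting graph. A careful case analysis along these lines, together with the three-step value argument above, completes the proof.
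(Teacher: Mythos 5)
The paper does not actually give a proof of this lemma: it states only that "Lemma \ref{lem Odot} trivially follows from Definition \ref{dot-def}." Your argument is correct and supplies exactly the bookkeeping the authors declare trivial --- the three-step value-preservation argument, the verification of (i)--(iv), and the observation that the only non-obvious case is a dotted edge linking an internal to an external atom, which is resolved because the paper's Definition \ref{dot-def} explicitly stipulates that, when no $\times$-dotted edge is present, the $G$ edges between the pinned atoms are reinterpreted as weights. This is a more careful writeup of the same verification-by-definitions route the paper gestures at, not a different approach.
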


Lemma \ref{lem Odot} trivially follows from Definition \ref{dot-def}. 
We now  introduce the  concept of molecules and  local expansions. 



\begin{definition}[Molecules]\label{def_poly}
	We partition the set of all atoms into a union of disjoint sets $\{\text{all atoms}\}=\cup_j \cal M_j$, where each $\cal M_j$ is called a molecule. Two internal atoms belong to the same molecule if and only if they are connected by a path of neutral/plus/minus {waved edges} and {dotted edges} (note there may be dotted edges between internal atoms if the graph is not regular). 
	Each external atom will be called an external molecule (such as $\otimes$, $\oplus$ and $\ominus$ molecules) by definition.  An edge is said to be inside a molecule if its ending atoms belong to this molecule.
\end{definition}
By \eqref{subpoly} and \eqref{S+xy}, if two atoms $x$ and $y$ are in the same molecule, then we essentially have $|x-y|\le W^{1+\tau}$ up to a negligible error $\OO(W^{-D})$. 
Given an atomic graph, we will call the subgraph inside a molecule (i.e. the subgraph induced on the atoms inside this molecule) the  {\it local structure} of the molecule. The \emph{molecular graph} (cf. Definition \ref{def moleg} below) is the quotient graph with each molecule regarded as a vertex. Then the {\it global structure} of a graph refers to its molecular graph.
Note that the local structures can only vary on scales of order $\OO (W^{1+\tau})$, while the global structure varies on scales up to  $L$. The \emph{two-level structure} of an atomic graph---a global structure plus several local structures---has been explored in \cite{PartIII} already.

We will call an expansion \emph{local} if it does not create new molecules, that is, every molecule in the new graphs is obtained by adding new atoms to existing molecules or merging some molecules in the original graph. It is easy to see that $\cal O_{dot}$ is a local expansion. In Sections \ref{sec localexp} and \ref{sec localexp edge}, we will introduce more local expansions. 
We point out that local expansions  can change the global structure. However, as we will explain in \cite{PartII_high}, they will maintain the doubly connected properties of the graphs (cf. Definition \ref{def 2net}). 




\subsection{Weight expansion} \label{sec localexp}

\begin{lemma} \label{ssl} 
In the setting of Theorem \ref{comp_delocal}, suppose that $f$ is a differentiable function of $G$. Then we have the following identity:  
\be\label{weightG}
\begin{split} 
     (G_{xx}  -  m) f(G)  & = m \sum_{\al,\beta}b_{x\al} s_{\al\beta} (G_{\al\al}-m) (G_{\beta\beta}-m) f(G) -  m \sum_{\al,\beta} b_{x\al} s_{\al\beta} P_\al  \left[ G_{\beta\al }\partial_{  h_{ \beta \al}} f(G)\right]  \\
  &+ \sum_{\al}b_{x\al}Q_\al \left[(G_{\al\al}  -  m) f(G)\right]  - m \sum_{\al,\beta}b_{x\al}s_{\al\beta}Q_\al\left[  (G_{\beta\beta}-m)G_{\al\al} f(G) \right] ,
\end{split} 
\ee
where for simplicity we introduced the matrix
\be\label{defn b}  b:= (1- m^2 S)^{-1} = 1 + S^+  .\ee
\end{lemma}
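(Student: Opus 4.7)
The plan is to derive a linear equation for $(G_{xx}-m)f(G)$ indexed by $x$ with coefficient $1-m^2 S$, and then to invert this operator using $b=(1-m^2 S)^{-1}$. This mirrors the derivation of \eqref{Otheta} but is adapted to a diagonal weight rather than a pair of off-diagonal $G$-entries.

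First, starting from \eqref{GmH} one has the identity
\begin{equation*}
G_{xx}-m \;=\; -m\sum_\al h_{x\al}G_{\al x}\;-\;m^2 G_{xx}.
\end{equation*}
Multiplying by $f(G)$ and applying the partial expectation $P_x$, I apply Gaussian integration by parts in $h_{x\al}$, using $\partial_{h_{\al x}}G_{\al x}=-G_{\al\al}G_{xx}$. This rewrites
\begin{equation*}
-m\sum_\al P_x\!\left[h_{x\al}G_{\al x}f(G)\right] \;=\; m\sum_\al s_{x\al}P_x\!\left[G_{\al\al}G_{xx}f(G)\right]\;-\;m\sum_\al s_{x\al}P_x\!\left[G_{\al x}\partial_{h_{\al x}}f(G)\right].
\end{equation*}

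Next, I decompose $G_{\al\al}=(G_{\al\al}-m)+m$ and $G_{xx}=(G_{xx}-m)+m$ inside the first product, combine with $-m^2 P_x[G_{xx}f(G)]$, and use the normalization $\sum_\al s_{x\al}=1$. The $m^2 P_x[(G_{xx}-m)f(G)]$ and the $m^3 P_x[f(G)]$ pieces cancel exactly, which is the mechanism producing the $1-m^2 S$ structure. Writing $P_x=1-Q_x$ in the remaining terms, I obtain the \emph{one-step} identity
\begin{equation*}
(G_{xx}-m)f(G)\;-\;m^2\!\sum_\al s_{x\al}(G_{\al\al}-m)f(G)\;=\;R(x),
\end{equation*}
where $R(x)$ contains the $m\sum_\al s_{x\al}(G_{\al\al}-m)(G_{xx}-m)f(G)$ term, the derivative term $-m\sum_\al s_{x\al}P_x[G_{\al x}\partial_{h_{\al x}}f(G)]$, and three $Q_x$-error terms.

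Then I view this as $[(1-m^2 S)(G_{\cdot\cdot}-m)f(G)]_x=R(x)$ and invert by applying $b=(1-m^2 S)^{-1}$, obtaining $(G_{xx}-m)f(G)=\sum_\al b_{x\al}R(\al)$. In $R(\al)$ the summation index is renamed $\beta$ and every $Q_x$ becomes $Q_\al$. Finally, I regroup the two $Q_\al$ pieces of the form $-m\sum_\beta s_{\al\beta}Q_\al[(G_{\beta\beta}-m)(G_{\al\al}-m)f(G)]$ and $-m^2\sum_\beta s_{\al\beta}Q_\al[(G_{\beta\beta}-m)f(G)]$ by using $(G_{\al\al}-m)+m=G_{\al\al}$; this merges them into the single term $-m\sum_\beta s_{\al\beta}Q_\al[(G_{\beta\beta}-m)G_{\al\al}f(G)]$ appearing on the right-hand side of \eqref{weightG}.

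The calculation is essentially algebraic and I expect no genuine obstacle: the only delicate step is the careful bookkeeping of the $Q$-terms and the cancellation of the $P_x[(G_{xx}-m)f(G)]$ and $P_x[f(G)]$ contributions that makes the $1-m^2 S$ factor emerge before inversion. One also needs to be a bit careful with the complex Gaussian IBP convention, namely that $\partial_{h_{\al x}}$ (not $\partial_{h_{x\al}}$) arises from pairing $h_{x\al}$ because $\E|h_{x\al}|^2=s_{x\al}$ while $\E h_{x\al}^2=0$; otherwise the signs and derivatives come out exactly in the pattern required.
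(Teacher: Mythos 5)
Your proposal is correct and follows essentially the same route as the paper's proof: expand $G_{xx}-m$ via \eqref{GmH}, apply $P_x$ and Gaussian integration by parts (with the complex-Gaussian convention $\E[h_{x\al}F]=s_{x\al}\partial_{h_{\al x}}F$), use $\sum_\al s_{x\al}=1$ to cancel the $m^2 P_x[(G_{xx}-m)f]$ and $m^3 P_x[f]$ pieces and thereby produce the $1-m^2 S$ operator, then split off $Q_x$ and invert by $b$. The only cosmetic difference is that the paper keeps $G_{xx}$ intact inside the single mixed $Q_x$-term (directly matching the fourth term of \eqref{weightG}), whereas you split it into $(G_{xx}-m)+m$ first, obtaining two $Q_x$-terms that you re-merge after inversion; both give the identical result.
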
 
\begin{proof}
Using \eqref{GmH} and Gaussian integration by parts, we obtain that
\begin{align*}
& (G_{xx}  -  m) f(G) =Q_x \left[(G_{xx}  -  m) f(G)\right] + P_x \left[(G_{xx}  -  m) f(G)\right] \\
&=Q_x \left[(G_{xx}  -  m) f(G)\right] +  P_x \Big[ \Big( -m^2G_{xx} - m \sum_\al h_{x\al}G_{\al x}\Big) f(G)\Big]\\
  &=Q_x\left[ (G_{xx}  -  m) f(G) \right] + m \sum_{\al} s_{x\al}P_x\left[ (G_{\al\al}-m) G_{xx}f(G) \right]-  m P_x  \sum_\al s_{x\al} \left[ G_{\al x}\partial_{ h_{\al x}} f(G)\right]\\
  &=Q_x\left[ (G_{xx}  -  m) f(G) \right] - m \sum_{\al} s_{x\al}Q_x\left[ (G_{\al\al}-m) G_{xx}f(G) \right] + m^2 \sum_{\al} s_{x\al} (G_{\al\al}-m)  f(G) \\
  &+ m \sum_{\al} s_{x\al} (G_{\al\al}-m) (G_{xx}-m)f(G)-  m \sum_\al s_{x\al} P_x  \left[  G_{\al x}\partial_{  h_{\al x}} f(G)\right],
  \end{align*}
which gives the equation
\begin{align*}
 \sum_\al(1-m^2 S)_{x\al}(G_{\al\al}  -  m) f(G) =Q_x\left[ (G_{xx}  -  m) f(G) \right] - m \sum_{\al} s_{x\al}Q_x\left[ (G_{\al\al}-m) G_{xx}f(G) \right]  \\
 + m \sum_{\al} s_{x\al} (G_{\al\al}-m) (G_{xx}-m)f(G)-  m \sum_\al s_{x\al} P_x  \left[  G_{\al x}\partial_{  h_{\al x}} f(G)\right].
\end{align*}
Multiplying both sides with $(1-m^2 S)^{-1}$, we obtain \eqref{weightG}.  
\end{proof}


Expanding $b_{x\al}$ as $\delta_{x\al} + S^+_{x\al}$ and $P_\al$ as $1-Q_\al$ in \eqref{weightG}, we obtain the following weight expansion operator. 
\begin{definition} [Weight expansion operator]\label{Ow-def} \ 
Given a normal regular graph $\cal G$ 
which contains an atom $x$, if there is no weight on $x$, then 
we trivially define $\cal O_{weight}^{(x)}[\cal G]:=\cal G$. Otherwise,  we define ${\cal O}_{weight}^{(x)}[\cal G]$ in the following way. 
 
 \medskip

\noindent{\bf (i) Removing regular weights:} Suppose there are regular $G_{xx}$ or $\overline G_{xx}$ weights on $x$. Then we rewrite 
$$G_{xx}=m+ (G_{xx}-m),\quad \text{and}\quad \overline G_{xx}=\overline m+ (\overline G_{xx}-\overline m).$$
Expanding the product of all these sums, we can write $\cal G$ into a linear combination of normal regular graphs containing only light weights on $x$. We denote this graph operator as $ {\cal O}^{(x), 1}_{weight}$.  

 \medskip

\noindent{\bf (ii) Expanding light weight:} If $\cal G$ has a light weight $G_{xx}-m$ of positive charge on $x$ and is of  the form $\cal G= (G_{xx}-m)f (G)$,  then we define the light weight expansion on $x$ by  
\begin{align} 
 {\cal O}^{(x),2}_{weight} \left [  \cal G \right] &:=  m \sum_{  \al} s_{x\al}  (G_{xx}-m) (G_{\al\al}-m)f (G) +m \sum_{  \al,\beta}S^{+}_{x\al} s_{\al \beta}  (G_{\al\al}-m) (G_{\beta\beta}-m)f (G) \nonumber\\
 &-  m  \sum_{ \al} s_{x \al} G_{\al x}\partial_{ h_{ \al x}} f (G) -  m \sum_{ \al,\beta} S^{+}_{x\al}s_{\al \beta} G_{\beta \al}\partial_{ h_{ \beta\al}} f(G) + \cal Q_w ,\label{Owx}\end{align}
 where 
 $\cal Q_w$ is a sum of $Q$-graphs,
 \begin{align} 
  \cal Q_w &:=  Q_x \left[(G_{xx}  -  m) f(G)\right] + \sum_{\al} Q_\al \left[  S^{+}_{x\al}(G_{\al\al}  -  m) f(G)\right] \nonumber\\
 & - m  Q_x\Big[ \sum_{\al} s_{x\al}  (G_{\al\al}-m)G_{xx} f(G) \Big]  - m \sum_\al Q_\al\Big[ \sum_{ \beta}S_{x\al}^{+}s_{\al\beta}(G_{\beta\beta}-m)G_{\al\al} f(G) \Big] \nonumber\\
  &+  m  Q_x  \Big[  \sum_\al s_{x \al}  G_{\al x}\partial_{ h_{\al x}} f(G)\Big]+ m \sum_\al Q_\al \Big[  \sum_{ \beta} S^{+}_{x\al}s_{\al \beta} G_{\beta \al}\partial_{ h_{\beta \al}} f(G)\Big]. \nonumber
\end{align} 
If $\cal G= (\overline G_{xx}-\overline m)f (G)$, then we define 
\be\label{Owx neg}
 {\cal O}^{(x),2}_{weight}\left [  \cal G \right] := \overline{ {\cal O}^{(x),2}_{weight} \left [  (G_{xx}  -  m) \overline{f(G)} \right ] } ,
\ee
where the right-hand side can be defined using \eqref{Owx}.
When there are more than one light weights on $x$, we pick any positive light weight and apply \eqref{Owx}; if there is no positive light weight, then we pick any negative light weight and apply \eqref{Owx neg}. 
 
Combining the above two graph operators, given any normal regular graph $\cal G$, we define
\be\label{eq defOdot}
{\cal O}_{weight}^{(x)}[ \cal G]:={\cal O}^{(x),1}_{weight} \circ\cal O_{dot} \circ  {\cal O}^{(x),2}_{weight} \circ {\cal O}^{(x),1}_{weight}  [\cal G], \ee
where the operator $\cal O_{dot}$ is applied to make sure that the resulting graphs after applying $\cal O_{dot}$ are normal regular.  The reason for the last  ${\cal O}^{(1)}_{weight}(x)$ operator will be explained in Remark \ref{remark remove_reg} below.

\end{definition}

\begin{remark}\label{remark remove_reg}
Consider the third term on the right-hand side of \eqref{Owx} as an example. First, when applying $\cal O_{dot}$, we will have a graph with $\al=x$, in which case $G_{\al x}$ becomes a weight $G_{xx}$. Second, we consider the partial derivative $\partial_{ h_{  \al x}} f(G) $.  Suppose $f(G)$ is of the form
$$f(G)=\sum_{\{y_i\},\{y_i'\},\{w_i\},\{w'_i\}} \prod_{i=1}^{k_1}G_{x y_i}  \cdot  \prod_{i=1}^{k_2}\overline G_{x y'_i} \cdot \prod_{i=1}^{k_3} G_{ w_i x} \cdot \prod_{i=1}^{k_4}\overline G_{ w'_i x} \cdot G_{xx}^{l_1} \overline G_{xx}^{l_2}(G_{xx}-m)^{l_3} (\overline G_{xx}-\overline m)^{l_4}  g(G),$$
where $ g(G)\equiv g (G,\{y_i\},\{y_i'\},\{w_i\},\{w'_i\})$ does not contain any weight or solid edge attached to atom $x$. Then we take the partial derivative of the weights and solid edges in $f(G)$ using the identities
\be\label{partialhG} \partial_{ h_{\al x}} G_{ab} = -G_{a\al}G_{xb}, \quad \partial_{ h_{\al x}} \overline G_{ba} =- \overline G_{bx}\overline G_{\al a},\quad a,b\in \Z_L^d.\ee
Note that it is possible to have $b=x$ (e.g.\;when we take the partial derivative $\partial_{ h_{\al x}}$ of $\overline G_{x y'_i} $, $G_{ w_i x}$ or a weight on $x$), which will lead to a weight $G_{xx}$ or $\overline G_{xx}$ on atom $x$.  Hence we can have regular weights in the graphs in $\cal O_{dot} \circ  {\cal O}^{(x),2}_{weight} \circ {\cal O}^{(x),1}_{weight}  [\cal G]$. These regular weights are  removed  by applying another ${\cal O}^{(x),1}_{weight} $.
\end{remark}

\begin{definition}[Canonical local expansions]\label{def_canonical_local}
A local expansion $\cal O^{(x)}$ of a normal regular graph $\cal G$ at an atom $x$ is said to be \emph{canonical}
if it satisfies the following properties.
\begin{itemize}
	\item[(i)] The graph value is unchanged after the expansion, i.e.,  $ \llbracket {\cal O} ^{(x)}[\cal G]\rrbracket  =\llbracket \cal G \rrbracket.$ 
		
	\item[(ii)] $  {\cal O} ^{(x)}  [\cal G  ] $  is a linear combination of normal regular graphs. 
	
	\item[(iii)] Every graph in ${\cal O}^{(x)}[\cal G]$ has scaling order $\ge \ord(\cal G)$.
	
	\item[(iv)] 
	If there is a new atom in a graph after the expansion, then it is connected to $x$ through a path of waved edges.
\end{itemize}
\end{definition}

The property (iv) shows that all the new atoms created in the expansions are included in the existing molecule containing atom $x$
and hence is consistent with the local property of $\cal O^{(x)}$.



  \begin{lemma}\label{expandlabel}
Given a normal regular graph $ \cal G$ with an atom $x$, $ {\cal O}_{weight}^{(x)}[\cal G]$ is a canonical local expansion. If $\cal G$ contains at least one weight at $x$, then every graph without $Q$-labels, say $\cal G_1$, in \smash{$ {\cal O}_{weight}^{(x)}[\cal G]$} satisfies one of the following two properties:


\begin{itemize}
	\item[(a)] its scaling order is strictly higher than $\ord(\cal G)$, i.e., $\ord(\cal G_1)\ge \ord(\cal G) +1$;
	\item[(b)] 
	$\ord(\cal G_1)= \ord(\cal G)$, and $\cal G_1$ has strictly fewer weights than $\cal G$ (more precisely, it contains at least one fewer weight on $x$, no weights on the new atoms, and the same number of weights on any other atom).
\end{itemize}

  \end{lemma}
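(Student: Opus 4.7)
The plan is to verify the four defining properties of Definition \ref{def_canonical_local} for the composite operator \eqref{eq defOdot} and then to analyze the scaling order/weight count on a case-by-case basis for the five terms in \eqref{Owx}. Property (i) (value preservation) is immediate once it is broken into pieces: $\cal O_{weight}^{(x),1}$ just splits $G_{xx}=m+(G_{xx}-m)$ (and its conjugate) and is value-preserving by inspection; $\cal O_{weight}^{(x),2}$ is an instance of the identity \eqref{weightG} from Lemma \ref{ssl}; and $\cal O_{dot}$ is value-preserving by Lemma \ref{lem Odot}. Normality (property (ii)) follows because the final $\cal O_{dot}$ resolves any new coincidences and the terminal $\cal O_{weight}^{(x),1}$ absorbs any diagonal $G$ edges at $x$ produced by the $\partial_{h_{\beta\al}}f(G)$ derivative (as explained in Remark \ref{remark remove_reg}) into light weights. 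Property (iv) is read off directly from \eqref{Owx}: every new summation index $\al$ or $\beta$ enters either through an $s_{x\al}$ factor or through $S^+_{x\al}s_{\al\beta}$, both of which are waved edges attaching the new atom to $x$ (for $S^+_{x\al}$, recall the decomposition $b = \mathrm{Id}+S^+$ in \eqref{defn b}).

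For property (iii), reduce first to the case where $\cal G$ carries exactly one light weight at $x$ and no regular weight at $x$; the reduction is performed by the initial $\cal O_{weight}^{(x),1}$, which either leaves the scaling order unchanged (by dropping a regular weight for an $m$ factor) or raises it by at least $1$ for every replacement $G_{xx}\mapsto (G_{xx}-m)$. On such a reduced graph $\cal G'=(G_{xx}-m)f(G)$, I would count, for each of the five terms in \eqref{Owx}, the net change in scaling order coming from the contributions: new light weights ($+1$ each), new waved/$S^\pm$ edges ($+2$ each), new internal atoms ($-2$ each), and new off-diagonal $G$ edges produced by $\partial_{h_{\al x}}$ via \eqref{partialhG} ($+1$ each, or $0$ if they end up diagonal and hence absorbed into weights). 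For terms 1 and 2 of \eqref{Owx}, the arithmetic gives a net change of $+1$; for the derivative terms 3 and 4, one loses the light weight on $x$ ($-1$) but gains a waved edge ($+2$), a new atom ($-2$), one off-diagonal $G$ edge ($+1$), and then the $\partial_{h_{\al x}}f(G)$ action replaces one $G$ edge in $f(G)$ by two new $G$ factors—this net adds at least one unit to the edge-count part of \eqref{eq_deforderrandom2} or else produces a regular $G_{xx}$ which, after the final $\cal O_{weight}^{(x),1}$, becomes a coefficient $m$ or a light weight at $x$. In every case the total change is $\ge 0$, yielding property (iii).

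The same bookkeeping drives the additional dichotomy. Fix a graph $\cal G_1$ in $\cal O_{weight}^{(x)}[\cal G]$ with no $Q$-label. If the initial $\cal O_{weight}^{(x),1}$ selected the $(G_{xx}-m)$ branch for some regular weight at $x$ that $\cal G$ carried but $\cal G_1$ does not need to keep (e.g.\ the weight gets consumed by the subsequent $\cal O_{weight}^{(x),2}$), then the chain of replacements produces an extra light weight and places case (a). Otherwise reduce to $\cal G'=(G_{xx}-m)f(G)$ as above. In terms 1 and 2 we showed a strict $+1$ gain, so case (a) holds. In terms 3 and 4, if the derivative action combined with $\cal O_{dot}$ creates any additional off-diagonal $G$ edge surviving the terminal $\cal O_{weight}^{(x),1}$, the order gains an extra unit and we land in (a); in the remaining sub-case, every $G$ factor produced by $\partial_{h_{\al x}}$ ends up either diagonal at $x$ (absorbed by $\cal O_{weight}^{(x),1}$ into a light weight placed on $x$) or merged by a dotted edge into an existing atom, in which case $\cal G_1$ has the same scaling order as $\cal G$ but the net count of weights at $x$ has dropped by one (the light weight $(G_{xx}-m)$ was removed while at most diagonal $G_{xx}$-weights—counted as regular, hence weight-removed in the final step—are reattached); this is exactly case (b). The bookkeeping also confirms the side condition that no weights sit on the new atoms, since any $G_{\al\al}$ or $G_{\beta\beta}$ weight appearing in the resulting normal-regular graph after $\cal O_{dot}$ is a light weight produced by \eqref{Owx} itself, not one carried into a newly created vertex.

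The main obstacle I anticipate is the careful accounting in the derivative terms (terms 3 and 4), where the partial derivative acts on $f(G)$ and potentially produces diagonal $G_{xx}$ factors that must be tracked through $\cal O_{dot}$ and the terminal $\cal O_{weight}^{(x),1}$. The key technical observation that makes the case analysis close is that the terminal $\cal O_{weight}^{(x),1}$ either replaces such a diagonal $G_{xx}$ by $m$ (scaling unchanged, one fewer weight at $x$, giving case (b)) or by $(G_{xx}-m)$ (scaling strictly higher, giving case (a)), and no other possibilities arise by Remark \ref{remark remove_reg}.
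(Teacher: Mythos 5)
Your proposal follows the same route as the paper: verify properties (i)--(iv) of Definition \ref{def_canonical_local} using Lemma \ref{ssl}, Lemma \ref{lem Odot}, and the decomposition $b=\mathrm{Id}+S^+$; then reduce to $\cal G'=(G_{xx}-m)f(G)$ via ${\cal O}_{weight}^{(x),1}$ and run a term-by-term case analysis on \eqref{Owx}, with the $m$ vs.\ $(G_{xx}-m)$ dichotomy for the diagonal $G_{xx}$ produced by the derivative (your last paragraph) being the correct key step that the paper also highlights. Up to that point the argument is sound.

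There is, however, a mis-classification of one sub-case in terms 3 and 4 of \eqref{Owx}. You characterize the ``remaining sub-case'' as: every $G$ factor from $\partial_{h_{\al x}}$ is either diagonal at $x$ or ``merged by a dotted edge into an existing atom,'' and you place both alternatives in case (b), claiming the scaling order is unchanged. The second alternative is wrong. If $\cal O_{dot}$ identifies the new atom $\al$ with an existing atom, then the waved edge $s_{x\al}$ still contributes scaling order $2$ to \eqref{eq_deforderrandom2}, but the compensating $-2$ from a new internal atom disappears. Against the $-1$ for losing the light weight $(G_{xx}-m)$, this is already a net gain of at least $+1$; adding $\ord(G_{\al x})\ge 0$ and $\ord[\partial_{h_{\al x}}f(G)]\ge\ord[f(G)]$ only pushes it further. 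So this sub-case lands in case (a), not (b). This is exactly what the paper's item (2) first bullet establishes (``the scaling order of $s_{x\al}$ is larger than $\ord(G_{xx}-m)=1$''). The only way to land in case (b) is the sub-case where $\al$ is genuinely a new atom, the derivative produces a factor $G_{xb}$ with $b=x$ (hence a regular $G_{xx}$ weight), and the terminal ${\cal O}^{(x),1}_{weight}$ selects the $m$ branch. Your bookkeeping criterion ``creates any additional off-diagonal $G$ edge'' also needs care, since even in case (b) the off-diagonal edge count goes up by one ($G_{\al x}$ and $G_{a\al}$ replacing $G_{ax}$), balanced exactly by the $+2-2$ from the waved edge and new atom and the $-1$ from the lost light weight.
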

 
 The proof of Lemma \ref{expandlabel} follows straightforwardly by using Definition \ref{Ow-def} and we postpone it to Appendix \ref{appd localpf}. The properties (a) and (b) in Lemma \ref{expandlabel}  show that, by applying the weight expansion repeatedly, we can get either new graphs without weights, or $Q$-graphs and graphs of sufficiently high scaling orders.

\subsection{Edge expansions} \label{sec localexp edge}

In this subsection, we introduce three basic edge expansion operators. First, we define a multi-edge expansion, which aims to remove atoms that have degrees larger than 2. Here we use the following notion of degrees of solid edges (i.e. plus and minus $G$ edges):
\be\label{degree}
{\deg}(x) := \# \{\text{solid edges connected with $x$}\} . 
\ee

\begin{lemma}
\label{Oe14}
In the setting of Theorem \ref{comp_delocal}, suppose that $f$ is a differentiable function of $G$. Consider a graph
\be\label{multi setting}
\cal G := \prod_{i=1}^{k_1}G_{x y_i}  \cdot  \prod_{i=1}^{k_2}\overline G_{x y'_i} \cdot \prod_{i=1}^{k_3} G_{ w_i x} \cdot \prod_{i=1}^{k_4}\overline G_{ w'_i x} \cdot f(G),
\ee
where the atoms $y_i,$ $y'_i$, $w_i$ and $w'_i$ are all not equal to $x$. If $k_1\ge 1$, then we have the following identity: 
\begin{align} 
& \cal G  = \sum_{i=1}^{k_2}m P_x \left[ \overline G_{xx}  \left( \sum_\al s_{x\al }G_{\al y_1} \overline G_{\al y'_i}\right)\frac{\cal G}{G_{x y_1} \overline G_{xy_i'}} \right] + \sum_{i=1}^{k_3} m P_x \left[ G_{xx} \left(\sum_\al s_{x\al }G_{\al y_1} G_{w_i \al} \right)\frac{\cal G}{G_{xy_1}G_{w_i x}}\right] \nonumber \\
&+  m P_x \left[  \sum_\al s_{x\al }\left(G_{\al \al}-m\right) \cal G\right] +(k_1-1) m P_x \left[  \sum_\al s_{x\al }G_{\al y_1} G_{x \al} \frac{ \cal G}{G_{xy_1}}\right] + k_4 m P_x \left[  \sum_\al s_{x\al }G_{\al y_1} \overline G_{\al x} \frac{\cal G}{G_{xy_1}}\right] \nonumber\\
 & - m P_x \left[  \sum_\al s_{x\al }\frac{\cal G}{G_{x y_1}f(G)}G_{\al y_1}\partial_{ h_{\al x}}f (G)\right]+ Q_x \left(  \cal G\right)  .\label{gH0} 
 \end{align} 
Here the fractions are used to simplify the expression. For example, the fraction ${\cal G}/({G_{x y_1} \overline G_{xy_i'}})$ is the  graph obtained by removing the factor ${G_{x y_1} \overline G_{xy_i'}}$ from the product in \eqref{multi setting}.
 \end{lemma}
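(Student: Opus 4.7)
The plan is to prove Lemma \ref{Oe14} by the same mechanism used in Lemmas \ref{expandtheta} and \ref{ssl}: decompose $\cal G = P_x(\cal G) + Q_x(\cal G)$, which immediately accounts for the $Q_x(\cal G)$ term at the end of \eqref{gH0}, and then expand the partial expectation $P_x(\cal G)$ by applying Gaussian integration by parts to a single distinguished factor, here the off-diagonal entry $G_{xy_1}$. This is the natural extension of the $T$-expansion technique from a $G\overline G$ pair (as in Lemma \ref{expandtheta}) or a light weight (as in Lemma \ref{ssl}) to a single solid edge.

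The starting point is the resolvent identity \eqref{GmH}. Since $y_1 \neq x$ by hypothesis, the identity $G - m = -m(H+m)G$ applied at entry $(x,y_1)$ gives
\[ G_{xy_1} = -m^2 G_{xy_1} - m\sum_{\alpha}H_{x\alpha}G_{\alpha y_1}. \]
Substituting this for one copy of $G_{xy_1}$ inside $P_x(\cal G) = P_x[G_{xy_1}\cdot\mathrm{rest}]$, where $\mathrm{rest}:= \cal G/G_{xy_1}$, produces an equation of the form
\[ (1+m^2)P_x(\cal G) = -m\sum_\alpha P_x\big[H_{x\alpha}G_{\alpha y_1}\cdot\mathrm{rest}\big]. \]
The next step is Gaussian integration by parts, $P_x[h_{x\alpha}\Psi] = s_{x\alpha}P_x[\partial_{h_{\alpha x}}\Psi]$, followed by a Leibniz expansion of $\partial_{h_{\alpha x}}(G_{\alpha y_1}\cdot\mathrm{rest})$ using the differentiation rules \eqref{partialhG}.

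The key algebraic observation, mirroring the step that makes the proof of Lemma \ref{ssl} close, is that the Leibniz term in which $\partial_{h_{\alpha x}}$ hits $G_{\alpha y_1}$ itself equals $-G_{\alpha\alpha}G_{xy_1}$; after summing over $\alpha$ against $s_{x\alpha}$ (using $\sum_\alpha s_{x\alpha}=1$) and splitting $G_{\alpha\alpha} = m + (G_{\alpha\alpha}-m)$, this contributes $m^2 P_x(\cal G) + m\sum_\alpha s_{x\alpha}P_x[(G_{\alpha\alpha}-m)\cal G]$, and the $m^2 P_x(\cal G)$ piece cancels exactly the $m^2 P_x(\cal G)$ implicit on the left-hand side $(1+m^2)P_x(\cal G)$. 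What remains produces Term~3 of \eqref{gH0}. The remaining Leibniz pieces, in which $\partial_{h_{\alpha x}}$ hits respectively the $k_2$ factors $\overline G_{xy_i'}$, the $k_3$ factors $G_{w_ix}$, the remaining $k_1 - 1$ factors $G_{xy_i}$ for $i \ge 2$, and the $k_4$ factors $\overline G_{w_i'x}$, produce Terms~1, 2, 4, and 5 of \eqref{gH0}, respectively, with the stated combinatorial coefficients $k_2$, $k_3$, $k_1-1$, $k_4$. Finally, the term in which $\partial_{h_{\alpha x}}$ is left on $f(G)$ produces Term~6.

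The argument is purely algebraic and requires no estimates. The only conceptual point to watch is the cancellation of the $m^2 P_x(\cal G)$ contribution produced by the derivative of $G_{\alpha y_1}$; without it one could not solve for $P_x(\cal G)$ in closed form, and the identity would contain $P_x(\cal G)$ on both sides. Beyond that, the main obstacle is bookkeeping: careful tracking of signs and of which of the four ``types'' of $G$-factors attached to $x$ is being differentiated at each step, combined with the standard resolvent and Gaussian identities already used repeatedly in the preceding lemmas.
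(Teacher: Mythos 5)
Your proposal is correct and follows essentially the same route as the paper: isolate $G_{xy_1}$, apply the resolvent identity \eqref{GmH} at entry $(x,y_1)$ (valid since $x\neq y_1$), apply Gaussian integration by parts to the $HG$ term, split the diagonal entry $G_{\alpha\alpha}=m+(G_{\alpha\alpha}-m)$ using $\sum_\alpha s_{x\alpha}=1$ so the $m^2P_x(\cal G)$ contributions cancel, and then write $\cal G = P_x(\cal G)+Q_x(\cal G)$. The only nit is a phrasing one: Terms~1 and~2 of \eqref{gH0} are genuine sums over $i=1,\ldots,k_2$ and $i=1,\ldots,k_3$ (the $y_i'$ and $w_i$ need not coincide), so ``coefficients $k_2$, $k_3$'' is imprecise, whereas the scalar factors $k_1-1$ and $k_4$ in Terms~4 and~5 do arise because those $k_1-1$ (resp.\ $k_4$) contributions are all identical; this does not affect the correctness of the argument.
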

 \begin{proof}
Using \eqref{GmH} and $x\ne y_1$, we can write that
 \be
  \begin{split}\label{gH}
 & P_x \left(  \cal G \right) =P_x \left[ \left( - m^2 G_{xy_1} - m \sum_\al h_{x\al }G_{\al y_1}\right) \prod_{i=2}^{k_1}G_{x y_i}  \cdot  \prod_{i=1}^{k_2}\overline G_{x y'_i} \cdot \prod_{i=1}^{k_3} G_{ w_i x} \cdot \prod_{i=1}^{k_4}\overline G_{ w'_i x}\cdot f(G)\right] .
 \end{split} 
 \ee
We apply Gaussian integration by parts to the $HG$ term to get that
  \begin{align*}
 & - m P_x \left[  \sum_\al h_{x\al }G_{\al y_1}\cdot \prod_{i=2}^{k_1}G_{x y_i}  \cdot  \prod_{i=1}^{k_2}\overline G_{x y'_i} \cdot \prod_{i=1}^{k_3} G_{ w_i x} \cdot \prod_{i=1}^{k_4}\overline G_{ w'_i x}\cdot f(G)\right] \\
 &=  m P_x \left[  \left(\sum_\al s_{x\al }G_{\al \al}\right) \cal G\right] +(k_1-1) m P_x \left[  \sum_\al s_{x\al }G_{\al y_1} G_{x \al} \frac{\cal G}{G_{xy_1}}\right] + k_4 m P_x \left[  \sum_\al s_{x\al }G_{\al y_1} \overline G_{\al x} \frac{\cal G}{G_{xy_1}}\right] \\
 &+  \sum_{i=1}^{k_2}m P_x \left[ \overline G_{xx}  \left( \sum_\al s_{x\al }G_{\al y_1} \overline G_{\al y'_i}\right)\frac{\cal G}{G_{x y_1} \overline G_{xy_i'}} \right] + \sum_{i=1}^{k_3} m P_x \left[ G_{xx} \left(\sum_\al s_{x\al }G_{\al y_1} G_{w_i \al} \right)\frac{\cal G}{G_{xy_1}G_{w_i x}}\right] \\
 & - m P_x \left[  \sum_\al s_{x\al }G_{\al y_1}\cdot \prod_{i=2}^{k_1}G_{x y_i}  \cdot  \prod_{i=1}^{k_2}\overline G_{x y'_i} \cdot \prod_{i=1}^{k_3} G_{ w_i x} \cdot \prod_{i=1}^{k_4}\overline G_{ w'_i x} \cdot\partial_{  h_{\al x}}f(G)\right].
 \end{align*}
Plugging it into \eqref{gH} and using $\cal G=P_x(\cal G)+Q_x(\cal G)$, we conclude \eqref{gH0}.
\end{proof}

Applying $P_x=1-Q_x$ to \eqref{gH0}, we can define the following multi-edge expansion operator.

\begin{definition} [Multi-edge expansion operator]\label{multi-def} 
Given a normal regular graph $\cal G$, if there are no solid edges connected with an atom $x$, then we trivially define \smash{$\cal O_{multi-e}^{(x)}[\cal G]:=\cal G$}. Otherwise, we define \smash{${\cal O}_{multi-e}^{(x)}$} in the following way. 
Suppose $\cal G$ takes the form \eqref{multi setting},
where 
the atoms $y_i,$ $y'_i$, $w_i$ and $w'_i$ are all not equal to $x$. 

\medskip
\noindent{(i)} If $k_1\ge 1$, then we define the multi-edge expansion on $x$ as 
\begin{align} 
& \wh{\cal O}_{multi-e}^{(x)} \left[\cal G\right] : = \sum_{i=1}^{k_2}|m|^2  \left( \sum_\al s_{x\al }G_{\al y_1} \overline G_{\al y'_i}\right)\frac{\cal G}{G_{x y_1} \overline G_{xy_i'}}   + \sum_{i=1}^{k_3} m^2 \left(\sum_\al s_{x\al }G_{\al y_1} G_{w_i \al} \right)\frac{\cal G}{G_{xy_1}G_{w_i x}} \nonumber \\
& + \sum_{i=1}^{k_2}m  (\overline G_{xx} -\overline m) \left( \sum_\al s_{x\al }G_{\al y_1} \overline G_{\al y'_i}\right)\frac{\cal G}{G_{x y_1} \overline G_{xy_i'}}   + \sum_{i=1}^{k_3} m  (G_{xx}-m) \left(\sum_\al s_{x\al }G_{\al y_1} G_{w_i \al} \right)\frac{\cal G}{G_{xy_1}G_{w_i x}} \nonumber \\
& +  m   \sum_\al s_{x\al }\left(G_{\al \al}-m\right) \cal G  +(k_1-1) m  \sum_\al s_{x\al } G_{x \al} G_{\al y_1}\frac{ \cal G}{G_{xy_1}}  + k_4 m   \sum_\al s_{x\al }\overline G_{\al x} G_{\al y_1}  \frac{\cal G}{G_{xy_1}}  \nonumber\\
 & - m    \sum_\al s_{x\al } \frac{\cal G}{G_{x y_1}f(G)}G_{\al y_1}\partial_{ h_{\al x}}f (G)  +\cal Q_{multi-e} .\label{Oe1x}
\end{align}
On the right-hand side of \eqref{Oe1x}, the first two terms are main terms with the same scaling order as $\cal G$, but the degree of atom $x$ is reduced by 2 and a new atom $\al$ with degree 2 is created; the third to fifth terms contain one more light weight and hence are of  strictly higher scaling orders than $\cal G$; the sixth to eighth terms contain at least one more off-diagonal $G$ edge and hence are of strictly higher scaling orders than $\cal G$.  The last term $\cal Q_{multi-e} $ is a sum of $Q$-graphs defined by 
\begin{align*}
  \cal Q_{multi-e} &:= Q_x \left( \cal G\right)  -  \sum_{i=1}^{k_2}m Q_x \left[ \overline G_{xx}  \left( \sum_\al s_{x\al }G_{\al y_1} \overline G_{\al y'_i}\right)\frac{\cal G}{G_{x y_1} \overline G_{xy_i'}} \right] \\
& - \sum_{i=1}^{k_3} m Q_x \left[ G_{xx} \left(\sum_\al s_{x\al }G_{\al y_1} G_{w_i \al} \right)\frac{\cal G}{G_{xy_1}G_{w_i x}}\right] -  m Q_x \left[  \sum_\al s_{x\al }\left(G_{\al \al}-m\right) \cal G\right] \\
&-(k_1-1) m Q_x \left[  \sum_\al s_{x\al }G_{x \al} G_{\al y_1}  \frac{ \cal G}{G_{xy_1}}\right] - k_4 m Q_x \left[  \sum_\al s_{x\al }\overline G_{\al x} G_{\al y_1}  \frac{\cal G}{G_{xy_1}}\right] \\
& +m Q_x \left[  \sum_\al s_{x\al } \frac{\cal G}{G_{x y_1}f(G)}G_{\al y_1}\partial_{h_{\al x}}f(G)  \right].
 \end{align*} 

\noindent{(ii)} If $k_1=0$ and $k_2\ge 1$, then we define 
$$\wh{\cal O}_{multi-e}^{(x)} \left[\cal G\right]:= \overline{  \wh{\cal O}_{multi-e}^{(x)} \left[\prod_{i=1}^{k_2}G_{x y'_i} \cdot \prod_{i=1}^{k_3} \overline  G_{ w_i x} \cdot \prod_{i=1}^{k_4} G_{ w'_i x} \cdot \overline{f (G)}\right] },$$
where the right-hand side can be defined using (i).

\medskip

\noindent{(iii)} If $k_1=k_2=0$ and $k_3 \ge 1$, then we define $ \wh{\cal O}_{multi-e}^{(x)} \left[\cal G\right]$ by exchanging the order of matrix indices in (i). More precisely, we define
\begin{align} 
 \wh{\cal O}_{multi-e}^{(x)} \left[\cal G\right] & := \sum_{i=1}^{k_4}|m|^2  \left( \sum_\al s_{x\al }G_{w_1\al} \overline G_{w'_i\al}\right)\frac{\cal G}{G_{w_1x} \overline G_{w_i'x}}   + \sum_{i=1}^{k_4}m  (\overline G_{xx} -\overline m) \left( \sum_\al s_{x\al }G_{w_1 \al } \overline G_{ w'_i \al}\right)\frac{\cal G}{G_{w_1x} \overline G_{w_i' x}}    \nonumber \\
& +  m   \sum_\al s_{x\al }\left(G_{\al \al}-m\right) \cal G  +(k_3-1) m  \sum_\al s_{x\al }G_{w_1\al} G_{\al x} \frac{ \cal G}{G_{w_1x}} \nonumber\\
&  - m    \sum_\al s_{x\al } \frac{\cal G}{G_{w_1x}f(G)}G_{  w_1 \al}\partial_{ h_{x\al}}f(G)  +\cal Q_{multi-e} ,\label{Oe1x3}
\end{align}
where 
\begin{align*}
  \cal Q_{multi-e} &:= Q_x \left( \cal G\right)  -  \sum_{i=1}^{k_4}m Q_x \left[ \overline G_{xx}  \left( \sum_\al s_{x\al }G_{w_1 \al } \overline G_{ w'_i \al}\right)\frac{\cal G}{G_{w_1 x} \overline G_{w_i' x}} \right]  -  m Q_x \left[  \sum_\al s_{x\al }\left(G_{\al \al}-m\right) \cal G\right] \\
&  -(k_3-1) m Q_x \left[  \sum_\al s_{x\al }G_{w_1 \al } G_{\al x} \frac{ \cal G}{G_{w_1 x}}\right]  +m Q_x \left[  \sum_\al s_{x\al } \frac{\cal G}{G_{w_1x}f(G)}G_{ w_1\al}\partial_{ h_{x\al }}f(G) \right] .
 \end{align*} 

\noindent{(iv)} If $k_1=k_2=k_3=0$ and $k_4\ge 1$, then we define 
$$\wh{\cal O}_{multi-e}^{(x)} \left[\cal G\right]:= \overline{  \wh{\cal O}_{multi-e}^{(x)} \left[  \prod_{i=1}^{k_4} G_{ w'_i x} \cdot \overline{f (G)}\right] },$$
where the right-hand side can be defined using (iii).
 
Finally, applying the $\cal O_{dot}$ in Definition \ref{dot-def}, we define
$${\cal O}_{multi-e}^{(x)}[ \cal G]:= \cal O_{dot} \circ  \wh{\cal O}_{multi-e}^{(x)}   [\cal G]. $$ 
 \end{definition}

 The multi-edge expansion motivates the following definition of \emph{matched} and \emph{mismatched} solid edges.

\begin{definition}[Matched and mismatched edges]
Consider an internal atom $x$ of degree 2 in a graph. We say the two edges connected with $x$ are {\bf mismatched} if they are of the following forms: 
\begin{center}
 \parbox[c]{0.8\linewidth}{\includegraphics[width=13cm]{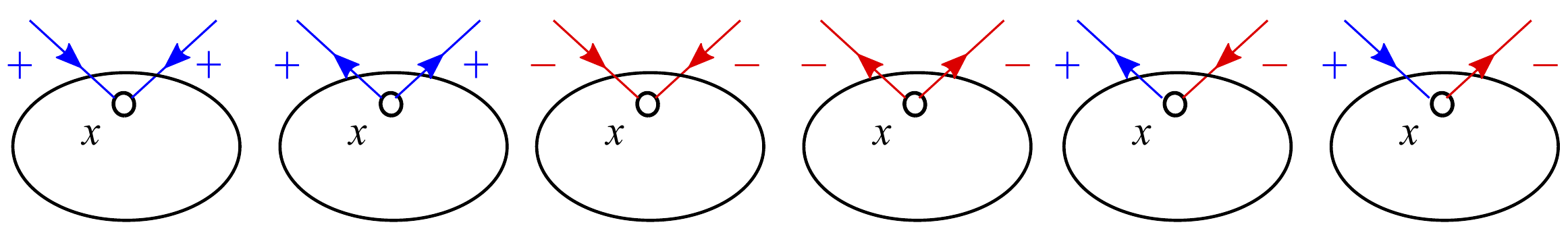}} 
 \end{center}
Otherwise the two edges are {\bf matched} and of the following forms:
  \begin{center}
 \parbox[c]{0.56\linewidth}{\includegraphics[width=9cm]{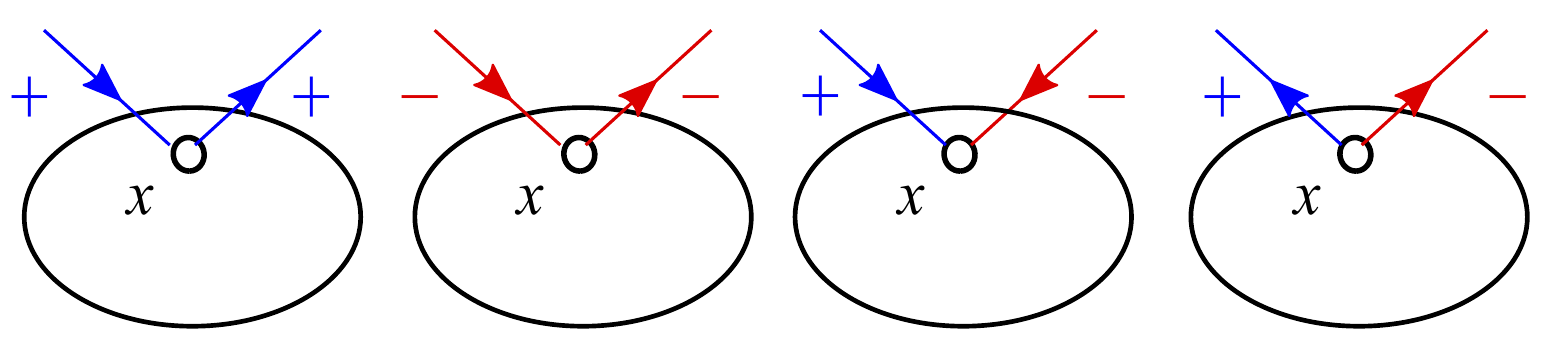}} 
\end{center}
Alternatively, a degree 2 atom $x$ is said to be connected with two matched edges if and only if its charge is 0, where  the charge of an atom is defined by  
$$\#\{\text{incoming $+$ and outgoing $-$ solid edges}\}- \#\{\text{outgoing $+$ and incoming $-$ solid edges}\} .$$
\end{definition}

By Definition \ref{multi-def}, we can see that if $x$ is connected with two mismatched edges, then $\cal O_{multi-e}(x) \left[\cal G\right] $ is a sum of graphs that are all of strictly higher scaling orders than $\cal G$. For example, we take  $\cal G=G_{xy}   G_{x y'}  f(G)$, i.e., $k_1=2$ and $k_2=k_3=k_4=0$ in \eqref{multi setting}. Then the first two main terms on the right-hand side of \eqref{Oe1x} are both zero.

The following lemma describes the basic properties of multi-edge expansions. 
Its proof is a straightforward application of Definition \ref{multi-def}, and we postpone it to Appendix \ref{appd localpf}.
   
\begin{lemma}\label{expandmulti}
Consider a normal regular graph $\cal G$ taking the form \eqref{multi setting}, where $f (G)$ does not contain any $G$ edges or weights attached to $x$, and the atoms $y_i,$ $y'_i$, $w_i$ and $w'_i$ are all not equal to $x$. Then \smash{${\cal O}_{multi-e}^{(x)}[\cal G]$} is a canonical local expansion satisfying the following properties. 
\begin{itemize}
\item[(a)] 
Suppose that $\deg(x)\ge 4$ in $\cal G$. Then every graph without $Q$-labels, say $\cal G_1$, in \smash{${\cal O}_{multi-e}^{(x)}[\cal G]$} either has a strictly higher scaling order than $\cal G$ or satisfies one of the following two properties:
\begin{itemize}
\item[(a.1)] 
$\ord(\cal G_1)= \ord(\cal G)$; $\cal G_1$ has one new atom with degree 2;  $\deg(x)$ in $\cal G_1$ is smaller than $\deg(x)$ in $\cal G$ by 2, and the degree of any other atom stays the same as in $\cal G$;
\item[(a.2)] 
$\ord(\cal G_1)= \ord(\cal G)$; $\cal G_1$ has no new atom; $\deg(x)$ in $\cal G_1$ is smaller than $\deg(x)$ in $\cal G$ by 2, and the degree of any other atom either stays the same or decreases by 2. 
\end{itemize}
\item[(b)] 
Suppose that $\deg(x)=1$, or $x$ is connected with exactly two mismatched solid edges in $\cal G$. Then 
every graph without $Q$-labels has a strictly higher scaling order than $\cal G$.
\end{itemize}
  \end{lemma}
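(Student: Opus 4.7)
The plan is to give a term-by-term analysis of the expansion formulas \eqref{Oe1x} and \eqref{Oe1x3} in Definition \ref{multi-def}, combined with the scaling-order count \eqref{eq_deforderrandom2}. First I would verify the four conditions of Definition \ref{def_canonical_local}. The value-preservation $\llbracket \cal O_{multi-e}^{(x)}[\cal G]\rrbracket = \llbracket \cal G\rrbracket$ follows from Lemma \ref{Oe14} (identity \eqref{gH0}) together with $P_x = 1 - Q_x$, after noting that the cases (ii) and (iv) reduce to (i) and (iii) by complex conjugation. Normality of the output is automatic because of the final $\cal O_{dot}$ in $\cal O_{multi-e}^{(x)} = \cal O_{dot}\circ \wh{\cal O}_{multi-e}^{(x)}$. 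The locality property (condition (iv) of Definition \ref{def_canonical_local}) holds because every newly summed atom $\alpha$ appears adjacent to a factor $s_{x\alpha}$, i.e.\ a neutral waved edge to $x$, so it lies in $x$'s molecule.

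Next, for part (a), assume $\deg(x)\geq 4$ and work through \eqref{Oe1x}. The first two terms remove the solid pair $G_{xy_1}\overline G_{xy_i'}$ (resp.\ $G_{xy_1}G_{w_ix}$) and insert $s_{x\alpha}G_{\alpha y_1}\overline G_{\alpha y_i'}$ (resp.\ $s_{x\alpha}G_{\alpha y_1}G_{w_i\alpha}$). Using \eqref{eq_deforderrandom2}, the count is $-2$ (edges removed) $+2$ (new solid edges) $+2$ (new waved edge) $-2$ (new internal atom) $=0$, so the scaling order is preserved; moreover $\deg(x)$ drops by $2$ and a new degree-$2$ atom $\alpha$ is introduced—this is case (a.1). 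After $\cal O_{dot}$, we may identify $\alpha$ with an existing atom of the graph; in that case the new atom disappears, the scaling order is still preserved (the loss of a free sum is compensated by a new dotted edge), and the degree of the identified atom changes by $\pm 2$ at most—this is case (a.2). The third and fourth terms add a light weight, adding $1$ to the scaling order. The sixth and seventh terms replace a single $G_{xy_1}$ by $G_{x\alpha}G_{\alpha y_1}$ (or $\overline G_{\alpha x}G_{\alpha y_1}$) with an extra $s_{x\alpha}$, giving net order change $-1+2+2-2 = +1$. The derivative term $\partial_{h_{\alpha x}}f(G)$, expanded via \eqref{partialhG}, produces a product of two $G$ factors in place of one, so the same bookkeeping yields an increase by at least $1$. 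The $\cal Q_{multi-e}$ piece is a $Q$-graph and is excluded from the hypothesis.

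For part (b), both scenarios arise by specializing the exponents $(k_1,k_2,k_3,k_4)$ in \eqref{multi setting}. If $\deg(x)=1$, WLOG $k_1=1$ and $k_2=k_3=k_4=0$: the first two sums in \eqref{Oe1x} are empty, the factors $(k_1-1)$ and $k_4$ both vanish, and only the light-weight term, the derivative term, and $\cal Q_{multi-e}$ survive; by the order analysis above, the surviving non-$Q$ terms strictly increase the scaling order. If $x$ has two mismatched solid edges, WLOG $k_1=2$ and $k_2=k_3=k_4=0$ (the other three mismatched patterns being handled by symmetry and by \eqref{Oe1x3}): again the first two sums vanish, and the only surviving non-$Q$ terms are the light-weight term (order $+1$), the $(k_1-1)=1$ term creating an extra off-diagonal $G_{x\alpha}$ (order $+1$), and the derivative term (order $\geq +1$).

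The main bookkeeping obstacle is correctly tracking the effect of $\cal O_{dot}$ on the first two ``main'' terms of \eqref{Oe1x}, namely distinguishing case (a.1) (when $\alpha$ is not identified with any existing atom) from case (a.2) (when it is). In the latter case one must verify that the off-diagonal/diagonal status of surviving $G$-factors and the increment of $\#\{\text{dotted edges}\}$ exactly cancel the loss of a free summation in \eqref{eq_deforderrandom2}, and that the degree changes of other atoms are always by multiples of $2$—a consequence of the fact that $\alpha$ itself enters with degree $2$ and merges with at most one other atom at a time. Once this is checked, the remaining claims are direct edge counts.
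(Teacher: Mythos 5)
Your overall structure matches the paper's proof: verify the canonical local expansion conditions, then go term-by-term through the right-hand side of \eqref{Oe1x} with scaling-order bookkeeping via \eqref{eq_deforderrandom2}, and finally specialize the degree counts for part (b). The light-weight terms, the $(k_1-1)$ and $k_4$ terms, and the vanishing of the main sums in case (b) are all handled correctly.

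However, there is a genuine gap in your treatment of the main (first and second) terms under $\cal O_{dot}$. You claim that whenever the new atom $\alpha$ is identified with an existing atom, ``the scaling order is still preserved,'' and you assign all such graphs to case (a.2). This is false when $\alpha$ is identified with exactly one of $y_1$ and $y_i'$ while $y_1 \neq y_i'$. Concretely, replacing $G_{xy_1}\overline{G}_{xy_i'}$ (two off-diagonal edges, contributing $2$) by $s_{xy_1} G_{y_1 y_1}\overline{G}_{y_1 y_i'}$ (a waved edge contributing $2$, a regular weight contributing $0$, one off-diagonal edge contributing $1$, and no net new summation index) gives a subgraph of scaling order $3$, i.e.\ a strict increase. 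So you must split the identification into three cases, as the paper does: $\alpha$ identified with neither $y_1$ nor $y_i'$ gives (a.1); $\alpha$ identified with exactly one of them (with $y_1 \ne y_i'$) gives strictly higher order; and $\alpha = y_1 = y_i'$ gives (a.2), where both $G$ factors become regular weights and the degree of the merged atom drops by exactly $2$. Your ``changes by $\pm 2$ at most'' is also looser than what (a.2) actually asserts, which is that the degree never increases.

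A secondary, smaller issue: your analysis of the derivative term $\partial_{h_{\alpha x}} f(G)$ asserts ``increase by at least $1$'' without saying why. The key point is the standing hypothesis that $f(G)$ contains no $G$ edges or weights attached to $x$, which forces $a \ne x$ and $b \ne x$ in \eqref{partialhG}, so both new factors $G_{a\alpha}$ and $G_{xb}$ (or $\overline G_{bx}, \overline G_{\alpha a}$) are off-diagonal; without this, one of them could be a regular weight $G_{xx}$ contributing $0$ and the count would not close. You also need to address the separate branch where $\cal O_{dot}$ identifies $\alpha$ with $x$ or with some atom inside $f(G)$, which the paper handles by noting the scaling order of $s_{x\alpha}$ exceeds that of $G_{xy_1}$ and that the derivative graphs have order $\geq \mathrm{ord}[f(G)]$.
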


 Lemma \ref{expandmulti} shows that, by applying the multi-edge expansion repeatedly, we can either make all atoms in the resulting graphs to be connected with \emph{exactly two matched solid edges}, or get $Q$-graphs and graphs of sufficiently high scaling orders. 

If an atom is connected with exactly two matched solid edges, then applying the multi-edge expansion cannot improve the graph anymore. Instead, we will apply the $G G$ expansion given by the following lemma if these two edges are of the same charge.

  \begin{lemma} \label{T eq0}
In the setting of Theorem \ref{comp_delocal}, consider a graph $\cal G= G_{xy}   G_{y' x }  f (G)$ where $f$ is a differentiable function of $G$ and $y,y'\ne x$. Then we have that 
\be\label{G2fH}
\begin{split} 
 \cal G   &  =   m b_{xy} P_y\left[ G_{y' y} f(G)\right]  +m \sum_{\al,\beta}  b_{x\al} s_{\al\beta}P_\al\left[  (G_{\beta \beta}-m)  G_{\al y}   G_{y' \al }  f (G)\right]  \\
 & + m \sum_{\al,\beta}  b_{x\al} s_{\al\beta} P_\al\left[ (G_{\al\al}-m)   G_{\beta y}   G_{y' \beta }  f (G)\right]- m \sum_{\al,\beta} b_{x\al}    s_{\al\beta}P_\al\left[ G_{\beta y} G_{y' \al} \partial_{ h_{\beta \al}}f(G)\right]\\
  & + \sum_\al b_{x\al}  Q_\al\left[G_{\al y}   G_{y' \al}  f (G)\right]   - m^2 \sum_{\al, \beta}b_{x\al} s_{\al\beta}  Q_\al\left[ G_{\beta y}   G_{y' \beta }  f (G)\right] ,
 \end{split} 
 \ee 
where $b$ is  defined in \eqref{defn b}.
\end{lemma}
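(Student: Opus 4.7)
The plan is to mirror the derivations of Lemma~\ref{expandtheta} and Lemma~\ref{ssl}. The starting point is the exact identity
\[
G_{xy} = m\delta_{xy} - m^2 G_{xy} - m\sum_\al h_{x\al}G_{\al y},
\]
a rewriting of $(H-z)G=I$ via $m(z+m)=-1$. Multiplying by $G_{y'x}f(G)$, taking the partial expectation $P_x$, and performing Gaussian integration by parts on the $h_{x\al}$ factor produces a pointwise equation in which the matrix $1-m^2 S$ appears on the left; inverting it yields $b=(1-m^2 S)^{-1}$. Although the hypothesis $y\ne x$ makes the $\delta_{xy}$ source vanish at the index $x$ itself, this source must be kept before the inversion because the same identity applied at the index $\gamma=y$ is genuinely inhomogeneous, and after inversion it reappears as the leading term $m b_{xy}P_y[G_{y'y}f(G)]$.

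Concretely, first I would rewrite $(1+m^2)\cal G = m\delta_{xy}G_{y'x}f(G) - m\sum_\al h_{x\al}G_{\al y}G_{y'x}f(G)$, apply $P_x$, and integrate by parts in $h_{\al x}=\overline{h_{x\al}}$ using $\partial_{h_{\al x}}G_{ab}=-G_{a\al}G_{xb}$ from \eqref{partialhG}. The $HG$ term then produces three pieces: $m\sum_\al s_{x\al}P_x[G_{\al\al}\cal G]$, $m\sum_\al s_{x\al}P_x[G_{xx}G_{\al y}G_{y'\al}f(G)]$, and a remainder $-m\sum_\al s_{x\al}P_x[G_{\al y}G_{y'x}\partial_{h_{\al x}}f(G)]$. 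Splitting $G_{\al\al}=m+(G_{\al\al}-m)$ and $G_{xx}=m+(G_{xx}-m)$ and using $\sum_\al s_{x\al}=1$, the leading $m^3$ contribution cancels the $-m^2 P_x[\cal G]$ already present on the left, while the leading $m^2$ contribution becomes $m^2\sum_\al s_{x\al}P_x[G_{\al y}G_{y'\al}f(G)]$. Writing $P_x=1-Q_x$ throughout and adding $Q_x[\cal G]$ to recover $\cal G$ from $P_x[\cal G]$, the identity reduces to the pointwise-in-$x$ equation
\[
F(x) \;-\; m^2(SF)(x) \;=\; r(x), \qquad F(\al):=G_{\al y}G_{y'\al}f(G),
\]
where $r(x)$ collects exactly five pieces: the source $m\delta_{xy}P_x[G_{y'x}f(G)]$, two light-weight terms from $(G_{\al\al}-m)$ and $(G_{xx}-m)$, the partial-derivative remainder, and the two fluctuations $Q_x[F(x)]-m^2\sum_\al s_{x\al}Q_x[F(\al)]$.

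The crucial observation for the last step is that the derivation above only uses the resolvent identity applied to $G_{\gamma y}$ and Gaussian IBP on the $\gamma$-row of $H$, so it is valid index-by-index: $F(\gamma)-m^2(SF)(\gamma)=r(\gamma)$ holds for every $\gamma$, with $r(\gamma)$ naturally built from $P_\gamma$ and $Q_\gamma$ rather than $P_x$ and $Q_x$. Viewing this as a vector identity $(1-m^2 S)F=r$ and applying $b=(1-m^2 S)^{-1}$ (which is well defined since $|m(z)|<1$ for $\eta>0$ while $S$ is doubly stochastic) gives $F(x)=\sum_\gamma b_{x\gamma}r(\gamma)$. The $m\delta_{\gamma y}P_\gamma[G_{y'\gamma}f(G)]$ source inside $r(\gamma)$ collapses after summation to $m b_{xy}P_y[G_{y'y}f(G)]$, producing the leading term of \eqref{G2fH}; the remaining pieces, upon the relabelling $\gamma\to\al$ and inner summation index $\to\beta$, yield precisely the other five terms claimed. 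The main obstacle is a bookkeeping one: one must verify that the same derivation at every index $\gamma$ is genuinely measurable with respect to the $\gamma$-row/column of $H$ so that $P_x$ in the pointwise identity correctly becomes $P_\gamma$ in the vectorial equation; apart from this, everything reduces to algebraic manipulations parallel to those behind \eqref{weightG}.
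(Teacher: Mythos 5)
Your proof is correct and uses essentially the same tools as the paper's (the resolvent identity $G-m=-m(H+m)G$, Gaussian integration by parts, and inversion of $1-m^2S$ via $b$), just arranged in the opposite order: you first derive the pointwise linear system $(1-m^2S)F=r$ at a generic index $\gamma$ (with $P_\gamma,Q_\gamma$) and then invert, whereas the paper's proof of Lemma~\ref{T eq0} inserts $b$ at the outset by writing $P_x(\cal G)=\sum_\al[b_{x\al}-m^2(bS)_{x\al}]P_\al[G_{\al y}G_{y'\al}f(G)]$ and expands the $b$-weighted term. The ``bookkeeping'' you flag — that the derivation is measurable with respect to the $\gamma$-th row so that $P_x$ becomes $P_\gamma$ — is exactly what the paper handles implicitly by working with $P_\al$ from the beginning, so there is no gap.
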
 
\begin{proof}
Using \eqref{GmH} and $I_N=b - m^2 bS$, we get that  
\begin{align}
   P_x\left(  \cal G\right) & = \sum_\al \left[ b_{x\al} -m^2 (b S)_{x\al}\right] P_\al\left[G_{\al y}   G_{y' \al}f(G)\right] \nonumber\\
   &= -\sum_\al m^2 (b S)_{x\al}  P_\al\left[G_{\al y}   G_{y' \al}f(G)\right] + \sum_\al  b_{x\al} P_\al\left[ \left(m\delta_{\al y}-m^2G_{\al y} - m (HG)_{\al y}\right) G_{y' \al} f(G)\right] .\label{dervG2}
  \end{align}
 Applying Gaussian integration by parts to the $ HG $ term, we get that
\begin{align*}
   &P_\al\Big[  -m \sum_\beta h_{\al\beta}G_{\beta y} G_{y' \al} f(G)\Big] \\
   =& P_\al\Big[ m \sum_\beta  s_{\al\beta}G_{\beta\beta} G_{\al y}   G_{y' \al}f(G) + m \sum_\beta s_{\al\beta} G_{\al\al}  G_{\beta y}   G_{y' \beta}f(G) - m \sum_\beta s_{\al\beta}G_{\beta y} G_{y' \al} \partial_{ h_{ \beta \al}}f(G)\Big].
\end{align*}
Plugging it into \eqref{dervG2},  using $1=P_x+Q_x$, $\delta_{x\al}+m^2 (b S)_{x\al}=b_{x\al}$ and 
\begin{align*}
& \, m \sum_{\al, \beta}b_{x\al} s_{\al\beta}  P_\al\left[G_{\al\al}  G_{\beta y}   G_{y' \beta}f(G)\right] -\sum_\al m^2 (b S)_{x\al}  P_\al\left[G_{\al y}   G_{y' \al}f(G) \right] \\
=&\,  m \sum_{\al, \beta}b_{x\al} s_{\al\beta}  P_\al\left[  (G_{\al\al}-m)  G_{\beta y}   G_{y' \beta}f(G)\right] + \sum_\al m^2 (b S)_{x\al}  Q_\al\left[G_{\al y}   G_{y' \al}f(G)\right] \\
&\,   - m^2 \sum_{\al, \beta}b_{x\al} s_{\al\beta}  Q_\al\left[G_{\beta y}   G_{y' \beta}f(G)\right] ,
\end{align*}
 we  obtain equation \eqref{G2fH}. 
  \end{proof}


Using \eqref{G2fH}, $b =1 + S^+ $ and $P_\al=1-Q_\al$, we can define the following $G G$ expansion operator. 
   \begin{definition}[$G G$ expansion operator] \label{GG-def}
  Given a normal regular graph $\cal G$, suppose an atom $x$ is connected with exactly two matched $G$ edges of the same charge. Suppose $\cal G$ takes the form $\cal G=G_{xy}   G_{y' x } f(G)$ with $y,y' \ne x$. Then we define
\begin{align}
  \wh{\cal O}_{GG}^{(x)} [\cal G] & := 
m S^{+}_{xy} G_{y' y} f(G)  + m \sum_\al  s_{x\al} (G_{\al \al}-m) \cal G + m \sum_{\al,\beta}  S^{+}_{x\al}  s_{\al\beta} (G_{\beta \beta}-m) G_{\al y}   G_{y'\al} f(G) \nonumber \\
    &  + m(G_{xx }-m)   \sum_\al s_{x\al}G_{\al y}   G_{y'\al} f(G) + m \sum_{\al,\beta}  S^+_{x\al} s_{\al\beta}  (G_{\al\al }-m) G_{\beta y}   G_{y'\beta} f(G)  \nonumber\\
    & - m \sum_{ \al}  s_{x\al}G_{\al y} G_{y' x} \partial_{ h_{\al x}}f(G) - m \sum_{\al,\beta} S^{+}_{x\al} s_{\al\beta}G_{\beta y} G_{y' \al} \partial_{ h_{\beta \al}}f(G) + \cal Q_{GG} .\label{Oe2x}
    \end{align}
On the right-hand side of \eqref{Oe2x}, the first term is the main term which is either of the same scaling order as $\cal G$ if $y=y'$ or has a strictly higher scaling order if $y\ne y'$; the second to fifth terms contain one more light weight and hence are of strictly higher scaling orders than $\cal G$; the sixth and seventh terms contain at least one more off-diagonal $G$ edge and hence are of strictly higher scaling orders than $\cal G$. The last term $\cal Q_{GG} $ is a sum of $Q$-graphs defined by
    \begin{align}
  \cal Q_{GG}&:=  Q_x \left(\cal G\right)+  \sum_\al Q_\al\Big[ S^+_{x\al}  G_{\al y}   G_{y'\al} f(G) \Big]  - m Q_y\Big[S^{+}_{xy}  G_{y' y} f(G)\Big] - m Q_x\Big[\sum_\al  s_{x\al} (G_{\al\al}-m) \cal G\Big]  \nonumber\\
   &- m \sum_\al Q_\al\Big[\sum_{ \beta}  S^{+}_{x\al}  s_{\al\beta} (G_{\beta \beta}-m)G_{\al y}   G_{y'\al} f(G)\Big] - mQ_x\Big[ G_{xx }  \sum_\al s_{x\al} G_{\al y}   G_{y'\al} f(G)\Big] \nonumber\\
&- m\sum_\al Q_\al\Big[ \sum_{ \beta}  S^{+}_{x\al}  s_{\al\beta} G_{\al\al }  G_{\beta y}   G_{y'\beta} f(G)\Big]  + m Q_x\Big[\sum_{ \al}  s_{x\al} G_{\al y} G_{y' x} \partial_{ h_{ \al x}}f(G)\Big] \\
& + m\sum_\al Q_\al\Big[ \sum_{\beta} S^{+}_{x\al} s_{\al\beta}G_{\beta y} G_{y' \al} \partial_{h_{\beta \al}}f (G)\Big] .\nonumber 
 \end{align}
On the other hand, if $\cal G=\overline G_{xy}  \overline G_{y' x }  f(G)$, then we define 
$$\wh{\cal O}_{GG}^{(x)}\left[\cal G\right]:=\overline{ \wh{\cal O}_{GG}^{(x)}\left[G_{xy}   G_{y' x }  \overline{f(G)}\right] },$$
where the right-hand side can be defined using \eqref{Oe2x}. Finally, we define
$${\cal O}_{GG}^{(x)}[ \cal G]:= \cal O_{dot} \circ  \wh{\cal O}_{GG}^{(x)}   [\cal G]. $$  
\end{definition}

 We describe the basic properties of the $GG$ expansions in the following lemma. Its proof is straightforward by using Definition \ref{GG-def}, and we postpone it to Appendix \ref{appd localpf}.
  \begin{lemma}\label{expandG2}
 Given a normal regular graph $\cal G = G_{xy}   G_{y' x } f(G)$, where $f(G)$ contains no weights or solid edges attached to $x$  and $y,y'\ne x$. 
Then \smash{${\cal O}_{GG}^{(x)}[ \cal G]$} is a canonical local expansion. 
Moreover, every graph without $Q$-labels, say $\cal G_1$, in \smash{${\cal O}_{GG}^{(x)}[ \cal G]$} satisfies one of the following properties. 
\begin{itemize}
\item[(a)] If $y\not = y'$, then $\cal G_1$ has a strictly higher scaling order than $\cal G$.

\item[(b)] If $y = y'$, then either $\cal G_1$ has a strictly higher scaling order than $\cal G$, or $\cal G_1$ is obtained by replacing $G_{xy} G_{yx}$ in $\cal G$ with $m S^+_{xy}G_{yy}$.
\end{itemize}
Similar statements hold if $\cal G = \overline G_{xy}   \overline  G_{y' x } f(G)$. 
\end{lemma}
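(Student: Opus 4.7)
The plan is to verify the claims by unpacking Definition \ref{GG-def} and carefully tracking scaling orders via Definition \ref{def scaling}. The identity \eqref{Oe2x} is exactly Lemma \ref{T eq0} after substituting $b = 1 + S^+$ and expanding $P_\al = 1 - Q_\al$, so the graph value is preserved, giving property (i) of Definition \ref{def_canonical_local}. The final application of $\cal O_{dot}$ then partitions each term into a linear combination of normal regular graphs by Lemma \ref{lem Odot}, which is property (ii), and every new summation atom $\al$ or $\beta$ introduced on the right-hand side is attached to $x$ via $s_{x\al}$ or the chain $S^+_{x\al} s_{\al\beta}$, giving property (iv). Property (iii), the non-decreasing scaling order, will follow from the term-by-term count below, and the same count simultaneously yields conclusions (a) and (b).

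For the main term $m S^+_{xy} G_{y'y} f(G)$, I would compare its scaling contribution at $x$ to that of $\cal G$. In $\cal G$, the two off-diagonal edges $G_{xy}$ and $G_{y'x}$ contribute $2$ to $\ord(\cal G)$; in the new term these are replaced by a single waved edge $S^+_{xy}$ which also contributes $2$. When $y = y'$ (case (b)), the resulting factor $G_{y'y} = G_{yy}$ is a regular weight of scaling contribution $0$, so the total scaling order is unchanged and one obtains exactly the replacement described in (b). When $y \ne y'$ (case (a)), the factor $G_{y'y}$ is off-diagonal and contributes an additional $+1$; the internal-atom and dotted-edge count via \eqref{eq_deforderrandom2} is unchanged, so the scaling order strictly increases.

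For the remaining six terms I would verify the strict increase by direct bookkeeping. The four light-weight terms (terms two through five in \eqref{Oe2x}) each insert one factor of the form $G_{cc} - m$, contributing $+1$, together with waved edges $s_{x\al}$ or $S^+_{x\al} s_{\al\beta}$ balancing exactly against the new internal atoms; a direct application of \eqref{eq_deforderrandom2} gives a net gain of $+1$. The two derivative terms (terms six and seven) are handled by \eqref{partialhG}: each action of $\partial_{h_{\al x}}$ or $\partial_{h_{\beta \al}}$ on a factor inside $f(G)$ replaces a single $G$ or $\overline G$ by a product of two such factors, and the hypothesis that $f(G)$ has no weights or solid edges attached to $x$ ensures that both newly produced factors are off-diagonal, yielding a net gain of at least $+1$ in the scaling order. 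The complex-conjugate case $\cal G = \overline G_{xy}\overline G_{y'x} f(G)$ reduces to the above by the defining identification in Definition \ref{GG-def}.

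The main technical obstacle I expect lies in the derivative-term count: one must enumerate every possible contraction of $\partial_{h_{\al x}} f(G)$ with all $G$ edges and weights inside $f(G)$, and confirm that no such contraction accidentally creates a diagonal $G$ factor, which would only contribute $0$ rather than $+1$ to the scaling order. This is precisely where the hypothesis on $f(G)$ enters, and once this point is verified the accounting via Definition \ref{def scaling} is routine, in parallel with the proofs of Lemmas \ref{expandlabel} and \ref{expandmulti} sketched in Appendix \ref{appd localpf}.
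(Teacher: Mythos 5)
Your plan matches the paper's proof in structure: both argue term-by-term through the right-hand side of \eqref{Oe2x} and count scaling orders via Definition \ref{def scaling}, with the paper's version (Appendix \ref{appd localpf}) deferring the derivative-term bookkeeping to item (4) of the proof of Lemma \ref{expandmulti}. Your treatment of the main term $m S^+_{xy}G_{y'y}f(G)$ is exactly right and cleanly separates cases (a) and (b).

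There is, however, a genuine gap in your handling of the derivative terms. You assert that the hypothesis on $f(G)$ ``ensures that both newly produced factors are off-diagonal,'' and you frame the potential obstacle as ruling out accidental diagonal factors. This is only correct in the sub-case where the new summation atom $\al$ remains a fresh index after $\cal O_{dot}$: then $\partial_{h_{\al x}}G_{ab}=-G_{a\al}G_{xb}$ with $a,b\neq x$ forces $a\neq\al$, $b\neq\al$, and $b\neq x$, so indeed both factors are off-diagonal and the scaling gain is $+1$. But $\cal O_{dot}$ can also \emph{merge} $\al$ with $x$ or with an existing atom of $f(G)$, and in that case $G_{a\al}$ may well become a diagonal weight $G_{aa}$, contrary to your claim. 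The scaling order still strictly increases, but for a different reason: the merge removes the $-2$ contribution of the would-be free internal atom $\al$ while the waved edge $s_{x\al}$ still contributes $+2$ (compared with the scaling $+1$ of the original off-diagonal $G$ edge), so the bookkeeping in \eqref{eq_deforderrandom2} still produces a net gain. The paper's proof (via item (4) of Lemma \ref{expandmulti}'s proof) explicitly splits into these two cases --- $\al$ merged versus $\al$ fresh --- which is the step your argument is missing. The same caveat applies to your claim that waved edges ``balance exactly against the new internal atoms'' in the light-weight terms: that balance only holds when the new atom is not merged, though here too the conclusion (strictly higher order) survives in the merged case.
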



Lemma \ref{expandG2} shows that, by applying the $GG$ expansion repeatedly, we can either get rid of atoms 
that are connected with a pair of edges of the same charge, or obtain $Q$-graphs and graphs of sufficiently high scaling orders. 

%
%

Now we define the following concept of \emph{standard neutral atoms}. Roughly speaking, the edges connected with a standard neutral atom almost form a $T$-variable (but not an exact $T$-variable because of the $\times$-dotted edges; see Section \ref{subsec global} for more details).

 \begin{definition}[{Standard neutral atoms}]\label{def SNA}
An atom is said to be \emph{standard neutral} if it is only connected with three edges besides the $\times$-dotted edges: two {matched} $G$ edges of {opposite charges} and one waved $S$ edge. 
 \end{definition}
 
Given a graph with a non-standard neutral atom $x$ (for example, the atom $x$ in graph (f) of \eqref{Aho3} below) that is connected with two matched $G$ edges of opposite charges, we can apply the following $G\overline G$ expansion. The $G\overline G$ expansion \eqref{Oe3x} is a special case of the multi-edge expansion in Definition \ref{multi-def} with $k_1=k_2=1$, $k_3=k_4=0$ or $k_1=k_2=0$, $k_3=k_4=1$.

\begin{definition} [$G\overline G$ expansion operator]\label{GGbar-def} 
Given a normal regular graph $\cal G$, suppose the atom $x$ is connected with exactly two matched $G$ edges of opposite charges,  and $\cal G$ takes the form $\cal G= G_{xy}  \overline G_{xy'} f(G)$ with $y,y' \ne x$.  
Then we define  
\begin{align}
 \wh{\cal O}_{G\overline G}^{(x)}[\cal G] &:=   |m|^2  \sum_\al s_{x\al }G_{\al y} \overline G_{\al y'} f(G) + m \sum_\al s_{x\al }\left(G_{\al \al} -m \right) \cal G \nonumber  \\
&  +  m (\overline G_{xx} - \overline m)   \sum_\al s_{x\al }G_{\al y} \overline G_{\al y'} f(G)  - m  \sum_\al s_{x\al }G_{\al y}  \overline G_{xy'} \partial_{ h_{\al x}} f(G) + \cal Q_{G\overline G}, \label{Oe3x}
 \end{align}
  where on the right-hand side, the first term is of the same scaling order as $\cal G$, and the new atom $\al$ is standard neutral; the second and third terms  contain one more light weight and hence are of strictly higher scaling orders than $\cal G$; the fourth term  contains at least one more off-diagonal $G$ edge and hence is of strictly higher scaling order than $\cal G$. The last term $\cal Q_{G\overline G} $ is a sum of $Q$-graphs defined by
 \begin{align*}
 \cal Q_{G\overline G}&:= Q_x \left(\cal G\right) -  m Q_x\Big[ \sum_\al s_{x\al }\overline G_{xx}G_{\al y} \overline G_{\al y'} f(G) \Big] - m Q_x\Big[\sum_\al s_{x\al }\left(G_{\al \al} -m \right) \cal G\Big] \\
 & + mQ_x\Big[  \sum_\al s_{x\al }G_{\al y}  \overline G_{xy'} \partial_{ h_{\al x}} f(G) \Big].
\end{align*}
On the other hand, if $\cal G= G_{yx}  \overline G_{y'x} f(G)$, then we define $\cal O_{G\overline G}(x)[ \cal G] $ by taking $k_1=k_2=0$ and $k_3=k_4=1$ in Definition \ref{multi-def}, and we omit the explicit expression for simplicity.  Finally, we define
$${\cal O}_{G\overline G}^{(x)}[ \cal G]:= \cal O_{dot} \circ  \wh{\cal O}_{G\overline G}^{(x)}   [\cal G]. $$  
\end{definition}

The purpose of the $G\overline G$ expansion is to turn the non-standard neutral atom $x$ into a new standard neutral atom $\al$ in the first term. The following lemma describes the basic properties of the $G\overline G$ expansion. Its proof is straightforward by using Definition \ref{GGbar-def}, and we postpone it to Appendix \ref{appd localpf}.  
  \begin{lemma}\label{expandGGbar}
  Given a normal regular graph $\cal G= G_{xy}  \overline G_{xy'} f(G)$, where $f(G)$ contains no weights or solid edges attached to $x$  and $y,y'\ne x$. Then \smash{${\cal O}_{G\overline G}^{(x)}[ \cal G]$} is a canonical local expansion. 
  Moreover, every graph without $Q$-labels, say $\cal G_1$, in \smash{${\cal O}_{G\overline G}^{(x)}[ \cal G]$} either has a strictly higher scaling order than $\cal G$, or satisfies one of the following properties:
%
\begin{enumerate}
\item[(a)] $\ord(\cal G_1)= \ord(\cal G)$, $\deg(x)=0$ in $\cal G_1$, and $\cal G_1$ contains one more standard neutral atom;
\item[(b)] $\ord(\cal G_1)= \ord(\cal G)$, and $\cal G_1$ is obtained by replacing $G_{xy}  \overline G_{xy}$ with $|m|^2 s_{xy} |G_{y y}|^2$ in the $y=y'$ case. 
\end{enumerate}
Similar statements hold if $\cal G = G_{y x}   \overline  G_{y' x } f(G)$.

  \end{lemma}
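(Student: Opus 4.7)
The plan is to verify the four conditions defining a canonical local expansion in Definition \ref{def_canonical_local}, and then classify the non-$Q$ graphs produced in \eqref{Oe3x} by their scaling orders. First I would observe that the $G\overline G$ expansion is simply the specialization of the multi-edge expansion in Definition \ref{multi-def} to the case $k_1 = k_2 = 1$, $k_3 = k_4 = 0$ (and symmetrically the conjugated case). Hence the identity $\llbracket \wh{\cal O}_{G\overline G}^{(x)}[\cal G]\rrbracket = \llbracket \cal G\rrbracket$ follows from Lemma \ref{Oe14} after expanding $P_x = 1 - Q_x$ and regrouping the light weight $\overline G_{xx} - \overline m$ out of the $P_x[\overline G_{xx}(\cdots)]$ term. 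Applying $\cal O_{dot}$ at the end and invoking Lemma \ref{lem Odot} upgrades the output to a sum of normal regular graphs, giving (i) and (ii) of Definition \ref{def_canonical_local}. Condition (iv) holds because every new summation atom $\alpha$ created in \eqref{Oe3x} is attached to $x$ by the waved edge $s_{x\alpha}$, so it lies in the same molecule as $x$.

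Next I would compute the scaling-order change term-by-term using Definition \ref{def scaling}. In the first summand $|m|^2 \sum_\alpha s_{x\alpha}G_{\alpha y}\overline G_{\alpha y'}f(G)$, the two off-diagonal $G$ edges attached to $x$ are replaced by two off-diagonal $G$ edges attached to a new internal atom $\alpha$, and one waved edge $s_{x\alpha}$ is added; by \eqref{eq_deforderrandom2} the net change in $\ord$ is $2 - 2 = 0$, so the scaling order is preserved. Moreover, $\alpha$ is connected to $x$ by $s_{x\alpha}$ and to $y, y'$ by a matched pair of $G$ edges of opposite charges, which is exactly the standard neutral configuration of Definition \ref{def SNA}, and $\deg(x) = 0$ in this term, yielding case (a). The second summand carries an extra light weight on $\alpha$ together with an additional waved edge and internal atom, increasing $\ord$ by $+1$; the third contributes an extra light weight on $x$, increasing $\ord$ by $+1$; and the fourth summand produces, via \eqref{partialhG}, at least one additional off-diagonal $G$ edge without a compensating reduction elsewhere, again increasing $\ord$ by at least $+1$. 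All $\cal Q_{G\overline G}$ contributions are $Q$-graphs and are therefore excluded from the classification. This also verifies condition (iii) of Definition \ref{def_canonical_local}.

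Finally I would treat the $y = y'$ subcase of the first summand, where $\cal O_{dot}$ partitions the summation over $\alpha$ according to $\alpha = y$ and $\alpha \neq y$. The $\alpha \neq y$ branch still fits case (a). For $\alpha = y$, the merging operation of Definition \ref{def merge} identifies $\alpha$ with the atom $y$, converting the off-diagonal pair $G_{\alpha y}\overline G_{\alpha y}$ into the weight $|G_{yy}|^2$ on $y$ and leaving the waved edge $s_{xy}$ between $x$ and $y$; the resulting graph is precisely $|m|^2 s_{xy}|G_{yy}|^2 f(G)$, which is case (b). The symmetric case $\cal G = G_{yx}\overline G_{y'x}f(G)$ is handled identically after interchanging the roles of incoming and outgoing $G$ edges, and the $\overline G_{xy}\overline G_{xy'}$-type cases reduce to these by complex conjugation as in the definition. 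The main bookkeeping subtlety is tracking how $\cal O_{dot}$ turns the off-diagonal $G$ edges at the merged atom into weights in case (b); beyond this, the proof is a mechanical application of Definition \ref{GGbar-def} and the preceding lemmas and contains no conceptual difficulty.
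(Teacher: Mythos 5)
Your proposal matches the paper's proof in both structure and substance: the paper also observes that ${\cal O}_{G\overline G}^{(x)}$ is the $k_1=k_2=1$ specialization of the multi-edge expansion, so the canonical-local-expansion claim is a corollary of Lemma \ref{expandmulti}, and properties (a)/(b) are settled by a term-by-term scaling-order count of \eqref{Oe3x}, with the $y=y'$, $\alpha=y$ branch singled out for (b). The one branch your bookkeeping skips is $\alpha\in\{y,y'\}$ with $y\ne y'$ in the dotted-edge partition of the first summand: your line ``the net change in $\ord$ is $2-2=0$, ... yielding case (a)'' holds only when $\alpha\notin\{y,y'\}$. If, say, $\alpha=y$ with $y\ne y'$, then $G_{\alpha y}$ collapses to the \emph{regular} weight $G_{yy}$, which does not appear in \eqref{eq_deforderrandom2}; one off-diagonal $G$ edge is lost ($-1$), the waved edge still contributes $+2$, and the internal-atom/dotted-edge balance is net zero (either $\alpha$ merges with an internal $y$, or the new internal atom is offset by a dotted edge to an external $y$), so $\ord$ rises by exactly one and this branch belongs to the ``strictly higher scaling order'' class. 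The paper's proof records this case explicitly in its item (3); inserting it closes the gap.
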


Lemma \ref{expandGGbar} shows that by applying the $G\overline G$ expansions repeatedly, we can get either new graphs containing only standard neutral atoms and degree 0 atoms, or $Q$-graphs and graphs of sufficiently high scaling orders.


\subsection{Local expansion strategy}

We define the concept of \emph{locally standard graphs}.

 \begin{definition} [Locally standard graphs] \label{deflvl1}
A graph $\cal G$ is  \emph{locally standard}  if 
\begin{itemize}
\item[(i)] it is a normal regular graph without $P/Q$ labels; 



\item[(ii)] it has no weights or light weights;

\item[(iii)] the degree of any internal atom is $0$ or $2$; 

\item[(iv)] all degree 2 internal atoms are \emph{standard neutral atoms}. 



\end{itemize}
 \end{definition}

Applying local expansions in Definitions \ref{dot-def}, \ref{Ow-def}, \ref{multi-def}, \ref{GG-def} and \ref{GGbar-def} repeatedly, we can  expand any regular graph into a linear combination of locally standard, recollision, higher order and $Q$ graphs. The expansions will be performed according to the flow chart in Figure \ref{Fig chart1}. More precisely, given a regular graph $\cal G$, we first apply $\cal O_{dot}$ to expand it into a sum of 
normal regular graphs, then apply the weight expansion to remove the weights, and then apply the multi-edge, $GG$  and $G\overline G$ expansions one by one to remove all  atoms that are not standard neutral.  After an expansion, we may need to perform  earlier expansions to the resulting graphs. For example, after a multi-edge expansion, we may get graphs that contain weights. Then before performing another multi-edge expansion, we first need to perform weight expansions to these graphs. This explains why we have loops in Figure \ref{Fig chart1}.


To describe precisely the local expansion process in  Figure \ref{Fig chart1}, we  define the following  stopping rules.  
Given a cut-off order $n$, we stop the expansion of a graph if it is a normal regular graph 
and satisfies at least one of the following properties:
\begin{itemize}
\item[(S1)] it is locally standard;

\item[(S2)] it is a $\oplus$/$\ominus$-recollision graph;

\item[(S3)] its scaling order is at least $n+1$; 

\item[(S4)] it is a $Q$-graph.
\end{itemize}

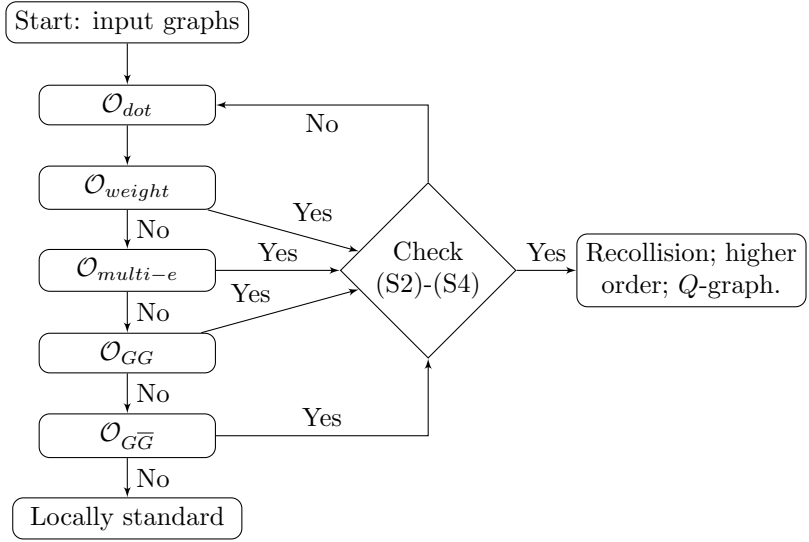
\begin{figure}
\color{black}
\tikzstyle{startstop} = [rectangle,rounded corners, minimum width=1cm,minimum height=0.5cm,text centered, draw=black]
\tikzstyle{startstop3} = [rectangle,rounded corners, minimum width=3cm, text width=8em, minimum height=0.5cm,text badly centered, draw=black]
\tikzstyle{block} = [rectangle, draw=black, 
    text width=6em, text centered, rounded corners, minimum height=1.5em]
\tikzstyle{line} = [draw=black, -latex']
\tikzstyle{line0} = [draw=black]
\tikzstyle{decision} = [diamond, draw=black, 
    text width=4.5em, text badly centered, node distance=2cm, 
    inner sep=0pt]

\tikzstyle{cloud} = [draw=black, ellipse, 
node distance=2.5cm,
    minimum height=2em]
\tikzstyle{null} = [draw=none,fill=none,right]

\begin{center}  
\begin{tikzpicture}[node distance = 1.1cm, auto]

    \node [startstop] (start) {Start: input graphs};
    \node [block, below of= start] (op1) {$\cal O_{dot}$};
    \node [block, below of=op1] (op2) {${\cal O}_{weight}$};
    \node [block, below of=op2] (op3) {$\cal O_{multi-e}$};
    \node [block, below of=op3] (op4) {$\cal O_{GG} $};
    \node [block, below of=op4] (op5) {$\cal O_{G\overline G}$};
    \node [startstop3, below of=op5] (stop_LS) {Locally standard};
    \node [decision, right of=op3, node distance=4cm] (decide_stop) {Check (S2)-(S4)};    
    
    \node [startstop3, right of=decide_stop, node distance=3.5cm] (stop2) {Recollision; higher order; $Q$-graph.};
    
    \path [line] (start) -- (op1);
    \path [line] (op1) -- (op2);
    \path [line] (op2) --node [midway] {No} (op3);
    \path [line] (op3) --node [midway] {No} (op4);
    \path [line] (op4) --node [midway] {No} (op5);
    
    \path [line] (op2) -- node [midway] {Yes} (decide_stop);
    \path [line] (op3) -- node [midway] {Yes} (decide_stop);
    \path [line] (op4) -- node [midway] {Yes} (decide_stop);
    \path [line] (op5) -| node [near start] {Yes} (decide_stop);
    \path [line] (decide_stop) -- node [midway] {Yes} (stop2);
    \path [line] (decide_stop) |- node [near end] {No} (op1);
    
    \path [line] (op5) --node [midway] {No}  (stop_LS);
\end{tikzpicture}
\end{center}
\caption{The flow chart for local expansions. If the weight, multi-edge, $GG$, or $G\overline G$ expansion does not do anything to an input graph (in which case we call it a \emph{null operation}), then we have ``No" and send it to the next operation. In particular, if all  graph operations are null for a graph, then it is locally standard and will be sent to the output.
On the other hand, if a non-trivial graph operation is acted on an input graph, then we have ``Yes"  and we will check whether the resulting graphs satisfy the stopping rules (S2)--(S4). If a graph indeed satisfies the stopping rules, then we send it to the output. Otherwise, we send it back to the first step $\cal O_{dot}$.}\label{Fig chart1}
\end{figure}

\begin{strategy}[Local expansion strategy]\label{strat_local}
We apply the following local expansion strategy.
\begin{itemize}
\item[(1)] We first assign dotted edge partitions of the input graph using $\cal O_{dot}$ such that all resulting graphs are normal regular. 


\item[(2)] For any input graph, pick an atom $x$ and apply ${\cal O}_{weight}^{(x)}$ to expand  the weights on $x$. For the resulting graphs from this expansion, we send the ones satisfying the stopping rules (S2)--(S4) to the outputs, and the remaining graphs back to the first operation $\cal O_{dot}$. If the input graph has no weight, then  ${\cal O}_{weight}$ is a null operation and we send the graph to the next operation.   

\item[(3)]  For any input graph, if it contains atoms of degrees $\notin \{0,2\}$ or atoms connected with two mismatched edges, then we pick one of them, say $x$, and apply \smash{$\cal O_{multi-e}^{(x)}$} to expand the graph. For the resulting graphs, we send the ones satisfying the stopping rules (S2)--(S4) to the outputs, and the remaining graphs back to the first operation $\cal O_{dot}$. If every internal atom in the input graph either has degree 0 or is connected with exactly two matched solid edges, then $\cal O_{multi-e}$ is a null operation and we send the graph to the next operation.  

\item[(4)] For any input graph, if it contains atoms connected with exactly two matched solid edges of the same charge, then we pick one of them, say $x$, and apply \smash{$\cal O_{GG}^{(x)}$} to expand the graph. For the resulting graphs, we send the ones satisfying the stopping rules (S2)--(S4) to the outputs, and the remaining graphs back to the first operation $\cal O_{dot}$. If every internal atom in an input graph is connected with exactly two matched edges of opposite charges, then $\cal O_{GG}$ is a null operation and we send the graph to the next operation.  

\item[(5)] For any input graph, if it contains non-standard neutral atoms, then we pick one of them, say $x$, and apply {$\cal O_{G\overline G}^{(x)}$} to expand the graph. For the resulting graphs, we send the ones satisfying the stopping rules (S2)--(S4) to the outputs, and the remaining graphs back to the first operation $\cal O_{dot}$. 

\item[(6)] Finally, if all the above operations are null, then the input graph is locally standard, and we send it to the output.  
\end{itemize}
\end{strategy}

Finally, we collect all the output graphs of Strategy \ref{strat_local} and obtain the following lemma. The proof of Lemma \ref{lvl1 lemma} is based on Lemmas \ref{expandlabel}, \ref{expandmulti}, \ref{expandG2} and \ref{expandGGbar}, and is postponed to Appendix \ref{appd localpf}.

\begin{lemma} \label{lvl1 lemma}
Let $\cal G_{\fa, \fb_1\fb_2}$ be a normal regular graph without solid edges connected with $\fa$. Then for any fixed $n\in \N$, we can expand it into a sum of $\OO(1)$ many graphs:
\begin{align}\label{expand lvl1}
\cal G_{\fa, \fb_1\fb_2} =  (\cal G_{local})_{\fa, \fb_1\fb_2}  + \PGn_{\fa,\fb_1\fb_2} +  (\AGn)_{\fa,\fb_1\fb_2} + \QGn_{\fa,\fb_1\fb_2} ,
\end{align}
where $(\cal G_{local})_{\fa, \fb_1\fb_2} $ is a sum of locally standard graphs, \smash{$ \PGn_{\fa,\fb_1\fb_2} $} is a sum of  $\oplus$/$\ominus$-recollision graphs, \smash{$(\AGn)_{\fa,\fb_1\fb_2}$} is a sum of graphs of scaling order $> n$, and \smash{$(\QGn)_{\fa,\fb_1\fb_2} $} is a sum of $Q$-graphs.  Every molecule in the graphs on the right side is obtained by merging some molecules in the original graph $\cal G_{\fa, \fb_1\fb_2}$. 
\end{lemma}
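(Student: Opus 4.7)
The plan is to iterate Strategy \ref{strat_local} as prescribed by the flow chart in Figure \ref{Fig chart1}, and to argue that the process terminates after $O_n(1)$ iterations while producing at each step only $O(1)$ new graphs. Since $\cal O_{dot}$, $\cal O_{weight}^{(x)}$, $\cal O_{multi-e}^{(x)}$, $\cal O_{GG}^{(x)}$ and $\cal O_{G\overline G}^{(x)}$ all preserve graph values (by Lemma \ref{lem Odot} together with Definitions \ref{Ow-def}, \ref{multi-def}, \ref{GG-def} and \ref{GGbar-def}), the identity \eqref{expand lvl1} will be an immediate consequence once the terminal graphs are classified.

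The key termination argument goes as follows. Assign to each graph $\cal G$ the lexicographic complexity
$$\Phi(\cal G) \;=\; (\ord(\cal G),\, N_w(\cal G),\, N_{\mathrm{bad}}(\cal G),\, N_{\mathrm{nsn}}(\cal G)),$$
where $N_w$ counts the regular and light weights, $N_{\mathrm{bad}}$ counts internal atoms whose degree is not in $\{0,2\}$ or which are connected by a pair of mismatched solid edges or by a pair of matched same-charge solid edges, and $N_{\mathrm{nsn}}$ counts the non-standard neutral atoms in the sense of Definition \ref{def SNA}. Combining Lemmas \ref{expandlabel}, \ref{expandmulti}, \ref{expandG2} and \ref{expandGGbar}, each non-null application of ${\cal O}_{weight}^{(x)}$, ${\cal O}_{multi-e}^{(x)}$, ${\cal O}_{GG}^{(x)}$ or ${\cal O}_{G\overline G}^{(x)}$ either strictly raises $\ord(\cal G)$, or preserves $\ord(\cal G)$ while strictly decreasing the corresponding coordinate ($N_w$, $N_{\mathrm{bad}}$, $N_{\mathrm{bad}}$, $N_{\mathrm{nsn}}$, respectively); the interleaved $\cal O_{dot}$ step only merges atoms connected by dotted edges and therefore cannot increase any coordinate of $\Phi$. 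Because stopping rule (S3) sends any graph with $\ord(\cal G) > n$ directly to the output, the scaling order is bounded by $n + O(1)$ throughout, so $\Phi$ takes values in a finite set and the iteration terminates.

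It then remains to verify that every terminal graph falls into one of the four claimed categories. By the construction of the stopping rules (S1)--(S4) and the flow chart in Figure \ref{Fig chart1}, terminal graphs are precisely those that are locally standard (contributing to $(\cal G_{local})_{\fa,\fb_1\fb_2}$), $\oplus$/$\ominus$-recollision (contributing to $\PGn_{\fa,\fb_1\fb_2}$), of scaling order $> n$ (contributing to $(\AGn)_{\fa,\fb_1\fb_2}$), or $Q$-graphs (contributing to $\QGn_{\fa,\fb_1\fb_2}$). The molecular statement then follows from property (iv) of canonical local expansions (Definition \ref{def_canonical_local}): any atom introduced by one of the five operators is attached to the pivot molecule through waved edges and hence lies inside it, while $\cal O_{dot}$ only merges existing molecules. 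Finally, the overall bound of $O(1)$ output graphs follows from the bounded number of iterations combined with the fact that each individual expansion produces only $O(1)$ new graphs, with constants depending only on $n$.

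The main (minor) subtlety is the bookkeeping underpinning termination. Weight expansion can reintroduce regular weights via the Gaussian partial derivative $\partial_{h_{\alpha x}}$ acting on off-diagonal resolvent entries ending at $x$ (cf.\ Remark \ref{remark remove_reg}), and the diagonal sub-cases $y=y'$ in Lemmas \ref{expandG2}(b) and \ref{expandGGbar}(b) also produce new weights. These are cleaned up by the final ${\cal O}^{(x),1}_{weight}$ built into the definition of ${\cal O}_{weight}^{(x)}$ and by the ``Yes'' loop back to $\cal O_{dot}$ and ${\cal O}_{weight}$ in Figure \ref{Fig chart1}, so $\Phi$ continues to descend strictly under the lexicographic order and termination of the full strategy is unaffected.
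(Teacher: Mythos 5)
Your overall plan—iterate Strategy \ref{strat_local}, show the flow chart terminates in $O_n(1)$ steps, and classify terminal graphs via the stopping rules—is the same as the paper's, but the specific termination argument has a genuine gap: the lexicographic complexity $\Phi = (\ord, N_w, N_{\mathrm{bad}}, N_{\mathrm{nsn}})$ does \emph{not} strictly decrease under the operations as you claim. Three concrete failures: (i) $\cal O_{GG}^{(x)}$ in case (b) of Lemma \ref{expandG2} replaces $G_{xy}G_{yx}$ by $mS^+_{xy}G_{yy}$, which \emph{adds} a regular weight $G_{yy}$, so $N_w$ strictly increases while only the later coordinate $N_{\mathrm{bad}}$ decreases—lexicographically $\Phi$ goes up; the same happens in $\cal O_{G\overline G}^{(x)}$ case (b). (ii) In case (a.1) of Lemma \ref{expandmulti}, the new atom $\al$ coming from the second main term of \eqref{Oe1x} (the $\sum_\al s_{x\al}G_{\al y_1}G_{w_i\al}$ term) is a matched \emph{same-charge} degree-2 atom, which your $N_{\mathrm{bad}}$ counts; if $\deg(x)\ge 6$ before the expansion, $x$ is still counted after, so $N_{\mathrm{bad}}$ can strictly increase. (iii) Even the weight expansion $\cal O_{weight}^{(x)}$ case (b), via $\partial_{h_{\alpha x}}$ hitting an edge ending at $x$, creates a new matched same-charge atom $\al$ (with edges $G_{\al x}$ and $G_{a\al}$), so decreasing $N_w$ can simultaneously increase $N_{\mathrm{bad}}$; a simple reordering of coordinates therefore does not fix (i) or (ii). Your final paragraph acknowledges the new-weight phenomenon but the assertion that "$\Phi$ continues to descend strictly" once the clean-up $\cal O^{(x),1}_{weight}$ runs is not established, and in fact is false as a statement about individual steps of the flow chart.

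The paper avoids this by not using a single monotone potential at all. Its proof proceeds through nested loops ($\cal O_{dot}$--$\cal O_{weight}$, then $\cal O_{dot}$--$\cal O_{weight}$--$\cal O_{multi-e}$, and so on), and the termination of the multi-edge, $GG$, and $G\overline G$ loops uses a tree-diagram argument that looks at self-avoiding root-to-leaf paths: along a stretch of constant scaling order, each step is in category (2) or (3) of Lemma \ref{expandmulti} and decreases the degree of some (old) pivot atom by $2$; since the pivots must always be degree-$\notin\{0,2\}$ or mismatched atoms (newly created atoms are matched and degree $2$, hence never chosen by $\cal O_{multi-e}$), the total degree budget of at most $\asymp n$ bounds the path length. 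In other words, the paper tracks the degree spent on \emph{eligible pivots at a fixed scaling order}, not a global lump-sum badness count. To rescue your approach you would need to separate the badness categories (high degree/mismatched versus matched-same-charge versus non-standard neutral), order them so that each operator only affects its own and later coordinates, and explicitly account for the new atoms' degrees and charges—which essentially reconstructs the paper's nested-loop bookkeeping.
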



We have  noted that local expansions will not create new molecules. Hence if there are no dotted or waved edges added between different molecules, then the molecules in the new graphs are the same as those in $\cal G_{\fa, \fb_1\fb_2}$. In general, there may be newly added dotted edges (due to the dotted edge partition $\cal O_{dot}$) or waved edges (due to the first term on the right-hand side of \eqref{Oe2x}) to the graphs, 
so the molecules in the new graphs are obtained from merging the molecules connected by dotted or waved edges. 

\subsection{Global expansions}\label{subsec global}


In this section, we introduce the global expansions. Suppose that we have the $(n-1)$-th order $T$-expansion by induction. Given a locally standard graph, say $\cal G$, a global expansion consists of the following three steps:
\begin{itemize}
	\item[(i)] choosing a standard neutral atom in $\cal G$;
	\item[(ii)] replacing the $T$-variable containing the atom in (i) by the $(n-1)$-th order $T$-expansion;
	\item[(iii)] applying $Q$-expansions to the resulting graphs with $Q$-labels  from (ii).
\end{itemize}
This procedure 
is called ``global" because it may 
create new molecules in the resulting graphs. For example, if we replace $T_{x,y_1y_2}$ with the right-hand side of \eqref{Otheta} (with $f(G)\equiv 1$), then the new atoms $\al$ and $\beta$ are in a different molecule from $x$. Unlike the local expansions, a global expansion 
may break the doubly connected properties of our graphs (cf. Definition \ref{def 2net}). To avoid this issue,
we need to follow a delicate procedure to choose the standard neutral atom in (i). This will be done fully in \cite{PartII_high} and a brief discussion will be given in Section \ref{sec strategy}. 

 
We now explain briefly the items (ii) and (iii) in the above procedure. Picking a standard neutral atom, say $\al$, in a locally standard graph, the edges connected to it take one of the following forms:
\begin{equation}\label{12in4T}
	t_{x,y_1 y_2} :=|m|^2\sum_\al s_{x\al}G_{\al y_1}\overline G_{\al y_2}\mathbf 1_{\al\ne y_1}\mathbf 1_{\al\ne y_2},\quad \text{or}\quad t_{y_1y_2, x}:=|m|^2\sum_\al G_{ y_1\al }\overline G_{y_2 \al}s_{\al x}\mathbf 1_{\al\ne y_1}\mathbf 1_{\al\ne y_2} .
\end{equation}
Then we apply the $(n-1)$-th order $T$-expansion in \eqref{mlevelTgdef} to these variables in the following way: 
\begin{align*}
	  t_{x,y_1 y_2} & = m  \Theta_{xy_1}\overline G_{y_1y_2} +m (\Theta \Sdelta^{(n-1)}\Theta)_{xy_1} \overline G_{y_1y_2} + (\PT^{(n-1)})_{x,y_1 y_2} +  (\AT^{(>n-1)})_{x,y_1y_2}  + (\QT^{(n-1)})_{x,y_1y_2} \nonumber\\
	& +  (\Err_{n-1,D})_{x,y_1y_2}  - |m|^2\sum_\al s_{x\al}G_{\al y_1}\overline G_{\al y_2}\left(\mathbf 1_{\al\ne y_1}\mathbf 1_{\al = y_2}+\mathbf 1_{\al=y_1}\mathbf 1_{\al \ne y_2} +\mathbf 1_{\al = y_1}\mathbf 1_{\al = y_2}\right) .
\end{align*}
The last term on the right-hand side gives one (if $y_1=y_2$) or two (if $y_1\ne y_2$) recollision graphs, so we combine it with $ (\PT^{(n-1)})_{x,y_1 y_2} $ and denote the resulting expression by $(\wtPT^{(n-1)})_{x,y_1 y_2}$. Hence we have the final expansion formula 
\be\label{replaceT}
 \begin{split}
	t_{x,y_1 y_2} &=   m  \Theta_{xy_1}\overline G_{y_1y_2} +m   (\Theta \Sdelta^{(n-1)}\Theta)_{xy_1} \overline G_{y_1y_2} \\
	& +  (\wtPT^{(n-1)})_{x,y_1 y_2} +  (\AT^{(>n-1)})_{x,y_1y_2}  + (\QT^{(n-1)})_{x,y_1y_2} +  (\Err_{n-1,D})_{x,y_1y_2} .
\end{split}
\ee
The expansion of $t_{y_1y_2, x}$ can be obtained by exchanging the order of matrix indices in the above equation.

In a global expansion, if we replace $t_{x,y_1y_2}$ in a graph, say $\cal G_0$, with a graph in $(\QT^{(n-1)})_{x,y_1y_2}$, we will get a graph of the form  
\be\label{QG}\cal G=\sum_y \Gamma Q_y (\cal G_1) ,\ee
where both $\Gamma$ and $\cal G_1$ are graphs without $P/Q$ labels (more precisely, $\Gamma$ is the subgraph obtained by removing $t_{x,y_1 y_2}$ from $\cal G_0$, and $Q_y (\cal G_1)$ is a $Q$-graph in \smash{$(\QT^{(n-1)})_{x,y_1y_2}$}). Applying the so-called $Q$-expansions, we can expand the above graph into a sum of $Q$-graphs and some graphs without $P/Q$ labels. 
We will give the precise definition of $Q$-expansions in \cite{PartII_high}. Here we only describe briefly the basic ideas. For any $y\in \Z_L^d$, let $H^{(y)}$ be the $(N-1)\times(N-1)$ minor of $H$ obtained by removing the $y$-th row and column of $H$, and define the resolvent minor $G^{(y)}(z):=(H^{(y)}-z)^{-1}$. Using Schur complement formula, we can obtain the following resolvent identity:
$$G_{x_1x_2}=G_{x_1x_2}^{(y)}+\frac{G_{x_1 y}G_{yx_2}}{G_{yy}},\quad x_1,x_2\in \Z_L^d.$$
Applying this identity to expand the resolvent entries in $\Gamma$ one by one, we can write it as 
\be\label{decompose_gamma} \Gamma=\Gamma^{(y)}+\sum_{\omega} \Gamma_\omega.\ee 
Here $\Gamma^{(y)}$ is a graph whose weights and solid edges are $G^{(y)}$ entries, so it is independent of the $y$-th row and column of $H$. The other term is a sum of $\OO(1)$ many graphs, where each $\Gamma_\omega$ has a strictly higher scaling order than $\Gamma$, at least two new solid edges connected with atom $y$, and a factor of the form $(G_{yy})^{-k}(\overline G_{yy})^{-l}$ for some $k,l\in \N$. 
The entry $1/G_{yy}$ can be expanded using Taylor expansion
$$\frac{1}{G_{yy}}=\frac{1}{m} + \sum_{k=1}^{D}\frac1m\left(-\frac{G_{yy}-m}{m}\right)^k + \cal W_{err},\quad \cal W_{err}:=\sum_{k>D}\left(-\frac{G_{yy}-m}{m}\right)^k . $$
We will regard $\cal W_{err}$ as a weight of scaling order $>D$ and collect all  graphs containing it into $\Err_{n,D}$ in \eqref{mlevelTgdef}. Using \eqref{decompose_gamma}, we can expand \eqref{QG} as
\be\label{QG2}
\cal G=\sum_{\omega}\sum_y \Gamma_\omega Q_y(\cal G_1) + \sum_y Q_y\left( \Gamma \cal G_1\right)- \sum_{\omega}\sum_y Q_y\left(\Gamma_\omega \cal G_1\right),
\ee
where the second and third terms are sums of $Q$-graphs. For the first term, we will remove $Q_y$ using some operations that will be introduced in \cite{PartII_high}. The above $Q$-expansion is an expansion of the commutator $[\Gamma, Q_y]$. It has the following important properties: (i) the scaling order of any graph $\sum_y \Gamma_\omega Q_y(\cal G_0)$ is strictly higher than $\cal G$; (ii) for any $\omega$,  at least one weight or solid edge in $\Gamma$ is replaced by two solid edges connected with $y$ in  $\Gamma_\omega$. 

%
%
%
%

 \begin{remark}\label{rem general distr}
The local and global expansions can be readily extended to non-Gaussian band matrices. 
The Gaussian integration by parts will be replaced by the following cumulant expansion in \cite[Proposition 3.1]{Cumulant1} and \cite[Section II]{Cumulant2}. Fix an integer  $l\in \N$ and let $h$ be a real-valued random variable with finite moments up to order $l+2$. Then for  any  $f\in \cal C^{l+1}(\R)$,  we have that
$$\mathbb E [f(h) h]=\sum_{k=0}^{l}\frac1{k!}\kappa_{k+1}(h)\mathbb Ef^{(k)}(h)+ R_{l+1},$$
where $\kappa_{k}(h)$ is the $k$-th cumulant of $h$ and $R_{l+1}$ satisfies that for any $K>0$,
$$R_{l+1}\lesssim \mathbb E\left| h^{l+2} \mathbf 1_{|h|>K}\right|\cdot \|f^{(l+1)}\|_{\infty} + \mathbb E\left| h\right|^{l+2}\cdot \sup_{|x|\le K}|f^{(l+1)}(x)|.$$
Using the cumulant expansions, we can extend the expansions in Lemma \ref{expandtheta} and Definitions \ref{Ow-def}, \ref{multi-def}, \ref{GG-def} and \ref{GGbar-def} to general cases. These general expansions will make the $\Theta$-expansion and the local expansions more complicated, but there are no new ``essential" difficulties. Moreover, the global expansions defined in this subsection can be 
used without any change regardless of the distributions of the matrix entries. With these remarks, we can prove our main results for random band matrices with entries satisfying only certain moment assumptions. Due to the length constraint of the current paper, we will postpone the details of this generalization  to a future work.
\end{remark}

\section{Examples of low order $T$-expansions}\label{sec lower_order}

To help the reader to understand how operations in Section \ref{sec_basiclocal} are applied, in this section we give some examples of low order $T$-expansions. We remark that these examples will not be used in the proof of Theorem \ref{main thm}, so the reader can skip this section and go to Section \ref{sec pfmain} directly for the main proof. 

\subsection{Third order $T$-expansion}
We can derive the third order $T$-expansion by further expanding \eqref{seconduniversal}. Applying the weight expansion \eqref{Owx} to the two terms in $ (\AT^{(>2)})_{\fa,\fb_1\fb_2} $, we can obtain that  
\begin{align*}
  (\AT^{(>2)})_{\fa,\fb_1\fb_2} &=   m^2\sum_{x,y,\al,\beta}\Theta_{\fa x} s_{xy} \left( \delta_{y\al}+  S^+_{y\al}\right)s_{\al\beta}(G_{\al\al}-m)(G_{\beta\beta}-m) G_{x \fb_1}\overline G_{x\fb_2} \\
&-m^2 \sum_{x,y,\al,\beta} \Theta_{\fa x} s_{xy}\left(\delta_{y\al}+S^+_{y\al}\right)s_{\al\beta}G_{\beta \al}\partial_{ h_{\beta\al}}(G_{x \fb_1}\overline G_{x \fb_2}) \\
&+ |m|^2 \sum_{x,y,\al,\beta} \Theta_{\fa x} s_{xy}\left( \delta_{x\al} + S^-_{x\al} \right)s_{\al\beta}( \overline G_{\al\al} - \overline m)( \overline G_{\beta\beta} - \overline m) G_{y \fb}\overline G_{y\fb_2}\\
& - |m|^2 \sum_{x,y,\al,\beta} \Theta_{\fa x} s_{xy} \left(\delta_{x\al}+S^-_{x\al}\right)s_{\al\beta}\overline G_{\beta\al} \partial_{h_{\al\beta}}( G_{y \fb_1}\overline G_{y\fb_2} )+\cal Q_{T,3} .
\end{align*}
Here $\cal Q_{T,3}$ is a sum of $Q$-graphs that can be derived from \eqref{Owx}, but we do not write down its expression for simplicity. If we expand the partial derivatives using \eqref{partialhG}, and use the identity 
\be\label{iden_S+}\sum_y m^2 s_{xy}\left(\delta_{y\al}+S^+_{y\al}\right) = S^+_{x\al},\ee  
we can reduce the above expansion to $(\AT^{(>2)})_{\fa,\fb_1\fb_2} =(\AT^{(>3)})_{\fa,\fb_1\fb_2} + (\cal Q_{T,3})_{\fa,\fb_1\fb_2},$ where
\begin{align}
 (\AT^{(>3)})_{\fa,\fb_1\fb_2} &:=   \sum_{x,\al,\beta} \Theta_{\fa x}  S^+_{x\al}s_{\al\beta} (G_{\al\al}-m)(G_{\beta\beta}-m) G_{x\fb_1}\overline G_{x\fb_2} \label{Aho>3}\\
&+ |m|^2 \sum_{x,y,\beta} \Theta_{\fa x} s_{xy} s_{x\beta}( \overline G_{xx} - \overline m)( \overline G_{\beta\beta} - \overline m) G_{y \fb_1}\overline G_{y\fb_2} \nonumber  \\
&+ |m|^2 \sum_{x,y,\al,\beta} \Theta_{\fa x} s_{xy} S^-_{x\al} s_{\al\beta}( \overline G_{\al\al} - \overline m)( \overline G_{\beta\beta} - \overline m) G_{y \fb_1}\overline G_{y\fb_2} \nonumber  \\
&+ \sum_{x,\al,\beta} \Theta_{\fa x} S^+_{x\al}s_{\al\beta}G_{\beta\al} G_{x\beta}G_{\al \fb_1}\overline G_{x\fb_2}  +  \sum_{x,\al,\beta} \Theta_{\fa x} S^+_{x\al}s_{\al\beta}G_{\beta\al} G_{x \fb_1}\overline G_{x\al}\overline G_{\beta\fb_2} \nonumber \\
&+ |m|^2 \sum_{x,y,\beta} \Theta_{\fa x} s_{xy} s_{x\beta}\overline G_{\beta x}  G_{y x}G_{\beta\fb_1}\overline G_{y\fb_2}  + |m|^2 \sum_{x,y,\beta} \Theta_{\fa x} s_{xy} s_{x\beta}\overline G_{\beta x}   G_{y\fb_1}\overline G_{y\beta}\overline G_{x\fb_2}  \nonumber\\
& + |m|^2 \sum_{x,y,\al,\beta} \Theta_{\fa x} s_{xy}  S^-_{x\al} s_{\al\beta}\overline G_{\beta\al}  G_{y\al}G_{\beta\fb_1}\overline G_{y\fb_2} + |m|^2 \sum_{x,y,\al,\beta} \Theta_{\fa x} s_{xy}  S^-_{x\al} s_{\al\beta}\overline G_{\beta\al}  G_{y\fb_1}\overline G_{y\beta}\overline G_{\al\fb_2}  .\nonumber 
\end{align}
Now we draw the 9 graphs of \eqref{Aho>3} in the following figure: 
\be  \label{Aho3} 
\parbox[c]{0.8\linewidth}{\includegraphics[width=12.5cm]{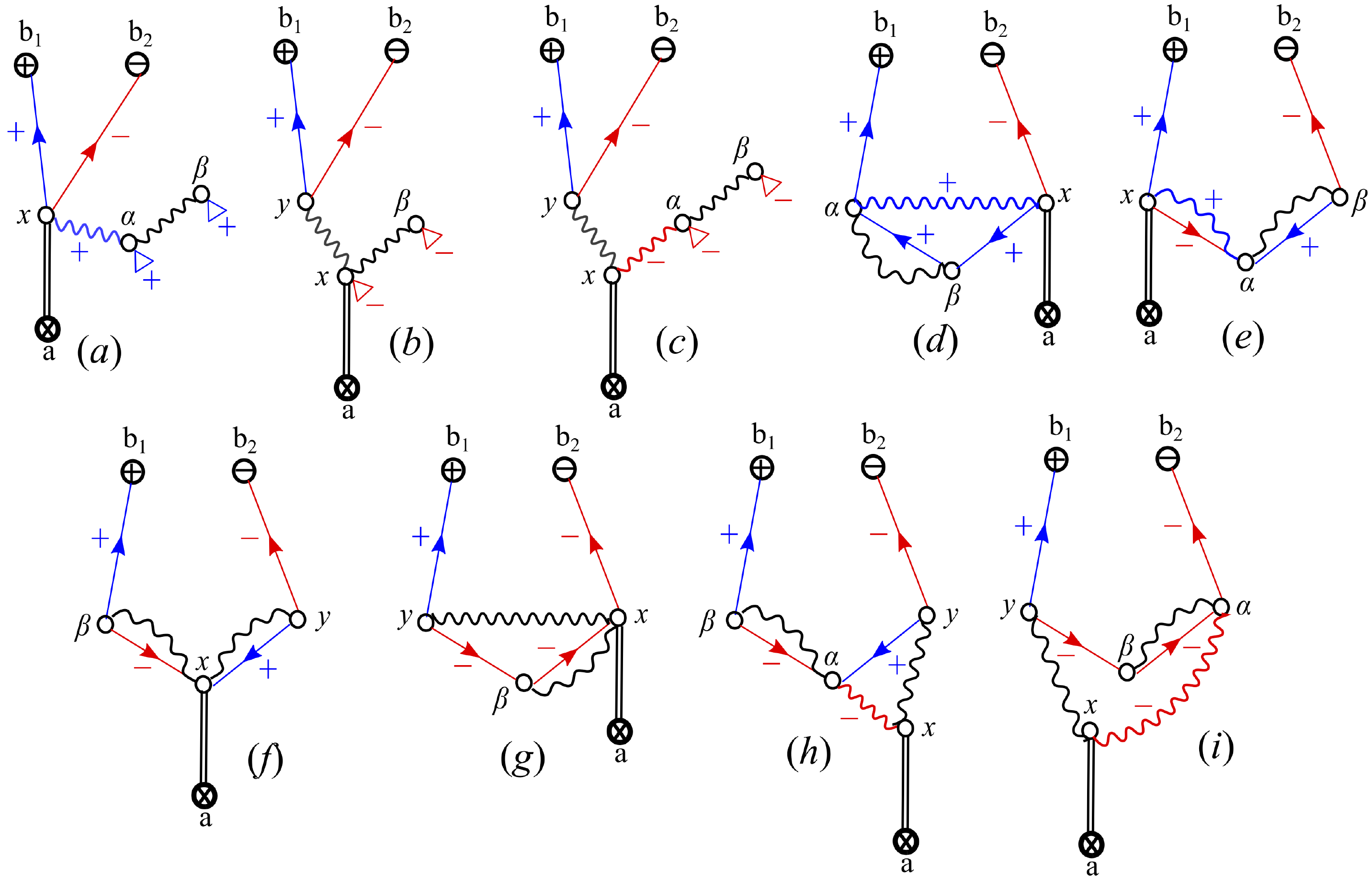}} 
\ee
For conciseness, we do not draw the coefficients of these graphs. The graphs in \eqref{Aho3} are not yet normal regular, but it is easy to see that after applying $\cal O_{dot}$ to them, all the resulting graphs are of scaling order $\ge 4$. Thus we have obtained the following third order $T$-expansion  
\begin{align} 
 T_{\fa,\fb_1\fb_2} &=  m  \Theta_{\fa\fb_1}\overline G_{\fb_1\fb_2} 
  +  (\AT^{(>3)})_{\fa,\fb_1\fb_2} + (\QT^{(3)})_{\fa,\fb_1\fb_2},\label{thirduniversal}
 \end{align}
 where $ \QT^{(3)}:= \cal Q_{T,3} + \QT^{(2)}$.
 
 \subsection{Fourth order $T$-expansion}
Next we can perform local and global expansions to the graphs in $\AT^{(>3)}$ to construct the fourth order $T$-expansion. Since the expression of the fourth order $T$-expansion is rather lengthy and does not help our proof, we will not give its explicit form in this paper. Instead, we will describe the expansions of several typical graphs to show that we actually have $\Sdelta^{(4)}=0$ in the current setting where $H$ has complex Gaussian entries.  

First, the graphs (a), (b), (c) in \eqref{Aho3} all have two light weights in them. Taking graph (a) as an example, we apply the weight expansion in Definition \ref{Owx} to the weight $G_{\beta\beta}-m$ and get that
\begin{align*}
& \sum_{x,\al,\beta} \Theta_{\fa x}  S^+_{x\al}s_{\al\beta} (G_{\al\al}-m)(G_{\beta\beta}-m) G_{x\fb_1}\overline G_{x\fb_2}=  \cal Q_a\\
& + m^{-1} \sum_{x,\al,\gamma_1,\gamma_2} \Theta_{\fa x} S^+_{x\al}S^+_{\al\gamma_1}s_{\gamma_1\gamma_2}(G_{\al\al}-m)(G_{\gamma_1\gamma_1}-m)(G_{\gamma_2\gamma_2}-m) G_{x\fb_1}\overline G_{x\fb_2} \tag{a1}\\
& +m^{-1} \sum_{x,\al, \gamma_1,\gamma_2} \Theta_{\fa x}  S^+_{x\al}S^+_{\al \gamma_1}s_{\gamma_1\gamma_2} G_{\gamma_2\gamma_1} G_{\al\gamma_2}G_{\gamma_1\al} G_{x\fb_1}\overline G_{x\fb_2} \tag{a2} \\
& + m^{-1} \sum_{x,\al,\gamma_1,\gamma_2} \Theta_{\fa x}  S^+_{x\al}S^+_{\al \gamma_1}s_{\gamma_1\gamma_2} (G_{\al\al}-m) G_{\gamma_2\gamma_1}  G_{x \gamma_2}G_{\gamma_1 \fb_1}\overline G_{x \fb_2} \tag{a3}  \\
& +m^{-1} \sum_{x,\al, \gamma_1,\gamma_2} \Theta_{\fa x}  S^+_{x\al} S^+_{\al\gamma_1}s_{\gamma_1\gamma_2} (G_{\al\al}-m) G_{\gamma_2\gamma_1}  G_{x\fb_1}\overline G_{x \gamma_1} \overline G_{\gamma_2 \fb_2} , \tag{a4}
 \end{align*}
where we used \eqref{iden_S+} in the derivation, and $\cal Q_a$ is a sum of $Q$-graphs. In \eqref{Aho41}, we draw the four graphs (a1)--(a4), where for conciseness we do not draw the coefficients of them. 
\be  \label{Aho41} 
\parbox[c]{0.8\linewidth}{\includegraphics[width=12.5cm]{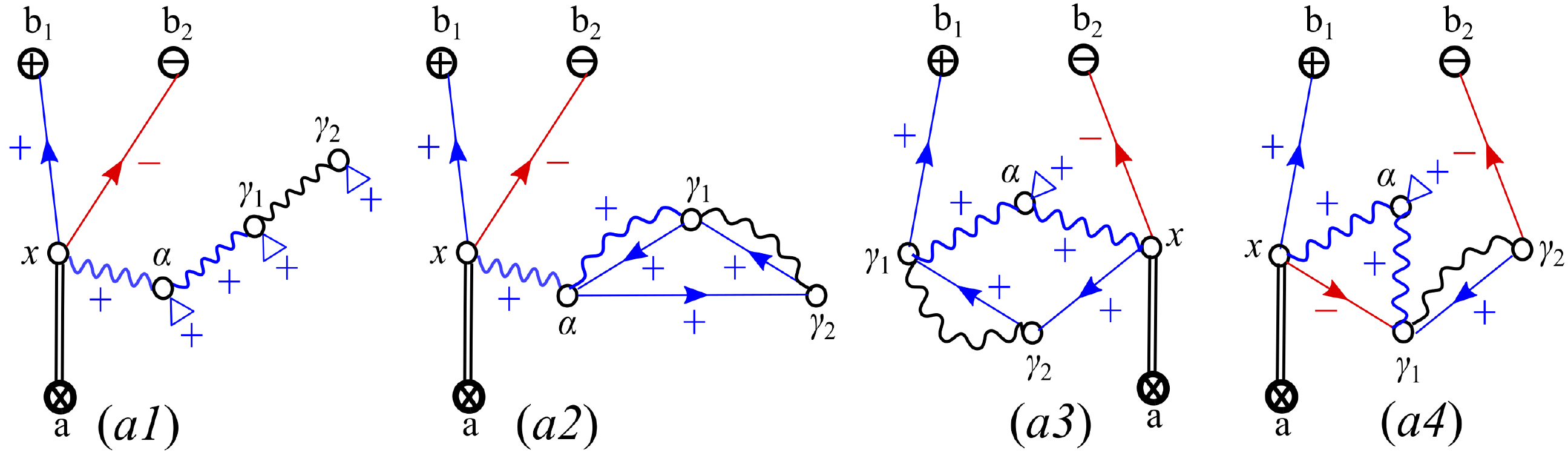}} 
\ee
The graphs in \eqref{Aho41} are not yet normal regular, but it is easy to see that after applying $\cal O_{dot}$ to them, all the resulting graphs are of scaling order $\ge 5$.   
Similarly, we can check that applying the weight expansion to the light weights in graphs (b) and (c) of \eqref{Aho3} will give graphs of scaling order $\ge 5$. 

Second, the graphs (d), (g) and (i) in \eqref{Aho3} all have an atom $\beta$ connected with two matched edges of the same charge. Taking graph (i) as an example, we apply the $GG$ expansion in Definition \ref{GG-def} to the two edges connected with $\beta$, and get that 
\begin{align*}
& 	|m|^2 \sum_{x,y,\al,\beta} \Theta_{\fa x} s_{xy}  S^-_{x\al} s_{\al\beta}\overline G_{\beta\al} \overline G_{y\beta} G_{y\fb_1}\overline G_{\al\fb_2}=\cal Q_i\\
& + |m|^2 \overline m\sum_{x,y,\al,\beta} \Theta_{\fa x} s_{xy}  S^-_{x\al} s_{\al\beta}S^-_{\al\beta}\overline G_{y\al} G_{y\fb_1}\overline G_{\al\fb_2} \tag{i1}\\
&  + m\sum_{x,y,\al, \gamma_1,\gamma_2} \Theta_{\fa x} s_{xy}  S^-_{x\al} S^-_{\al \gamma_1}s_{\gamma_1\gamma_2} (\overline G_{\gamma_2\gamma_2}-\overline m ) \overline G_{\gamma_1\al} \overline G_{y \gamma_1} G_{y\fb_1}\overline G_{\al \fb_2} \tag{i2}\\
&  +  m\sum_{x,y,\al, \gamma_1,\gamma_2} \Theta_{\fa x} s_{xy}  S^-_{x\al}S^-_{\al\gamma_1}s_{\gamma_1\gamma_2} (\overline G_{\gamma_1\gamma_1}-\overline m )\overline G_{\gamma_2\al} \overline G_{y \gamma_2}  G_{y\fb_1}\overline G_{\al\fb_2} \tag{i3}\\
& +  m\sum_{x,y,\al, \gamma_1,\gamma_2} \Theta_{\fa x} s_{xy}  S^-_{x\al} S^-_{\al\gamma_1}s_{\gamma_1\gamma_2} |G_{y \gamma_1}|^2 \overline G_{\gamma_2\al}   G_{\gamma_2 \fb_1}\overline G_{\al \fb_2} \tag{i4}\\
& +  m\sum_{x,y,\al,\gamma_1,\gamma_2} \Theta_{\fa x} s_{xy}  S^-_{x\al}  S^-_{\al\gamma_1}s_{\gamma_1\gamma_2} \overline G_{y \gamma_1}\overline G_{\al \gamma_2}  \overline G_{\gamma_2\al}  G_{y\fb_1}\overline G_{\gamma_1 \fb_2} , \tag{i5}
\end{align*}
where we used the complex conjugate of \eqref{iden_S+} in the derivation, and $\cal Q_i$ is a sum of $Q$-graphs. In \eqref{Aho42}, we draw the five graphs (i1)--(i5), where for conciseness we do not draw the coefficients of the graphs.  
\be  \label{Aho42} 
\parbox[c]{0.85\linewidth}{\includegraphics[width=13.5cm]{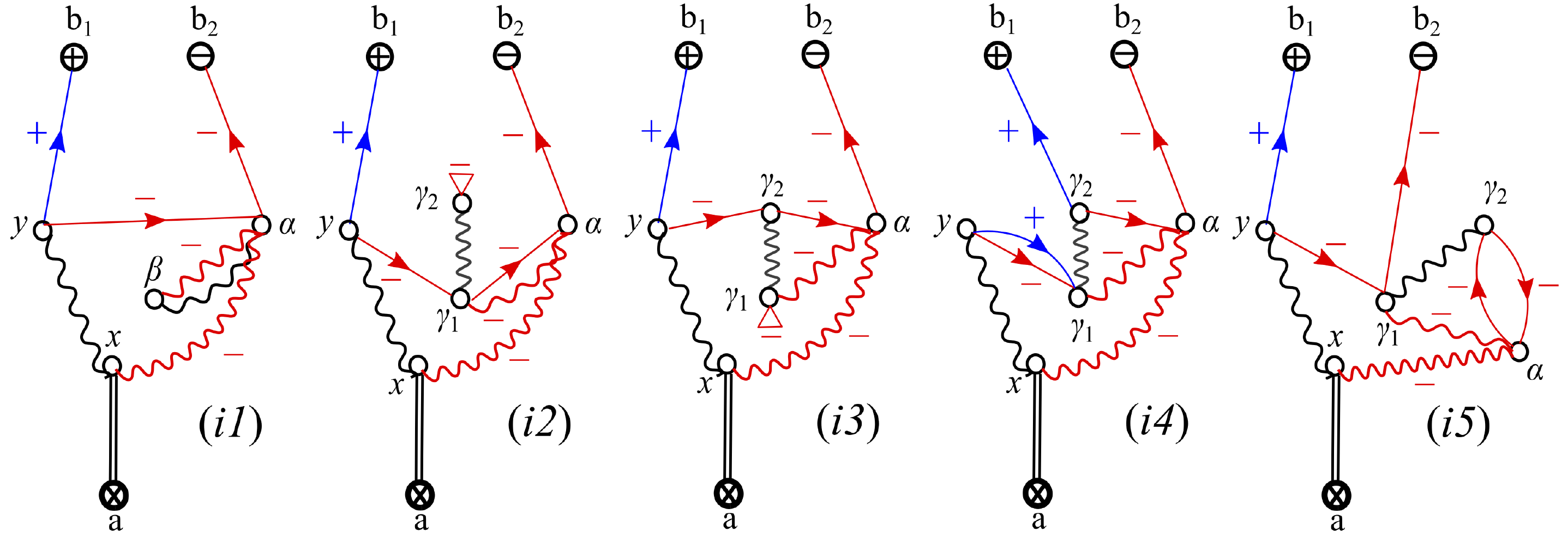}} 
\ee
The graphs in \eqref{Aho42} are not yet normal regular, but it is easy to see that after applying $\cal O_{dot}$ to them, all the resulting graphs are of scaling order $\ge 5$. Similarly, we can check that applying the $GG$ expansion to the two $G$ edges connected with atom $\beta$ in the graphs (d) and (g) of \eqref{Aho3} will give graphs of scaling order $\ge 5$. 

Finally, the graphs (e), (f) and (h) of \eqref{Aho3} only contain degree 2 atoms connected with two matched edges of opposite charges, but not all atoms in them are standard neutral, such as the atom $x$ in (f) and the atom $\al$ in (e) and (h). Taking graph (f) as an example, we apply the $G\overline G$ expansion in Definition \ref{GGbar-def} to the two edges connected with atom $x$, and get that 
\begin{align*}
	&|m|^2 \sum_{x,y,\beta} \Theta_{\fa x} s_{xy} s_{x\beta}\overline G_{\beta x}  G_{y x}G_{\beta\fb_1}\overline G_{y\fb_2}= \cal Q_f\\
&+ |m|^4 \sum_{x,y,\al,\beta } \Theta_{\fa x} s_{xy} s_{x\beta}s_{x\al}\overline G_{\beta\al}  G_{y \al}G_{\beta\fb_1}\overline G_{y\fb_2} \tag{f1}\\
&+ |m|^2 m \sum_{x,y,\al,\beta} \Theta_{\fa x} s_{xy} s_{x\beta}s_{x\al} (G_{\al\al}-m)\overline G_{\beta x}  G_{y x}G_{\beta \fb_1}\overline G_{y\fb_2} \tag{f2}\\
&+ |m|^2 m \sum_{x,y,\al,\beta} \Theta_{\fa x} s_{xy} s_{x\beta}s_{x\al} (\overline G_{xx}-\overline m)\overline G_{\beta\al}  G_{y\al}G_{\beta\fb_1}\overline G_{y\fb_2} \tag{f3}\\
&+ |m|^2 m\sum_{x,y,\al,\beta } \Theta_{\fa x} s_{xy} s_{x\beta}s_{x\al} G_{y\al}  |G_{\beta x}|^2 G_{\al\fb_1}\overline G_{y\fb_2}   \tag{f4}\\
&+ |m|^2 m\sum_{x,y,\al,\beta} \Theta_{\fa x} s_{xy} s_{x\beta}s_{x\al}\overline G_{\beta x}  |G_{y \al}|^2  G_{\beta \fb_1} \overline G_{x\fb_2} , \tag{f5}
\end{align*}
where $\cal Q_f$ is a sum of $Q$-graphs. In \eqref{Aho43}, we draw the five graphs (f1)--(f5), where for conciseness we do not draw the coefficients of them. 
 \be  \label{Aho43} 
\parbox[c]{0.85\linewidth}{\includegraphics[width=13.5cm]{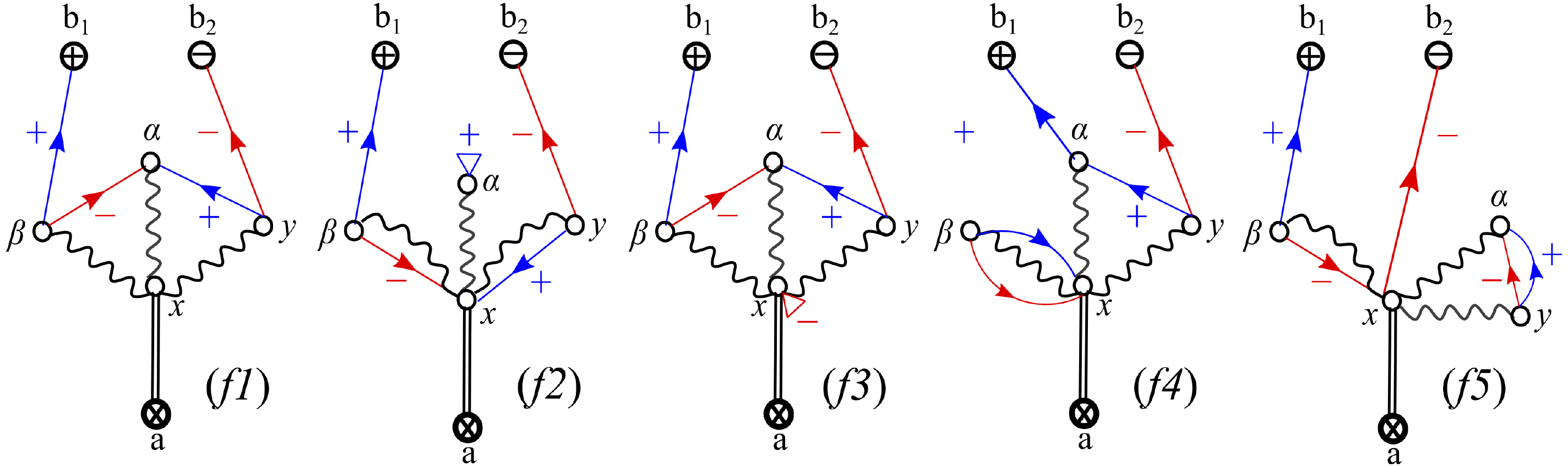}} 
\ee
Here the graph (f1) is the main term, while the graphs (f2)--(f5) all give graphs of scaling order $\ge 5$ after a dotted edge partition $\cal O_{dot}$.  Next we apply a global expansion to (f1), that is, we replace $|m|^2\sum_{\al}s_{x\al}G_{y\al}\overline G_{\beta\al }$ with the second order $T$-expansion in \eqref{seconduniversal}:
\begin{align*}
&	|m|^4 \sum_{x,y,\al,\beta } \Theta_{\fa x} s_{xy} s_{x\beta}s_{x\al}\overline G_{\beta\al}  G_{y \al}G_{\beta\fb_1}\overline G_{y\fb_2}  =  |m|^2 \sum_{x,y,\beta} \Theta_{\fa x} s_{xy} s_{x\beta}(\QT^{(2)})_{y\beta, x}  G_{\beta \fb_1}\overline G_{y\fb_2} \\
&+ |m|^2 m \sum_{x,y,\beta} \Theta_{\fa x} s_{xy} s_{x\beta}\Theta_{xy} \overline G_{\beta y}  G_{\beta\fb_1}\overline G_{y\fb_2} \tag{f1.1}\\
&+|m|^2 m\sum_{x,y,\beta ,\gamma_1,\gamma_2} \Theta_{\fa x} s_{xy} s_{x\beta}\Theta_{x\gamma_1} s_{\gamma_1\gamma_2} (G_{\gamma_2\gamma_2}-m) \overline G_{\beta \gamma_1}  G_{y \gamma_1}G_{\beta \fb_1}\overline G_{y\fb_2} \tag{f1.2}\\
&+|m|^2 m\sum_{x,y,\beta ,\gamma_1,\gamma_2} \Theta_{\fa x} s_{xy} s_{x\beta}\Theta_{x \gamma_1} s_{\gamma_1\gamma_2} (\overline G_{\gamma_1\gamma_1}-\overline m) \overline G_{\beta \gamma_2}  G_{y \gamma_2}G_{\beta \fb_1}\overline G_{y \fb_2}. \tag{f1.3}
\end{align*}
We need to further apply a $Q$-expansion to the first term on the right-hand side. In \eqref{Aho44}, we draw the three graphs (f1.1)--(f1.3), where for conciseness we do not draw the coefficients of them. It is easy to see that after applying $\cal O_{dot}$ to these graphs, all the resulting graphs are of scaling order $\ge 5$.  
 \be  \label{Aho44} 
\parbox[c]{0.7\linewidth}{\includegraphics[width=11cm]{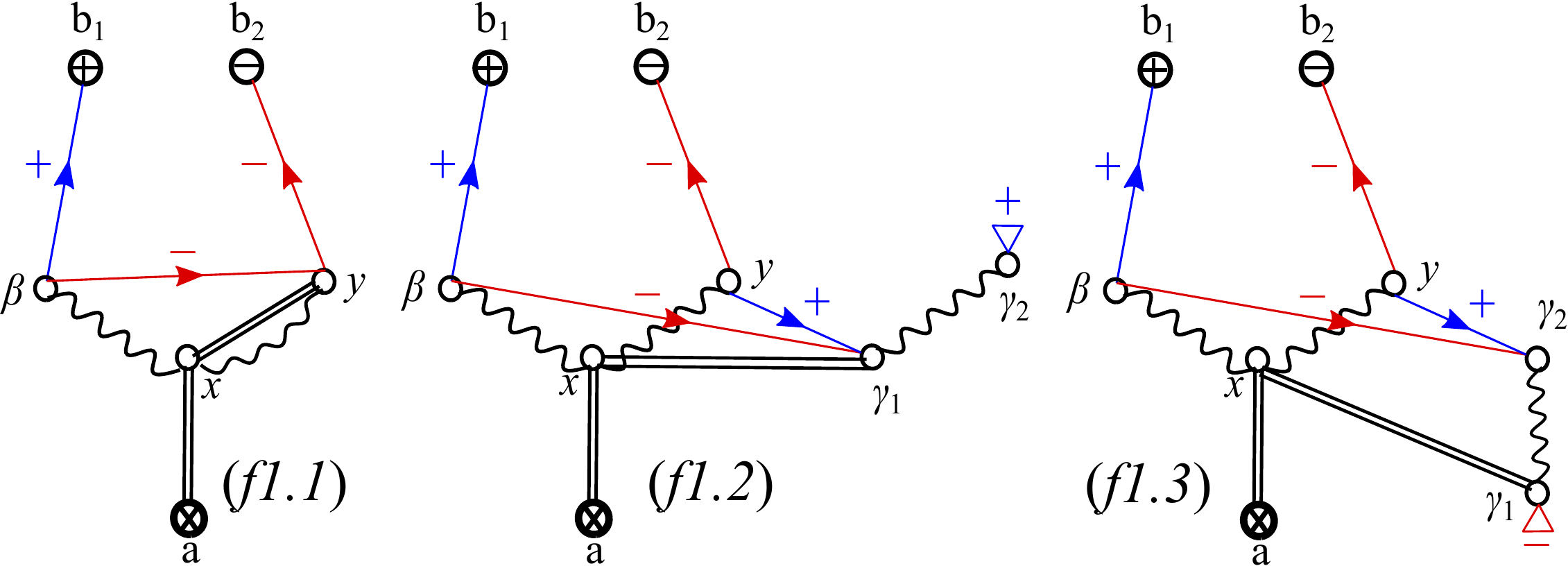}} 
\ee
Similarly, we can check that after applying the $G\overline G$ expansion to the two $G$ edges connected with atom $\al$ in graphs (e) and (h) of \eqref{Aho3} and then applying a global expansion, we will get graphs of scaling order $\ge 5$. 
    
To summarize, we have found that in the fourth order $T$-expansion, the fourth order $\self$ $\Sele_4$ vanishes.

\subsection{Examples of graphs in $\self$ $\Sele_6$}
In this subsection, we use some examples to show that the 6th order $\self$ actually contains non-trivial graphs. Hence, unlike $\Sele_4$, its sum zero property \eqref{weaz} is not trivial anymore. We remark there are hundreds of ways to get graphs in $\Sele_6$, and we are not trying to exhaust all of them. 
   
First, if we assign the dotted edge partition such that the two internal $G$ edges in (d)--(i) of \eqref{Aho3} are diagonal, then we will get sixth order graphs. For example, for the graph (d) of \eqref{Aho3}, we assign dotted edges $\delta_{x\beta}$ and $\delta_{\al \beta}$, and then replace the two weights $G_{xx}^2$ with $m^2$; for the graph (h), we assign dotted edges $\delta_{y\al}$ and $\delta_{\al\beta}$, and then replace the two weights $|G_{yy}|^2$ with $|m|^2$. Then we get the following two graphs:
\be  \label{Aho61} 
\parbox[c]{0.3\linewidth}{\includegraphics[width=5cm]{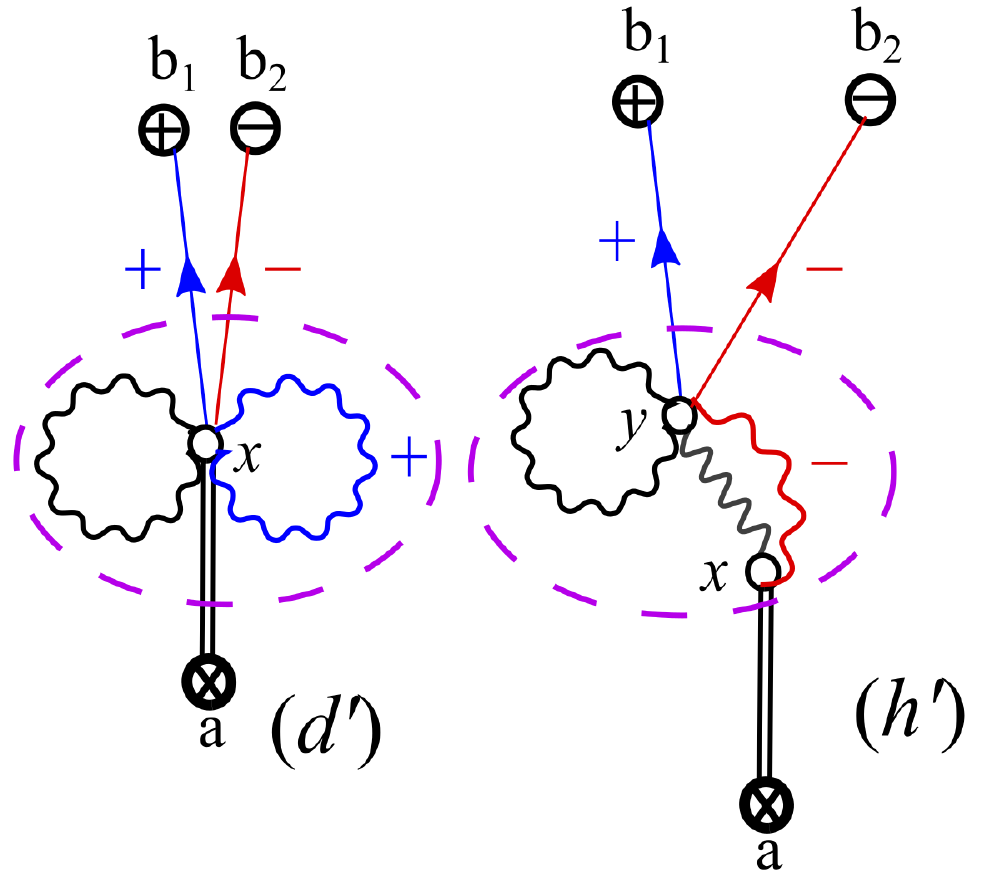}} 
\ee
Inside the purple dashed circles are two deterministic graphs in $(\Sele_6)_{xy}$ (except for the coefficients):
$$(d'): \ m^2 \delta_{xy} s_{xx}S^+_{xx},\quad (h'):\ |m|^4 s_{yy}s_{xy} S^-_{xy}. $$ 
 
As the second example, in the graph (i4) of \eqref{Aho42}, if we assign a dotted edge $\delta_{\al\gamma_2}$, replace the weight $\overline G_{\al\al}$ with $\overline m$, replace the $T$-variable $|m|^2\sum_{y}s_{xy}|G_{y\gamma_1}|^2$ with $|m|^2\Theta_{x\gamma_1}$ in a global expansion, and rename $\al$ as $y$, we then get the graph (i4$'$) in \eqref{Aho62}. In the graph (i5) of \eqref{Aho42}, if we assign a dotted edge $\delta_{y \gamma_1}$, replace the weight $\overline G_{yy}$ with $\overline m$, and replace the two edges  $\overline G_{\al\gamma_2}\overline G_{\gamma_2\al}$ with $\overline m^2 S^-_{\al\gamma_2}$ in the $GG$ expansion, then we get the graph (i5$'$) in \eqref{Aho62}.
    \be  \label{Aho62} 
\parbox[c]{0.4\linewidth}{\includegraphics[width=6.5cm]{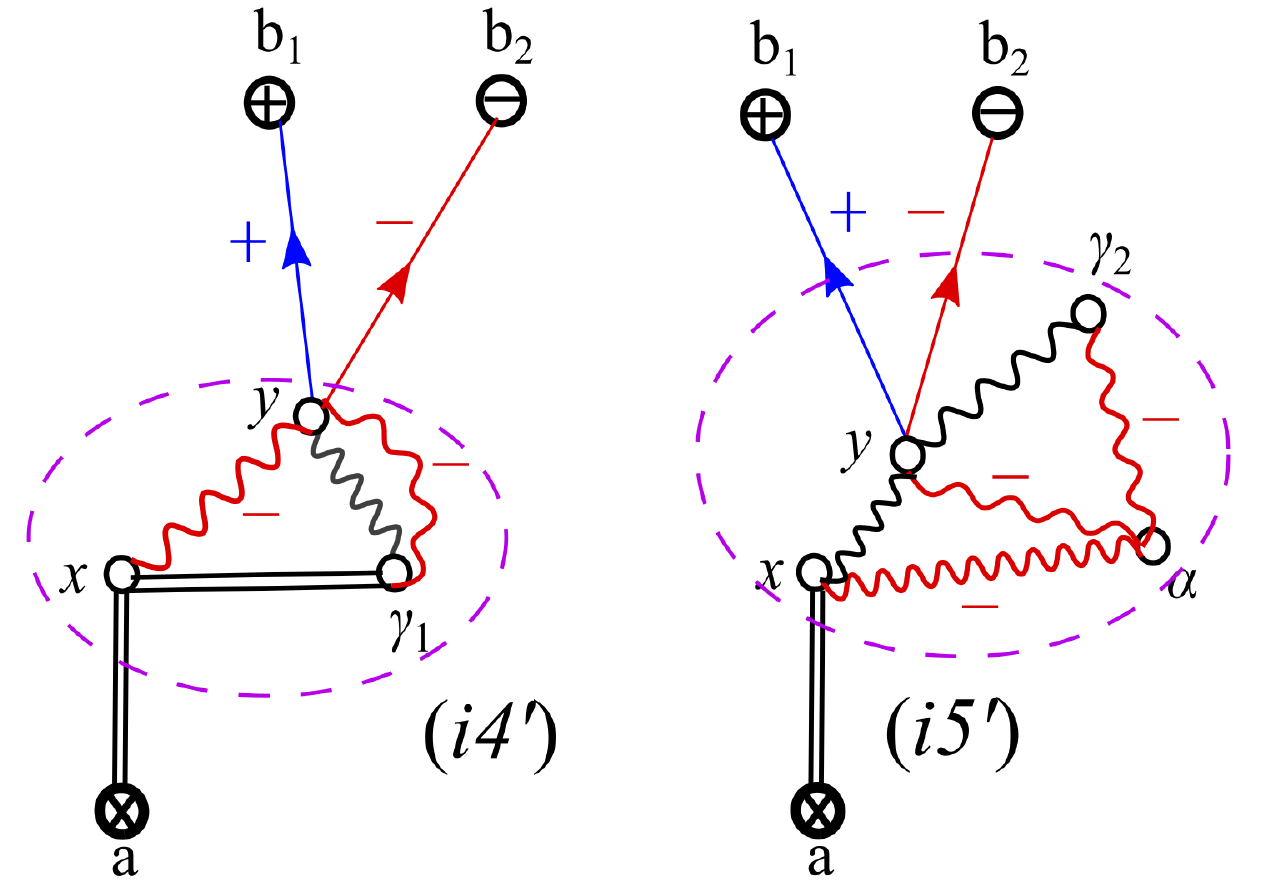}} 
\ee
Inside the purple dashed circles are two deterministic graphs in $(\Sele_6)_{xy}$ (except for the coefficients):
$$  (i4'):\ |m|^2 S^-_{xy}\sum_{\gamma_1}s_{y\gamma_1}S^-_{y\gamma_1} \Theta_{x\gamma_1}, \quad (i5'): \ |m|^2\overline m^2 s_{xy} \sum_{\al,\gamma_2}S^-_{x\al} S^-_{y\al} S^-_{\al \gamma_2}s_{y\gamma_2}.$$

As the last example, in the graph (f4) of \eqref{Aho43}, if we assign a dotted edge $\delta_{y\al}$, replace the weight $  G_{yy}$ with $ m$, and replace the $T$-variable $|m|^2\sum_{\beta}s_{x\beta}|G_{\beta x}|^2$ with a $|m|^2\Theta_{xx}$ edge in a global expansion, then we get the graph (f4$'$) in \eqref{Aho63}. In the graph (f1.1) of \eqref{Aho44}, if we assign a dotted edge $\delta_{y\beta}$ and replace the weight $\overline G_{yy}$ with $\overline m$, then we get the graph (f1.1$'$) in \eqref{Aho63}.
    \be  \label{Aho63} 
\parbox[c]{0.3\linewidth}{\includegraphics[width=5cm]{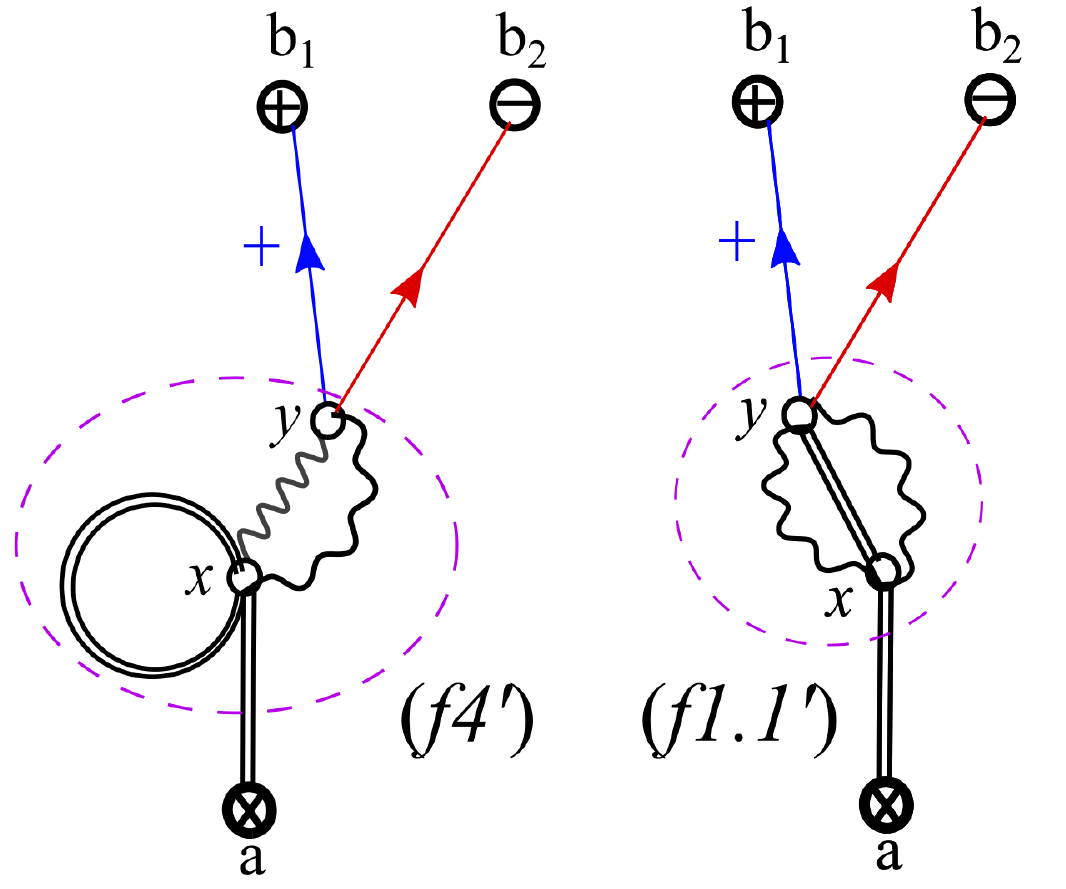}} 
\ee
Inside the purple dashed circles are two deterministic graphs in $(\Sele_6)_{xy}$ (except for the coefficients):
$$ \quad (f4'):\   |m|^2 m^2 \Theta_{xx}(s_{xy})^2 , \quad (f1.1'): \  |m|^4(s_{xy})^2\Theta_{xy}.$$

\section{Proof of Theorem \ref{main thm}}\label{sec pfmain}

In this section, we give an outline of the proof of Theorem \ref{main thm}. Some lemmas used in the proof will be proved in subsequent sections and \cite{PartII_high}. We first recall the following large deviation estimates in Lemma \ref{lem G<T}, which show that the resolvent entries can be bounded using the $T$-variables in \eqref{defGT}. The bound \eqref{offG largedev} was proved in equation (3.20) of \cite{PartIII}, while \eqref{diagG largedev} was proved in Lemma 5.3 of \cite{delocal}. Given a matrix $M$, we will use $\|M\|_{\max}=\max_{i,j}|M_{ij}|$ to denote its maximum norm. 

\begin{lemma}\label{lem G<T}
Suppose for a constant $\delta_0>0$ and deterministic parameter $W^{-d/2}\le \Phi\le W^{-\delta_0}$ we have that
\be\label{initialGT} 
\|G(z)-m(z)\|_{\max}\prec W^{-\delta_0},\quad \|T\|_{\max} \prec \Phi^2 ,
\ee
uniformly in $z\in \mathbf D$ for a subset $\mathbf D\subset \C_+$. Then 
\be\label{offG largedev} \mathbf 1_{x\ne y} |G_{xy}(z)|^2  \prec T_{xy}(z)\ee
uniformly in $x\ne y \in \Z_L^d$ and $z\in \mathbf D$, and
\be\label{diagG largedev} |G_{xx}(z)-m(z)| \prec \Phi
\ee
	uniformly in $x\in \Z_L^d$ and $z\in \mathbf D$.
\end{lemma}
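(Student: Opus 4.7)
The plan is to derive both bounds from the Schur complement formula applied to the $x$-th row/column of $H$, together with Gaussian concentration (Hanson–Wright type) for quadratic forms in independent complex Gaussians, applied conditionally on the minor $H^{(x)}$. The inputs $\|G-m\|_{\max}\prec W^{-\delta_0}$ and $\|T\|_{\max}\prec\Phi^2$ enter in different roles: the first is a crude a priori bound ensuring denominators like $G_{xx}$ are comparable to $m$, while the second is the quantitative variance estimate that powers the concentration.

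For the off-diagonal bound \eqref{offG largedev}, I would start from the standard identity, valid for $x\ne y$,
\[
G_{xy} \;=\; -\,G_{xx}\sum_{\alpha\ne x} h_{x\alpha}\,G^{(x)}_{\alpha y}.
\]
Conditional on $H^{(x)}$ the inner sum is a centered complex Gaussian with variance $\sum_{\alpha}s_{x\alpha}|G^{(x)}_{\alpha y}|^2$, so Gaussian concentration yields $\big|\sum_\alpha h_{x\alpha}G^{(x)}_{\alpha y}\big|^{2}\prec \sum_\alpha s_{x\alpha}|G^{(x)}_{\alpha y}|^2$. Passing from $G^{(x)}$ back to $G$ through the minor identity $G^{(x)}_{\alpha y}=G_{\alpha y}-G_{\alpha x}G_{xy}/G_{xx}$ and absorbing the self-correction using $|G_{xx}|=|m|+O_\prec(W^{-\delta_0})$ gives $|G_{xy}|^{2}\prec |m|^2\sum_{\alpha}s_{x\alpha}|G_{\alpha y}|^2=T_{xy}$ as claimed.

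For the diagonal bound \eqref{diagG largedev}, the Schur complement gives
\[
\frac{1}{G_{xx}} \;=\; -z+h_{xx}-\sum_{\alpha,\beta\ne x}h_{x\alpha}\,G^{(x)}_{\alpha\beta}\,h_{\beta x}.
\]
I would split the quadratic form into its conditional mean $\sum_\alpha s_{x\alpha}G^{(x)}_{\alpha\alpha}$ plus a centered remainder $Z_x$. Hanson–Wright for complex Gaussians yields $|Z_x|^2\prec \sum_{\alpha,\beta}s_{x\alpha}s_{x\beta}|G^{(x)}_{\alpha\beta}|^2$; the diagonal terms contribute $O(W^{-d})=O(\Phi^2)$, and the off-diagonal terms rewrite as $|m|^{-2}\sum_\beta s_{x\beta}T^{(x)}_{x\beta}\prec \|T\|_{\max}\prec\Phi^2$ after using the minor identity once more, so $|Z_x|\prec\Phi$. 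Combining this with $-z=m+m^{-1}$ and $G^{(x)}_{\alpha\alpha}=G_{\alpha\alpha}-|G_{x\alpha}|^{2}/G_{xx}$ (whose second term is $T_{xx}/(|m|^2 G_{xx})=O_\prec(\Phi^2)$), the equation reduces to the perturbed self-consistent relation $D_x=m^{2}(SD)_x+O_\prec(\Phi)$ for $D_x:=G_{xx}-m$.

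The main obstacle, and the step that requires the most care, is closing this self-consistent equation to conclude $\|D\|_\infty\prec\Phi$ rather than the naive $\Phi/(1-|m|^2)$: in the bulk $|m|^2\to 1$ as $\eta\to 0$, so a crude maximum-principle inversion of $I-m^2 S$ in $\ell^\infty$ would cost a prohibitive $\eta^{-1}$. The resolution is a bootstrap using the a priori bound $\|G-m\|_\max\prec W^{-\delta_0}$ together with the doubly-stochastic structure of $S$: rewriting the equation in the form $(I-m^2 S)D=O_\prec(\Phi)+\mathrm{(nonlinear\ in\ }D)$ and exploiting that the nonlinear remainder is already of size $W^{-\delta_0}\|D\|_\infty$, one absorbs it into the left-hand side and iterates, concluding $\|D\|_\infty\prec\Phi$. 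The same circular issue appears when invoking $\|T\|_{\max}$ to bound $Z_x$ (since $T$ is defined via $G$, not $G^{(x)}$), and is handled by the same a priori bound, which guarantees $\|T^{(x)}-T\|_{\max}\prec W^{-\delta_0}\Phi^{2}$ — negligible compared to $\Phi^2$.
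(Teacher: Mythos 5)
Your two-step Schur-complement argument is the standard route to both bounds and is essentially the same approach used in the references the paper cites for this lemma (equation (3.20) of \cite{PartIII} and Lemma 5.3 of \cite{delocal}). The off-diagonal step — conditioning on $H^{(x)}$, Gaussian concentration for the linear form $\sum_\alpha h_{x\alpha}G^{(x)}_{\alpha y}$, then removing the hat via the minor identity and absorbing the self-correction term $|G_{xy}/G_{xx}|^2\, T_{xx}/|m|^2 \prec W^{-2\delta_0}|G_{xy}|^2$ — is correct, and the diagonal step via Hanson–Wright and the self-consistent equation for $D_x=G_{xx}-m$ is also sound.

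One conceptual point in your "main obstacle" paragraph is off, and should be corrected. You claim that inverting $I-m^2S$ in $\ell^\infty$ "would cost a prohibitive $\eta^{-1}$" in the bulk because $|m|^2\to 1$ as $\eta\to0$. This conflates $1-m^2$ with $1-|m|^2$. It is $1-|m|^2\sim\eta$ that degenerates, and consequently $\Theta=(I-|m|^2S)^{-1}|m|^2S$ that has $\ell^\infty$ row sums of order $\eta^{-1}$ — see \eqref{sumTheta}. By contrast, for $E\in(-2+\kappa,2-\kappa)$ the complex number $1-m^2(z)$ is bounded away from zero uniformly in $\eta\ge0$ (since $m(E)$ lies on the upper unit circle away from $\pm1$), so $(I-m^2S)^{-1}=I+S^+$ is a bounded $\ell^\infty\to\ell^\infty$ operator uniformly down to $\eta=0_+$; this is exactly the content of Lemma \ref{lem deter} and the estimate \eqref{S+xy}. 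So there is no $\eta^{-1}$ hurdle to clear in the diagonal equation. The absorption/bootstrap you describe is still needed, but only to handle the \emph{quadratic} remainder $O(D_x^2)$, which is small thanks to the a priori input $\|D\|_\infty\prec W^{-\delta_0}$; that is the genuine reason the crude a priori bound is a hypothesis. Stated that way, the closing step is a one-line absorption and your sketch is complete.

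Minor remark: when invoking $\|T\|_{\max}$ to control the off-diagonal part of the Hanson–Wright variance, you are right that a $T^{(x)}$ versus $T$ mismatch arises and that the a priori bound renders it negligible; this should be made explicit in a full write-up, e.g.\ via the minor identity $G^{(x)}_{\alpha\beta}=G_{\alpha\beta}-G_{\alpha x}G_{x\beta}/G_{xx}$ applied inside the sum and the bound $\sum_\alpha s_{x\alpha}|G_{\alpha x}|^2 = T_{xx}/|m|^2\prec\Phi^2$.
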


\subsection{Main structure of the proof}  \label{sec_main_struct}
The proof of Theorem \ref{main thm} will proceed by induction on $n$, the scaling order of the $T$-expansion. 

\vspace{5pt}

\noindent{\bf Step 1: Second order $T$-expansion.} The second order $T$-expansion has been given by Lemma \ref{2nd3rd T}.

\vspace{5pt}

\noindent{\bf Step 2: Local law.}  Assume by induction that we have obtained the $k$-th order $T$-expansion for $2\le k \le n-1$. Then 
we will prove in Theorem \ref{thm ptree} that  the local law \eqref{locallaw1} holds when $L$ satisfies the condition (with $n$ in \eqref{Lcondition1} replaced by $n-1$) 
\be\label{Lcondition0}  {L^2}/{W^2}  \le W^{(n-2)d/2-c_0} .\ee

%

 \vspace{5pt}

\noindent{\bf Step 3: $n$-th order $\incomp$.}  Given the $k$-th order $T$-expansion for $2\le k\le n-1$, we will construct an $n$-th order $\incomp$ in Lemma \ref{incomplete Texp}.  
 
\vspace{5pt}

\noindent{\bf Step 4: Sum zero property.} With the $n$-th order $\incomp$ in Step 3, using the local law proved in Step 2 we will show in Lemma \ref{cancellation property} that the $n$-th order $\self$ $\Sele_n$ satisfies the sum zero property. 

\vspace{5pt}

\noindent{\bf Step 5: $n$-th order $T$-expansion.} With the $n$-th order $\incomp$ in Step 3 and the sum zero property for $\Sele_n$, we will construct an $n$-th order $T$-expansion in Lemma \ref{lem completeTexp}.  

 
\vspace{5pt}

\noindent Combining these steps, by induction on $n$ we obtain an $n$-th order $T$-expansion for any fixed $n\in \N$. Theorem \ref{thm ptree} then implies that the local law \eqref{locallaw1} holds for $L$ satisfying \eqref{Lcondition1}. This concludes Theorem \ref{main thm} since $n$ is arbitrary. In Figure \ref{Fig pfchart1}, we illustrate the structure of the whole proof of Theorem \ref{main thm} with a flow chart.

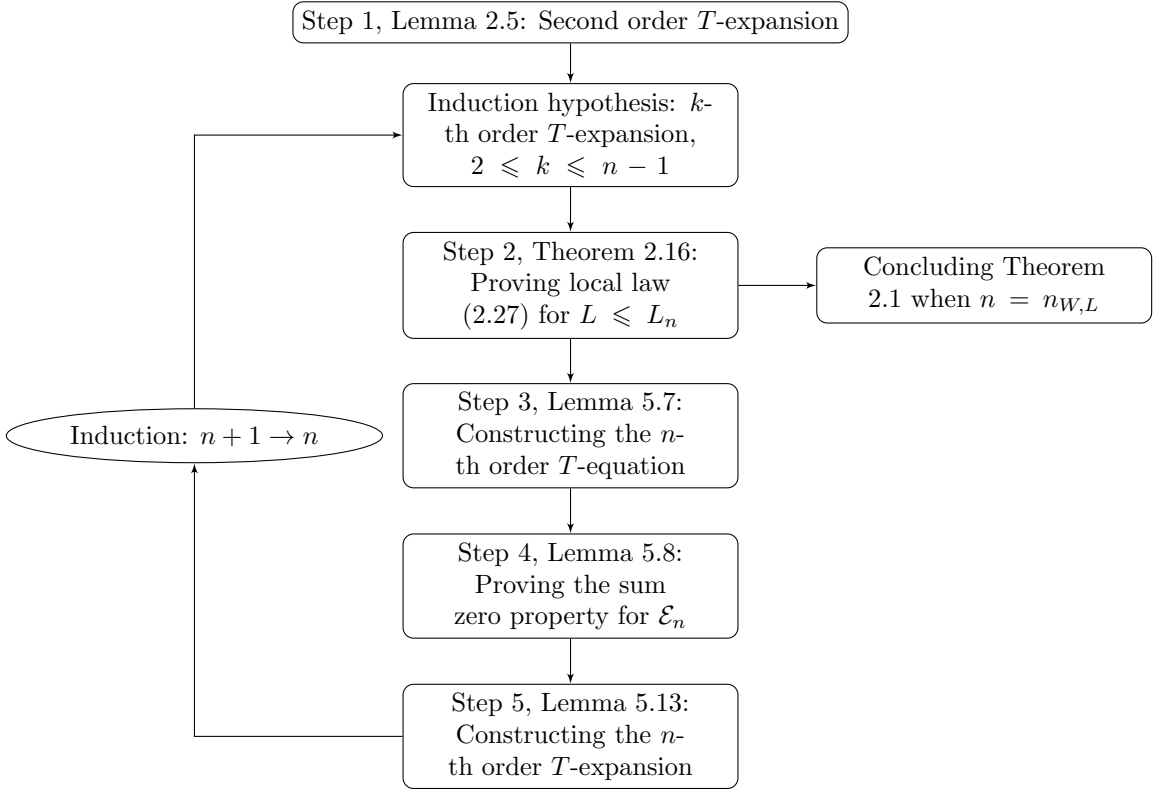
\begin{figure}[htb]
\color{black}
\tikzstyle{startstop} = [rectangle,rounded corners, minimum width=1cm,minimum height=0.5cm,text centered, draw=black]
\tikzstyle{startstop3} = [rectangle,rounded corners, minimum width=3cm, text width=6em, minimum height=0.5cm,text badly centered, draw=black]
\tikzstyle{block} = [rectangle, draw=black, 
    text width=12em, text centered, rounded corners, minimum height=1.5em]
\tikzstyle{line} = [draw=black, -latex']
\tikzstyle{line0} = [draw=black]
\tikzstyle{decision} = [diamond, draw=black, 
    text width=3em, text badly centered, node distance=2cm, 
    inner sep=0pt]

\tikzstyle{cloud} = [draw=black, ellipse, 
node distance=2.5cm,
    minimum height=2em]
\tikzstyle{null} = [draw=none,fill=none,right]

\begin{center}  
\begin{tikzpicture}[node distance = 1.5cm, auto]

    \node [startstop] (start) {Step 1, Lemma \ref{2nd3rd T}: Second order $T$-expansion};
    \node [block, below of= start, node distance=1.5cm] (op1) {Induction hypothesis: $k$-th order $T$-expansion, $2\le k\le n-1$};
    \node [block, below of=op1, node distance=2cm] (op2) {Step 2, Theorem \ref{thm ptree}: Proving local law \eqref{locallaw1} for $L\le L_n$};
    \node [block, right of=op2, node distance=5.5cm] (output) {Concluding Theorem \ref{main thm} when $n=n_{W,L}$}; 
    \node [block, below of=op2, node distance=2cm] (op3) {Step 3, Lemma \ref{incomplete Texp}: Constructing the $n$-th order $\incomp$ };
    \node [block, below of=op3, node distance=2cm] (op4) {Step 4, Lemma \ref{cancellation property}: Proving the sum zero property for $\Sele_n$};
    \node [block, below of=op4, node distance=2cm] (op5) {Step 5, Lemma \ref{lem completeTexp}: Constructing the $n$-th order $T$-expansion};
    \node [cloud, left of=op3, node distance=5cm] (update) {Induction: $n+1 \to n$};

    \path [line] (start) -- (op1);
    \path [line] (op1) -- (op2);
    \path [line] (op2) --node [midway] {} (op3);
    \path [line] (op2) -- (output);
    \path [line] (op3) --node [midway] {} (op4);
    \path [line] (op4) --node [midway] {} (op5);
    \node [null, left of=op2, node distance=4.12cm] (null2) {};
    \node [null, right of=op2, node distance=4.12cm] (null22) {};
    \node [null, right of=op3, node distance=4.12cm] (null33) {};
    \node [null, left of=op4, node distance=4.12cm] (null4) {};
    \node [null, right of=op4, node distance=4.12cm] (null44) {};
     
    \path [line] (update) |- (op1);
    \path [line] (op5) -| node [near start] {} (update);

\end{tikzpicture}
\end{center}
\caption{The main structure of the proof of Theorem \ref{main thm}. Corresponding to \eqref{Lcondition0}, we denote $L_n:=W^{1+(n-2)d/{4} - {c_0}/{2}}$ for a  constant $c_0>0$. Moreover, given the $L$ in Theorem \ref{main thm}, it is enough to perform the induction up to $n_{W,L}:= \left\lceil\frac{4}{d}\left(\log_W L  - 1 + \frac{c_0}{2}\right)\right\rceil+2$.}\label{Fig pfchart1}
\end{figure}

\subsection{Step 2: Proof of Theorem \ref{thm ptree}}\label{sec_strat_step1}
%

In this subsection, we prove Theorem \ref{thm ptree}, which is based on three main ingredients, Lemmas \ref{eta1case0}, \ref{lem: ini bound} and \ref{lemma ptree}. The first step is an initial estimate when $\eta=1$. The following lemma is a folklore result, and has been proved in e.g. \cite{delocal} in a different setting. We will give a formal proof in \cite{PartII_high}. 


\begin{lemma}[Initial estimate, Lemma 7.2 of \cite{PartII_high}] \label{eta1case0} 
Under the assumptions of Theorem \ref{thm ptree}, for any $z=E+\ii\eta$ with $E\in (-2+\kappa,2-\kappa)$ and $\eta=1$, we have that 
\be\label{locallaw eta1}
|G_{xy} (z) -m(z)\delta_{xy}|^2\prec  B_{xy}  ,\quad \forall \ x,y \in \Z_L^d.  
\ee
\end{lemma}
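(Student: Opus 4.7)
\medskip

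\noindent\textbf{Proof proposal for Lemma \ref{eta1case0}.}
The plan is to proceed in two stages, first establishing a weak entrywise local law and then upgrading it to the diffusive bound $B_{xy}$ using the second order $T$-expansion already derived in Lemma \ref{2nd3rd T}. The point is that $\eta=1$ keeps us comfortably in the bulk with a spectral gap of order one: $|m(z)|^2<1-c_\kappa$, so that $1-|m|^2S$ is invertible with a uniformly bounded inverse, and the Neumann series $\Theta=\sum_{k\ge 1}|m|^{2k}S^k$ converges geometrically. Since each $S^k$ is banded at scale $kW$, the entries $\Theta_{xy}$ decay faster than any polynomial in $|x-y|/W$; in particular, the general bound \eqref{thetaxy} specializes to $|\Theta_{xy}|\prec B_{xy}$ in this regime.

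First I would prove the preliminary entrywise bound
\[
\|G(z)-m(z)\|_{\max}\prec W^{-d/2}\qquad\text{at }\eta=1.
\]
This is a folklore bulk local law for banded Wigner-type matrices. Starting from the trivial $\|G\|_{\max}\le\eta^{-1}=1$, one uses \eqref{GmH}, Gaussian integration by parts, and the standard self-consistent equation for $G_{xx}$ combined with large-deviation bounds for $\sum_\alpha h_{x\alpha}G^{(x)}_{\alpha y}$. Every Gaussian contraction produces a factor $s_{x\alpha}=O(W^{-d})$, so each high-moment computation gains $W^{-d/2}$; iterating a standard bootstrap from $W^{-\delta_0}$ to $W^{-d/2}$ gives the desired bound. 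This step uses only mean-field-type inputs (no $T$-expansion), and plays the role of the $\eta=1$ initial input for Lemma \ref{lem G<T}.

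Second, I plug this bound into the second order $T$-expansion
\[
T_{x,\fb\fb}=m\,\Theta_{x\fb}\,\overline G_{\fb\fb}+(\AT^{(>2)})_{x,\fb\fb}+(\QT^{(2)})_{x,\fb\fb}
\]
and estimate each piece using $|\Theta_{x\fb}|\prec B_{x\fb}$ together with $\sum_y \Theta_{xy}\cdot(\cdot)$ behaving like convolution with a short-range kernel at $\eta=1$. The main term gives $|m|^2\Theta_{x\fb}+O_\prec(W^{-d/2})B_{x\fb}$ by the preliminary local law applied to $\overline G_{\fb\fb}$. The higher-order term $\AT^{(>2)}$ contains an extra light weight $G_{yy}-m$ or $\overline G_{xx}-\overline m$, so the preliminary local law yields $|\AT^{(>2)}_{x,\fb\fb}|\prec W^{-d/2}\sum_y B_{xy}B_{y\fb}\prec W^{-d/2}B_{x\fb}$ using the summability of $B$ against a short-range kernel. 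The fluctuation term $(\QT^{(2)})_{x,\fb\fb}$ is controlled by a high-moment $Q$-estimate: each $Q_\alpha$ conditions out the $\alpha$-th row/column, and a direct moment expansion (no fluctuation averaging is actually needed at $\eta=1$) gives a gain of $W^{-d/2}$ per $Q$-label. Combining these, $|T_{x\fb}|\prec B_{x\fb}$, so \eqref{offG largedev} gives $|G_{x\fb}|^2\prec B_{x\fb}$ for $x\ne\fb$, and \eqref{diagG largedev} with $\Phi=W^{-d/2}$ gives $|G_{xx}-m|^2\prec W^{-d}=B_{xx}$.

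The main obstacle, such as it is, is the bookkeeping for the preliminary bound $\|G-m\|_{\max}\prec W^{-d/2}$: the key is to track the band width $W$ rather than the system size $L$ in every large deviation estimate, which relies on the banded structure \eqref{app compact f} rather than on any delocalization input. Once that is in place, the upgrade to $B_{xy}$ is straightforward because at $\eta=1$ the diffusive propagator $\Theta$ is effectively short-range, so all the sums that would otherwise be dangerous (and produce the $\eta^{-1}$ factors discussed after \eqref{sumTheta}) are harmless here. No sum zero property or renormalization is required at this stage.
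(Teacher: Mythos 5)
Your skeleton matches the paper's approach: import the max-norm bound $\|G-m\|_{\max}\prec W^{-d/2}$ at $\eta=1$ (which the paper attributes to \cite{Semicircle, ErdYauYin2012Univ} rather than re-deriving), feed it into the second order $T$-expansion of Lemma \ref{2nd3rd T}, exploit that $\Theta$ is effectively short-range at $\eta=1$, and close with Lemma \ref{lem G<T}. However, there is a genuine gap in the middle step. You write $|\AT^{(>2)}_{x,\fb\fb}|\prec W^{-d/2}\sum_y B_{xy}B_{y\fb}$, which uses $|G_{y\fb}|^2\prec B_{y\fb}$; but the only input available is the \emph{uniform} bound $|G_{y\fb}|\prec W^{-d/2}$ with \emph{no} spatial decay in $|y-\fb|$. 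With that input alone you only get $\sum_y\Theta_{xy}|G_{y\fb}|^2\prec W^{-d}\sum_y\Theta_{xy}\prec W^{-d}$, which is $\not\prec B_{x\fb}$ once $\langle x-\fb\rangle\gg W^{3/2}$; the estimate is circular as written, since the decay of $G_{y\fb}$ is precisely what $T_{xy}\prec B_{xy}$ is supposed to deliver. The paper closes this gap with the self-improving high-moment iteration described around \eqref{locallawptree_intro}: assume $T_{xy}\prec B_{xy}+\wt\Phi^2$, derive $T_{xy}\prec B_{xy}+W^{-c}\wt\Phi^2$, and iterate from $\wt\Phi^2=W^{-d}$ down to $W^{-D}$. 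You need to set up this $\wt\Phi$ bootstrap (or an equivalent iterative decay argument) explicitly; a one-pass estimate cannot manufacture the decay.

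Two smaller points. First, the claim that ``no fluctuation averaging is actually needed at $\eta=1$'' is misleading: the $Q$-term $(\QT^{(2)})_{\fa,\fb\fb}$ is still controlled by a high-moment computation $\E\big[T_{\fa\fb}^{p-1}(\QT^{(2)})\big]$ together with $Q$-expansions (cf. \eqref{QTab_exp}), i.e. precisely the mechanism that averages the fluctuations; what $\eta=1$ buys you is only that the sum $\sum_x\Theta_{\fa x}(\cdots)$ is $O(1)$ rather than $O(\eta^{-1})$, so the gain you need from the averaging is milder. Claiming a clean ``$W^{-d/2}$ per $Q$-label'' without this moment bookkeeping is not a substitute. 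Second, the bound on $\AT^{(>2)}$ — once corrected to the bootstrap form $|G_{y\fb}|^2\prec B_{y\fb}+\wt\Phi^2$ — should be written with $\Theta_{xy}$ rather than $B_{xy}$ in the summation: $\sum_y\Theta_{xy}(B_{y\fb}+\wt\Phi^2)\prec B_{x\fb}+\wt\Phi^2$ uses $\sum_y\Theta_{xy}\sim 1$ at $\eta=1$, whereas $\sum_y B_{xy}\sim L^2/W^2$ would not suffice for the $\wt\Phi^2$ contribution.
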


The second step is the following continuity estimate, Lemma \ref{lem: ini bound}, whose proof will be given at the end of this subsection. It allows us to get some a priori estimates on $G(z)$ from the local law \eqref{locallaw1} on $G(\wt z)$ for $\wt z$ with a larger imaginary part $\im \wt z = W^{\e_0} \im z$ for a small constant $\e_0>0$. 

 \begin{lemma}[Continuity estimate]\label{lem: ini bound}
 Under the assumptions of Theorem \ref{thm ptree}, suppose that 
\be\label{ulevel}
 |G_{xy} (\wt z) -m(\wt z)\delta_{xy}|^2\prec B_{xy}(\wt z),\quad \forall\ x,y \in \Z_L^d,
\ee
with $\wt z =E+ \ii \wt\eta$ for some $E\in (-2+\kappa,2-\kappa)$ and $\wt\eta\in [W^{2}/L^{2-\e},1]$. Then we have that
\be \label{initial Txy2}
\max_{x,x_0}\frac{1}{K^{d}}\sum_{y:|y-x_0|\le K} \left(|G_{xy}( z)|^2 + |G_{yx}( z)|^2\right) \prec \left(\frac{\wt \eta} {\eta}\right)^2 \frac{1}{W^4 K^{d-4}},
\ee
uniformly in $K \in[W,L/2]$ and $z=E+ \ii\eta$ with $W^{2}/L^{2-\e}\le \eta\le \wt \eta$. Moreover, for any constant $\e_0\in (0,d/20)$, we have that
\be\label{Gmax}
\|G(z)-m(z)\|_{\max}\prec W^{-d/2+\e_0},
\ee
uniformly in $z=E+\ii \eta$ with $\max\{W^{-\e_0}\wt\eta, W^{2}/L^{2-\e}\} \le \eta \le \wt\eta$.
 \end{lemma}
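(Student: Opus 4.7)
The proof breaks naturally into two parts: first I would establish the averaged estimate \eqref{initial Txy2}, then deduce the pointwise bound \eqref{Gmax} from it by combining \eqref{initial Txy2} with the large deviation estimates of Lemma~\ref{lem G<T}. The starting point for \eqref{initial Txy2} is the elementary operator inequality $(H-E)^2+\eta^2 \ge (\eta/\wt\eta)^2[(H-E)^2+\wt\eta^2]$, valid for $\eta\le\wt\eta$ since $[1-(\eta/\wt\eta)^2](H-E)^2\ge 0$. Inverting yields the positive-operator bound
\[
G(z)G(z)^* \;=\; \frac{1}{(H-E)^2+\eta^2} \;\le\; \left(\wt\eta/\eta\right)^2\frac{1}{(H-E)^2+\wt\eta^2} \;=\; \left(\wt\eta/\eta\right)^2 G(\wt z)G(\wt z)^*.
\]
Taking diagonal entries and using the Ward identity $(GG^*)_{xx}(z)=\sum_y|G_{xy}(z)|^2=\im G_{xx}(z)/\eta$ on both sides delivers the global continuity comparison $\sum_y|G_{xy}(z)|^2\le(\wt\eta/\eta)^2\sum_y|G_{xy}(\wt z)|^2$.

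To upgrade this global comparison to the ball-localized form \eqref{initial Txy2}, I would combine it with the pointwise decay $|G_{xy}(\wt z)|^2\prec B_{xy}$ supplied by the hypothesis \eqref{ulevel}. A direct computation with the explicit profile $B_{xy}=W^{-2}\langle x-y\rangle^{-d+2}$ shows that $\sum_{y:|y-x_0|\le K}B_{xy}\lesssim W^{-2}K^2$ when $x$ lies in the ball, and decays like $K^d W^{-2}\langle x-x_0\rangle^{-d+2}$ when $x$ is far from it, both comfortably dominated by $K^4/W^4$ in the regime $K\ge W$. To transfer this bound to the scale $\eta$ with the correct $(\wt\eta/\eta)^2$ prefactor, I would decompose the ball dyadically into shells around $x_0$ and apply the global operator inequality together with a Cauchy--Schwarz argument in each shell, summing the contributions to produce the overall $K^4/W^4$ scaling stated in \eqref{initial Txy2}.

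For \eqref{Gmax}, I would specialize \eqref{initial Txy2} to a ball of radius $K\sim W^{1+\tau}$ and exploit the band structure $s_{x\alpha}\lesssim W^{-d}\mathbf 1_{|x-\alpha|\le W^{1+\tau}}$ to conclude
\[
T_{xy}(z)\;=\;|m|^2\sum_\alpha s_{x\alpha}|G_{\alpha y}(z)|^2\;\prec\;\left(\wt\eta/\eta\right)^2 W^{-d},
\]
uniformly in $x,y$. Under the restriction $\eta\ge W^{-\e_0}\wt\eta$ this reads $T_{xy}(z)\prec W^{2\e_0-d}$, and Lemma~\ref{lem G<T} applied with $\Phi = W^{\e_0-d/2}$ then yields the desired bound \eqref{Gmax}. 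The initial smallness $\|G(z)-m(z)\|_{\max}\prec W^{-\delta_0}$ required by Lemma~\ref{lem G<T} is obtained by propagating the hypothesis \eqref{ulevel} (which holds at $\wt z$ with margin $d/2$) down to $z$ via a short bootstrap in $\eta$ that uses the resolvent identity $G(z)-G(\wt z)=\ii(\eta-\wt\eta)G(z)G(\wt z)$ together with the already-established \eqref{initial Txy2}.

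The main technical obstacle is the descent of the \emph{global} operator inequality to a ball-localized estimate: since the projection $P_B=\mathbf 1_{|\cdot-x_0|\le K}$ does not commute with the spectral projections of $H$, there is no direct analogue of the form ``$G(z) P_B G(z)^*\le(\wt\eta/\eta)^2 G(\wt z) P_B G(\wt z)^*$'' at the operator level. The proof must therefore proceed through the dyadic/Cauchy--Schwarz route sketched above, exploiting the pointwise decay of $B_{xy}$ at the larger scale $\wt z$, and care is needed to avoid accumulating extra logarithmic or polynomial-in-$K/W$ factors when summing over dyadic shells so that the final bound is sharp to the stated $(\wt\eta/\eta)^2/(W^4K^{d-4})$ accuracy uniformly in $K\in[W,L/2]$.
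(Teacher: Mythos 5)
Your opening observation is sound and agrees with what the paper does: the monotonicity $\eta \im G_{yy}(z)\le\wt\eta\im G_{yy}(\wt z)$ combined with Ward's identity gives the \emph{global} bound $\sum_y|G_{xy}(z)|^2\lesssim \wt\eta/\eta^2$, which you rephrase as $GG^*(z)\le(\wt\eta/\eta)^2GG^*(\wt z)$. You also correctly identify the central difficulty: localizing this bound to a ball of radius $K$ is the crux, and the projection onto the ball does not commute with $H$. Where your proposal breaks down is that the "dyadic shells $+$ Cauchy--Schwarz" route you sketch does not actually close. Applying Cauchy--Schwarz to the cross term $(\wt\eta-\eta)(G(z)G(\wt z))_{xy}$ from the resolvent identity leads unavoidably to the operator norm of the Gram matrix $\cal A_{y'y}=\frac1{2\ii}\bigl[G_{y'y}(\wt z)-\overline{G_{yy'}(\wt z)}\bigr]$ restricted to the ball of radius $K$, and the naive Hilbert--Schmidt bound $\|\cal A\|_{HS}\lesssim K^{(d+2)/2}/W$ is far too weak to produce the $K^4/W^4$ scaling you need at either end of the range $K\in[W,L/2]$. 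The paper explicitly flags this insufficiency and closes the gap with a genuinely nontrivial input: the moment-method estimate $\|\cal A\|_{\ell^2\to\ell^2}\prec K^4/W^4$ (Lemma~\ref{lem normA}), proved in the companion paper by a combinatorial expansion of $\E\tr(\cal A^{2p})$, inserted into an orthogonal-projection argument (projecting the normalized column $\wh\bv_x=G_{x\cdot}(z)/\|G_{x\cdot}(z)\|$ onto $\mathrm{span}\{\wh\bw_y:y\in\cal I\}$ and bounding $\|\b b_x\|^2\le\|A\|_{\ell^2\to\ell^2}$). Your proposal never invokes or substitutes for this operator-norm bound, and no amount of dyadic decomposition of the ball recovers it from the global operator inequality alone, since that inequality carries no spatial localization. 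Your subsequent derivation of \eqref{Gmax} from \eqref{initial Txy2} via $T_{xy}\prec(\wt\eta/\eta)^2W^{-d}$ and Lemma~\ref{lem G<T} is in the right spirit and parallels the paper's argument (which additionally uses an $L^{-10d}$-discretization of $[\eta,\wt\eta]$ and a union bound to propagate the initial smallness), but it rests on \eqref{initial Txy2}, and without Lemma~\ref{lem normA} that estimate is unproved.
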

Compared with \eqref{locallaw1}, the $\ell^\infty$ bound \eqref{Gmax} is sharp up to a factor $W^{\e_0}$. The estimate \eqref{initial Txy2} is an averaged bound instead of an entrywise bound and the right-hand side of \eqref{initial Txy2} loses an $W^2/K^2$ factor when compared with the sharp averaged bound $W^{-2}K^{-(d-2)}$. In our proof, we will need to bound terms of the form $\sum_x \Theta_{xy_1}|G_{xy_2}|$. Using \eqref{thetaxy} and \eqref{initial Txy2}, it is not hard to get the bound $\sum_x \Theta_{xy_1}|G_{xy_2}| \prec W^{-d/2}{\wt \eta} /{\eta}$ when $d\ge 8$ (cf. Claim \ref{cor: ini bound}). This is one key reason why we require $d\ge 8$ in Theorem \ref{main thm0} and Theorem \ref{main thm}. 

In order to improve the weaker estimates \eqref{initial Txy2} and \eqref{Gmax} to the stronger local law \eqref{locallaw1}, we use the following lemma, whose proof will be given in \cite{PartII_high}. Note that \eqref{initial Txy2} verifies the assumption \eqref{initial Txy222} as long as we have $W^{-\e_0}\wt\eta \le \eta\le \wt\eta$. 
 

\begin{lemma}[Entrywise bound on $T$-variables, Lemma 7.4 of \cite{PartII_high}]\label{lemma ptree}  
Suppose the assumptions of Theorem \ref{thm ptree} hold. Fix any $z=E+ \ii\eta$ with $E\in (-2+\kappa,2-\kappa)$ and $\wt\eta\in [W^{2}/L^{2-\e},1]$. Suppose \eqref{Gmax} and the following estimate hold:
 \be \label{initial Txy222}
 \max_{x,x_0} \frac1{K^{d}}\sum_{y:|y-x_0|\le K} \left(|G_{xy}( z)|^2 + |G_{yx}( z)|^2\right) \prec  \frac{W^{2\e_0}}{W^4 K^{d-4}} ,
\ee
for all $K \in[W,L/2]$. As long as $\e_0$ is a sufficiently small constant (depending on $n$ and $c_0$ in \eqref{Lcondition0}), we have that 
\be\label{pth T}
 T_{xy}(z) \prec B_{xy} ,\quad \forall \ x,y\in \Z_L^d.
\ee
\end{lemma}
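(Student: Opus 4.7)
\medskip

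\noindent\textbf{Proof proposal.} The plan is to expand $T_{xy}=T_{x,yy}$ via the $n$-th order $T$-expansion \eqref{mlevelTgdef} with $\fa=x$, $\fb_1=\fb_2=y$, and bound each of the six terms on the right-hand side by $B_{xy}$ up to stochastic dominance. The leading term $m\,\Theta_{xy}\overline G_{yy}$ is immediate from $|\Theta_{xy}|\prec B_{xy}$ (via \eqref{thetaxy}) and $|G_{yy}|\prec 1$ (via \eqref{Gmax}). For the self-energy renormalization term $m(\Theta\,\Sigma^{(n)}\Theta)_{xy}\overline G_{yy}$, I would invoke the forthcoming Lemma \ref{lem redundantagain}, which establishes $(\Theta\,\Sigma_{T,k}\Theta)_{xy}\prec W^{-(k-2)d/2}B_{xy}$ by a double summation-by-parts against the sum-zero property \eqref{weaz} of $\Sele_k$, exploiting the smoothness bound $|\partial_\alpha^2\Theta_{x\alpha}|\prec\langle x-\alpha\rangle^{-d}$; summing over $4\le k\le n$ yields the required $\prec B_{xy}$ estimate, with the small $\eta$-correction from \eqref{3rd_property0} canceled against the $\eta^{-1}$ factor coming from $\sum_y\Theta_{xy}$.

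The remaining terms are finite linear combinations of normal regular graphs whose values must be estimated under the \emph{weaker} hypotheses \eqref{Gmax} and \eqref{initial Txy222}, which are off by at most $W^{\epsilon_0}$-powers from the sharp local law. The key tool is the \emph{doubly connected structure} developed in Section \ref{sec double}: every graph in the expansion possesses two edge-disjoint spanning subgraphs of waved, $\Theta$, and $G$ edges linking all molecules to the external atoms. One such spanning structure is used to extract a "skeleton" factor of $B_{xy}$ between $\otimes\equiv x$ and $\oplus\equiv\ominus\equiv y$, while the other is bounded using the deterministic estimates $|\Theta_{\alpha\beta}|\prec W^{-d}$ uniformly, $s_{\alpha\beta}=O(W^{-d})$, and the random estimate $|G_{\alpha\beta}|\prec W^{-d/2+\epsilon_0}$ per off-diagonal solid edge. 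Free internal atoms contribute summation factors at most $L^d$ each, consistent with the scaling order counting in Definition \ref{def scaling}; where the naive pointwise estimate for $|G_{\alpha\beta}|^2$ is insufficient (because $L^d\cdot W^{-d+2\epsilon_0}$ explodes), the averaged hypothesis \eqref{initial Txy222} is applied through dyadic decomposition against the decay of $\Theta_{x\alpha}$, which is where the restriction $d\ge 8$ enters via the convergence of $\sum_\alpha\Theta_{x\alpha}|G_{\alpha z}|^2$.

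With this accounting I would then treat the three remaining types of terms in turn. For the recollision graphs $\mathcal R_T^{(n)}$ of scaling order $\le n$, the dotted edge pinning $\oplus$ or $\ominus$ to an internal atom eliminates one summation and simultaneously produces an explicit $\Theta$-factor along the skeleton, yielding $\prec B_{xy}$ order-by-order. For the higher-order graphs $\mathcal A_T^{(>n)}$ of scaling order $\ge n+1$, a direct power count gives $\prec W^{-(n+1)d/2}\cdot W^{C\epsilon_0}\cdot(L/W)^2\cdot\langle x-y\rangle^{-(d-2)}$, which under the hypothesis \eqref{Lcondition1} is bounded by $W^{-c_0/2}B_{xy}\prec B_{xy}$ once $\epsilon_0$ is chosen sufficiently small in terms of $n$ and $c_0$. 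For the fluctuation graphs $\mathcal Q_T^{(n)}$, a fluctuation-averaging mechanism extending the one in \cite{EKY_Average} and developed in \cite{PartII_high} exploits the $Q$-label to provide an additional $W^{-d/2+\epsilon_0}$ improvement per such graph, which is more than enough. Finally, the error term $\mathrm{Err}_{n,D}$ is rendered negligible by choosing $D$ large. The main technical obstacle is not any single bound but rather the bookkeeping: verifying that the doubly connected structure survives every local and global expansion used to construct $\mathcal R_T^{(n)}$, $\mathcal A_T^{(>n)}$, and $\mathcal Q_T^{(n)}$, and that the $\epsilon_0$-losses accumulated from repeated applications of \eqref{Gmax} and \eqref{initial Txy222} do not exceed the $W^{c_0/2}$ budget afforded by \eqref{Lcondition1}. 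This bookkeeping is precisely what Section \ref{sec strategy} and \cite{PartII_high} must establish to close the argument.
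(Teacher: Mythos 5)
Your proposal captures the broad shape of the argument — expand $T_{xy}$ via the $T$-expansion, bound the leading and self-energy terms by the diffusive kernel, and handle the recollision/higher-order/fluctuation pieces using the doubly connected structure and Lemma~\ref{no dot} — but it omits the structural device that actually makes the paper's proof close, and without it the treatment of the $Q$-graphs cannot work.

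The paper does \emph{not} bound $T_{xy}$ by inserting the $T$-expansion once and estimating each piece pointwise. As indicated in Section~\ref{sec strategy}, the proof is a \emph{self-improving high-moment iteration}: one assumes a priori that $T_{xy}\prec B_{xy}+\wt\Phi^2$ for a free parameter $\wt\Phi$, forms $\E T_{\fa\fb}^p$, replaces exactly one copy of $T$ by its expansion, estimates each resulting term by (a power of) $B_{\fa\fb}+W^{-c_1}\wt\Phi^2$ times $\E T^{p-k}$ for various $k$, and then closes via H\"older and Young inequalities as in~\eqref{Holder3.6}. Iterating this replaces $\wt\Phi^2\mapsto W^{-c_1}\wt\Phi^2$ until it is below $W^{-D}$. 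Your proposal skips the introduction of $\wt\Phi$ entirely and works with a single copy of $T$; this breaks the argument at the $Q$-graph stage. A $Q$-graph is of the form $\sum_x\Theta_{\fa x}Q_x(\Gamma_x)$. The $Q$-label provides a gain \emph{only when you compute an expectation of a product of copies}: the cancellation $\E Q_x(\cdots)=0$ and the structure of $\E\prod_i Q_{x_i}(\cdots)$ force coincidences of indices, producing the extra $W^{-d/2}$ factor. There is no pointwise estimate of $\sum_x\Theta_{\fa x}Q_x(\Gamma_x)$ that beats the trivial bound $\sum_x|\Theta_{\fa x}||\Gamma_x|\sim \eta^{-1}\max_x|\Gamma_x|$, which is far too large. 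The "fluctuation-averaging mechanism" you invoke is precisely this moment computation; it is not a black-box pointwise improvement, and the paper's bound~\eqref{QTab_exp} is explicitly a moment bound of the form $\E T^{p-1}\QTn\prec\sum_{k\ge 2}[W^{-d/4+\e_0/2}(B+\wt\Phi^2)]^k\E T^{p-k}$, not a pointwise estimate.

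A secondary issue: in your bound for $\mathcal A_T^{(>n)}$ you produce only a term decaying like $B_{xy}$, but the paper's bound~\eqref{PTab111} carries an extra $\wt\Phi^2$ contribution. This is because the external $|G_{y\fb}|^2$ factors in $\mathcal A_T^{(>n)}$ are estimated there via the bootstrap hypothesis $|G_{y\fb}|^2\prec T_{y\fb}\prec B_{y\fb}+\wt\Phi^2$ (using Lemma~\ref{lem G<T}), and the $\wt\Phi^2$ piece generates a factor $L^2/W^2$ upon summation that is what ultimately calls on~\eqref{Lcondition1} to produce the contractive factor $W^{-c_0+(n-1)\e_0}$. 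Your averaged hypothesis~\eqref{initial Txy222} enters through Lemma~\ref{no dot}, which gives a variable $\cal A_{xy}$ of bounded weak-$(1,2)$ norm, not directly the bound you quote. The mechanism works, but only inside the iteration, and only with both the $\cal A$-factor and the $\wt\Phi^2$ term present.

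In short: the missing idea is the bootstrap parameter $\wt\Phi$ and the high-moment estimate $\E T_{\fa\fb}^p\prec(B_{\fa\fb}+W^{-c_1}\wt\Phi^2)^p$. Without it the $Q$-graphs are uncontrollable and the higher-order graphs are not actually shown to be small enough.
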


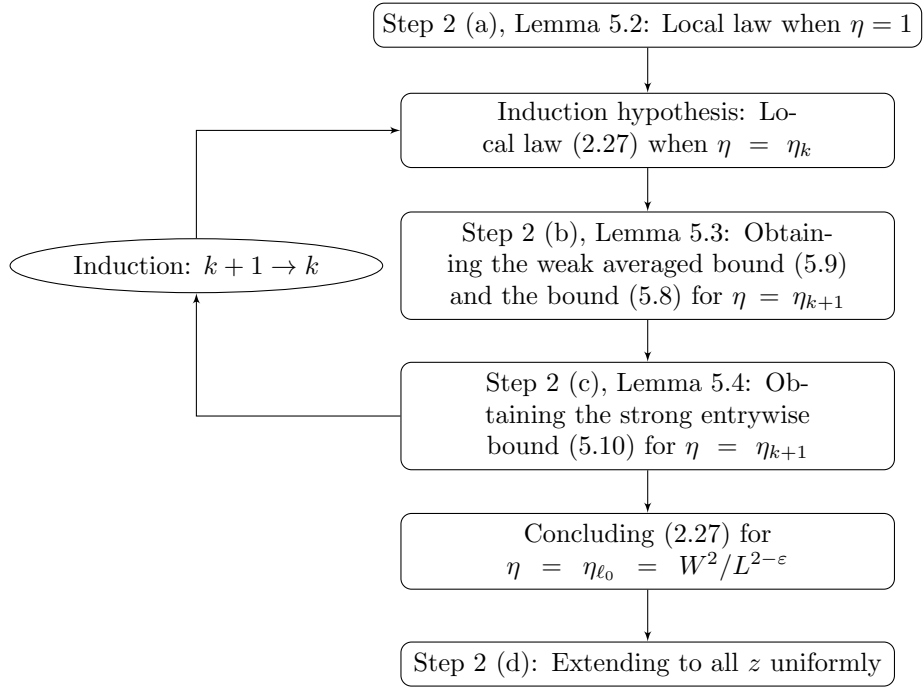
\begin{figure}[htb]
\color{black}
\tikzstyle{startstop} = [rectangle,rounded corners, minimum width=1cm,minimum height=0.5cm,text centered, draw=black]
\tikzstyle{startstop3} = [rectangle,rounded corners, minimum width=3cm, text width=6em, minimum height=0.5cm,text badly centered, draw=black]
\tikzstyle{block} = [rectangle, draw=black, 
    text width=18em, text centered, rounded corners, minimum height=1.5em]
\tikzstyle{line} = [draw=black, -latex']
\tikzstyle{line0} = [draw=black]
\tikzstyle{decision} = [diamond, draw=black, 
    text width=3em, text badly centered, node distance=2cm, 
    inner sep=0pt]

\tikzstyle{cloud} = [draw=black, ellipse, 
node distance=2.5cm,
    minimum height=2em]
\tikzstyle{null} = [draw=none,fill=none,right]

\begin{center}  
\begin{tikzpicture}[node distance = 1.5cm, auto]

    \node [startstop] (start) {Step 2 (a), Lemma \ref{eta1case0}: Local law when $\eta=1$};
    \node [block, below of= start, node distance=1.4cm] (op1) {Induction hypothesis: Local law \eqref{locallaw1} when $\eta=\eta_{k}$};
    \node [block, below of=op1, node distance=1.8cm] (op2) {Step 2 (b), Lemma \ref{lem: ini bound}: Obtaining the weak averaged bound \eqref{initial Txy222} and the bound \eqref{Gmax} for $\eta=\eta_{k+1}$};
 
    \node [block, below of=op2, node distance=2.0cm] (op3) {Step 2 (c), Lemma \ref{lemma ptree}: Obtaining the strong entrywise bound \eqref{pth T} for $\eta=\eta_{k+1}$};
    \node [block, below of=op3, node distance=1.8cm] (op4) {Concluding \eqref{locallaw1} for $\eta=\eta_{\ell_0}=W^{2}/L^{2-\e}$};
     \node [block, below of=op4, node distance=1.5cm] (op5) {Step 2 (d): Extending to all $z$ uniformly};
     \node [cloud, left of=op2, node distance=6cm] (update) {Induction: $k+1 \to k$};

    \path [line] (start) -- (op1);
    \path [line] (op1) -- (op2);
    \path [line] (op2) --node [midway] {} (op3);
    \path [line] (op3) --node [midway] {} (op4);
     \path [line] (op4) --node [midway] {} (op5);
    \node [null, left of=op2, node distance=4.12cm] (null2) {};
    \node [null, right of=op2, node distance=4.12cm] (null22) {};
    \node [null, right of=op3, node distance=4.12cm] (null33) {};
    \node [null, left of=op4, node distance=4.12cm] (null4) {};
    \node [null, right of=op4, node distance=4.12cm] (null44) {};
     
    \path [line] (update) |- (op1);
    \path [line] (op3) -| node [near start] {} (update);

\end{tikzpicture}
\end{center}
\caption{The structure of the proof of Theorem \ref{thm ptree}, where $\eta_k$ is defined in \eqref{def etak}. }\label{Fig pfchart2}
\end{figure}

Combining Lemmas \ref{eta1case0}--\ref{lemma ptree}, we can complete the proof of Theorem \ref{thm ptree} using a bootstrapping argument  on a sequence of multiplicatively decreasing $\eta$ given below.  

\begin{proof}[Proof of Theorem \ref{thm ptree}]
Given a small constant $\e_0>0$ and a fixed $E\in (-2+\kappa, 2-\kappa)$, we define the following sequence of decreasing imaginary parts: 
\be\label{def etak}
 \eta_{k}:=\max\left\{W^{- k\e_0}, \; W^{2}/L^{2-\e}\right\} ,\quad 0\le k \le \ell_{0},
\ee
where $\ell_0$ is the smallest integer such that $W^{-\ell_0\e_0}\le W^{2}/L^{2-\e}$. Note that by definition $\eta_{k+1}=W^{-\e_0}\eta_k$ for $k\le \ell_0-2$ and we always have $\eta_{\ell_0}=W^{2}/L^{2-\e}\ge W^{-\e_0}\eta_{\ell_0-1}$. Then we prove Theorem \ref{thm ptree} through an induction on $k$ as illustrated in Figure \ref{Fig pfchart2}. More precisely, we have the following procedure.  

\medskip
\noindent{\bf Step 2 (a):} By Lemma \ref{eta1case0}, \eqref{locallaw1} holds for $z_0=E+\ii\eta_0$. 

\medskip
\noindent{\bf Step 2 (b):} For any $0\le k \le \ell_0-1$, suppose \eqref{locallaw1} holds for $z_{k}=E+ \ii \eta_k$. 
Then by Lemma \ref{lem: ini bound},  \eqref{Gmax} and \eqref{initial Txy222} hold for all $z=E+\ii \eta$ with $\eta_{k+1}\le \eta \le \eta_{k}$. 

\medskip
\noindent{\bf Step 2 (c):} Applying Lemma \ref{lemma ptree}, we obtain that \eqref{pth T} holds for $z=z_{k+1}$. Using Lemma \ref{lem G<T}, we conclude \eqref{locallaw1} for $z=z_{k+1}$.

\medskip

Repeating the above Steps 2 (b) and 2 (c) for $\ell_0$ steps, we obtain that 
\begin{itemize}
	\item[(i)]\eqref{locallaw1} holds for all $z_k$ with $0\le k \le \ell_0$;
	\item[(ii)]\eqref{Gmax} and \eqref{initial Txy222} hold for all $z=E+\ii \eta$ with $\eta_{\ell_0}\le \eta\le 1$.
\end{itemize}  
To conclude Theorem \ref{thm ptree}, we still need to extend \eqref{locallaw1} uniformly to all $z=E+\ii \eta$ with $E\in (-2+\kappa,2-\kappa)$  and $\eta\in [\eta_{\ell_0}, 1]$. 

\medskip

\noindent{\bf Step 2 (d):} For a fixed $E\in (-2+\kappa, 2-\kappa)$ and $0\le k \le \ell_0-1$, we consider the following interpolations between $z_{k}$ and $z_{k+1}$: 
\be\label{fine interp}
z_{k,j}=E + \ii \eta_k-  \ii (\eta_k -\eta_{k+1})\cdot jL^{-10d}, \quad   j\in \llbracket 0, L^{10d}\rrbracket. 
\ee
By the above item (ii), \eqref{Gmax} and \eqref{initial Txy222} hold for all $z=z_{k,j}$, $ j\in \llbracket 0, L^{10d}\rrbracket$. Now applying Lemma \ref{lemma ptree}, we obtain that \eqref{pth T} holds for all $z_{k,j}$: 
$$T_{xy}(z_{k,j})\prec B_{xy}(z_{k,j}),\quad \forall \ x,  y\in \Z_L^d, \ j\in \llbracket 0, L^{10d}\rrbracket.$$
Using Lemma \ref{lem G<T} and taking a union bound, we conclude that \eqref{locallaw1} holds uniformly for all $z_{k,j}$. Then using the simple resolvent identity 
\be\label{resol exp0}G_{xy}(E+\ii \eta) = G_{xy}(E+\ii \eta') + \ii (\eta-\eta') \sum_{\al}G_{x\al}(E+\ii \eta)G_{\al y}(E+\ii \eta'), \ee
and the trivial bound
$$\|G(E+\ii \eta)\|_{\max}\le  \eta^{-1} \le L^2/W^2,\quad \forall \ \eta\ge W^2/L^2 ,$$
we can easily obtain the perturbation estimate
\be\label{perturb G}\|G(z)-G(z_{k,j})\|\le L^{-d},\quad \forall\  z=E+\ii \eta, \ \eta \in [\im z_{k,j},\im z_{k,j-1}].\ee
Together with the local law \eqref{locallaw1} at $z_{k,j}$, \eqref{perturb G} implies that \eqref{locallaw1} holds uniformly for all $z=E+\ii \eta$ with $\eta\in[\eta_k, \eta_{k+1}]$. This concludes \eqref{locallaw1} for a fixed $E\in (-2+\kappa, 2-\kappa)$ and uniformly for all $\eta\in [\eta_{\ell_0}, 1]$. Finally, to extend \eqref{locallaw1} uniformly to all $E$, we choose an $L^{-10d}$-net of $(-2+\kappa,2-\kappa)$ and use a similar perturbation argument as above. 
We omit the details.   
\end{proof}

Now we give the proof of Lemma \ref{lem: ini bound}.  We first recall the following classical Ward's identity. Its proof is a simple application of the spectral decomposition of $G(z)$.
\begin{lemma}[Ward's identity]\label{lem ward}
	For any $y,y'\in \Z_L^d$ and $z=E+\ii \eta$, we have 
	\be\label{eq_Ward0}
	\sum_x \overline G_{xy'}( z) G_{xy}( z) = \frac{G_{y'y}(z)-\overline{G_{yy'}(z)}}{2\ii \eta},\quad \sum_x \overline G_{y' x}( z) G_{yx}( z) = \frac{G_{yy'}(z)-\overline{G_{y'y}(z)}}{2\ii \eta}.
	\ee
	As a special case, if $y=y'$, we have
	\be\label{eq_Ward}
	\sum_x |G_{xy}( z)|^2 =\sum_x |G_{yx}( z)|^2 = \frac{\im G_{yy}(z) }{ \eta}.
	\ee
\end{lemma}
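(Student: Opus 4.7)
The plan is to carry out the standard spectral decomposition argument. Let $\{(\lambda_\alpha, \mathbf{u}_\alpha)\}_{\alpha=1}^N$ be the eigenpairs of the Hermitian matrix $H$, with $\{\mathbf{u}_\alpha\}$ an orthonormal basis of $\mathbb{C}^N$. Then
\begin{equation*}
G_{xy}(z) \;=\; \sum_\alpha \frac{u_\alpha(x)\,\overline{u_\alpha(y)}}{\lambda_\alpha - z}, \qquad \overline{G_{xy'}(z)} \;=\; \sum_\alpha \frac{\overline{u_\alpha(x)}\,u_\alpha(y')}{\overline{\lambda_\alpha - z}}.
\end{equation*}
First I would multiply these two expressions, sum over $x$, and apply the orthonormality relation $\sum_x \overline{u_\alpha(x)}\,u_\beta(x) = \delta_{\alpha\beta}$ to collapse the double sum into a single one:
\begin{equation*}
\sum_x \overline{G_{xy'}(z)}\,G_{xy}(z) \;=\; \sum_\alpha \frac{u_\alpha(y')\,\overline{u_\alpha(y)}}{|\lambda_\alpha - z|^2}.
\end{equation*}

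Next I would compute the right-hand side of the claimed identity directly from the spectral decomposition:
\begin{equation*}
G_{y'y}(z) - \overline{G_{yy'}(z)} \;=\; \sum_\alpha u_\alpha(y')\,\overline{u_\alpha(y)}\left[\frac{1}{\lambda_\alpha - z} - \frac{1}{\overline{\lambda_\alpha - z}}\right] \;=\; 2\ii\eta \sum_\alpha \frac{u_\alpha(y')\,\overline{u_\alpha(y)}}{|\lambda_\alpha - z|^2},
\end{equation*}
using $\overline{\lambda_\alpha - z} - (\lambda_\alpha - z) = z - \bar z = 2\ii\eta$. Dividing by $2\ii\eta$ yields the first identity in \eqref{eq_Ward0}. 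The second identity in \eqref{eq_Ward0} is obtained analogously by decomposing $\overline{G_{y'x}(z)}\,G_{yx}(z)$ and using orthonormality in the first index of the eigenvectors. Finally, the special case \eqref{eq_Ward} follows by setting $y=y'$ and observing that $G_{yy}(z) - \overline{G_{yy}(z)} = 2\ii\,\im G_{yy}(z)$.

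There is no real obstacle here; the only subtlety is keeping track of which index of $\mathbf{u}_\alpha$ is conjugated (so that the orthonormality relation applies cleanly), and this is purely bookkeeping. The argument uses nothing beyond the Hermiticity of $H$ and is independent of the band structure, Gaussian assumption, or any of the graphical machinery developed earlier in the paper.
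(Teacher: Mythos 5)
Your proof is correct and uses exactly the approach the paper indicates: the remark immediately following the lemma statement says ``Its proof is a simple application of the spectral decomposition of $G(z)$,'' which is precisely what you do. The spectral decomposition, orthonormality of eigenvectors, and the algebra $\frac{1}{\lambda-z}-\frac{1}{\overline{\lambda-z}}=\frac{2\ii\eta}{|\lambda-z|^2}$ are all correctly deployed, and the special case $y=y'$ follows as you say.
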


 \begin{proof}[Proof of Lemma \ref{lem: ini bound}]
 We first prove \eqref{initial Txy2}. The proof of \eqref{Gmax} will be based on \eqref{initial Txy2}.

\medskip

\noindent{\bf Proof of \eqref{initial Txy2}:} By \eqref{ulevel}, the following event is a high probability event: 
$$\Xi:=\big\{ \max_{x}|G_{xx}(\wt z) -m(\wt z)| \le W^{-d/4}\big\}.$$
With Ward's identity \eqref{eq_Ward}, we obtain that on $\Xi$,
\be\label{l2vw0}
\sum_{x}|G_{xy}(\wt z)|^2 =\sum_{x}|G_{yx}(\wt z)|^2 = \frac{\im G_{yy}(\wt z)}{\wt\eta} \sim \wt \eta^{-1}  .
\ee
Moreover, using the inequality $\eta \im G_{yy}(z) \le\wt\eta \im G_{yy}(\wt z)$, we obtain that on $\Xi$,
\be\label{l2vw}
\sum_{x}|G_{xy}(z)|^2=\sum_{x}|G_{yx}(z)|^2 = \frac{\im G_{yy}(z)}{\eta} \le \frac{\wt\eta \im G_{yy}(\wt z)}{\eta^2}\lesssim \frac{ {\wt \eta} }{\eta^2}.
\ee
Now we define the family of vectors $\bv_x$, $\bw_x$, $\wh  \bv_x$ and $\wh  \bw_x$ as
$$ v_x(y):=G_{xy}(z), \quad \wh  \bv_x:= \frac{\bv_x}{\|\bv_x\|_2}, \quad  w_x(y):=\overline G_{yx}(\wt z), \quad \wh  \bw_x:= \frac{\bw_x}{\|\bw_x\|_2}.$$
By \eqref{l2vw0} and \eqref{l2vw}, we have that on $\Xi$,
\be\label{l2vw2} \|\bv_x\|_2^2 \lesssim { {\wt \eta} }/{\eta^2},\quad \|\bw_x\|_2^2 \sim \wt\eta^{-1},\quad \forall \ x\in \Z_L^d.\ee


Now let $\mathcal I$ be any subset of indices. Suppose the orthogonal projection of $\wh  \bv_x$ onto the subspace spanned by $\{\wh  \bw_y: y\in \mathcal I\}$ can be written as $\bu_x = \sum_{y\in\mathcal  I} a_x(y) \wh  \bw_y$. Then we have 
\be\label{imply ba}
b_x(y) : = (\wh  \bv_x , \wh  \bw_y)= (\bu_x , \wh  \bw_y)= \sum_{y'\in \mathcal I} a_x({y'})A_{y'y}.
\ee
Here the inner product is defined as $(\bv,\bw):=\sum_{x}\bv(x)\overline \bw(x)$,
and the matrix $A$ is defined by 
\begin{align} \label{Ayy}
A_{y'y}:= (\wh  \bw_{y'},\wh  \bw_y) = \frac{\sum_x \overline G_{xy'}(\wt z) G_{xy}(\wt z)}{\|\bw_y\|_2\|\bw_{y'}\|_2} = \frac{G_{y'y}(\wt z)-\overline{G_{yy'}(\wt  z)}}{2\ii \wt \eta\|\bw_y\|_2\|\bw_{y'}\|_2}, 
\end{align}
where we used \eqref{eq_Ward0} in the third step. Notice that by definition, $A \equiv A(\cal I)$ is a positive definite Hermitian matrix with indices in $\cal I$. 
We define two \emph{row vectors} $\mathbf a_x:=(a_x(y))_{y\in \mathcal I}$ and $\mathbf b_x:=(b_x(y))_{y\in \mathcal I}$. Then \eqref{imply ba} gives that $\mathbf a_x = \mathbf b_x A^{-1}$, with which we can get that 
$$1=\|\wh  \bv_x\|_2^2 \ge \|\bu_x \|_2^2 = \mathbf a_x A \mathbf a_x^* =\mathbf b_x A^{-1}\mathbf b_x^* \ge \|\mathbf b_x\|^2 \|A\|_{\ell^2 \to \ell^2}^{-1} .$$
This inequality implies that on $\Xi$,
\be\label{b<A} \|\mathbf b_x\|^2 \le \|A\|_{\ell^2 \to \ell^2}\lesssim \|\cal A\|_{\ell^2 \to \ell^2},\ee
where the matrix $\cal A$ is defined by 
\begin{align*}
	\cal A_{y'y}= \frac1{2\ii }\left[G_{y'y}(\wt z)-\overline{G_{yy'}(\wt  z)}\right].
\end{align*}
On the other hand, we have the resolvent identity 
\be \label{vwrel}
\begin{split}
G_{xy}(z) = G_{xy}(\wt z) - \ii (\wt\eta-\eta) \sum_{\al}G_{x\al}(z)G_{\al y}(\wt z) &= \overline w_{y}(x) - \ii (\wt\eta-\eta)(\wh  \bv_x, \wh  \bw_y) \|\bv_x\|_2 \|\bw_y\|_2 \\
 &= \overline w_{y}(x) - \ii (\wt\eta-\eta) b_x(y) \|\bv_x\|_2 \|\bw_y\|_2 .
\end{split}
\ee
With this identity, we obtain that on $\Xi$, 
\be\label{vwrel3}
\sum_{y\in \mathcal I} |G_{xy}(z)|^2 \lesssim \sum_{y\in \mathcal I} |G_{xy}(\wt z)|^2 + \left( \frac{\wt\eta}{\eta}\right)^2 \sum_{y\in \mathcal I} |b_{x}(y)|^2 \lesssim \sum_{y\in \mathcal I} |G_{xy}(\wt z)|^2 +  \left( \frac{\wt\eta}{\eta}\right)^2 \|\cal A\|_{\ell^2\to \ell^2},
\ee
where we used \eqref{l2vw2} in the first step and \eqref{b<A} in the second step, 
Similarly, we can get that on $\Xi$, 
\be\label{vwrel4}
\sum_{y\in \mathcal I} |G_{yx}(z)|^2 \lesssim \sum_{y\in \mathcal I} |G_{yx}(\wt z)|^2 + \left( \frac{\wt\eta}{\eta}\right)^2  \|\cal A\|_{\ell^2\to \ell^2} .
\ee

For the specific index set $\cal I=\{y:|y-x_0|\le K\}$, using \eqref{ulevel} we can bound that
\be\label{vwrel5}\sum_{y\in \mathcal I}\left( |G_{xy}(\wt z)|^2+ |G_{yx}(\wt z)|^2\right) \prec 1+ \sum_{y\in \mathcal I}B_{xy}\lesssim \frac{K^2}{W^2}.\ee
It remains to bound $ \|\cal A\|_{\ell^2\to \ell^2}$ in \eqref{vwrel3} and \eqref{vwrel4}. A simple bound can be obtained by using the Hilbert-Schmidt norm:
$$ \|\cal A\|_{\ell^2\to \ell^2}^2 \le \|\cal A\|_{HS}^2\lesssim \sum_{y,y'\in \cal I}\left(|G_{y'y}(\wt z)|^2+|G_{yy'}(\wt z)|^2\right) \prec |\cal I|+ \sum_{y,y'\in \mathcal I}B_{yy'} \lesssim \frac{K^{d+2}}{W^2}. $$
This estimate is not strong enough to give the bound \eqref{initial Txy2}. To obtain a better bound on $ \|\cal A\|_{\ell^2\to \ell^2}$, we  use the following lemma, which is based on the classical method of moments. The proof of this lemma will be given in \cite{PartII_high}.

\begin{lemma}[Lemma 9.1 of \cite{PartII_high}]\label{lem normA} 
Suppose the assumptions of Lemma \ref{lem: ini bound} hold. We choose the index set $\cal I=\{y:|y-x_0|\le K\}$ for  $ K\in [W, L/2]$. Then for any fixed $p\in \N$ and small constant $\e>0$, 
we have the estimate  
\be\label{Moments method}
\E \tr\left(\cal A^{2p}\right)  \le K^d\left( W^\e \frac{K^4}{W^4}\right)^{2p-1}.
\ee
\end{lemma}
With Lemma \ref{lem normA}, we obtain that 
$$\E\|\cal A\|_{\ell^2\to \ell^2}^{2p} \le  \E \tr\left(\cal A^{2p}\right)\le K^d\left( W^\e\frac{K^4}{W^4}\right)^{2p-1}.$$
Since $p$ can be arbitrarily large, using Markov's inequality we get that 
\be\label{strong normA}
\|\cal A\|_{\ell^2\to \ell^2} \prec \frac{K^4}{W^4}.
\ee
Inserting \eqref{vwrel5} and \eqref{strong normA} into \eqref{vwrel3} and \eqref{vwrel4}, we obtain \eqref{initial Txy2}.

\vspace{10pt}

\noindent{\bf Proof of \eqref{Gmax}:} To prove \eqref{Gmax}, as in \eqref{fine interp}, we define the following interpolations between $z$ and $\wt z$:  
\be\label{fine interp2}
z_{j}=E + \ii \wt \eta-  \ii (\wt\eta -\eta)\cdot jL^{-10d}, \quad   j\in \llbracket 0, L^{10d}\rrbracket. 
\ee
By \eqref{perturb G}, we have the perturbation estimate 
\be\label{perturb G2}\|G(z_j)-G(z_{j-1})\|\le L^{-d} . 
\ee
Moreover, by \eqref{initial Txy2} and the fact $\wt\eta/\eta\le W^{\e_0}$, we know that \eqref{initial Txy222} holds for $z=z_{j}$ for all $j\in \llbracket 0, L^{10d}\rrbracket$. Taking a union bound, we get that for any small constant $\tau>0$ and large constant $D>0$, the event 
\begin{align}
\Xi_0 :=&\left\{|G_{xy}(\wt z)-m(\wt z)\delta_{xy}|\le W^{\tau}B_{xy}, \ \forall \ x,y\in \Z_L^d\right\}\nonumber \\
\cap &\bigg\{ \max_{0\le j \le L^{10d}}  \sum_{K \in[W,L/2]} \max_{x,x_0\in \Z_L^d}\frac{W^4}{K^4}\sum_{|y-x_0|\le K} \left(|G_{xy}( z_j)|^2 + |G_{yx}( z_j)|^2\right) \le  W^{2\e_0+\tau} \bigg\}\label{defn Xi0}
\end{align}
holds with probability $\PP(\Xi_0)\ge 1-L^{-D}$. Now fix a small constant $\delta_0\in(0,d/20)$, we define the events
$$A_j:=\{\|G(z_{j})-m(z_{j})\|_{\max}\le W^{-\delta_0}\},\quad B_j :=\{ \|T(z_{j})\|_{\max}\le W^{-d+2\e_0+3\tau} \},\quad   j\in \llbracket 0, L^{10d}\rrbracket.  $$
By Lemma \ref{lem G<T}, we have that 
\begin{align}
&\PP\left( \|G(z_{j})-m(z_{j})\|_{\max}\ge W^{-d/2+\e_0+2\tau};A_j\cap B_j\right) \le L^{-D} .\label{largedev G2}
\end{align}
Now we use the above facts to prove that 
\be\label{unionG}
\PP\Big( \max_{0\le j \le L^{10d}} \|G(z_{j})-m(z_{j})\|\le W^{-d/2+ \e_0 + 2\tau}\Big)\ge 1 - 2L^{-D+10d},
\ee
which concludes \eqref{Gmax} since $\tau$ and $D$ are arbitrary. 

 Using \eqref{perturb G2}, we can obtain that $\|G (z_{1})-m(z_{1})\|_{\max}\le W^{-d/4}$ on $\Xi_0$, which gives $\Xi_0\subset A_1$. 
Moreover, on $\Xi_0$, we have that for any $x,y\in \Z_L^d$,
\begin{align*}
T_{xy}(z_{1})&=|m(z_{1})|^2\sum_\al s_{x\al}|G_{\al y}(z_{1})|^2   
 \lesssim  s_{xy} +  W^{-d} \sum_{|\al-y|\le W^{1+\frac{\tau}{4}}}|G_{\al y}(z_{1})|^2 + \OO(W^{-100d}) \lesssim   \frac{W^{2\e_0+2\tau }}{W^d} ,
\end{align*}
where in the second step we used \eqref{subpoly} and in the third step we used the averaged bound in the definition of $\Xi_0$. This estimate gives that $\Xi_0\subset A_1\cap B_1$. Then \eqref{largedev G2} implies that $\PP(\Xi_1)\ge \PP(\Xi_0) - L^{-D}  \ge 1-2L^{-D}$, where the event $\Xi_1$ is defined by
$$\Xi_1:=\Xi_0\cap \left\{\|G_{xy}(z_{1})-m(z_{1})\|_{\max}\le W^{-d/2+\e_0+2\tau}\right\}.$$
Repeating the above argument, for any $j\in \llbracket 0, L^{10d}\rrbracket$, we can obtain that $\PP(\Xi_{j})\ge 1-(j+1)\cdot L^{-D}$ for
$$\Xi_{j}:=\Xi_0 \cap \left\{\max_{0\le k \le j}\|G_{xy}(z_{k})-m(z_{k})\|_{\max}\le W^{-d/2+\e_0+2\tau}\right\} .  $$
Taking $j=L^{10d}$, we conclude \eqref{unionG}. 
\end{proof}

\subsection{Step 3: $n$-th order $\incomp$}
In this step, we construct the $n$-th order {$\incomp$} in Lemma \ref{incomplete Texp}, whose proof will be postponed to \cite{PartII_high}. In general, it is difficult to define the $\incomp$ explicitly (there are already hundreds of terms when $n=6$). Instead, we will give a prescription to generate the $\incomp$ by applying local and global expansions.  
Section \ref{sec strategy} contains some more explanations.  

\begin{lemma} [$n$-th order $\incomp$, Theorem 3.7 of \cite{PartII_high}]   \label{incomplete Texp} 
Fix any $n\in \N$. Suppose we have defined the $(n-1)$-th order $T$-expansion. Then we can construct an $n$-th order $\incomp$ satisfying Definition \ref{defn incompgenuni}.  
 \end{lemma}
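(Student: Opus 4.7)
The plan is to construct the $n$-th order $\incomp$ iteratively, by running the local and global expansion machinery of Sections~\ref{sec_basiclocal} and \ref{subsec global} starting from the $(n-1)$-th order $T$-expansion, but stopping one step before the final $T$-variable substitution that would produce the full $T$-expansion. My starting point is the induction hypothesis
\[
T_{\fa,\fb_1\fb_2} = m\Theta_{\fa\fb_1}\overline G_{\fb_1\fb_2} + m(\Theta\Sdelta^{(n-1)}\Theta)_{\fa\fb_1}\overline G_{\fb_1\fb_2} + (\PT^{(n-1)})_{\fa,\fb_1\fb_2} + (\AT^{(>n-1)})_{\fa,\fb_1\fb_2} + (\QT^{(n-1)})_{\fa,\fb_1\fb_2} + (\Err_{n-1,D})_{\fa,\fb_1\fb_2}.
\]
Since $m\Theta_{x\fb_1}\overline G_{\fb_1\fb_2}$ is the leading term in the $T$-expansion of $T_{x,\fb_1\fb_2}$, I would invert this relation to re-express the self-energy chain as
\[
m(\Theta\Sdelta^{(n-1)}\Theta)_{\fa\fb_1}\overline G_{\fb_1\fb_2} = \sum_x(\Theta\Sdelta^{(n-1)})_{\fa x}T_{x,\fb_1\fb_2} - \sum_x(\Theta\Sdelta^{(n-1)})_{\fa x}\bigl[T_{x,\fb_1\fb_2} - m\Theta_{x\fb_1}\overline G_{\fb_1\fb_2}\bigr],
\]
where the correction bracket gets absorbed into $(\PITn),(\AITn),(\QITn),\Err'_{n,D}$ after invoking the $(n-1)$-th order $T$-expansion for $T_{x,\fb_1\fb_2}$ and applying the $Q$-expansions of Section~\ref{subsec global}. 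Only the single-$\Sele$ part $\sum_{l=4}^{n-1}\Sele_l$ of $\Sdelta^{(n-1)}$ is retained inside the new leading term, while the chain-of-$\Sele$ contributions are pushed into the corrections; this yields the ``old'' portion of $\wtSdeltan$.

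Next, I would process the graphs of scaling order exactly $n$ sitting inside $(\AT^{(>n-1)})_{\fa,\fb_1\fb_2}$. The local expansion Strategy~\ref{strat_local} with cutoff $n$ reduces each such graph to a linear combination of locally standard graphs, $\oplus/\ominus$-recollision graphs (feeding $(\PITn)$), $Q$-graphs (feeding $(\QITn)$), and graphs of scaling order $>n$ (feeding $(\AITn)$). For every locally standard graph that still contains a $T$-variable, I would perform a global expansion: select a deliberately chosen standard neutral atom and replace the corresponding $t_{x,y_1y_2}$ or $t_{y_1y_2,x}$ using \eqref{replaceT}, then $Q$-expand any resulting $Q$-labels. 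The leading $m\Theta_{xy_1}\overline G_{y_1y_2}$ term of \eqref{replaceT}, when re-embedded into the ambient locally standard graph, either completes a deterministic graph of scaling order exactly $n$ --- with external atoms at $\otimes$ and a residual $T$-variable attached to $\oplus,\ominus$, precisely the structure feeding the new $\Sele_n$ --- or it produces a higher-order, recollision, or $Q$-graph; the subleading terms in \eqref{replaceT} already carry strictly higher scaling order and are absorbed automatically. Collecting all these deterministic contributions defines $\Sele_n$, and together with $\sum_{l=4}^{n-1}\Sele_l$ this assembles $\wtSdeltan$ as in \eqref{decompose Sdelta}.

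The principal technical obstacle is preserving the doubly connected property of Section~\ref{sec double} throughout the global expansions. Unlike local expansions, a global expansion can instantiate new molecules remote from the existing atoms, and under a careless choice of standard neutral atom it can sever the molecular connectivity skeleton that underlies the bound $\llbracket\cal G\rrbracket\prec W^{-\ord(\cal G)\cdot d/2}$ on which every estimate in Section~\ref{sec pfmain} depends. A deterministic selection rule --- ensuring that the atom chosen leaves the remaining graph doubly connected to $\otimes$ through $\Theta$-edges and that the doubly connected property is restored after the $T$-substitution --- must be prescribed, shown to be stable under iteration, and verified to terminate in finitely many steps. Designing this selection rule and executing the associated bookkeeping so that the final expression matches Definition~\ref{defn incompgenuni} (and the additional properties of Definition~\ref{def incompgenuni}) is the main content of \cite{PartII_high}. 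At this stage $\Sele_n$ is defined only structurally; its sum zero property \eqref{weaz}, crucial for the next round of the induction, is postponed to Lemma~\ref{cancellation property}.
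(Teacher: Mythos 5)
Your sketch correctly identifies the high-level machinery — local expansions followed by global expansions that plug in the $(n-1)$-th order $T$-expansion, and the obstruction that a global expansion may break the doubly connected structure — and it correctly defers the heavy bookkeeping to \cite{PartII_high}, as the paper itself does (the statement is only proved there; Section~\ref{sec strategy} gives heuristics). However, the ``inversion'' with which you open is not a sound construction route. It is an algebraic identity, but re-expanding the correction bracket $\sum_x(\Theta\Sdelta^{(n-1)})_{\fa x}\bigl[T_{x,\fb_1\fb_2}-m\Theta_{x\fb_1}\overline G_{\fb_1\fb_2}\bigr]$ with the $(n-1)$-th order $T$-expansion regenerates the very term $m(\Theta\Sdelta^{(n-1)}\Theta)_{\fa\fb_1}\overline G_{\fb_1\fb_2}$ you were trying to dispose of, and the chain-of-$\Sele$ pieces of $\Sdelta^{(n-1)}$ paired against the unresolved $T_{x,\fb_1\fb_2}$ give graphs of scaling order as low as $6$; for $n\geq 7$ these are neither recollision nor $Q$-graphs and have order $\leq n$, so they have no legal home among $\PITn$, $\AITn$, $\QITn$, $\Err'_{n,D}$ as required by Definition~\ref{defn incompgenuni}. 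The construction actually runs forward: one expands $T_{\fa,\fb_1\fb_2}$ to cutoff order $n$ via iterated local and global expansions and stops as soon as every unresolved $T$-variable is paired with a direct-sum self-energy $\wtSdeltan$, rather than post-processing the already-resolved $(n-1)$-th $T$-expansion.

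The more substantive gap is that your ``deterministic selection rule'' is exactly where the paper's argument lives, and you leave it unspecified. Section~\ref{sec strategy} names the invariant: the \emph{(sequentially) pre-deterministic property}, i.e.\ the existence of an order $b_1\preceq b_2\preceq\cdots$ on the internal blue solid edges of a doubly connected graph such that each $b_k$ is \emph{redundant} — removable while keeping the molecular graph doubly connected — once $b_1,\dots,b_{k-1}$ have been promoted to $\dashed$ edges. The global expansion is always applied at the $T$-variable carrying the first such redundant edge, and the content of the proof in \cite{PartII_high} is precisely that both local and global expansions preserve this invariant and that the resulting process terminates. Saying a rule ``must be prescribed, shown stable, and verified to terminate'' names the open problem but does not supply the invariant that resolves it, so your proposal outlines the shape but not the mechanism of the argument.
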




In Step 5, we will solve the $n$-th order $\incomp$ \eqref{mlevelT incomplete} to get the $n$-th order $T$-expansion \eqref{mlevelTgdef}. Before doing that, we need to show that $\Sele_n$ satisfies the properties \eqref{two_properties0}--\eqref{3rd_property0} and \eqref{two_properties0V}--\eqref{weaz},  i.e. $\Sele_n$ is indeed an $n$-th order $\self$. This is the purpose of Step 4, where the proof of the sum zero properties \eqref{3rd_property0} and \eqref{weaz} will be the core argument. 

\subsection{Step 4: Proving the sum zero properties}\label{sec sumzero1} 

In this subsection, we prove that $\Sele_n$ constructed in Lemma \ref{incomplete Texp} is indeed a $\self$. 

\begin{lemma}[Properties of $\Sele_n$]\label{cancellation property}
 Fix any $n\in \N$. Suppose we have defined the $(n-1)$-th order $T$-expansion.  
 The deterministic matrix $\Sele_n$ constructed in the $n$-th order $\incomp$ in Lemma \ref{incomplete Texp} satisfies the properties \eqref{two_properties0}--\eqref{3rd_property0} and \eqref{two_properties0V}--\eqref{weaz} with $l=n$.


 \end{lemma}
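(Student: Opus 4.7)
The plan is to split the seven properties into three groups and handle them in order of increasing difficulty. The symmetry and translation-invariance assertions \eqref{two_properties0} and \eqref{two_properties0V} should follow essentially from the construction: every graph in $\Sele_n$ is built from the $\Z_L^d$-translation-invariant matrices $S$, $S^\pm$, and $\Theta$, and the recipe for generating these graphs via the local and global expansions in Section \ref{sec_basiclocal} is symmetric under exchange of the two external atoms. I will codify this as a graph-symmetry lemma (called Lemma \ref{lem Rsymm} in the outline), whose proof amounts to inspecting the basic operations $\cal O_{dot}, \cal O_{weight}, \cal O_{multi-e}, \cal O_{GG}, \cal O_{G\overline G}$ and the global replacement rule \eqref{replaceT}.

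The pointwise bounds \eqref{4th_property0} and \eqref{4th_property0V} reduce, by Definition \ref{collection elements}(i), to estimating the $\OO(1)$ explicit deterministic graphs of scaling order $n$ with external atoms $0$ and $x$ that make up $\Sele_n$. The key structural input is the doubly-connected property (Definition \ref{def 2net}), which will be maintained throughout the expansion procedure in Lemma \ref{incomplete Texp}; combined with the decay \eqref{thetaxy} for $\Theta$ and the estimate \eqref{S+xy} for $S^\pm$, the doubly connected structure provides two independent decaying paths between atoms $0$ and $x$ and hence the decay $B_{0x}^2$. For the infinite-space version, each graph in $(\Sele_n^L)_{0x}$ will be shown to converge, as $L\to\infty$ (with $\eta$ tending to $0$ suitably), to its $\Z^d$ analogue; the improved $B_{0x}^2$ decay (rather than $B_{0x}$) makes the tail $\sum_{|x|>L/2}$ convergent, and a difference estimate \eqref{3rd_property0 diff} will be proved by similar Taylor expansion / continuity arguments applied graph by graph.

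The bulk of the work is the sum zero property \eqref{weaz}, from which the discrete-sum estimate \eqref{3rd_property0} then follows by the triangle-inequality computation already displayed in the excerpt after Definition \ref{collection elements}. My strategy is to combine the $n$-th order $\incomp$ \eqref{mlevelT incomplete} with the local law \eqref{locallaw1} from Step 2 (available by induction, since the sum zero property is only needed at order $n$ and the $(n-1)$-th order expansion is already in hand). Taking expectation with $\fb_1=\fb_2=\fb$, estimating every remaining term on the right-hand side ($\PITn$, $\AITn$, $\QITn$, $\Err'_{n,D}$ are all small by the local law and the fluctuation-averaging mechanism), and comparing with the direct asymptotic $\E T_{\fa,\fb\fb} \approx |m|^2 \Theta_{\fa\fb}$, we obtain a self-consistent identity of the schematic form
\begin{equation*}
\sum_{x} (\Theta \wtSdeltan)_{\fa x}\,|m|^2 \Theta_{x\fb} \;=\; \OO_\prec(\text{small error}).
\end{equation*}
Since $\sum_x\Theta_{0x}\sim \eta^{-1}$ at small $\eta$, any non-vanishing value of $\wh{\Seleinf_n}(0)=\sum_x (\Seleinf_n)_{0x}$ would contribute a term of size $\eta^{-1}\cdot\wh{\Seleinf_n}(0)$ that the controlled error on the right-hand side cannot absorb; letting $\eta\to 0$ along the limit procedure of Section \ref{sec infspace} then forces \eqref{weaz}.

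The main obstacle, as in all arguments of this type, is the passage to the infinite-volume, zero-imaginary-part limit: the local law only applies for $\eta\gtrsim W^2/L^2>0$, so $\eta=0$ is not directly accessible, and the matrices $\Theta$, $S^\pm$ themselves depend on $L$ through the torus structure. I will handle this by working along a sequence $(L_k,\eta_k)$ with $L_k\to\infty$, $\eta_k\to 0$, and $L_k^2\eta_k/W^2\to\infty$ (which keeps us in the regime where the local law applies), establishing uniform continuity of each graph in $(\Sele_n^L)_{0x}(z)$ jointly in $(L,\eta)$ via the difference estimate \eqref{3rd_property0 diff}, and then extracting the limit. A secondary subtlety is that uniform convergence of the building blocks $\Theta$ and $S^\pm$ to their $\eta=0$, $L=\infty$ analogues must be verified using the Schwartz-class smoothness of $\psi$ from Assumption \ref{var profile} (and the compact support in the setting of Theorem \ref{main thm}); once this is in place, the induction on $n$ closes and Lemma \ref{cancellation property} follows.
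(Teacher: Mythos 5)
Your treatment of the symmetry assertions \eqref{two_properties0}, \eqref{two_properties0V} (via the graph-symmetry Lemma \ref{lem Rsymm}) and of the pointwise decay bounds \eqref{4th_property0}, \eqref{4th_property0V}, \eqref{3rd_property0 diff} (via the doubly connected property, i.e.\ Lemma \ref{lem V-R wt} with Corollaries \ref{lem Rdouble} and \ref{double inf}) matches the paper, as does your starting point for the sum zero property: insert the $n$-th order $\incomp$ into $\E T_{\fa,\fb\fb}$, sum over $\fa,\fb$, and compare (this is Lemma \ref{lem maincancel}). The gap is in your limit procedure. You propose $L_k\to\infty$, $\eta_k\to 0$ with $W$ fixed; but the $(n-1)$-th order $T$-expansion only gives the local law for $L^2/W^2 \le W^{(n-2)d/2-c_0}$ (apply Theorem \ref{thm ptree} with $n-1$), so for fixed $W$ this caps $L$, and then $\eta\ge W^2/L^{2-\e}$ caps $\eta$ strictly away from zero. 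Moreover, even inside the accessible regime, the self-consistent identity from \eqref{mlevelT sum} only yields $|\sum_x(\Sele_n)_{0x}| \lesssim W^{-d/2+\e}\eta + W^{-(n-1)d/2+\e}$: the second error, which comes from $\AITn$ via \eqref{intro_error}, is $\eta$-independent, so letting $\eta$ shrink leaves a nonzero bound of order $W^{-(n-2)d/2}\cdot W^{-c}$ for a fixed constant $c>0$, not zero.

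What is missing is the $W$-scaling invariance of the infinite-space limit, i.e.\ Lemma \ref{lem FT0}: the change of variables $q_e=Wp_e$ in the Fourier representation shows $\sum_{\fa\in\Z^d}(\Seleinf_n)_{0\fa}(m(E),\psi,W)=W^{-(n-2)d/2}\,\Sint_n(m(E),\psi)$ with $\Sint_n$ a constant \emph{independent of $W$}. (This scaling identity, not just uniform convergence of $\Theta$ and $S^\pm$, is where the compact support of $\psi$ in Theorem \ref{main thm} is actually used.) With it, the paper picks one specific $L_n$ tied polynomially to $W$ by \eqref{Lcondition} and $z_n=E+\ii W^{2+\e}/L_n^2$, uses the identity above together with \eqref{property V-R wt} to deduce $|\Sint_n|\lesssim W^{-c}+\eta_n W^\e = \oo(1)$, and then invokes $W$-independence of $\Sint_n$ to conclude $\Sint_n=0$, hence \eqref{weaz}; finally \eqref{3rd_property0} follows from \eqref{weaz}, \eqref{4th_property0V} and \eqref{3rd_property0 diff} by the displayed triangle-inequality estimate after Definition \ref{collection elements}, as you say. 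So the limit that closes the argument is $W\to\infty$ with $L$ scaled to $W$, and the contradiction is supplied by the $W$-scaling of $\Sint_n$ --- your $\eta\to 0$, $L\to\infty$, $W$-fixed procedure cannot substitute for it.
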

The proof of Lemma \ref{cancellation property} is based on three main ingredients, Lemmas \ref{lem V-R wt}, \ref{lem FT0} and \ref{lem maincancel}. In Lemma \ref{lem V-R wt}, we show that the estimates \eqref{4th_property0}, \eqref{4th_property0V} and \eqref{3rd_property0 diff} hold. 
Its proof depends on the doubly connected property of $\Sele_{n}$ (cf. Definition \ref{def 2net}) and we postpone it to Section \ref{sec infspace}. 

\begin{lemma} \label{lem V-R wt}
Under the assumptions of Theorem \ref{main thm} and Lemma \ref{cancellation property}, $\Sele_n^{\infty}$ exists. Moreover, $\Sele_n$ and $\Sele_n^{\infty}$ satisfy \eqref{4th_property0}, \eqref{4th_property0V}, \eqref{3rd_property0 diff} and
 \be\label{property V-R wt}
 \bigg|\sum_{\fa\in \Z_L^d} ( \Sele_{n})_{0\fa} (m(z),\psi, W, L) - \sum_{\fa\in \Z^d} ( \Sele^{\infty}_{n})_{0\fa}(m(E), \psi, W)  \bigg|   \le \eta W^{-(n-2)d/2+\e},\quad \forall\ \eta\in[  W^{2}/L^{2-\e} ,L^{-\e}],
\ee
for any small constant $\e>0$. Here we have abbreviated $m(E)\equiv m(E+\ii 0_+)$. 
\end{lemma}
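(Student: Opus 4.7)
\textbf{Plan for Lemma \ref{lem V-R wt}.} The proof splits into four parts corresponding to its four claims: existence of $\Sele_n^{\infty}$, the entrywise decay bounds \eqref{4th_property0} and \eqref{4th_property0V}, the finite-to-infinite difference bound \eqref{3rd_property0 diff}, and the sum equivalence \eqref{property V-R wt}. The decisive structural input throughout is that every deterministic graph in $\Sele_n$ is doubly connected in the sense of Definition \ref{def 2net}; this follows from the construction in Lemma \ref{incomplete Texp}. To define $\Sele_n^{\infty}$, I would replace each of the finitely many $S$, $S^{\pm}$, $\Theta$ edges in each graph by its infinite-space analogue. Under Assumption \ref{var profile}, $\widehat S_{W,L}(p)$ converges to an $L$-independent smooth limit as $L\to\infty$, while for fixed $W$ the multiplier $(1-|m(z)|^2\widehat S)^{-1}$ extends continuously to $z=E+\ii 0$ with only an $|p|^{-2}$ singularity at $p=0$ which is integrable for $d\ge 3$; termwise passage to $L\to\infty$, $\eta\to 0$ then yields a well-defined $\Sele_n^{\infty}(E)$.

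For \eqref{4th_property0} and \eqref{4th_property0V}, I would exploit that a doubly connected graph admits two edge-disjoint paths of waved and $\dashed$ edges from one external atom to the other, each carrying a $B_{0x}^{1/2}$-type decay via \eqref{S+xy} and \eqref{thetaxy}. Summing out the internal atoms, the two paths together produce the $B_{0x}^{2}$ in \eqref{4th_property0}, while each of the remaining $(n-4)/2$ ``spare'' waved edges contributes a further $W^{-d/2}$ upon summation of its attached internal atom, accounting for the prefactor $W^{-(n-4)d/2}$. The same accounting performed directly on $\Z^d$ gives \eqref{4th_property0V}: because each path has rapid polynomial decay, summing over $\Z_L^d$ or $\Z^d$ differs only by polynomially small boundary corrections.

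For the difference bound \eqref{3rd_property0 diff} I would telescope: write $(\Sele_n)_{0x}(z)-(\Sele_n^{\infty})_{0x}(E)$ as a finite sum over graphs in which exactly one edge has been replaced by its ``difference'' version while all other edges are bounded by the estimates already established. The $m$, $S$ and $S^{\pm}$ differences either vanish (torus distance equals Euclidean distance whenever $|x-y|<L/2$) or give boundary contributions of size $\OO(W^{-D})$. For a $\dashed$ edge, a Fourier/contour argument combined with $|m(z)|^2=1-\eta/r(E)+\OO(\eta^{2})$ yields the key estimate
\[
\bigl|\Theta^{L}_{xy}(z)-\Theta^{\infty}_{xy}(E)\bigr| \;\lesssim\; \eta\, W^{-2}\langle x-y\rangle^{-d+4},
\]
an extra $\eta$ factor at the cost of exactly two powers of decay. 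Plugged into the doubly connected bookkeeping, with the two lost decay powers charged to one of the two edge-disjoint paths, this produces the advertised bound $\eta\, W^{-nd/2}W^{2d-6}\langle x\rangle^{-(2d-6-\tau)}$.

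For \eqref{property V-R wt} I would identify $\Z_L^d$ with its standard representative in $\Z^d$ and decompose
\[
\sum_{\fa\in\Z^d}(\Sele_n^{\infty})_{0\fa}(E) - \sum_{\fa\in\Z_L^d}(\Sele_n)_{0\fa}(z) \;=\; \sum_{|\fa|>L/2}(\Sele_n^{\infty})_{0\fa}(E) \;+\; \sum_{|\fa|\le L/2}\!\bigl[(\Sele_n^{\infty})_{0\fa}(E)-(\Sele_n)_{0\fa}(z)\bigr].
\]
The tail sum is controlled by \eqref{4th_property0V}: since $2d-4-\tau>d$ for $d\ge 8$, it is bounded by $W^{-(n-2)d/2}(W/L)^{d-4-\tau}$, which is $\le \eta W^{-(n-2)d/2+\e}$ whenever $\eta\ge W^{2}/L^{2-\e}$. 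The inner sum is controlled by summing \eqref{3rd_property0 diff}; since $\langle\fa\rangle^{-(2d-6-\tau)}$ is summable in dimension $d\ge 8$, it also produces $\eta W^{-(n-2)d/2+\e}$. The main obstacle will be step three: extracting a clean $\eta$ factor from $\Theta^{L}(z)-\Theta^{\infty}(E)$ while preserving enough spatial decay to keep the doubly connected bookkeeping intact. This will require careful Fourier-side estimates together with a judicious choice of which edge-disjoint path absorbs the loss of two decay powers, but the combinatorics remains manageable because $\Sele_n$ consists of only $\OO(1)$ many graphs with $\OO(1)$ many edges each.
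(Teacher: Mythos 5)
Your overall architecture matches the paper's: establish entrywise bounds via the doubly connected graph estimate, define $\Sele_n^\infty$ by termwise passage to the limit, telescope one edge at a time to control $(\Sele_n)_{0x}(z)-(\Sele_n^\infty)_{0x}(E)$, and then pass from \eqref{3rd_property0 diff} to \eqref{property V-R wt} by summing over $|\fa|\le L/2$ and controlling the tail $|\fa|>L/2$ separately. The paper formalizes this via Corollary \ref{lem Rdouble}, Corollary \ref{double inf} and Lemma \ref{lemm V-R}, and your sketch is a reasonable blind reconstruction of that route.

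However, your key technical input is off by $W^2$, and this error propagates to a final bound that is $W^2$ too weak. You state
\[
\bigl|\Theta^{L}_{xy}(z)-\Theta^{\infty}_{xy}(E)\bigr| \;\lesssim\; \eta\, W^{-2}\langle x-y\rangle^{-(d-4)},
\]
describing the loss as ``exactly two powers of decay''. The correct estimate (the paper's \eqref{Theta-wh}) is $\eta\, W^{-4}\langle x-y\rangle^{-(d-4-\tau)}$, i.e.\ one loses two powers of the \emph{scaled} distance $\langle x-y\rangle/W$, not two raw powers of $\langle x-y\rangle$. You can see this from the heuristic: $|m(z)|^{2k}-|m(E)|^{2k}\approx -k\eta/r(E)$, so $\Theta(z)-\Theta_\infty(E)\approx -(\eta/r)\sum_k k(S^k)_{0x}$; since $(S^k)_{0x}\sim (kW^2)^{-d/2}$ and the sum is dominated by $k\sim |x|^2/W^2$, the extra factor of $k$ gives exactly $|x|^2/W^2$ more than $\Theta$ itself, yielding $\eta\,(|x|/W)^2\cdot W^{-2}|x|^{-(d-2)}=\eta\, W^{-4}|x|^{-(d-4)}$. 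In other words, the edge $B_{xy}=W^{-2}\langle x-y\rangle^{-(d-2)}$ is replaced by $\eta\,\wt B_{xy}$ with $\wt B_{xy}=W^{-4}\langle x-y\rangle^{-(d-4)}$ (precisely the quantity used in Corollary \ref{lem Rdouble weak}). With your stated bound, the graph bookkeeping for \eqref{3rd_property0 diff} produces $W^{-nd/2}\cdot\eta\, W^{2d-4}\langle x\rangle^{-(2d-6-\tau)}$ rather than $W^{-nd/2}\cdot\eta\, W^{2d-6}\langle x\rangle^{-(2d-6-\tau)}$, and the same $W^2$ surplus then appears in \eqref{property V-R wt}, where the target $\eta W^{-(n-2)d/2+\e}$ cannot absorb it. You need the full $W^{-4}\langle x-y\rangle^{-(d-4)}$ to close the argument.

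Two smaller remarks. First, your telescoping must also handle the labelled $\dashed$ edges of the form \eqref{eq label}: the chains $\Theta\Sele_{2k_1}\Theta\cdots\Sele_{2k_l}\Theta$ require replacing each lower-order $\Sele_{2k_i}$ by $\Sele_{2k_i}^\infty$, which uses \eqref{3rd_property0 diff} for those lower orders inductively (together with the zero-sum property \eqref{weaz} via Lemma \ref{lem redundantagain2}); your proposal does not flag this induction. Second, the passage from $\Z_L^d$ to $\Z^d$ requires more than ``torus distance equals Euclidean distance for $|x-y|<L/2$'': the paper first cuts off all edges at scale $L^\tau W\eta^{-1/2}\ll L$ (the $\cal G^{[\eta]}$ construction), precisely so that the torus geometry is invisible; you should incorporate some such truncation to make the decomposition clean.
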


By taking $L\to \infty$, the infinite space limit $\sum_\fa  (\Sele^{\infty}_{n})_{0\fa}$ depends only on $n$, $m(E)$, the function $\psi$ in Assumption \ref{var profile}, and the band width $W$. Now using a standard calculation with Fourier transforms, we show that the $W$ dependence can be pulled out as a scaling factor if $\psi$ is compactly supported.

\begin{lemma}\label{lem FT0}
Fix any $n\in \N$. Under the assumptions of Theorem \ref{main thm} and Lemma \ref{cancellation property},
we have that 
\be\label{eq FT0} \sum_{\fa\in \Z^d}( \Sele^{\infty}_{n})_{0\fa }(m(E), \psi, W) =W^{-(n-2)d/2} {\Sint}_{n}(m(E), \psi)  ,\ee
where ${\Sint}_{n}$ is a constant independent of   $W$. 
\end{lemma}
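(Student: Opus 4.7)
The plan is to analyze each deterministic graph contributing to $(\Sele^{\infty}_n)_{0\fa}$ in momentum space and exploit the fact that every convolution kernel entering such a graph depends on the momentum only through the rescaled variable $Wp$. By Definition \ref{collection elements}, $(\Sele^{\infty}_n)_{xy}$ is a finite sum of translation-invariant normal regular graphs on $\Z^d$ built from the kernels $S$, $S^{\pm}$ and $\Theta$ at $z=E+\ii 0_+$. Under the compact-support assumption on $\psi$ in Theorem \ref{main thm}, the normalizing constant $Z_{W,\infty}=1$ is exact on $\Z^d$ for $W$ large, so the Fourier symbols
\[
\wh S(p)=\psi(Wp),\qquad \wh{S^{\pm}}(p)=\frac{m^{\pm 2}\psi(Wp)}{1-m^{\pm 2}\psi(Wp)},\qquad \wh\Theta(p)=\frac{|m|^{2}\psi(Wp)}{1-|m|^{2}\psi(Wp)}
\]
are functions of $Wp$ alone, where $m=m(E)$.

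For each deterministic graph $\cal G$ in $\Seleinf_n$, I would first contract every dotted edge (including, in the diagonal case, the edge between the external atoms $x$ and $y$). Let $V$ be the number of vertices in the contracted graph, $E$ its number of waved and $\dashed$ edges, and $L:=E-V+1$ its first Betti number; one may assume the contracted graph is connected, since otherwise the argument factorizes over connected components. Translation invariance together with the standard Feynman-diagram representation give
\[
\sum_{\fa\in\Z^d}\cal G_{0\fa} \;=\; \frac{1}{(2\pi)^{dL}}\int_{[-\pi,\pi]^{dL}}\prod_{e}\wh{K_{e}}(p_e)\;\dd p_1\cdots \dd p_L,
\]
where each edge momentum $p_e$ is an appropriate integer-coefficient linear combination of the loop momenta $p_1,\dots,p_L$ and each $\wh{K_e}$ lies in $\{\wh S,\wh{S^{\pm}},\wh\Theta\}$. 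In the off-diagonal case this is $\wh{\cal G}(0)$; in the diagonal case it equals $\cal G_{00}=(2\pi)^{-d}\int\wh{\cal G}(p)\,\dd p$, the contracted graph having a single external vertex. Because $\psi$ is supported in a fixed compact set of $\R^d$, every $\wh{K_e}$ is supported in $\{|p|\le C/W\}$; hence for $W$ large the integrand vanishes outside the torus, and the domain of integration may be replaced by $\R^{dL}$ without changing the value. The substitution $q_i=Wp_i$, $i=1,\dots,L$, produces an overall prefactor $W^{-dL}$ and leaves an absolutely convergent integral over $\R^{dL}$ whose integrand depends only on $m(E)$ and $\psi$ (convergence follows from $\supp\psi$ being compact together with $|m(E)|<1$, which keeps the denominators bounded away from zero).

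To match the target prefactor, I would verify that $L=(n-2)/2$ for every scaling-order-$n$ deterministic graph in $\Seleinf_n$. Writing $V_{int}$ and $E_{dot}$ for the numbers of internal atoms and dotted edges of the original $\cal G$, Definition \ref{def scaling} gives $n=2E-2(V_{int}-E_{dot})$. Dotted contraction reduces the total vertex count by $E_{dot}$, so the contracted graph has $V=V_{int}+2-E_{dot}$ vertices and $E$ edges, whence
\[
L \;=\; E-V+1 \;=\; E-V_{int}-2+E_{dot}+1 \;=\; \tfrac{n}{2}-1 \;=\; \tfrac{n-2}{2},
\]
as required. Summing the resulting $W$-independent integrals over the finitely many deterministic graphs in $\Seleinf_n$ defines $\Sint_n(m(E),\psi)$ and proves \eqref{eq FT0}. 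The main technical point requiring care is that diagonal graphs (where $x$ and $y$ are joined by a dotted edge) must be placed on the same footing as off-diagonal ones; both are handled uniformly once all dotted edges are contracted before the Fourier transformation, and the identity $L=(n-2)/2$ then follows directly from the bookkeeping of dotted edges in the scaling-order formula \eqref{eq_deforderrandom2}.
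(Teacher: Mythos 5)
Your proposal takes essentially the same momentum-space scaling argument as the paper: write the row sum of $\Seleinf_n$ as a Fourier integral in loop momenta, note that every symbol is a function of the rescaled variable $Wp$, substitute $q=Wp$, and use the compact support of $\psi$ to replace the torus by $\R^d$, thereby producing $W^{-(n-2)d/2}$ times a $W$-independent constant. Your loop-count identity $L=(n-2)/2$ is correct, and working directly with $S_\infty$ (whose Fourier symbol equals $\psi(Wp)$ exactly for large $W$ under compact support) is a legitimate shortcut past the paper's detour through the auxiliary kernels $\wt S$, $\wt S^\pm$, $\wt\Theta$ in Claims \ref{claim wtS-S} and \ref{claim_wtself}.

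However, the convergence argument contains a genuine error. You claim the denominators in $\wh{S^\pm}$ and $\wh\Theta$ stay bounded away from zero because $|m(E)|<1$. In fact $|m(E)|=1$ for every $E\in(-2+\kappa,2-\kappa)$: writing $m(E+\ii 0_+)=\tfrac12(-E+\ii\sqrt{4-E^2})$ gives $|m(E)|^2=1$, consistent with \eqref{insert m^2}, which shows $1-|m(z)|^2\sim\eta\to 0$. Consequently $\wh{\Theta_\infty}(p)=\psi(Wp)/(1-\psi(Wp))$ has a genuine singularity of order $|Wp|^{-2}$ at $p=0$; equivalently, $\Theta_\infty\notin\ell^1(\Z^d)$ by \eqref{Theta-wh1}. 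The same singularities appear in each $\Theta$-factor of the labelled diffusive edges \eqref{eq label}, whose symbols \eqref{FT_finitespace} are products of powers of $\wh\Theta$ with the symbols of the lower-order self-energies. Absolute convergence of the momentum integral over $\R^{d(n-2)/2}$ is therefore not automatic: it relies on (i) the sum zero property \eqref{weaz} of the lower self-energies, which forces $\wh{\Sele_{2k_i}}(q)=\OO(|q|^2)$ at $q=0$ and cancels the adjacent $\Theta$-singularities inside a labelled diffusive edge, and (ii) the doubly connected structure of the graphs, which prevents several $\Theta$-singularities from piling up on a common momentum. This is exactly the infrared issue the paper addresses by introducing the cutoff $\psi_{a_e,\epsilon}$ after \eqref{eq_discrete2} and invoking the doubly connected property to justify the $\epsilon\to 0$ limit. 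Without this step your argument does not distinguish convergent from divergent cases, and the justification you give is simply false.
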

The full proof of Lemma \ref{lem FT0} will be given in Section \ref{sec pf FT0}. Here we give a sketch of the proof. 

\begin{proof}[Sketch of the proof of Lemma \ref{lem FT0}]
The proof is straightforward if we replace the variance profile $f_{W,L}$ in \eqref{choicef} with an exact Fourier series on $\Z_L^d$:
\be\label{choicef0}
\wt f_{W,L}(x):= \frac1{L^d}\sum_{p\in \mathbb T_L^d} \psi(Wp)e^{\ii p\cdot x}, \quad \text{with}\quad \mathbb T_L^d =\left(\frac{2\pi }{L}\Z_L\right)^d.  \ee
We define the matrix $\wt S=(\wt s_{xy})$ with entries $\wt s_{xy}= \wt f_{W,L}([x-y]_L)$ and 
\be\label{choicefS}\wt S^+(z):=\frac{m^2(z)\wt S}{1-m^2(z)\wt S}, \quad \wt S^-(z):=\overline{\wt S^+(z)},\quad \wt \Theta(z):=\frac{|m(z)|^2 \wt S}{1-|m(z)|^2\wt S}.\ee
Then their entries can be expressed as 
\be\label{FTtheta}
\wt S^{+}_{xy}(z) = \frac1{L^d}\sum_{p\in \mathbb T_L^d} \frac{ m^2(z)\psi(Wp)}{1-m^2(z)\psi(Wp)}e^{\ii p\cdot (x-y)} ,\quad \wt \Theta_{xy}  =  \frac1{L^d}\sum_{p\in \mathbb T_L^d}\frac{|m(z)|^2  \psi(Wp)}{1- |m(z)|^2 \psi(Wp)}e^{\ii p\cdot (x-y)} .
\ee
By replacing $ S$, $ S^\pm$ and $ \Theta$ with $\wt S$, $\wt S^\pm$ and $\wt\Theta$ in $\Sele_n$, we get a new matrix $\wt\Sele_n$.  
In Section \ref{sec pf FT0}, we will show that $\sum_{\fa}(\Sele_n)_{ 0\fa}$ has the same infinite space limit as $\sum_{\fa}(\wt \Sele_n)_{ 0\fa}$. Let $\cal G$ denote the graphs in $\wt\Sele_{n}$, $p_e$ denote the momentum associated with each edge $e$ in $\cal G$, $\Xi_L$ be a subset of $(\mathbb T_L^{d})^{n_e}$ given by the constraint that the total momentum at each vertex is equal to 0, where $n_e\equiv n_e(\cal G)$ is the total number of edges in $\cal G$. Then using the Fourier series \eqref{choicef0} and \eqref{FTtheta}, we can write that
$$\sum_{\fa} (\wt\Sele_{n})_{0\fa}(m(z),\psi,W,L)=\frac{1}{L^{(n-2) d/2 }}\sum_{\cal G}\sum_{ \{p_e\} \in \Xi_{L}}\cal F_{\cal G}\left( \{Wp_e\},z\right), $$
where $\cal F_{\cal G}$ is a function expressed in terms of $\psi(Wp_e)$ (cf. Section \ref{sec pf FT0} for more details). 
Taking $L\to \infty$ and $\eta \to 0$, we get that
\begin{align*} 
\sum_{\fa} (\Seleinf_{n})_{0\fa } (m(E), \psi, W) &=\frac{1}{(2\pi)^{(n-2) d/2}}\sum_{\cal G}\int_{ \{p_e\} \in \Xi} \cal F_{\cal G}\left( \{Wp_e\},E\right) \prod_e \dd p_e ,
\end{align*}
where $\Xi$ is a union of hyperplanes in the torus $(-\pi,\pi]^{d n_e}$ with the  constraint that the total momentum at each vertex is equal to 0. Then applying a change of variables $q_e=Wp_e$ and using that $\psi$ is compactly supported, we obtain that 
$$\sum_{\fa} (\Seleinf_{n})_{ 0\fa }(m(E), \psi, W) =\frac{1}{(2\pi)^{(n-2) d/2}W^{(n-2) d/2}}\sum_{\cal G}\int_{\wt\Xi} \cal F_{\cal G}\left( \{q_e\},E\right) \prod_e \dd q_e,$$
where $\wt\Xi$ is a union of hyperplanes in $(\R^d)^{n_e}$ given by the constraints of $\Xi$. Renaming the right-hand side, we conclude \eqref{eq FT0}. 
\end{proof}

In Lemma \ref{lem maincancel}, we show that the row sum $\sum_{\fa} ( \Sele_n)_{0\fa}$ is much smaller than $W^{-(n-2)d/2}$ under some particular choices of $L$ and $z$. 


\begin{lemma}\label{lem maincancel}
Fix any $n\in \N$. Under the assumptions of Theorem \ref{main thm} and Lemma \ref{cancellation property}, suppose $L\equiv L_n$ satisfies that
\be\label{Lcondition} W^{(n-3)d/2+c_0} \le {L_n^2}/{W^2}  \le W^{(n-2)d/2-c_0} \ee
for a constant $c_0>0$, and $\eta\equiv \eta_n=W^{2+\e}/L_n^2$ for a small enough constant $\e>0$. Then  for $z_n= E+\ii \eta_n$,  
\be\label{small SnE0}
\begin{split}
\Big| \sum_{\fa} (\Sele_n)_{0\fa } \left(m(z_n),\psi, W, L_n\right) \Big|\le W^{-(n -2)d/2}\cdot W^{-c} 
\end{split}
\ee
for a constant $c>0$ depending only on $c_0$ and $d$.
\end{lemma}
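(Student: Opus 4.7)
The plan is to apply the $n$-th order $\incomp$ \eqref{mlevelT incomplete} at the specified $L_n$ and $z_n$, sum against Ward's identity, and exploit a key algebraic cancellation that will isolate $\sigma_n:=\sum_\fa (\wtSdelta^{(n)})_{0\fa}$. The upper bound $L_n^2/W^2\le W^{(n-2)d/2-c_0}$ in \eqref{Lcondition} is precisely condition \eqref{Lcondition1} with $n$ replaced by $n-1$, so the induction hypothesis together with Theorem \ref{thm ptree} yields the local law \eqref{locallaw1} at $z=z_n$. I will also extract from the $(n-1)$-th order $T$-expansion sharp averaged corrections of the form $|\sum_\fb \E(G_{\fb\fb}(z_n)-m(z_n))|\le L_n^d\,W^{-(n-1)d/2+\tau}$ and its imaginary-part analogue, which are essential for the error estimates below.

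Next, I will set $\fb_1=\fb_2=\fb$ in \eqref{mlevelT incomplete}, take expectation, and sum over $\fa$ and $\fb$. Ward's identity \eqref{eq_Ward} rewrites the left-hand side as $(|m|^2/\eta_n)\sum_\fb \E\im G_{\fb\fb}$. Using the translational invariances $\sum_\fa \Theta_{\fa y}=|m|^2/(1-|m|^2)$ and $\sum_y (\wtSdelta^{(n)})_{yx}=\sigma_n$, the second term on the right becomes $L_n^d |m|^4\im m\,\sigma_n/[\eta_n(1-|m|^2)]$ up to controlled corrections. The semicircle identity $\im m/\eta_n = |m|^2/(1-|m|^2)$, a direct consequence of $(z+m)m=-1$, then produces an exact cancellation between the two explicit leading terms $L_n^d |m|^2\im m/\eta_n$ (from the LHS) and $L_n^d |m|^4/(1-|m|^2)$ (from the first term on the RHS). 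The $Q$-graph contribution $\sum_{\fa,\fb}\E(\QITn)_{\fa,\fb\fb}$ vanishes identically because every $Q$-graph is of the form $(\text{deterministic})\cdot Q_x[\text{random}]$ with $\E Q_x[\cdot]=0$. What remains reads schematically
\begin{equation*}
L_n^d\,\frac{|m|^4\im m}{\eta_n(1-|m|^2)}\,\sigma_n \;=\; \cal E_1-\cal E_2-\cal E_{\mathrm{rest}},
\end{equation*}
where $\cal E_1,\cal E_2$ are the averaged deterministic corrections to the two leading terms and $\cal E_{\mathrm{rest}}=\sum_{\fa,\fb}\E[(\PITn)+(\AITn)+(\Err'_{n,D})]_{\fa,\fb\fb}$. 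Since the prefactor of $\sigma_n$ is of order $L_n^d/\eta_n$, I obtain $|\sigma_n|\lesssim (\eta_n/L_n^d)(|\cal E_1|+|\cal E_2|+|\cal E_{\mathrm{rest}}|)$.

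To conclude, I will bound each error using the doubly connected structure of the graphs (Definition \ref{def 2net}), the local law at $z_n$, and the averaged bounds above. The corrections $\cal E_1,\cal E_2$ are dominated by $L_n^d\eta_n^{-1}W^{-(n-1)d/2+\tau}$; the higher-order graphs in $(\AITn)$ and $(\Err'_{n,D})$ are controlled by their scaling order $>n$ combined with Ward-type gains; and the recollision graphs in $(\PITn)$ have internal summations pinned to $\fb$, which yields an automatic $W^{-d+\tau}$ gain that more than compensates for the lost free summation. Under \eqref{Lcondition}, these estimates will imply $|\sigma_n|\le W^{-(n-2)d/2-c}$ for a constant $c>0$ depending on $c_0$ and $d$. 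Finally, the already-established sum-zero property \eqref{3rd_property0} for $\Sele_l$, $4\le l\le n-1$, gives $|\sigma_n-\sum_\fa(\Sele_n)_{0\fa}|\le L_n^\tau\eta_n W^{-d}$, which is much smaller than $W^{-(n-2)d/2-c}$ thanks to the lower bound in \eqref{Lcondition}; combining this with the bound on $\sigma_n$ yields \eqref{small SnE0}. The hard part will be the bookkeeping of $\cal E_{\mathrm{rest}}$: the many graphs produced by the global expansion construction of $\PITn,\AITn,\Err'_{n,D}$ must each be summed against Ward's identity and shown to satisfy the required bound, which is where the doubly-connected structure and the detailed graphical machinery from \cite{PartII_high} are indispensable.
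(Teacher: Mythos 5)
Your proposal follows essentially the same route as the paper: set $\fb_1=\fb_2=\fb$ in the $n$-th order $\incomp$, take expectations, sum over $\fa,\fb$ with Ward's identity, cancel the two explicit leading contributions via the semicircle identity $\im m/\eta_n = |m|^2/(1-|m|^2)$, isolate $\sum_\al(\Sele_n)_{0\al}$ against a $\sim L_n^d/\eta_n^2$ prefactor, bound the remaining terms via Lemma \ref{lem informal}, and separate off $\sum_\al(\Sele_l)_{0\al}$ for $4\le l\le n-1$ using \eqref{3rd_property0}. One minor remark: the auxiliary averaged bound $\big|\sum_\fb\E(G_{\fb\fb}(z_n)-m(z_n))\big|\le L_n^d W^{-(n-1)d/2+\tau}$ you propose to extract is both stronger than needed and not obviously obtainable at this stage of the induction — the pointwise local law $\E G_{00}=m+\OO(W^{-d/2+\e})$ already yields $\cal E_1,\cal E_2=\OO(L_n^d W^{-d/2+\e}/\eta_n)$, and since the $(\PITn)$ contribution produces the same $W^{-d/2+\e}\eta_n$ term after multiplying by $\eta_n^2/L_n^d$, the final bound $\big|\sum_\al(\Sele_n)_{0\al}\big|\lesssim W^{-d/2+\e}\eta_n + W^{-(n-1)d/2+\e}$, and hence \eqref{small SnE0}, is unchanged.
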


Comparing \eqref{small SnE0} with \eqref{eq FT0}, we see that $\sum_{\fa} (\Sele_n)_{0\fa}$ is much smaller than its scaling size $W^{-(n-2)d/2}$ if $\Sint_n\ne 0$. We will use this contradiction to show that $\Sint_n=0$, and hence conclude the sum zero property \eqref{weaz} for $\Seleinf_n$. To prove Lemma \ref{lem maincancel}, we need to use the following lemma, whose proof will be postponed to Section \ref{secpfinformal}. The proof is based on some additional properties of \smash{$\PITn$, $\AITn$ and $\Err_{n,D}'$} that will be introduced in Definition \ref{def incompgenuni} below (more precisely, their doubly connected properties that will be defined in Definition \ref{def 2net}).


\begin{lemma} \label{lem informal}
Fix any $n\in \N$. Under the assumptions of Lemma \ref{lem maincancel}, 
 we have the following estimates:  
\be\label{intro_recol}
\sum_{\fa,\fb}\left| (\PITk)_{\fa,\fb\fb}\left(z_n,\psi, W, L_n\right)\right| \prec L_n^d \cdot \eta_n^{-1} W^{-(k-2)d/2} ,\quad 3\le k \le n;
\ee
\be\label{intro_error}
\sum_{\fa,\fb}\left| (\AITn)_{\fa,\fb\fb}\left(z_n,\psi, W, L_n\right)\right| \prec L_n^d \cdot \eta_n^{-2} W^{-(n -1)d/2} ;
\ee
\be\label{intro_higherror}
\sum_{\fa,\fb}\left|(\Err_{n,D}')_{\fa,\fb\fb}\left(z_n,\psi, W, L_n\right)\right| \prec L_n^d \cdot \eta_n^{-2} W^{-(D-1)d/2}.
\ee
\end{lemma}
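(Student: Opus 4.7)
My strategy is to combine pointwise estimates on the individual graphs with summation bounds derived from the doubly connected structure of the graphs appearing in $\PITk$, $\AITn$, and $\Err_{n,D}'$. This structure, to be introduced in Section \ref{sec double} and established for the graphs in the $n$-th order $\incomp$ as part of Lemma \ref{incomplete Texp}, is designed precisely to ensure that free summations along $\Theta$-edges or $|G|^2$-edges can be controlled with at most $\eta^{-1}$-type blow-ups in the required quantities.

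The first step is to obtain a pointwise bound on each graph $\cal G_{\fa,\fb\fb}$ using the local law from Theorem \ref{thm ptree} (which applies because $L_n$ satisfies \eqref{Lcondition0}): the bound $|G_{xy}|^2 \prec B_{xy}$ for off-diagonal $G$ edges, $|G_{xx}-m|\prec W^{-d/2}$ for light weights, the $\Theta$-estimate \eqref{thetaxy}, and Lemma \ref{lem deter} for $S^\pm$. Each graph then bounds into a product of $B$-kernels, $s$-kernels, and $\Theta$-kernels over internal and external atoms. The second step is to execute the sum $\sum_{\fa,\fb}$ using the two identities
\begin{equation*}
\sum_x \Theta_{\fa x} \lesssim \eta^{-1}, \qquad \sum_x B_{\fa x} \lesssim W^\tau \eta^{-1},
\end{equation*}
where the second uses $\eta_n \gg W^2/L_n^2$. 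Each free summation along a $\Theta$- or $B$-edge costs a single factor of $\eta^{-1}$, while a summation constrained to a molecule (of radius $\OO(W^{1+\tau})$ via an $s$-edge) costs $W^d$. The doubly connected structure prescribes the order of these summations and prevents over-counting of $\eta^{-1}$-factors.

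By property (3) of Definition \ref{defn incompgenuni}, every graph in $\PITk$, $\AITn$, and $\Err_{n,D}'$ has a unique $\Theta$-edge incident to $\fa$; summing $\fa$ through this edge yields the first factor of $\eta^{-1}$. For $\PITk$, the recollision constraint identifies $\fb$ with some internal atom, preventing an additional free $\fb$-summation and giving the single $\eta^{-1}$ in \eqref{intro_recol}. For $\AITn$ and $\Err_{n,D}'$, the external atom $\fb$ is free, producing a second $\eta^{-1}$-factor via the doubly connected structure and hence the $\eta^{-2}$ in \eqref{intro_error} and \eqref{intro_higherror}. The exponents $W^{-(k-2)d/2}$, $W^{-(n-1)d/2}$, and $W^{-(D-1)d/2}$ come from the respective scaling orders $k$, $>n$, and $>D$, where the uniform $W^d$-loss from the naive scaling-order estimate reflects the fact that one external sum contributes an $L^d$-factor rather than the $W^d$-factor that the scaling-order counting formula allots to a constrained summation.

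The main obstacle will be verifying that the doubly connected property is preserved through the inductive construction of $\PITk$, $\AITn$, and $\Err_{n,D}'$ in Lemma \ref{incomplete Texp}, and then choosing a spanning tree of $\Theta$-edges that realizes exactly the claimed number of $\eta^{-1}$-factors, with all remaining summations handled by $W^d$-per-molecule counting. Once this combinatorial reduction is made, the desired bounds follow by a direct multiplication. The full details are postponed to Section \ref{secpfinformal}.
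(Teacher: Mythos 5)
Your high‑level picture is right (doubly connected structure + local law, with the $\eta^{-1}$ coming from the $\Theta$‑edge at $\otimes$ and, for $\AITn$, from the $\fb$‑sum via Ward), but the counting mechanism you describe is not the one that works, and if followed literally it would produce far too many factors of $\eta^{-1}$.

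You write that ``each free summation along a $\Theta$‑ or $B$‑edge costs a single factor of $\eta^{-1}$, while a summation constrained to a molecule costs $W^d$,'' and later that one should choose ``a spanning tree of $\Theta$‑edges that realizes exactly the claimed number of $\eta^{-1}$‑factors, with all remaining summations handled by $W^d$‑per‑molecule counting.'' If a graph has $\ell$ internal molecules, the black spanning tree has $\ell$ or $\ell+1$ $\dashed$ edges; bounding each such sum by $\eta^{-1}$ would give $\eta^{-\ell}$, which is disastrous for $\ell\geq 3$. The whole point of the doubly connected property (Definition \ref{def 2net}) is precisely that each internal sum is \emph{not} bounded by $\eta^{-1}$: each molecule summation pairs a black $\dashed$ edge with a blue solid/$\dashed$ edge, and a sum of the form $\sum_\al B_{x\al}|G_{y\al}|$ is of order $W^{-d/2}$ when $d\geq 8$ (this is Claim \ref{cor: ini bound} and, in its iterated form, Claim \ref{twonet claim}). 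The result is Lemma \ref{no dot}, which gives the sharp estimate $|(\cal G_0)_{xy}|\prec W^{-(n_{xy}-3)d/2}B_{xy}^{3/2}$ for a doubly connected graph with two fixed atoms. The paper then sums: $\sum_{\fa}\Theta_{\fa x}\prec\eta_n^{-1}$ via \eqref{sumTheta}, and $\sum_{x,\fb}B_{x\fb}^{3/2}\lesssim L_n^d W^{-d/2}$ because the $3/2$ power of $B$ is summable in one index; this produces \eqref{intro_recol}. For $\AITn$ the extra $\eta_n^{-1}$ comes from $\sum_\fb|G_{y\fb}\overline G_{y'\fb}|\prec\eta_n^{-1}$ (Ward/Cauchy--Schwarz), and again the internal sums are controlled by the $B^{3/2}$ mechanism, not by $\eta^{-1}$ per edge.

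In short, the missing ingredient is Lemma \ref{no dot} (or its proof via the weak/strong norm calculus in Section \ref{sec bound double}), which converts the doubly connected structure into a quantitative $W^{-(\ord-3)d/2}B^{3/2}$ bound. Without that, ``choosing a spanning tree of $\Theta$‑edges and summing'' does not close the argument, and your ``$W^d$‑per‑molecule'' heuristic for the non‑tree sums is not how the exponents $W^{-(k-2)d/2}$, $W^{-(n-1)d/2}$, $W^{-(D-1)d/2}$ actually arise.
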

In the proof of Lemma \ref{lem maincancel}, we will use these estimates to control
$$ 
\Big| \sum_{\fa,\fb}\sum_x \left(\Theta \Sele_n\right)_{\fa x} T_{x,\fb\fb}\Big| \sim \frac{L_n^{d}}{\eta_n^2} \Big| \sum_{\fa} (\Sele_n)_{0\fa } \Big| .$$
Compared to the scaling size $L_n^d \eta_n^{-2} W^{-(n -2)d/2}$ of the right-hand side  given by \eqref{eq FT0}, \eqref{intro_recol} gains a factor $W^{(n-3)d/2}\eta_n$ when $k=3$, \eqref{intro_error} gains a factor $W^{-d/2}$, and \eqref{intro_higherror} is negligible because $D$ is arbitrarily large.

%
%


\begin{proof}[Proof of Lemma \ref{lem maincancel}]
In the setting of Lemma \ref{lem maincancel}, we have  the $(n-1)$-th order $T$-expansion. Hence by Theorem \ref{thm ptree}, the local law \eqref{locallaw1} holds for $G(z_n)\equiv G(z_n, \psi, W, L_n)$ if $L_n$ satisfies ${L_n^2}/{W^2}  \le W^{(n-2)d/2-c_0} $. This explains the upper bound in \eqref{Lcondition}. 

Now given an $n$-th order $\incomp$ \eqref{mlevelT incomplete} with $\fb_1=\fb_2=\fb$, taking the expectation of both sides and summing over $\fa,\fb \in \Z_L^d$, we obtain that for large enough $D>0$, 
\begin{align}
  \sum_{\fa,\fb}\E T_{\fa\fb}(z_n,  L_n) & =  m(z_n)  \sum_{\fa,\fb}\E\overline G_{\fb\fb}(z_n,  L_n)\Theta_{\fa\fb}(z_n,  L_n) + \Big[\sum_{\fa,x}  (\Theta \wtSdelta^{(n)} )_{\fa x} \sum_{\fb}\E T_{x \fb}\Big](z_n, L_n)\nonumber \\
&   + \sum_{\fa,\fb}\E(\PITn)_{\fa,\fb\fb}(z_n , L_n)   +  \sum_{\fa,\fb}\E(\AITn)_{\fa,\fb\fb}(z_n,  L_n) + \OO (W^{-D}) ,\label{mlevelT sum}
\end{align}
where we used $\E\QITn  =0$ and \eqref{intro_higherror}. For simplicity, we have omitted the arguments $\psi$ and $W$ from the above equation. To further simplify the notation,  we will also omit the arguments $z_n$ and $ L_n$ in the following proof.

For the left-hand side of \eqref{mlevelT sum}, using Ward's identity \eqref{eq_Ward} we get that 
\begin{align}\label{Rcancel10}
&\sum_{\fa,\fb} \mathbb E T_{\fa \fb} = |m|^2 \sum_{\fa, x}s_{\fa x}\cdot \mathbb E \sum_\fb |G_{x\fb}|^2 = |m|^2 \frac{ \sum_{x}\im  \left(\mathbb E G_{xx}\right)}{\eta_n} =L_n^d |m|^2 \frac{\im  \left(\mathbb E G_{00}\right)}{\eta_n},   
\end{align}
where in the last step we used $\mathbb E G_{xx}= \mathbb E G_{00}$ for all $x\in \Z_{L_n}^d$ by translational invariance.
For the first term on the right-hand side of \eqref{mlevelT sum}, using the identity in \eqref{sumTheta} we obtain that 
\begin{align}\label{Rcancel20}
 m  \sum_{\fa,\fb}\E\overline G_{\fb\fb} \Theta_{\fa\fb} = m \mathbb E \overline G_{00} \cdot \sum_{\fa,\fb}\Theta_{\fa\fb}  = L_n^d\frac{|m|^2 m \cdot \mathbb E \overline G_{00}}{1-|m|^2} .
\end{align}
 For the third term on the right-hand side of \eqref{mlevelT sum}, using \eqref{intro_recol} we obtain that 
\begin{align}\label{Rcancel30} 
\sum_{\fa,\fb}\big|\E(\PITn)_{\fa,\fb\fb}\big|\le \sum_{k=3}^n \sum_{\fa,\fb}\left|\E(\PITk)_{\fa,\fb\fb}\right| \le L_n^d \frac{W^{-d/2+\e}}{\eta_n}.
\end{align}
For the fourth term on the right-hand side of \eqref{mlevelT sum}, using \eqref{intro_error} we obtain that 
\begin{align}\label{Rcancel40} 
\sum_{\fa,\fb}\big|  \E(\AITn)_{\fa,\fb\fb}\big| \le L^d_n\frac{W^{-(n-1)d/2+\e}}{\eta_n^2}.
\end{align}
Finally, for the second term on the right-hand side of \eqref{mlevelT sum}, we decompose $\wtSdeltan$ as \eqref{decompose Sdelta}. For $4\le l \le n$, we can calculate that  
\begin{align}\label{Rcancel21}
\sum_{\fa,x} \left(\Theta \Sele_{l} \right)_{\fa x} \sum_{\fb}\E T_{x \fb} &=\frac{|m|^2}{1-|m|^2}   \sum_{\al,x} \left( \Sele_{l}\right)_{\al x}  \sum_{\fb}\E T_{x \fb}   = L^d_n \frac{|m|^4\cdot \im\left(\mathbb E G_{00}\right)}{(1-|m|^2)\eta_n}\sum_{\al} \left(\Sele_{l}\right)_{ 0\al} ,
\end{align}
where in the first step we used \eqref{sumTheta} and in the second step we used the translational invariance of $\Sele_{\al x}$ and \eqref{Rcancel10}. Applying \eqref{3rd_property0} to $\Sele_l$,  $4\le l\le n-1$, we get that
$$\Big|\sum_{\al}  (\Sele_{l})_{0\al }\Big| \le \eta_n W^{-(l-2)d/2+\e},\quad 4\le l \le n-1.$$
Inserting it into \eqref{Rcancel21} and using $1-|m|^2\sim \eta_n$, we obtain that 
\begin{align}\label{Rcancel22}
\Big|\sum_{\fa,x} \left(\Theta \Sele_{l} \right)_{\fa x} \sum_{\fb}\E T_{x \fb}\Big| \lesssim L^d_n \frac{W^{-(l-2)d/2+\e} \im \left(\mathbb EG_{00}\right)}{\eta_n} ,\quad 4\le l \le n-1.
\end{align}

Now plugging \eqref{Rcancel10}--\eqref{Rcancel22} into \eqref{mlevelT sum} and cancelling the $L^d_n$ factor on both sides, we obtain that
\begin{align}
  |m|^2 \frac{ \im \left(\mathbb EG_{00}\right)}{\eta_n} &=  \frac{|m|^2 m \cdot \mathbb E \overline G_{00}}{1-|m|^2}  +  \frac{|m|^4\cdot\im\left( \mathbb E G_{00}\right)}{(1-|m|^2)\eta_n}\sum_{\al} \left(\Sele_{n} \right)_{0\al } \label{Rcancelsum0}\\
&+   \OO \left(  \frac{W^{-d/2+\e}}{\eta_n}+\frac{W^{-(n -1 )d/2+\e}}{\eta_n^2} +\sum_{ l=4}^{n-1} \frac{W^{-(l -2)d/2+\e}\im \left( \mathbb E G_{00}\right)}{\eta_n}\right) .\nonumber
\end{align}
Since \eqref{locallaw1} holds for $G(z_n,  L_n)$, we have that
\be\label{insert Gxx}\E G_{00}(z_n,  L_n)=m(z_n)+\OO(W^{-d/2+\e}).\ee 
Moreover, taking the imaginary part of the equation $z_n=-m(z_n)-m^{-1}(z_n)$, we obtain that 
\be\label{insert m^2}\frac{|m(z_n)|^2}{1-|m(z_n)|^2}  = \frac{ \im m(z_n)}{\eta_n} .\ee
Inserting \eqref{insert Gxx} and \eqref{insert m^2} into \eqref{Rcancelsum0} and using $1-|m|^2\sim \eta_n$, we get that
$$\Big|\sum_{\al} (\Sele_{n} )_{0\al } (m(z_n),\psi, W, L_n) \Big|\lesssim   W^{-d/2+\e}\eta_n + W^{-(n-1)d/2+\e} = W^{-d/2+\e}\frac{W^{2+\e}}{L_n^2} + W^{-(n-1)d/2+\e}.$$
Together with the lower bound in condition \eqref{Lcondition}, we conclude \eqref{small SnE0} for $c= \min(d/2-\e, c_0-2\e)$. 
 \end{proof}
 
Finally, combining Lemmas \ref{lem V-R wt}--\ref{lem maincancel}, we can conclude Lemma \ref{cancellation property}.

\begin{proof}[Proof of Lemma \ref{cancellation property}] 
	The estimates \eqref{4th_property0}, \eqref{4th_property0V} and \eqref{3rd_property0 diff} for $\Sele_n$ follow from  Lemma \ref{lem V-R wt}, and the equations \eqref{two_properties0} and \eqref{two_properties0V} follow from Lemma \ref{lem Rsymm}. Now we pick 
	$ L_{n}= W^{1+\left(n/4-5/8\right)d}$,
	which satisfies the condition \eqref{Lcondition} with $c_0=d/4$. Applying Lemma \ref{lem V-R wt} and Lemma \ref{lem FT0} with $L=L_n$ and $z=z_n$ defined in Lemma \ref{lem maincancel}, we obtain that 
	\be\label{lower Sdelta} 
	\Big|\sum_{\fa} (\Sele_n)_{0\fa} (m(z_n),\psi, W, L_n) \Big| \ge W^{-(n-2)d/2}\left|{\Sint}_{n}(m(E), \psi)\right| - \eta_n W^{-(n-2)d/2+\e}.
	\ee
	Combining \eqref{small SnE0} and \eqref{lower Sdelta}, we obtain that 
	$$ \left|{\Sint}_{n}(m(E), \psi)\right| \le \eta_n W^{\e} + W^{-c} =\oo(1).$$
	Since ${\Sint}_{n}$ is a constant, we must have ${\Sint}_{n}(m(E), \psi)=0$, which by Lemma \ref{lem FT0} implies \eqref{weaz} for $\Seleinf_n $.  Combining \eqref{weaz} for $\Seleinf_n $ with Lemma \ref{lem V-R wt}, we obtain \eqref{3rd_property0} for $\Sele_n $.
\end{proof}

%
%
%
%

\subsection{Step 5: The $n$-th order $T$-expansion}\label{sec expT}

After showing that $ \Sele_{n} $ is a $\self$ satisfying Definition \ref{collection elements}, we can now 
solve the $n$-th order $\incomp$ to obtain the $n$-th order $T$-expansion.

 \begin{lemma}[$n$-th order $T$-expansion]   \label{lem completeTexp} 
 Given the $n$-th order $\incomp$ constructed in Lemma \ref{incomplete Texp}, if $\Sele_n $ satisfies \eqref{two_properties0}--\eqref{3rd_property0} and \eqref{two_properties0V}--\eqref{weaz}, then we can construct an $n$-th order $T$-expansion satisfying Definition \ref{defn genuni}.
\end{lemma}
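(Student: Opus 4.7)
\medskip
\noindent\textbf{Proof proposal.} The plan is to treat the incomplete $T$-expansion \eqref{mlevelT incomplete} as a linear equation for $T_{\fa,\fb_1\fb_2}$ and ``solve'' it by iterated substitution. Abbreviating $M := \Theta\wtSdeltan$ (viewed as a matrix acting on the first index) and grouping
$R_{\fa,\fb_1\fb_2} := (\PITn) + (\AITn) + (\QITn) + (\Err'_{n,D})$, the incomplete expansion reads
\begin{equation}\nonumber
T = m\Theta\overline G \;+\; M T \;+\; R.
\end{equation}
Substituting this identity into the right-hand $T$ factor $K$ times yields
\begin{equation}\nonumber
T \;=\; m\sum_{j=0}^{K} M^j \Theta\overline G \;+\; \sum_{j=0}^{K} M^j R \;+\; M^{K+1} T.
\end{equation}
Since $\wtSdeltan = \Sele_n + \sum_{l=4}^{n-1}\Sele_l$ is a sum of self-energies of scaling order $\geq 4$, each factor of $M$ contributes net scaling order $\geq 2$ (the $\Sele_l$ adds at least $4$, the matrix product with $\Theta$ nets $+2 - 2 = 0$). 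Choosing $K$ with $2K + 2 > D$, the remainder $M^{K+1} T$ is a sum of $O_D(1)$ graphs of scaling order $> D$, and is absorbed into $(\Err_{n,D})_{\fa,\fb_1\fb_2}$.

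I would then identify the main chain with $\Sdelta^{(n)}$ as follows. Expanding $\wtSdeltan = \sum_{l=4}^{n}\Sele_l$ in every factor of $M$, the sum $m\sum_{j \geq 1} M^j \Theta\overline G$ becomes
\begin{equation}\nonumber
m\sum_{j \geq 1}\sum_{l_1,\ldots,l_j \in \{4,\ldots,n\}} \Theta \Sele_{l_1}\Theta\cdots\Theta\Sele_{l_j}\Theta\; \overline G.
\end{equation}
Each inner chain $\Sele_{l_1}\Theta\cdots\Theta\Sele_{l_j}$ has scaling order $s = \sum l_i - 2(j-1)$. I group chains by $s$ and keep only $s \leq n$, the rest being absorbed into $(\ATn)$. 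The remaining sum has precisely the form $m(\Theta\Sdelta^{(n)}\Theta)_{\fa\fb_1}\overline G$ with $\Sdelta^{(n)}$ decomposed as in \eqref{decompose Sdelta0}--\eqref{chain S2k}. The standalone $\Sele_k$ in \eqref{chain S2k} comes from $j=1$, while the $l \geq 2$ chains arise from $j \geq 2$. The constraint $l_i \leq k-1$ in $\Omega_{k}^{(l)}$ is automatic for $j \geq 2$: from $\sum_i l_i = k + 2(j-1)$ and $l_{i'} \geq 4$ for $i' \neq i$, one derives $l_i \leq k + 2(j-1) - 4(j-1) = k - 2(j-1) \leq k-2$.

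The terms $\sum_{j=0}^K M^j R$ are distributed as follows. Multiplication by the deterministic operator $M^j = (\Theta\wtSdeltan)^j$ preserves the structural type of each summand of $R$: $\oplus/\ominus$-recollision graphs stay recollision, $Q$-graphs stay $Q$-graphs (since $M^j$ is deterministic), and so on. Classifying by scaling order, the recollision outputs of scaling order $\leq n$ build up $(\PTn)$ and the rest feed $(\ATn)$; the $Q$-graph outputs build up $(\QTn)$ (with its high-order sector $\cal Q^{(>n)}_T$); the higher-order outputs of $(\AITn)$ feed $(\ATn)$; and the error outputs of $(\Err'_{n,D})$ together with $M^{K+1}T$ feed $(\Err_{n,D})$. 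Normal regularity is preserved throughout (applying $\cal O_{dot}$ of Definition \ref{dot-def} if any intermediate matrix-product index coincides with $\fb_1$ or $\fb_2$, generating additional recollisions that are reassigned to $(\PTn)$). The structural requirements of Definition \ref{defn genuni}--(8), including the unique $\dashed$ edge at $\otimes$, are inherited: every output term either begins with the $\Theta_{\fa\bullet}$ factor from the leftmost $M$ or inherits the $\Theta$-edge at $\fa$ from $R$ via property 3 of Definition \ref{defn incompgenuni}.

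The main obstacle is the combinatorial bookkeeping required to verify that the induced decomposition of $\Sdelta^{(n)}$ matches exactly the prescription \eqref{chain S2k}, and that properties like the decomposition by scaling order in \eqref{decomposeP}--\eqref{decomposeQ} and the ``uniform in $n$'' property of $\Sdeltak$, $\PTk$, $\QTk$ (property 6 of Definition \ref{defn genuni}) hold. The last point relies on the inductive assumption that the $\Sele_l$'s for $l < n$ have been fixed in previous stages and that the incomplete expansion's building blocks $\PITk$, $\QITk$ are themselves $n$-independent (Definition \ref{defn incompgenuni}, property 1), so no reshuffling occurs when $n$ increases. Once this bookkeeping is in place, all bounds such as \eqref{intro_redagain} follow mechanically from the sum zero property already established for $\Sele_n$ in Lemma \ref{cancellation property} together with the analogous estimates inherited from the induction hypothesis.
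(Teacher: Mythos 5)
Your proposal is correct and is essentially the paper's own proof. The paper multiplies the incomplete $T$-expansion by $(1-\Theta\wtSdeltan)^{-1}$ to obtain $T = m\Theta^{(n)}\overline G + \Theta^{(n)}\Gamma$, then Taylor-expands $\Theta^{(n)} = \sum_{k\le D}(\Theta\wtSdeltan)^k\Theta + \Theta^{(n)}_{err}$ and regards $\Theta^{(n)}_{err}$ as a diffusive edge of scaling order $>2D$; your iterated substitution $T = m\sum_{j\le K}M^j\Theta\overline G + \sum_{j\le K}M^j R + M^{K+1}T$ produces the same truncated Neumann series, with the only cosmetic difference being that you keep $M^{K+1}T$ as the remainder instead of the deterministic tail $\Theta^{(n)}_{err}$ applied to $m\overline G + \Gamma$ (these are algebraically the same once $T$ is re-substituted, and both land in $\Err_{n,D}$ with scaling order $>D$).

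One small inaccuracy: the parenthetical justification ``the $\Sele_l$ adds at least $4$, the matrix product with $\Theta$ nets $+2-2=0$'' actually yields $\ge 4$, not the $\ge 2$ you claim; the correct count is that each matrix multiplication by $\Sele_l$ introduces one edge of order $l\ge 4$ and one new internal atom of order $-2$, netting $\ge 2$, while the $\Theta$ multiplication nets $0$, for $\ge 2$ total per factor of $M$. Your final claim $\ge 2$ is right; only the bracketed accounting is off, and the choice $2K+2>D$ still works since $\ord(M^{K+1}T)\ge 2(K+1)+\ord(T) = 2K+4>D$.
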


\begin{proof}
By property (iii) of Definition \ref{defn incompgenuni}, we can write that 
\be\label{form_IT1}
\begin{split}
(\PIT^{(n)})_{\fa,\fb_1 \fb_2} = \sum_x \Theta_{\fa x} (\Gamma_R^{(n)})_{x,\fb_1 \fb_2},\quad & (\AITn)_{\fa,\fb_1 \fb_2} = \sum_x \Theta_{\fa x}(\Gamma_A^{(>n)})_{x,\fb_1 \fb_2} ,\\
 (\QITn)_{\fa,\fb_1 \fb_2}= \sum_x \Theta_{\fa x}(\Gamma_Q^{(n)})_{x,\fb_1 \fb_2},\quad & (\Err'_{n,D})_{\fa,\fb_1\fb_2}=\sum_x \Theta_{\fa x}(\Gamma_{err}^{(n,D)})_{x,\fb_1 \fb_2} ,
 \end{split}
\ee
for some sums of graphs $\Gamma_R^{(n)}$, $\Gamma_A^{(>n)}$, $\Gamma_Q^{(n)}$ and $\Gamma_{err}^{(n,D)}$. Then moving the second term on the right-hand side of \eqref{mlevelT incomplete} to the left-hand side and multiplying both sides by $(1-\Theta\wtSdeltan)^{-1}$, we get that 
\begin{align}
	T_{\fa,\fb_1\fb_2} &=  m \Theta^{(n)}_{\fa\fb_1} \overline G_{\fb_1\fb_2}   + \sum_x \Theta^{(n)}_{\fa x} \left[(\Gamma_R^{(n)})_{x,\fb_1 \fb_2} + (\Gamma_A^{(>n)})_{x,\fb_1 \fb_2}+  (\Gamma_Q^{(n)})_{x,\fb_1 \fb_2} +   (\Gamma_{err}^{(n,D)})_{x,\fb_1 \fb_2}\right] . \label{solving_T2}
\end{align}	
where we used $\Theta^{(n)}=(1-\Theta \wtSdelta^{(n)})^{-1}\Theta$ by \eqref{theta_renormal}. We can expand $\Theta^{(n)}$ as 
\be\label{expand_thetan}
\Theta^{(n)}= \sum_{k=0}^D ( \Theta \wtSdelta^{(n)})^{k}\Theta + \Theta^{(n)}_{err},\quad \Theta^{(n)}_{err}:= \sum_{k>D} ( \Theta \wtSdelta^{(n)})^{k}\Theta.
\ee
This expansion is well-defined because $\|\Theta\wtSdelta^{(n)}\|_{\ell^\infty \to \ell^\infty} \le W^{-d+\tau}$ for any constant $\tau>0$ by estimate \eqref{redundant again} below. Every $( \Theta \wtSdelta^{(n)})^{k}\Theta$ can be expanded into a sum of \emph{labelled $\dashed$ edges} (cf. Definition \ref{def_graph comp}), which are allowed in the $T$-expansion (cf. Definition \ref{def genuni}). Moreover, we regard \smash{$(\Theta^{(n)}_{err})_{xy}$} as a $\dashed$ edge of scaling order $> 2D$. Then we plug \eqref{expand_thetan} into \eqref{solving_T2} and rearrange the resulting graphs as follows: \smash{$m \Theta^{(n)}_{\fa\fb_1} \overline G_{\fb_1\fb_2}$} will give the first two terms in \eqref{mlevelTgdef} and some graphs in {$ (\ATn)_{\fa,\fb_1\fb_2}$}; \smash{$\sum_x \Theta^{(n)}_{\fa x} (\Gamma_R^{(n)})_{x,\fb_1 \fb_2}$} will give \smash{$(\PTn)_{\fa,\fb_1\fb_2}$} and some graphs in \smash{$ (\ATn)_{\fa,\fb_1\fb_2}$}; {$\sum_x \Theta^{(n)}_{\fa x} (\Gamma_A^{(>n)})_{x,\fb_1 \fb_2}$} will give some graphs in \smash{$ (\ATn)_{\fa,\fb_1\fb_2}$}; \smash{$\sum_x \Theta^{(n)}_{\fa x} (\Gamma_Q^{(n)})_{x,\fb_1 \fb_2}$} will give \smash{$(\QTn)_{\fa,\fb_1\fb_2}$}; {$\sum_x \Theta^{(n)}_{\fa x} (\Gamma_{err}^{(n,D)})_{x,\fb_1 \fb_2}$} will give $(\Err_{n,D})_{\fa,\fb_1\fb_2}$. This concludes Lemma \ref{lem completeTexp}. 
\end{proof}
 


Finally, we collect the results in Sections \ref{sec_strat_step1}--\ref{sec expT}  to complete the proof of Theorem \ref{main thm}. The following proof is simply a recap of the strategy described in Section \ref{sec_main_struct}.

\begin{proof}[Proof of Theorem \ref{main thm}]
We follow the  flow chart in Figure \ref{Fig pfchart1}. 

\vspace{5pt}
\noindent{\bf Step 1:} By Lemma \ref{2nd3rd T}, we have defined the second order $T$-expansion. 

\vspace{5pt}
\noindent{\bf Step 2:} Suppose that we have defined the $k$-th order $T$-expansion for all $2\le k \le n-1$. Then applying Theorem \ref{thm ptree}, we get that the local law \eqref{locallaw1} holds as long as $L\le L_n:=W^{1+(n-2)d/{4} - {c_0}/{2}}$. 

\vspace{5pt}
\noindent{\bf Step 3:} 
We can construct an $n$-th order $\incomp$ by Lemma \ref{incomplete Texp}. 

\vspace{5pt}
\noindent{\bf Step 4:} Using the local law in Step 2 and the $n$-th order $\incomp$ in Step 3, we show properties \eqref{two_properties0}--\eqref{3rd_property0} and \eqref{two_properties0V}--\eqref{weaz} for $\Sele_n$ in Lemma \ref{cancellation property}. 



\vspace{5pt}
\noindent{\bf Step 5:}  
Applying Lemma \ref{lem completeTexp} we obtain the $n$-th order $T$-expansion. 

\vspace{5pt}
By induction, we can construct the $n$-th order $T$-expansion for all $2\le n \le n_{W,L}$ with $$n_{W,L} = \left\lceil\frac{4}{d}\left(\log_W L  - 1 + \frac{c_0}{2}\right)\right\rceil+2.$$ 
Then we apply Theorem \ref{thm ptree} to  conclude Theorem \ref{main thm}. 
\end{proof}
For the reader's convenience, we  summarize the lemmas in this section which are still  to be proved. 
\begin{itemize}



\item Lemmas \ref{eta1case0}, \ref{lemma ptree}, \ref{lem normA} and \ref{incomplete Texp} will be proved in \cite{PartII_high}.


\item Lemma \ref{lem V-R wt} and Lemma \ref{lem FT0} will be proved in Section \ref{sec infspace}.

\item Lemma \ref{lem informal} will be proved in Section \ref{secpfinformal}.
 

 
\end{itemize}
In Section \ref{sec strategy}, we will describe some key ideas in   \cite{PartII_high} that are needed to  prove  Lemmas \ref{lemma ptree}, \ref{lem normA} and \ref{incomplete Texp}. 

\section{Doubly connected property}\label{sec double} 

In this section, we will introduce an important structural property---the doubly connected property---satisfied by the graphs in the $T$-expansion.

\subsection{Labelled $\Theta$ edges}
In this subsection, we show how \eqref{intro_redagain} follows from the properties \eqref{two_properties0}--\eqref{3rd_property0} of the $\selfs$. 
The following lemma is a simple consequence of the sum zero property and will be proved in Appendix \ref{appd}. 
 
\begin{lemma}\label{lem cancelTheta}
Fix any $z=E+\ii \eta$ with $E\in (-2+\kappa,2-\kappa)$ and $\eta\ge W^{2}/L^{2-\e}$ for a small constant $\e>0$. Let $g:\Z_L^d \to \R$ be a symmetric function (i.e. $g(x)=g(-x)$) supported on a box $\cal B_K:=\llbracket -K,K\rrbracket^d$ of scale $K\ge W$. Assume that $g$ satisfies the sum zero property $\sum_{x}g(x)=0.$ Then for any $x_0\in \Z_L^d$ such that $|x_0| \ge K^{1+c}$ for a constant $c>0$, we have that 
$$\Big|\sum_{x}\Theta_{0 x}(z) g(x-x_0)\Big| \le   \sum_{x\in \cal B_K}\frac{x^2}{|x_0|^2}|g(x)| \cdot \left(  |x_0|^\tau  B_{0 x_0} \mathbf 1_{|x_0|\le   \eta^{-1/2}W^{1+\tau}}   +  |x_0|^{-D} \right),$$
for any constants $\tau,D>0$.
\end{lemma}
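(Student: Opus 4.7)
The plan is to exploit the symmetry and sum-zero property of $g$ to rewrite $\sum_x \Theta_{0x}(z) g(x-x_0)$ as a weighted average of a \emph{second symmetric difference} of the slowly varying kernel $y \mapsto \Theta_{0,x_0+y}(z)$, and then to bound this difference using the decay estimate \eqref{thetaxy} of $\Theta$ together with the fact that $|y|\le K\ll |x_0|$.

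First I would substitute $y = x - x_0$ so the sum becomes $\sum_{y \in \cal B_K} \Theta_{0, x_0+y}(z)\, g(y)$. Using $g(y)=g(-y)$ and reindexing $y \to -y$ on half of the terms, this equals
\begin{equation*}
\frac{1}{2}\sum_{y \in \cal B_K}\bigl[\Theta_{0, x_0+y}(z) + \Theta_{0, x_0-y}(z)\bigr] g(y).
\end{equation*}
The hypothesis $\sum_y g(y)=0$ then lets me subtract $\Theta_{0,x_0}(z)\sum_y g(y)=0$, yielding
\begin{equation*}
\sum_x \Theta_{0x}(z)\, g(x-x_0) \;=\; \frac{1}{2}\sum_{y \in \cal B_K}\bigl[\Theta_{0, x_0+y}(z) + \Theta_{0, x_0-y}(z) - 2\Theta_{0, x_0}(z)\bigr] g(y),
\end{equation*}
which is precisely the ``discrete Taylor remainder'' that combining symmetry and sum-zero produces, so the zeroth and first order contributions have cancelled.

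Next I would prove the pointwise estimate
\begin{equation*}
\bigl|\Theta_{0,x_0+y}(z) + \Theta_{0,x_0-y}(z) - 2\Theta_{0,x_0}(z)\bigr| \;\lesssim\; \frac{|y|^2}{|x_0|^2}\Bigl(|x_0|^\tau B_{0,x_0}\mathbf 1_{|x_0|\le \eta^{-1/2}W^{1+\tau}} + |x_0|^{-D}\Bigr)
\end{equation*}
for all $y \in \cal B_K$. Because $|y|\le K \le |x_0|^{1/(1+c)}$, the three evaluation points $x_0, x_0\pm y$ lie well within a ball of radius $\sim |x_0|$ on the torus, so one can work as on $\Z^d$. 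The estimate itself is obtained by inserting the Fourier representation of $\Theta$ (writing $\Theta_{0,x_0\pm y}-\Theta_{0,x_0}$ via an integral of $e^{\ii p\cdot x_0}(e^{\pm\ii p\cdot y}-1)\wh\Theta(p)$) so that the symmetric combination produces the bounded factor $2(1-\cos(p\cdot y))\le |p\cdot y|^2$; the extra two powers of $p$ provide the extra factor $|x_0|^{-2}$ compared with \eqref{thetaxy} by the standard stationary-phase/integration-by-parts analysis enabled by the Schwartz profile $\psi$ in Assumption \ref{var profile}. Substituting this bound into the symmetrized sum and factoring out the $(|x_0|^\tau B_{0,x_0}\mathbf 1 + |x_0|^{-D})$ piece yields the claim.

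The main obstacle is the second-difference estimate for $\Theta$, uniformly across the crossover between the diffusive regime $|x_0|\le\eta^{-1/2}W^{1+\tau}$ (where $\Theta$ behaves like a massive lattice Green's function with mass $\sim\sqrt{\eta}/W$ and polynomial tail $\langle x_0\rangle^{-(d-2)}$) and the super-polynomially decaying regime beyond; getting the correct $|y|^2/|x_0|^2$ gain in both regimes, together with the arbitrarily large power $D$, requires a careful Fourier/stationary-phase argument rather than a naive application of \eqref{thetaxy}. The rest of the proof is then essentially bookkeeping.
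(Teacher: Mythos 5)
Your first reduction (symmetrize $g$ and subtract $\Theta_{0,x_0}\sum_y g(y)=0$ to obtain the second symmetric difference $\Theta_{0,x_0+y}+\Theta_{0,x_0-y}-2\Theta_{0,x_0}$) is a legitimate and in fact slightly cleaner version of the paper's step \eqref{decomposezero}, which instead pairs up points $a\leftrightarrow y_a$ inside the support of $g$. Both reductions use only the hypotheses $g(y)=g(-y)$ and $\sum g=0$. Where you diverge from the paper is in the core estimate on the second difference of $\Theta$. The paper inserts the random-walk expansion $\Theta_{0x}=\sum_k |m|^{2k}(S^k)_{0x}+\OO(W^{-D})$ from \eqref{taylor1}, interprets $(S^k)_{0x}$ as a hitting probability, and invokes the discrete local-CLT-with-Edgeworth-correction estimate \eqref{RW_diffusion1} of Lemma \ref{reg RW2} (itself proved by a characteristic function / stationary-phase argument with a Taylor expansion of $\cos(p\cdot a)-\cos(p\cdot b)$); the indicator $\mathbf 1_{|x_0|\le\eta^{-1/2}W^{1+\tau}}$ then falls out of the Gaussian weight $e^{-\frac12 x_0^\top(k\cal C)^{-1}x_0}$ being negligible for all admissible $k\le |x_0|^\tau\eta^{-1}$ when $|x_0|$ exceeds that scale, together with the Chernoff bound \eqref{largedeviation} for small $k$. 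You propose instead to work directly on $\wh\Theta(p)$ and treat $\Theta$ as a massive lattice Green's function. That route is conceptually parallel (it is the same Fourier analysis, just with the geometric sum over $k$ already performed), but the paragraph where you claim that ``the extra two powers of $p$ provide the extra factor $|x_0|^{-2}$ \ldots by the standard stationary-phase/integration-by-parts analysis'' is precisely the place where the paper devotes an entire technical lemma. In particular, the crossover between the diffusive regime and the super-polynomial regime, the appearance of the indicator and of the arbitrary power $|x_0|^{-D}$, and the uniformity over the singular window $|p|\lesssim\sqrt{\eta}/W$ of $\wh\Theta(p)\sim(\eta+W^2p^2)^{-1}$ are not supplied by a one-line stationary-phase invocation and are the real content of the lemma. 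So I would count your approach as a valid alternative \emph{outline} whose main technical ingredient is not actually proved; the paper's version has the advantage that after the random-walk expansion the oscillatory integral is Gaussian to leading order and the error is a polynomially damped Edgeworth correction, which makes both the $|y|^2/|x_0|^2$ gain and the two decay regimes essentially transparent.
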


With Lemma \ref{lem cancelTheta}, we can readily obtain the following lemma. The long proof is due to extra arguments needed to handle the facts that \eqref{3rd_property0} is only an approximate ``sum zero property" and $(\Sele_l)_{0x}$ satisfies the ``compactly supported property" of $g(x)$ only approximately. 

 \begin{lemma}\label{lem redundantagain}
Fix $d\ge 6$. Given a $\self$ $\Selek_{2l} $ satisfying Definition \ref{collection elements}, we have that 
 \begin{align}\label{redundant again}
\Big| \sum_{\al}\Theta_{x \al} (\Selek_{2l})_{\al y}\Big| \le \frac{W^\tau}{W^{(l-1)d}\langle x-y\rangle^d },\quad \forall \ x,y \in \Z_L^d ,
\end{align}
for any small constant $\tau>0$. Let $\Sele_{2k_1}, \, \Sele_{2k_2},\, \cdots , \, \Sele_{2k_l} $ be a sequence of $\selfs$ satisfying Definition \ref{collection elements}. We have that for any small constant $\tau>0$,
\be\label{BRB} 
\left|\left(\Theta \Sele_{2k_1}\Theta  \Sele_{2k_2}\Theta \cdots \Theta  \Sele_{2k_l}\Theta\right)_{xy}\right|\le W^{-(k-2)d/2+\tau}B_{xy}  , \quad \forall \ x,y \in \Z_L^d , \ee
where $k:=\sum_{i=1}^l  2k_i -2(l-1)$ is the scaling order of $\left(\Theta \Sele_{2k_1}\Theta  \Sele_{2k_2}\Theta \cdots \Theta  \Sele_{2k_l}\Theta\right)_{xy}$. The estimate \eqref{BRB} implies \eqref{intro_redagain} for the $\Sdeltak$ defined in \eqref{chain S2k}. 
 \end{lemma}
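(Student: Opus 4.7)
The plan is to first establish the single-kernel estimate \eqref{redundant again} via a Taylor expansion / summation-by-parts argument that exploits the symmetry \eqref{two_properties0V} of $\Sele_{2l}$ together with the approximate sum zero property \eqref{3rd_property0} and \eqref{weaz}, and then deduce the chain estimate \eqref{BRB} by inductively peeling off one $(\Theta\Sele_{2k_i})$ factor at a time.

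For \eqref{redundant again}, using the translation invariance $(\Sele_{2l})_{\al y} = (\Sele_{2l})_{0,\al-y}$ from \eqref{two_properties0V} (see also Lemma \ref{lem Rsymm}), we rewrite
\[
\sum_\al \Theta_{x\al}(\Sele_{2l})_{\al y} = \sum_{a} \Theta_{x,y+a}\, g(a), \qquad g(a) := (\Sele_{2l})_{0a},
\]
and Taylor-expand $\Theta_{x,y+a}$ to second order in $a$ around $a=0$. The zeroth-order term is $\Theta_{xy}\sum_a g(a)$; by the sum zero property of $\Seleinf_{2l}$ together with \eqref{3rd_property0 diff}, $|\sum_a g(a)|\lesssim \eta\, W^{-(l-1)d+\tau}$, and on the essential support of $\Theta$ (where $|x-y|\lesssim \eta^{-1/2}W^{1+\tau}$ by \eqref{thetaxy}) one has $\eta |\Theta_{xy}|\lesssim W^\tau \langle x-y\rangle^{-d}$, so this term is bounded by $W^{-(l-1)d+\tau}\langle x-y\rangle^{-d}$. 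The first-order term $(\nabla_2 \Theta_{xy})\cdot\sum_a a\, g(a)$ vanishes by the symmetry $g(-a)=g(a)$. The second-order remainder is bounded using $|\nabla^2 \Theta_{x,y+\xi}|\lesssim W^{-2}\langle x-y\rangle^{-d}$ (since $\Theta_{xy}$ has amplitude $\lesssim B_{xy}$ and varies on scale $\langle x-y\rangle$) together with
\[
\sum_a |a|^2 |g(a)| \lesssim W^{-(l-2)d-4}\sum_a \langle a\rangle^{-(2d-6)+\tau}\lesssim W^{-(l-1)d+2+\tau},
\]
where we used \eqref{4th_property0V} and convergence of the $a$-sum for $d\ge 6$ (with a harmless logarithmic loss at $d=6$). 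Combining, the second-order term is also $\OO(W^{-(l-1)d+\tau}\langle x-y\rangle^{-d})$, matching the claimed bound.

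Since $\Sele_{2l}$ is not exactly compactly supported nor does it enjoy an exact sum zero on $\Z_L^d$, the argument above must be implemented after a decomposition $g = g_{\mathrm{loc}} + g_{\mathrm{tail}}$, where $g_{\mathrm{loc}}$ is supported on $\{|a|\le W^{1+\tau}\}$ and $g_{\mathrm{tail}}$ is controlled by brute force using the polynomial decay of \eqref{4th_property0V}; the mismatch between $\sum g_{\mathrm{loc}}$ and $\sum g$ is of the same order as the zeroth-order correction just analyzed. The region $|\al-y|\gtrsim \langle x-y\rangle$, not captured by Taylor expansion around $y$, is handled similarly by separating $\Theta_{x\al}$ and $(\Sele_{2l})_{\al y}$ and exploiting their individual decay. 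For \eqref{BRB} we induct on $l$: writing $\Theta \Sele_{2k_1}\Theta\cdots\Sele_{2k_l}\Theta = (\Theta\Sele_{2k_1})\cdot (\Theta\Sele_{2k_2}\Theta\cdots\Sele_{2k_l}\Theta)$, applying \eqref{redundant again} to the first factor (giving decay $W^{\tau-(k_1-1)d}\langle\cdot\rangle^{-d}$) and the inductive hypothesis to the second (giving $W^{\tau-\sum_{i\ge 2}(k_i-1)d}B_{\cdot y}$), the remaining convolution $\sum_\beta \langle x-\beta\rangle^{-d}B_{\beta y}\lesssim W^\tau B_{xy}$ (dominated by $\beta$ near $x$, up to a $\log L$ loss absorbed into $W^\tau$) delivers the desired bound with total prefactor $W^{-\sum_i (k_i-1)d+\tau} = W^{-(k-2)d/2+\tau}$.

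The central technical obstacle is reconciling Lemma \ref{lem cancelTheta}, whose hypotheses demand exact compact support and exact sum zero, with the \emph{approximate} versions enjoyed by $\Sele_{2l}$ on the torus $\Z_L^d$: the $\eta$ factor in \eqref{3rd_property0} has been tuned precisely to compensate the $\eta^{-1}$ factor that one inevitably accumulates from $\sum_y \Theta_{xy}\sim \eta^{-1}$, and tracking this cancellation while absorbing all logarithmic corrections and polynomial tails into the $W^\tau$ slack requires careful bookkeeping. This is also the source of the $d\ge 6$ restriction: the moment $\sum_a |a|^2 |g(a)|$ converges absolutely only when $2d-6\ge d$, i.e.\ $d\ge 6$, with a logarithmic divergence in the borderline case $d=6$.
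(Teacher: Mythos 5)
Your proposal captures the right structural ideas — translation invariance, symmetry to annihilate the first-order term, the approximate sum-zero property combined with the $\eta^{-1}$ from $\sum_y\Theta_{xy}$ to control the zeroth-order term, and a second-moment estimate on $(\Sele_{2l})_{0\cdot}$ to control the remainder — and you correctly trace the $d\geq 6$ restriction to the convergence of $\sum_a |a|^2 \langle a\rangle^{-(2d-4)}$. However, there is a genuine gap in the implementation. You assert the second-difference bound $|\nabla^2 \Theta_{x,y+\xi}| \lesssim W^{-2}\langle x-y\rangle^{-d}$ as if it were a routine consequence of ``$\Theta$ has amplitude $B_{xy}$ and varies on scale $\langle x-y\rangle$,'' but no such derivative bound on the lattice kernel $\Theta$ is established in the paper, nor is it easy to extract directly. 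The rigorous replacement is Lemma \ref{lem cancelTheta}, which encodes the effect of two summations-by-parts via the random-walk/Fourier representation of $\Theta$ (Lemma \ref{reg RW2}), and its hypotheses are exact compact support and exact sum zero. Your Taylor-expansion sketch does not apply Lemma \ref{lem cancelTheta}; it informally re-derives it, which is exactly the non-trivial content.

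The second issue is that your proposed $g = g_{\mathrm{loc}}+g_{\mathrm{tail}}$ split does not restore the hypotheses of Lemma \ref{lem cancelTheta}: truncating $g$ to $\{|a|\le W^{1+\tau}\}$ does not make its mean exactly zero. The paper's fix is different and sharper: it decomposes the sum over $\alpha$ dyadically around $y$, bounds the far region directly (this is where $d\ge 6$ enters through $2d-4\ge d+2$), and on the near region it writes $(\Sele_{2l})_{\alpha y}=\overline R + \mathring R_{\alpha y}$ with $\overline R$ the mean over the near region. The constant $\overline R$ is then small by \eqref{3rd_property0} and \eqref{4th_property0} and pairs against $\sum_{\alpha}\Theta_{x\alpha}$, while $\mathring R$ has \emph{exact} sum zero and compact support on the near region, so Lemma \ref{lem cancelTheta} applies verbatim. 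Your $g_{\mathrm{loc}}$ would still need this additional mean-subtraction step before Lemma \ref{lem cancelTheta} could be invoked. So while the high-level mechanism you describe is correct, the concrete route through Lemma \ref{lem cancelTheta} is not set up, and the derivative bound you lean on in its place is unproved; the chain estimate \eqref{BRB} by induction, granted \eqref{redundant again}, is fine.
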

 \begin{proof}
We abbreviate $r:=\langle x-y\rangle$. To prove \eqref{redundant again}, we decompose the sum over $\al$ according to the dyadic scales $ \cal I_{n}:=\{\al \in \Z_L^d: K_{n-1} \le |\al-y| \le K_{n}\}$, where $K_n$ are defined by
\be\label{defn Kn}
K_n:= 2^n W \ \ \text{for}\ \ 1\le n \le \log_2 (L/W)-1, \ \ \text{and}\ \  K_{0}:=0. 
\ee
If $K_n\ge W^{-\e} r$ for a small constant $\e>0$, then we have that
\be\label{boundIfar} \Big| \sum_{\al\in \cal I_{n}}\Theta_{ x \al} (\Selek_{2l})_{\al y}\Big| \le   \sum_{\al\in \cal I_{n}}\left|\Theta_{ x \al}\right| \cdot \max_{\al\in \cal I_n} \left|(\Selek_{2l})_{\al y}\right| \le W^\e\frac{K_n^2}{W^2} \cdot W^\e\frac{W^2}{W^{(l-1)d} K_n^{d+2}}\le \frac{W^{(d+2)\e}}{W^{(l-1)d} r^{d}},\ee
where in the second step we used \eqref{4th_property0} (together with $2d-4\ge d+2$ when $d\ge 6$) and $\sum_{\al\in \cal I_{n}}\Theta_{ x \al}\le W^\e {K_n^2}/{W^2}$ by \eqref{thetaxy}. It remains to bound the sum
$$\sum_{\al\in \cal I_{near}}\Theta_{x \al} (\Selek_{2l})_{\al y},\quad \cal I_{near}:=\bigcup_{n: K_n \le W^{-\e}r} \cal I_n .$$
In order for $\cal I_{near}$ to be nonempty, it suffices to assume that $r\ge W^{1+\e}$. 

Using \eqref{4th_property0} and \eqref{3rd_property0}, we can obtain that 
\be\label{mean Snear}\sum_{\al\in \cal I_{near}} (\Selek_{2l})_{\al y}=\sum_\al (\Selek_{2l})_{\al y} - \sum_{x\notin \cal I_{near}} (\Selek_{2l})_{\al y} \le W^\e\left(\frac{\eta}{ W^{(l-1)d} } + \frac{W^2}{W^{(l-1)d} (W^{-\e}r)^{2}}\right).\ee
Then we write $ (\Selek_{2l})_{\al y} = \overline R + \mathring R_{\al y}$ for $\al \in \cal I_{near}$, where $\overline R:= \sum_{\al\in \cal I_{near}}(\Selek_{2l})_{\al y}/|\cal I_{near}|$ is the average of $(\Selek_{2l})_{\al y}$ over $\cal I_{near}$.   
By \eqref{4th_property0} and \eqref{mean Snear}, we have that
\be\label{bound barR} |\overline R| \le \frac{W^{(d+3)\e}W^2}{W^{(l-1)d}r^{d+2}}+\frac{ \eta W^{(d+1)\e}}{W^{(l-1)d}r^d},\quad | \mathring R_{\al y} | \le  \frac{W^{2+\e}}{W^{ (l-1)d}\langle \al-y\rangle^{d+2}} + |\overline R| .\ee
Thus we can bound that 
\begin{align}
\Big| \sum_{\al \in \cal I_{near}}\Theta_{  x \al} \overline R  \Big|&\le \left( \frac{W^{(d+3)\e}W^2}{W^{(l-1)d}r^{d+2}}+\frac{ \eta W^{(d+1)\e}}{W^{(l-1)d}r^d}\right)\sum_{\al\in \cal I_{near}}\Theta_{x\al} \nonumber\\
&\lesssim   \left( \frac{W^{(d+3)\e}W^2}{W^{(l-1)d}r^{d+2}}+\frac{ \eta W^{(d+1)\e}}{W^{(l-1)d}r^d}\right) \min\left\{W^\e\frac{W^{-2\e}r^2}{W^2},\eta^{-1}\right\} \lesssim  \frac{W^{(d+2)\e}}{W^{(l-1)d} r^{d}},\label{Rnear_sum1}
\end{align}
where in the second step we used \eqref{thetaxy} and \eqref{sumTheta} to bound $\sum_{\al\in \cal I_{near}}\Theta_{x\al} .$
Finally, we use Lemma \ref{lem cancelTheta} to bound the sum over $\mathring R$ as 
\begin{align}
\Big| \sum_{\al\in \cal I_{near}}\Theta_{ x \al} \mathring R_{\al y} \Big|& \le  \left(\sum_{\al\in \cal I_{near}}\frac{|\al-y|^2}{r^2}|\mathring R_{\al y}|\right) \left(  \frac{W^\e}{W^2r^{d-2}}\mathbf 1_{r\le   \eta^{-1/2}W^{1+\e}}   + W^{-D}\right) \nonumber\\
 & \le \left(\sum_{\al\in \cal I_{near}}\frac{W^{2+\e}}{r^2\langle\al-y\rangle^d W^{(l-1)d}} + W^{-(d+2)\e} r^d |\overline R|\right) \left(  \frac{W^\e}{W^2r^{d-2}}\mathbf 1_{r\le   \eta^{-1/2}W^{1+\e}}   + W^{-D}\right)\nonumber\\
& \le \left(\frac{W^{2+2\e}}{W^{(l-1)d}r^2 } + \frac{\eta W^{-\e}}{W^{(l-1)d}}\right) \left( \frac{W^\e \mathbf 1_{r\le   \eta^{-1/2}W^{1+\e}}}{W^2r^{d-2}}   + W^{-D}\right)\le \frac{W^{4\e}}{W^{(l-1)d} r^{d}},\label{Rnear_sum2}
\end{align}
where in the second and third steps we used \eqref{bound barR}, and in the last step we used 
$$\frac{\eta W^{-\e}}{W^{(l-1)d}}\frac{W^\e}{W^2r^{d-2}}\mathbf 1_{r\le   \eta^{-1/2}W^{1+\e}}  \le  \frac{W^{2\e}}{W^{(l-1)d} r^{d}}.$$
Combining \eqref{boundIfar}, \eqref{Rnear_sum1} and \eqref{Rnear_sum2}, we conclude \eqref{redundant again} since $\e$ is arbitrary. 

From \eqref{redundant again}, we can obtain \eqref{BRB} easily by using the following simple facts: if $f_1$, $f_2$ and $g$ are functions on $\Z_L^d\times \Z_L^d$ satisfying that
$$|f_1(x,y)| \le \langle x-y\rangle^{-d}, \quad |f_2(x,y)| \le \langle x-y\rangle^{-d},\quad |g(x,y)| \le W^{-2}\langle x-y\rangle^{-d+2},$$
then we have
$$\sum_\al \left|f_1(x,\al)f_2(\al,y)\right|\lesssim \langle x-y\rangle^{-d},\quad \sum_\al \left|f_1(x,\al)g(\al,y)\right|\lesssim W^{-2} \langle x-y\rangle^{-d+2}. $$
Finally, \eqref{intro_redagain} follows from \eqref{BRB} directly by definition \eqref{chain S2k}.
\end{proof}


%
%

The $\selfs$ in Definition \ref{collection elements} will appear in the following \emph{labelled $\dashed$ edges}, which are formed by joining the $\selfs$ with $\dashed$ edges. 

\begin{definition}[Labelled $\dashed$ edges] \label{def_graph comp} 
Given $l$ $\selfs$ $\Sele_{2k_i}$, $ i =1,2,\cdots,l$, we represent the entry
\be\label{eq label} \left(\Theta \Sele_{2k_1}\Theta  \Sele_{2k_2}\Theta \cdots \Theta  \Sele_{2k_l}\Theta\right)_{xy}\ee
 by a labelled $\dashed$ edge between atoms $x$ and $y$ with label $(k;  {2k_1}, \cdots , {2k_l})$, where $k:=\sum_{i=1}^l 2 k_i -2(l-1)$ is the scaling order of this edge.  
In graphs, each labelled $\dashed$ edge is drawn as one single double-line edge with a label but without any internal structure as in the following figure: 
\begin{center}
\includegraphics[width=10cm]{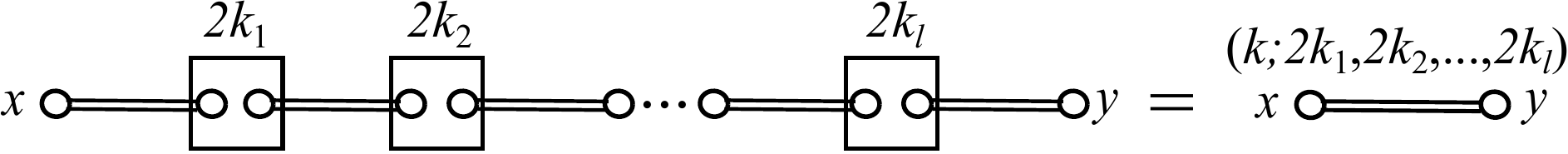}
\end{center}
\end{definition}

The scaling order of a labelled $\dashed$ edge is calculated as follows. Taking \eqref{eq label} as an example, there are $l+1$ $\dashed$ edges of total scaling order $2(l+1)$, $\selfs$ $\Sele _{2k_1},\cdots, \Sele_{2k_l}$ of total scaling order \smash{$\sum_{i=1}^l 2 k_i$}, and $2l$ internal atoms of total scaling order $-4l$. Hence the scaling order of \eqref{eq label} is $2(l+1)+\sum_{i=1}^l 2 k_i -4l =k. $ By \eqref{BRB}, \eqref{eq label} is bounded by $W^{-(k-2)d/2+\tau}B_{xy}$ for any constant $\tau>0$, i.e. it has the same decay with respect to $|x-y|$ as $\Theta_{xy}$ except for an extra $W^{-(k-2)d/2}$ factor.  As a convention, both $\dashed$ and labelled $\dashed$ edges will be called ``$\dashed$ edges".


The scaling order of a normal regular graph with labelled $\dashed$ edges can be equivalently counted as
\begin{align}
  \text{ord}(\cal G) &:= \#\{\text{off-diagonal }  G  \text{ edges}\} + \#\{\text{light weights}\} + 2\#\{ \text{waved edges}\}  + 2 \#\{\text{$\dashed$ edges}\} \nonumber\\
&+\sum_k k\cdot   \#\{k\text{-th order labelled $\dashed$ edges}\}  -  2\left[ \#\{\text{internal atoms}\}- \#\{\text{dotted edges}\} \right] . \label{eq_deforderrandom3}
\end{align}
In other words, a $k$-th order labelled $\dashed$ edges is simply counted as an edge of scaling order $k$, and there is no need to count its internal structures using Definition \ref{def scaling}.

\subsection{Doubly connected property}\label{sec doublenet}

Recall the definition of molecules in Definition \ref{def_poly}. We define the molecular graph as the quotient graph of the atomic graph  with the equivalence relation that atoms belonging to the same molecule are equivalent. 
  
\begin{definition}[Molecular graphs] \label{def moleg}
 Molecular graphs are graphs consisting of 
  \begin{itemize}
\item external molecules which represent the external atoms (such as the $\otimes$, $\oplus$ and $\ominus$ molecules);
\item internal molecules;
\item blue and red solid edges, which represent the plus and minus $G$ edges between molecules;
\item 
$\dashed$ edges between molecules;
\item dotted edges between 
external and internal molecules.
\end{itemize}
Given any atomic graph $\cal G$, we define its molecular quotient graph $ \cal G_{\cal M}$ in the following way:
\begin{itemize}
\item each molecule of $\cal G$ is represented by a vertex in $ \cal G_{\cal M}$;

\item each blue or red solid edge of $\cal G$ between atoms in different molecules is represented by a blue or red solid edge between these two molecules in $ \cal G_{\cal M}$;

\item each $\dashed$ edge of $\cal G$ between atoms in different molecules is represented by a $\dashed$ edge between these two molecules in $ \cal G_{\cal M}$;

\item each dotted edge of $\cal G$ between an external atom and an internal atom is represented by a dotted edge between the corresponding external and internal molecules;


\item we discard all the other components in $\cal G$ (including the weights, $\times$-dotted edges, 
and  
all edges inside any molecule). 
\end{itemize}
\end{definition}

We emphasize that molecular graphs are used solely to analyze the graph structures; the expansions in Section \ref{sec_basiclocal}  are only applied to atomic graphs. In the following proof, we assume that each atomic graph is automatically associated with a molecular graph. As discussed below Definition \ref{def_poly}, we call 
the structure of the molecular graph as the {\it global structure} of the atomic graph.  



The following \emph{doubly connected} property is a key global property for our proof. It allows us to establish a direct connection between the scaling order of a graph and a bound on its value (cf. Lemma \ref{no dot} below). In fact, all graphs in the $T$-expansion and $\incomp$ will satisfy this property (cf. Definitions \ref{def genuni} and \ref{def incompgenuni}). 

\begin{definition} [Doubly connected property]\label{def 2net}  
A subgraph $\cal G$ without external molecules is said to be \emph{doubly connected} if its molecular graph $\cal G_{\cal M}$ satisfies the following property. There exist a collection, say $\cal B_{black}$, of $\dashed$ edges and another collection, say $\cal B_{blue}$, of  either blue solid or  $\dashed$ edges  such that (a) $\cal B_{black}\cap \cal B_{blue}=\emptyset$, and (b) both  $\cal B_{black}$ and $\cal B_{blue}$ contain a spanning tree 
that connects all molecules in the graph. For simplicity of notations, we call the $\dashed$ edges in $\cal B_{black}$ as black edges, and the blue solid and $\dashed$ edges in $\cal B_{blue}$ as blue edges. Correspondingly, $\cal B_{black}$ and $\cal B_{blue}$ are referred to as \emph{black net} and \emph{blue net}, respectively,  where a ``net" refers to a subset of edges that contains a spanning tree.  


A graph  $\cal G$ with external molecules is said to be doubly connected if its subgraph with all external molecules removed is doubly connected, i.e. the spanning trees in the two nets are not required to contain the external molecules.
\end{definition}

The doubly connected property is defined on molecular graphs, and thus is a \emph{global property}. In the above definition, the $\dashed$ edges also include labelled $\dashed$ edges introduced in Definition \ref{def_graph comp}. The red solid edges are not tracked in the doubly connected property, and the path connectivity of red solid edges can be broken in our expansion procedure in \cite{PartII_high}. By symmetry, we can also define an expansion procedure so that graphs in the $T$-expansion satisfy the doubly connected property with a black net and a red net.

 \subsection{$T$-expansion with doubly connected structures}
 
For graphs in the $T$-expansion, they are all doubly connected in the sense of Definition \ref{def 2net}. By including this property and the labelled $\dashed$ edges in Definition \ref{def_graph comp}, we are now ready to state the rest of the details for the $T$-expansion in Definition \ref{defn genuni} and the $\incomp$ in Definition \ref{defn incompgenuni}. We will design an expansion strategy in \cite{PartII_high} so that all graphs in the $T$-expansion and $\incomp$ are doubly connected.
\begin{definition} [More properties of the $n$-th order $T$-expansion]
\label{def genuni}
An $n$-th order $T$-expansion of $T_{\fa,\fb_1 \fb_2}$ is an expression satisfying Definition \ref{defn genuni} and the following additional properties.


\begin{enumerate}

\item A diffusive edge in the graphs on the right-hand side of \eqref{mlevelTgdef} is either a $\Theta$ edge or a labelled $\dashed$ edge of the form \eqref{eq label} with $4\le 2k_i \le n$. 

\item Each graph in $(\PTn)_{\fa,\fb_1\fb_2}$, $(\ATn)_{\fa,\fb_1\fb_2}$, $(\QTn)_{\fa,\fb_1\fb_2}$ and $(\Err_{n,D})_{\fa,\fb_1\fb_2}$ is doubly connected in the sense of Definition \ref{def 2net}. 
\end{enumerate}

 \end{definition}
   

 \begin{definition} [More properties of the $n$-th order $\incomp$]\label{def incompgenuni}
 An $n$-th order $\incomp$ of $T_{\fa,\fb_1 \fb_2}$ is an expression satisfying Definition \ref{defn incompgenuni} and the following additional properties.
\begin{enumerate}
	
	
	\item A diffusive edge in $\Sele_n$ and the graphs on the right-hand side of \eqref{mlevelT incomplete} is either a $\Theta$ edge or a labelled $\dashed$ edge of the form \eqref{eq label} with $4\le 2k_i \le n-1$. 

	\item Each graph in $\Sele_n$, $(\PITn)_{\fa,\fb_1\fb_2}$, $(\AITn)_{\fa,\fb_1\fb_2}$, $(\QITn)_{\fa,\fb_1\fb_2}$ and $(\Err'_{n,D})_{\fa,\fb_1\fb_2}$ is doubly connected in the sense of Definition \ref{def 2net}. 
\end{enumerate}

\end{definition}

\subsection{Bounding doubly connected graphs}\label{sec bound double}

In this subsection, we give some important estimates on doubly connected graphs in Lemma \ref{no dot}. In particular, these estimates will be used crucially in the proofs of Lemma \ref{lem V-R wt} and Lemma \ref{lem informal}. Inspired by the maximum bound in \eqref{Gmax} and the weak averaged bound in \eqref{initial Txy222}, we introduce the following weak and strong norms, which will be a convenient tool for the proof of Lemma \ref{no dot}. 

\begin{definition}\label{Def PseudoG} 
Given a $\Z_L^d\times \Z_L^d$ matrix $\cal A$ and some fixed $a,b>0$, we define its weak-$(a,b)$ norm as
$$\|\cal A\|_{w;(a,b)}:= W^{a d/2}\max_{x,y\in \Z_L^d} \left|\cal A_{xy}\right| + \sup_{ K \in [W, L/2]} \left( \frac{W}{K}\right)^b K^{ad/2} \max_{x, x_0\in \Z_L^d} \frac1{K^d}\sum_{y:| y - x_0 |\le K} \left(\left|\cal A_{xy}\right|+\left| \cal A_{yx}\right| \right),$$
and its strong-$(a,b)$ norm as
$$\|\cal A\|_{s;(a,b)}:= \max_{x,y\in \Z_L^d} \left( \frac{W}{\langle x-y\rangle}\right)^b \langle x-y\rangle^{ad/2} \left|\cal A_{xy}\right|.$$
\end{definition}

In this paper, we only use weak or strong-$(a,b)$ norms with $a\le 2$. In this case, it is easy to check that the strong-$(a,b)$ norm is  strictly stronger than the weak-$(a,b)$ norm. By Definition \ref{Def PseudoG}, we immediately get the bounds
\be\label{eq PseudoG2} 
\max_{x,y\in \Z_L^d} \left|\cal A_{xy}\right| \le W^{-a d/2}\|\cal A\|_{w;(a,b)},
\ee
\be\label{eq PseudoG}
\max_{x, x_0\in \Z_L^d} \frac1{ K^{d}}\sum_{y:| y - x_0 |\le K} \left(\left|\cal A_{xy}\right|+\left| \cal A_{yx}\right| \right)\le   \frac{1}{W^b K^{ad/2-b}}  \|\cal A\|_{w;(a,b)},\quad \text{for all $  K \in [W, L/2]$,}
\ee
\be\label{eq unifG}
\left|\cal A_{xy}\right| \le    \frac{1}{W^b \langle x-y\rangle^{ad/2-b}} \|\cal A\|_{s;(a,b)}.
\ee
Here we list the weak or strong norms of some key deterministic or random variables. 
\begin{enumerate}
\item 
$\|B\|_{s;(2,2)}\le 1$ and $\|B^{(1/2)}\|_{s;(1,1)}\le 1$, where $B^{(1/2)}$ is the matrix with entries $(B_{xy})^{1/2}$; 

\item If \eqref{locallaw1} holds, then $\|G(z)-m(z)I_N\|_{s;(1,1)}\prec 1$. 

\item If \eqref{Gmax} and \eqref{initial Txy222} hold, then $\|G(z)-m(z)I_N\|_{w;(1,2)}\prec W^{\e_0}$ and $W^{-2\e_0}\|T(z)\|_{w;(2,4)}\prec W^{2\e_0}$. 



\item The following positive random variable $\Psi_{xy}$ was defined in \cite[Definition 3.4]{PartIII} for a small constant $\tau>0$ and a large constant $D>0$:
\be\label{eq defPsi}
\Psi^2_{xy}\equiv \Psi^2_{xy}(\tau,D) :=W^{-D}+ \max\limits_{ \substack{|x_1-x| \le   W^{1+\tau}  \\ |y_1-y|\le  W^{1+\tau}}}s_{x_1y_1} + W^{-(2+2\tau)d}\sum_{ |x_1-x| \le  W^{1+\tau}}\sum_{ |y_1-y|\le  W^{1+\tau}} |G_{x_1y_1}|^2  .\ee
Note that  $\|\Psi(z)\|_{w;(1,2)} \prec \|G(z)-m(z)I_N\|_{w;(1,2)} + 1$ and  $\|\Psi(z)\|_{s;(1,1)} \prec \|G(z)-m(z)I_N\|_{s;(1,1)} + 1$ as long as $D$ is large enough.

\end{enumerate}
The motivation for introducing the $\Psi$ matrix is as follows: given $x_1,x_2\in \Z_L^d$, suppose $y_1$ and $y_2$ satisfy that  
\be\label{y12x12}|y_1-x_1|\le W^{1+\tau/2} ,\quad |y_2-x_2|\le W^{1+\tau/2}.\ee 
If $y_1\ne y_2$ and we know that $\|G(z)\|_{\max}\prec 1$, then using Lemma \ref{lem G<T} we can obtain the bound 
\begin{align}
|G_{y_1y_2}(z)|^2&\prec T_{y_1y_2}(z) =|m|^2 s_{y_1y_2}|G_{y_2y_2}(z)|^2 +|m|^2 \sum_{\al\ne y_2}s_{y_1 \al}|G_{\al y_2}(z)|^2 \nonumber\\
&=|m|^2 s_{y_1y_2}|G_{y_2y_2}(z)|^2 +|m|^2 \sum_{\al\ne y_2}s_{y_1 \al}|G_{y_2\al }(\overline z)|^2 \nonumber\\
&\prec   s_{y_1y_2} +  \sum_{\al\ne y_2}s_{y_1\al} T_{ y_2 \al}(\overline z)  \le s_{y_1y_2} + \sum_{\al ,\beta} s_{y_1\al}s_{y_2\beta} |G_{\al\beta }(z)|^2 \nonumber\\
&\le W^{-D}+s_{y_1y_2} + W^{-2d}\sum_{|\al-y_1|\le W^{1+\tau/2}}\sum_{|\beta-y_2|\le W^{1+\tau/2}}  |G_{\al\beta}(z)|^2  \le W^{2d\tau} \Psi_{x_1x_2}^2(\tau,D),\label{Gpsi}
\end{align}
where in the third and fifth steps we used the simple identity $G_{xy}(z)=\overline{G_{yx}(\overline z)}$, and in the sixth step we used \eqref{subpoly}. In particular, if $y_1$ and $y_2$ are in the same molecules as $x_1$ and $x_2$, respectively, then we know that \eqref{y12x12} holds, since otherwise the graph value will be smaller than $W^{-D}$ for any fixed $D>0$ by \eqref{subpoly} and \eqref{S+xy}. Then \eqref{Gpsi} shows that all the $G$ edges between two molecules containing atoms $x_1$ and $x_2$ can be bounded with the same variable $\Psi_{x_1x_2}$. This fact will be convenient for our proof. 

%

By \eqref{sumTheta}, the row sums of $\Theta$ diverge when $L\to \infty$ (e.g. if $\eta=W^{2}/L^{2-\e}$). On the other hand, the following claim shows that the product of a $\Theta$ entry and a variable with bounded weak-$(1,2)$ norm is summable if $d\ge 8$. Although this claim will not be used in our proof directly, it explains why we require $d\ge 8$ in Theorem \ref{main thm0}. Our proof of Lemma \ref{no dot} is actually based on some more general versions of this claim in \eqref{keyobs3} and \eqref{keyobs2} below.

 
 \begin{claim}\label{cor: ini bound} 
Let $\cal A$ be a matrix satisfying $\|\cal A\|_{w;(a,b)}\prec 1$ 
for some fixed $a,b>0$. If 
 \be\label{a+b}
 a d/2 - b - 2\ge 0,
 \ee
then we have that
\be\label{BG14}
\max_{x,y\in \Z_L^d} \sum_{ \al }B_{x\al}\cal A_{y\al} \prec  W^{-a d/2}.
 \ee
  \end{claim}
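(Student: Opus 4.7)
My plan is to dyadically decompose the sum over $\al$ according to the distance $|\al - x|$ and use the averaged bound inherent in the weak norm (around either index) to control the contribution of each shell. The hypothesis $ad/2 - b - 2 \ge 0$ will turn out to be exactly what makes the resulting geometric sum close at the smallest scale $M = W$.

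First I would split $\sum_\al B_{x\al}|\cal A_{y\al}| = \sum_{m \ge 0}\sum_{\al \in \Omega'_m} B_{x\al}|\cal A_{y\al}|$, where $\Omega'_0 := \{\al : |\al - x| \le W\}$ and $\Omega'_m := \{\al : M_{m-1} < |\al - x| \le M_m\}$ with $M_m := 2^m W$ for $m \ge 1$. On each shell one has the uniform pointwise bound $B_{x\al} \lesssim W^{-2} M_m^{-d+2}$ for $m \ge 1$ (respectively $B_{x\al} \le W^{-d}$ for $m = 0$), while the averaged estimate \eqref{eq PseudoG}, applied with the ball centered at $x_0 = x$ (so that the summation variable plays the role of the second index of $\cal A$), yields
\[
\sum_{|\al - x| \le M_m} |\cal A_{y\al}| \;\le\; C\, W^{-b}\, M_m^{d - ad/2 + b}\, \|\cal A\|_{w;(a,b)}.
\]
Combining the two on each shell gives a contribution of order $W^{-2-b} M_m^{b + 2 - ad/2} \|\cal A\|_{w;(a,b)}$ for $m \ge 1$, and $W^{-ad/2}\|\cal A\|_{w;(a,b)}$ for $m = 0$.

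Summing over $m$ reduces to the geometric series $\sum_{m \ge 0} M_m^{b + 2 - ad/2}$. Under \eqref{a+b} the exponent is non-positive, so the series is dominated by its smallest term at $m = 0$ (with a harmless $\log(L/W)$ factor in the borderline case $ad/2 = b + 2$, absorbed into the $\prec$ notation). This produces the desired bound $\prec W^{-2-b}\cdot W^{b + 2 - ad/2} = W^{-ad/2}$, uniformly in $x, y \in \Z_L^d$.

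The one non-trivial step, and the main obstacle to a naive approach, is the decision to decompose around $x$ rather than around $y$. A decomposition based on $|\al - y|$ would pair the shells on which $B_{x\al}$ is largest (namely $|\al - x|$ small, which can lie in \emph{any} shell around $y$ depending on $|x-y|$) with only pointwise bounds on $|\cal A_{y\al}|$, forcing one to use the crude bound $B_{x\al} \le W^{-d}$ in a regime where the corresponding sum $\sum_{|\al - y| \le K}|\cal A_{y\al}|$ has a volume factor $K^{d - ad/2 + b}$ that can grow like $L^{d - ad/2 + b}$ and destroy the estimate. Decomposing around $x$ avoids this loss entirely, because the symmetry of the weak norm in its two indices lets us sum $|\cal A_{y\al}|$ over a ball centered at $x$ with the same quality of bound as over a ball centered at $y$. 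This symmetry is the single key observation that makes the argument close.
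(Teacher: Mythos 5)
Your proof is correct, and it takes a mildly different route than the paper's. The paper decomposes the sum over $\alpha$ into a \emph{two-dimensional} family of dyadic cells $\cal I_{n,m} = \{\alpha : K_{n-1} \le |x-\alpha| \le K_n,\ K_{m-1} \le |y-\alpha| \le K_m\}$, observes that each $\cal I_{n,m}$ is contained in a ball of radius $O(K_n\wedge K_m)$ (around $x$ or around $y$, whichever is closer), applies \eqref{eq PseudoG} on that ball with the pointwise bound $B_{x\alpha}\lesssim W^{-2}K_n^{-(d-2)}$, and sums over $O((\log L)^2)$ cells. You instead decompose dyadically only around $x$ and apply \eqref{eq PseudoG} with $x_0 = x$, which is simpler and immediately exhibits the sum over scales as a geometric series with non-positive exponent. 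Both arguments hinge on the same key fact, which you isolate explicitly: the outer maximum over $x_0$ in the definition of $\|\cdot\|_{w;(a,b)}$ lets you average $|\cal A_{y\alpha}|$ over balls centered at an arbitrary point, so the ball can be centered at $x$ to match the annulus on which $B_{x\alpha}$ is controlled. Your explanation of why a decomposition around $y$ alone fails (when $K_m \gtrsim |x-y|$ the best available pointwise bound on $B_{x\alpha}$ over the shell degenerates to $W^{-d}$, and the volume factor $K_m^{d-ad/2+b}$ can then grow like a positive power of $L$) is a correct and useful piece of intuition that the paper sidesteps by its two-parameter decomposition rather than addressing head-on. The one cosmetic remark is that your single-scale decomposition reproduces the paper's $(\log L)^2$ loss as only a single $\log(L/W)$ in the borderline case $ad/2 = b+2$, and nothing at all when the inequality \eqref{a+b} is strict; this is a minor tightening since both are absorbed by $\prec$.
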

  
  \begin{proof}
We decompose the sum over $\al$ according to the dyadic scales: 
$$\al\in \cal I_{n,m}:=\{\al\in \Z_L^d: K_{n-1} \le |x-\al| \le K_{n},K_{m-1} \le |y-\al| \le K_{m}\},$$ 
where $K_n$ are defined in \eqref{defn Kn}. Then using \eqref{eq PseudoG} and the fact that $\cal I_{n,m}$ is inside a box of scale $\OO(K_{n}\wedge K_{m})$, we can estimate that 
\begin{align*}
\sum_{\al\in \cal I_{n,m}}B_{x\al} \cal A_{y\al} &\prec \frac{1}{W^2K_n^{d-2}}\sum_{\al\in \cal I_{n,m}}\cal A_{y\al}  \prec   \frac1{W^{2} K_{n}^{d-2} }\cdot \frac{\left( K_n\wedge K_m\right)^{d}}{ W^b(K_n\wedge K_m)^{ad/2-b}}  \\
&\le  \frac{1}{ W^{b+2}(K_n\wedge K_m)^{ad/2-b-2}}  \le  W^{-a d/2},
\end{align*}
where in the last step we used \eqref{a+b}. Summing over $\OO( (\log L)^2)$ many such sets $\cal I_{n,m}$, we get that 
$$\sum_{ \al }B_{x\al} \cal A_{y\al} \prec (\log L)^2  W^{-a d/2}\prec W^{-a d/2}.$$
This concludes the proof.
\end{proof}

If $\|G(z)-m(z)I_N\|_{w;(1,2)}\prec 1$, 
then by Claim \ref{cor: ini bound} we have that
$$\max_{x,y\in \Z_L^d} \sum_{ \al }B_{x\al}|G_{y\al}| \prec  W^{- d/2} $$
if $d/2-4\ge 0$, which gives $d\ge 8$. 
We now prove the following key estimates on doubly connected graphs. 
 
\begin{lemma}\label{no dot}
 Suppose $d\ge 8$ and $\|G(z)-m(z)I_N\|_{w;(1,2)}\prec 1$. 
Let $\cal G$ be a doubly connected normal regular graph without external atoms. 
Pick any two atoms of $\cal G$ and fix their values $x , y\in \Z_L^d$. Then the resulting graph $\cal G_{xy}$ satisfies  that 
\be\label{bound 2net1}
\left|\cal G_{xy}\right| \prec W^{ - \left(n_{xy}-3\right)d/2 } B_{xy} \cal A_{xy},
\ee
where  $n_{xy}:=\ord(\cal G_{xy})$ is the scaling order of $\cal G_{xy}$ and  $\cal A_{xy}$ is some  positive variable  satisfying $\|\cal A\|_{w;(1,2)}\prec 1$. 
Furthermore, 
if $\|G(z)-m(z)I_N\|_{s;(1,1)}\prec 1$, then we have that
\be\label{bound 2net1 strong}
\left|\cal G_{xy}\right| \prec W^{ -  \left(n_{xy}-3\right)d/2} B_{xy}^{3/2}.
\ee
If we fix an atom $x \in \cal G$, then  the resulting graph  $\cal G_{x}$ satisfies  that  
\be\label{bound 2net1 singlex}
\left|\cal G_{x}\right| \prec W^{ - \ord(\cal G_{x}) \cdot d/2 } .
\ee
The above bounds  hold also  for the graph $ {\cal G}^{{\rm abs}}$, which is obtained by replacing each component (including edges, weights and coefficients) in $\cal G$ with its absolute value and ignoring all the $P$ or $Q$ labels (if any). We emphasize that in defining $ {\cal G}^{{\rm abs}}$, a labelled $\dashed$ edge \eqref{eq label} will be regarded as one single edge and replaced by $|\left(\Theta \Sele_{2k_1}\Theta  \Sele_{2k_2}\Theta \cdots \Theta  \Sele_{2k_l}\Theta\right)_{xy}|$.
 \end{lemma}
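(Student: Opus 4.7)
The plan is to prove the estimates by induction on the number of internal molecules of $\cal G$, with the doubly connected structure of $\cal G_{\cal M}$ as the mechanism for iteratively summing out internal atoms. In the base case $\cal G_{\cal M}$ consists only of the molecules containing $x$ and $y$, so the edges of $\cal G$ live either inside these two molecules or connect them directly. Here, the variance-profile decay \eqref{subpoly}, \eqref{S+xy}, the $\Theta$-bound \eqref{thetaxy}, the assumed weak-$(1,2)$ (respectively strong-$(1,1)$) bound on $G-mI$, and the labelled-edge estimate \eqref{BRB} deliver both the $B_{xy}$ factor (from the black net) and the $\cal A_{xy}$ factor (from the blue net), with $\cal A_{xy}$ being a product of edges of bounded weak-$(1,2)$ norm. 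For the inductive step, I first dispose of the local structure inside each molecule: atoms within one molecule are tied by waved and dotted edges, so their positions are confined to an $\OO(W^{1+\tau})$ neighborhood by \eqref{subpoly} and \eqref{S+xy}, and summing over the internal atoms within a molecule produces a factor matching its local scaling-order contribution in \eqref{eq_deforderrandom3}. This reduces the problem to analyzing $\cal G_{\cal M}$ with one effective vertex per molecule.

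The key summation step is performed on a leaf molecule $M$ of the black spanning tree (which exists because $\cal G_{\cal M}$ is finite and $M$ can be chosen to be internal). Since the blue spanning tree also visits $M$ and is edge-disjoint from the black tree, $M$ is attached to the rest of the graph by exactly one black $\dashed$ edge and at least one additional blue edge (either $\dashed$ or $G$). Summing the effective index of $M$ over $\Z_L^d$ reduces to estimating expressions of the form $\sum_\al B_{x_1 \al}\,\cal A_{x_2 \al}$ with $B$ coming from the black $\Theta$ edge and $\cal A$ being the product of the blue edge(s) and any residual attached edges. By the $d\ge 8$ hypothesis and the range condition \eqref{a+b} of Claim \ref{cor: ini bound}, each such sum gains a factor $W^{-d/2}$; this matches exactly the two units of scaling order lost when the internal atom is removed. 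The residual graph has one fewer internal molecule, inherits a doubly connected structure (removing a leaf from each spanning tree preserves spanning), and the effective residual edge still has bounded weak-$(1,2)$ norm by a $\Psi$-style estimate \eqref{eq defPsi}--\eqref{Gpsi}. Iterating until only the molecular path from $x$ to $y$ remains, the accumulated $W^{-d/2}$ factors combine with the local-structure bookkeeping to produce the claimed $W^{-(n_{xy}-3)d/2}$ prefactor, while the residual black-path edge supplies $B_{xy}$ and the residual blue-path edge supplies $\cal A_{xy}$. The strong bound \eqref{bound 2net1 strong} follows verbatim after replacing weak-$(1,2)$ by strong-$(1,1)$ throughout, so that $\cal A_{xy}$ is itself deterministically bounded by $B_{xy}^{1/2}$; the single-index bound \eqref{bound 2net1 singlex} follows because no residual inter-external decay is required when one of the external atoms is absent; and the absolute-value versions follow unchanged because every step uses only triangle-type estimates.

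The main obstacle will be preserving the doubly connected property of the residual graph when a blue leaf edge happens to be a random $G$ edge, since naively removing the $G$ edge could disconnect the blue net. The resolution is to replace any $G$ edge between two molecules by the deterministic majorant $\Psi$ via \eqref{Gpsi}, which has comparable weak-$(1,2)$ norm and which can then be regarded as a blue $\dashed$-type edge for the purpose of maintaining connectivity. A second bookkeeping issue is that labelled $\dashed$ edges of the form \eqref{eq label} already carry internal $W^{-(k-2)d/2}$ savings; by \eqref{BRB} they behave as a single diffusive edge of scaling order $k$, so treating each such edge as one atomic object in \eqref{eq_deforderrandom3} keeps the induction consistent. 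Once these two points are handled, the induction closes and yields \eqref{bound 2net1}, \eqref{bound 2net1 strong}, and \eqref{bound 2net1 singlex} simultaneously.
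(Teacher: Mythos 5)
Your overall architecture (reduce to representative atoms per molecule, iterate a summation over $\Z_L^d$ that gains $W^{-d/2}$ via the condition $ad/2 - b - 2 \ge 0$, and maintain the doubly connected property on the residual) is genuinely in the same spirit as the paper, which first constructs an auxiliary two-spanning-tree graph and then sums out internal atoms one at a time inside Claim \ref{twonet claim}. However, there is a concrete gap in the choice of which tree you leaf-sum over, and this choice is not a matter of convention — it is the heart of the argument.

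You sum over a leaf molecule $M$ of the \emph{black} spanning tree. But there is no reason $M$ should also be a leaf of the \emph{blue} spanning tree: in $K_4$ with black tree $\{12,23,34\}$ and blue tree $\{13,14,24\}$, every black leaf ($1$ and $4$) has blue degree $2$. If $M$ has blue degree $\ge 2$, summing over its index reduces to $\sum_{\al}B_{p\al}\,\cal A_{q_1\al}\,\cal A_{q_2\al}\cdots$, which is \emph{not} of the form $\sum_\al B_{x_1\al}\cal A_{x_2\al}$ as you claim — the product of several blue factors is not a single matrix entry of anything with controlled weak-$(1,2)$ norm. Worse, after the summation the former blue neighbors $q_1,q_2,\dots$ of $M$ are no longer connected to each other through the blue net. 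Unlike the black $B$ edges, which \emph{do} rewire (via the $\Gamma$-type estimate $B_{y_1\al}B_{y_2\al}\lesssim B_{y_1y_2}\sum_i B_{y_i\al}$, possible because $B$ has pointwise decay), the blue $\cal A$ edges have only an averaged bound in the weak-$(1,2)$ case, so there is no analogous inequality producing a new $\cal A_{q_1q_2}$ factor. Once blue connectivity is lost, the next summation on the blue-disconnected side sees only a black $B$ edge and gives a factor $\sum_\al B_{x\al}\sim L^2/W^2 \sim \eta^{-1}$, which is precisely the divergence the double-connectivity hypothesis is designed to prevent, so your induction does not close. The paper avoids this by summing from the leaves of the \emph{blue} tree to its root: a blue leaf has exactly one blue edge, which is consumed cleanly with nothing to rewire, while its several black edges rewire into $\Gamma(y_1,\dots,y_k)$ (keeping the black net spanning). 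This asymmetry — black edges rewire, blue ones cannot — is exactly why the weak-norm version \eqref{bound 2net1} forces the leaf-sum to be taken on the blue tree, and it is also the reason the paper handles \eqref{keyobs2} (the case where the blue leaf carries a second blue edge to the external atom $x$) as a genuine matrix composition rather than by pointwise rewiring.

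Your strong-norm case \eqref{bound 2net1 strong} would fare better, since strong-$(1,1)$ norms do yield pointwise decay of $\cal A$ and hence a $\Gamma$-type blue rewiring — but the lemma requires the weak estimate as well. Finally, the obstacle you flag (a blue leaf edge being a random $G$ edge, resolved by replacing $G$ with $\Psi$) is real but not the blocking issue; it concerns replacing edge weights with deterministic majorants, which the paper also does, whereas the blocking issue is the topology of which tree loses a leaf under your summation order.
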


%

Note that a doubly connected graph $\cal G$ with at least two molecules must have $n_{xy}\ge 3$.  If $x$ and $y$ are in the same molecule, then \eqref{bound 2net1} gives the sharp bound $\left|\cal G_{xy}\right| \prec W^{ - n_{xy} d/2 }$.

 \begin{proof} [Proof of Lemma \ref{no dot}]
The estimate \eqref{bound 2net1 singlex} is a special case of \eqref{bound 2net1} with $x=y$. Hence we only need to prove \eqref{bound 2net1} and \eqref{bound 2net1 strong}. Moreover, due to the trivial bound $|\cal G_{xy}|\prec \cal G^{\abs}_{xy}$, it suffices to prove \eqref{bound 2net1} and \eqref{bound 2net1 strong} for the graph \smash{$\cal G_{xy}^{\abs}$}. 
As explained before, the $\eta^{-1}$ factor in \eqref{sumTheta} is the main trouble for our proof. 
We will show that if we choose the order of summation in a proper way, then the following key property holds: for every summation over the global scale $L$, it involves a product of at least one $\dashed$ edge and one variable whose weak-$(1,2)$ or strong-$(1,1)$ norm is bounded by $\OO_\prec(1)$. In particular, every such summation does not provide a large $\eta^{-1}$ factor as we have seen in \eqref{BG14}.


By \eqref{subpoly}, \eqref{thetaxy}, \eqref{S+xy} and \eqref{BRB}, we have the following maximum bounds on deterministic edges:
\be\label{Ssmall}\begin{split}
	\max_{x,y}s_{xy} = & \OO   (W^{-d}), \quad  \max_{x,y}|S^{\pm}_{xy}|  = \OO(W^{-d}),\quad \max_{x,y}\Theta_{xy} \prec W^{-d}, \\
	& \max_{x,y}\left|(\Theta \Sele_{2k_1}\Theta  \Sele_{2k_2}\Theta \cdots \Theta  \Sele_{2k_l}\Theta)_{xy}\right|  \prec W^{-kd /2 } ,
\end{split}
\ee
where $k:=\sum_{i=1}^l 2 k_i -2(l-1)$. 
For simplicity of notations, we will use $\al\sim_{\cal M} \beta$ to mean that ``atoms $\al$ and $\beta$ belong to the same molecule". Suppose there are $\ell$ internal molecules $\cal M_i$, $1\le i \le \ell$, in $\cal G_{xy}^{\abs}$.  We choose one atom in each $\cal M_i$, say $x_i$, as a representative. Moreover, let atoms $x$ and $y$ be the representatives of their respective molecules in $\cal G^{\abs}$. For definiteness, we assume that $x$ and $y$ belong to \emph{different molecules}. The case where $x$ and $y$ belong to the same molecule can be dealt with in a similar way, and we omit the details. In the following proof, we fix a small constant $\tau>0$ and a large constant $D>0$. For any $y_i\sim_{\cal M}x_i$, it suffices to assume that  
\be\label{yixi}
|y_i-x_i|\le W^{1+\tau/2},
\ee
because otherwise the graph is smaller than $W^{-D}$. Then under the assumption \eqref{yixi}, for $y_i\sim_{\cal M} x_i$ and $y_j\sim_{\cal M} x_j$, by \eqref{thetaxy}, \eqref{Gpsi} and \eqref{BRB} we have that
\be\label{intermole1}|G_{y_i y_j}|\prec W^{d\tau} \Psi_{x_i x_j}(\tau, D) ,\quad \Theta_{y_i y_j}\prec B_{y_i y_j} \lesssim W^{(d-2)\tau/2} B_{x_i x_j},\ee
\be\label{intermole2} \left|(\Theta \Sele_{2k_1}\Theta  \Sele_{2k_2}\Theta \cdots \Theta  \Sele_{2k_l}\Theta)_{y_i y_j}\right| \prec W^{-(k-2)d /2 + (d-2)\tau/2} B_{x_i x_j}. \ee 
These estimates show that we can bound the edges between different molecules with $\Psi$ or $B$ entries that only contain the representative atoms $x_i$ in their indices. 
 
First, we bound the edges between different molecules. Due to the doubly connected property of $\cal G_{xy}^{\abs}$, we can pick two spanning trees of the black net and blue net, which we refer to as the \emph{black tree} and \emph{blue tree}, respectively. We bound the edges that do  not belong to the two trees using the maximum bounds: 
\begin{itemize}
	\item[(i)] each solid edge that is not in the blue tree is bounded by $\OO_\prec(W^{-d/2})$ using \eqref{eq PseudoG2} with $a=1$; 
	
	\item[(ii)] each $\dashed$ edge that is not in the black and blue trees is bounded by $\OO_\prec(W^{-d})$; 
	
	\item[(iii)] each labelled $\dashed$ edge that is not in the black and blue trees is bounded by $\OO_\prec(W^{-kd/2})$, where $k$ is the scaling order of this edge. 
\end{itemize}
The edges in the two trees are bounded as follows:
\begin{itemize}
	\item[(iv)] the blue solid and $\dashed$ edges in the two trees are bounded using \eqref{intermole1} and \eqref{intermole2}.
\end{itemize}
%
%
In this way, we can bound that 
\be\label{reduce Gaux0}
\cal G_{xy}^{\abs}\prec W^{-n_1 d/2 + n_2 \tau}\sum_{x_1, \cdots, x_\ell} \Gamma_{global}(x_1, \cdots, x_\ell) \prod_{i=1}^\ell \cal G_{x_i}^{(i)} ,
\ee
where $W^{-n_1 d/2+ n_2\tau}$ is a factor coming from the above items (i)--(iv), $\Gamma_{global}$ is a product of blue solid edges that represent $\Psi$ entries and double-line edges that represent $B$ entries, and every \smash{$\cal G_{x_i}^{(i)}$} is the subgraph inside the molecule $\cal M_i$, which has $x_i$ as an external atom. 
We bound the local structure $\cal G_{x_i}^{(i)}$ inside $\cal M_i$ as follows: 
\begin{itemize}
\item each waved or $\dashed$ edge is bounded by $\OO_\prec(W^{-d })$ using \eqref{Ssmall}; 
\item each labelled $\dashed$ edge is bounded by $\OO_\prec (W^{-kd/2 })$, where $k$ is its scaling order; 
\item each off-diagonal $G$ edge and light weight is bounded by $\OO_\prec(W^{-d/2})$ using \eqref{eq PseudoG2} with $a=1$; 
\item each summation over an internal atom in $\cal M_i \setminus \{x_i\}$ provides a factor $\OO( W^{(1+\tau/2)d})$ due to \eqref{yixi}. 
\end{itemize}
Thus with the definition of the scaling order in \eqref{eq_deforderrandom3}, we get that
\begin{align}\label{internal struc1}
|\cal G_{x_i}^{(i)}|\prec W^{-\ord(\cal G^{(i)}_{x_i}) \cdot d/2 + k_{i}\cdot \tau d/2 }, 
\end{align} 
where $k_i$ is the number of internal atoms in $\cal G_{x_i}^{(i)}$. Finally, for convenience of proof, we bound each $\dashed$ edge in the \emph{blue} (but not black) tree of $\Gamma_{global}(x_1, \cdots, x_\ell)$ as
\be\label{bound thetaweak} B_{x_i x_j}\le W^{-d/2}B_{x_i x_j} ^{1/2}.\ee
Then every edge in the blue tree represents a $\Psi$ or $B^{(1/2)}$ entry, whose weak-$(1,2)$ or strong-$(1,1)$ norm is bounded by $\OO_\prec(1)$ (depending on whether we want to prove \eqref{bound 2net1} or \eqref{bound 2net1 strong}).
Plugging \eqref{internal struc1} and \eqref{bound thetaweak} into \eqref{reduce Gaux0}, we obtain that
\be\label{bound G aux}
 \cal G_{xy}^{\abs} \prec W^{- (n_{xy}-\ell -3) d/2 + n_3\tau} (\cal G_{xy})_{aux},
\ee
where $n_3:=n_2+\sum_{i=1}^\ell k_i d/2$ and the number $n_{xy}-\ell -3$ in the exponent can be obtained by counting carefully the number of $W^{-d/2}$ factors from the above arguments. 
Here $(\cal G_{xy})_{aux}$ is an \emph{auxiliary graph} defined as follows: 
 \begin{itemize}
\item it has two external atoms $x$ and $y$, and some internal atoms $x_i$, $1\le i \le \ell$, which are the representative atoms of the molecules in $\cal G_{xy}^{\abs}$; 

\item each $\dashed$ edge in the black tree of $\cal G_{xy}^{\abs}$ is replaced by a double-line edge representing a $B$ entry in $(\cal G_{xy})_{aux}$; 

\item each edge in the blue tree of $\cal G_{xy}^{\abs}$ is replaced by a blue solid edge representing 
a $\Psi$ or $B^{(1/2)}$ entry in $(\cal G_{xy})_{aux}$.
\end{itemize}

By the construction of $(\cal G_{xy})_{aux}$, it is doubly connected in the following sense: $(\cal G_{xy})_{aux}$ contains a black spanning tree consisting of black double-line edges and a blue spanning tree consisting of blue solid edges. Now with \eqref{bound G aux}, to conclude the proof it suffices to show that after summing over all the internal atoms in $ (\cal G_{xy})_{aux}$, the auxiliary graph can be bounded as
\be\label{boundaux0}
| (\cal G_{xy})_{aux}| \prec W^{-\ell d/2  }B_{xy}\cal A_{xy} ,
\ee
for a positive variable $\cal A_{xy}$ satisfying $\|\cal A\|_{w;(1,2)}\prec 1$ (resp. $\|\cal A\|_{s;(1,1)}\prec 1$) if $\|G(z)-m(z)I_N\|_{w;(1,2)}\prec 1$ (resp. $\|G(z)-m(z)I_N\|_{s;(1,1)}\prec 1$).
The estimate \eqref{boundaux0} is an easy consequence of the following Claim \ref{twonet claim}. Our auxiliary graph $(\cal G_{xy})_{aux}$ satisfies its assumptions. We postpone its proof until we complete the proof of Lemma \ref{no dot}.

\begin{claim}\label{twonet claim}
Let $\wt{\cal G}_{xy}$ be a graph with two external atoms $x$ and $y$, $\ell$ internal atoms $x_1, x_2,\cdots, x_\ell$, a black spanning tree consisting $\ell+1$ black double-line edges, and a blue spanning tree consisting $\ell+1$ blue solid edges. Suppose that each black edge between atoms, say $\al$ and $\beta$, represents a $B_{\al\beta}$ factor, and each blue edge represents a positive variable whose weak-$(a,b)$ norm is bounded by $\OO_\prec(1)$. If \eqref{a+b} holds, then 
\be\label{boundaux}
| \wt{\cal G}_{xy}| \prec W^{-a\ell  d /2  }B_{xy}\cal A_{xy} ,
\ee
for a positive variable $\cal A_{xy}$ satisfying $\|\cal A\|_{w;(a,b)}\prec 1$. Moreover, if the strong-$(a,b)$ norm of each blue edge is bounded by $\OO_\prec(1)$ and \eqref{a+b} holds, then \eqref{boundaux} holds for a positive variable $\cal A_{xy}$ satisfying $\|\cal A\|_{s;(a,b)}\prec 1$. 
\end{claim}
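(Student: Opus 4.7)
I would prove this by induction on $\ell$, the number of internal atoms. The base case $\ell = 0$ is immediate: $\wt{\cal G}_{xy} = B_{xy}\,\cal A^{(1)}_{xy}$ where $\cal A^{(1)}$ is the unique blue edge, and we may set $\cal A = \cal A^{(1)}$.

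For the inductive step, the strategy is motivated by the following counting: the graph contains $\ell+1$ black and $\ell+1$ blue edges, while the target bound $W^{-a\ell d/2}B_{xy}\cal A_{xy}$ keeps one of each, so the induction must consume $\ell$ black and $\ell$ blue edges while producing the scalar factor $W^{-a\ell d/2}$. The key tool for a single contraction is the estimate
\[
\sum_{x_*} B_{x_* u}\,|\cal A^{(r)}_{x_* v}| \prec W^{-ad/2},
\]
which is exactly Claim \ref{cor: ini bound} under the hypothesis $ad/2 - b - 2 \geq 0$. At each step I would locate an internal atom $x_*$ that is a leaf of the black spanning tree $T_B$. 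At such $x_*$ there is a unique incident black edge $B_{x_* u}$ and at least one incident blue edge $\cal A^{(1)}_{x_* v_1}$. Pairing these via the above estimate yields a $W^{-ad/2}$ factor from the summation over $x_*$; any further $m-1$ blue edges incident to $x_*$ are bounded in maximum norm by $W^{-ad/2}$ each, for a total gain $W^{-m\cdot ad/2}$. The reduced graph has $\ell - 1$ internal atoms, the black tree minus one leaf edge (still spanning), and the blue tree minus $m$ edges (a forest of $m$ components). The surplus $m-1$ factors of $W^{-ad/2}$ are then used to install $m-1$ phantom blue edges that copy existing black-tree edges, restoring a blue spanning tree, so that the inductive hypothesis applies to the reduced doubly-connected graph.

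An internal leaf in $T_B$ or $T_A$ always exists when $\ell \geq 1$, unless both spanning trees are Hamiltonian $x$-to-$y$ paths passing through all internal atoms — a degenerate configuration in which every internal vertex has degree $2$ in both trees. This case is handled by a symmetric contraction scheme: for any internal $x_*$, I would pair its two adjacent black-tree edges with its two adjacent blue-tree edges via two independent applications of Claim \ref{cor: ini bound}, reserving one $B$-edge and one $\cal A$-edge along each path for the final $B_{xy}\cal A_{xy}$ factor, and iterate.

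The main technical obstacle is that the contracted quantity $\sum_{x_*}B_{x_* u}\cal A^{(1)}_{x_* v_1}$ cannot be interpreted as a matrix-valued blue edge in the reduced graph: as a function of $u$, its row sums grow like $\sum_\al B_{\cdot\al} \sim L^2/W^2$, so its weak-$(a,b)$ norm is unbounded and a naive iteration would collapse. Instead, the factor $W^{-ad/2}$ must be treated as a \emph{scalar} gain collected on the side, while the phantom blue edges introduced during the reconnection step must copy existing black-tree edges, which satisfy $\|B^{1/2}\|_{w;(1,2)}\prec 1$ after the renormalization $B \le W^{-d/2} B^{1/2}$ already used in \eqref{bound thetaweak}, rather than arise as genuine contracted matrices. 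The strong-$(a,b)$ version of the claim follows by an identical argument with the strong-norm analogue of Claim \ref{cor: ini bound} together with $\|B^{1/2}\|_{s;(1,1)}\leq 1$ in place of their weak-norm counterparts.
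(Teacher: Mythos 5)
Your proposal takes a genuinely different route from the paper, and unfortunately the route has gaps that I do not see how to close. The paper orders the summations to follow the \emph{blue} tree rooted at $y$, so that the vertex being summed is always a blue-tree leaf (or is adjacent to $x$ on the blue tree). That way each vertex $x_i$ contributes exactly one or two blue factors and possibly many $B$ factors, and the key estimates are \eqref{keyobs3}--\eqref{keyobs2}: they do not merely produce a scalar $W^{-ad/2}$, they also \emph{redistribute} the $k$ black edges at $x_i$ into a new star $\Gamma(y_1,\ldots,y_k) = \sum_s\prod_{j\neq s}B_{y_s y_j}$, which is again a union of $B$-factors, so the black spanning tree survives the contraction exactly. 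Your plan is dual --- contract a \emph{black}-tree leaf $x_*$ --- but then $x_*$ may carry several blue edges, and there is no analogue of $\Gamma$ for blue edges: a product of two generic weak-$(a,b)$-bounded kernels has no reason to be weak-$(a,b)$-bounded, so you cannot redistribute blue edges the way the paper redistributes $B$'s. This is why you are forced into the phantom-edge workaround, and it is exactly there that things break.

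Concretely, after max-bounding the extra $m-1$ blue edges at $x_*$ you hold a scalar surplus $W^{-(m-1)ad/2}$, but you need to restore a blue spanning forest of $m$ components into a tree, i.e.\ to insert $m-1$ bona fide blue edges with $\|\cdot\|_{w;(a,b)}\prec 1$. A constant kernel $c=W^{-ad/2}$ does not qualify: its averaged-norm term is of order $cW^bK^{ad/2-b}$ which at $K\sim L$ is $\gg 1$ under \eqref{a+b}. And a $B^{a/2}$ kernel cannot be conjured from the scalar either, because $B_{\al\beta}^{a/2}\le W^{-ad/2}$, so the inequality goes the wrong way --- the surplus is too small to pay for it. Your suggestion to ``split'' an existing black-tree edge via $B\le W^{-d/2}B^{1/2}$ likewise destroys the hypothesis of the claim, since the black spanning tree must consist of genuine $B$-factors, not $B^{1/2}$. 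Finally, the degenerate Hamiltonian-path case as you describe it (``two independent applications of Claim~\ref{cor: ini bound}'' at one vertex) is not a well-defined operation: you only sum once over each internal index, and a single sum $\sum_{x_*}B_{x_* a}B_{x_* b}\cal A_{x_* c}\cal A_{x_* d}$ must be handled by a single estimate, which is precisely what \eqref{keyobs2} provides (and it handles this case cleanly, returning $W^{-ad/2}B_{ab}\cal A_{cd}$). The fix is to abandon the black-leaf contraction and instead sum in blue-tree order using \eqref{keyobs3} and \eqref{keyobs2} verbatim.
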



Note that both $(a,b)=(1,2)$ and $(a,b)=(1,1)$ satisfy \eqref{a+b} for $d\ge 8$. Hence taking $a=1$ in \eqref{boundaux}, we obtain \eqref{boundaux0}. Combining \eqref{bound G aux} and \eqref{boundaux0}, we conclude \eqref{bound 2net1} and \eqref{bound 2net1 strong} for $\cal G_{xy}^{\abs}$ since $\tau$ is arbitrary. 
\end{proof}

\begin{proof}[Proof of Claim \ref{twonet claim}]
Our proof is based on the following extensions of Claim \ref{cor: ini bound}. If $\cal A^{(1)}$ and $\cal A^{(2)}$ are two matrices whose weak-$(a,b)$ or strong-$(a,b)$ norms are bounded by $\OO_\prec(1)$, then we have that 
\be\label{keyobs3}
\sum_{x_i}\cal A^{(2)}_{x_i \beta}\cdot \prod_{j=1}^k B_{x_i y_j } \prec W^{-ad/2 } \Gamma(y_1,\cdots, y_k),
\ee
and 
\be\label{keyobs2}
\sum_{x_i}\cal A^{(1)}_{x_i \al}\cal A^{(2)} _{x_i \beta}\cdot \prod_{j=1}^k B_{x_i y_j } \le W^{-a d/2 }\Gamma(y_1,\cdots, y_k)  {\cal A}_{\al\beta},
\ee
where $ {\cal A}$ is a matrix with $\|\cal A\|_{w;(a,b)}\prec 1$ if  $\|\cal A^{(1)}\|_{w;(a,b)}+ \|\cal A^{(2)}\|_{w;(a,b)}\prec 1$ or $\|\cal A\|_{s;(a,b)}\prec 1$ if  $\|\cal A^{(1)}\|_{s;(a,b)}+ \|\cal A^{(2)}\|_{s;(a,b)}\prec 1$, and $\Gamma(y_1,\cdots, y_k)$ is defined as a sum of $k$ different products of $(k-1)$ double-line edges:  
\be\label{defn_Gamma}\Gamma(y_1,\cdots, y_k):= \sum_{i=1}^k \prod_{j\ne i}B_{y_{i}y_j}.\ee
Intuitively speaking, \eqref{keyobs2} means that after summing over a product of $k$ double-line edges and two solid edges, we lose one double-line edge and one solid edge, which leads to the $W^{-ad/2}$ factor as in Claim \ref{cor: ini bound}. In each new graph, we have $(k-1)$ double-line edges connected with one of the neighbors of $x_i$ on the black tree, and one solid edge between atoms $\al$ and $\beta$ representing $\cal A_{\al\beta}$. In the following figure, we draw an example with $k=3$, where there are three graphs corresponding to the three terms on the right-hand side of \eqref{defn_Gamma} and we have omitted the factor $W^{-ad/2}$ from them: 
\begin{center}
\includegraphics[width=12cm]{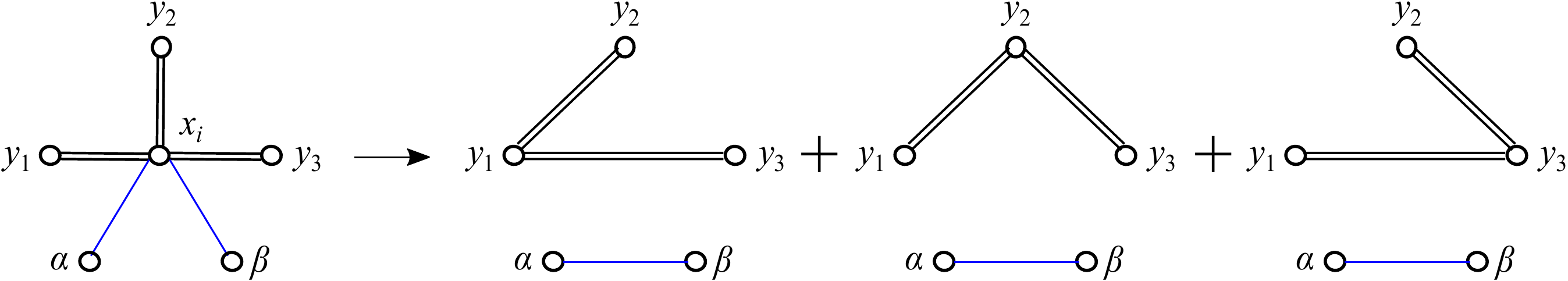}
\end{center}

To prove \eqref{keyobs3}, it suffices to assume the weaker condition $\|\cal A^{(2)}\|_{w;(a,b)}\prec 1$. We decompose the sum over $x_i$ according to dyadic scales $K_n$ defined in \eqref{defn Kn}. Consider the case $x_i \in \cal I_{\vec a}$ for some $\vec a:=(a_1, \cdots, a_k)\in (\N\setminus\{0\})^k$, where 
$$\cal I_{\vec a}:=\{ x_i: K_{a_j-1} \le |x_i - y_j| \le K_{a_j} ,  1\le j \le k\}.$$ For simplicity of notations, we abbreviate $L_j := K_{a_j}$ and $L_{\min}:=\min_{1\le j \le k}L_j $. Then we have that 
\begin{align}\label{sum_ba}
\sum_{x_i \in \cal I_{\vec a}} |\cal A^{(2)}_{x_i \beta}|\cdot \prod_{j=1}^k B_{x_i y_j }\le \prod_{j=1}^k\frac{1}{W^2L_j^{d-2}} \sum_{x\in \cal I_{\vec a}}  |\cal A ^{(2)}_{x_i \beta}|  \prec   \frac{L_{\min}^{d}}{W^bL_{\min}^{ad/2-b} }\frac{1}{\prod_{j=1}^k W^2L_j^{d-2}} ,  
\end{align}  
where in the second step we used \eqref{eq PseudoG} and the fact that $\cal I_{\vec a}$ is inside a box of scale $L_{\min}$.
Let $s \in \{1,2,\cdots, k\}$ be the value such that $L_s=L_{\min}$. Using \eqref{a+b}, we obtain that
$$\frac{L_{\min}^{d}}{ W^b L_{\min}^{ad/2-b} } \frac1{W^2L_s^{d-2}} = \frac{1}{W^{b+2}L_{\min}^{ad/2-b-2}}\le W^{-ad/2}.$$
Combining this bound with the fact that  
\be\label{yj-s}\langle y_j - y_s \rangle \le W+ L_j + L_s \le 3L_j,\quad j \ne s, \ee  
we can bound \eqref{sum_ba} as 
 \begin{align}
 \sum_{x_i \in \cal I_{\vec a}} |\cal A^{(2)}_{x_i \beta}|\cdot \prod_{j=1}^k B_{x_i y_j } \prec W^{-ad/2} \prod_{j\ne s} \frac1{W^2L_j^{d-2}} \lesssim W^{-ad/2} \prod_{j\ne s} B_{y_j y_s} \le  W^{-ad/2}{\Gamma(y_1,\cdots, y_k)} . \nonumber
\end{align}  
Summing over all possible scales $\cal I_{\vec a}$, we conclude \eqref{keyobs3}.

Next we prove \eqref{keyobs2} when $\|\cal A^{(1)}\|_{w;(a,b)}+ \|\cal A^{(2)}\|_{w;(a,b)}\prec 1$.
Applying \eqref{eq PseudoG2} to $\cal A^{(1)}_{x_i\al}$ and using \eqref{keyobs3}, we get that
\begin{align*}
& 
\sum_{x_i} |\cal A^{(1)}_{x_i \al}|  |\cal A^{(2)} _{x_i \beta}|\cdot \prod_{j=1}^k B_{x_i y_j } 
\prec W^{-ad/2}\sum_{x_i }  |\cal A^{(2)} _{x_i \beta}|\cdot \prod_{j=1}^k B_{x_i y_j }  \prec W^{-ad }\Gamma(y_1,\cdots, y_k).
\end{align*}
Applying \eqref{eq PseudoG} to $\cal A^{(1)}_{x_i\al}$ and using \eqref{keyobs3}, we get that for any $x_0\in \Z_L^d$ and $  K \in [W, L/2]$,
\begin{align*}
 \frac{1}{K^d}\sum_{\al: |\al - x_0|\le K}\sum_{x_i} |\cal A^{(1)}_{x_i \al}|  |\cal A^{(2)} _{x_i \beta}|\cdot \prod_{j=1}^k B_{x_i y_j }  & \prec \frac{\sum_{x_i } |\cal A^{(2)} _{x_i \beta}|\cdot  \prod_{j=1}^k B_{x_i y_j } }{W^{b}K^{ad/2-b}}  \prec W^{-ad/2} \frac{\Gamma(y_1,\cdots, y_k)}{W^{b}K^{ad/2-b}}. 
\end{align*} 
We can obtain a similar estimate for the average over $\{\beta:|\beta-x_0|\le K\}$. The above two estimates imply that $  \|\cal A\|_{w;(a,b)}\prec 1$, where $\cal A$ is defined by
\be\label{keyobs_add}
\begin{split}
	{\cal A}_{\al\beta}:=\frac{W^{ad/2 }}{\Gamma(y_1,\cdots, y_k)}\sum_{x_i} |\cal A^{(1)}_{x_i \al}|  |\cal A^{(2)} _{x_i \beta}|\cdot \prod_{j=1}^k B_{x_i y_j } . 
\end{split}
\ee
This concludes \eqref{keyobs2} in one case.
Then we prove \eqref{keyobs2} in the other case with $\|\cal A^{(1)}\|_{s;(a,b)}+ \|\cal A^{(2)}\|_{s;(a,b)}\prec 1$. We decompose the sum over $x_i$ according to dyadic scales as $x_i \in \cal I_{\vec a}$ for some $\vec a:=(a_1, \cdots, a_{k+2})\in (\N\setminus\{0\})^{k+2}$, where 
$$\cal I_{\vec a}:=\{ x_i: K_{a_j-1} \le |x_i - y_j| \le K_{a_j} ,  1\le j \le k; K_{a_{k+1}-1} \le |x_i-\al|\le K_{a_{k+1}}, K_{a_{k+2}-1} \le |x_i-\beta|\le K_{a_{k+2}}\}.$$ For simplicity of notations, we abbreviate $L_j := K_{a_j}$, $1\le j \le k+2$, and $L_{\min}:=\min_{1\le j \le k}L_j$. Let $s \in \{1,2,\cdots, k\}$ be the value such that $L_s=L_{\min}$. Then using \eqref{eq unifG} and the fact that $\cal I_{\vec a}$ is inside a box of scale $L_{\min}\wedge L_{k+1}\wedge L_{k+2}$, we obtain that 
\begin{align} \nonumber
& \sum_{x_i \in \cal I_{\vec a}} |\cal A^{(1)}_{x_i \al}|  |\cal A^{(2)} _{x_i \beta}|\cdot \prod_{j=1}^k B_{x_i y_j }    \prec \prod_{j=1}^k\frac{1}{W^2L_j^{d-2}} \cdot\frac{(L_{\min}\wedge L_{k+1}\wedge L_{k+2})^d}{W^{2b} L_{k+1}^{ad/2-b}L_{k+2}^{ad/2-b}} \\
&\le   \prod_{1\le j \le k, j\ne s}\frac{1}{W^2L_j^{d-2}} \cdot\frac{1}{W^{b} (L_{k+1}\vee L_{k+2})^{ad/2-b}}\cdot\frac{1}{W^{b+2}(L_{k+1}\wedge L_{k+2})^{ad/2-b-2}} \nonumber\\
& \lesssim W^{-ad/2} \frac{1}{W^{b} \langle \al-\beta \rangle^{ad/2-b}}\prod_{j\ne s} B_{y_j y_s}. \nonumber
\end{align}  
Here in the second step we used $L_{k+1} L_{k+2}=(L_{k+1}\vee L_{k+2})(L_{k+1}\wedge L_{k+2})$, and  in the third step we used \eqref{a+b}, \eqref{yj-s} and  $\langle \al - \beta \rangle \le W+ L_{k+1} + L_{k+2} \le 3L_{k+1}\vee L_{k+2} $. Summing the above estimate over all possible scales $\cal I_{\vec a}$, we get that $\|\cal A\|_{s;(a,b)}\prec 1$, which concludes \eqref{keyobs2}.



Now the proof of \eqref{boundaux} involves repeated applications of \eqref{keyobs3} and \eqref{keyobs2} with a carefully chosen order of summations. Without loss of generality, we regard $y$ as the root of the blue tree, and sum over the internal vertices from the leaves of the blue tree to the root. More precisely, we will sum over the vertices according to a partial order $x_{i_1}\preceq x_{i_2} \preceq \cdots \preceq x_{i_\ell}\preceq y$ that is compatible with the blue tree structure---if $x_i$ is a child of $x_j$, then we have $x_i \preceq x_j$. 
By renaming the labels of vertices if necessary, we can assume that the partial order is $x_1\preceq x_2  \preceq\cdots \preceq x_\ell$, so that we will perform the summations according to the order $\sum_{x_\ell}\cdots \sum_{x_2}\sum_{x_1}$. For simplicity of notations, we denote all the blue solid edges appearing in the proof by $\cal A$, including the old edges in  $\wt{\cal G}_{xy}$ and the new edges coming from applications of \eqref{keyobs2}. All these $\cal A$ variables have weak-$(a,b)$ or strong-$(a,b)$ norms bounded by $\OO_\prec(1)$, and their exact expressions may change from one line to another.

For the summation over $x_1$, using \eqref{keyobs3} (if  $x_1$ is not connected with $x$ in the blue tree) or \eqref{keyobs2} (if $x_1$ is connected with $x$ in the blue tree), we can bound $\wt{\cal G}_{xy}$ as
\be\label{reduceaux1}\wt{\cal G}_{xy} \prec W^{-ad/2 }\sum_{k=1}^{\ell_1} \cal G^{(1)}_{xy,k},\ee
where $\cal G^{(1)}_{xy, k}$ are new graphs obtained by replacing the edges connected to $x_1$ with the graphs on the right-hand side of \eqref{keyobs3} or \eqref{keyobs2}, and $\ell_1$ is the number of neighbors of $x_1$ on the black tree. More precisely, we perform the following operations to get these new graphs. 
\begin{itemize}
	\item We get rid of the blue solid and black double-line edges connected with $x_1$. 
	\item If $x$ and $x_1$ are connected through a blue solid edge in $\wt{\cal G}_{xy}$, then in each new graph $x$ is connected to the parent of $x_1$ on the blue tree through a blue solid edge.
	\item Suppose $w_{1},\cdots,w_{\ell_1}$ are the neighbors of $x_1$ on the black tree. Then corresponding to the $k$-th term in $\Gamma(w_{1},\cdots, w_{\ell_1})$, $k=1,\cdots, \ell_1$, the atoms $w_{1},\cdots,w_{k-1},w_{k+1},\cdots, w_{\ell_1}$ are connected to $w_{k}$ through double-line edges in the new graph. 
\end{itemize}
Now it is crucial to observe that each new graph \smash{$\cal G^{(1)}_{xy,k}$} is still doubly connected. 
In \eqref{Graphreduction}, we show the reduction from the first graph to the second one through a summation over $x_1$, where we have omitted the factor $W^{-ad/2}$ from the graphs.
\be  \label{Graphreduction}
\parbox[c]{0.9\linewidth}{\includegraphics[width=\linewidth]{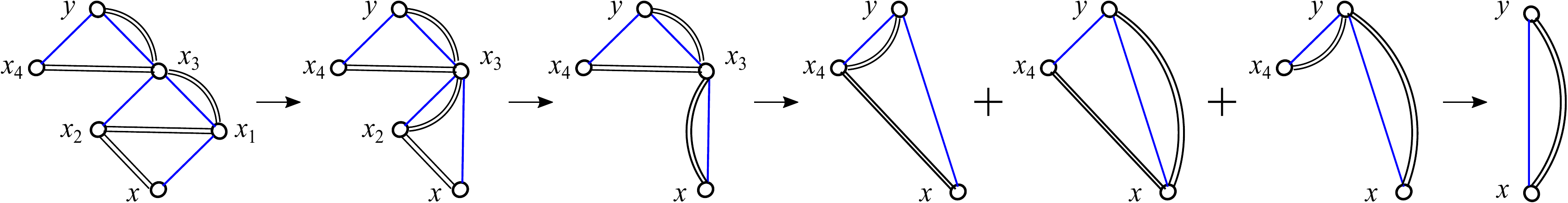}} 
\ee
Similarly, we can bound the summations over atoms $x_2, \cdots , x_{\ell}$ one by one using \eqref{keyobs3} and \eqref{keyobs2}. At each step we gain an extra factor $W^{-ad/2}$ and reduce the graphs into a sum of several new graphs, each of which has one fewer atom and a doubly connected structure. Finally, after summing over all internal atoms, we obtain a graph with atoms $x$ and $y$ only. In this case, the only doubly connected graph is the graph where $x$ and $y$ are connected by a double-line $B_{xy}$ edge and a blue solid edge whose weak-$(a,b)$ or strong-$(a,b)$ norm is bounded by $\OO_\prec(1)$. This concludes \eqref{boundaux}. In \eqref{Graphreduction}, we give an example of the above graph reduction process by summing over the four internal atoms. 
\end{proof}

If $\|G_{xy}(z)-m(z)I_n\|_{w;(a,b)} \prec W^{\e_0} $, then from \eqref{bound 2net1} we immediately get that 
\be\label{bound 2net1 cor}
\left|\cal G_{xy}\right| \prec W^{(n_{xy}-2)\e_0}\cdot  W^{ -  \left(n_{xy}-3\right)d/2} B_{xy} \cal A_{xy},
\ee
for a positive variable $\cal A_{xy}$ satisfying $\|\cal A\|_{w;(1,2)}\prec 1$. 
This follows from the fact that the number of light weights and off-diagonal $G$ edges in $\cal G_{xy}$ is at most $n_{xy} -2$, because by property (ii) of Definition \ref{defnlvl0}, the number of internal atoms in $\cal G_{xy}$ is smaller than the number of waved and $\dashed$ edges at least by 1. 


Deterministic doubly connected graphs satisfy better bounds than Lemma \ref{no dot}, because all edges in the blue net are now (labelled) $\dashed$ edges, whose strong-$(2,2)$ norms are bounded by $\OO_\prec(1)$.

\begin{corollary}\label{lem Rdouble}
Suppose $d\ge 6$. Let $\cal G$ be a deterministic doubly connected normal regular graph without external atoms. Pick any two atoms of $\cal G$ and fix their values as $x , y\in \Z_L^d$. Then the resulting graph $\cal G_{xy}$ satisfies that 
\be\label{bound 2net1B}
\left|\cal G_{xy}\right| \prec W^{ - \left(n_{xy} -4\right) d/2} B_{xy}^2 ,\quad \text{with}\quad n_{xy}:=\ord(\cal G_{xy}).
\ee
This bound also holds for the graph $ {\cal G}^{{\rm abs}}_{xy}$. 
\end{corollary}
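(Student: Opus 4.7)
The plan is to follow the strategy used in the proof of Lemma \ref{no dot} almost verbatim, while exploiting the fact that in a deterministic graph there are no off-diagonal $G$ edges or light weights, so every edge is either waved, $\dashed$, or labelled $\dashed$. This changes two things fundamentally: the local structure inside each molecule is deterministic and bounded pointwise by \eqref{Ssmall}, and both the black and blue nets can be chosen to consist entirely of $\dashed$ edges. Since any $\dashed$ or labelled $\dashed$ edge between atoms $x_i,x_j$ in distinct molecules is bounded by $B_{x_ix_j}$ (up to a benign $W^{-(k-2)d/2}$ factor coming from the scaling order $k$ of the labelled edge, cf.\ \eqref{intermole2} and Lemma \ref{lem redundantagain}, which is the source of the $d\ge 6$ condition), the edges in the blue tree have strong-$(2,2)$ norm bounded by $\OO_\prec(1)$. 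This removes the need for the weakening trick $B_{xy}\le W^{-d/2}B_{xy}^{1/2}$ used in \eqref{bound thetaweak}.

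First I would reduce to the absolute value graph $\cal G^{\abs}_{xy}$ and fix a black spanning tree and a blue spanning tree, both consisting of $\dashed$ (possibly labelled) edges. Edges outside the two trees are bounded by their pointwise maxima from \eqref{Ssmall}, producing some $W^{-n_1 d/2}$ prefactor from waved edges, $\dashed$ edges and labelled $\dashed$ edges that are not used for connectivity. Edges on the two trees are bounded between the chosen representatives $x_i$ of each molecule using \eqref{intermole2}, contributing $B_{x_ix_j}$ factors (together with $W^{-(k_e-2)d/2}$ per labelled edge $e$ of scaling order $k_e$). The local structure inside each molecule $\cal M_i$, having only waved and $\dashed$ edges, is bounded by $W^{-\ord(\cal G^{(i)}_{x_i})\cdot d/2+\OO(\tau d)}$ exactly as in \eqref{internal struc1}, with internal summations absorbed by \eqref{yixi}. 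This produces an auxiliary graph $(\cal G_{xy})_{aux}$ whose edges are all $B$-valued double-line edges, with a black spanning tree and a blue spanning tree on the representative atoms.

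Next I would apply Claim \ref{twonet claim} with $(a,b)=(2,2)$, since every blue-tree edge now represents $B_{\cdot\cdot}$ and $\|B\|_{s;(2,2)}\le 1$. The condition \eqref{a+b} becomes $d/2\cdot 2 - 2 - 2\ge 0$, i.e.\ $d\ge 4$, which is implied by $d\ge 6$. Each summation over an internal representative atom then contributes a factor $W^{-d}$ (rather than $W^{-d/2}$ as in the random case with $(a,b)=(1,1)$), giving a total global gain of $W^{-\ell d}$ for $\ell$ internal molecules. After collapsing all internal atoms, the auxiliary graph reduces to $x$ joined to $y$ by one black $B_{xy}$ edge and one blue $B_{xy}$ edge, producing the $B_{xy}^2$ factor in \eqref{bound 2net1B}. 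Combining the global prefactor, the local molecular bounds, the gain from Claim \ref{twonet claim}, and the $W^{-(k_e-2)d/2}$ factors from labelled edges, a straightforward scaling-order count (using \eqref{eq_deforderrandom3} and the fact that each of the $\ell+2$ molecules occupies at least one summation index) yields the overall prefactor $W^{-(n_{xy}-4)d/2}$, since the exponent is shifted by one extra $-d/2$ relative to Lemma \ref{no dot} thanks to the upgrade from $(a,b)=(1,1)$ to $(a,b)=(2,2)$ for the blue net.

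The main obstacle, as in Lemma \ref{no dot}, is bookkeeping: verifying that the arithmetic of the scaling order exponents lands exactly at $n_{xy}-4$ after accounting for (i) unused non-tree edges contributing to $n_1$, (ii) $\ell+1$ edges in each tree, (iii) labelled $\dashed$ edge scaling orders, (iv) intramolecular orders summed against \eqref{eq_deforderrandom3}, and (v) the $W^{-\ell d}$ gain. Everything else is a parallel rerun of the argument for Lemma \ref{no dot} with the $B^{1/2}$ step suppressed, so the proof is essentially mechanical once the strong-$(2,2)$ norm replacement is accepted.
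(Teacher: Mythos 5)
Your proposal is correct and takes the same route as the paper: the paper's own proof is a one-line remark that says to rerun the argument of Lemma \ref{no dot} with the blue net now made of $\dashed$/labelled $\dashed$ edges, so that Claim \ref{twonet claim} is applied with strong-$(2,2)$ norms rather than $(1,1)$ or $(1,2)$. You correctly identify the two structural changes (no $B^{1/2}$-weakening step, $a=2$ in Claim \ref{twonet claim}) and the actual source of the $d\ge 6$ restriction (the $2d-4\ge d+2$ requirement in Lemma \ref{lem redundantagain} for labelled $\dashed$ edges, not the condition \eqref{a+b}). The only loose spot is the final sentence: the shift in the exponent from $n_{xy}-3$ to $n_{xy}-4$ is not a single $d/2$ coming only from the $a=1\to a=2$ upgrade in Claim \ref{twonet claim}; that upgrade alone gains $\ell d/2$ over $\ell$ internal molecules, while suppressing \eqref{bound thetaweak} \emph{reduces} the gain by $(\ell+1)d/2$ on the $\ell+1$ blue tree edges, and the net change is $d/2$. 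Since you already flagged this bookkeeping as the main burden, this is a presentation nit rather than a gap.
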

\begin{proof} 
This corollary can be proved in the same way as Lemma \ref{no dot}, except that we need to apply Claim \ref{twonet claim} to an auxiliary graph whose blue edges have strong-$(2,2)$ norms bounded by $\OO_\prec(1)$. 
\end{proof}

 We also need another version of Corollary \ref{lem Rdouble}, which will be used in the proof of Lemma \ref{lem V-R wt} in Section \ref{subsec_inf_pf} below.
\begin{corollary}\label{lem Rdouble weak}
Under the assumptions of Corollary \ref{lem Rdouble}, suppose we replace every (labelled) $\dashed$ edge in $\cal G$ between atoms, say $\al$ and $\beta$, with an edge bounded by $\OO_\prec (B_{\al\beta})$. We treat these edges as double-line edges of scaling order $2$ and call the resulting graph $\cal G'$. We pick any two atoms of $\cal G'$ and fix their values as $x , y\in \Z_L^d$. Then the resulting graph  $\cal G'_{xy}$ satisfies the bound 
\be\label{bound 2net weak0}
\left|\cal G'_{xy}\right| \le \frac{W^{ - \left(n'_{xy}-4\right) d/2}}{W^4\langle x-y\rangle^{2d-4-\tau}} ,
\quad \text{with }\ \ n'_{xy}:=\ord(\cal G'_{xy}),
\ee
for any constant $\tau>0$. Furthermore, suppose we replace a double-line edge between atoms, say $ \al_0$ and $ \beta_0$, in $\cal G'_{xy}$ with an edge bounded by {$\OO_\prec (\wt B_{\al_0 \beta_0})$}, where \smash{$\wt B_{ \al_0 \beta_0}:= W^{-4}\langle \al_0 - \beta_0\rangle^{-(d-4)}.$}
We treat this edges as a double-line edge of scaling order $2$ and denote the resulting graph by $\cal G''_{xy}$. 
Then it satisfies the bound 
\be\label{bound 2net weak}
\left|\cal G''_{xy}\right| \le  \frac{W^{ - \left(n''_{xy}-4\right) d/2}}{W^6\langle x-y\rangle^{2d-6-\tau}},\quad \text{with }\ \ n''_{xy}:=\ord(\cal G''_{xy}),
\ee
for any constant $\tau>0$.
\end{corollary}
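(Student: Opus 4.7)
The plan is to mirror the proof of Corollary \ref{lem Rdouble}, tracking how the altered decay profile of the replacement edges changes the final bound. For \eqref{bound 2net weak0}, the key observation is that any edge bounded by $\OO_\prec(B_{\al\beta})$ inherits strong-$(2,2)$ norm $\prec 1$, exactly the property enjoyed by the original $\dashed$ edges via \eqref{BRB}. Hence I would rerun the proof of Corollary \ref{lem Rdouble} essentially verbatim: choose two edge-disjoint spanning trees in the molecular graph, bound every non-tree edge by its maximum value, bound the local subgraph inside each molecule as in \eqref{internal struc1}, and apply Claim \ref{twonet claim} with $(a,b)=(2,2)$ to the resulting auxiliary graph. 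This gives $|\cal G'_{xy}|\prec W^{-(n'_{xy}-4)d/2}B_{xy}^{2}$. Converting the $\prec$ bound to a deterministic bound absorbs a factor $W^{\tau}$, which by $W\le\langle x-y\rangle$ is absorbed into the $\langle x-y\rangle^{\tau}$ loss in \eqref{bound 2net weak0}.

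For \eqref{bound 2net weak}, the new ingredient is that $\wt B$ satisfies $\|\wt B\|_{s;(2,4)}\le 1$ rather than the strong-$(2,2)$ bound of the other edges. My first step is to observe that $\cal G''_{xy}$ remains doubly connected as a combinatorial object (edge replacement does not alter the underlying multigraph), so I may select two edge-disjoint spanning trees in which the $\wt B$-edge lies in the \emph{blue} tree; this is always possible by a standard tree-exchange argument (add the $\wt B$-edge to the blue tree to create a cycle, then remove any other edge from that cycle; if it originally sat in the black tree, swap labels). I would then rerun the reduction of Claim \ref{twonet claim} from the leaves of the blue tree toward the root $y$, with one blue-tree edge now carrying the weaker strong-$(2,4)$ norm.

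The principal obstacle is verifying a mixed-norm analogue of the convolution estimate \eqref{keyobs2}: if $\cal A^{(1)}$ has strong-$(2,2)$ norm $\prec 1$ and $\cal A^{(2)}$ has strong-$(2,4)$ norm $\prec 1$, then the convolution defined as in \eqref{keyobs_add} combined with black-tree $B$-factors produces a new edge of strong-$(2,4)$ norm $\prec 1$. Inspecting the dyadic decomposition in the proof of \eqref{keyobs2}, the weaker decay exponent $d-4$ dominates $d-2$ in the final estimate, and the summability hypothesis $ad/2-b-2\ge 0$ is preserved for the pair $(2,4)$ provided $d\ge 6$, which is assumed. Iterating this mixed reduction along the blue tree, the strong-$(2,4)$ character of the $\wt B$-edge propagates to the effective final blue edge between $x$ and $y$, so that $\cal A_{xy}\prec \wt B_{xy}$. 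Combining with the $B_{xy}$ factor from the black-tree reduction yields
\[
|\cal G''_{xy}|\prec W^{-(n''_{xy}-4)d/2} B_{xy}\wt B_{xy} = W^{-(n''_{xy}-4)d/2} W^{-6}\langle x-y\rangle^{-(2d-6)},
\]
and the same $\prec$-to-$\le$ conversion as above produces \eqref{bound 2net weak}. The one subtle point I would double-check is the compatibility of the tree-exchange step with the partial order of summation used in the reduction, ensuring that the $\wt B$-edge can always be reached as a leaf-edge at some stage of the induction so that the mixed-norm convolution bound applies cleanly.
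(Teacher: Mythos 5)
Your first bound \eqref{bound 2net weak0} is correct and follows the paper's route: an edge dominated by $\OO_\prec(B_{\al\beta})$ has strong-$(2,2)$ norm $\prec 1$, exactly like the $\dashed$ edges it replaces, so \eqref{bound 2net1B} (equivalently Claim \ref{twonet claim} with $(a,b)=(2,2)$) applies verbatim.

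For \eqref{bound 2net weak} your argument is sound but more elaborate than it needs to be. The mixed-norm generalisation of \eqref{keyobs2} is avoidable once you notice that $B_{\al\beta}=W^{-2}\langle\al-\beta\rangle^{-(d-2)}\le W^{-4}\langle\al-\beta\rangle^{-(d-4)}=\wt B_{\al\beta}$ (since $\langle\al-\beta\rangle\ge W$), so \emph{every} blue edge in the auxiliary graph — the $B$-bounded ones and the single $\wt B$-bounded one — has strong-$(2,4)$ norm $\prec 1$. Then Claim \ref{twonet claim} applies uniformly with $(a,b)=(2,4)$, the condition \eqref{a+b} reads $d-6\ge 0$ which is assumed, and the output $\cal A_{xy}$ with $\|\cal A\|_{s;(2,4)}\prec 1$ yields $|\cal G''_{xy}|\prec W^{-(n''_{xy}-4)d/2}B_{xy}\wt B_{xy}$ exactly as you derive. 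This is the paper's one-line argument. Your tree-exchange step does address a real implicit point — that the $\wt B$-edge must not sit in the black tree, since the black-tree reduction in Claim \ref{twonet claim} hard-codes $B$-decay — and your justification is valid because in a deterministic graph all molecular edges are $\dashed$ edges, hence admissible for the black net, so the spanning-tree exchange is unobstructed. The only genuine gap is that you leave the mixed-norm convolution estimate unproved; the uniform $(2,4)$ route above sidesteps the need to prove it.
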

\begin{proof}
The estimate \eqref{bound 2net weak0} follows from \eqref{bound 2net1B}. The estimate \eqref{bound 2net weak} can be proved in the same way as Lemma \ref{no dot}, except that we need to apply Claim \ref{twonet claim} to an auxiliary graph whose blue edges have strong-$(2,4)$ norms bounded by $\OO_\prec(1)$. 
\end{proof}

\subsection{Proof of Lemma \ref{lem informal}}\label{secpfinformal}

In this subsection, we complete the proof of Lemma \ref{lem informal} using Lemma \ref{no dot}. Recall that by Theorem \ref{thm ptree}, the local law \eqref{locallaw1} holds for $G(z_n, \psi, W, L_n)$, so $\|G(z_n)-m(z_n)I_N\|_{w;(1,1)}\prec 1$. For simplicity of notations, in the following proof we abbreviate $G\equiv G(z_n, f, W, L_n)$. Moreover, in the setting of Lemma \ref{lem informal}, $\otimes$ represents the external atom $\fa$, while $\oplus$ represents the external atom $\fb$.

We first consider the $ \oplus$-recollision graphs in $(\PITk)_{\fa,\fb\fb}$. Take a graph from  $(\PITk)_{\fa,\fb\fb}$, say $\cal G_{\fa\fb}$.  By Definitions \ref{defn incompgenuni} and \ref{def incompgenuni}, it has at least one dotted edge connected with $\oplus$,  a $\dashed$ edge connected with $\otimes$, is of scaling order $\ge 3$, and is doubly connected in the sense of Definition \ref{def 2net}. 
%
%
Now we combine $\oplus$ with the internal atoms that connect to it through dotted edges. 
Then by property (iii) of Definition \ref{defn incompgenuni}, we can write that
\be\label{twoPks}
\cal G_{\fa\fb}=\sum_x \Theta_{\fa x}(\cal G_0)_{x\fb},\quad \text{or}\quad\cal G_{\fa\fb}=\Theta_{\fa \fb}(\cal G_0)_{\fb},\ee
for a graph $(\cal G_0)_{x\fb}$ or $(\cal G_0)_{\fb}$ satisfying the assumptions of Lemma \ref{no dot}. Using \eqref{sumTheta} and  \eqref{bound 2net1 strong}, we can bound  the first case of \eqref{twoPks} as
\be\label{recol case2}\sum_{\fa,\fb} |\cal G_{\fa\fb}|\le \sum_{\fa,\fb,x} \Theta_{\fa x}|(\cal G_0)_{x\fb}|\prec \eta_n^{-1}W^{ -\left(k-3\right)d/2 }\sum_{x,\fb} B_{x\fb}^{3/2}\lesssim L^d_n\frac{W^{-(k-2)d/2 }}{\eta_n}.\ee
The second case of \eqref{twoPks} is easier to bound and we omit the details. 

The proof of \eqref{intro_error} is similar. Recall that by Definitions \ref{defn incompgenuni} and \ref{def incompgenuni}, the graphs in $(\AITn)_{\fa,\fb\fb}$ are of scaling orders $\ge n+1$ and doubly connected in the sense of Definition \ref{def 2net} (i.e. the subgraphs induced on the internal atoms are doubly connected). 
Without loss of generality, we only consider the graphs in \smash{$(\AITn)_{\fa,\fb\fb} $} that are not $\oplus$-recollision graphs, because otherwise they can be bounded in the same way as the graphs in $(\PITn)_{\fa,\fb\fb}$. Pick one such graph $\cal G_{\fa\fb}$ in $(\AITn)_{\fa,\fb\fb}$. It can be written into 
\be\label{Aho 3forms}\cal G_{\fa\fb}= \sum_{x,y, y'}{\Theta}_{ \fa x} (\cal G_0)_{x,yy'} G_{y\fb}\overline G_{y'\fb}, \quad \text{or}\quad  \cal G_{\fa\fb}= \sum_{x,y}{\Theta}_{\fa x} (\cal G_0)_{xy}\Theta_{y\fb},\ee
or some forms obtained by setting some indices of $x,y,y'$ to be equal to each other. Without loss of generality, we only consider the two cases in \eqref{Aho 3forms}, while all the other cases can be dealt with in similar ways. 
By the doubly connected property of $\cal G_{\fa\fb}$, we know that $\cal G_0$ is doubly connected. 
Using \eqref{sumTheta} and \eqref{bound 2net1 strong}, we can bound the second term of \eqref{Aho 3forms} as
$$\sum_{\fa,\fb}|\cal G_{\fa\fb}| \prec \eta_n^{-2} \sum_{x,y} \left|(\cal G_0)_{xy}\right| \prec \eta_n^{-2}W^{-(n-2)d/2}\sum_{x,y}B_{xy}^{3/2} \prec L_n^d \frac{W^{-(n-1)d/2}}{\eta_n^2},$$
where in the second step we used that $\ord((\cal G_0)_{xy})\ge  n+1$. Using \eqref{sumTheta}, \eqref{bound 2net1 strong} and Ward's identity \eqref{eq_Ward0}, we can bound the first term in \eqref{Aho 3forms} as
\begin{align*}
	\sum_{\fa,\fb}|\cal G_{\fa\fb}|& \prec  \sum_{\fa,\fb}\sum_{x,y, y'}{\Theta}_{\fa x} \left|(\cal G_0)_{x,yy'} G_{y\fb}\overline G_{y'\fb}\right|\prec  \eta_n^{-2}W^{-(n-2)d/2}\sum_{x,y,y'} \left(\cal G_0^{\abs}\right)_{x,yy'}\\
	& \prec  \eta_n^{-2}W^{-(n-2)d/2}\sum_{y,y'}B_{ yy'}^{3/2} \prec L^d_n \frac{W^{-(n-1)d/2}}{\eta_n^2}.
\end{align*}
Here in the third step we used that $\sum_x ({\cal G}_0^{\rm{abs}})_{x, y y'}$ is a doubly connected graph satisfying the assumptions of Lemma \ref{no dot} with two fixed atoms $y$ and $ y'$, so that it satisfies \eqref{bound 2net1 strong}. 
Combining the above estimates, we conclude \eqref{intro_error}. 

Finally, \eqref{intro_higherror} can be proved in the same way as \eqref{intro_error} by using that the scaling orders of the graphs in $(\Err_{n,D}')_{\fa,\fb\fb}$ are at least $ D +1$.

\section{Infinite space limit}\label{sec infspace}


In this subsection, we study the infinite space limits of the $\selfs$ $\Seleinf_{2l}$. In particular, we will complete the proofs of Lemma \ref{lem V-R wt} and Lemma \ref{lem FT0}. We write the graphs $\Selek_{2l}$ as 
\be\label{defRnl} \Selek_{2l} \equiv  \Selek_{2l} \left(m(z),S, S^{\pm}(z),\Theta(z)\right), \ee
where the matrices $S$, $S^\pm$ and $\Theta$ depend on $W$, $L$ and $\eta$. We want to remove the $L$ and $\eta$ dependence by taking $L\to \infty$ and $\eta\to 0$. 
More precisely, we define the infinite space limit $\Seleinf_{2l}$ as follows. 

\begin{definition}[Infinite space limits]\label{def infspace}
Given a deterministic regular graph $\cal G\equiv \cal G\left(m(z),S, S^{\pm}(z),\Theta(z)\right)$ with $ z=E+\ii \eta$, we define 
 \be\label{defVnl} \cal G^\infty \equiv  \cal G^\infty \left( m(E), S_\infty ,  S_\infty^{\pm}(E), \Theta_\infty(E)\right), \quad  x\in \Z^d, \ee
in the following way. Recall that we denote $m(E):=m(E+\ii 0_+)$. 
\begin{enumerate}
\item We replace the $s_{\al\beta}$ edges in $\cal G $ with $(S_{\infty})_{\al\beta}$, where (recall \eqref{sxyf})
\be\label{eq_calS}
(S_\infty)_{\al\beta}:= \lim_{L\to \infty} f_{W,L}(\al-\beta).
\ee

\item We replace the $\cal S^{\pm}_{\al\beta}(z)$ edges in $\cal G $ with $(S_\infty^{\pm})_{\al\beta}(E)$, where
\be\label{eq_Sinftypm} S_\infty^{+}(E):= \frac{m^2(E)S_\infty}{1-m^2(E)S_\infty},\quad  S_\infty^{-}(E):=\overline S_\infty^{+}(E).
\ee

\item We replace the $\Theta_{\al\beta}$ edges in $\cal G $ with $(\Theta_\infty)_{\al\beta}$, where   
\be\label{defwhtheta}
(\Theta_\infty)_{\al\beta}:=\lim_{ L \to \infty }  \Theta_{\al\beta}\left(E+\ii \frac{W^2}{ L^2},  L\right). 
\ee


\item For all $m(z)$ in the coefficient (that is, $m(z)$'s that do not appear in $ S^{\pm}(z)$ and $\Theta(z)$ entries), we replace them with $m(E)$.


\item Finally, we let all the internal atoms take values over the whole $\Z^d$. 
\end{enumerate}
Note that $\cal G^\infty$ (if exists) only depends on $E$, $W$ and $\psi$ in Assumption \ref{var profile}, but does not depend on $L$ and $\eta$. 
\end{definition}

We first show that $S^\pm_\infty(E)$ and $\Theta_\infty(E)$ are well-defined, and give some basic estimates on them. The proof of Lemma \ref{lem esthatTheta} will be given in Appendix \ref{appd}. 
\begin{lemma}\label{lem esthatTheta}
For any $x\in \Z^d$ and $E\in (-2+\kappa,2-\kappa)$, $(S^\pm_\infty(E))_{0x}$ and $(\Theta_\infty(E))_{0x}$ exist and we have that
\be\label{S+xy1}
 |(S^+_\infty)_{0x}(E)| \lesssim   W^{-d} \mathbf 1_{|x|\le W^{1+\tau}} + \left(|x|+W\right)^{-D},
\ee
and 
\be\label{Theta-wh1}
|(\Theta_\infty)_{0x}(E)|\le  \frac{1}{W^2\left(|x|+W\right)^{ d-2 -\tau }} ,
\ee
for any constants $\tau, D>0$. Moreover, for any $L\ge W$ and $z=E+\ii\eta$ with $ {W^{2}}/{L^{2-\e}}\le \eta\le 1$ for a small constant $\e>0$, we have that
\be\label{S+xy2}
|(S^+_\infty)_{0x}(E)-  S^{+}_{0x}(z)| \lesssim \eta W^{-d}\mathbf 1_{|x|\le W^{1+\tau}} + \left(|x|+W\right)^{-D},\quad \forall \ x\in \Z_L^d,  
\ee
and 
\be\label{Theta-wh}
  |(\Theta_\infty)_{0x}(E)-\Theta_{0x}(z)|\le \frac{\eta  }{W^4(|x|+W)^{d-4-\tau}}+\left(|x|+W\right)^{-D},\quad \forall \ x\in \Z_L^d,  
\ee
for any constants $\tau,D>0$,
\end{lemma}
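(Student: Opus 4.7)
The plan is to work entirely in Fourier space and then compare the discrete Fourier sums on $\mathbb T_L^d$ with their continuous analogues on $(-\pi,\pi]^d$. Using \eqref{choicef}, \eqref{Thetapm2} and \eqref{bandcw1}, one has (up to an error $\OO(W^{-D})$ which can be absorbed into the $\langle x\rangle^{-D}$ terms) the representations
\[
S^+_{0x}(z) = \frac{1}{L^d}\sum_{p\in \mathbb T_L^d}\frac{m^2(z)\,\psi(Wp)}{1-m^2(z)\,\psi(Wp)}\,e^{\ii p\cdot x},\qquad
\Theta_{0x}(z) = \frac{1}{L^d}\sum_{p\in \mathbb T_L^d}\frac{|m(z)|^2\,\psi(Wp)}{1-|m(z)|^2\,\psi(Wp)}\,e^{\ii p\cdot x},
\]
and analogously for $S^\pm_\infty(E)$ and $\Theta_\infty(E)$ with $L^{-d}\sum_p$ replaced by $(2\pi)^{-d}\int_{(-\pi,\pi]^d}\dd p$ and $z$ replaced by $E+\ii 0_+$. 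So the question reduces to (i) showing that these integrals converge and satisfy the stated pointwise bounds, and (ii) controlling the two types of discrepancies---the Riemann sum versus integral error and the shift from $z=E+\ii\eta$ to $E+\ii 0_+$.

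For the $S^+_\infty$ estimate \eqref{S+xy1}, one uses that for $E\in(-2+\kappa,2-\kappa)$ the value $m^2(E)$ is non-real, so $|1-m^2(E)\psi(Wp)|\geq c(\kappa)>0$ uniformly in $p$. By Assumption \ref{var profile}, the function $p\mapsto m^2(E)\psi(Wp)/(1-m^2(E)\psi(Wp))$ is smooth and Schwartz in $Wp$, and a rescaling $q=Wp$ together with repeated integration by parts against $e^{\ii p\cdot x}$ gives decay faster than any power of $|x|/W$, which yields both the $W^{-d}$ scale inside the ball $|x|\leq W^{1+\tau}$ and the subpolynomial decay outside. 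The hard term is $\Theta_\infty$: now $|m(E)|=1$, so the denominator $1-|m(E)|^2\psi(Wp)$ vanishes like $c_\psi W^2|p|^2$ near $p=0$ by property (ii) of Assumption \ref{var profile}, and $\hat\Theta_\infty(p)\sim (W^2|p|^2)^{-1}$. This singularity is integrable precisely because $d\geq 3$, and integration by parts $k$ times in the integral $\int \hat\Theta_\infty(p)e^{\ii p\cdot x}\dd p$ produces a factor $|x|^{-k}$ at the cost of differentiating through the singularity. Balancing the integration-by-parts count against a dyadic decomposition of $p$ near $0$ (cutting at the scale $|p|\sim 1/|x|$) gives the borderline decay $W^{-2}\langle x\rangle^{-(d-2)+\tau}$ claimed in \eqref{Theta-wh1}. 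This is the principal technical obstacle, as one needs the $\tau$-loss to absorb logarithms produced by the dyadic decomposition and the fact that $\hat\Theta_\infty$ is only $C^\infty$ away from $p=0$.

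For the comparison \eqref{S+xy2} and \eqref{Theta-wh}, write $1-m^2(z)\psi(Wp)=(1-m^2(E)\psi(Wp))+(m^2(E)-m^2(z))\psi(Wp)$ and use the resolvent-type identity
\[
\frac{1}{1-m^2(z)\psi(Wp)}-\frac{1}{1-m^2(E)\psi(Wp)}=\frac{(m^2(z)-m^2(E))\psi(Wp)}{[1-m^2(z)\psi(Wp)][1-m^2(E)\psi(Wp)]},
\]
with an analogous identity for $\Theta$. Since $|m(z)-m(E)|\lesssim \eta$ and $|1-|m(z)|^2|\sim \eta$, each application of the identity inserts an extra factor of $\eta$ and two diffusive propagators; the same integration-by-parts analysis as before then produces the bounds with the prefactor $\eta W^{-4}\langle x\rangle^{-(d-4)+\tau}$, which is why \eqref{Theta-wh} has a decay two powers worse than \eqref{Theta-wh1}. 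The residual discrepancy between the Riemann sum over $\mathbb T_L^d$ and the integral over $(-\pi,\pi]^d$ is handled via Poisson summation: periodizing the smooth integrand produces images at shifts in $L\Z^d$, which are negligible since $L\gg W$ and the integrand is concentrated at momentum scale $W^{-1}$; near the singularity the lattice spacing $2\pi/L$ is much smaller than $W^{-1}$, so a standard trapezoidal-rule error estimate, combined with the fact that $\eta\geq W^2/L^{2-\e}$ ensures the singularity is regularized on a scale $\gtrsim 1/L$, shows the Riemann sum error is dominated by the right-hand sides of \eqref{S+xy2} and \eqref{Theta-wh}.
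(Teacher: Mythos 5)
Your proposal is correct in spirit but takes a genuinely different route from the paper. The paper works entirely in real space: it uses the Taylor expansion $\Theta = \sum_{k}|m|^{2k}S^k$ (equation \eqref{taylor1}), interprets $(S^k)_{0x}$ as the probability distribution of a $k$-step random walk, and then invokes local-CLT-type estimates (Lemma \ref{reg RW2}, in particular \eqref{RW_diffusion2} and \eqref{RW_diffusion1}) to control the entries and their differences, splitting the $k$-sum into several dyadic ranges as in \eqref{T-T0}. Existence of the limit $\Theta_\infty$ is established separately via a Cauchy-sequence argument. You instead pass to Fourier space and read off the decay from stationary-phase/integration-by-parts estimates on the explicit symbols $m^2\psi(Wp)/(1-m^2\psi(Wp))$ and $|m|^2\psi(Wp)/(1-|m|^2\psi(Wp))$, comparing the discrete and continuum representations via Poisson summation. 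Both methods are legitimate; the random-walk route meshes with the rest of the paper's toolkit (the same Lemma \ref{reg RW2} and expansion \eqref{taylor1} are reused for Lemmas \ref{lem cancelTheta} and \ref{lem cancelTheta2}), whereas the Fourier route is arguably more direct for establishing the static decay \eqref{S+xy1}, \eqref{Theta-wh1} since $1-m^2(E)\psi$ is uniformly bounded below and the $\Theta$-singularity is explicitly $\sim (c_\psi W^2|p|^2)^{-1}$ at $p=0$. Your resolvent-type identity for the $z$-versus-$E$ comparison is the Fourier-side analogue of the paper's term-by-term comparison of $|m(z)|^{2k}S^k$ and $|m(E)|^{2k}S^k$.

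Two places in your sketch deserve more care if written out in full. First, in the aliasing step you assert $\hat S(p)=\psi(Wp)+\OO(W^{-D})$, but when this error is pushed through the diffusive symbol it is divided by $(1-|m|^2\hat S(p))(1-|m|^2\psi(Wp))\sim(\eta+W^2|p|^2)^2$, which amplifies it near $p=0$; one must use the exact normalizations $\hat S(0)=\psi(0)=1$ and the matching of low-order Taylor coefficients at $p=0$ to see that the aliasing error is in fact $\OO(W^{-D}\cdot W^2|p|^2)$ there, not just $\OO(W^{-D})$. Second, the Riemann-sum/Poisson-summation argument is cleanest when phrased as a real-space periodization: $L^{-d}\sum_{p\in\mathbb T_L^d}\hat\Theta(z,p)e^{\mathrm ipx}=\sum_{k\in\Z^d}\Theta^{\mathrm{cont}}_{0,x+Lk}(z)$, and the images with $k\neq 0$ are exponentially small because $\eta\ge W^2/L^{2-\e}$ forces $\Theta^{\mathrm{cont}}(z)$ to decay at scale $W/\sqrt\eta\le L^{1-\e/2}\ll L$. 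Your wording (``the integrand is concentrated at momentum scale $W^{-1}$'') conflates the aliasing and the Riemann-sum errors; the point that actually matters for the latter is the exponential real-space decay induced by $\eta>0$. These are expository gaps, not failures of the method.
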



We have the following counterpart of Lemma \ref{lem cancelTheta} with $\Theta$ replaced by $\Theta_\infty$. The proof of Lemma \ref{lem cancelTheta2} will be given in Appendix \ref{appd}.
 

\begin{lemma}\label{lem cancelTheta2}
Fix any $E\in (-2+\kappa,2-\kappa)$. Let $g:\Z^d \to \R$ be a symmetric function supported on a box $\cal B_K:=\llbracket -K,K\rrbracket^d$ of scale $K\ge W$. Assume that $g$ satisfies the sum zero property $\sum_{x}g(x)=0.$ Then for any $x_0\in \Z^d$ such that $|x_0| \ge K^{1+c}$ for a constant $c>0$, we have that
$$\Big|\sum_{x} (\Theta_\infty)_{0  x }(E) g(x-x_0)\Big| \le   \left(\sum_{x\in \cal B_K}\frac{x^2}{|x_0|^2}|g(x)|\right)  \frac{1}{W^2 |x_0|^{d-2-\tau}},$$ 
for any constant $\tau >0$.
\end{lemma}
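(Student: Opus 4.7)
The plan is to mimic the proof of Lemma \ref{lem cancelTheta}, which is the finite-volume counterpart of this statement. First, I would substitute $y=x-x_0$ and use the symmetry $g(y)=g(-y)$ together with the hypothesis $\sum_y g(y)=0$ to rewrite
\[
\sum_x (\Theta_\infty)_{0,x}(E)\, g(x-x_0)
= \tfrac{1}{2}\sum_{y\in \cal B_K} \bigl[(\Theta_\infty)_{0,x_0+y} + (\Theta_\infty)_{0,x_0-y} - 2(\Theta_\infty)_{0,x_0}\bigr](E)\, g(y).
\]
Because $|y|\le K$ while $|x_0|\ge K^{1+c}\gg K$, the bracketed expression is a genuine centered second-order discrete difference of $(\Theta_\infty)_{0,\cdot}(E)$ at scale $|y|\ll |x_0|$. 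Combined with the trivial bound $\sum_y |y|^2 |g(y)| = |x_0|^2 \sum_y \tfrac{|y|^2}{|x_0|^2}|g(y)|$, the statement of the lemma will follow immediately from a pointwise estimate of the form
\[
\bigl|(\Theta_\infty)_{0,x_0+y} + (\Theta_\infty)_{0,x_0-y} - 2(\Theta_\infty)_{0,x_0}\bigr|(E) \lesssim \frac{|y|^2}{W^2 |x_0|^{d-\tau}},
\]
which formally gains two powers of $|x_0|$ over the baseline pointwise bound \eqref{Theta-wh1}, consistently with taking two derivatives in the $x$-variable.

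For this second-difference bound I see two natural routes. The direct Fourier route uses $(\Theta_\infty)_{0,x}(E) = (2\pi)^{-d}\!\int \wh\Theta_\infty(p,E)\, e^{-ip\cdot x}\,dp$ with $\wh\Theta_\infty(p,E) = |m|^2\wh S_\infty(p)/(1-|m|^2\wh S_\infty(p))$; the second difference then equals $-2(2\pi)^{-d}\!\int \wh\Theta_\infty(p,E)\,(1-\cos(p\cdot y))\, e^{-ip\cdot x_0}\, dp$. The factor $1-\cos(p\cdot y)\lesssim |p|^2|y|^2$ cancels the low-frequency diffusive singularity of $\wh\Theta_\infty \sim |p|^{-2}$, after which two integrations by parts in $p$ against $e^{-ip\cdot x_0}$ produce the $|x_0|^{-2}$ improvement and the desired $|y|^2/(W^2|x_0|^{d-\tau})$ bound, uniformly in $E\in(-2+\kappa,2-\kappa)$. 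The alternative limiting route is to apply Lemma \ref{lem cancelTheta} on $\Z_L^d$ with $L\gg |x_0|$ and $\eta = W^2/L^{2-\e}\to 0$: in this regime the indicator $\mathbf 1_{|x_0|\le \eta^{-1/2}W^{1+\tau}}$ is automatically satisfied, the leading factor $|x_0|^\tau B_{0,x_0}$ matches $W^{-2}|x_0|^{-(d-2-\tau)}$, and the gap between $\Theta_{0,x}(z)$ and $(\Theta_\infty)_{0,x}(E)$ is controlled by \eqref{Theta-wh}, which upon summation against $g$ contributes only subleading corrections in $\eta$ (since $g$ has bounded $\ell^1$ norm on $\cal B_K$).

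The main obstacle is the second-order difference estimate itself. In the direct Fourier route, the fact that $1-|m(E)|^2 \wh S_\infty(p)$ vanishes quadratically at $p=0$ (the hallmark of diffusion at the spectral edge $\eta=0$) forces a careful split of the Fourier integral at $|p|\sim |x_0|^{-1}$, with uniform control in $W$ and in $E\in(-2+\kappa,2-\kappa)$; extracting the $W^{-2}$ prefactor also requires tracking the scaling $\wh S_\infty(p) = \psi(Wp)(1+O(W^{-D}))$. In the limiting route, the obstacle is much milder---one only needs to check that the error $\sum_{x\in \cal B_K + x_0} |\Theta_{0,x}(z) - (\Theta_\infty)_{0,x}(E)|\,|g(x-x_0)|$ is $o(W^{-2}|x_0|^{-(d-2-\tau)} \cdot \sum |y|^2|g(y)|/|x_0|^2)$ as $\eta\to 0$, which is essentially immediate from \eqref{Theta-wh}. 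For this reason I plan to pursue the limiting route first and only fall back on the Fourier argument if an issue with uniformity in $L$ arises.
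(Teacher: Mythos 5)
Your preferred ``limiting route'' is exactly what the paper does: write $\sum_x(\Theta_\infty)_{0x}(E)\,g(x-x_0)$ as $\sum_x[(\Theta_\infty)_{0x}(E)-\Theta_{0x}(z)]g(x-x_0)+\sum_x\Theta_{0x}(z)g(x-x_0)$, control the first piece with \eqref{Theta-wh}, and the second piece with Lemma \ref{lem cancelTheta}. The only concrete difference is that the paper does not send $L\to\infty$; instead it fixes $L$ with $L\le|x_0|^2\le 2L$ and $\eta=W^2/L^{2-\tau}$ (so $\eta\sim W^2/|x_0|^{4-2\tau}$), which already makes the indicator $\mathbf 1_{|x_0|\le\eta^{-1/2}W^{1+\tau}}$ fire and makes the error term of size $\eta W^{-4}|x_0|^{-(d-4-\tau)}\|g\|_{\ell^1}$ subleading. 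Your version, letting $\eta\to 0$, is a modest streamlining. Two small points worth noting when you write it out: (i) the preliminary centered second-difference decomposition in your opening paragraph is unnecessary once you invoke Lemma \ref{lem cancelTheta}, since that lemma already performs the summation against $g$; and (ii) to absorb both the $\langle x\rangle^{-D}$ tail in \eqref{Theta-wh} and the $|x_0|^{-D}$ tail in Lemma \ref{lem cancelTheta} (the latter of which does not vanish as $\eta\to 0$), you need the elementary observation that symmetry and $\sum_xg(x)=0$ give $\|g\|_{\ell^1}\le 2\sum_x x^2|g(x)|$, since $|g(0)|\le\sum_{x\ne 0}|g(x)|\le\sum_{x\ne0}x^2|g(x)|$. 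Your ``essentially immediate'' phrasing glosses over this, but it is indeed a one-line fix. The direct Fourier argument you sketch as a fallback is not pursued in the paper.
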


With this lemma, we can obtain the following counterpart of Lemma \ref{lem redundantagain}  for the infinite space limits of the labelled $\dashed$ edges. 

\begin{lemma}\label{lem redundantagain2}
Fix $d\ge 6$.  For any $\Seleinf_{2l}$ satisfying \eqref{two_properties0V}, \eqref{4th_property0V} and \eqref{weaz}, 
we have that
\begin{align}
&\Big| \sum_{\al}(\Theta_\infty)_{x \al} (\Seleinf_{2l} )_{\al y} \Big|\le \frac{W^{-(l-1)d}}{(|x-y|+W)^{d-\tau} },\quad \forall \ x,y \in \Z^d,  \label{redundant again2}
\end{align}
for any constant $\tau>0$. If $\Sele^{\infty}_{2k_1},  \cdots ,  \Sele^{\infty}_{2k_l} $ satisfy \eqref{two_properties0V}, \eqref{4th_property0V} and \eqref{weaz}, then we have that 
\be \label{BRB2} 
\left|\left(\Theta_\infty \Sele^{\infty}_{2k_1}\Theta_\infty  \Sele^{\infty}_{2k_2}\Theta_\infty \cdots \Theta_\infty  \Sele^{\infty}_{2k_l}\Theta_\infty\right)_{xy}\right|\le \frac{W^{-(k-2)d/2}}{W^2(|x-y|+W)^{d-2-\tau}} , \quad \forall \ x,y \in \Z^d,  \ee
 for any constant $\tau>0$, where $k:=\sum_{i=1}^l  2k_i -2(l-1)$. 
 \end{lemma}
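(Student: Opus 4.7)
\medskip

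\textbf{Proposal.} I would prove Lemma~\ref{lem redundantagain2} by adapting the dyadic decomposition argument of Lemma~\ref{lem redundantagain}, with the crucial simplification that the sum zero property \eqref{weaz} for $\Seleinf_{2l}$ is now \emph{exact} (no $\eta$ error term) and with Lemma~\ref{lem cancelTheta2} playing the role of Lemma~\ref{lem cancelTheta}. The governing principle is the same as before: $(\Seleinf_{2l})_{\al y}$ is effectively supported in $\{|\al-y|\le W^{1+\tau}\}$ by \eqref{4th_property0V}, is symmetric in $\al-y$ by \eqref{two_properties0V}, and has vanishing sum in $\al$ by \eqref{weaz}; combined with the slow variation of $(\Theta_\infty)_{x\al}$ in $\al$ on scales much smaller than $|x-y|$, a Taylor-type cancellation produces two extra derivatives of $\Theta_\infty$, which is exactly what is needed to upgrade the decay from $\langle x-y\rangle^{-(d-2)}$ to $\langle x-y\rangle^{-d}$.

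\medskip

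\textbf{Step 1 (the bound \eqref{redundant again2}).} Abbreviate $r := |x-y|+W$ and partition $\Z^d$ into the dyadic shells $\cal I_n := \{\al : K_{n-1}\le |\al-y|\le K_n\}$ with $K_n = 2^n W$. For the far shells $K_n \ge W^{-\e}r$, the maximum bound from \eqref{4th_property0V} gives $|(\Seleinf_{2l})_{\al y}|\lesssim W^{-(l-1)d}W^2/K_n^{d+2}$ while \eqref{Theta-wh1} gives $\sum_{\al\in\cal I_n}|(\Theta_\infty)_{x\al}|\lesssim K_n^2/W^2$; multiplying and summing over the $O(\log L)$ far shells yields a contribution $\lesssim W^{-(l-1)d}r^{-d+\tau}$. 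For the near region $\cal I_{\rm near}:=\bigcup_{K_n\le W^{-\e}r}\cal I_n$, I would write $(\Seleinf_{2l})_{\al y} = \bar R + \mathring R_{\al y}$ with $\bar R$ the average over $\cal I_{\rm near}$. Here the exact sum zero property gives, using \eqref{4th_property0V} on the complement,
\[
|\bar R| \;=\; |\cal I_{\rm near}|^{-1}\Big|\sum_{\al\notin\cal I_{\rm near}}(\Seleinf_{2l})_{\al y}\Big| \;\lesssim\; \frac{W^2}{W^{(l-1)d}r^{d+2}},
\]
so $|\bar R\,\sum_{\al\in\cal I_{\rm near}}(\Theta_\infty)_{x\al}|\lesssim W^{-(l-1)d}r^{-d+\tau}$ after bounding the $\Theta_\infty$ sum by $r^2/W^2$. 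For the centered part $\mathring R_{\al y}$, which is symmetric in $\al-y$, has sum zero, and by \eqref{4th_property0V} is supported on the box $\{|\al-y|\le W^{-\e}r\}$ up to negligible tails, I apply Lemma~\ref{lem cancelTheta2} to obtain
\[
\Big|\sum_{\al\in\cal I_{\rm near}}(\Theta_\infty)_{x\al}\mathring R_{\al y}\Big| \;\le\; \bigg(\sum_{\al}\frac{|\al-y|^2}{r^2}|\mathring R_{\al y}|\bigg)\frac{1}{W^2 r^{d-2-\tau}} \;\lesssim\; \frac{W^{-(l-1)d+\tau'}}{r^d},
\]
where the parenthesized factor is bounded using \eqref{4th_property0V} and the estimate on $\bar R$. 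Since $\tau>0$ is arbitrary, combining the far and near estimates yields \eqref{redundant again2}.

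\medskip

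\textbf{Step 2 (the chain bound \eqref{BRB2}).} The bound \eqref{BRB2} follows from iterating \eqref{redundant again2}. Interpreting $\sum_\al (\Theta_\infty)_{x\al}(\Seleinf_{2k_i})_{\al\cdot}$ as a kernel with the same spatial decay as $\Theta_\infty$ multiplied by the weight-loss factor $W^{-(k_i-2)d/2}$ (which corresponds exactly to the right-hand side of \eqref{redundant again2} divided by the $\Theta_\infty$ decay $W^{-2}\langle x-y\rangle^{-(d-2)}$), one may convolve the $l$ factors successively. The relevant bookkeeping is a standard convolution estimate: two kernels decaying like $\langle \cdot\rangle^{-d}$ convolve to produce a kernel still decaying like $\langle\cdot\rangle^{-d}$ up to logarithmic loss (absorbed in $W^\tau$), and convolving with $\Theta_\infty$ restores a kernel of size $W^{-2}\langle\cdot\rangle^{-(d-2)}$. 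Summing the $l$ weight-loss factors $W^{-(k_i-2)d/2}$ gives exactly $W^{-(k-2)d/2}$ with $k=\sum_i 2k_i-2(l-1)$, completing \eqref{BRB2}.

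\medskip

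\textbf{Main obstacle.} Conceptually the proof is a direct adaptation of Lemma~\ref{lem redundantagain}, and in fact the infinite-volume setting is \emph{easier} because the exact sum zero \eqref{weaz} eliminates the delicate balancing of the $\eta$-factor against the $\eta^{-1}$ divergence of $\sum_\al\Theta_{x\al}$ that was the central difficulty in Lemma~\ref{lem redundantagain}. The real technical care lies in the estimate of $\bar R$ and $\mathring R$: the tail of $\Seleinf_{2l}$ is only polynomial, not compact, so when applying Lemma~\ref{lem cancelTheta2}, whose hypothesis is compact support, one must truncate at $W^{-\e}r$ and absorb the truncation error into the $r^{-d+\tau}$ bound, matching the $\tau$-losses consistently through the shells so that the final decay rate $\langle x-y\rangle^{-d+\tau}$ is preserved.
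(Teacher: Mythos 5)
Your proposal is correct and follows essentially the same route as the paper's proof: a far/near decomposition of the $\al$-sum, a decomposition of $(\Seleinf_{2l})_{\al y}$ into its near-region average $\bar R$ (bounded via the exact sum zero property \eqref{weaz}) and the centered part $\mathring R$ (bounded by Lemma~\ref{lem cancelTheta2}), followed by the convolution estimate to iterate to \eqref{BRB2}. The only cosmetic difference is that you use dyadic shells in the far region while the paper treats the far region in a single sum with cutoff $\langle x-y\rangle^{1-\tau}$; both work equivalently.
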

 \begin{proof}
As in \eqref{Japanesebracket}, we abbreviate $\langle x-y\rangle:=|x-y|+W$ for our current setting with $L=\infty$. With Lemma \ref{lem cancelTheta2}, the proofs of \eqref{redundant again2} and \eqref{BRB2} are similar to the ones for \eqref{redundant again} and \eqref{BRB}. To prove \eqref{redundant again2}, we decompose the sum over $\al$ according to $\cal I_{far}:=\{\al: |\al-y|\ge \langle x-y\rangle^{1-\tau}\}$ and $\cal I_{near}:=\{\al: |\al-y|< \langle x-y\rangle^{1-\tau}\}$. Using \eqref{Theta-wh1} and \eqref{4th_property0V} (together with $2d-4\ge d+2$ when $d\ge 6$), we can bound that for any constant $\tau>0$, 
  \begin{align} 
 \Big|\sum_{\al\in \cal I_{far}}(\Theta_\infty)_{x \al} (\Seleinf_{2l})_{\al y} \Big| & \le \frac{1}{W^{(l-1)d}}\sum_{\al\in \cal I_{far}}\frac{1}{  \langle x-\al\rangle^{d-2-\tau} } \frac{1}{ \langle \al -y\rangle^{d+2-\tau} } \nonumber\\
 &\le \frac{1}{W^{(l-1)d}\langle x-y\rangle^{(1-\tau)(d-3\tau)}}\sum_{\al\in \cal I_{far}}\frac{1}{  \langle x-\al\rangle^{d-2-\tau} } \frac{1}{ \langle \al -y\rangle^{2+2 \tau} } \nonumber\\
 & \lesssim \frac{1}{W^{(l-1)d}\langle x-y\rangle^{(1-\tau)(d-3\tau)}}.\label{boundIfar2}
\end{align}
For the sum over $\al\in \cal I_{near}$, we decompose it as $(\Seleinf_{2l})_{\al y}= \overline R + \mathring R_{\al y}$ with
$$ \overline R:= \frac{\sum_{\al\in \cal I_{near}}(\Seleinf_{2l})_{\al y}}{|\cal I_{near}|} = -\frac{\sum_{\al\in \cal I_{far}}(\Seleinf_{2l})_{\al y}}{|\cal I_{near}|},$$
where we used \eqref{weaz} in the second step. Then using \eqref{4th_property0V}, we can obtain that
\be\label{bound barR2} |\overline R| \le  \frac{\langle x-y \rangle^{(d+3)\tau}W^2}{W^{(l-1)d}\langle x-y\rangle^{d+2}} ,\quad | \mathring R_{\al y} | \le  \frac{W^{2}}{W^{(l-1)d}\langle \al-y\rangle^{d+2-\tau}} + |\overline R| .\ee
We can bound the term with $\overline R$ as
\begin{align*}
\Big|\sum_{\al \in \cal I_{near}}(\Theta_\infty)_{x \al} \overline R\Big|  &\le \frac{\langle x-y \rangle^{(d+3)\tau}W^2}{W^{(l-1)d}\langle x-y\rangle^{d+2}} \sum_{\al\in \cal I_{near}}(\Theta_\infty)_{x\al} \le  \frac{\langle x-y \rangle^{(d+3)\tau}W^2}{W^{(l-1)d}\langle x-y\rangle^{d+2}} \frac{\langle x-y\rangle^{(1-\tau)\cdot (2+\tau)}}{W^2} \\ &\le  \frac{\langle x-y \rangle^{(d+3)\tau}}{W^{(l-1)d} \langle x-y\rangle^{d}},
\end{align*}
where in the second step we used \eqref{Theta-wh1} to bound $\sum_{\al\in \cal I_{near}}(\Theta_\infty)_{x\al} .$ On the other hand, we use Lemma \ref{lem cancelTheta2} and \eqref{bound barR2} to bound the term with $\mathring R$ as 
\begin{align*}
  \Big|\sum_{\al\in \cal I_{near}}\Theta_{ x \al} \mathring R_{\al y}\Big| & \le  \sum_{\al\in \cal I_{near}} |\al-y|^2  |\mathring R_{\al y}| \cdot \frac{\langle x-y\rangle^\tau}{W^2\langle x-y\rangle^{d}} \\
  & \le \left( \sum_{\al\in \cal I_{near}} \frac{W^{-(l-1)d}}{\langle \al-y\rangle^{d-\tau}} +\frac{ |\overline R| }{W^2} \langle x -y\rangle^{(1-\tau)(d+2)}\right) \frac{\langle x-y\rangle^\tau}{ \langle x-y\rangle^{d}} \lesssim \frac{\langle x-y\rangle^{2\tau}}{W^{(l-1)d}\langle x-y\rangle^{d} }.
\end{align*}
Combining the above two estimates with \eqref{boundIfar2}, we conclude \eqref{redundant again2}.  Finally, 
\eqref{BRB2} follows from \eqref{redundant again2}.
 \end{proof}
 
We will refer to the infinite space limits of the $\dashed$ and labelled $\dashed$ edges as $\Theta_\infty$ and labelled $\Theta_\infty$ edges. The estimates \eqref{Theta-wh1} and \eqref{BRB2} suggest that these two types of edges can be also used in the doubly connected property. 
 
 \begin{definition} [Doubly connected property with $\Theta_\infty$ egdes]\label{def 2net infty}  
We extend the doubly connected property in Definition \ref{def 2net} by including $\Theta_\infty$ and labelled $\Theta_\infty$ edges, 
which can be used either in the black net $\cal B_{black}$ or the blue net $\cal B_{blue}$.
\end{definition}
 
From Corollary \ref{lem Rdouble weak}, we immediately obtain the following result, which explains why \eqref{4th_property0V} holds.
 
\begin{corollary}\label{double inf}
Suppose $d\ge 6$. Let $\cal G$ be a deterministic doubly connected graph without external atoms. Denote its infinite space limit by $\cal G^\infty$. Pick any two atoms of $\cal G^\infty$ and fix their values as $x , y\in \Z^d$. Then the resulting graph $\cal G^\infty_{xy}$ satisfies that for any constant $\tau>0$,
\be \nonumber
\left|\cal G^\infty_{xy}\right| \le   W^{ -  n_{xy} d/2}\frac{W^{2d-4}}{(|x-y|+W)^{2d-4-\tau}}, \quad \text{with} \quad n_{xy}:=\ord(\cal G_{xy}).
\ee    
 \end{corollary}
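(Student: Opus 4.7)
The plan is to reduce this statement directly to Corollary \ref{lem Rdouble weak}, specifically the bound \eqref{bound 2net weak0}, by showing that $\cal G^\infty_{xy}$ satisfies the hypotheses of that corollary after an appropriate bookkeeping of the labelled $\Theta_\infty$ edges. The key point is that every (labelled) $\Theta_\infty$ edge in $\cal G^\infty$ already obeys a $B$-type entrywise bound in the infinite space, so the passage from a finite-torus graph to its infinite-space limit does not hurt the decay estimates used in the proof of Lemma \ref{no dot}.

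First I would replace each edge in $\cal G^\infty_{xy}$ by a deterministic $B$-type edge with an explicit $W^{-(k-2)d/2}$ prefactor, as follows. For an ordinary $\Theta_\infty$ edge between atoms $\al,\beta$, estimate \eqref{Theta-wh1} gives
\[
|(\Theta_\infty)_{\al\beta}(E)| \le \frac{1}{W^2(|\al-\beta|+W)^{d-2-\tau}},
\]
which is, up to a harmless $W^\tau$ factor, bounded by $B_{\al\beta}$ in the infinite-space sense. For a labelled $\Theta_\infty$ edge of scaling order $k$ between $\al,\beta$, Lemma \ref{lem redundantagain2} (estimate \eqref{BRB2}) yields
\[
\left|\bigl(\Theta_\infty\Sele^\infty_{2k_1}\Theta_\infty\cdots\Sele^\infty_{2k_l}\Theta_\infty\bigr)_{\al\beta}\right| \le W^{-(k-2)d/2}\cdot \frac{1}{W^2(|\al-\beta|+W)^{d-2-\tau}}.
\]
Thus after pulling out the scalar factor $W^{-(k-2)d/2}$ from every labelled edge of scaling order $k$, each edge of $\cal G^\infty_{xy}$ is bounded by a $B$-type edge of scaling order $2$. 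Since the doubly connected property of $\cal G$ (Definition \ref{def 2net infty}) uses $\Theta$ and labelled $\Theta$ edges in the same way regardless of whether we are on the torus or in the infinite lattice, the resulting graph retains the doubly connected structure required by Corollary \ref{lem Rdouble weak}.

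Next I would apply \eqref{bound 2net weak0} to this auxiliary graph $\cal G'_{xy}$, whose scaling order equals $n'_{xy}=n_{xy}-\sum_e (k_e-2)$ where the sum runs over the labelled $\Theta_\infty$ edges (with scaling orders $k_e$). Combining the extracted prefactors $\prod_e W^{-(k_e-2)d/2}$ with the right-hand side of \eqref{bound 2net weak0}, we obtain
\[
\left|\cal G^\infty_{xy}\right| \le \prod_e W^{-(k_e-2)d/2}\cdot \frac{W^{-(n'_{xy}-4)d/2}}{W^4(|x-y|+W)^{2d-4-\tau}} = \frac{W^{-(n_{xy}-4)d/2}}{W^4(|x-y|+W)^{2d-4-\tau}},
\]
and since $W^{-(n_{xy}-4)d/2}/W^4 = W^{-n_{xy}d/2}\cdot W^{2d-4}$, this gives exactly the claimed bound. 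The only non-routine point is to confirm that the proof of Corollary \ref{lem Rdouble weak}, which is written for graphs on $\Z_L^d$, carries over verbatim to the infinite lattice $\Z^d$. This is not an obstacle in substance: the argument is based on Claim \ref{twonet claim}, whose inputs are the identities \eqref{keyobs3}--\eqref{keyobs2}, and these summation estimates rely solely on the polynomial decay of $B$-type and $\Psi$-type factors. In the infinite-space setting the relevant sums converge absolutely by the $|x|^{-d+2+\tau}$-decay encoded in \eqref{Theta-wh1} and \eqref{BRB2}, so the dyadic decomposition used to prove \eqref{keyobs3}--\eqref{keyobs2} still applies without modification. Thus the only care needed is to verify that all intermediate sums over internal atoms in $\Z^d$ converge, which follows immediately from the strong-$(2,2)$-type control on the deterministic edges used in the blue net.

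The hardest part, though essentially bookkeeping rather than a genuine analytic difficulty, will be to keep track of the scaling-order accounting when labelled $\Theta_\infty$ edges of different orders are simultaneously present and to make sure the doubly connected property is preserved after all edges have been replaced by $B$-type bounds (which it is, by Definition \ref{def 2net infty}). Once this is done, the corollary follows from Corollary \ref{lem Rdouble weak} by a direct substitution as above.
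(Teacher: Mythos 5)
Your argument is correct and follows the same route as the paper: the paper's own proof of Corollary \ref{double inf} is a one-line reduction to Corollary \ref{lem Rdouble weak}, which is exactly the reduction you carry out. The only cosmetic difference is that the paper states the reduction as ``taking $L\to\infty$'' of \eqref{bound 2net weak0}, whereas you prefer to run the argument directly on $\Z^d$ using the infinite-space entrywise bounds \eqref{Theta-wh1} and \eqref{BRB2}; these are equivalent ways to package the same observation, and your scaling-order bookkeeping $n'_{xy}=n_{xy}-\sum_e(k_e-2)$ together with the extracted prefactors $\prod_e W^{-(k_e-2)d/2}$ recombines correctly to the claimed $W^{-n_{xy}d/2}W^{2d-4}\langle x-y\rangle^{-(2d-4-\tau)}$.
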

 
 \begin{proof}
This result is a corollary of \eqref{bound 2net weak0} by taking $L\to \infty$. 
 \end{proof}
 

\subsection{Proof of Lemma \ref{lem V-R wt}}\label{subsec_inf_pf}
Now we prove the following lemma, which implies Lemma \ref{lem V-R wt} as a special case. 
\begin{lemma}\label{lemm V-R}
Fix $d\ge 6$. Suppose we have a sequence of $\selfs$ $\Sele_{2l}$, $4\le 2l \le n-1$, satisfying Definition \ref{collection elements} and properties \eqref{two_properties0V}--\eqref{weaz}. 
Let $\cal G$ be a deterministic graph satisfying the assumptions of Corollary \ref{lem Rdouble}, and let $\cal G^\infty $ be its infinite space limit. Moreover, suppose the labelled $\dashed$ edges in $\cal G_{xy}$ can only be of the form \eqref{eq label} with $4\le 2k_i \le n-1$.  Fix any $L\ge W$ and $z=E+\ii \eta$ with $E\in (-2+\kappa,2-\kappa)$ and $W^{2}/L^{2-\e}\le \eta\le L^{-\e}$ for a small constant $\e>0$. Then for any $x\in \Z_L^d$, we have that
\be\label{V-R1}  
\left|\cal G_{0x}(m(z),S, S^{\pm}(z),\Theta(z))-\cal G^\infty_{0x}( m(E), S_\infty,  S_\infty^{\pm}(E), \Theta_\infty(E))\right| \le W^{-n_0d/2} \frac{\eta W^{2d-6}}{\langle x\rangle^{2d-6-\tau}},
\ee
for any constant $\tau>0$, where $ n_0:=\ord(\cal G_{0x})$. Moreover, \eqref{V-R1} implies that for any constant $\tau>0$,
\be\label{V-R2}  \bigg|\sum_{x\in \Z_L^d} \cal G_{0x}(m(z),S, S^{\pm}(z),\Theta(z))-\sum_{x\in \Z^d} \cal G^\infty_{0x}( m(E), S_\infty,  S_\infty^{\pm}(E), \Theta_\infty(E)) \bigg| \le  \frac{L^\tau \eta }{ W^{(n_0-2)d/2}} .\ee
\end{lemma}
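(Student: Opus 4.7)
The plan is to prove \eqref{V-R1} by a telescoping argument, replacing the ingredients of $\cal G_{0x}$ by their infinite-space counterparts one at a time. List the edges of $\cal G_{0x}$ in some order $e_1,\dots,e_r$ (including all $s$-edges, $S^\pm$-edges, labelled or unlabelled $\dashed$-edges) and write the difference as a telescoping sum
\[
\cal G_{0x}-\cal G^{\infty}_{0x} \;=\; \sum_{j=1}^{r}\bigl(\cal G^{(j-1)}_{0x}-\cal G^{(j)}_{0x}\bigr) \;+\; (\text{coefficient difference}),
\]
where $\cal G^{(j)}_{0x}$ is the graph in which edges $e_1,\dots,e_j$ have been replaced by their infinite-space analogues (and atom sums are extended appropriately once all internal edges have been replaced). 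The coefficient difference uses $m(z)-m(E)=\OO(\eta)$. Each individual summand $\cal G^{(j-1)}_{0x}-\cal G^{(j)}_{0x}$ is again a graph with exactly the same topology, except that one edge $e_j$ between atoms $\al,\beta$ is replaced by its ``difference edge''. By Lemma \ref{lem esthatTheta} and Lemma \ref{lem redundantagain2} combined with Lemma \ref{lem esthatTheta} applied to a labelled $\dashed$ edge, each such difference edge is bounded by $\eta\,\wt B_{\al\beta}$ up to the natural scaling factor, where $\wt B_{\al\beta}=W^{-4}\langle \al-\beta\rangle^{-(d-4)}$; for waved edges the bound is much better (exponentially small decay outside $|\al-\beta|\le W^{1+\tau}$ plus an $\eta$ gain from \eqref{S+xy2}).

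The key observation is that the perturbed graph is still doubly connected, because replacing one edge never destroys the spanning trees in the two nets. Hence Corollary \ref{lem Rdouble weak}, specifically the estimate \eqref{bound 2net weak}, applies: treating the difference edge as a scaling-order-$2$ edge of $\wt B$-type, we obtain
\[
\bigl|\cal G^{(j-1)}_{0x}-\cal G^{(j)}_{0x}\bigr| \;\le\; \eta\;\frac{W^{-(n_0-4)d/2}}{W^{6}\langle x\rangle^{2d-6-\tau}}\;=\;W^{-n_0d/2}\;\frac{\eta\,W^{2d-6}}{\langle x\rangle^{2d-6-\tau}} ,
\]
which is exactly the bound in \eqref{V-R1}. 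Summing over the $\OO(1)$ many edges and adding the coefficient-change contribution (bounded analogously) yields \eqref{V-R1}. The last step in the telescoping is the replacement of $\sum_{\alpha\in\Z_L^d}$ by $\sum_{\alpha\in\Z^d}$ for each internal atom $\alpha$; the missing tail contribution, by Corollary \ref{double inf} applied after fixing $\alpha$ outside $\Z_L^d$, is again of size $\oo(1)$ times the right-hand side of \eqref{V-R1} thanks to the strong polynomial decay $\langle\cdot\rangle^{-(2d-4-\tau)}$ for $d\ge 6$.

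To derive \eqref{V-R2}, sum \eqref{V-R1} over $x\in\Z_L^d$:
\[
\sum_{x\in\Z_L^d}\frac{W^{2d-6}}{\langle x\rangle^{2d-6-\tau}}\;\lesssim\;L^{\tau}\,W^{d},
\]
where for $d>6$ the sum converges at $|x|\sim W$ producing the factor $W^d$, and for $d=6$ one picks up an additional logarithmic (hence $L^\tau$) factor. Multiplying by $W^{-n_0d/2}\eta$ gives $L^\tau\eta W^{-(n_0-2)d/2}$. Finally, the tail correction $|\sum_{x\in\Z^d\setminus\Z_L^d}\cal G^{\infty}_{0x}|$ is controlled via Corollary \ref{double inf} by $W^{-n_0d/2}W^{2d-4}L^{4-d+\tau}$, which, using the lower bound $\eta\ge W^2/L^{2-\e}$, is much smaller than $L^\tau\eta W^{-(n_0-2)d/2}$ for $d\ge 6$. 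This establishes \eqref{V-R2}.

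The main technical obstacle will be the careful treatment of the labelled $\dashed$ edges: each such edge $(\Theta\Sele_{2k_1}\Theta\cdots\Theta\Sele_{2k_l}\Theta)_{\alpha\beta}$ is itself a chain, and its difference from the infinite-space version $(\Theta_\infty\Sele^\infty_{2k_1}\Theta_\infty\cdots\Theta_\infty\Sele^\infty_{2k_l}\Theta_\infty)_{\alpha\beta}$ must be telescoped internally, using the inductive hypothesis \eqref{3rd_property0 diff} for $\Sele_{2k_i}$ (already established at lower orders) together with Lemmas \ref{lem redundantagain} and \ref{lem redundantagain2}. Verifying that each intermediate ``perturbed'' labelled edge still produces a bound of the form $\eta\cdot\wt B_{\alpha\beta}\cdot W^{-(k-2)d/2}$ is the core technical step, but it proceeds exactly parallel to the proof of Lemma \ref{lem redundantagain2} using the sum-zero property \eqref{weaz} and Lemma \ref{lem cancelTheta2}, plus the $\eta$-gain from \eqref{Theta-wh}.
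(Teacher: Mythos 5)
Your overall strategy matches the paper's: replace ingredients one at a time, bound each ``difference edge'' by $\eta$ times a decaying weight, invoke Corollary~\ref{lem Rdouble weak} for the perturbed (still doubly connected) graph, and handle labelled $\dashed$ edges by an internal telescope using \eqref{3rd_property0 diff}, \eqref{Theta-wh1}, \eqref{Theta-wh}, \eqref{redundant again} and \eqref{redundant again2}. The derivation of \eqref{V-R2} from \eqref{V-R1} and the tail estimate via Corollary~\ref{double inf} are also in line with the paper.

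However, there is a genuine gap: you telescope directly on the torus $\Z_L^d$ (keeping the atom sums over $\Z_L^d$ until the final step), and claim the difference edge $(\Theta-\Theta_\infty)_{\al\beta}$ is bounded by $\eta\,\wt B_{\al\beta}$ ``up to the natural scaling factor.'' This bound does \emph{not} hold for wrap-around pairs: if $\al,\beta\in\Z_L^d\subset\Z^d$ lie near opposite faces of the box so that $\|\al-\beta\|_L$ is $\OO(W)$ but $|\al-\beta|_{\Z^d}\sim L$, then $\Theta_{\al\beta}\sim W^{-d}$ (since $\Theta$ depends on the periodic difference) while $(\Theta_\infty)_{\al\beta}\lesssim W^{-2}L^{-(d-2)}$ (since $\Theta_\infty$ depends on the $\Z^d$ difference); hence $|(\Theta-\Theta_\infty)_{\al\beta}|\sim W^{-d}$, which exceeds $\eta\,\wt B_{\al\beta}\sim\eta W^{-d}$ by a factor $\eta^{-1}$. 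The estimate \eqref{Theta-wh} you cite is only stated for $x\in\Z_L^d$, i.e.\ for differences that do not wrap. The same issue afflicts the regime where $x$ itself is of order $L$. The paper avoids all of this by first introducing the cutoff graph $\cal G^{[\eta]}_{0x}$, in which every non-waved edge is restricted to $|\al-\beta|\le L^\tau W\eta^{-1/2}\ll L$. This confines all atoms to a box well inside $(-L/2,L/2]^d$, so the torus and $\Z^d$ geometries coincide and the difference-edge bound \eqref{Deltatheta1} holds as stated; the case $|x|>C_0L^\tau W\eta^{-1/2}$ is then treated separately (where $\cal G^{[\eta]}_{0x}=0$ and one just bounds $|\cal G^\infty_{0x}|$). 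Your ``atom sums are extended appropriately once all internal edges have been replaced'' glosses over exactly this issue, and as written the telescoping summands are not controlled uniformly. To complete the argument you would need to either insert the cutoff as in the paper, or supply a separate estimate showing that wrap-around configurations are negligible (which would require a dedicated argument since $\Theta$ decays only polynomially).
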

\begin{proof}
Using \eqref{V-R1} when $x\in \Z_L^d$ and applying Corollary \ref{double inf} to $\cal G^\infty_{0x}$ when $x\notin \Z_L^d$, we obtain that
\begin{align*}
 &\bigg|\sum_{x\in \Z_L^d}  \cal G_{0x}(m(z),S, S^{\pm}(z),\Theta(z))-\sum_{x\in \Z^d} \cal G^\infty_{0x}( m(E), S_\infty,  S_\infty^{\pm}(E), \Theta_\infty(E)) \bigg| \\
& \le \sum_{\|x\|_\infty \le L/2} \frac{\eta W^{2d-6}}{W^{n_0d/2}(|x|+W)^{2d-6-\tau}}  + \sum_{\|x\|_\infty > L/2}\frac{W^{2d-4 } }{W^{n_0d/2}|x|^{2d-4-\tau}}  \\
& \lesssim L^\tau \left(\eta + \frac{W^2}{L^2}\right)W^{-(n_0-2)d/2}  \lesssim L^\tau \eta W^{-(n_0-2)d/2}  ,
\end{align*}
which concludes \eqref{V-R2}. It remains to prove \eqref{V-R1}. 



First, using $|m(z)-m(E)|=\OO(\eta)$, we observe that replacing $m(z)$ in the coefficient with $m(E)$ leads to an extra factor $\eta$: 
$$\left|\cal G_{0x}(m(z),S , S^{\pm}(z),\Theta(z))-\cal G_{0x}( m(E), S , S^{\pm}(z),\Theta(z))\right| \le  \frac{ \eta W^{2d-4}}{W^{n_0d/2}\langle x \rangle^{2d-4-\tau}},$$
 for any small constant $\tau>0$. It remains to prove that for $x\in \Z_L^d$,
\be  \label{V-R3}
\left|\cal G_{0x}(m(E),S, S^{\pm}(z),\Theta(z))-\cal G^\infty_{0x}( m(E), S_\infty,  S_\infty^{\pm}(E), \Theta_\infty(E))\right| \le \frac{\eta W^{2d-6}}{W^{n_0d/2} \langle x\rangle^{2d-6-\tau}}.
\ee
For this purpose, we define a new graph $ {\cal G}^{[\eta]}_{0x}(m(E),S^{[\eta]}, S^{\pm,[\eta]}(z),\Theta^{[\eta]}(z))$ obtained by replacing the $S$, $S^\pm(z)$ and $\Theta(z)$ edges defined on $\Z_L^d$ with $S^{[\eta]}$, $S^{\pm,[\eta]}(z)$ and $\Theta^{[\eta]}(z)$ edges defined on $\Z^d$, where 
$$ S^{[\eta]}_{\al\beta}:=S_{ \al \beta } \mathbf 1_{|\al-\beta|\le L^\tau W \eta^{-1/2}},\quad S^{\pm, [\eta]}_{\al\beta}(z):=S^{\pm}_{ \al \beta } (z)\mathbf 1_{|\al-\beta|\le L^\tau W \eta^{-1/2}}, $$ 
and
$$ \Theta^{[\eta]}_{\al\beta}(z):=\Theta_{ \al \beta }(z)\mathbf 1_{|\al-\beta|\le L^\tau W \eta^{-1/2}},$$
for $\al,\beta\in \Z^d$. Note that for a sufficiently small $ \tau \in (0,\e/4)$, we have $L^\tau W \eta^{-1/2} \le L^{1-\e/4}$. Hence in order for {$\cal G_{0x}^{[\eta]}$} to be nonzero, any atom $\al$ in it must satisfy $|\al |\le C_0 L^\tau W\eta^{-1/2}\ll L$ for a constant $C_0>0$. By \eqref{subpoly}, \eqref{thetaxy} and \eqref{S+xy}, we have that for any constant $D>0$,
$$ \max_{x\in \Z_L^d}\left|\cal G_{0x}(m(E),S, S^{\pm}(z),\Theta(z))-{\cal G}^{[\eta]}_{0x}(m(E),S^{[\eta]}, S^{\pm,[\eta]}(z),\Theta^{[\eta]}(z))\right| \le L^{-D}.$$
Hence to prove \eqref{V-R3}, it remains to show that for $x\in \Z_L^d$,
\be\label{V-R4}
\left|{\cal G}^{[\eta]}_{0x}(m(E),S^{[\eta]}, S^{\pm,[\eta]}(z),\Theta^{[\eta]}(z))-\cal G^\infty_{0x}( m(E), S_\infty,  S_\infty^{\pm}(E), \Theta_\infty(E))\right|  \le \frac{\eta W^{2d-6}}{W^{n_0d/2} \langle x\rangle^{2d-6-\tau}}.
\ee
By Corollary \ref{double inf}, we have that for $|x| > C_0 L^\tau W \eta^{-1/2}$,
 \begin{align*}
& \left|{\cal G}^{[\eta]}_{0x}(m(E),S^{[\eta]}, S^{\pm,[\eta]}(z),\Theta^{[\eta]}(z))-\cal G^\infty_{0x}( m(E), S_\infty,  S_\infty^{\pm}(E), \Theta_\infty(E))\right| \\
 =&\left| \cal G^\infty_{0x}( m(E), S_\infty,  S_\infty^{\pm}(E), \Theta_\infty(E))\right| \le \frac{W^{2d-4}}{W^{ n_0d/2}\langle x\rangle^{2d-4-\tau}}  \le \frac{\eta W^{2d-6}}{W^{ n_0 d/2}\langle x\rangle^{2d-6 -\tau}} .
\end{align*}
  

It remains to prove \eqref{V-R4} for $|x| \le C_0 W^{1+\tau}\eta^{-1/2}$. We will replace the $S^{[\eta]}$, $S^{\pm,[\eta]}$, $\Theta^{[\eta]}$ and labelled $\Theta^{[\eta]}$ edges in ${\cal G}^{[\eta]}_{0x}$ with the $S_\infty$, $S_\infty^{\pm}$, $\Theta_\infty$ and labelled $\Theta_\infty$ edges one by one, and control the error of each replacement using the estimates \eqref{subpoly}, \eqref{S+xy2} and \eqref{Theta-wh}. We remark that when dealing with a labelled \smash{$\Theta^{[\eta]}$} edge, we will replace a $\self$ \smash{$\Sele_{2l}^{[\eta]}$}, $4\le 2l\le n-1$, with \smash{$\Sele_{2l}^{\infty}$} as a whole, and the estimate \eqref{3rd_property0 diff} will be used to bound the difference. For simplicity, in the following proof we use the notations
$$  {\cal G}^{[\eta]}_{0x}( S^{[\eta]}, S^{\pm,[\eta]},\Theta^{[\eta]}),\quad   \cal G^\infty_{0x}( S_\infty,  S_\infty^{\pm}(E), \Theta_\infty(E)),$$
with the understanding that the arguments $\Theta^{[\eta]}$ and $\Theta_\infty$ represent both $\dashed$ and labelled $\dashed$ edges. 
First, using \eqref{subpoly}, it is easy to see that replacing any $S^{[\eta]}$ edge with a $S_\infty$ edge gives an error of order $\OO(L^{-D})$. Second, using \eqref{S+xy2}, it is easy to show that replacing any $S^{\pm,[\eta]}(z)$ edge with a $S_\infty^{\pm}(E)$ edge leads to an extra factor $\eta$. Hence after replacing all $S^{[\eta]}$ and $S^{\pm,[\eta]}(z)$ edges with $S_\infty$ and $S_\infty^{\pm}$ edges, we get that 
$$\left|{\cal G}^{[\eta]}_{0x}(S^{[\eta]}, S^{\pm,[\eta]}(z),\Theta^{[\eta]}(z))-{\cal G}^{[\eta]}_{0x}( S_\infty,  S^{\pm}_\infty(E),\Theta^{[\eta]}(z))\right| \le \frac{ \eta W^{ 2d-4}}{ W^{ n_0d/2 }\langle x \rangle^{2d-4-\tau}}.$$
Here as a convention, we still add the superscript $[\eta]$ to the graph after the replacements, but its arguments are different from the original graph.
It remains to show that replacing the $\Theta^{[\eta]}$ and labelled $\Theta^{[\eta]}$ edges with  $\Theta_{\infty}$ and labelled $\Theta_{\infty}$ edges leads to a small enough error:
 \be\label{V-R3.5}
 \left|{\cal G}^{[\eta]}_{0x}(S_\infty, S^{\pm}_\infty(E),\Theta^{[\eta]}(z))-{\cal G}^\infty_{0x}( S_\infty,  S^{\pm}_\infty(E),\Theta_\infty (E))\right| \le  \frac{\eta W^{2d-6}}{W^{n_0d/2}\langle x\rangle^{2d-6-\tau}}.
 \ee
 Combining the above two estimates, we conclude \eqref{V-R4}. 
 

It remains to prove \eqref{V-R3.5}. 
Notice that ${\cal G}^{[\eta]}_{0x} (S_\infty, S^{\pm}_\infty(E),\Theta^{[\eta]}(z))-\cal G^{\infty}_{0x}
(S_\infty, S^{\pm}_\infty(E), \Theta_\infty (E))$ can be written into a sum of $\OO(1)$ many graphs, each of which is of scaling order $n_0$ and has a doubly connected structure consisting of $\Theta^{[\eta]}$ and $\Theta_\infty$ edges, labelled $\Theta^{[\eta]}$ and $\Theta_\infty$ edges, and one edge of the form $(\Theta^{[\eta]}-\Theta_\infty)_{\al\beta}$ or  
\be\label{diff label}
\left[\Theta^{[\eta]} \Sele^{[\eta]}_{2k_1}\Theta^{[\eta]}  \Sele^{[\eta]}_{2k_2}\Theta^{[\eta]} \cdots \Theta^{[\eta]}  \Sele^{[\eta]}_{2k_l}\Theta^{[\eta]}-\Theta_\infty \Sele^{\infty}_{2k_1}\Theta_\infty  \Sele^{\infty}_{2k_2}\Theta_\infty \cdots \Theta_\infty  \Sele^{\infty}_{2k_l}\Theta_\infty \right]_{\al\beta} ,\ee
with $4\le 2k_i \le n-1$, $1\le i \le l$, and scaling order $2s:= \sum_{i=1}^l 2k_i-2(l-1)$. 
Let $(\cal G_\omega)_{0x}$ be one of these graphs. We claim that 
\be\label{G-Ginfty2}   \left|(\cal G_\omega)_{0x}\right| \le \frac{\eta W^{2d-6}}{W^{n_0d/2}\langle x\rangle^{2d-6-\tau}} \quad \text{for}\quad |x| \le C_0 W^{1+\tau}\eta^{-1/2}.
\ee
With \eqref{G-Ginfty2}, we immediately conclude \eqref{V-R3.5}.

Finally we prove \eqref{G-Ginfty2}. If $(\cal G_\omega)_{0x}$ contains a  $(\Theta^{[\eta]}-\Theta_\infty)_{\al\beta}$ edge, then by \eqref{Theta-wh1} and \eqref{Theta-wh} we obtain that for any constant $\tau>0$,
\begin{align}\label{Deltatheta1}
\left|(\Theta^{[\eta]}-\Theta_\infty)_{\al\beta}\right| \le \frac{\eta \mathbf 1_{|\al-\beta|\le L^\tau W \eta^{-1/2}}}{W^4 (|\al-\beta|+W)^{d-4-\tau}}  + \frac{\mathbf 1_{|\al-\beta|> L^\tau W \eta^{-1/2}}}{W^2(|\al-\beta|+W)^{d-2-\tau}} \le  \frac{\eta }{W^4(|\al-\beta|+W)^{d-4-\tau}} ,
\end{align}
where in the second step we used that $W^2/| \al-\beta|^{2}\le \eta$ for $|\al-\beta|>L^\tau W \eta^{-1/2}$. Thus we can write that \smash{$\cal G_\omega= \eta  \wt{\cal G}_{\omega}$} for a graph \smash{$\wt{\cal G}_{\omega}$} which has a doubly connected structure consisting of $\Theta^{[\eta]}$ and $\Theta_\infty$ edges, labelled $\Theta^{[\eta]}$ and $\Theta_\infty$ edges, and one special edge between $\al$ and $\beta$ bounded by $\OO_\prec(\wt B_{\al\beta}).$ Then applying \eqref{bound 2net weak} (in the $L\to\infty$ case), we obtain that 
$$ \left|(\wt{\cal G}_\omega)_{0x}\right|\le  \frac{W^{-(n_0-4)d/2}}{W^6(|x|+W)^{2d-6-\tau}} 
,$$
which implies \eqref{G-Ginfty2}. 
On the other hand, suppose $\cal G_\omega$ contains an edge of the form \eqref{diff label}.
Following the same argument as above, in order to show \eqref{G-Ginfty2}, it suffices to prove that for any constant $\tau>0$,
\be\label{G-Ginfty3}
|(\ref{diff label})|\le \frac{\eta W^{-(s-1)d}}{W^4(|\al-\beta|+W)^{d-4-\tau}}.
\ee
We prove this estimate by replacing the $\Theta^{[\eta]}$ and $\Sele_{2k_i}^{[\eta]}$ entries one by one, and bounding the error of each replacement using \eqref{Theta-wh} and \eqref{3rd_property0 diff}. 
First, with \eqref{Theta-wh} and \eqref{redundant again}, we get that 
\begin{align*}
& \Big|\left[(\Theta_\infty-\Theta^{[\eta]})   \Sele^{[\eta]}_{2k_1}\Theta^{[\eta]}  \Sele^{[\eta]}_{2k_2}\Theta^{[\eta]} \cdots \Theta^{[\eta]}  \Sele^{[\eta]}_{2k_l}\Theta^{[\eta]}\right]_{\al\beta}\Big| \\
& \le \sum_{\al_1,\cdots, \al_{l}} \frac{\eta }{W^4(|\al-\al_1|+W)^{d-4-\tau}}\prod_{i=1}^{l-1} \frac{W^{-(k_i-1)d}}{(|\al_i-\al_{i+1}|+W)^{d-\tau}}\frac{W^{-(k_l-1)d}}{(|\al_l-\beta|+W)^{d-\tau} } \\
&\lesssim \frac{\eta W^{-(s-1)d}}{W^4 (| \al-\beta|+W)^{d-4-(l+1)\tau}}.
\end{align*}
Second, using \eqref{Theta-wh1}, \eqref{redundant again} and \eqref{3rd_property0 diff} for $\Sele_{2k_1}$, we get that 
\begin{align*}
&\Big|\left[\Theta_\infty \left( \Sele^{[\eta]}_{2k_1}- \Sele^{\infty}_{2k_1}\right)\Theta^{[\eta]}  \Sele^{[\eta]}_{2k_2}\Theta^{[\eta]} \cdots \Theta^{[\eta]}  \Sele^{[\eta]}_{2k_l}\Theta^{[\eta]}\right]_{\al\beta}\Big|\\
& \le \sum_{\al_1,\cdots,\al_l,\beta_1}\frac{1}{W^2(| \al-\al_1|+W)^{d-2-\tau}} \frac{\eta W^{2d-6}}{W^{k_1d}(| \al_1-\beta_1|+W)^{2d-6-\tau}} \frac{1}{W^2(|\beta_1-\al_2|+W)^{d-2-\tau}} \\
&\quad \times \prod_{i=2}^{l-1} \frac{W^{-(k_i-1)d}}{(| \al_i-\al_{i+1}|+W)^{d-\tau}}\frac{W^{-(k_l-1)d}}{(| \al_l-\beta|+W)^{d-\tau} }\\
&\lesssim \frac{\eta W^{-(s-1)d}}{W^4( | \al-\beta|+W)^{d-4-(l+2)\tau}}.
\end{align*}
Continuing the above process, we can replace $\Theta^{[\eta]}$ with $\Theta_\infty$ and $\Sele^{[\eta]}_{2k_i}$ with $\Sele^{\infty}_{2k_i}$ one by one. Moreover, using \eqref{3rd_property0 diff}, \eqref{redundant again}, \eqref{Theta-wh1}, \eqref{Theta-wh} and \eqref{redundant again2} at each step, we can show that each replacement gives an error at most 
$$\frac{\eta W^{-(s-1)d}}{W^4( | \al-\beta|+W)^{d-4-(l+2)\tau}}.$$ This implies \eqref{G-Ginfty3} since $\tau$ is arbitrarily small, and hence concludes \eqref{G-Ginfty2}. 
\end{proof}

Now we can complete the proof of Lemma \ref{lem V-R wt}. 
\begin{proof}[Proof of Lemma \ref{lem V-R wt}]
As given by Definition \ref{def incompgenuni}, $\Sele_{n}$ is a sum of $\OO(1)$ many deterministic doubly connected graphs satisfying the assumptions of Lemma \ref{lemm V-R}. Hence we immediately conclude Lemma \ref{lem V-R wt} using Corollary \ref{lem Rdouble}, Corollary \ref{double inf} and Lemma \ref{lemm V-R}.  
\end{proof}

%

\subsection{Proof of Lemma \ref{lem FT0}}\label{sec pf FT0}
 
Finally, in this subsection we give the full proof of Lemma \ref{lem FT0}. Recall the matrices $\wt S$, $\wt S^{\pm}$ and $\wt\Theta$ defined in \eqref{choicefS}. 
The following claim shows that $\wt S$, $\wt S^\pm$ and $\wt\Theta$ are close to $ S$, $ S^\pm$ and $ \Theta$. Its proof will be given in Section \ref{appd}.
\begin{claim}\label{claim wtS-S}
	Under the assumptions of Lemma \ref{lem FT0}, fix any $L\ge W$ and $z=E+\ii \eta$ with $E\in (-2+\kappa,2-\kappa)$ and $W^{2}/L^{2-\e}\le \eta\le L^{-\e}$ for a small constant $\e>0$. For any $x\in \Z_L^d$, we have that  
	\be\label{S0xy3}
	|\wt S_{0x} - S_{0x} | \lesssim \frac{W^2}{L^2}\frac{1}{W^d}\mathbf 1_{|x|\le W^{1+\tau}} +  \langle x\rangle^{-D} , 
	\ee
	\be\label{S+xy3}
	|\wt S^+_{0x}(z)- \cal S^{+}_{0x}(z)| \lesssim \frac{W^2}{L^2}\frac{1}{W^d }\mathbf 1_{|x|\le W^{1+\tau}} +  \langle x\rangle^{-D} , 
	\ee
	\be\label{Theta-wh3}
	|\wt \Theta_{0x}(z)-\Theta_{0x}(z)|\le \frac{W^2}{L^2}\frac{1}{W^4(|x|+W)^{d-4-\tau}}+ \langle x\rangle^{-D} ,
	\ee
 for any constants $\tau,D>0$.
\end{claim}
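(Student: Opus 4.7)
All six operators $S,\wt S,S^{\pm},\wt S^{\pm},\Theta,\wt\Theta$ are translation-invariant convolution operators on the torus $\Z_L^d$, hence simultaneously diagonal in the discrete Fourier basis $\mathbb T_L^d$. Because $S$ and $\wt S$ commute, one has the clean resolvent identities
\begin{align*}
\wt S^{+}-S^{+} &= (I+S^{+})\,m^2(z)\,(\wt S-S)\,(I+\wt S^{+}), \\
\wt\Theta-\Theta &= (I+\Theta)\,|m(z)|^2\,(\wt S-S)\,(I+\wt\Theta),
\end{align*}
so the whole proof reduces to an entrywise bound on $\wt S-S$, combined with the already-known pointwise bounds on $S^{\pm},\wt S^{\pm}$ (from Lemma \ref{lem deter}, whose proof applies to $\wt S$ as well) and on $\Theta,\wt\Theta$ (from \eqref{thetaxy}, which likewise applies verbatim to $\wt\Theta$).

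\textbf{Step 1: the scalar bound $\wt S_{0x}-S_{0x}$.} Compare the discrete Fourier series $\wt f_{W,L}(x)=\frac1{L^d}\sum_{p\in\mathbb T_L^d}\psi(Wp)e^{ip\cdot x}$ with the integral $f_{W,L}(x)=\frac1{(2\pi)^d Z_{W,L}}\int\psi(Wp)e^{ip\cdot x}\dd p$ by means of Poisson summation on the lattice $\frac{2\pi}{L}\Z^d$. Under the compact-support assumption on $\psi$ of Theorem \ref{main thm}, the terms in the discrete sum with $|p|_\infty>\pi$ vanish identically once $W$ is large, and Poisson summation gives the exact identity
$$\wt f_{W,L}(x) \;=\; Z_{W,L}\sum_{n\in\Z^d} f_{W,L}(x+nL).$$
The tail terms $n\ne 0$ are $\OO(\langle L/W\rangle^{-D})$ by \eqref{subpoly}, while $|Z_{W,L}-1|\le W^{-D}$ by \eqref{bandcw1}. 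Together these yield $|\wt S_{0x}-S_{0x}|\lesssim W^{-D}f_{W,L}(x)+\langle x\rangle^{-D}$ for any constants $\tau,D>0$, which is in fact far stronger than the claim \eqref{S0xy3}.

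\textbf{Step 2: propagation to $\wt S^{\pm}$ and $\wt\Theta$.} For \eqref{S+xy3}, expand the first identity entrywise and insert the pointwise bound \eqref{S+xy} (valid for both $S^+$ and $\wt S^+$) together with Step~1. Because the three factors $(I+S^+)_{0a}$, $(\wt S-S)_{ab}$ and $(I+\wt S^+)_{bx}$ are each (up to negligible tails) supported on the short scale $W^{1+\tau}$ in the relevant index difference, the double sum contributes only when $|a|,|a-b|,|b-x|\le W^{1+\tau}$, forcing $|x|\lesssim W^{1+\tau}$; this yields \eqref{S+xy3}. For \eqref{Theta-wh3}, the same identity with $\Theta$ in place of $S^+$ is more delicate: $\Theta$ and $\wt\Theta$ only decay polynomially ($W^{-2}\langle\cdot\rangle^{-(d-2)}$ by \eqref{thetaxy}), so the convolution is genuinely non-local. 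Splitting the double sum over dyadic regions for $(a,b)$ and using that $(\wt S-S)_{ab}$ is effectively supported in $\{|a-b|\le W^{1+\tau}\}$, one recovers the decay $W^{-4}\langle x\rangle^{-(d-4-\tau)}$ claimed; the explicit prefactor $W^2/L^2$ is a (very) weak upper bound that conveniently absorbs the strength of $\wt S-S$ produced by Step~1.

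\textbf{Main obstacle.} Everything except the decay accounting in \eqref{Theta-wh3} is routine. The difficulty there is that two of the three convolution factors decay only with exponent $d-2$, whereas the target decay has exponent $d-4-\tau$, so one must pick up \emph{two} extra powers of $\langle x\rangle^{-1}$ from the local factor $(\wt S-S)_{ab}$. The mechanism for this is exactly the one underlying \eqref{label_Theta}: in the region $|a-b|\le W^{1+\tau}\ll|x|$ where $(\wt S-S)_{ab}$ is supported, the $\Theta$ and $\wt\Theta$ factors are approximately translation-invariant, so summation against the (nearly diagonal) $\wt S-S$ kernel behaves like a second-order finite difference, generating the missing two powers. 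Once this principle is implemented — via a dyadic decomposition parallel to the one carried out in Lemma~\ref{lem redundantagain} — the estimate follows in a straightforward way.
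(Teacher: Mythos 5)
Your proposal is correct and takes a genuinely different route from the paper. For \eqref{S0xy3}, the paper's argument is Euler--Maclaurin in flavor: it compares the Fourier sum with the Fourier integral box by box, expands $\psi(Wp_0)-\psi(Wp)e^{\ii(p-p_0)\cdot x}$ to second order about the box center (the first-order term vanishes by symmetry of each box), and so produces the factor $\frac{\langle x\rangle^2}{L^2}W^{-d}$; this argument only needs $\psi$ Schwartz, which is why the stated bound has exactly that strength. Your Poisson summation gives the exact identity $\wt f_{W,L}(x)=Z_{W,L}\sum_{m\in\Z^d}f_{W,L}(x-Lm)$ and hence the much stronger bound $\OO(W^{-D-d}\mathbf 1_{|x|\le W^{1+\tau}}+\langle x\rangle^{-D})$, but this step uses compact support in an essential way. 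For \eqref{Theta-wh3}, the paper does not pass through a resolvent identity: it writes $\Theta$, $\wt\Theta$ as geometric series in $|m|^2S$ and $|m|^2\wt S$, telescopes $\wt S^k-S^k=\sum_{k_1}S^{k_1}(\wt S-S)\wt S^{k-1-k_1}$, splits the sum over $k$ into dyadic ranges, and invokes the random-walk Gaussian approximation \eqref{RW_diffusion2}. Your identity $\wt\Theta-\Theta=(I+\wt\Theta)|m|^2(\wt S-S)(I+\Theta)$ resums that series in one line and, combined with the elementary convolution estimate $\sum_a B_{0a}B_{ax}\lesssim W^{-4}\langle x\rangle^{-(d-4)}$ (valid for $d>4$), gives a shorter and arguably cleaner path. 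Both are sound.

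The one thing to correct is the ``Main obstacle'' paragraph, which misdiagnoses where the work lies. You claim one must ``pick up two extra powers of $\langle x\rangle^{-1}$'' from $(\wt S-S)_{ab}$ via a second-order finite-difference (sum-zero) mechanism as in \eqref{label_Theta}. This is backwards. The target exponent $d-4-\tau$ in \eqref{Theta-wh3} is \emph{smaller} than the exponent $d-2$ of each $\Theta$-factor, and in $d>4$ the convolution of two $\langle\cdot\rangle^{-(d-2)}$ kernels automatically \emph{loses} two powers, yielding $\langle\cdot\rangle^{-(d-4)}$ — there is nothing to cancel. Likewise the prefactor $W^2/L^2$ requires no sum-zero input; it is inherited directly from the bound on $\wt S-S$: your Step~1 gives $\OO(W^{-D})$, which is already far smaller, while the paper's weaker bound $\OO(\frac{W^2}{L^2}W^{-d})$, multiplied by the $\OO(W^{(1+\tau)d})$ volume of the shell $\{|a-b|\lesssim W^{1+\tau}\}$, again lands on $\frac{W^2}{L^2}W^{\OO(\tau)}$. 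It is true that $\sum_b(\wt S-S)_{ab}=0$ (both kernels are doubly stochastic), so summation by parts would not \emph{fail}, but invoking it here is unnecessary and suggests the naive dyadic estimate was not actually carried to the end; if you do so, you will find there is no obstacle at all.
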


Corresponding to the $\selfs$ in Definition \ref{collection elements}, we define $\wt\Sele_{2l}$, $4\le 2l\le n$, as the sum of graphs obtained by replacing the $ S$, $ S^\pm$ and $ \Theta$ edges in $\Sele_{2l}$ with the $\wt S$, $\wt S^\pm$ and $\wt\Theta$ edges. With Claim \ref{claim wtS-S}, we can show that $\wt\Sele_{2l}$ is sufficiently close to $\Sele_{2l}$. 
\begin{claim}\label{claim_wtself}
Under the assumptions of Lemma \ref{lem FT0}, fix any $L\ge W$ and $z=E+\ii \eta$ with $E\in (-2+\kappa,2-\kappa)$ and $W^{2}/L^{2-\e}\le \eta\le L^{-\e}$ for a small constant $\e>0$. Then for any $4\le 2l \le n$, we have that 
\be\label{eq_wtS-S0}
\left| (\wt \Selek_{2l})_{0x}(z)-(\Sele_{2l})_{0x}(z) \right| \le W^{-ld} \frac{W^2}{L^2} \frac{W^{2d-6}}{\langle x\rangle^{2d-6-\tau}}, \quad \forall \  x\in \Z_L^d  , 
\ee
and
\be\label{eq_wtS-S1}
\Big|\sum_{x\in \Z_L^d} (\wt\Selek_{2l})_{0x}(z)\Big|   \le  L^\tau W^{-(l-1)d} \frac{W^2}{L^2} ,  
\ee
for any constant $\tau>0$.
\end{claim}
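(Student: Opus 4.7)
The plan is to adapt the edge-by-edge replacement scheme used in the proof of Lemma \ref{lemm V-R}, using the pointwise estimates of Claim \ref{claim wtS-S} as the per-edge input. Structurally, the deterministic graph $\Sele_{2l}$ is, by Definition \ref{def incompgenuni}, a sum of $O(1)$ many doubly connected normal regular graphs whose diffusive edges are either $\Theta$ edges or labelled $\dashed$ edges of the form \eqref{eq label} with $4\le 2k_i\le 2l-1$. We will write the difference $\wt\Sele_{2l}-\Sele_{2l}$ as a telescoping sum obtained by swapping the edges (of type $S$, $S^\pm$, $\Theta$, and labelled $\dashed$) one at a time from their $S,S^\pm,\Theta$ versions to their $\wt S,\wt S^\pm,\wt\Theta$ versions. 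Each resulting graph preserves the doubly connected structure of the original (since the replacement takes place on a single edge between the same two molecules), and differs from a graph in $\Sele_{2l}$ by the substitution of exactly one edge by the corresponding difference.

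The per-edge inputs are supplied by Claim \ref{claim wtS-S}. The bounds \eqref{S0xy3} and \eqref{S+xy3} show that replacing an $S$ or $S^\pm$ edge by its $\wt S$ or $\wt S^\pm$ counterpart incurs an extra factor $W^2/L^2$ compared to the original waved-edge bound. Similarly, \eqref{Theta-wh3} says that $(\Theta-\wt\Theta)_{\al\beta}$ is bounded, up to an $(|x|+W)^{-D}$ term that is irrelevant for our purposes, by $(W^2/L^2)\wt B_{\al\beta}$, with $\wt B_{\al\beta}:=W^{-4}\langle\al-\beta\rangle^{-(d-4)}$ as in Corollary \ref{lem Rdouble weak}. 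For a labelled $\dashed$ edge \eqref{eq label} with inner $\selfs$ $\Sele_{2k_i}$, $4\le 2k_i\le 2l-1$, we expand the difference
\[
\bigl[\Theta\Sele_{2k_1}\Theta\cdots\Theta\Sele_{2k_l}\Theta-\wt\Theta\wt\Sele_{2k_1}\wt\Theta\cdots\wt\Theta\wt\Sele_{2k_l}\wt\Theta\bigr]_{\al\beta}
\]
into its own telescoping sum. For the $\Theta-\wt\Theta$ pieces we again use \eqref{Theta-wh3}; for the $\Sele_{2k_i}-\wt\Sele_{2k_i}$ pieces we invoke the induction hypothesis on $l$, where the base case $2l=4$ is trivial because a fourth-order $\self$ contains no labelled $\dashed$ edges. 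In either case, the telescoped piece inherits the doubly connected structure.

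Having produced an $O(1)$-sized telescoping sum in which each graph is doubly connected, has scaling order equal to $2l$, and differs from a piece of $\Sele_{2l}$ by one replaced edge carrying an extra $W^2/L^2$ factor, we apply Corollary \ref{lem Rdouble weak} (with the ``weakened'' edge identified as the replaced one, bounded either by $\wt B_{\al\beta}$ for $\Theta$-type replacements or absorbed into the waved-edge factor for $S,S^\pm$ replacements). This yields the entrywise estimate
\[
\bigl|(\wt\Sele_{2l})_{0x}(z)-(\Sele_{2l})_{0x}(z)\bigr|\le W^{-ld}\frac{W^2}{L^2}\frac{W^{2d-6}}{\langle x\rangle^{2d-6-\tau}},\quad x\in\Z_L^d,
\]
which is exactly \eqref{eq_wtS-S0}. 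The bound \eqref{eq_wtS-S1} then follows by summing over $x\in\Z_L^d$: for $d\ge 8$, $2d-6-\tau>d$, so the sum $\sum_{x\in\Z_L^d}\langle x\rangle^{-(2d-6-\tau)}$ is controlled by $\OO(W^{-(d-6-\tau)})$ (a logarithmic factor is absorbed into $L^\tau$), giving the total $L^\tau W^{-(l-1)d}(W^2/L^2)$.

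The main technical obstacle I anticipate is the inductive treatment of labelled $\dashed$ edges: one must check that the telescoping expansion of the labelled-edge difference produces only doubly connected intermediate graphs so that Corollary \ref{lem Rdouble weak} applies cleanly, and that the cumulative number of one-edge replacements across all levels of induction stays uniformly bounded in $L$ (it is controlled by the fixed combinatorial structure of $\Sele_{2l}$). A secondary bookkeeping point is that in the telescoping expansion of a labelled edge, the sub-blocks of the form $\Theta\Sele_{2k_i}\Theta\cdots$ or $\wt\Theta\wt\Sele_{2k_j}\wt\Theta\cdots$ on either side of the replaced factor must still satisfy the analogue of \eqref{BRB} with an acceptable error; this is where the induction hypothesis (applied to $\wt\Sele_{2k_i}-\Sele_{2k_i}$ for $2k_i<2l$) combined with Lemma \ref{lem redundantagain} yields the requisite replacement for $\wt\Theta\wt\Sele_{2k}\wt\Theta\cdots\wt\Sele_{2k'}\wt\Theta$.
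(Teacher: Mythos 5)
Your proposal matches the paper's approach: the paper's proof is precisely an induction on $l$ that invokes the estimates \eqref{S0xy3}--\eqref{Theta-wh3} of Claim \ref{claim wtS-S} and then runs ``the same argument as in the proof of Lemma \ref{lemm V-R}'', which is the one-edge-at-a-time telescoping replacement controlled by Corollary \ref{lem Rdouble weak} that you spell out. The pointwise bound \eqref{eq_wtS-S0} is handled correctly, and your exponent bookkeeping checks out.

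One small logical step is glossed over in your derivation of \eqref{eq_wtS-S1}: summing \eqref{eq_wtS-S0} over $x\in\Z_L^d$ bounds the \emph{difference} $\bigl|\sum_x\bigl((\wt\Sele_{2l})_{0x}-(\Sele_{2l})_{0x}\bigr)\bigr|$, not $\bigl|\sum_x(\wt\Sele_{2l})_{0x}\bigr|$ itself. To conclude you must also control $\bigl|\sum_x(\Sele_{2l})_{0x}\bigr|$, which is supplied by \eqref{3rd_property0} and gives $L^\tau\eta\,W^{-(l-1)d}$. At the value $\eta=W^2/L^{2-\e}$ actually used in Lemma \ref{lem FT0} this is $L^{\tau+\e}W^{-(l-1)d}W^2/L^2$ and the two pieces combine to the stated bound (absorbing $L^\e$ into $L^\tau$); you should state this second ingredient explicitly, since without it the ``follows by summing'' step is incomplete. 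This observation also makes clear why the inductive hypothesis must include both \eqref{eq_wtS-S0} and \eqref{eq_wtS-S1}: the row-sum bound on $\wt\Sele_{2k_i}$ is what feeds the analogue of Lemma \ref{lem redundantagain} for the $\wt\Theta$-labelled diffusive chains appearing at higher order.
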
 
\begin{proof}
	We prove \eqref{eq_wtS-S0} and \eqref{eq_wtS-S1} by induction on $l$. First, we trivially have $\wt\Sele_2=\Sele_2=0$. Now suppose we have shown that \eqref{eq_wtS-S0} and \eqref{eq_wtS-S1} hold for $\Sele_{2l}$ with $l\le k-1$. Then with this induction hypothesis and the estimates \eqref{S0xy3}--\eqref{Theta-wh3}, using the same argument as in the proof of Lemma \ref{lemm V-R},  we can prove that \eqref{eq_wtS-S0} and \eqref{eq_wtS-S1} hold for $\Sele_{2k}$.
\end{proof}

Claim \ref{claim_wtself} shows that $\sum_{\fa}(\Sele_{n})_{0\fa }$ has the same infinite space limit as $\sum_{\fa}(\wt\Sele_{n})_{0\fa}$. Hence to prove Lemma \ref{lem FT0}, we first calculate the sum \smash{$\sum_\fa (\wt \Sele_{n})_{0\fa }$} for a finite $L$, and then take $L\to \infty$. In the following proof, we choose $z \equiv z(L)=E+ \ii W^{2}/L^{2-\e}$ for a small enough constant $\e>0$. 
Now we express $ \sum_{\fa} (\wt\Sele_{n})_{0\fa }$ using the Fourier series \eqref{choicef0} and \eqref{FTtheta}. 
For simplicity of notations, 
we denote the $\wt S$, $\wt S^\pm$, $\wt\Theta$ and labelled $\wt\Theta$ edge in a unified way as
$$\wt S^{(a)}_{xy}(z) =  \frac1{L^d}\sum_{p\in \mathbb T_L^d} \psi_{a}(Wp, z)e^{\ii p\cdot (x-y)},$$
for
$$ a\in \{\emptyset, \pm, \Theta\} \cup \Big\{ (k; 2k_1, \cdots, 2k_l): l\ge 1, \max_{i}(2k_i)\le n-1, k=\sum_{i=1}^l 2 k_i -2(l-1)  \Big\},$$
where $ \wt S^{(\emptyset)} :=\wt S $, $ \wt S^{(\pm)} (z):=\wt S^\pm(z) $, $\wt S^{(\Theta)} (z):=\wt\Theta(z)$ and $\wt S^{(k; 2k_1, \cdots, 2k_l)}$ corresponds to a labelled $\wt\Theta$ edge as in \eqref{eq label} (with $\Theta$ and $\Sele_{2k_i}$ replaced by $\wt\Theta$ and $\wt\Sele_{2k_i}$). The functions $\psi_a(Wp,z)$ are given by \eqref{choicef0} and \eqref{FTtheta} for $a\in \{\emptyset, \pm, \Theta\}$, and 
we have 
\be\label{FT_finitespace}
\psi_{ (k; 2k_1, \cdots, 2k_l)}(Wp,z) =  \psi_{\Theta}(Wp, z)^{l+1}\prod_{i=1}^l \psi_{\Sele_{2k_i}}(Wp , z),  
\ee
where \smash{$\psi_{\wt\Sele_{2k_i}}(Wp , z)$} is the Fourier transform of $(\wt\Sele_{2k_i})_{0x}$ (which can be calculated inductively with respect to $2k_i$). 
For each edge $e$ in \smash{$ (\wt\Sele_{n})_{0\fa }$}, we assign a label $a_e$ and a momentum $p_e$ to it. 	

For a vertex $x$ in the graph, suppose that it is connected with $k$ edges with labels $a_i$, $1\le i \le k$. Then summing over $x\in \Z_L^d$, we get that
\begin{align*}
\sum_ {x\in \Z_L^d} \prod_{i=1}^k \wt S^{(a_i)} _{xy_i} & =\sum_{x\in \Z_L^d} \prod_{i=1}^k \frac{1}{L^d}\sum_{p_i\in \mathbb T_L^d}\psi_{a_i} (Wp_i,z)e^{\ii p_i \cdot (x-y_i)}  \\
& = \frac{1}{L^{(k-1)d}}\sum_{\substack{p_1,\cdots, p_k\in \mathbb T_L^d: \\ p_1+p_2+\cdots+p_k=0 \mod 2\pi}}\prod_{i=1}^k\psi_{a_i}(Wp_i,z)e^{-\ii p_i \cdot y_i}  ,
\end{align*}
where for a vector $v\in \R^d$, we use $v=0 \mod 2\pi$ to mean that $v_i=0 \mod 2\pi$ for all $1\le i \le d$. Note that $p_1+\cdots+p_k=0 \mod 2\pi$ is a momentum conservation condition. The momentum $p_i$ associated with $y_i$ will be used later in the summation over $y_i$, and so on. Let $\cal G$ denote the graphs in $\wt\Sele_{n}$, $c(\cal G, z)$ be the coefficient of $\cal G$, $p_e$ denote the momentum associated with each edge $e$ in $\cal G$, $\Xi_L$ be a subset of $(\mathbb T_L^{d})^{n_e}$ given by the constraint that the total momentum at each vertex is equal to 0 modulo $2\pi$, where $n_e\equiv n_e(\cal G)$ is the total number of edges in $\cal G$.  Then after summing over all indices in $(\wt\Sele_{n})_{0\fa}$, we obtain that 
\be\label{eq_discrete} 
\sum_{\fa} (\wt\Sele_{n})_{0\fa }(m(z),\psi,W,L)=\frac{1}{L^{(n-2) d/2 }}\sum_{\cal G}c(\cal G, z)\sum_{ \{p_e\} \in \Xi_{L}}\prod_{e}\psi_{a_e}(Wp_e,z) . 
\ee
Taking $L\to \infty$, \eqref{eq_discrete} gives that
\begin{align}\label{eq_discrete2} \sum_{\fa} (\Seleinf_{n})_{0\fa }&=\frac{1}{(2\pi)^{(n-2) d/2}}\sum_{\cal G}c(\cal G, E)\int_{ \{p_e\} \in \Xi}\prod_{e}\psi_{a_e}(Wp_e, E) \dd p_e ,
\end{align}
where $\Xi$ is a union of hyperplanes in the torus $(-\pi,\pi]^{d n_e}$ given by the constraint that the total momentum at each vertex is equal to zero modulo $2\pi$. To give a more rigorous proof of \eqref{eq_discrete2}, we need to deal with the singularities of $\psi_{a_e}(Wp_e, E)$ at $p_e=0$ for (labelled) $\dashed$ edges. We introduce an infrared cutoff on these edges, i.e. $\psi_{a_e,\e}(Wp_e,z):=\psi_{a_e}(Wp_e,z) \mathbf 1_{|W p_e|\le \e}$. Then we define \smash{$\sum_{\fa} (\wt\Sele_{n,\e})_{0\fa }$} by replacing  $\psi_{a_e}(Wp_e,z)$ with $\psi_{a_e,\e}(Wp_e,z)$ on the right-hand side of \eqref{eq_discrete}. Since $\psi_{a_e,\e}(Wp_e,z)$'s are nonsingular, taking $L\to \infty$ we readily get that 
$$\sum_{\fa} (\Seleinf_{n,\e})_{0\fa }=\frac{1}{(2\pi)^{(n-2) d/2}}\sum_{\cal G}c(\cal G, E)\int_{ \{p_e\} \in \Xi}\prod_{e}\psi_{a_e,\e}(Wp_e, E) \dd p_e . $$
Then taking $\e\to 0$, we can show that this equation converges to \eqref{eq_discrete2}, which again follows from the doubly connected property of the graphs $\cal G$ by using a similar argument as in the proof of Lemma \ref{lemm V-R}. We omit the details. 


Now applying a change of variables $q_e=Wp_e$ to \eqref{eq_discrete2}, we get that 
$$\sum_{\fa} (\Seleinf_{n})_{0\fa }=\frac{1}{(2\pi)^{(n-2) d/2}W^{(n-2) d/2}}\sum_{\cal G}c(\cal G, E)\int_{W\Xi}\prod_{e}\psi_{ a_e}(q_e, E) \dd q_e .$$
Since $\psi$ is compactly supported in the assumption of Theorem \ref{main thm}, we have that 
\be\label{psi_compact}\frac{1}{(2\pi)^{(n-2) d/2}W^{(n-2) d/2}}\int_{W\Xi}\prod_{e}\psi_{ a_e}(q_e, E) \dd q_e =\frac{1}{(2\pi)^{(n-2) d/2}W^{(n-2) d/2}}\int_{\wt\Xi }\prod_{e}\psi_{ a_e}(q_e, E) \dd q_e ,\ee
where $\wt\Xi$  is a union of hyperplanes in $(\R^d)^{n_e}$ given by the constraint that the total momentum at each vertex is equal to 0 (without modulo $2\pi$).
Combining the above two equations, 	we obtain  \eqref{eq FT0} by renaming
$$\Sint_{n}(m(E), \psi) :=\frac{1}{(2\pi)^{(n-2) d/2} }\sum_{\cal G}c(\cal G,E)\int_{\wt\Xi}\prod_{e}\psi_{ a_e}(q_e, E) \dd q_e .$$

\begin{remark}\label{rem FT0}
The equation \eqref{psi_compact} is the only place where the compactly supported condition of $\psi$ is used. If we only assume that $\psi$ is a Schwartz function, then equation \eqref{psi_compact} does not hold exactly, but with an additional error of order $\OO(W^{-D})$ for any large constant $D>0$. Such a small error does not affect our proofs, and we refer the reader to Section \ref{sec pf0} below for the necessary modifications of the proof in the setting of Theorem \ref{main thm0}. 
\end{remark}

\section{Proof of the main results}\label{sec pf0} 


In this section, we complete the proofs of the main results---Theorem \ref{comp_delocal}, Theorem \ref{main thm0}, Theorem \ref{main thm2} and Corollary \ref{main_cor}. First, we prove Theorem \ref{comp_delocal} using the local law \eqref{locallaw}. In fact, we will prove a slightly stronger result in Lemma \ref{lemm_comp_delocal}. For any constants $M, K>1$, we define the following random subset of indices that contains $B_{\gamma, K,\ell}$ as a subset:
$$
\wt{\mathcal B}_{ M, K,\ell} :=\left\{\alpha: \lambda_\alpha \in (-2+\kappa, 2-\kappa) \text{ so that  } \min_{x_0} \sum_x | u_\alpha(x)|^2 \left(\frac{\|x-x_0||_L}{\ell}+1\right)^M \le K\right\}. 
$$
Note that this subset contains all indices associated with bulk eigenvectors that are localized \emph{super-polynomially} in balls of radius $\OO(\ell)$. 

\begin{lemma} \label{lemm_comp_delocal}
	Suppose the assumptions of Theorem \ref{comp_delocal} hold. Fix any constants $c_0>0$ and $M,K>1$. For any $W\le  \ell\le L^{1 - c_0}$, we have that
	\be\label{uinf0} 
	{|\wt{\mathcal B}_{M,K,\ell}|}/{N}\prec \big(  {\ell^{\frac{M}{M+4}}}/{L^{\frac{M-d}{M+4}}}\big)^2 + W^{-d/2}.
	\ee
\end{lemma}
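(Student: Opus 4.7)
The plan is to reduce \eqref{uinf0} to the $\ell^\infty$ eigenvector bound $\|\bu_\alpha\|_\infty^2 \prec W^2/L^{2-\e}$ combined with the local law \eqref{locallaw}, via a weighted spectral-counting argument. I would first derive the $\ell^\infty$ bound as a standard consequence of Ward's identity and Theorem \ref{main thm0}: the spectral expansion $\im G_{xx}(\lambda_\alpha+\ii\eta_0) \ge |u_\alpha(x)|^2/\eta_0$ together with $\im G_{xx}\prec 1$ (uniformly on a fine net of $E$'s in the bulk, via \eqref{locallaw}) gives $|u_\alpha(x)|^2 \prec \eta_0 = W^2/L^{2-\e}$ for every bulk eigenvector simultaneously. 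Next, I translate the polynomial decay into two working statements: for $\alpha \in \wt{\mathcal B}_{M,K,\ell}$ with center $x_0 = x_0(\alpha)$, the pointwise bound $|u_\alpha(x)|^2 \le K(\|x-x_0\|_L/\ell+1)^{-M}$ and, for any $R>1$, the tail bound $\sum_{\|x-x_0\|_L > R\ell} |u_\alpha(x)|^2 \le KR^{-M}$, so a definite fraction of the $\ell^2$-mass lies in a ball of radius $\OO(\ell)$ about $x_0$.

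The central step is a weighted trace estimate. I pick a net $\{x_0^{(i)}\}$ of centers with spacing $\sim \ell$ and a net $\{E_j\}$ of energies with spacing $\eta$ (both free parameters to optimize), and for each pair $(x_0^{(i)}, E_j)$ I consider
\begin{equation*}
	F_i(E_j,\eta) \deq \sum_x \Bigl(\frac{\|x-x_0^{(i)}\|_L}{\ell}+1\Bigr)^{-M}\im G_{xx}(E_j+\ii\eta).
\end{equation*}
The local law yields $F_i(E_j,\eta) = \im m(E_j)\sum_x(\|x-x_0^{(i)}\|_L/\ell+1)^{-M} + O_\prec(W^{-d/2}\ell^d) = O_\prec(\ell^d)$ for $M>d$, while the spectral representation combined with the mass-concentration bound shows that every $\alpha \in \wt{\mathcal B}_{M,K,\ell}$ with center within $\ell/2$ of $x_0^{(i)}$ and $|\lambda_\alpha - E_j|\le \eta$ contributes at least $c_{M,K}/\eta$ to $F_i(E_j,\eta)$. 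This produces a per-pair count of at most $C_{M,K}\,\ell^d\eta$ such eigenvectors.

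The main obstacle is extracting the precise exponents $M/(M+4)$ and $(M-d)/(M+4)$: the naive sum over the $(L/\ell)^d$ centers and $\OO(\eta^{-1})$ windows merely saturates density of states and yields the trivial $|\wt{\mathcal B}|/N = O(1)$. The improvement must come from interpolating between the weighted trace bound and the sharp $\ell^\infty$ constraint $|u_\alpha(x)|^2 \le C\eta_0$. Concretely, I would write $|u_\alpha(x)|^2 = |u_\alpha(x)|^{2\theta}\,|u_\alpha(x)|^{2(1-\theta)}$ with the H\"older exponent $\theta = 4/(M+4)$, bound the first factor by the $\ell^\infty$ estimate (this contributes the $4$ in the denominator of the exponent) and the second by the polynomial-decay weight (this contributes the $M$), and then carry out the optimization. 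The additive $W^{-d/2}$ term in \eqref{uinf0} is inherited from the error $|G_{xx}-m|\prec W^{-d/2}$ in the local law, propagated through the trace formula. Once \eqref{uinf0} is established, \eqref{uinf_locallength} of Theorem \ref{comp_delocal} follows by taking $M = M(\gamma,K)$ sufficiently large so that $\mathcal{B}_{\gamma,K,\ell} \subset \wt{\mathcal{B}}_{M,K',\ell}$ for a suitable $K'$.
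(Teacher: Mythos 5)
Your opening observations are sound: the $\ell^\infty$ bound $\|\bu_\alpha\|_\infty^2\prec W^2/L^{2-\e}$ does follow from the local law via Ward's identity, the mass of an eigenvector in $\wt{\mathcal B}_{M,K,\ell}$ is indeed concentrated in a ball of radius $\OO(\ell)$, and you are right that the naive weighted diagonal trace argument collapses to a density-of-states bound and gives nothing. The problem is the proposed fix. Your counting object is $F_i(E,\eta)=\sum_x w_\ell(x-x_0^{(i)})\im G_{xx}(E+\ii\eta)$, which involves only the \emph{diagonal} resolvent and hence carries no $L$-dependence whatsoever: $F_i\prec\ell^d$ uniformly in $L$, and the per-window lower bound $c_{M,K}/\eta$ combined with the $\eta^{-1}$ windows also has no $L$-dependence. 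But the target bound $\ell^{2M/(M+4)}/L^{2(M-d)/(M+4)}$ depends explicitly on the system size. Hölder-splitting $|u_\alpha(x)|^2=|u_\alpha(x)|^{2\theta}|u_\alpha(x)|^{2(1-\theta)}$ and invoking the $\ell^\infty$ bound on one factor does not repair this: whichever way you insert the $\ell^\infty$ estimate, it only weakens the per-eigenvector lower bound or changes the normalization of $F_i$, and in either case it cannot convert a trace saturated by density of states into a nontrivial fraction. In short, the interpolation you gesture at has no mechanism to make the count depend on $L$.

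The paper's actual route is structurally different and uses a genuinely off-diagonal input. It introduces the intermediate set $\mathcal A_{\delta,\wt\ell}:=\{\alpha: \sum_x|u_\alpha(x)|\,\|P_{x,\wt\ell}\bu_\alpha\|\le\delta\}$, where $P_{x,\wt\ell}$ projects onto $\{y:|y-x|\ge\wt\ell\}$ — a cross-correlation quantity rather than a weighted norm — and proves $|\mathcal A_{\delta,\wt\ell}|/N\lesssim\sqrt{\delta}+\OO_\prec(\wt\ell^2/L^{2-\e}+W^{-d/2})$ by following Proposition 7.1 of \cite{delocal}. The crucial input there is the localized off-diagonal estimate $\eta\sum_{y:|y-x|\le\wt\ell}|G_{xy}(z)|^2\prec\eta\,\wt\ell^2/W^2 = \wt\ell^2/L^{2-\e}$ at $\eta=W^2/L^{2-\e}$, which is where the system size enters and which you mention at the outset but never actually exploit. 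The paper then shows $\wt{\mathcal B}_{M,K,\ell}\subset\mathcal A_{\delta_L,\wt\ell}$ by a Cauchy--Schwarz argument exploiting the polynomial weight (giving $\delta_L\sim K(\ell/\wt\ell)^{M/2}L^{d/2}$), and finally optimizes over the free intermediate scale $\wt\ell=\ell L^{c_1}$; the balance between $(\ell/\wt\ell)^{M/2}L^{d/2}$ and $\wt\ell^2/L^2$ is what produces the exponents $\frac{M}{M+4}$ and $\frac{M-d}{M+4}$. Your sketch has the $\ell^\infty$ ingredient and the right intuition that an optimization over scales is needed, but it is missing the intermediate set $\mathcal A_{\delta,\wt\ell}$, the Cauchy--Schwarz embedding $\wt{\mathcal B}\subset\mathcal A_{\delta_L,\wt\ell}$, and above all the localized second-moment bound on $G_{xy}$, which is the ingredient that actually makes the count nontrivial.
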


Using Theorem \ref{main thm0} and Lemma \ref{lemm_comp_delocal}, we can easily conclude Theorem \ref{comp_delocal}. 

\begin{proof}[Proof of Theorem \ref{comp_delocal}]
	Since ${\mathcal B}_{\gamma,K,\ell}\subset \wt{\mathcal B}_{M, C_M K,\ell}$ for arbitrarily large $M$ and a constant $C_M>0$, we obtain from \eqref{uinf0} that
	$$ {|\mathcal B_{\gamma,K,\ell}|}/{N}\prec  \left( {\ell}/{L}\right)^2 + W^{-d/2}.$$
	This implies \eqref{uinf_locallength} by Definition \ref{stoch_domination}. To prove \eqref{uinf}, using the spectral decomposition of $G(z)$, we obtain from \eqref{locallaw} that 
	$$| u_\al(x)|^2 \le \eta \im G_{xx}(\lambda_\al +\ii \eta) \prec  W^{2}/L^{2-\e}, $$
	for $\eta = W^{2}/L^{2-\e}$.  Since $\e>0$ is arbitrarily small, we conclude \eqref{uinf}.
\end{proof}

Now we give the proof of Lemma \ref{lemm_comp_delocal} based on Theorem \ref{main thm0}.

\begin{proof}[Proof of Lemma \ref{lemm_comp_delocal}]
	We define the following characteristic function $P_{x,l}$ projecting onto the complement of the $\ell$-neighborhood of $x$: $P_{x,\ell}(y):=\mathbf 1(|y-x|\ge \ell).$ 	Define the following random subset of indices  
	$$\mathcal A_{\delta, \ell}:=\Big\{ \alpha: \lambda_\alpha \in (-2+\kappa, 2-\kappa), \sum_x |u_\alpha(x)|\|P_{x,\ell}\bu_{\al}\|\le \delta \Big\} , $$
	where $\delta \equiv \delta(L)$ may depend on $L$ and is not necessarily a constant. Using Theorem \ref{main thm0}, we get that 
	$$\max_{x,y}|G_{xy}(z)-m(z)\delta_{xy}|\prec W^{-d/2},\quad  \quad \max_{x\in \Z_L^d}\ \eta \sum_{y:|y-x|\le \ell}|G_{xy}|^2 \prec \eta {\ell^2}/{W^2} \le  \ell^2/L^{2-\e},$$
	if we take $\eta = W^{2}/L^{2-\e}$. 	With these estimates, following the proof of Proposition 7.1 of \cite{delocal}, we can obtain that 
	\be\label{uinf1}  {|\mathcal A_{\delta, \ell }| }/{N} \le C\sqrt{\delta} + \OO_\prec \left( \ell^2/L^{2-\e}+W^{-d/2}\right).\ee
	Next we use a similar argument as in the proof of \cite[Corollary 3.4]{ErdKno2011} to derive the estimate \eqref{uinf0} from \eqref{uinf1}. 
	Let $\wt \ell:= \ell L^{c_1}$ for a constant $c_1\in (0,c_0)$. If $\al\in  \wt{\mathcal B}_{M,K,\ell} $, then for any $x_0\in \Z_L^d$ we have that 
	\begin{align*}
		\left(\sum_x |u_\alpha(x)|\|P_{x,\wt\ell}\bu_{\al}\|\right)^2& \le \left[\sum_x |u_\alpha(x)|^2 \left(\frac{\|x-x_0||_L}{\ell}+1\right)^M\right] \left[\sum_x \|P_{x,\wt\ell}\bu_{\al}\|^2  \left(\frac{\|x-x_0||_L}{\ell}+1\right)^{-M}\right] \\
		& \le K \left( \frac{\ell}{\wt\ell}\right)^M\sum_{x,y:\|x-y\|_L \ge \wt\ell} \| u_{\al}(y)\|^2  \left(\frac{\|x-x_0||_L}{\ell}+1\right)^{-M}\left(\frac{\|x-y||_L}{\ell}+1\right)^M \\
		& \le  K\left( \frac{\ell}{\wt\ell}\right)^M\sum_{x,y } \| u_{\al}(y)\|^2   \left(\frac{\|y-x_0||_L}{\ell}+1\right)^M \le  K^2  \left( \frac{\ell}{\wt\ell}\right)^M L^d=:\delta_L^2 .
	\end{align*}
	Thus we have proved that $\wt{\mathcal B}_{ M,K,\ell}\subset \cal A_{\delta_L, \wt\ell}$. Then we get from \eqref{uinf1} that
	$$\left|\wt{\mathcal B}_{M,K,\ell}\right|/N \le CK\left( {\ell}/{\wt\ell}\right)^{M/2} L^{d/2}+\OO_\prec \left(  {\wt\ell^2}/{L^{2-\e}}+W^{-d/2}\right)  \prec L^\e \left(  {\ell^{\frac{M}{M+4}}}/{L^{\frac{M-d}{M+4}}}\right)^2 +W^{-d/2},$$
	where in the second step we minimized the sum over $\wt\ell$. Since $\e$ is arbitrarily small, we conclude \eqref{uinf0}. 
\end{proof}


The proof of Theorem \ref{main thm0} is almost the same as the one for Theorem \ref{main thm}, except for some minor differences regarding the infinite space limits of the $\selfs$. Here we only describe the necessary modifications to the arguments in Section \ref{sec pfmain}, without writing down all the details of the proof of Theorem \ref{main thm0}. First, fix any $n\in \N$, we define the renormalized $\selfs$ $ \Sele_l^{(r)} $, $4\le l \le n$, as follows. 

\begin{definition}[Renormalized $\selfs$] \label{collection elements weak}
Let $z=E+\ii \eta$ with $E\in (-2+\kappa,2-\kappa)$ and $W^{2}/L^{2-\e}\le \eta\le L^{-\e}$ for a small constant $\e>0$. Let $\Sele_{l}$, $4\le l \le n-1$, be a sequence of $\selfs$ satisfying Definition \ref{collection elements}, and $\Sele_n$ be a sum of scaling order $n$ deterministic graphs  constructed in the $n$-th order $\incomp$ in Lemma \ref{incomplete Texp}. 
Then we define the renormalized $\selfs$ inductively as follows. First, we define \smash{$\Sele_4^{(r)}=\Sele_4$} and \smash{$\Sele^{(r)}_{2l+1}=0$}. Suppose we have defined \smash{$\Sele_l^{(r)}$} for all $l\le k-1$. Then we define \smash{$\Sele_k^{(r)}$} as the sum of deterministic graphs obtained from $\Sele_k$ by replacing all the lower order $\selfs$ $ \Sele_l$, $4\le l \le k-1$, in it with 
\be\label{vacuum_renorm} \mathring \Sele_l:= \Sele_l^{(r)} - \Big[\sum_x ( \Sele_l^{(r)})_{0x}\Big]\cdot S .\ee 
%
\end{definition}

By definition, we trivially have that $\mathring \Sele_l$ and its infinite space limit $\mathring \Sele_l^\infty$ satisfy the sum zero properties. In particular, the sum zero properties of \smash{$\mathring \Sele_l^\infty$}, $4\le l \le n-1$, are necessary for \smash{$\Sele_n^{(r),\infty}$}, the infinite space limit of \smash{$\Sele_n^{(r)}$}, to be well-defined. For example, we consider the function in \eqref{FT_finitespace}. In the proof below, we will see that $\sum_{x\in \Z^d}(\Sele^{\infty}_{l})_{0x} = \OO(W^{-D})$ under the assumptions of Theorem \ref{main thm0}, which gives that $\psi_{\Sele_{2k_i}}(Wp,E) = \OO(W^2|p|^2 + W^{-D})$ for $p\ll W^{-1}$. If we do not perform the renormalization in Definition \ref{collection elements weak}, then we have
$$\left|\psi_{ (k; 2k_1, \cdots, 2k_l)}(Wp,E)\right| \lesssim W^{-2(l+1)}|p|^{-2(l+1)} (W^2|p|^2 + W^{-D})^{2l} \quad \text{for $|p|\ll W^{-1}$}.  $$
Thus it may give a non-integrable singularity around $p=0$ in the infinite space limit. (However, notice that $W^{-D}$ is negligible in finite space with $W\ge L^{\e}$ for a constant $\e>0$.) In \eqref{vacuum_renorm}, the matrix $S$ can be replaced by any doubly stochastic matrix whose Fourier transform is a Schwartz function (e.g. $S^k$ for any fixed $k\in \N$).

Using properties \eqref{two_properties0}--\eqref{3rd_property0} for $\Sele_l$, $4\le l \le n-1$, we can obtain the following result. 

\begin{lemma}
	 Under the assumptions of Theorem \ref{main thm0} and in the setting of Definition \ref{collection elements weak}, for $4\le l \le n$, we have that 
	\be\label{wh3rd_property0 diff}
	\left| ( \Selek_{l}^{(r)})_{0x}(z)-( \Sele_{l})_{0x}(z) \right| \le W^{-ld/2} \frac{\eta W^{2d-6}}{\langle x\rangle^{2d-6-\tau}}, \quad \forall \  x\in \Z_L^d ,
	\ee
	and
	\be\label{wh3rd_property0}
	\Big|\sum_{x } (\Selek_{l}^{(r)})_{0x}(z)- \sum_{x } ( \Selek_{l})_{0x}(z)\Big|   \le L^\tau \eta W^{-(l-2)d/2 } ,  
	\ee
	for any constant $\tau>0$.	
\end{lemma}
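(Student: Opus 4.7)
The plan is to prove both estimates simultaneously by strong induction on $l$, with base case $l=4$: by Definition \ref{collection elements weak} we have $\Selek_4^{(r)} = \Sele_4$, so both sides of \eqref{wh3rd_property0 diff} and \eqref{wh3rd_property0} vanish identically. For the inductive step, suppose both bounds are known for all indices strictly less than $l$. I would then rewrite $\Sele_l^{(r)} - \Sele_l$ via a hybrid/telescoping argument: in each graph of $\Sele_l$ (which contains only finitely many labelled $\dashed$ edges whose inner self-energies are $\Sele_{l'}$ with $l' \le l-1$), replace these self-energies one at a time by the renormalized $\mathring\Sele_{l'}$, producing a finite sum of modified doubly connected graphs, each of which contains exactly one factor of $\mathring\Sele_{l'} - \Sele_{l'}$ (the other self-energies having been already replaced but remaining of the same scaling order).

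The first substantive step is a pointwise estimate on the single-edge perturbation $\mathring\Sele_{l'} - \Sele_{l'}$. Decomposing $\mathring\Sele_{l'} - \Sele_{l'} = \bigl(\Sele_{l'}^{(r)} - \Sele_{l'}\bigr) - \bigl[\sum_y (\Sele_{l'}^{(r)})_{0y}\bigr]\, S$, the induction hypothesis \eqref{wh3rd_property0} combined with the a priori sum bound \eqref{3rd_property0} for $\Sele_{l'}$ (available since $l' \le n-1$) yields $\bigl|\sum_y (\Sele_{l'}^{(r)})_{0y}\bigr| \le L^\tau \eta W^{-(l'-2)d/2}$. Combining this with $|S_{0x}| \lesssim W^{-d}\mathbf 1_{|x|\le W^{1+\tau}} + \langle x\rangle^{-D}$ and the induction hypothesis \eqref{wh3rd_property0 diff}, and absorbing the local indicator into the polynomial decay, gives a schematic bound
$$
|(\mathring\Sele_{l'} - \Sele_{l'})_{0x}| \le L^\tau\, \eta\, W^{-l'd/2}\,\frac{W^{2d-6}}{\langle x\rangle^{2d-6-\tau}} + L^\tau \eta W^{-(l'-2)d/2} \langle x\rangle^{-D}.
$$
The crucial feature is the explicit $\eta$ factor, which is manufactured precisely by the $S$-subtraction in \eqref{vacuum_renorm}; without it, the raw bound \eqref{4th_property0} on $\Sele_{l'}$ would survive and no $\eta$ gain would be available.

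Next I would propagate this local gain to the full modified graphs of $\Sele_l$, using the doubly connected property guaranteed by Definition \ref{def incompgenuni}. Treating the altered factor $\mathring\Sele_{l'} - \Sele_{l'}$ as an auxiliary edge of weight bounded pointwise by $L^\tau \eta W^{-l'd/2} W^{2d-6}/\langle\cdot\rangle^{2d-6-\tau}$, I would invoke Corollary \ref{lem Rdouble weak}---or, equivalently, mimic the chain estimate \eqref{G-Ginfty3} appearing in the proof of Lemma \ref{lemm V-R}---to each modified graph. Careful accounting of the ambient scaling order then yields
$$
|(\Sele_l^{(r)} - \Sele_l)_{0x}| \le W^{-ld/2}\, \frac{\eta\, W^{2d-6}}{\langle x\rangle^{2d-6-\tau}},
$$
which is \eqref{wh3rd_property0 diff}. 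The bound \eqref{wh3rd_property0} then follows by summing over $x \in \Z_L^d$: for $d \ge 8$ the kernel $\langle x\rangle^{-(2d-6-\tau)}$ is summable on $\Z^d$ with total mass $\OO(W^{-(d-6-\tau)})$, so multiplication by $W^{2d-6}$ produces the advertised $L^\tau \eta W^{-(l-2)d/2}$.

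The main obstacle I anticipate is graph-level bookkeeping: after substituting $\mathring\Sele_{l'} - \Sele_{l'}$ into a labelled $\dashed$ edge, one has to verify that the altered edge---in particular the $S$-type subtraction---remains compatible with the blue/black net framework of Definition \ref{def 2net}, so that the weight-propagation argument of Corollary \ref{lem Rdouble weak} continues to apply. The $S$-subtraction in \eqref{vacuum_renorm} is essential here: without the $\eta$ factor it manufactures, summation of the $\dashed$ edges flanking the altered self-energy would introduce the divergent prefactor $\eta^{-1}$ from \eqref{sumTheta} and destroy the target scaling.
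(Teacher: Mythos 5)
Your proof is essentially the same as the paper's: induction on $l$ with base case $\Sele_4^{(r)}=\Sele_4$; bound $\mathring\Sele_{l'}-\Sele_{l'}$ using the inductive hypothesis together with \eqref{3rd_property0} (to control $\sum_x(\Sele_{l'}^{(r)})_{0x}$) and the decay of $S$; then invoke the replacement/chain argument of Lemma \ref{lemm V-R} (via the doubly connected machinery of Corollary \ref{lem Rdouble weak}) to propagate the gain. One small correction to your commentary: the $\eta$ factor in the term $\bigl[\sum_y(\Sele_{l'}^{(r)})_{0y}\bigr]S$ comes from the row-sum bound inherited from \eqref{3rd_property0} plus the induction hypothesis, not from the $S$-subtraction itself—the purpose of subtracting $\bigl[\sum_y(\Sele_{l'}^{(r)})_{0y}\bigr]S$ is to make $\mathring\Sele_{l'}$ satisfy the sum zero property \emph{exactly}, which is what renders the infinite space limit $\Sele_n^{(r),\infty}$ well-defined; the $\eta$ gain would already be present from the induction hypothesis without the subtraction.
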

\begin{proof}	
We prove \eqref{wh3rd_property0 diff} and \eqref{wh3rd_property0} by induction on $l$. First, we trivially have $\Sele_4^{(r)}=\Sele_4$. Then suppose we have shown that \eqref{wh3rd_property0 diff} and \eqref{wh3rd_property0} hold for $\Sele_{l}^{(r)}$ for all $l\le k-1$. Combining this induction hypothesis with \eqref{3rd_property0}, we get that
$$\Big|\sum_{x } (\Selek_{l}^{(r)})_{0x}(z)\Big|   \le L^\tau \eta W^{-(l-2)d/2 } .$$
Then using \eqref{vacuum_renorm} and \eqref{wh3rd_property0 diff}, it is trivial to see that
$$\left| (\mathring\Selek_{l})_{0x}(z)-( \Sele_{l})_{0x}(z) \right| \le W^{-ld/2} \frac{\eta W^{2d-6}}{\langle x\rangle^{2d-6-\tau}}, \quad \forall \  x\in \Z_L^d , \ \ l\le k-1.
$$
Now using the same argument as in the proof of Lemma \ref{lemm V-R},  we can get that \eqref{wh3rd_property0 diff} and \eqref{wh3rd_property0} hold for $\Sele_{k}$. 
\end{proof}

We can obtain the following result on the infinite space limits $ \Sele^{(r),\infty}_{l}$ of $ \Sele_l^{(r)}$, $4\le l \le n$.
\begin{lemma}\label{lem FT1}
Under the assumptions of Theorem \ref{main thm0} and in the setting of Definition \ref{collection elements weak},  for $4\le l \le n$, we have that 
 \be\label{whtwo_properties0V}
 \Sele^{(r),\infty}_{l}(x, x+a) =  \Sele^{(r),\infty}_{l}(0,a), \quad   \Sele^{(r),\infty}_{l}(0, a) = \Sele^{(r),\infty}_{l} (0,-a), \quad \forall \ x,a\in \Z^d,
\ee 
and
\be\label{wh4th_property0V}
\big| \big( \Sele^{(r),\infty}_{l}\big)_{0x} \big| \le W^{-ld/2} \frac{W^{2d-4} }{\langle x\rangle^{2d-4-\tau}}, \quad \forall \ x\in \Z^d, 
\ee
for any constant $\tau>0$. Furthermore, we have that for any fixed $D>0$, 
\be\label{eq FT1}\sum_{x\in \Z^d}(\Sele^{(r),\infty}_{n})_{0x}(m(E), \psi, W) =  W^{-(n-2)d/2} {\Sint}_{n}(m(E), \psi)  +\OO(W^{-D}),\ee
where ${\Sint}_{n}$ is a constant that does not depend on $W$. 
\end{lemma}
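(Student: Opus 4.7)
\medskip

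\noindent\textbf{Proof plan for Lemma \ref{lem FT1}.} The plan is to establish the three claims (\ref{whtwo_properties0V}), (\ref{wh4th_property0V}), and (\ref{eq FT1}) in order, by adapting the graphical and Fourier-analytic arguments of Section \ref{sec infspace}, and in particular the proofs of Lemma \ref{lem V-R wt} and Lemma \ref{lem FT0}, to the renormalized self-energies. Throughout the argument I proceed by induction on $l$. The base case $l=4$ is trivial since $\Sele_4^{(r)}=\Sele_4$, so all three properties in this case are already contained in Definition \ref{collection elements} and Lemma \ref{cancellation property}. For the inductive step, suppose (\ref{whtwo_properties0V})--(\ref{eq FT1}) have been established for all $l\le k-1$; the task is to upgrade them to $l=k$.

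For claim (\ref{whtwo_properties0V}), translational invariance and reflection symmetry, the renormalized object $\mathring\Sele_l$ defined in (\ref{vacuum_renorm}) inherits both properties from $\Sele_l^{(r)}$ (by induction) and from $S$ (which is translationally invariant and symmetric on $\Z^d$ in the infinite space limit). By construction $\Sele_k^{(r)}$ is obtained by substituting $\mathring\Sele_l$ for the lower-order self-energies appearing as labels on $\dashed$ edges inside each graph of $\Sele_k$. Since the graph structure of $\Sele_k$ itself is translationally invariant and symmetric in its two external atoms (this is standard for all graphs produced by our expansion procedure, cf.~Lemma \ref{lem Rsymm}), and these symmetries are preserved by the substitution, (\ref{whtwo_properties0V}) follows for $l=k$. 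For claim (\ref{wh4th_property0V}), the decay estimate, one combines the induction hypothesis with Lemma \ref{lem redundantagain2}: by the inductive sum-zero property of $\mathring\Sele_l^{\infty}$ (which is exact by the very definition (\ref{vacuum_renorm})) together with (\ref{wh4th_property0V}) for $l\le k-1$, each labelled $\Theta_\infty$ edge arising inside $\Sele_k^{(r),\infty}$ satisfies the bound (\ref{BRB2}). Thus $\Sele_k^{(r),\infty}$ is a sum of doubly connected deterministic graphs in the sense of Definition \ref{def 2net infty}, to which Corollary \ref{double inf} applies directly, yielding the claimed polynomial decay.

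The main effort is claim (\ref{eq FT1}), and this is the step I expect to be the real obstacle. The plan is to follow the proof of Lemma \ref{lem FT0} in Section \ref{sec pf FT0} verbatim, but with two modifications: first, the self-energies appearing as labels on $\dashed$ edges are replaced by their renormalized (sum-zero) counterparts $\mathring\Sele_l^\infty$; second, the function $\psi$ is now only Schwartz, not compactly supported. The renormalization is precisely what rescues the Fourier analysis in the infinite space limit: without (\ref{vacuum_renorm}), a labelled $\Theta_\infty$ edge of type $(k;2k_1,\dots,2k_l)$ would have Fourier symbol behaving like $\psi_\Theta^{l+1}\prod_i\psi_{\Sele_{2k_i}}$, and at momentum $p=0$ the factor $\psi_\Theta(Wp,E)\sim (W^2|p|^2)^{-1}$ is non-integrable to the power $l+1$; the subtraction of $[\sum_x(\Sele_l^{(r)})_{0x}]\cdot S$ forces $\psi_{\mathring\Sele_{2k_i}}(Wp,E)=\OO(W^2|p|^2)$ as $p\to 0$, which exactly cancels the singularities of the $\Theta_\infty$ factors and makes every integral appearing in the infinite-space analogue of (\ref{eq_discrete2}) absolutely convergent. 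Once absolute convergence is secured, one performs the change of variables $q_e=Wp_e$ edge-by-edge; the resulting integrand is supported on a union of hyperplanes in $(\R^d)^{n_e}$ after the linear constraints $\{\sum_e \pm p_e=0\}$ are imposed, and the integrand is expressed purely in terms of $\psi(q_e)$ and $m(E)$.

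The hard part will be handling the Schwartz tails of $\psi$: in the compactly supported setting of Lemma \ref{lem FT0} the change of variables gave the exact equality (\ref{psi_compact}), whereas here the integration range $W\Xi$ (a torus of side $\sim W$) differs from the full Euclidean $\wt\Xi$ by a region on which $\psi$ and all its derivatives decay faster than any polynomial. The plan is to quantify this by the Schwartz estimate (\ref{schwarzpsi}): outside the box $\{|q_e|\le W^{\tau}\}$ for small $\tau>0$, each factor $\psi_{a_e}(q_e,E)$ is bounded by $\OO(W^{-D})$, and the contribution of this region to the integral is absolutely summable by the decay at infinity of the renormalized Fourier integrands and the finiteness of $n_e$. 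Together with the near-origin integrability coming from the sum-zero renormalization, this yields the total error $\OO(W^{-D})$ asserted in (\ref{eq FT1}), where the leading constant $\Sint_n(m(E),\psi)$ is defined, exactly as in Section \ref{sec pf FT0}, by the full Euclidean integral over $\wt\Xi$ of the product of the $\psi_{a_e}$'s. A final bookkeeping point is that along the way one needs the discrete-to-continuum passage analogous to (\ref{eq_discrete})--(\ref{eq_discrete2}), which in the present setting requires an infrared cutoff argument to justify interchanging the $L\to\infty$ limit with the sum over momenta; this is handled exactly as in Section \ref{sec pf FT0}, using the renormalized sum-zero property to ensure uniform integrability near $p=0$.
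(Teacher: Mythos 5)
Your proposal is correct and follows essentially the same route as the paper's (very terse) proof: symmetry via the reflection/translation invariance of the underlying deterministic graphs (Lemma \ref{lem Rsymm}), the decay bound (\ref{wh4th_property0V}) via the doubly connected structure and Corollary \ref{double inf}, and (\ref{eq FT1}) by rerunning the Fourier argument of Section \ref{sec pf FT0} with the modification for Schwartz (rather than compactly supported) $\psi$ noted in Remark \ref{rem FT0}. You have spelled out why the renormalization (\ref{vacuum_renorm}) is exactly what makes the infinite-space Fourier integrals absolutely convergent near $p=0$ --- the paper states this only in passing in the discussion preceding the lemma --- and your inductive bookkeeping makes explicit the hypotheses needed to invoke Lemma \ref{lem redundantagain2} on the labelled $\Theta_\infty$ edges; these are helpful elaborations rather than a different proof.
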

\begin{proof}
The property \eqref{whtwo_properties0V} follows from Lemma \ref{lem Rsymm}, and the estimate \eqref{wh4th_property0V} follows from the doubly connected property by Corollary \ref{double inf}. Equation \eqref{eq FT1} can be proved using the same argument as in Section \ref{sec pf FT0}. There is only one difference that has been discussed in Remark \ref{rem FT0}---the equation \eqref{psi_compact} does not hold exactly if $\psi$ is not compactly supported. However, using the fact that $\psi$ is a Schwartz function, we get that \eqref{psi_compact} holds up to a small error $\OO(W^{-D})$, which  leads to the extra $\OO(W^{-D})$ in \eqref{eq FT1}. We omit the details.
\end{proof}

As in Lemma \ref{lemm V-R}, we can bound the difference between  $\Sele^{(r)}_l$ and  $ \Sele^{(r),\infty}_l$.

\begin{lemma}\label{lem V-R weak}
Under the assumptions of Theorem \ref{main thm0} and in the setting of Definition \ref{collection elements weak}, for $4\le l \le n$, we have that for any constant $\tau>0$,
\be\label{V-R1 weak}  
\left|(\Sele_l^{(r)})_{0x}(m(z),\psi, W, L)-( \Sele^{(r),\infty}_l)_{0x}( m(E), \psi, W)\right| \le W^{-ld/2}\frac{\eta W^{2d-6}}{ \langle x\rangle^{2d-6-\tau}},\quad \forall \ x\in \Z_L^d,
\ee
and
\be\label{V-R2 weak}  \Big|\sum_{x\in \Z_L^d} ( \Sele_l^{(r)})_{0x}(m(z),\psi, W, L)-\sum_{x\in \Z^d}( \Sele^{(r),\infty}_l)_{0x}( m(E), \psi, W) \Big| \le L^\tau \eta W^{-(l-2)d/2 }. \ee 
\end{lemma}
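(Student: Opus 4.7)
The proof proceeds by induction on $l$, mirroring the inductive strategy of Lemma \ref{lemm V-R} and Claim \ref{claim_wtself}. The base case $l=4$ is immediate: since $\Sele_4^{(r)} = \Sele_4$ by Definition \ref{collection elements weak}, and the graphs in $\Sele_4$ contain only $S$, $S^\pm$, and $\Theta$ edges (no embedded lower order self-energies, as $\Sele_2=\Sele_3=0$), the estimate \eqref{V-R1 weak} follows from Lemma \ref{lemm V-R} applied directly. For the inductive step, I assume \eqref{V-R1 weak} and \eqref{V-R2 weak} hold for all $4 \le l \le k-1$ and aim to establish them for $l=k$.

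The first key step is to verify that the renormalized tensors $\mathring \Sele_l$ and their infinite-volume counterparts $\mathring \Sele_l^\infty$ play the role of bona fide self-energies for the Lemma \ref{lemm V-R} argument. The defining relation \eqref{vacuum_renorm} combined with $\sum_x S_{0x} = \sum_x (S_\infty)_{0x} = 1$ yields the exact sum zero properties
$$\sum_{x\in\Z_L^d}(\mathring \Sele_l)_{0x}(z)=0,\qquad \sum_{x\in\Z^d}(\mathring \Sele_l^\infty)_{0x}(E)=0.$$
Moreover, combining \eqref{wh3rd_property0 diff} with the inductive hypothesis on $\Sele_l^{(r)}-\Sele_l^{(r),\infty}$ and using \eqref{S+xy2} (to control the discrepancy between $S$ and $S_\infty$ in the counterterm) yields
$$\bigl|(\mathring \Sele_l)_{0x}(z)-(\mathring \Sele_l^\infty)_{0x}(E)\bigr| \le W^{-ld/2}\frac{\eta W^{2d-6}}{\langle x\rangle^{2d-6-\tau}}.$$
These three ingredients---exact sum zero at finite $L$, exact sum zero in the limit, and the $\eta$-gaining difference bound---are precisely what the replacement scheme of Lemma \ref{lemm V-R} requires in order to apply Lemmas \ref{lem cancelTheta2} and \ref{lem redundantagain2} to the chains that appear inside labelled $\dashed$ edges.

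With these properties in hand, I would now run the replacement strategy from Section \ref{subsec_inf_pf} on $(\Sele_k^{(r)})_{0x}$. By construction, $\Sele_k^{(r)}$ is a sum of doubly connected deterministic graphs whose labelled $\dashed$ edges carry chains of $\mathring \Sele_{l_i}$ (with $l_i \le k-1$) rather than $\Sele_{l_i}$. I would replace the $S$, $S^\pm(z)$, $\Theta(z)$ factors, and then the $\mathring\Sele_l$ chains, one at a time with their infinite-volume counterparts; each replacement produces an error containing an explicit $\eta$ factor thanks to \eqref{S+xy2}, \eqref{Theta-wh}, and the inductive difference bound. The sum zero property of $\mathring\Sele_l^\infty$ cancels the dangerous $\sum_y \Theta_{xy}\sim \eta^{-1}$ factor exactly as in Lemma \ref{lem redundantagain2}, while the doubly connected structure (preserved after each substitution) together with Corollary \ref{lem Rdouble weak} supplies the $\langle x\rangle^{-(2d-6-\tau)}$ decay. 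Finally, \eqref{V-R2 weak} follows from \eqref{V-R1 weak} by summing over $x\in\Z_L^d$ and controlling the tail $\sum_{x\in\Z^d\setminus\Z_L^d}(\Sele_l^{(r),\infty})_{0x}$ via the decay \eqref{wh4th_property0V} from Lemma \ref{lem FT1}; the tail is $\OO(W^{-(l-2)d/2}(W/L)^{d-4-\tau})\ll \eta W^{-(l-2)d/2}$ because $\eta\ge W^2/L^{2-\e}$.

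The main obstacle I anticipate is organizational rather than analytical: the renormalization prescription \eqref{vacuum_renorm} cascades through all orders, and one must verify that after substituting $\mathring\Sele_l$ for $\Sele_l$ inside $\Sele_k$, the resulting linear combination of doubly connected graphs still admits the edge-by-edge replacement argument---in particular that each intermediate chain $\Theta_\infty \mathring\Sele_{l_1}^\infty \Theta_\infty \cdots \Theta_\infty \mathring\Sele_{l_j}^\infty \Theta_\infty$ satisfies the exact sum zero hypothesis of Lemma \ref{lem cancelTheta2} before the next edge is substituted. Since this sum zero property for $\mathring\Sele_l^\infty$ is \emph{exact} (not merely approximate) by the construction above, the cancellation mechanism of Lemma \ref{lem redundantagain2} applies verbatim, and the analytic estimates reduce to those already carried out in Section \ref{subsec_inf_pf}.
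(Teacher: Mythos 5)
Your proposal is correct and follows the paper's own proof: an induction on $l$ with base case $l=4$ (where $\Sele_4^{(r)}=\Sele_4$ contains no embedded lower-order self-energies), and an inductive step that runs the replacement argument of Lemma \ref{lemm V-R} verbatim once the renormalized self-energies $\mathring\Sele_l$ are verified to satisfy the exact sum zero property, the decay bound, and the $\eta$-gaining difference bound. The paper simply says ``with this induction hypothesis and the same argument as in the proof of Lemma \ref{lemm V-R}''; your write-up supplies exactly the verification that phrase is silently invoking, in particular that $\sum_x(\mathring\Sele_l)_{0x}=\sum_x(\mathring\Sele_l^\infty)_{0x}=0$ exactly by the definition \eqref{vacuum_renorm}, so that Lemmas \ref{lem cancelTheta2} and \ref{lem redundantagain2} are applicable to the chains inside the labelled diffusive edges.
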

\begin{proof}
We prove this lemma by induction on $l$. First, \eqref{V-R1 weak} and \eqref{V-R2 weak} trivially hold for $\Sele_4^{(r)}= \Sele_4$. Now suppose we have shown that \eqref{V-R1 weak} and \eqref{V-R2 weak} hold for $ \Sele_{l}^{(r)}$ for all $l\le k-1$. Then with this induction hypothesis and the same argument as in the proof of Lemma \ref{lemm V-R},  we can show that\eqref{V-R1 weak} and \eqref{V-R2 weak} hold for $\Sele_{k}^{(r)}$. We omit the details.
\end{proof}

Now we are ready to complete the proof of Theorem \ref{main thm0}. 
\begin{proof}[Proof of Theorem \ref{main thm0}]
We repeat the five-step strategy in Section \ref{sec_main_struct}, where the steps 1, 2, 3 and 5 stay the same as in the proof of Theorem \ref{main thm}, because these steps only involve the $\selfs$ $\Sele_l$ but do not use the infinite space limits $\Sele^{\infty}_l(z)$ at any place. Regarding Step 4, we need to prove a counterpart of Lemma \ref{cancellation property} in the setting of Theorem \ref{main thm0}. 
The properties \eqref{two_properties0} and \eqref{4th_property0} follow from Lemma \ref{lem Rsymm} and Corollary \ref{lem Rdouble}. The properties \eqref{two_properties0V}, \eqref{4th_property0V} and \eqref{3rd_property0 diff} will be replaced by \eqref{whtwo_properties0V}, \eqref{wh4th_property0V} and \eqref{V-R1 weak}. It remains to prove the following sum zero properties: 
\be\label{property V-R wh}
\Big|\sum_{x\in \Z_L^d} ( \Sele_{n})_{0x} (m(z),\psi, W, L) \Big|   \le L^\tau \eta W^{-(n-2)d/2 },\quad \forall \ \eta\in[  W^{2}/L^{2-\e} ,L^{-\e}],
\ee
and 
\be\label{property V-R wh2}
\Big|\sum_{x\in \Z^d} (\Sele^{(r),\infty}_{n})_{0x} (m(E),\psi, W) \Big|   \le W^{-D}  ,
\ee
for any constants $\tau, D>0$. By \eqref{wh3rd_property0} and \eqref{V-R2 weak}, we have that
\be\label{V-R2 weak333} \Big|\sum_{x\in \Z_L^d} ( \Sele_n)_{0x}(m(z),\psi, W, L)-\sum_{x\in \Z^d}( \Sele^{(r),\infty}_n)_{0x}( m(E), \psi, W) \Big| \le L^\tau \eta W^{-(n-2)d/2 } .\ee
Hence the estimate \eqref{property V-R wh} is a consequence of \eqref{property V-R wh2}.

The proof of \eqref{property V-R wh2} is similar to the one for Lemma \ref{cancellation property}. Since the proof of Lemma \ref{lem maincancel} does not involve infinite space limits, the estimate \eqref{small SnE0} also holds in the current setting for $L_n$ satisfying \eqref{Lcondition}. Combining this estimate with \eqref{V-R2 weak333} and \eqref{eq FT1}, we can obtain that $ {\Sint}_{n}(m(E), \psi)  =\oo(1)$, which gives ${\Sint}_{n}(m(E), \psi)=0$. Together with \eqref{eq FT1}, it implies \eqref{property V-R wh2}, and hence completes Step 4 of the five-step strategy in Section \ref{sec_main_struct}. Finally, applying the argument in Figure \ref{Fig pfchart1}, we complete the proof of Theorem \ref{main thm0}.
\end{proof}
Finally, we give the proof of Theorem \ref{main thm2} and Corollary \ref{main_cor}.

\begin{proof}[Proof of Theorem \ref{main thm2}]
By Theorem \ref{main thm0}, we know that $G(z)$ satisfies the local law \eqref{locallaw}. Moreover, in the proof of Theorem \ref{main thm0}, we have constructed the $M$-th order $\incomp$ \eqref{mlevelT incomplete} with $n=M$. Setting $\fb_1=\fb_2:=\fb$ in \eqref{mlevelT incomplete}, solving $T_{\fa\fb}$ and taking expectation, we get that
\begin{align}
\E T_{\fa\fb} &=  |m|^2 \left( \frac{1}{1-\Theta \wtSdelta^{(M)}}\Theta\right)_{\fa\fb}   + \sum_x \left( \frac{1}{1-\Theta \wtSdelta^{(M)}}\right)_{\fa x}\E\left[m \Theta_{x \fb}\left(\overline G_{\fb\fb}-\overline m \right)+ (\PIT^{(M)})_{x,\fb \fb}\right] \nonumber\\
	& + \sum_x \left( \frac{1}{1-\Theta \wtSdeltan}\right)_{\fa x} \E \left[(\AIT^{(>M)})_{x,\fb \fb} + (\Err'_{M,D})_{x,\fb\fb}\right]  . \label{solving_T}
\end{align}	
Recall that $(\PIT^{(M)})_{x,\fb \fb}$, $(\AIT^{(>M)})_{x,\fb \fb}$ and $ (\Err'_{M,D})_{x,\fb\fb}$ can be written into the forms in \eqref{form_IT1}. Plugging them into \eqref{solving_T} and using the identity $\Theta^{(M)}=(1-\Theta \wtSdelta^{(M)})^{-1}\Theta$, we obtain that 
\be\label{E_local2}
\E T_{\fa\fb} = |m|^2 \Theta^{(M)}_{\fa\fb} + \big( \Theta^{(M)} \cal G^{(M)} \big)_{\fa\fb}  + \sum_{x}  \Theta^{(M)}_{\fa x} \sum_{\omega}\E\left(\cal G^{err}_\omega \right)_{x\fb},
\ee
where $\cal G^{(M)}$ is defined as 
\be\label{local_R}
\cal G^{(M)}_{x\fb}:= m \delta_{x\fb} \E \left(\overline G_{\fb\fb}-\overline m \right) + \E (\Gamma_R^{(n)})_{x,\fb \fb},
\ee
and $\cal G^{err}_\omega $ are the graphs in $(\Gamma_A^{(>n)})_{x,\fb \fb}$ and $ (\Gamma_{err}^{(n,D)})_{x,\fb \fb}$, i.e. $(\Gamma_A^{(>n)})_{x,\fb \fb}+(\Gamma_{err}^{(n,D)})_{x,\fb \fb}=\sum_{\omega}\left(\cal G^{err}_\omega \right)_{x\fb}. $
To conclude the proof, it remains to prove \eqref{Self_theta}, \eqref{bound_calG} and that 
\be\label{err_Texpand}\sum_{x} \left| \Theta^{(M)}_{\fa x} \E\left(\cal G^{err}_\omega \right)_{x\fb}\right| \le W^{-dM/2} .\ee

First, we can expand $\Theta^{(M)}$ as
\be \label{expand_Theta} \Theta^{(M)}=\left[1-(\Theta\wtSdelta^{(M)})^{K+1}\right]^{-1}\sum_{k=0}^K(\Theta\wtSdelta^{(M)})^{k}\Theta,\ee
for a large constant $K\in \N$. Using \eqref{redundant again} and \eqref{BRB}, we can obtain that 
\be\label{bdd_theta}\left\|\Theta\wtSdelta^{(M)}\right\|_{\ell^\infty \to \ell^\infty} \prec W^{-d}, \quad \text{and}\quad   \left[(\Theta\wtSdelta^{(M)})^{k}\Theta\right]_{xy}  \prec B_{xy}. \ee 
Combining \eqref{expand_Theta} with \eqref{bdd_theta}, we get that 
$$\left|\Theta^{(M)}_{xy}\right|\prec B_{xy} +W^{-(K+1)d} \le 2B_{xy} ,$$
as long as $K$ is chosen to be sufficiently large. This concludes \eqref{Self_theta}. Second, we notice that every graph in $\cal G^{(M)}$ is doubly-connected. Then using \eqref{bound 2net1 strong}, we immediately conclude \eqref{bound_calG}. Finally, we prove \eqref{err_Texpand}. Each graph $\left(\cal G^{err}_\omega \right)_{x\fb}$ can be written into 
\be \label{Ahoforms_add}
\left(\cal G^{err}_\omega \right)_{x\fb} = \sum_{y, y'}(\cal G_0)_{x,yy'} G_{y\fb}\overline G_{y'\fb}, \quad \text{or}\quad  \left(\cal G^{err}_\omega \right)_{x\fb}= \sum_{x,y} (\cal G_0)_{xy}\Theta_{y\fb},\ee
or some forms obtained by setting some indices of $x,y,y'$ to be equal to each other. Without loss of generality, we only consider the first form in \eqref{Ahoforms_add}, while all the other forms are easier to bound. By Definition \ref{def incompgenuni}, the graph $\cal G_0$ is doubly connected. Then we can get the bound
\begin{align*}
 \sum_{x}  \left|\Theta^{(M)}_{\fa x}  \E\left(\cal G^{err}_\omega \right)_{x\fb}\right| & \prec   \sum_{x,y, y'}B_{\fa x} \left|\E (\cal G_0)_{x,yy'} G_{y\fb}\overline G_{y'\fb}\right|\prec  W^{-d/2}\sum_{x,y}B_{\fa x}\E \sum_{y'}\left(\cal G_0^{\abs}\right)_{x,yy'}B^{1/2}_{y\fb}\\
	& \prec  W^{-d/2}W^{-(M-2)d/2}\sum_{x,y}B_{\fa x}B_{x y}^{3/2} B^{1/2}_{y\fb}\prec W^{-(M+1)d/2},
\end{align*}
where in the second step we used \eqref{locallaw} and \eqref{Self_theta}, and in the third step we used that $\sum_{y'} ({\cal G}_0^{\rm{abs}})_{x, y y'}$ is a graph satisfying the assumptions of Lemma \ref{no dot} with two fixed atoms $x$ and $ y$, so that it satisfies \eqref{bound 2net1 strong}. This concludes \eqref{err_Texpand}.
\end{proof}

\begin{proof}[Proof of Corollary \ref{main_cor}]
If $S$ is invertible, then multiplying both sides of \eqref{E_local2} by $|m|^{-2}S^{-1}$, we obtain that 
\begin{align*}
\E |G_{\fa\fb}|^2 & = |m|^2 \left[ \frac{1}{1-\left(1+\wtSdelta^{(M)}\right)|m|^2  S} \right]_{\fa\fb} + \left[  \frac{1}{1-\left(1+\wtSdelta^{(M)} \right)|m|^2  S} \cal G^{(M)} \right]_{\fa\fb} \\
&+  \sum_{x}  \left[  \frac{1}{1-\left(1+\wtSdelta^{(M)} \right)|m|^2  S}\right]_{\fa x} \sum_{\omega}\E\left(\cal G^{err}_\omega \right)_{x\fb}.
\end{align*} 
The last term can be bounded by $\OO(W^{-Md/2})$ using the same argument as the one below \eqref{Ahoforms_add}, and we omit the details. 

On the other hand, if $S$ is singular, we can choose $0<\e_N< L^{-100Md}$ so that $S+\e_N I$ is nonsingular. Then we define another random band matrix $\wt H$ with variance profile $\wt S:=(S+\e_N I)/(1+\e_N)$ and denote its resolvent by $\wt G(z):=(\wt H-z)^{-1}$. The above argument shows that \eqref{EGxy} holds for $\E|\wt G_{xy}|^2$. Moreover, it is easy to show that $\E| G_{xy}|^2=\E|\wt G_{xy}|^2+\OO(W^{-Md/2})$ with a simple perturbation argument. 
\end{proof}

\section{Main ideas for Lemmas \ref{lemma ptree}, \ref{lem normA} and \ref{incomplete Texp}}\label{sec strategy} 
%
%
%
%
%
%
 
 In this section, we discuss some key ideas that will be used in the proofs of three key lemmas, Lemmas \ref{lemma ptree}, \ref{lem normA} and \ref{incomplete Texp}, in \cite{PartII_high}. 

\medskip

\noindent {\underline{\bf Main idea for Lemma \ref{lemma ptree}.}} To prove Lemma \ref{lemma ptree}, it suffices to prove the following self-improving estimate on $T$-variables. If $T_{xy} \prec B_{xy}+\wt\Phi^2$ for a deterministic parameter $\wt\Phi $, then 
\be\label{self_improve1} T_{xy} (z)  \prec B_{xy} +W^{-c_1} \wt\Phi^2 \ee
for a constant $c_1>0$ depending only on $d$ and $c_0$ in \eqref{Lcondition0}. Iterating this estimate for $D/c_1$ many times, we will get that $T_{xy} (z)  \prec B_{xy} +W^{-D}$. This concludes \eqref{pth T} as long as $D$ is large enough.

The estimate \eqref{self_improve1} follows from the high moment bound for any fixed $p\in \N$:
\be\label{locallawptree_intro}
\E T_{\fa\fb} (z) ^p \prec  (B_{\fa\fb} +W^{-c} \wt\Phi^2)^p,\quad \forall \ \fa,\fb\in \Z_L^d.
\ee
We regard $T_{\fa\fb}(z)^p$ as a graph with $p$ copies of $T_{\fa\fb}(z)$. Now we replace one of them with the $n$-th order $T$-expansion. If we replace $T_{\fa\fb}$ with the first two terms on the right-hand side of \eqref{mlevelTgdef}, then using \eqref{thetaxy} and \eqref{intro_redagain} we can bound that
\be\label{Holder1}
\left|\E T_{\fa\fb}^{p-1}m\overline G_{\fb\fb}\left[ \Theta_{\fa\fb} +   \left(\Theta \Sdelta^{(n)} \Theta\right)_{\fa\fb} \right]\right| \prec B_{\fa\fb} \E T_{\fa\fb}^{p-1} .
\ee
Next we replace $T_{\fa\fb}$ with a graph $\cal G_{\fa\fb}$ in $(\PTk)_{\fa,\fb\fb}$, $k\ge 3$. It can be written into the forms in \eqref{twoPks} or some variants of them with $\Theta$ replaced by a labelled $\dashed$ edge. As an example, if $\cal G_{\fa\fb}=\sum_x \Theta_{\fa x}(\cal G_0)_{x \fb}$, then using \eqref{thetaxy} and  \eqref{bound 2net1 cor}, we can bound that
\be\label{PTab1}
|\cal G_{\fa\fb}|\prec W^{ (k - 3) (-d/2+\e_0) + \e_0} \sum_{x} B_{\fa x}  B_{x\fb}\cal A_{x\fb} \prec W^{ (k - 2) (-d/2+\e_0)}  B_{\fa  \fb} ,
\ee
where $\cal A_{x\fb}$ is a variable satisfying $\|\cal A\|_{w;(1,2)}\prec 1$ and in the last step we used \eqref{keyobs3}. So with \eqref{PTab1} and the fact $k\ge 3$, we can bound that 
\be\label{Holder2}
\left|\E T_{\fa\fb}^{p-1} (\PT^{(n)})_{\fa,\fb\fb}   \right|\prec W^{-d/2+\e_0} B_{\fa\fb} \E T_{\fa\fb}^{p-1} . 
\ee
Then we replace  $T_{\fa\fb}$ with a graph $\cal G_{\fa\fb}$ in $(\AT^{(>n)})_{\fa,\fb\fb}$. It can be written into the forms in \eqref{Aho 3forms} or some variants of them. As an example, if $\cal G_{\fa\fb}=\sum_x \Theta_{\fa x}(\cal G_0)_{xy}|G_{y\fb}|^2$, then using \eqref{thetaxy}, \eqref{bound 2net1 cor} and $\ord((\cal G_0)_{xy}) > n$, we can bound that
\begin{align} 
	|\cal G_{\fa\fb}| &\prec  W^{(n-2)(-d/2+\e_0)+\e_0}\sum_{x,y} B_{\fa x} B_{xy}\cal A_{xy}\left(B_{y\fb}+\wt\Phi^2\right) \lesssim W^{(n-1)(-d/2+\e_0)}\sum_{y} B_{\fa y}\left(B_{y\fb}+\wt\Phi^2\right) \nonumber\\
	&\lesssim W^{(n-1)(-d/2+\e_0)}\left( \frac{1}{W^4\langle \fa-\fb\rangle^{d-4}} + \frac{L^2}{W^2}\wt\Phi^2\right)\lesssim W^{-c_0+(n -1)\e_0} \left( B_{\fa\fb} + \wt\Phi^2\right), \label{PTab111}
\end{align}
where we used \eqref{keyobs3} in the second step, $  \sum_{y} B_{\fa y} B_{y\fb}\lesssim W^{-4}\langle \fa-\fb\rangle^{-(d-4)}$ and $\sum_{y} B_{\fa y}\lesssim {L^2}/{W^2}$ in the third step, and $\langle \fa-\fb\rangle\le L$ and \eqref{Lcondition1} in the fourth step. 
With \eqref{PTab111}, we obtain that 
\be\label{Holder3}
\left|\E T_{\fa\fb}^{p-1} (\AT^{(>n)})_{\fa,\fb\fb}\right|   \prec  W^{-c_0+(n-1)\e_0} \left(B_{\fa\fb} + \wt\Phi^2\right)  \E T_{\fa\fb}^{p-1} .
\ee
Finally, if we replace $T_{\fa\fb}$ with $(\QTn)_{\fa,\fb\fb}$, then we apply the $Q$-expansions mentioned in Section \ref{subsec global} to $T_{\fa\fb}^{p-1}(\QT^{(n)})_{\fa\fb}$, and show that 
\be\label{QTab_exp}
\left|\E T_{\fa\fb}^{p-1}(\QT^{(n)})_{\fa,\fb\fb}\right| \prec \sum_{k=2}^{p}\left[W^{-d/4+\e_0/2} \left(B_{\fa\fb}+\wt\Phi^2\right)  \right]^k \E T_{\fa\fb}^{p-k} . 
\ee
The details will be given in \cite{PartII_high}. Combining \eqref{Holder1}, \eqref{Holder2}, \eqref{Holder3} and \eqref{QTab_exp}, and applying H{\"o}lder's inequality and Young's inequality to each term, we obtain that 
\be\label{Holder3.6} \E T_{\fa\fb}^p  \prec W^\e \left( B_{\fa\fb} + W^{-c_1}\wt\Phi^2\right)^p + W^{-\e}\E T_{\fa\fb}^{p}  \quad \Rightarrow \quad \E T_{\fa\fb}^p  \prec W^\e \left( B_{\fa\fb} + W^{-c_1}\wt\Phi^2\right) , \ee
for $c_1:=\min(c_0-(n-1)\e_0, d/4-\e_0/2)$ and any constant $\e>0$. This concludes \eqref{locallawptree_intro} as long as $\e_0$ is sufficiently small.

\medskip

\noindent {\underline{\bf Main idea for Lemma \ref{lem normA}.}} We expand $ \E \tr\left( \cal A^{2p}\right) $ as  
\begin{align}\label{eq_pgons}
	\E \tr\left( \cal A^{2p}\right) = \sum_{x_1,\cdots, x_{2p}\in \cal I}\sum_{a_1, \cdots, a_{2p}} c(a_1, \cdots, a_{2p}) \prod_{i=1}^{2p}G^{a_i}_{x_{i} x_{i+1}},\quad a_i \in \{+,-\},
\end{align}
where we adopted the conventions $x_{2p+1}\equiv x_1$, $G_{xy}^{+}\equiv G_{xy}$ and $G_{xy}^{-}\equiv \overline G_{xy}$, and each $c(a_1, \cdots, a_{2p})$ is a deterministic coefficient of order $\OO(1)$. To conclude \eqref{Moments method}, it suffices to show that  
\be\label{Moments method2}
\sum_{x_1,\cdots, x_{2p}\in \cal I} \E \prod_{i=1}^{2p}G^{a_i}_{x_{i} x_{i+1}} \le K^d\left( W^\e \frac{K^4}{W^4}\right)^{2p-1},\quad \forall \ (a_1, \cdots , a_{2p})\in \{+,- \}^{2p}.
\ee
We regard the above graph as a $2p$-gon graph with $2p$ external vertices $x_1,\cdots, x_{2p}$. We will expand it using the operations defined in Section \ref{sec_basiclocal}, and a similar expansion strategy as the one for Lemma \ref{incomplete Texp} that will be introduced below. In the expansions, we will get internal molecules. Our goal is to expand every $2p$-gon graph into a linear combination of connected deterministic graphs that satisfy a weaker doubly connected property: there exist two disjoint nets $\cal B_{black}$ and $\cal B_{blue}$ of black and blue $\dashed$ edges, so that each internal molecule connects to external molecules through a path of edges in $\cal B_{black}$ and a path of edges in $\cal B_{blue}$.
(Note that if we remove the external molecules from these graphs, the remaining internal molecules do not form  doubly connected graphs, so this new property is weaker than the one in Definition \ref{def 2net}.) Such deterministic graphs will satisfy the bound in \eqref{Moments method2}.

\medskip

\noindent {\underline{\bf Main idea for Lemma \ref{incomplete Texp}.}} The proof of Lemma \ref{incomplete Texp} is based on a carefully designed global expansion strategy. This strategy is also used in the proof of Lemma \ref{lem normA} as discussed above. 

The main difficulty with our expansions is how to maintain the doubly connected structures of the graphs. It is not hard to check that local expansions will not affect the  doubly connected property. However, this is not the case with global expansions introduced in Section \ref{subsec global}, because new molecules created in a global expansion may break the doubly connected property. In fact, a global expansion preserves the  doubly connected structure only when we expand a $T$-variable containing a \emph{redundant} blue solid edge. Here we call a blue solid edge redundant if and only if after removing it, the resulting graph is still doubly connected. 

In \cite{PartII_high}, we will show that the graphs in our expansions actually satisfy a stronger \emph{pre-deterministic property}. Roughly speaking, a doubly connected graph $\cal G$ is said to be pre-deterministic if the following property holds: there exists an order of all the internal blue solid edges 
in $\cal G$, denoted by $b_1\preceq b_2\preceq \cdots$, such that for any $k$, after changing the edges $b_1, \cdots, b_{k-1} $ into $\dashed$ edges, the blue solid edge $b_k$ becomes a redundant edge. We call this order of blue solid edges a \emph{pre-deterministic order}. 

Now the highlight of our expansion strategy is that if we expand the $T$-variable containing the \emph{first redundant edge} in a pre-deterministic order using the global expansion in Section \ref{subsec global}, then the resulting graphs are still pre-deterministic. Then in every new graph, we find the first redundant edge in a pre-deterministic order and expand it further. Continuing in this way, we  finally obtain a linear combination of graphs that can be written into the form \eqref{mlevelT incomplete}. Here we remark that 
after one step of global expansion, we need to apply local expansions to the resulting graphs to turn them into locally standard graphs before we apply the next step of global expansion. In \cite{PartII_high}, we will show that local expansions also do not affect the pre-deterministic property.

Finally, we remark that the above discussion is only for heuristic purpose, and they are not completely rigorous regarding some technical details. In fact, we will use a slightly weaker property, called the \emph{sequentially pre-deterministic property}, instead of the pre-deterministic property. The interested reader can refer to \cite{PartII_high} for more details.

\appendix

\section{Symmetry and translational invariance}

In this section, we record the following simple fact: any deterministic graph with two external atoms satisfies the properties in  \eqref{two_properties0}.

\begin{lemma}\label{lem Rsymm}
Let $\cal M$ be a deterministic matrix in terms of $S$, $S^{\pm}(z)$ and $\Theta(z)$. 
Then we have 
\be\label{two_properties app}\cal M (x, x+a) =  \cal M (0,a), \quad   \cal M (0, a) = \cal M(0,-a), \quad \forall x,a\in \Z_L^d. \ee
\end{lemma}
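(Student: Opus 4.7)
The plan is to reduce the claim to three verifications: (i) each of the elementary building blocks $S$, $S^{\pm}(z)$, and $\Theta(z)$ is translationally invariant and symmetric on the torus $\Z_L^d$; (ii) the class of matrices satisfying \eqref{two_properties app} is closed under matrix addition, scalar multiplication, and matrix multiplication on $\Z_L^d$; (iii) it is also closed under summation against translationally invariant kernels, so that every graph value obtained from these building blocks by contracting internal indices over $\Z_L^d$ remains in the class. Together, (i)--(iii) cover every deterministic matrix $\cal M$ considered in this paper.

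For (i), the variance matrix $S$ satisfies $s_{xy}=f_{W,L}([x-y]_L)$ by \eqref{sxyf}, and by Assumption \ref{var profile} the profile $f_{W,L}$ is symmetric, so $s_{x,x+a}=f_{W,L}(-[a]_L)=f_{W,L}([a]_L)=s_{0a}$ and $s_{0a}=s_{0,-a}$. Because $S$ depends only on the difference $[x-y]_L$, the same holds for any polynomial in $S$, and hence (by functional calculus, or by expanding the Neumann series $m^{2k}S^k$ whose partial sums converge) for
\[
S^{+}(z)=\frac{m^2(z)S}{1-m^2(z)S}, \qquad S^{-}(z)=\overline{S^{+}(z)}, \qquad \Theta(z)=\frac{|m(z)|^2 S}{1-|m(z)|^2 S},
\]
as well. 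The symmetry of each of these matrices follows from the symmetry of $S$ together with the fact that $S$, and therefore every power $S^k$, is a symmetric kernel on the torus.

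For (ii) and (iii), suppose $A$ and $B$ both satisfy \eqref{two_properties app}. Then for any $a,x\in\Z_L^d$,
\begin{align*}
(AB)(x,x+a)&=\sum_{\alpha\in\Z_L^d} A(x,\alpha)\,B(\alpha,x+a)
=\sum_{\beta\in\Z_L^d} A(0,\beta)\,B(\beta,a)=(AB)(0,a),
\end{align*}
after the change of variables $\beta=\alpha-x$ (which is a bijection on the torus), and similarly $(AB)(0,-a)=(AB)(0,a)$ using the symmetry of $A$ and $B$ and relabelling the summation index. The same argument handles repeated products, linear combinations with $z$-dependent scalar coefficients (which do not carry any spatial indices), and the contraction $\sum_\alpha A(x_1,\alpha)\cdots B(\alpha,x_k)$ of any number of translationally invariant symmetric kernels over an internal atom $\alpha\in\Z_L^d$. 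Since every deterministic matrix $\cal M$ in this paper is produced in exactly this way from the building blocks in (i), \eqref{two_properties app} is inherited by $\cal M$.

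There is essentially no obstacle in this argument: the only point that warrants a mention is that the change of variables $\beta=\alpha-x$ used above relies crucially on summing over the full torus $\Z_L^d$, which is indeed the convention adopted throughout for internal atoms, cf.\ Definition~\ref{ValG}.
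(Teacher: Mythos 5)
Your proposal is correct and follows essentially the same route as the paper: verify that the building blocks $S$, $S^{\pm}$, $\Theta$ satisfy \eqref{two_properties app}, then observe that translating or reflecting all internal summation indices simultaneously on the torus $\Z_L^d$ preserves the graph value. The paper writes this out directly for a general graph sum $\sum_{x_1,\dots,x_\ell}\prod f(\cdot,\cdot)$, while you phrase it as closure of the class under products and contractions; the underlying change-of-variables mechanism and use of edge-wise translation invariance and symmetry are identical.
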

\begin{proof}
This lemma is a simple consequence of the fact that all the matrices $S$, $S^{\pm}(z)$ and $\Theta(z)$ satisfy the two properties in \eqref{two_properties app}. Suppose $\cal M(x,y)$ can be written into the general form
\begin{align*}
\cal M(x,y)=\sum_{x_1,\cdots, x_\ell}\prod_{(x,x_i)\in \mathbf E}f_{i}(x,x_i)\cdot \prod_{(x_i,x_j)\in \mathbf E}f_{ij}(x_i,x_j) \cdot \prod_{(x_j,y)\in \mathbf E}g_{j}(x_j,y),
\end{align*}
where $\mathbf E$ is the set of all the edges in the graph $\cal M(x,y)$, and $f_{i}$, $f_{ij}$ and $g_j$ satisfy the two properties in \eqref{two_properties app}. Then we have that
\begin{align*}
\cal M(x , x+a) &=\sum_{x_1,\cdots, x_\ell}\prod_{(x,x_i)\in \mathbf E}f_{i}(x,x_i)\cdot \prod_{(x_i,x_j)\in\mathbf E}f_{ij}(x_i,x_j) \cdot \prod_{(x_j,x+a)\in\mathbf E}g_{j}(x_j,x+a) \\
 &=\sum_{x_1,\cdots, x_\ell}\prod_{(0,x_i-x)\in\mathbf E}f_{i}(0,x_i-x)\cdot \prod_{(x_i-x,x_j-x)\in\mathbf E}f_{ij}(x_i-x,x_j-x) \cdot \prod_{(x_j-x,a)\in\mathbf E}g_{j}(x_j-x,a) \\
 &=\sum_{x_1,\cdots, x_\ell}\prod_{(0,x_i)\in\mathbf E}f_{i}(0,x_i)\cdot \prod_{(x_i,x_j)\in\mathbf E}f_{ij}(x_i,x_j) \cdot \prod_{(x_j,a)\in\mathbf E}g_{j}(x_j,a) =\cal M (0 , a) ,
\end{align*}
and 
\begin{align*}
\cal M(0 , a) &=\sum_{x_1,\cdots, x_\ell}\prod_{(0,x_i)\in\mathbf E}f_{i}(0,x_i)\cdot \prod_{(x_i,x_j)\in\mathbf E}f_{ij}(x_i,x_j) \cdot \prod_{(x_j,a)\in\mathbf E}g_{j}(x_j,a) \\
 &=\sum_{x_1,\cdots, x_\ell}\prod_{(0,-x_i)\in\mathbf E}f_{i}(0,-x_i)\cdot \prod_{(-x_i,-x_j)\in\mathbf E}f_{ij}(-x_i,-x_j) \cdot \prod_{(-x_j,-a)\in\mathbf E}g_{j}(-x_j,-a) \\
 &=\sum_{x_1,\cdots, x_\ell}\prod_{(0,x_i)\in\mathbf E}f_{i}(0,x_i)\cdot \prod_{(x_i,x_j)\in\mathbf E}f_{ij}(x_i,x_j) \cdot \prod_{(x_j,-a)\in\mathbf E}g_{j}(x_j,-a) =\cal M (0 , -a) ,
\end{align*}
where we used that for $f$ satisfying \eqref{two_properties app}, $f(-x,-y)=f(0,x-y)=f(0,y-x)=f(x,y).$ 
\end{proof}

\section{Proofs of some deterministic estimates}\label{appd}
In this section, we collect the proofs of some deterministic estimates, including Lemma \ref{lem cancelTheta}, Lemma \ref{lem esthatTheta}, Lemma \ref{lem cancelTheta2} and Claim \ref{claim wtS-S}. We start with the following Taylor expansion:
\begin{equation}\label{taylor}
\Theta = (1-|m|^2 S)^{-1}|m|^2S =  \left( 1-|m|^{2K}S^{K}\right)^{-1}\sum_{k=1}^{K}|m|^{2k}S^{k}.
 \end{equation}
Since $\|S\|_{\ell^\infty \to \ell^\infty} = 1$ and $|m|\le 1 - c\eta$ for some constant $c>0$, 
taking $K=\eta^{-1} W^\tau$ or $K=\eta^{-1} \langle x-y \rangle^\tau$ in \eqref{taylor} for a small constant $\tau>0$, we get that
\be\label{taylor1}
\Theta_{xy} = \sum_{k=1}^{\eta^{-1} W^\tau} |m|^{2k}(S^{k})_{xy} +\OO(e^{-cW^\tau/2}) = \sum_{k=1}^{\eta^{-1} \langle x-y \rangle^\tau} |m|^{2k}(S^{k})_{xy} +\OO(e^{-c\langle x-y \rangle^\tau/2}).
\ee
Since $S$ is a doubly stochastic matrix, $(S^{k})_{xy}$ can be understood through a $k$-step random walk on the torus $\Z_L^d$. We first prove the following lemma for the random walk on $\Z^d$.

\begin{lemma}\label{reg RW2} 
 Let $B_n = \sum_{i=1}^n X_i$ be a random walk on $\mathbb Z^d$ with $i.i.d.$ steps $X_i$ such that \begin{equation*}
\mathbb P(|X_1|=x) = f_{W,L}(x), 
\end{equation*}
for a function $f_{W,L}$ satisfying Assumption \ref{var profile}. 
Let $\cal C$ be the covariance matrix of $X_1$ with $\cal C_{ij}=\mathbb E [ (X_1)_i (X_1)_j ]$. Assume that $ n \ge   W^{c_0}$  for a constant $c_0>0$. Then for any large constant $D>0$, we have that
\be\label{RW_diffusion2}
\mathbb P\left(B_n = x\right) = \frac{1+\oo(1)}{(2\pi n)^{d/2} \sqrt{\det(\cal C)}}e^{-\frac1{2} x^\top (n\cal C)^{-1} x}+\OO(n^{-D}).
\ee
Moreover, suppose $x, a,b\in \Z^d$ satisfy that $|x|\ge W^{1+2\e_0}$ and $|a|\le |b| \le |x|^{1-\e_0}$ for a small constant $\e_0>0$. Then if $n\ge {|x|^{2-2\e_0+\e_1}}/{W^2}$ for a constant $\e_1>0$, we have that
\be\label{RW_diffusion1}
\begin{split}
&\left| \mathbb P\left(B_n = x + a\right) + \mathbb P\left(B_n = x - a \right) - \mathbb P\left(B_n = x + b\right)- \mathbb P\left(B_n = x - b\right) \right| \\
&\le   \frac{| b |^2}{W^2} \frac{n^\tau}{n^{d/2+1}W^d}e^{-\frac1{2} x^\top (n\cal C)^{-1} x}+\OO(|x|^{-D}),
\end{split}
\ee
for any constants $\tau, D>0$.
\end{lemma}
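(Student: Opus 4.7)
The plan is to base both estimates on Fourier inversion on the torus. Let $\phi(p) := \mathbb{E} e^{\ii p\cdot X_1}$, which by Assumption \ref{var profile} and the Poisson summation formula satisfies $\phi(p) = \psi(Wp)/Z_{W,L} + \OO(W^{-D})$ on the fundamental domain $[-\pi,\pi]^d$, with $Z_{W,L} = 1 + \OO(W^{-D})$. By Assumption \ref{var profile}, $\phi$ has two key properties: first, for $|Wp|\le w_0$ with $w_0$ a small constant, Taylor expansion gives
\begin{equation*}
\log\phi(p) = -\tfrac{1}{2}p^\top \cal C p + \OO(|Wp|^4),\qquad \cal C_{ij} = \E[(X_1)_i (X_1)_j] \sim W^2 \delta_{ij};
\end{equation*}
second, for $|Wp|\ge w_0$, we have $|\phi(p)|\le 1-c$ for some constant $c>0$, so $\phi(p)^n$ contributes $\OO(e^{-cn})\le \OO(n^{-D})$ to the Fourier inversion integral. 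The standard Edgeworth/LCLT argument then replaces $\phi(p)^n$ by the Gaussian $e^{-\frac{n}{2}p^\top \cal C p}$ on $|Wp|\le n^{-1/2+\tau}$, with a negligible error, and computes the resulting Gaussian Fourier integral by completing the square. This yields \eqref{RW_diffusion2}.

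For \eqref{RW_diffusion1}, I would start from the Fourier inversion identity
\begin{equation*}
\P(B_n=x+a)+\P(B_n=x-a)-\P(B_n=x+b)-\P(B_n=x-b) = \frac{2}{(2\pi)^d}\int_{[-\pi,\pi]^d}\phi(p)^n\bigl[\cos(p\cdot a)-\cos(p\cdot b)\bigr] e^{-\ii p\cdot x}\,\dd p.
\end{equation*}
The key cancellation is the product-to-sum identity $\cos(p\cdot a)-\cos(p\cdot b) = -2\sin\tfrac{p\cdot(a+b)}{2}\sin\tfrac{p\cdot(b-a)}{2}$, which, together with $|a|\le |b|$, gives the uniform bound $|\cos(p\cdot a)-\cos(p\cdot b)|\le |p|^2|b|^2$. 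On the tail region $|Wp|\ge w_0$, the $e^{-cn}$ decay of $|\phi(p)|^n$ together with $n\ge W^{c_0}$ produces an $\OO(|x|^{-D})$ contribution. On $|Wp|\le w_0$, I would substitute $\phi(p)^n = e^{-\frac{n}{2}p^\top \cal C p}(1+\cal E_n(p))$, where $\cal E_n(p)= \OO(n|Wp|^4)$ is the Edgeworth error, controlled uniformly by $n^\tau$ on the effective integration window $|p|\lesssim n^{-1/2+\tau/4}$.

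The remaining main-order integral is a Gaussian integral of a quadratic polynomial. Completing the square via $p \mapsto q - \ii(n\cal C)^{-1}x$ shows that
\begin{equation*}
\int_{\R^d} |p|^2 e^{-\frac{n}{2}p^\top\cal C p - \ii p\cdot x}\,\dd p = e^{-\frac{1}{2}x^\top(n\cal C)^{-1}x}\cdot \OO\!\left(\tfrac{1}{n^{d/2+1}W^{d+2}}\right),
\end{equation*}
which, combined with the $|b|^2$ factor, gives precisely the claimed bound $\frac{|b|^2}{W^2}\cdot\frac{1}{n^{d/2+1}W^d}e^{-\frac{1}{2}x^\top(n\cal C)^{-1}x}$. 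The assumption $n\ge |x|^{2-2\e_0+\e_1}/W^2$ ensures that the saddle point $-\ii(n\cal C)^{-1}x$ lies well within the region where the Taylor expansion of $\log\phi$ is valid, so that the formal contour shift is legitimate at the level of Gaussian approximation.

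The main obstacle is the careful bookkeeping of the Edgeworth error $\cal E_n(p)$ after the contour shift: one must show that the contributions from $\cal E_n$ to the shifted Gaussian integral remain suppressed by the same factor $e^{-\frac{1}{2}x^\top(n\cal C)^{-1}x}$ and a small power of $n$, so that the final bound with $n^\tau$ absorbs all polylogarithmic losses. A secondary technical issue is to verify that on the medium region $n^{-1/2+\tau}\le |p|\le w_0/W$, the direct bound $|\phi(p)|^n \le e^{-cn|Wp|^2}$ times $|p|^2|b|^2$ still integrates to a term dominated by the main bound, which follows from $e^{-cn|Wp|^2}$ providing enough Gaussian decay to fold in the $e^{-\frac{1}{2}x^\top(n\cal C)^{-1}x}$ factor (using the inequality $c_1 nW^2|p|^2 + c_2 |x|^2/(nW^2) \ge 2\sqrt{c_1 c_2}|p||x|$ in conjunction with the oscillatory $e^{-\ii p\cdot x}$).
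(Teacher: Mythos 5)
Your overall strategy---Fourier inversion, Gaussian approximation with Edgeworth corrections, exploiting the $\cos$ cancellation, then stationary phase/completing the square---is the same as the paper's, and \eqref{RW_diffusion2} is indeed handled by the standard local CLT (which the paper simply cites from Lemma~30 of \cite{PartIII}). However, for \eqref{RW_diffusion1} there are two genuine gaps.

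First, and most importantly, the uniform pointwise bound $|\cos(p\cdot a)-\cos(p\cdot b)|\le |p|^2|b|^2$ cannot be applied inside the oscillatory integral and still yield the factor $e^{-\frac12 x^\top(n\cal C)^{-1}x}$. Once you replace the integrand by its absolute value, the contour shift $p\mapsto q-\ii(n\cal C)^{-1}x$ is no longer available, and the bare estimate $\int|p|^2 e^{-\frac n2 p^\top\cal C p}\dd p\sim n^{-d/2-1}W^{-d-2}$ has no exponential suppression. Your display $\int|p|^2 e^{-\frac n2 p^\top\cal C p-\ii p\cdot x}\dd p = e^{-\frac12 x^\top(n\cal C)^{-1}x}\OO(\cdot)$ does hold, because $|p|^2=\sum p_i^2$ is a polynomial and the contour shift is legitimate for polynomials, but this is not the integral you are left with after invoking the uniform bound. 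The two steps---bound the cosine, then contour-shift---are mutually incompatible as written. The paper avoids this by Taylor-expanding $\cos(p\cdot a)-\cos(p\cdot b)=\sum_{k\ge1}(-1)^k\frac{(p\cdot a)^{2k}-(p\cdot b)^{2k}}{(2k)!}+\OO(|x|^{-D})$ (valid since $|p\cdot a|+|p\cdot b|\lesssim |x|^{-\e_1/4}$ on the integration domain) and treating each monomial by completing the square; the product-to-sum identity for cosines is used neither in the paper nor needed.

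Second, you do not perform the preliminary Chernoff-bound reduction to $|x|=\OO(Wn^{1/2+\tau_0})$. Without it, the stationary point $v=(n\cal C)^{-1}x$ and the scaled variable $y=(n\cal C)^{-1/2}x$ need not be small, so (a) the Taylor expansion of the cosines and the validity of the cumulant expansion of $\phi$ are not controllable, (b) the extra $|y|^2$-type factors appearing after the contour shift (from $(v\cdot a)^2-(v\cdot b)^2$ terms) need not be absorbed into $n^\tau$. After the Chernoff cut, $|y|\lesssim n^{\tau_0}$ with $\tau_0$ as small as you like, and these issues disappear. This reduction is an essential first step in the paper's argument, and the statement's hypotheses ($|x|\ge W^{1+2\e_0}$, $n\ge |x|^{2-2\e_0+\e_1}/W^2$) alone do not give it to you. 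Finally, the ``main obstacle'' you flag about the Edgeworth error under the contour shift is a real gap: the paper resolves it by quoting a ready-made polynomial Edgeworth formula (with an explicit $\OO(|x|^{-D})$ remainder and a concrete cutoff $|p|\le W^{-1}n^{-1/2}|x|^{\tau_0}$) rather than working with an abstract error term $\cal E_n(p)$, which is not analytic and for which the contour shift is not immediately justified.
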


\begin{proof} 
The estimate \eqref{RW_diffusion2} has been proved in Lemma 30 of \cite{PartIII}. We only need to prove \eqref{RW_diffusion1}. By \eqref{subpoly}, we have that for any fixed $\tau, D>0$,
$$ \P(|X_1|\le W^{1+\tau})\ge 1- W^{-D}. $$
Then using a simple Chernoff bound, we can get that for any fixed $\tau, D>0$,
\be\label{largeBn} \P\left(B_n = x\right) = \OO(n^{-D}), \quad \text{for \ \ $|x|\ge Wn^{1/2+\tau}$.}\ee
Thus to prove \eqref{RW_diffusion2} and \eqref{RW_diffusion1}, we only need to focus on the case 
\be\label{restrictx} |x|=\OO(W n^{1/2+\tau_0})\ee 
for a small constant $\tau_0>0$. In the following proof, we always make this assumption.
Using characteristic functions and cumulants, the following estimate has been shown in the proof of Lemma 30 in \cite{PartIII}:
\begin{align} \label{RW000}
\P\left(B_n =x \right)& = \frac{1}{(2\pi)^d}\int\limits_{|p| \le W^{-1}n^{-1/2}|x|^{\tau_0}} \rd p \, e^{-\ii p\cdot x}e^{-\frac12 n p^\top\cal C p}\bigg[1+\sum_{3 \le k \le K_D}\al_k(\hat p)( Wn^{1/2} |p|)^k\bigg] \\
&+\OO(|x|^{-D}), \nonumber
\end{align}
where 
$K_D$ is a fixed integer depending only on $D$, and $\al_k(\hat p)\in \C$ are complex coefficients defined as 
$$ \al_k (\hat p):=\frac{\kappa_k(\hat p)}{k! \cdot W^k} \ii^k n^{1-k/2} .$$
Here $\hat p:= p/|p|$ and $\kappa_k(\hat p)$ denotes the $k$-th cumulant of $\hat p\cdot X_1$. Using \eqref{subpoly}, we can check that
$$ |\kappa_k(\hat p)|\le  C^k k! \cdot W^k, \quad  \forall \ \hat p \in \mathbb S^d ,$$
for a large enough constant $C>0$. Thus we have $\al_k(\hat p)=\OO(n^{1-k/2})$.  Using \eqref{RW000}, we obtain that 
\begin{align} 
&\left| \mathbb P\left(B_n = x + a\right) + \mathbb P\left(B_n = x - a \right) - \mathbb P\left(B_n = x + b\right)- \mathbb P\left(B_n = x - b\right) \right| \label{cumulantFourier2}\\
&\lesssim \bigg|\int\limits_{|p| \le W^{-1}n^{-1/2}|x|^{\tau_0}} \rd p \left[ \cos\left(p\cdot a\right)-\cos\left(p\cdot b\right)\right]  e^{-\ii p\cdot x}e^{-\frac12 n p^\top\cal C p}\bigg[1+\sum_{3 \le k \le K_D}\al_k(\hat p)( Wn^{1/2} |p|)^k\bigg]\bigg|+\OO(|x|^{-D}).\nonumber
\end{align}
For $n\ge {|x|^{2-2\e_0+\e_1}}/{W^2}$, $|a|\le |b| \le |x|^{1-\e_0}$ and $|p| \le W^{-1}n^{-1/2}|x|^{\tau_0}$, we have
 $$|p\cdot a|+ |p\cdot b|\lesssim |x|^{-\e_1/2} |x|^{\tau_0} \le |x|^{-\e_1/4},$$
as long as $\tau_0<\e_1/4$. Thus using the Taylor expansions of $\cos(p\cdot a)$ and $\cos(p\cdot b)$, we can write that
 \be\label{Taylorcos}\cos\left(p\cdot a\right)-\cos\left(p\cdot b\right) = \sum_{k=1}^{K'_D} (-1)^k \frac{(p\cdot a)^{2k}-(p\cdot b)^{2k}}{(2k)!}+\OO(|x|^{-D}),\ee
where $K'_D$ is a fixed integer depending only on $D$ and $\e_1$.  Inserting it into \eqref{cumulantFourier2} and 
bounding each term in the resulting expression, we can obtain \eqref{RW_diffusion1}. 
For example, the leading term is   
 \begin{align} 
&  \int\limits_{|p| \le W^{-1}n^{- 1/2}|x|^{\tau_0}} \rd p \frac{(p\cdot a)^{2}-(p\cdot b)^{2}}{2}  e^{-\ii p\cdot x}e^{-\frac12 n p^\top\cal C p} \nonumber\\ 
& =\frac{1}{ \sqrt{ n^d \det(\cal C)}} \int\limits_{|W\cal C^{-1/2}q|\le |x|^{\tau_0}} \rd q \frac{\left[q\cdot (n\cal C)^{-1/2}a\right]^{2}-\left[q\cdot (n\cal C)^{-1/2}b\right]^{2}}{2}e^{ -\ii q \cdot y}e^{-{q^2}/{2}} 
 +\OO(|x|^{-D}) ,\label{boundstat}
\end{align}
where we used change of variables $ y:=(n\cal C)^{-1/2}x$ and $q:=(n\cal C)^{1/2}p$. Using the conditions in Assumption \ref{var profile}, we can check that
\be\label{operator_sigma}
C^{-1}W^{2}\le \lambda_{\min}(\cal C)\le \lambda_{\max}(\cal C) \le CW^2
\ee
for some large constant $C>0$, where $\lambda_{\min}$ and $\lambda_{\max}$ respectively denote the maximum and minimum eigenvalues of $\cal C$. By \eqref{operator_sigma}, we have that $C^{-1/2}|q|\le |W\cal C^{-1/2}q|\le C^{1/2}|q|$. Then in \eqref{boundstat} we can replace the domain of the integral by \smash{$\int_{q\in \R^d}$}, because the integral over the domain $\{q:|W\cal C^{-1/2}q|>|x|^{\tau_0}\}$ can be bounded by $\OO(|x|^{-D})$ for any fixed $D>0$ due to the term $e^{-q^2/2}$. Hence we can estimate \eqref{boundstat} as
\begin{align*}
 |\eqref{boundstat}| &\lesssim \frac{1}{n^{d/2}W^d} \left|\int_{q\in \R^d} \rd q \frac{[q\cdot (n\cal C)^{-1/2}a]^{2}-[q\cdot (n\cal C)^{-1/2}b]^{2}}{2}  e^{-\ii q\cdot y}e^{-{q^2}/2} \right|+\OO(|x|^{-D})\\
&\lesssim \frac{n^{2\tau_0} }{n^{d/2+1}W^d}\frac{|b|^2}{W^2} e^{-y^2/2} +\OO(|x|^{-D}) 
\end{align*}
where we have bounded the integral using the stationary phase approximation and the fact that $|y|=\OO(n^{\tau_0})$ for $x$ satisfying \eqref{restrictx}. All the other integrals coming from the $k\ge 2$ terms in \eqref{Taylorcos} can be bounded in a similar way, and they all  give sub-leading terms. This proves the bound in \eqref{RW_diffusion1}.  
\end{proof} 

Now we prove Lemma \ref{lem cancelTheta}, Lemma \ref{lem esthatTheta}, Lemma \ref{lem cancelTheta2} and Claim \ref{claim wtS-S} one by one using Lemma \ref{reg RW2}.

\begin{proof}[Proof of Lemma \ref{lem cancelTheta}]
Fix a small constant $\tau>0$. We need to estimate the sum in \eqref{taylor1}. Let $B_n = \sum_{i=1}^n X_i$ be a random walk on $\Z_L^d$ with $i.i.d.$ steps $X_i$, such that $\mathbb P(X_i = y-x)=s_{xy}$. Then we have 
\be\label{ProbRW}(S^k)_{xy}=\mathbb P(B_k = y - x)  .\ee
If $W^{\tau} \le k \le L^{2-\tau}/W^{2}$, using a simple Chernoff bound we can get the large deviation estimate
\be \label{largedeviation}
\P\left(|B_k| \ge k^{1/2+\delta}W \right)  \le k^{-D} 
\ee
for any constants $\delta,D>0$. In particular, it shows that with high probability, $B_k$ can be regarded as a random walk on the full lattice $\Z^d$ if $k \le L^{2-\tau}/W^{2}$, so that both \eqref{RW_diffusion2} and \eqref{RW_diffusion1} can be applied.

Since $g(\cdot)$ is a symmetric function and $\sum_x g(x)=0$, we can write that
\be\label{decomposezero} \sum_{x}\Theta_{0 x}(z) g(x-x_0) =\sum_{a\in\mathfrak A} g(a) \left[  \Theta(0,x_0+a)+ \Theta(0,x_0-a) - \Theta(0,x_0+y_a) -\Theta(0,x_0-y_a)\right],\ee
where $\mathfrak A$ is a subset of $\cal B_K$, and $y_a\in \cal B_K$ depends on $a$ and satisfies $|y_a|\le |a|$. 
By \eqref{taylor1} and \eqref{ProbRW}, we have
 \begin{align}\label{Pk0}
&\left|  \Theta(0,x_0+a)+ \Theta(0,x_0-a) - \Theta(0,x_0+y_a) -\Theta(0,x_0-y_a)\right|\le \sum_{k=1}^{|x_0|^\tau \eta^{-1}}|m|^{2k}  P_k(a,y_a) + |x_0|^{-D} ,
 \end{align} 
for any constant $ D>0$, where 
 $$P_k(a,y_a):= \left| \mathbb P\left(B_k = x_0+a\right) + \mathbb P\left(B_k = x_0-a \right) - \mathbb P\left(B_k = x_0 + y_{a}\right)- \mathbb P\left(B_k = x_0 - y_{a}\right) \right| .$$
Suppose $\tau <\e/2$ so that $\eta\ge W^{2 }/L^{2-2\tau}$. Using the large deviation estimate \eqref{largedeviation}, we can bound that
\be\label{Pk1} 
P_k(a,y_a)=\OO(W^{-D}) \quad \text{ for }\quad 1\le k \le {|x_0|^{2-\tau}}/{W^{2}}.
\ee
For ${|x_0|^{2-\tau}}/{W^{2}} < k\le |x_0|^\tau \eta^{-1}$, $P_k(a,y_a)$ can be bounded using \eqref{RW_diffusion1} as 
\be\label{Pk2}  P_k(a,y_a)\le   \frac{| a |^2}{W^2} \frac{n^\tau}{k^{d/2+1}W^d}e^{-\frac1{2} x_0^\top (k\cal C)^{-1} x_0} .\ee
Plugging \eqref{Pk1} and \eqref{Pk2} into \eqref{Pk0}, we obtain that for any constant $D>0$,
\begin{align*}
|(\ref{Pk0})|&\le \sum_{{|x_0|^{2-\tau}}/{W^{2 }}  \le k\le |x_0|^\tau \eta^{-1}}  \frac{| a |^2}{W^2} \frac{n^\tau}{k^{d/2+1}W^d}e^{-\frac1{2} x_0^\top (k\cal C)^{-1} x_0} + |x_0|^{-D} \\
& \lesssim \frac{|a|^2}{W^2}\frac{n^{\tau} }{\langle x_0 \rangle^{d-d\tau/2}}\mathbf 1_{|x_0|\le  \eta^{-1/2}W^{1+\tau}} +  |x_0|^{-D}.
\end{align*}
Plugging it into \eqref{decomposezero}, we conclude Lemma \ref{lem cancelTheta} since $\tau$ can be arbitrarily small. 
\end{proof}

\begin{proof}[Proof of Lemma \ref{lem esthatTheta}]
Using \eqref{subpoly}, we get that for any fixed $D>0$,
\begin{align}\label{S-S0} 
	\left|(S_{\infty})_{0x} - S_{0x}(L)\right| \le  (|x|+W)^{-D} , \quad \forall \  x\in \Z^d,
\end{align}
where $S(L)$ refers to the variance matrix defined on $\Z_L^d$, and we adopted the convention that $S_{0x}(L)=0$ for $x\notin (-L/2,L/2]^d$. Now we prove the estimates \eqref{S+xy1}--\eqref{Theta-wh} one by one. 
First, by the arguments in the proof of \cite[Lemma 4.2]{PartII}, there exist constants $c_0, c_1>0$ such that  
\be\label{contractionS+}
\left\|\left( \frac{m^2(E)S +c_0}{1+c_0}\right)^2\right\|_{\ell^\infty \to \ell^\infty}\le 1-c_1,\quad \left\|\left( \frac{m^2(E)S_\infty+c_0}{1+c_0}\right)^2\right\|_{\ell^\infty \to \ell^\infty} \le 1-c_1.  
\ee
Then we use \eqref{contractionS+} to estimate the Taylor expansion 
\be\label{expS+}S_\infty^{+}=\frac{m^2(E) S_\infty}{1+c_0}\sum_{k=0}^{\infty}\left(\frac{m^2(E)S_\infty+c_0}{1+c_0}\right)^k.\ee
Using \eqref{contractionS+}, we immediately obtain that $(S^+_{\infty})_{0x}$ exists for any $x\in \Z^d$, $\max_{x}(S^+_{\infty})_{0x}=\OO(W^{-d})$ and 
$$ \sum_{k\ge |x|/W } \left[S_\infty\left(\frac{m^2(E)S_\infty+c_0}{1+c_0}\right)^k\right]_{0x}=\OO\left( W^{-d}(1-c_1)^{|x|/(2W)}\right).$$
On the other hand, when $|x|\ge W^{1+\tau}$, we have that for any fixed $D>0$,
$$\left|\sum_{k < |x|/W} \left[S_\infty\left(\frac{m^2(E)S_\infty+c_0}{1+c_0}\right)^k\right]_{0x}\right| \le |x|^{-D}.$$ 
Here we used a similar large deviation estimate as in \eqref{largedeviation} to derive this estimate. 
Combining the above two estimates, we conclude \eqref{S+xy1}. Subtracting the Taylor expansion of $S^+(z)$ from the expansion \eqref{expS+}, and using $|m(z)-m(E)|=\OO(\eta)$ and \eqref{S-S0}, we can readily conclude \eqref{S+xy2}.

It remains to study $\Theta_\infty$. Suppose we have shown that $(\Theta_\infty)_{0x}$ exists for any $x\in \Z^d$. Then the estimate \eqref{Theta-wh1} follows from \eqref{thetaxy}. Now we prove \eqref{Theta-wh}. For any $\wt L\ge L >  2|x|$, we abbreviate $\wt z:=E+\ii W^2/\wt L^2$, $\wt m\equiv m(\wt z)$, $\wt S\equiv S(\wt L)$ and $\wt\Theta\equiv \Theta(\wt z, \wt L) $. Then using \eqref{taylor1}, we obtain that 
\begin{align}
 \left|\wt\Theta_{0x}  - \Theta_{0x}\right| &\le \sum_{k=1}^{ \langle x\rangle^{2-\tau}/W^2} \left||m|^{2k}S^k_{0x} -|\wt m|^{2k}\wt S^k _{0x}\right| + \sum_{k= \langle x\rangle^{2-\tau}/W^2}^{\langle x\rangle^{-\tau}\eta^{-1}} \left||m|^{2k}S^k_{0x} -|\wt m|^{2k}\wt S^k _{0x}\right|   \nonumber\\
&+ \sum_{k=\langle x\rangle^{-\tau}\eta^{-1}}^{\langle x\rangle^\tau\eta^{-1}} |m|^{2k} S^k_{0x}+ \sum_{k=\langle x\rangle^\tau \eta^{-1}}^{ \langle x\rangle^{-\tau}\wt L^2/W^2} |\wt m|^{2k} \wt S^k_{0x} + \sum_{k=\langle x\rangle^{-\tau}\wt L^2/W^2}^{\langle x\rangle^\tau \wt L^2/W^2} |\wt m|^{2k} \wt S^k_{0x}+ \langle x\rangle^{-D} ,\label{T-T0}
\end{align}
for any constants $\tau,D>0$. 
Using \eqref{S-S0}, $\left||m|^{2k}-|\wt m|^{2k}\right|\lesssim k \eta$ and \eqref{RW_diffusion2}, we can bound the five terms on the right-hand side of \eqref{T-T0} one by one as follows: 
\be \nonumber
\sum_{k=1}^{ \langle x\rangle^{2-\tau}/W^2} \left||m|^{2k}S^k_{0x} -|\wt m|^{2k}\wt S^k _{0x}\right| \le   \langle x\rangle ^{-D},
\ee
\begin{align} \nonumber
&  \sum_{k= \langle x\rangle^{2-\tau}/W^2}^{\langle x\rangle^{-\tau}\eta^{-1}} \left||m|^{2k}S^k_{0x} -|\wt m|^{2k}\wt S^k _{0x}\right|  \lesssim \sum_{k= \langle x\rangle^{2-\tau}/W^2}^{\langle x\rangle^{-\tau}\eta^{-1}} \frac{k\eta}{W^dk^{d/2}}   \le  \frac{\eta  }{W^4\langle x\rangle^{d-4 -d\tau}}  ,
\end{align}
\begin{align} \nonumber
&\sum_{k=\langle x\rangle^{-\tau}\eta^{-1}}^{\langle x\rangle^\tau\eta^{-1}} |m|^{2k} S^k_{0x} \lesssim  \frac{\langle x\rangle^\tau\eta^{-1}\mathbf 1_{|x|\le \langle x\rangle^\tau W \eta^{-1/2}} }{(\langle x\rangle^{-\tau}\eta^{-1})^{d/2}W^d}+ \langle x\rangle^{-D} \le \frac{\eta }{W^4\langle x\rangle^{d-4-2d\tau}}+ \langle x\rangle^{-D} ,
\end{align}
\begin{align*}
 \sum_{k=\langle x\rangle^\tau \eta^{-1}}^{ \langle x\rangle^{-\tau}\wt L^2/W^2} |\wt m|^{2k} \wt S^k_{0x}\lesssim \sum_{k=\max\left(\langle x\rangle^\tau \eta^{-1}, \langle x\rangle^{2-\tau}/W^2\right)}^{\langle x\rangle^{-\tau}\wt L^2/W^2} \frac{1}{W^dk^{d/2}} +  \langle x\rangle^{-D}  \le  \frac{\eta  }{W^4\langle x\rangle^{d-4-d\tau}} +   \langle x\rangle^{-D} ,
 \end{align*}  
\begin{align} \nonumber
\sum_{k=\langle x\rangle^{-\tau}\wt L^2/W^2}^{\langle x\rangle^\tau \wt L^2/W^2} |\wt m|^{2k} \wt S^k_{0x} \lesssim \frac{\langle x\rangle^\tau \wt L^2}{W^2} \frac{1}{(\langle x\rangle^{-\tau}\wt L^2/W^2)^{d/2}W^d} + \langle x\rangle^{-D} \le \frac{ \langle x\rangle^{d\tau}}{W^2\wt L^{d-2} }+ \langle x\rangle^{-D}.
\end{align}
Combining the above estimates and taking $\wt L \to \infty$, we can conclude \eqref{Theta-wh} since $\tau$ is arbitrary. 

Finally, it remains to show that the limit in \eqref{defwhtheta} exists. For any $x\in \Z^d$, we choose $\wt L\ge L \ge L^{c}\langle x\rangle$ for a constant $c>0$, $\eta=W^2/L^2$, $\wt\eta=W^2/\wt L^2$, $m\equiv m(E+\ii \eta)$ and $\wt m\equiv m(E+\ii \wt\eta)$. Again using the Taylor expansion \eqref{taylor1}, we obtain that for any constants $\tau, D>0$,
\begin{align*}
  \left|\wt\Theta_{0x}\left(E+\ii \wt\eta,\wt L\right)  - \Theta_{xy}\left(E+\ii \eta,L\right)\right|&\le \sum_{k=1}^{L^{-\tau}\eta^{-1}} \left||m|^{2k}S^k_{0x}-|\wt m|^{2k}\wt S^k_{0x}\right|  +\sum_{k=L^{-\tau}\eta^{-1}}^{L^\tau \eta^{-1}} |m|^{2k} S^k_{0x}\\
  &+\sum_{k=L^{\tau}\eta^{-1}}^{L^{-\tau} \wt \eta^{-1}} |\wt m|^{2k} \wt S^k_{0x} +\sum_{k=L^{-\tau}\wt\eta^{-1}}^{L^{\tau} \wt \eta^{-1}} |\wt m|^{2k} \wt S^k_{0x} +  L^{-D} .
\end{align*}
Applying similar arguments as above to each term on the right-hand side, we can obtain that
 \begin{align*}
   \left|\wt\Theta_{0x}\left(E+\ii \wt\eta,\wt L\right)  - \Theta_{0x}\left(E+\ii  \eta,L\right)\right|&\le  \frac{\eta L^{2d\tau}}{W^4\langle x\rangle^{d-4}} +  L^{-D} .
 \end{align*}
This shows that for any fixed $x\in \Z^d$, $\wt\Theta_{0x}(E+\ii W^2/\wt L,\wt L)$ is a Cauchy sequence in $\wt L$. Hence the limit in \eqref{defwhtheta} exists. 
\end{proof}

\begin{proof}[Proof of Lemma \ref{lem cancelTheta2}]
 We choose $L$ such that $L\le |x_0|^2 \le 2L$ and $\eta=  W^{2}/L^{2-\tau}$ for a small constant $\tau>0$. By \eqref{Theta-wh}, we have that for any constant $D>0$, 
\be \nonumber 
  |(\Theta_\infty)_{0x}(E)-\Theta_{0x}(E+\ii\eta)|\le \frac{\eta }{W^4\langle x\rangle^{d-4-\tau}}+ \langle x \rangle^{-D} .
\ee
With this estimate, we get that 
\begin{align*}
\left|\sum_{x}\left[(\Theta_\infty)_{0 x}(E)-\Theta_{0x}(E+\ii \eta) \right]g(x-x_0)\right| &\le   \sum_{x\in \cal B_K}\left[ \frac{ |x_0|^\tau}{ |x_0|^{d}}+ |x_0|^{-D} \right] |g(x)|.
\end{align*}
On the other hand, 
by Lemma \ref{lem cancelTheta} we have that
$$\left|\sum_{x}\Theta_{0 x}(E+\ii \eta) g(x-x_0)\right| \le \left(\sum_{x\in \cal B_K}\frac{x^2}{|x_0|^2}|g(x)|\right) \left[  | x_0|^\tau B_{0x_0}    +  |x_0|^{-D} \right].$$
Combining the above two estimates, we conclude the proof.
\end{proof}

\begin{proof}[Proof of Claim \ref{claim wtS-S}]
 Fix any constants $\tau, D>0$, by \eqref{subpoly} we have that 
\begin{align}\label{S-S2} \left|\wt S_{0x} - S_{0x}\right| \le |x|^{-D} \quad \text{for}\quad  |x|\ge  W^{1+\tau}.
\end{align}
We now bound the difference $ \wt S_{0x}-S_{0x} $ for $|x|< W^{1+\tau}$:
\begin{align}\label{S-S3}
	\wt S _{0x}  - S_{0x}  =\frac{\langle x\rangle^2}{L^2} \frac1{L^d}\sum_{p_0\in \mathbb T_L^d}\phi(p_0, x) e^{\ii p_0\cdot x} + \OO(W^{-D})
	,\end{align} 
where we used \eqref{bandcw1} and that 
$$ \int_{|p| > \pi} \psi(Wp)e^{\ii p\cdot x} \dd p =\OO(W^{-D}),$$
because $\psi$ is a Schwartz function.  Moreover, the function $\phi(p_0, x)$ is defined as 
$$\phi(p_0, x):=\frac{L^2}{\langle x\rangle^2} \frac{L^d}{(2\pi)^d}\int_{p\in B(p_0)} \left[ \psi(Wp_0)- \psi(Wp)e^{\ii (p-p_0)\cdot x} \right] \dd p, $$
with $B(p_0)$ being the box centered at $p_0$ with side length $2\pi/L$. It is easy to check that 
$$|\phi(p_0, x)|\lesssim  \sup_{p\in B(p_0)}\left(|\psi(Wp)|+ |\psi'(Wp)|+ |\psi''(Wp)|\right).$$ 
Plugging this estimate into \eqref{S-S3}, we can get that 
\be\label{S-S4}
\left|\wt S_{0x}  - S_{0x} \right| \lesssim \frac{\langle x\rangle^2}{L^2} \frac{1}{W^d}+ \langle x\rangle^{-D},\quad \text{for}\quad |x|\le  W^{1+\tau}.
\ee
Combining \eqref{S-S2} and \eqref{S-S4}, we obtain \eqref{S0xy3}. Then using \eqref{contractionS+}, \eqref{S0xy3} and the Taylor expansions of $S^+$ and $\wt S^+$ as in \eqref{expS+}, we can readily get \eqref{S+xy3}. We omit the details.

It remains to prove \eqref{Theta-wh3}. By \eqref{S0xy3}, we have that for any constants $ \tau,D>0$,
\be\label{T-T3}\left| \wt S^k_{0x} - S^k _{0x}\right| \lesssim \frac{W^{2}}{L^2} \frac{1}{W^d} \sum_{k_1=1}^{k-1} \sum_{|x_1-x_2|\le \langle x\rangle^\tau W }S^{k_1}_{0x_1} \wt S^{k-1-k_1}_{x_2 x} + \langle x\rangle^{-D}.\ee
Moreover, we have a similar inequality as \eqref{T-T0}:
\begin{align}
	|\wt\Theta_{0x}  - \Theta_{0x}| &\le \sum_{k=1}^{ \langle x\rangle^{2-\tau}/W^2} |m|^{2k}\left|S^k_{0x} - \wt S^k _{0x}\right| + \sum_{k= \langle x\rangle^{2-\tau}/W^2}^{\langle x\rangle^{-\tau}\eta^{-1}} |m|^{2k}\left|S^k_{0x} - \wt S^k _{0x}\right|   \nonumber\\
	&+ \sum_{k=\langle x\rangle^{-\tau}\eta^{-1}}^{\langle x\rangle^\tau\eta^{-1}} |m|^{2k} \left(\left|\wt S^k_{0x}\right|+\left|S^k_{0x}\right|\right) + \langle x\rangle^{-D} .\label{T-T1}
\end{align}
Using \eqref{RW_diffusion2} and \eqref{T-T3}, we can bound the three terms on the right-hand side of \eqref{T-T1} one by one as in the estimates below \eqref{T-T0}, which concludes \eqref{Theta-wh3}. We omit the details. 
\end{proof}

\section{Proofs for local expansions}\label{appd localpf}
In this section, we provide the proofs for the lemmas in Section \ref{sec_basiclocal}.
\begin{proof} [Proof of Lemma \ref{expandlabel}]
We first prove that $ {\cal O}_{weight}^{(x)}[\cal G]$ is a canonical local expansion by verifying the properties (i)--(iv) of Definition \ref{def_canonical_local} one by one. To prove property (i), it suffices to show that \eqref{Owx} is an identity in the sense of graph values. This follows from Lemma \ref{ssl} together with the facts that $b_{x\al}=\delta_{x\al} + S^+_{x\al}$ and  $P_\al = 1-Q_\al$. The properties (iii) and (iv) are trivial by definition. 
It remains to prove property (ii). First, it is easy to see that \smash{${\cal O}^{(x),1}_{weight}$} acting on a regular (resp. normal regular) graph gives a linear combination of regular (resp. norma regular) graphs. Second, the $\cal O_{dot}$ in \eqref{eq defOdot} will expand a regular graph into a sum of normal regular graphs by Lemma \ref{lem Odot}. Hence to prove property (ii) of Definition \ref{def_canonical_local}, it suffices to show that \smash{$ {\cal O}^{(x), 2}_{weight}$} acting on a regular graph also gives a linear combination of regular graphs. Now given any regular graph $\cal G$, we need to check the properties (i)--(iii) of Definition \ref{defnlvl0} for the graphs in \smash{${\cal O}^{(x) ,2}_{weight} [\cal G]$}. The properties (i) and (iii) of Definition \ref{defnlvl0} are trivially true, while the property (ii) follows from the fact that the new atoms are connected to $x$ through paths of waved edges. 

In sum, we have shown that  \smash{$ {\cal O}_{weight}^{(x)}[\cal G]$} is a canonical local expansion. Now we prove statements (a) and (b) of Lemma \ref{expandlabel}. If $\cal G$ contains some regular weights $G_{xx}$ or $\overline G_{xx}$ on $x$, then there is a graph in \smash{${\cal O}^{(x),1}_{weight} [\cal G]$} obtained by replacing all these weights by $m$ or $\overline m$, and this graph satisfies (b). All the other graphs in \smash{${\cal O}^{(x),1}_{weight} [\cal G]$} satisfy (a). To conclude the proof, it remains to prove that if $\cal G$ only contains light weights on the atom $x$, then every graph without $Q$-labels 
in \smash{$ {\cal O}_{weight}^{(x)}[\cal G]$} satisfies either (a) or (b). For this purpose, we study the graphs on the right-hand side of \eqref{Owx} one by one.

\begin{itemize}
\item[(1)] The first two graphs on the right-hand side of \eqref{Owx} both have strictly higher scaling orders than $\cal G$ because they contain one more light weight than $\cal G$. 

\item[(2)] We consider any graph, say $\cal G_1$, in ${\cal O}^{(x), 1}_{weight}\circ \cal O_{dot}\left[m  \sum_\al s_{x \al} G_{\al x}\partial_{ h_{\al x}} f(G)\right]$. We have the following cases.
\begin{itemize}
\item  If $\al$ is identified with $x$ or some other atoms in $f(G)$, then we have that
\be\label{larger weight} \text{ord} \left[ \cal G_1 \right] \ge \text{ord} \left[\cal G\right] + 1 ,\ee
because the scaling order of $s_{x\al}$ is larger than $\ord(G_{xx}-m)=1$ and the scaling orders of the graphs in $ \partial_{  h_{\al x}}f(G) $ are $ \ge \text{ord}[f(G) ] $. Hence $\cal G_1$ satisfies (a).


\item If $\al$ is not identified with any other atom, then $G_{\al x}$ is of scaling order 1. Moreover, suppose $\cal G_1$ contains a subgraph in $\partial_{h_{\al x}} f(G)$ obtained through the partial derivatives in \eqref{partialhG}. If $b \ne x$, then the scaling order of $G_{a\al}G_{xb}$ or $\overline G_{bx}\overline G_{\al a}$ is strictly larger than the scaling order of the original component $G_{ab}$, $\overline G_{ba}$, $G_{aa}-m$ or $\overline G_{aa}-\overline m$ (where the last two cases happen if the partial derivative acts on a light weight with $a=b$). Hence $\cal G_1$ satisfies (a).

\item Suppose $\al$ is not identified with any other atom, and $\cal G_1$ contains a subgraph in $\partial_{ h_{ \al x}} f(G)$ obtained through the partial derivatives in \eqref{partialhG}. If $b=x$, then {${\cal O}^{(x), 1}_{weight}$} will expand \eqref{partialhG} into
\be\nonumber
G_{a\al}G_{xx} =m G_{a\al}+G_{a\al}(G_{xx}-m) , \quad  \text{or} \quad \overline G_{xx}\overline G_{\al a}=\overline m\overline G_{\al a} +(\overline G_{xx}-\overline m)\overline G_{\al a}.\ee
If $\cal G_1$ contains $G_{a\al}(G_{xx}-m) $ or $(\overline G_{xx}-\overline m)\overline G_{\al a}$, then it satisfies (a); otherwise, if $\cal G_1$ contains $m G_{a\al}$ or $\overline m\overline G_{\al a}$, then it satisfies (b). 

\end{itemize}
\item[(3)] The graphs in ${\cal O}^{(x), 1}_{weight}\circ\cal O_{dot}\left[m \sum_{\al,\beta} S^{+}_{x\al}s_{\al \beta} G_{\beta \al}\partial_{ h_{ \beta \al}} f (G)\right]$ can be dealt with in the same way as (2).

 
\end{itemize}
Combining the above cases (1)-(3), we conclude the proof of Lemma \ref{expandlabel}.
  \end{proof} 


\begin{proof} [Proof of Lemma \ref{expandmulti}]
First, using Lemma \ref{Oe14} and a similar argument as in the above proof of Lemma \ref{expandlabel}, we can prove that \smash{${\cal O}_{multi-e}^{(x)}[\cal G]$} is a canonical local expansion. To prove the property (a), we study the graphs on the right-hand side of \eqref{Oe1x} one by one. 
 \begin{itemize}

\item[(1)] 
Notice that {$ \cal O_{dot}\left[ \sum_\al s_{x\al }G_{\al y_1} \overline G_{\al y'_i}\right]$} is a sum of subgraphs of scaling orders $\ge \ord(G_{x y_1} \overline G_{xy_i'}) =2$. In addition, there is an extra light weight $\overline G_{xx} -\overline m$ in the third graph on the right-hand side of \eqref{Oe1x}, so it gives graphs of strictly higher scaling orders than $\cal G$ after the dotted edge partition $\cal O_{dot}$. The fourth graph on the right-hand side of \eqref{Oe1x} can be handled in the same way.

\item[(2)] The fifth graph on the right-hand side of \eqref{Oe1x} obviously has strictly higher scaling order than $\cal G$, because it contains one more light weight $G_{\al\al}-m$. 
 
\item[(3)] 
Notice that $ \cal O_{dot}\left[ \sum_\al s_{x\al }G_{x \al}G_{\al y_1}  \right]$ gives subgraphs of scaling orders $\ge 2 > \ord(G_{xy_1})=1$. Hence the dotted edge partition of the sixth graph on the right-hand side of \eqref{Oe1x} gives graphs of strictly higher scaling orders than $\cal G$. The seventh graph on the right-hand side of \eqref{Oe1x} can be handled in the same way.

\item[(4)] Regarding the eighth graph on the right-hand side of \eqref{Oe1x}, we consider any graph, say $\cal G_1$, in $$\cal O_{dot}\left[ \sum_\al s_{x\al } \frac{\cal G}{G_{x y_1}f (G)}G_{\al y_1}\partial_{ h_{ \al x}}f (G)\right].$$
We have the following two cases.
\begin{itemize}
\item If $\al$ is identified with $x$ or some other atoms in $f (G)$, then we have \eqref{larger weight}, because the scaling order of $s_{x\al}$ is larger than $G_{xy_1}$ and the scaling orders of the graphs in $ \partial_{h_{\al x}}f (G) $ are $ \ge \text{ord}[f (G) ] $. 

\item If $\al$ is not identified with any other atom, then $G_{\al y_1}$ is of scaling order 1. Moreover, suppose $\cal G_1$ contains a graph in $\partial_{ h_{\al x}} f (G)$ obtained through the partial derivatives in \eqref{partialhG}. By our assumption on $f (G)$, we must have $a \ne x$ and $b\ne x$. Then the scaling order of $G_{a\al}G_{xb}$ or $\overline G_{bx}\overline G_{\al a}$ is strictly larger than the scaling order of the original component $G_{ab}$, $\overline G_{ba}$, $G_{aa}-m$ or $\overline G_{aa}-\overline m$. Hence 
$\cal G_1$ satisfies \eqref{larger weight}.
\end{itemize}

\item[(5)] Regarding the first graph on the right-hand side of \eqref{Oe1x}. we consider any graph, say $\cal G_1$, in $$\cal O_{dot}\left[  \sum_\al s_{x\al }G_{\al y_1} \overline G_{\al y'_i}\cdot \frac{\cal G_x}{G_{x y_1} \overline G_{xy_i'}} \right].$$
We have the following three cases.
\begin{itemize}
\item If $\al$ is identified with $y_1$ or $y_i'$ and $y_1\ne y_i'$, then the subgraph $s_{xy_1}G_{y_1 y_1} \overline G_{y_1 y'_i}$ or $s_{xy_i'}G_{y_i' y_1} \overline G_{y_i' y'_i}$ is of strictly higher scaling order than $G_{x y_1} \overline G_{xy_i'}$. Hence $\cal G_1$ satisfies \eqref{larger weight}. 

\item If $\al$ is not identified with either $y_1$ or $y_i'$, then $\cal G_1$ satisfies (a.1). 

\item if $\al=y_1=y_i'$, then $\cal G_1$ satisfies (a.2). 
\end{itemize}
The second graph on the right-hand side of \eqref{Oe1x} can be handled in the same way. 
\end{itemize}
Combining the cases (1)--(5), we conclude property (a). 

Finally, if $\deg(x)=1$ or $x$ is connected with exactly two mismatched solid edges in $\cal G$, then one can check that the first two leading terms on the right-hand side of \eqref{Oe1x} vanish, and the above case (5) cannot happen. Hence we get that property (b) holds. 
  \end{proof} 


\begin{proof} [Proof of Lemma \ref{expandG2}]
First, using Lemma \ref{T eq0} and a similar argument as in the proof of Lemma \ref{expandlabel}, we can prove that \smash{${\cal O}_{GG}^{(x)}[\cal G]$} is a canonical local expansion. To prove the properties (a) and (b), we consider the graphs on the right-hand side of \eqref{Oe2x} one by one. 
 
 \begin{itemize}
 \item[(1)] The second to fifth graphs on the right-hand side of \eqref{Oe2x} obviously have strictly higher scaling orders than $\cal G$, because they contain one more light weight than $\cal G$. 
 
 \item[(2)]  With a similar argument as in item (4) of the above proof of Lemma \ref{expandmulti}, we can show that the sixth and seventh graphs on the right-hand side of \eqref{Oe2x} will give graphs satisfying \eqref{larger weight} after the dotted edge partition $\cal O_{dot}$.

\item[(3)] The first graph on the right-hand side of \eqref{Oe2x} satisfies \eqref{larger weight} if $y\ne y'$. Otherwise, if $y=y'$, then the graph is obtained by replacing $G_{xy}   G_{yx }$ with $m S^+_{xy}G_{yy}$. 
\end{itemize}
Combining the cases (1)--(3), we conclude the properties (a) and (b).
\end{proof}

\begin{proof}[Proof of Lemma \ref{expandGGbar}] 
All the statements are corollaries of Lemma \ref{expandmulti}, except for the properties (a) and (b) of $\cal G_1$. We prove these properties by studying the graphs on the right-hand side of \eqref{Oe3x} one by one. 
 \begin{itemize}
 \item[(1)] The second and third graphs on the right-hand side of \eqref{Oe3x} obviously satisfy \eqref{larger weight} because they contain one more light weight than $\cal G$. 
 
\item[(2)] With a similar argument as in item (4) of the above proof of  Lemma \ref{expandmulti}, we can show that the fourth graph on the right-hand side of \eqref{Oe3x} will give graphs satisfying \eqref{larger weight} after the dotted edge partition.

\item[(3)] Any graph in $\cal O_{dot}[\sum_\al s_{x\al }G_{\al y} \overline G_{\al y'} f(G)]$ satisfies \eqref{larger weight} if $\al \in \{y,y'\}$ and $y\ne y'$, satisfies (a) if $\al \notin \{y,y'\}$, and satisfies (b) if $\al = y = y'$. 
\end{itemize}
Combining the cases (1)-(3), we conclude the proof.
\end{proof} 

Finally, we give the proof of Lemma \ref{lvl1 lemma}.
\begin{proof}[Proof of Lemma \ref{lvl1 lemma}]
To conclude the proof, we need to show that the expansion process in Figure \ref{Fig chart1} will finally stop after $\OO(1)$ many iterations. 

\vspace{5pt}

\noindent{\bf The $\cal O_{dot}$--${\cal O}_{weight}$ loop.} First, we prove that the $\cal O_{dot}$--${\cal O}_{weight}$ loop stops after $\OO(1)$ many iterations. By Lemma \ref{expandlabel}, after a weight expansion, every resulting graph satisfies 
at least one of the following conditions: (1) it already satisfies the stopping rules; (2) it has strictly higher scaling order than the input graph; (3) it has strictly fewer weights than the input graph. 
Thus there exists a fixed $k\in \N$ depending on $n$ and the number of weights such that after $k$ iterations of the $\cal O_{dot}$--${\cal O}_{weight}$ loop, every new graph either satisfies the stopping rules already or has no weights in it. A graph in the former case will be sent to the output directly. For a graph in the latter case, ${\cal O}_{weight}$ will be a null operation in the next iteration and this graph exits the $\cal O_{dot}$--${\cal O}_{weight}$ loop successfully.

\vspace{5pt}

\noindent{\bf The $\cal O_{dot}$--${\cal O}_{weight}$--$\cal O_{multi-e}$ loop.} Suppose we apply the $\cal O_{dot}$-${\cal O}_{weight}$-$\cal O_{multi-e}$ iteration once to an input graph, say $\cal G_0$, and get a collection of new graphs, say $\mathscr G_1$. For any new graph in $\mathscr G_1$, if it already satisfies the stopping rules, then we send it to the output directly; otherwise we send it back to the first step $\cal O_{dot}$. For a graph in the latter case, 
if it contains no weights and every atoms in it either has degree 0 or is connected with two matched solid edges, then ${\cal O}_{weight}$ and $\cal O_{multi-e}$ are both null operations and this graph exits the $\cal O_{dot}$--${\cal O}_{weight}$--$\cal O_{multi-e}$ loop successfully. On the other hand, if either ${\cal O}_{weight}$ or $\cal O_{multi-e}$ is a non-trivial operation for a graph $\cal G_1\in \mathscr  G_1$, then we will apply the $\cal O_{dot}$--${\cal O}_{weight}$--$\cal O_{multi-e}$ iteration to it and get a collection of new graphs, say $\mathscr G_2 $. By Lemmas \ref{expandlabel} and \ref{expandmulti}, every graph in $\mathscr G_2 $ either satisfies the stopping rules already, or falls into at least one of the following categories: 
\begin{itemize}
\item[(1)] it has strictly higher scaling order than $\cal G_1$; 
\item[(2)] there is one new atom of degree 2 and one old atom whose degree decreases by 2, while the degree of any other atom stays the same;
\item[(3)] there is no new atom and one old atom whose degree decreases by 2, while the degree of any other atom either stays the same or decreases by 2. 
\end{itemize}
If a graph in $\mathscr G_2$ satisfies the stopping rules, then we send it to the output. Otherwise, we send it back to $\cal O_{dot}$ and apply another $\cal O_{dot}$--${\cal O}_{weight}$--$\cal O_{multi-e}$ iteration to it.

We repeat the above iterations, and construct correspondingly a tree diagram $\cal T$ of graphs as follows. Let $\cal G_0$ be the root, which represents the input graph. Given a graph $\cal G$ represented by a vertex of the tree, its children are the graphs obtained from an $\cal O_{dot}$--${\cal O}_{weight}$--$\cal O_{multi-e}$ iteration acting on $\cal G$. If a graph $\cal G$ satisfies the stopping rules or if ${\cal O}_{weight}$ and $\cal O_{multi-e}$ are null operations for $\cal G$, then $\cal G$ is a leaf of the tree, and it exits the $\cal O_{dot}$--${\cal O}_{weight}$--$\cal O_{multi-e}$ successfully. Let the height of $\cal T$ be the maximum distance between a leaf of $\cal T$ and the root. To show that the $\cal O_{dot}$--${\cal O}_{weight}$--$\cal O_{multi-e}$ loop stops after $\OO(1)$ many iterations, it is equivalent to show that $\cal T$ is a finite tree with height of order $\OO(1)$. 

Let $\cal G_0 \to \cal G_1 \to \cal G_2 \to \cdots \to \cal G_h$ be a self-avoiding path on $\cal T$ from the root to a leaf. We let $k_0=0$. After having defined $k_i$, let $k_{i+1}:=\min\{j> k_i : \text{ord}(\cal G_j) > \text{ord}(\cal G_{k_i})\}$. Then the sequence $\{k_0,k_1, k_2, \cdots\}$ has length $\le n$, since a graph of scaling order $\ge n+1$ already satisfies the stopping rule (S3). Moreover, we claim that $|k_{i+1}-k_i|< n$. In fact, from the above discussion, we see that in order for the scaling order of a child to be the same as the scaling order of its parent, it has to be in category (2) or (3). Note that the total degree of the atoms in $\cal G_{k_i}$ is at most $2n$ (because there are at most $n$ off-diagonal solid edges in it), and each iteration decreases at least one atom's degree by 2. Hence we immediately get that $|k_{i+1}-k_i|\le n$.
The above argument shows that we must have $h\le n^2$, i.e. the height of $\cal T$ is at most $n^2$. 
This means that the $\cal O_{dot}$--${\cal O}_{weight}$--$\cal O_{multi-e}$ loop will stop after at most $n^2$ many iterations.

\vspace{5pt}

Next with Lemmas \ref{expandG2} and \ref{expandGGbar}, using a similar tree diagram argument as above, we can show that both the $\cal O_{dot}$--$\cdots$--$\cal O_{GG}$ and $\cal O_{dot}$--$\cdots$--$\cal O_{G\overline G}$ loops will stop after $\OO(1)$ many iterations. 
In particular, exiting the $\cal O_{dot}$--$\cdots$--$\cal O_{G\overline G}$ loop means that the expansion process in Figure \ref{Fig chart1} is completed successfully, which concludes the proof of Lemma \ref{lvl1 lemma}.
\end{proof}


\begin{bibdiv}
	\begin{biblist}
		
		\bib{PRL_Anderson}{article}{
			author={Abrahams, E.},
			author={Anderson, P.~W.},
			author={Licciardello, D.~C.},
			author={Ramakrishnan, T.~V.},
			title={Scaling theory of localization: Absence of quantum diffusion in
				two dimensions},
			date={1979},
			journal={Phys. Rev. Lett.},
			volume={42},
			pages={673\ndash 676},
		}
		
		\bib{Aizenman1993}{article}{
			author={Aizenman, M.},
			author={Molchanov, S.},
			title={{Localization at large disorder and at extreme energies: an
					elementary derivation}},
			date={1993},
			journal={Communications in Mathematical Physics},
			volume={157},
			number={2},
			pages={245\ndash 278},
		}
		
		\bib{Bethe_PRL}{article}{
			author={Aizenman, Michael},
			author={Warzel, Simone},
			title={Extended states in a {L}ifshitz tail regime for random
				{S}chr\"odinger operators on trees},
			date={2011},
			journal={Phys. Rev. Lett.},
			volume={106},
			pages={136804},
		}
		
		\bib{Bethe_JEMS}{article}{
			author={Aizenman, Michael},
			author={Warzel, Simone},
			title={{Resonant delocalization for random Schrödinger operators on
					tree graphs}},
			date={2013},
			journal={J. Eur. Math. Soc.},
			volume={15},
			number={4},
			pages={1167\ndash 1222},
		}
		
		\bib{Anderson}{article}{
			author={Anderson, P.~W.},
			title={Absence of diffusion in certain random lattices},
			date={1958},
			journal={Phys. Rev.},
			volume={109},
			pages={1492\ndash 1505},
		}
		
		\bib{BaoErd2015}{article}{
			author={Bao, Z.},
			author={Erd{\H{o}}s, L.},
			title={Delocalization for a class of random block band matrices},
			date={2017},
			journal={Probab. Theory Related Fields},
			volume={167},
			number={3},
			pages={673\ndash 776},
		}
		
		\bib{BHY2019}{article}{
			author={Bauerschmidt, R.},
			author={Huang, J.},
			author={Yau, H.-T.},
			title={Local {K}esten--{M}ckay law for random regular graphs},
			date={2019},
			journal={Communications in Mathematical Physics},
			volume={369},
			number={2},
			pages={523\ndash 636},
		}
		
		\bib{BKH2017}{article}{
			author={Bauerschmidt, R.},
			author={Knowles, A.},
			author={Yau, H.-T.},
			title={Local semicircle law for random regular graphs},
			date={2017},
			journal={Communications on Pure and Applied Mathematics},
			volume={70},
			number={10},
			pages={1898\ndash 1960},
		}
		
		\bib{isotropic}{article}{
			author={Bloemendal, A.},
			author={Erd{\H o}s, L.},
			author={Knowles, A.},
			author={Yau, H.-T.},
			author={Yin, J.},
			title={Isotropic local laws for sample covariance and generalized
				{W}igner matrices},
			date={2014},
			journal={Electron. J. Probab.},
			volume={19},
			number={33},
			pages={1\ndash 53},
		}
		
		\bib{PB_review}{incollection}{
			author={Bourgade, P.},
			title={Random band matrices},
			date={2018},
			booktitle={Proceedings of the {I}nternational {C}ongress of
				{M}athematicians},
			pages={2759\ndash 2783},
		}
		
		\bib{BouErdYauYin2017}{article}{
			author={Bourgade, P.},
			author={Erd{\H o}s, L.},
			author={Yau, H.-T.},
			author={Yin, J.},
			title={Universality for a class of random band matrices},
			date={2017},
			journal={Advances in Theoretical and Mathematical Physics},
			volume={21},
			number={3},
			pages={739\ndash 800},
		}
		
		\bib{PartII}{article}{
			author={Bourgade, P.},
			author={Yang, F.},
			author={Yau, H.-T.},
			author={Yin, J.},
			title={Random band matrices in the delocalized phase, {II}: Generalized
				resolvent estimates},
			date={2019},
			journal={Journal of Statistical Physics},
			volume={174},
			number={6},
			pages={1189\ndash 1221},
		}
		
		\bib{PartI}{article}{
			author={Bourgade, P.},
			author={Yau, H.-T.},
			author={Yin, J.},
			title={Random band matrices in the delocalized phase, {I}: Quantum
				unique ergodicity and universality},
			date={2020},
			journal={Communications on Pure and Applied Mathematics},
			volume={73},
			number={7},
			pages={1526\ndash 1596},
		}
		
		\bib{Bourgain2005}{article}{
			author={Bourgain, J.},
			author={Kenig, C.},
			title={On localization in the continuous {A}nderson-{B}ernoulli model in
				higher dimension},
			date={2005},
			journal={Inventiones mathematicae},
			volume={161},
			number={2},
			pages={389\ndash 426},
		}
		
		\bib{Carmona1987}{article}{
			author={Carmona, R.},
			author={Klein, A.},
			author={Martinelli, F.},
			title={Anderson localization for {B}ernoulli and other singular
				potentials},
			date={1987},
			journal={Communications in Mathematical Physics},
			volume={108},
			number={1},
			pages={41\ndash 66},
		}
		
		\bib{CarLa1990}{book}{
			author={Carmona, R.},
			author={Lacroix, J.},
			title={Spectral theory of random {S}chr{\"o}dinger operators},
			publisher={Birkh{\"a}user, Boston},
			date={1990},
		}
		
		\bib{ConJ-Ref2}{article}{
			author={Casati, G.},
			author={Guarneri, I.},
			author={Izrailev, F.},
			author={Scharf, R.},
			title={Scaling behavior of localization in quantum chaos},
			date={1990},
			journal={Phys. Rev. Lett.},
			volume={64},
			pages={5\ndash 8},
		}
		
		\bib{ConJ-Ref1}{article}{
			author={Casati, G.},
			author={Molinari, L.},
			author={Izrailev, F.},
			title={Scaling properties of band random matrices},
			date={1990},
			journal={Phys. Rev. Lett.},
			volume={64},
			pages={1851\ndash 1854},
		}
		
		\bib{Damanik2002}{article}{
			author={Damanik, D.},
			author={Sims, R.},
			author={Stolz, G.},
			title={{Localization for one-dimensional, continuum,
					{B}ernoulli-{A}nderson models}},
			date={2002},
			journal={Duke Mathematical Journal},
			volume={114},
			number={1},
			pages={59\ndash 100},
		}
		
		\bib{DingSmart2020}{article}{
			author={Ding, J.},
			author={Smart, C.},
			title={Localization near the edge for the {A}nderson {B}ernoulli model
				on the two dimensional lattice},
			date={2020},
			journal={Inventiones mathematicae},
			volume={219},
			number={2},
			pages={467\ndash 506},
		}
		
		\bib{DisPinSpe2002}{article}{
			author={Disertori, M.},
			author={Pinson, L.},
			author={Spencer, T.},
			title={Density of states for random band matrices},
			date={2002},
			journal={Comm. Math. Phys.},
			volume={232},
			pages={83\ndash 124},
		}
		
		\bib{Edwards_1972}{article}{
			author={Edwards, J.~T.},
			author={Thouless, D.~J.},
			title={Numerical studies of localization in disordered systems},
			date={1972},
			journal={Journal of Physics C: Solid State Physics},
			volume={5},
			number={8},
			pages={807\ndash 820},
		}
		
		\bib{Efe1997}{article}{
			author={Efetov, K.},
			title={Supersymmetry in disorder and chaos},
			date={1997},
			journal={Cambridge University Press},
		}
		
		\bib{ErdKno2013}{article}{
			author={Erd{\H{o}}s, L.},
			author={Knowles, A.},
			title={Quantum diffusion and delocalization for band matrices with
				general distribution},
			date={2011},
			journal={Ann. Henri Poincar\'e},
			volume={12},
			number={7},
			pages={1227\ndash 1319},
		}
		
		\bib{ErdKno2011}{article}{
			author={Erd{\H{o}}s, L.},
			author={Knowles, A.},
			title={Quantum diffusion and eigenfunction delocalization in a random
				band matrix model},
			date={2011},
			journal={Communications in Mathematical Physics},
			volume={303},
			number={2},
			pages={509\ndash 554},
		}
		
		\bib{EKY_Average}{article}{
			author={Erd{\H o}s, L.},
			author={Knowles, A.},
			author={Yau, H.-T.},
			title={Averaging fluctuations in resolvents of random band matrices},
			date={2013},
			journal={Ann. Henri Poincar\'e},
			volume={14},
			pages={1837\ndash 1926},
		}
		
		\bib{delocal}{article}{
			author={Erd{\H{o}}s, L.},
			author={Knowles, A.},
			author={Yau, H.-T.},
			author={Yin, J.},
			title={Delocalization and diffusion profile for random band matrices},
			date={2013},
			journal={Comm. Math. Phys.},
			volume={323},
			number={1},
			pages={367\ndash 416},
		}
		
		\bib{Semicircle}{article}{
			author={Erd{\H{o}}s, L.},
			author={Knowles, A.},
			author={Yau, H.-T.},
			author={Yin, J.},
			title={The local semicircle law for a general class of random matrices},
			date={2013},
			journal={Elect. J. Prob.},
			volume={18},
			number={59},
			pages={1\ndash 58},
		}
		
		\bib{EKYY_ER1}{article}{
			author={Erd{\H{o}}s, L.},
			author={Knowles, A.},
			author={Yau, H.-T.},
			author={Yin, J.},
			title={{Spectral statistics of {E}rd{\H{o}}s–{R}{\'e}nyi graphs {I}:
					Local semicircle law}},
			date={2013},
			journal={The Annals of Probability},
			volume={41},
			number={3B},
			pages={2279 \ndash  2375},
			url={https://doi.org/10.1214/11-AOP734},
		}
		
		\bib{ESY2008}{article}{
			author={Erd{\H o}s, L.},
			author={Salmhofer, M.},
			author={Yau, H.-T.},
			title={{Quantum diffusion of the random Schrödinger evolution in the
					scaling limit}},
			date={2008},
			journal={Acta Mathematica},
			volume={200},
			number={2},
			pages={211 \ndash  277},
		}
		
		\bib{ESY1}{article}{
			author={Erd{\H{o}}s, L.},
			author={Schlein, B.},
			author={Yau, H.-T.},
			title={Semicircle law on short scales and delocalization of eigenvectors
				for {W}igner random matrices},
			date={2009},
			journal={Annals of Probability},
			volume={37},
			pages={815\ndash852},
		}
		
		\bib{ESY_local}{article}{
			author={Erd{\H{o}}s, L.},
			author={Schlein, B.},
			author={Yau, H.-T.},
			title={Local semicircle law and complete delocalization for {W}igner
				random matrices},
			date={2008},
			journal={Commun. Math. Phys.},
			volume={287},
			number={2},
			pages={641\ndash 655},
		}
		
		\bib{ErdYauYin2012Univ}{article}{
			author={Erd{\H{o}}s, L.},
			author={Yau, H.-T.},
			author={Yin, J.},
			title={Bulk universality for generalized {W}igner matrices},
			date={2012},
			journal={Probab. Theory Related Fields},
			volume={154},
			number={1-2},
			pages={341\ndash 407},
		}
		
		\bib{ErdYauYin2012Rig}{article}{
			author={Erd{\H{o}}s, L.},
			author={Yau, H.-T.},
			author={Yin, J.},
			title={Rigidity of eigenvalues of generalized {W}igner matrices},
			date={2012},
			journal={Adv. Math.},
			volume={229},
			number={3},
			pages={1435\ndash 1515},
		}
		
		\bib{EYYbernoulli}{article}{
			author={Erd{\H{o}}s, L.},
			author={Yau, H.-T.},
			author={Yin, J.},
			title={Universality for generalized {W}igner matrices with {B}ernoulli
				distribution},
			date={2011},
			journal={Journal of Combinatorics},
			volume={2},
			number={1},
			pages={15\ndash 81},
		}
		
		\bib{ConJ-Ref4}{article}{
			author={Feingold, M.},
			author={Leitner, D.~M.},
			author={Wilkinson, M.},
			title={Spectral statistics in semiclassical random-matrix ensembles},
			date={1991Feb},
			journal={Phys. Rev. Lett.},
			volume={66},
			pages={986\ndash 989},
		}
		
		\bib{FroSpen_1985}{article}{
			author={Fr{\"o}hlich, J.},
			author={Martinelli, F.},
			author={Scoppola, E.},
			author={Spencer, T.},
			title={{Constructive proof of localization in the Anderson tight binding
					model}},
			date={1985},
			journal={Communications in Mathematical Physics},
			volume={101},
			number={1},
			pages={21\ndash 46},
		}
		
		\bib{FroSpen_1983}{article}{
			author={Fr{\"o}hlich, J.},
			author={Spencer, T.},
			title={{Absence of diffusion in the Anderson tight binding model for
					large disorder or low energy}},
			date={1983},
			journal={Communications in Mathematical Physics},
			volume={88},
			number={2},
			pages={151\ndash 184},
		}
		
		\bib{fy}{article}{
			author={Fyodorov, Y.~V.},
			author={Mirlin, A.~D.},
			title={Scaling properties of localization in random band matrices: A
				$\sigma$-model approach.},
			date={1991},
			journal={Phys. Rev. Lett.},
			volume={67},
			pages={2405\ndash 2409},
		}
		
		\bib{Germinet2013}{article}{
			author={Germinet, F.},
			author={Klein, A.},
			title={{A comprehensive proof of localization for continuous Anderson
					models with singular random potentials}},
			date={2013},
			journal={J. Eur. Math. Soc.},
			volume={15},
			number={1},
			pages={53\ndash 143},
		}
		
		\bib{HKM2019}{article}{
			author={He, Y.},
			author={Knowles, A.},
			author={Marcozzi, M.},
			title={{Local law and complete eigenvector delocalization for
					supercritical Erdős–Rényi graphs}},
			date={2019},
			journal={The Annals of Probability},
			volume={47},
			number={5},
			pages={3278 \ndash  3302},
		}
		
		\bib{HeMa2018}{article}{
			author={He, Y.},
			author={Marcozzi, M.},
			title={Diffusion profile for random band matrices: A short proof},
			date={2019},
			journal={Journal of Statistical Physics},
			volume={177},
			number={4},
			pages={666\ndash 716},
			url={https://doi.org/10.1007/s10955-019-02385-2},
		}
		
		\bib{HY_Regulard}{article}{
			author={Huang, J.},
			author={Yau, H.-T.},
			title={Spectrum of random $d$-regular graphs up to the edge},
			journal={arXiv:2102.00963},
		}
		
		\bib{Cumulant2}{article}{
			author={Khorunzhy, A.},
			author={Khoruzhenko, B.},
			author={Pastur, L.},
			title={Asymptotic properties of large random matrices with independent
				entries},
			date={1996},
			journal={Journal of Mathematical Physics},
			volume={37},
			number={10},
			pages={5033\ndash 5060},
		}
		
		\bib{Kirsch2007}{article}{
			author={Kirsch, W.},
			title={An invitation to random {S}chroedinger operators},
			date={2007},
			journal={arXiv:0709.3707},
		}
		
		\bib{LiZhang2019}{article}{
			author={Li, L.},
			author={Zhang, L.},
			title={{Anderson-Bernoulli localization on the 3D lattice and discrete
					unique continuation principle}},
			date={2019},
			journal={arXiv:1906.04350},
		}
		
		\bib{Cumulant1}{article}{
			author={Lytova, A.},
			author={Pastur, L.},
			title={Central limit theorem for linear eigenvalue statistics of random
				matrices with independent entries},
			date={2009},
			journal={Ann. Probab.},
			volume={37},
			number={5},
			pages={1778\ndash 1840},
		}
		
		\bib{PelSchShaSod}{article}{
			author={Peled, R.},
			author={Schenker, J.},
			author={Shamis, M.},
			author={Sodin, S.},
			title={{On the Wegner Orbital Model}},
			date={2017},
			journal={International Mathematics Research Notices},
			volume={2019},
			number={4},
			pages={1030\ndash 1058},
		}
		
		\bib{Sch2009}{article}{
			author={Schenker, J.},
			title={Eigenvector localization for random band matrices with power law
				band width},
			date={2009},
			journal={Comm. Math. Phys.},
			volume={290},
			pages={1065\ndash 1097},
		}
		
		\bib{SchMT}{article}{
			author={Shcherbina, M.},
			author={Shcherbina, T.},
			title={Characteristic polynomials for 1d random band matrices from the
				localization side},
			date={2017},
			journal={Communications in Mathematical Physics},
			volume={351},
			number={3},
			pages={1009\ndash 1044},
		}
		
		\bib{Sch3}{article}{
			author={Shcherbina, M.},
			author={Shcherbina, T.},
			title={Universality for 1d random band matrices: Sigma-model
				approximation},
			date={2018},
			journal={Journal of Statistical Physics},
			volume={172},
			number={2},
			pages={627\ndash 664},
			url={https://doi.org/10.1007/s10955-018-1969-1},
		}
		
		\bib{1Dchara}{article}{
			author={Shcherbina, M.},
			author={Shcherbina, T.},
			title={Universality for 1d random band matrices},
			date={2019},
			journal={arXiv:1910.02999},
		}
		
		\bib{Sch1}{article}{
			author={Shcherbina, T.},
			title={On the second mixed moment of the characteristic polynomials of
				1d band matrices},
			date={2014},
			journal={Comm. Math. Phys.},
			volume={328},
			pages={45\ndash 82},
		}
		
		\bib{Sch2014}{article}{
			author={Shcherbina, T.},
			title={Universality of the local regime for the block band matrices with
				a finite number of blocks},
			date={2014},
			journal={J. Stat. Phys.},
			volume={155},
			pages={466\ndash 499},
		}
		
		\bib{Sch2}{article}{
			author={Shcherbina, T.},
			title={Universality of the second mixed moment of the characteristic
				polynomials of the 1d band matrices: Real symmetric case},
			date={2015},
			journal={J. Math. Phys.},
			volume={56},
		}
		
		\bib{Sod2010}{article}{
			author={Sodin, S.},
			title={The spectral edge of some random band matrices},
			date={2010},
			journal={Ann. of Math.},
			volume={173},
			number={3},
			pages={2223\ndash 2251},
		}
		
		\bib{Spencer3}{article}{
			author={Spencer, T.},
			title={Duality, statistical mechanics and random matrices},
			journal={Current Developments in Mathematics},
			volume={2012},
			pages={229\ndash 260},
		}
		
		\bib{Spencer_Anderson}{article}{
			author={Spencer, T.},
			title={Mathematical aspects of {A}nderson localization},
			date={2010},
			journal={International Journal of Modern Physics B},
			volume={24},
			pages={1621\ndash 1639},
		}
		
		\bib{Spencer2}{incollection}{
			author={Spencer, T.},
			title={Random banded and sparse matrices},
			date={2011},
			booktitle={Oxford handbook of random matrix theory},
			editor={Akemann, G.},
			editor={Baik, J.},
			editor={Francesco, P.~Di},
			publisher={Oxford University Press},
			address={New York},
		}
		
		\bib{Spencer1}{incollection}{
			author={Spencer, T.},
			title={{SUSY} statistical mechanics and random band matrices},
			date={2012},
			booktitle={Quantum many body systems},
			series={Lecture Notes in Mathematics, vol 2051},
			publisher={Springer},
			address={Berlin, Heidelberg},
		}
		
		\bib{Thouless_1977}{article}{
			author={Thouless, D.~J.},
			title={Maximum metallic resistance in thin wires},
			date={1977},
			journal={Phys. Rev. Lett.},
			volume={39},
			pages={1167\ndash 1169},
		}
		
		\bib{Wigner}{article}{
			author={Wigner, E.~P.},
			title={Characteristic vectors of bordered matrices with infinite
				dimensions},
			date={1955},
			journal={Annals of Mathematics},
			volume={62},
			number={3},
			pages={548\ndash 564},
		}
		
		\bib{ConJ-Ref6}{article}{
			author={Wilkinson, M.},
			author={Feingold, M.},
			author={Leitner, D.~M.},
			title={Localization and spectral statistics in a banded random matrix
				ensemble},
			date={1991},
			journal={Journal of Physics A: Mathematical and General},
			volume={24},
			number={1},
			pages={175},
		}
		
		\bib{PartII_high}{article}{
			author={Yang, F.},
			author={Yau, H.-T.},
			author={Yin, J.},
			title={Delocalization and quantum diffusion of random band matrices in
				high dimensions {II}: {$T$}-expansion},
			date={2021},
			journal={arXiv:2107.05795},
		}
		
		\bib{PartIII}{article}{
			author={Yang, F.},
			author={Yin, J.},
			title={Random band matrices in the delocalized phase, {III}: averaging
				fluctuations},
			date={2021},
			journal={Probability Theory and Related Fields},
			volume={179},
			pages={451\ndash 540},
		}
		
	\end{biblist}
\end{bibdiv}

\end{document}